\documentclass[11pt]{amsart}
\usepackage{amsmath}
\usepackage{amssymb}
\usepackage{amsthm}
\usepackage{amsfonts}
\usepackage{latexsym}
\usepackage{graphicx}
\usepackage{hyperref}
\usepackage{enumerate}
\usepackage[all]{xy}
\usepackage{chngcntr}
\usepackage{xcolor}
\usepackage{tikz-cd}
\usepackage{tikz}
\usepackage{multicol}
\usepackage{adjustbox}

\setlength{\unitlength}{1cm}
\setlength{\topmargin}{0cm}
\setlength{\textheight}{22cm}
\setlength{\oddsidemargin}{1cm}
\setlength{\textwidth}{14cm}
\setlength{\voffset}{-1cm}

\newtheorem{thm}{Theorem}[section]
\newtheorem{cor}[thm]{Corollary}
\newtheorem{lem}[thm]{Lemma}
\newtheorem{prop}[thm]{Proposition}

\newtheorem{thmintro}{Theorem}

\theoremstyle{definition}

\newtheorem{rem}[thm]{Remark}

\newcommand{\iref}[1]{\S \ref{#1}, p. \pageref{#1}}

\newcommand{\Z}{\mathbb Z}
\newcommand{\Q}{\mathbb Q}
\newcommand{\R}{\mathbb R}
\newcommand{\C}{\mathbb C}
\newcommand{\F}{\mathbb F}
\newcommand{\mf}{\mathfrak}
\newcommand{\mc}{\mathcal}
\newcommand{\mb}{\mathbf}
\newcommand{\mh}{\mathbb}

\def\cH{{\mathcal H}}
\newcommand{\mr}{\mathrm}
\newcommand{\ind}{\mathrm{ind}}
\newcommand{\enuma}[1]{\begin{enumerate}[\textup{(}a\textup{)}] {#1} \end{enumerate}}
\newcommand{\Fr}{\mathrm{Frob}}
\newcommand{\Sc}{\mathrm{sc}}
\newcommand{\ad}{\mathrm{ad}}
\newcommand{\cusp}{\mathrm{cusp}}
\newcommand{\nr}{\mathrm{nr}}

\newcommand{\cpt}{\mathrm{cpt}}
\newcommand{\Rep}{\mathrm{Rep}}
\newcommand{\Res}{\mathrm{Res}}
\newcommand{\af}{\mathrm{aff}}

\newcommand{\der}{\mathrm{der}}

\newcommand{\matje}[4]{\left(\begin{smallmatrix} #1 & #2 \\ 
#3 & #4 \end{smallmatrix}\right)}

\newcommand{\Mod}{\mathrm{Mod}}
\newcommand{\Hom}{\mathrm{Hom}}
\newcommand{\End}{\mathrm{End}}

\newcommand{\isom}{\xrightarrow{\,\sim\,}}

\newcommand{\Ad}{\mathrm{Ad}}

\newcommand{\ff}{\mathfrak{f}} 
\newcommand{\Irr}{\mathrm{Irr}}
\newcommand{\SL}{{\mathrm{SL}}}
\newcommand{\GL}{{\mathrm{GL}}}
\newcommand{\PGL}{{\mathrm{PGL}}}
\newcommand{\SU}{{\mathrm{SU}}}
\newcommand{\fl}{\mathrm{fl}}
\newcommand{\II}{\operatorname{I}}
\newcommand{\Xo}{\mathfrak{X}^0}

\numberwithin{equation}{section}

\begin{document}

\title[Non-singular depth-zero representations]{Hecke algebras and 
local Langlands correspondence for non-singular depth-zero representations}
\author{Maarten Solleveld}
\address{Institute for Mathematics, Astrophysics and Particle Physics, 
Radboud Universiteit Nijmegen, Nijmegen,
The Netherlands}
\email{m.solleveld@science.ru.nl}

\author{Yujie Xu}
\address{Department of Mathematics, 
Columbia University, 
New York, NY, USA}
\email{xu.yujie@columbia.edu}

\date{\today}
\subjclass[2010]{22E50, 20C08, 20G25}

\begin{abstract}
Let $G$ be a connected reductive group over a non-archimedean local field.~We say that 
an irreducible depth-zero (complex) $G$-representation is non-singular 
if its cuspidal support is non-singular.~We establish a local Langlands correspondence 
for all such representations.~We obtain it as a specialization from a categorical version:
an equivalence between the category of finite-length non-singular depth-zero 
$G$-representations and the category of finite-length right modules of
a direct sum of twisted affine Hecke algebras constructed from Langlands
parameters. We also show that our LLC and our equivalence of categories have
several nice properties, for example compatibility with parabolic induction and
with twists by depth-zero characters.
\end{abstract}

\maketitle

\thispagestyle{empty}
\vspace{-5mm}

\tableofcontents

\section{Introduction}

\textbf{Overview and main results.} \\
Let $F$ be a non-archimedean local field and $G$ a connected reductive algebraic group over $F$. 
Let $G^{\vee}$ be the group of $\C$-points of the reductive group whose root datum is dual to 
that of $G$. Let $\mb W_F$ be the Weil group of $F$. As a vast generalization of local class field
theory, the classical explicit local Langlands conjecture, first proposed in the 1960s \cite{Bor},
predicts a surjective map from the ``group side'', which consists of irreducible smooth
representations of $G(F)$ up to isomorphism, to the ``Galois side'', which consists of 
``$L$-parameters'', i.e.~continuous homomorphisms $\varphi\colon \mb W_F\times\SL_2(\C) \to
G^{\vee}\rtimes \mb W_F$. This conjectural surjective map oftentimes has non-singleton fibres, 
called \textit{L-packets}, which are expected to be always finite. When $G$ is a torus, the 
local Langlands conjecture recovers local class field theory. Both tori and $\GL_n$ famously 
have singleton $L$-packets.

In order to formulate a conjectural bijection (or an equivalence of categories) for more general
reductive groups, partially driven by aesthetics, many mathematicians such as Deligne, Vogan,
Lusztig etc.~proposed a refined form of the local Langlands conjecture (see for example 
\cite{Vog} and \cite{ABPS1} for a more detailed exposition), which takes into account the 
non-singleton nature of $L$-packets, and probes further into the \textit{internal structure} 
of the $L$-packets,
parametrized by \textit{enhancements} of the $L$-parameters. The refined local Langlands 
conjecture considers \textit{enhanced $L$-parameters} on the Galois side, which consist of 
$L$-parameters $\varphi$ together with an irreducible representation of a certain component 
group attached to $\varphi$ (see \S\ref{subsec:Lparam-prelim} for more details). 

In this article, we establish the explicit refined local Langlands conjecture for a large 
class of representations. In this overview, we first survey some known results in the 
literature, then highlight the new advancements to the field added by our current article. 

On the group side, i.e.~in the smooth complex representation theory of $p$-adic groups, 
depth-zero representations play a pivotal role. On the one hand, it is 
expected that most representations of higher depth can be reduced in some sense to 
depth-zero representations; on the other hand, experts have long postulated that almost all 
possible technical difficulties (and new phenomena!) already arise at depth zero. In the 
groundbreaking work \cite{DR}, DeBacker and Reeder constructed depth-zero \textit{regular}
supercuspidal $L$-packets, where the condition of ``regularity'' on a supercuspidal 
representation can be very roughly (and perhaps rather inaccurately) thought of as the 
character $\theta$ (in Deligne--Lusztig's $R_T^{\theta}$) being ``in general position'', 
a notion which goes all the way back to \cite{DeLu}. The results of \cite{DR} were later 
generalized from depth-zero to arbitrary depth in \cite{Kal2}, and the assumption of 
re\-gu\-larity was later relaxed to non-singularity in \cite{Kal3}.
To venture beyond the realm of \textit{non-singular supercuspidals}, one necessarily needs to 
enlist the theory of Hecke algebras: (i) either one would like to consider \textit{singular
supercuspidals}--terminology first due to \cite{Aubert-Xu-G2}, which are supercuspidals 
whose $L$-packets \textit{mix} supercuspidals and non-supercuspidals 
and whose study necessarily require a careful analysis of their Bernstein block Hecke algebras; 
(ii) or one would like to consider \textit{non-singular non-supercuspidals}, which are 
$G$-representations whose supercuspidal supports are non-singular. 

Hecke algebra techniques have proven particularly powerful in attacking the local Langlands 
conjecture, as can be seen in \cite{Aubert-Xu-Hecke-algebra, Aubert-Xu-G2, SolLLCunip, SolQS}. 
This is in part due to the fact that Hecke algebras naturally show up on the Galois side 
of the conjectural local Langlands correspondence (LLC). More precisely, as shown in \cite{AMS1} 
(see also \S\ref{subsec:HeckeL-prelim}) the enhanced $L$-parameters admit a natural 
decomposition, \`a la Bernstein,  according to their cuspidal supports, and each such Bernstein 
component on the Galois side is parametrized by the irreducible representations of a certain 
Hecke algebra \cite{AMS3} (see also \S\ref{subsec:HeckeL-prelim}). 

In this article, we generalize the aforementioned literature and construct a local Langlands
correspondence for all depth-zero $G$-representations with non-singular supercuspidal 
support\footnote{For a precise definition, see \S\ref{sec:DL}.}. In 
\cite{Aubert-Xu-Hecke-algebra}, an axiomatic setup for constructing a bijective local Langlands
correspondence was proposed, which can be combined with an analysis of Hecke algebras to obtain
stronger results. In this article, we verify these requirements for all non-singular 
depth-zero Bernstein blocks.

Our first main result is a bijection between
\begin{itemize}
\item the set $\Irr^0(G)_{ns}$ of irreducible non-singular depth-zero
$G$-representations (up to isomorphism); and 
\item the set $\Phi^0_e (G)_{ns}$ of non-singular enhanced L-parameters
for $G$ which are trivial on the wild inertia subgroup of the Weil group $\mb W_F$.
\end{itemize}
Here (and throughout the paper) $G$ should be viewed as a rigid inner twist
of a quasi-split $F$-group. 

\begin{thmintro}\label{thm:B} \textup{(all results in \S \ref{sec:LLC})} \\
There exists a bijection    
\begin{equation}\label{eqn:ThmB-intro-LLC-eqn}
\begin{array}{ccc}
\Irr^0 (G)_{ns} & \longleftrightarrow & \Phi^0_e (G)_{ns} \\
\pi & \mapsto & (\varphi_\pi, \rho_\pi) \\
\pi (\varphi,\rho) & \text{\reflectbox{$\mapsto$}} & (\varphi,\rho)
\end{array}
\end{equation}
such that:
\enuma{
\item The map $\Irr^0 (G)_{ns} \mapsto \Phi (G) : \pi \mapsto \varphi_\pi$
is canonical.
\item The bijection is equivariant for the natural actions of the depth-zero subgroup 
of $H^1 (\mb W_F,Z(G^\vee))$ and the associated group of depth-zero characters of $G$.
\item The central character of $\pi$ is equal to the character of $Z(G)$
canonically determined by $\varphi_\pi$.
\item $\pi$ is tempered if and only if $\varphi_\pi$ is bounded.
\item $\pi$ is essentially square-integrable if and only if $\varphi_\pi$ is 
discrete.
\item Our LLC \eqref{eqn:ThmB-intro-LLC-eqn}, its version for supercuspidal representations 
of Levi subgroups of $G$ and the cuspidal support maps form a commutative diagram
\[
\begin{array}{ccc}
\Irr^0 (G)_{ns} & \longleftrightarrow & \Phi_e^0 (G)_{ns} \\
\downarrow \mr{Sc} & & \downarrow \mr{Sc} \\
\bigsqcup\nolimits_L \, \Irr^0_\cusp (L)_{ns} / W(G,L) & \longleftrightarrow & 
\bigsqcup\nolimits_L \, \Phi^0_\cusp (L)_{ns} / W(G^\vee,L^\vee)^{\mb W_F}
\end{array} .
\]
Here $W(G,L) = N_G (L) / L,\; W(G^\vee,L^\vee) = N_{G^\vee}(L^\vee) / L^\vee$ and
$L$ runs through a set of representatives for the $G$-conjugacy classes
of Levi subgroups of $G$.
\item Our LLC \eqref{eqn:ThmB-intro-LLC-eqn} is compatible with parabolic induction. 
Suppose that $P = M U$ is a parabolic subgroup of $G$, with Levi factor $M$. 
Let $(\varphi,\rho^M) \in \Phi^0_e (M)_{ns}$ be bounded. Let $\pi_0 (S_\varphi^{+})$ and 
$\pi_0 (S_\varphi^{M+})$ be the component groups for $\varphi$ as object of, respectively,
$\Phi (G)$ and $\Phi (M)$. Then
\[
\II_P^G \big( \pi^M (\varphi,\rho^M)\big) \cong \bigoplus\nolimits_\rho \, 
\Hom_{S_\varphi^{M+}} (\rho, \rho^M) \otimes \pi (\varphi, \rho) ,
\]
where the sum runs through all $\rho \in \Irr \big( \pi_0 (S_\varphi^+) \big)$ such that
Sc$(\varphi,\rho)$ is $G^\vee$-conjugate to Sc$(\varphi,\rho^M)$.
\item Our LLC \eqref{eqn:ThmB-intro-LLC-eqn} is compatible with the Langlands classification. 
Suppose that $(\varphi,
\rho) \in \varphi_e^0 (G)_{ns}$ and $\varphi = z \varphi_b$ with $\varphi_b \in \Phi (M)$ 
bounded and $z \in \Hom (M,\R_{>0})$ strictly positive with respect to 
$P = MU$. Then $\II_P^G (z \otimes \pi^M (\varphi_b,\rho))$ is a standard 
$G$-representation and $\pi (\varphi,\rho)$ is its unique irreducible quotient.
\item The $p$-adic Kazhdan--Lusztig conjecture holds for $\Rep^0 (G)_{ns}$.
}
\end{thmintro}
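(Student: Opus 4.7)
The plan is to prove the theorem blockwise via the Bernstein decomposition of $\Rep^0(G)_{ns}$, leveraging the categorical equivalence (established earlier in the paper) between non-singular depth-zero Bernstein blocks of $G$ and modules over a direct sum of twisted affine Hecke algebras. On the Galois side, \cite{AMS3} provides an analogous presentation of each Bernstein component of $\Phi_e^0 (G)_{ns}$ in terms of modules over a twisted affine Hecke algebra. The strategy is to match these two families of Hecke algebras, inertial-class by inertial-class, with compatible root data, $2$-cocycles and $q$-parameters, thereby inducing the bijection \eqref{eqn:ThmB-intro-LLC-eqn} on the level of irreducible objects.

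The base case is that of non-singular depth-zero supercuspidals on Levi subgroups. For $\sigma \in \Irr_\cusp^0 (L)_{ns}$, Kaletha's work \cite{Kal3} produces a canonical $(\varphi_\sigma, \rho_\sigma) \in \Phi_\cusp^0 (L)_{ns}$, which identifies each group-side inertial class $\mathfrak{s} = [L,\sigma]_G$ with a Galois-side inertial class $\mathfrak{s}^\vee$. Comparing the group-side Hecke algebra $\cH^{\mathfrak{s}}$ (built from parahoric restriction and Deligne--Lusztig parameters at the finite reductive quotient) with the Galois-side algebra $\cH(\mathfrak{s}^\vee)$ of \cite{AMS3}, I would verify that both are twisted affine Hecke algebras with the same underlying root datum, the same $2$-cocycle on the $R$-group, and the same $q$-parameters. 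The axiomatic framework of \cite{Aubert-Xu-Hecke-algebra} then turns these matching equivalences into the bijection, and automatically delivers (a) canonicality (since $\varphi_\pi$ is determined by the cuspidal support of $\pi$ through Kaletha's canonical assignment) and (f) compatibility with cuspidal support essentially by construction.

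Most of the remaining properties follow from standard formalism on tempered and discrete modules over twisted affine Hecke algebras, together with the corresponding analysis of enhanced $L$-parameters. For (b), one tracks how depth-zero characters of $G$ act through the Hecke algebra presentation and matches this with the natural $H^1 (\mb W_F, Z(G^\vee))$-action on the Galois side. For (c), one reduces to the cuspidal case and uses that the Hecke algebra presentation respects central characters. Properties (d) and (e) follow from the classification of tempered, respectively discrete series, modules over twisted affine Hecke algebras, matched with the bounded and discrete $L$-parameter conditions on the Galois side. Property (h) is a formal consequence of (d) combined with the Langlands classification applied on both sides. Property (i) translates the Kazhdan--Lusztig conjecture for twisted affine Hecke algebras, which holds for the algebras arising here, into the corresponding statement for $\Rep^0 (G)_{ns}$ via the categorical equivalence.

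The main obstacle is property (g): compatibility with parabolic induction together with the explicit multiplicity formula. This requires showing that the group-side categorical equivalence converts $\II_P^G$ into the appropriate induction functor between the Hecke algebras $\cH^{\mathfrak{s}_M}$ and $\cH^{\mathfrak{s}_G}$, and that this matches, under the Galois-side equivalence of \cite{AMS3} and the formalism of \cite{AMS1}, the natural induction on modules over Hecke algebras of enhanced $L$-parameters. The decomposition $\II_P^G \bigl( \pi^M (\varphi,\rho^M) \bigr) \cong \bigoplus\nolimits_\rho \Hom_{S_\varphi^{M+}} (\rho,\rho^M) \otimes \pi(\varphi,\rho)$ then emerges from Clifford theory applied to the inclusion $\pi_0 (S_\varphi^{M+}) \subseteq \pi_0 (S_\varphi^+)$ controlling the component groups. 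The delicate point is the precise tracking of the cocycles, the twists, and the $q$-parameters through each of the three equivalences (group side, finite-dimensional Hecke modules, Galois side), and this forms the most technically demanding portion of the argument.
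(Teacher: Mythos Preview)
Your overall strategy matches the paper's: work Bernstein-block by Bernstein-block, use Kaletha's LLC for the cuspidal base case, and transfer everything through Hecke-algebraic equivalences. However, there is a genuine gap in the central step, and you have misidentified where the main difficulty lies.

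You propose to ``verify that both are twisted affine Hecke algebras with the same underlying root datum, the same $2$-cocycle on the $R$-group, and the same $q$-parameters,'' i.e.\ to exhibit an isomorphism $\cH^{\mf s}\cong\cH(\mf s^\vee,q_F^{1/2})$. This fails in general. On the $p$-adic side the natural algebra is Morris's $\cH(G,\hat P_\ff,\hat\sigma)\cong\cH(W_\af(J,\sigma),q_\sigma)\rtimes\C[\Omega(J,\hat\sigma),\mu_{\hat\sigma}]$, but the complement $\Omega(J,\hat\sigma)$ is \emph{not} the group $\Gamma_{\mf s}\cong\Gamma_{\mf s^\vee}$ appearing in $\cH(\mf s^\vee,q_F^{1/2})$; indeed $\Gamma_{\mf s}$ does not embed in $\cH(G,\hat P_\ff,\hat\sigma)$ at all. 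The paper instead passes to $\End_G(\Pi_{\mf s})$, and even then obtains only an equivalence of \emph{finite-length} module categories, not a Morita equivalence (the remark after Theorem~\ref{thm:10.3} is explicit about this). The route is: localize both algebras at $W_{\mf s}$-orbits of unitary points $\tau\leftrightarrow(\varphi_b,\rho_b)$, reduce each side to a twisted \emph{graded} Hecke algebra (Propositions~\ref{prop:10.1} and the analogue from \cite{AMS3}), and compare those (Proposition~\ref{prop:9.7}).

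The hardest step is not (g) but the $2$-cocycle comparison underlying Proposition~\ref{prop:9.7}: one must show that $\natural_\tau$ and $\natural_{\mf s^\vee}$ are cohomologous on each stabilizer $W_{\mf s,\tau}\cong W_{\mf s^\vee,\varphi_b}$ (Theorem~\ref{thm:9.6}). This requires expressing each cocycle as a product of two pieces via Baer-sum decompositions of certain extensions (Lemmas~\ref{lem:7.10}, \ref{lem:6.3} and their $G$-level analogues in the appendices), and then proving that the ``$0$-pieces'' $\mc E^{0,[x]}_{\theta,G}$ and $\mc E^{0,\varphi_T}_{\eta,G}$ split equivariantly (Propositions~\ref{prop:7.3}, \ref{prop:6.6}), which is done by a case-by-case analysis over simple root systems. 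Once Theorem~\ref{thm:10.3} is in place with its built-in compatibility with parabolic induction, property (g) follows almost immediately from \cite[Lemma 3.19]{AMS3} (see Lemma~\ref{lem:11.7}); no separate Clifford-theoretic argument is needed. Your description of (a) is also too coarse: $\varphi_\pi$ is not determined by cuspidal support alone (it carries the $\SL_2(\C)$-part), and canonicity is proved by checking that all the non-canonical choices in the construction affect only $\rho_\pi$ (Lemma~\ref{lem:10.5}).
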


For any progenerator $\Pi_{\mf s}$ (e.g.~from a type), the category $\Rep (G)_{\mf s}$
is naturally equivalent to the category of right modules for $\End_G (\Pi_{\mf s})$. 
By \cite{Mor1,Mor3}, $\End_G (\Pi_{\mf s})$ is rather close to an affine Hecke algebra, 
while its irreducible modules have been studied extensively in \cite{SolEnd}. 
The Bernstein blocks $\Rep (G)_{\mf s}$ altogether make up the category of non-singular
depth-zero $G$-representations $\Rep^0 (G)_{ns}$. We indicate its full subcategory of 
finite-length representations by a subscript ``fl''.

On the Galois side, the set $\Phi^0_e (G)_{ns}$ decomposes naturally as a disjoint union 
of Bernstein components $\Phi_e (G)^{\mf s^\vee}$ \cite{AMS1}, indexed by a 
finite set $\mf B^\vee (G)^0_{ns}$. 
To every such Bernstein component $\Phi_e (G)^{\mf s^\vee}$, one can associate a certain 
twisted affine Hecke algebra $\cH (\mf s^\vee, q_F^{1/2})$ (see
\cite{AMS3}\footnote{Compared to \cite{AMS3}, we specialized an indeterminate
$q$-parameter to $q_F^{1/2} = |k_F|^{1/2}$.}), which is constructed in terms of 
the geometry of the complex
variety of Langlands parameters underlying $\Phi_e (G)^{\mf s^\vee}$,
and whose irreducible modules\footnote{In this paper, modules of a Hecke algebra are 
by default right modules.} are parametrized canonically by
$\Phi_e (G)^{\mf s^\vee}$. Such an algebra $\cH (\mf s^\vee,q_F^{1/2})$
can be compared with $\End_G (\Pi_{\mf s})$ for an appropriate inertial
equivalence class $\mf s$ for $\Rep (G)$. 
Our second main result is the following. 

\begin{thmintro}\label{thm:A} \textup{(Theorem \ref{thm:10.7})} \\
There exists an equivalence of categories 
\[
\Rep^0_\fl (G)_{ns} \; \cong \; \bigoplus\nolimits_{\mf s^\vee \in \mf B^\vee (G)^0_{ns}} \,
\Mod_\fl \text{ - }\cH (\mf s^\vee, q_F^{1/2}) ,
\]
which is compatible with parabolic induction and restriction and with twists
by depth-zero characters.
\end{thmintro}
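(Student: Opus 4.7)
The plan is to assemble the equivalence one Bernstein block at a time, matching inertial classes on the two sides via the cuspidal-support bijection of Theorem \ref{thm:B}(f), and then identifying the two associated Hecke algebras.

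\textbf{Step 1: Bernstein decomposition on both sides.} First I would write
\[
\Rep^0 (G)_{ns} \;=\; \bigoplus_{\mf s} \Rep (G)^{\mf s}, \qquad
\Phi^0_e (G)_{ns} \;=\; \bigsqcup_{\mf s^\vee \in \mf B^\vee (G)^0_{ns}} \Phi_e (G)^{\mf s^\vee},
\]
where $\mf s$ runs over the non-singular depth-zero inertial equivalence classes for $G$. The cuspidal-support part of Theorem \ref{thm:B}, combined with its $L$-side version at depth zero (the supercuspidal LLC from \cite{Kal3} for rigid inner twists), provides a canonical bijection $\mf s \leftrightarrow \mf s^\vee$ such that $\Irr(G)^{\mf s}$ corresponds to $\Phi_e (G)^{\mf s^\vee}$. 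Thus it suffices to exhibit, for each such pair, an equivalence
\[
\Rep_{\fl} (G)^{\mf s} \;\cong\; \Mod_{\fl} \text{-} \cH (\mf s^\vee, q_F^{1/2}),
\]
compatible with parabolic induction/restriction and with depth-zero character twists.

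\textbf{Step 2: Identify the group-side block with modules over an endomorphism algebra.} For each non-singular depth-zero $\mf s$, I would take a progenerator $\Pi_{\mf s}$ (supplied by a depth-zero type, e.g.\ a Morris type or a cover thereof), yielding a canonical equivalence
\[
\Rep (G)^{\mf s} \;\cong\; \Mod \text{-} \End_G (\Pi_{\mf s}).
\]
By the results of Morris \cite{Mor1,Mor3}, $\End_G (\Pi_{\mf s})$ is a twisted affine Hecke algebra (possibly with a $2$-cocycle and an outer finite group), whose root datum, parameter function and cocycle have been determined explicitly in \cite{SolEnd}. In particular, under the non-singularity assumption the finite reductive quotients involved behave as expected, and these Hecke-algebra invariants are computable from the cuspidal support data.

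\textbf{Step 3: Matching with the Galois-side Hecke algebra (main obstacle).} The core of the proof is to establish an isomorphism
\[
\End_G (\Pi_{\mf s}) \;\cong\; \cH (\mf s^\vee, q_F^{1/2})
\]
of twisted affine Hecke algebras. Both sides are built from the same finite root data (via the Bernstein torus of $\mf s$ on one side and the variety of Langlands parameters underlying $\mf s^\vee$ on the other); what must be checked is that their $q$-parameters, their extension data (the finite $R$-group and $2$-cocycle), and the identifications of their underlying lattices coincide. I would do this by using the explicit description of $\cH (\mf s^\vee, q_F^{1/2})$ from \cite{AMS3} and comparing to \cite{SolEnd}, tracing the cuspidal-support bijection fixed in Step 1 and invoking the supercuspidal LLC of \cite{Kal3} to match the action of the appropriate stabilizer groups. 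This identification of parameters and cocycles is the main technical obstacle, because a priori it relies on the axiomatic setup of \cite{Aubert-Xu-Hecke-algebra}, which per the introduction must first be verified in the non-singular depth-zero setting.

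\textbf{Step 4: Compatibilities.} Finally, I would deduce compatibility with parabolic induction/restriction by noting that on the group side parabolic induction corresponds, via the equivalence in Step 2, to induction between the Hecke subalgebras attached to Levi subgroups (this is standard for types with covers), while on the Galois side \cite{AMS3} shows that the analogous induction operation exists for $\cH (\mf s^\vee, q_F^{1/2})$ and is compatible with cuspidal support. Under the identification of Step 3 these two inductions match. Compatibility with depth-zero character twists follows in the same way: such twists translate into shifts of Bernstein components on both sides indexed by the depth-zero subgroup of $H^1 (\mb W_F, Z (G^\vee))$, and the isomorphism of Hecke algebras in Step 3 is natural enough to intertwine these shifts. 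Assembling the block-by-block equivalences and passing to finite-length subcategories yields the statement.
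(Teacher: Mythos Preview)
Your overall plan (Bernstein decomposition, matching $\mf s \leftrightarrow \mf s^\vee$, then comparing Hecke algebras block by block) coincides with the paper's, but Step~3 contains a genuine gap: you assert an algebra isomorphism $\End_G(\Pi_{\mf s}) \cong \cH(\mf s^\vee, q_F^{1/2})$, and the paper explicitly warns that no such isomorphism (nor even a Morita equivalence) is established. The obstruction is already visible at the cuspidal level. For $L$, the type-theoretic Hecke algebra $\cH(L,\hat P_{L,\ff},\hat\sigma)$ is a \emph{twisted} group algebra $\C[W_L(J,\hat\sigma),\mu_{\hat\sigma}]$ (Corollary~\ref{cor:3.3}), whereas $\cH(\mf s_L^\vee, q_F^{1/2}) = \mc O(\mf s_L^\vee)$ is commutative; these are not isomorphic and only become Morita equivalent after analytic localization on small open sets (Proposition~\ref{prop:5.4}). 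Hence the underlying lattices, $R$-groups and $2$-cocycles you propose to match globally do \emph{not} coincide in general, and the restriction to finite-length objects is not cosmetic but essential.

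What the paper actually does is compare the two sides only after localizing at a unitary basepoint $\tau \leftrightarrow (\varphi_b,\rho_b)$ and passing to \emph{graded} Hecke algebras. Concretely: $\Rep_\fl(G)_{\mathfrak X_\nr^+(L)\tau}$ is identified with finite-length modules over $\mh H(\tilde{\mc R}_\tau,W_{\mf s,\tau},k^\tau,\natural_\tau)$ (Proposition~\ref{prop:10.1}), and the latter is shown to be isomorphic to the localized Galois-side algebra $\mh H(\mf s^\vee,\varphi_b,\log(q_F^{1/2}))$ (Proposition~\ref{prop:9.7}). The hard step here is matching the $2$-cocycles $\natural_\tau$ and $\natural_{\mf s^\vee}$ on the stabilizer $W_{\mf s,\tau}\cong W_{\mf s^\vee,\varphi_b}$ (Theorem~\ref{thm:9.6}); this requires the equivariant splittings of extensions established in Appendices~\ref{sec:A} and~\ref{sec:B}, and is the reason the $W(G,L)$-equivariance of the cuspidal LLC (Theorem~\ref{thm:8.2}) had to be proven first. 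One then glues these local equivalences over all bounded $(\varphi_b,\rho_b)$, using Proposition~\ref{prop:10.2} to control the $W_{\mf s^\vee}$-action, to recover $\Mod_\fl$-$\cH(\mf s^\vee,q_F^{1/2})$ on the right and $\Rep_\fl(G)_{\mf s}$ on the left. Your Step~3 should therefore be replaced by this localize--grade--compare--glue procedure; the rest of your outline is fine.
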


There seem to be obstructions to generalizing this equivalence 
to categories of representations of arbitrary length, due to certain 2-cocycles 
in the Hecke algebras from \cite{Mor1} on the cuspidal level. 
On the other hand, for some special cases of groups and representations, 
an equivalence of categories of the form
\[
\Rep (G)_{\mf s} \; \cong \; \Mod \text{ - }\mc H (\mf s^\vee,q_F^{1/2})
\]
is known. See \cite{AMS3} for inner forms of $\GL_n (F)$, \cite{AMS4} for pure inner 
forms of quasi-split classical groups, \cite{SolLLCunip,SolRamif} for unipotent 
representations, \cite{Aubert-Xu-G2} for $\mathrm{G}_2 (F)$, \cite{Suzuki-Xu} for 
$\mathrm{GSp}_4(F)$, and \cite{SolQS} for principal series representations. 

Theorem \ref{thm:A} is in the spirit of recent geometric 
and categorical versions of a local Langlands correspondence
\cite{Hel,Zhu,BZCHN,FaSc}, where the objects on the Galois (or spectral)
side are equivariant coherent sheaves on stacks of Langlands 
parameters, and one must pass to derived categories on both sides of the
(conjectural) correspondence to formulate the conjecture. The construction of 
$\cH (\mf s^\vee,q_F^{1/2})$ in \cite{AMS3} strongly suggests that its
modules are related to such equivariant coherent sheaves, but it has proven
difficult to make that precise. 

In Section \ref{sec:DL}, we prove new results on Deligne--Lusztig packets of supercuspidal
$L$-representations, which in the end show that they behave well with respect to conjugation
by $N_G (L)$. In \S\ref{sec:L0}, we conduct a closer analysis on the representations of the 
component groups of supercuspidal L-parameters for $L$, related to conjugation by 
$N_{G^\vee}(L^\vee)$. On both sides of the LLC, it involves checking that certain extensions of
groups split equivariantly (see \S \ref{par:2.split} and \S \ref{par:3.split} for details).
Using this, we are able to (in \S \ref{sec:LLCcusp}) even prove new results about the LLC 
on the cuspidal level from \cite{Kal3}.

\begin{thmintro}\label{thm:C} \textup{(See Theorem \ref{thm:8.2})} \\
Identify $\big( Z(L^\vee)^{\mb I_F} \big)_{\mb W_F}^{\; \circ}$ with the set of Langlands
parameters for the group of unramified characters $\mf{X}_\nr (L)$.
In the LLC for non-singular supercuspidal $L$-representations, 
the choices can be made so that the bijection
\[
\Irr^0_\cusp (L)_{ns} \longleftrightarrow \Phi^0_\cusp (L)_{ns}
\]
is equivariant for the natural actions of
\[
W(G,L) \ltimes \mathfrak{X}_{\nr} (L) \cong 
W(G^\vee,L^\vee )^{\mb W_F} \ltimes \big( Z(L^\vee)^{\mb I_F} \big)_{\mb W_F}^{\; \circ} .
\]
\end{thmintro}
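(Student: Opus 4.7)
}

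The plan is to decompose the action of the semidirect product $W(G,L) \ltimes \mf X_\nr(L)$ into its Weyl-group piece and its unramified-twist piece, establish equivariance for each separately, and then assemble them. I would first record the canonical isomorphism $W(G,L) \ltimes \mf X_\nr(L) \cong W(G^\vee,L^\vee)^{\mb W_F} \ltimes \big( Z(L^\vee)^{\mb I_F} \big)_{\mb W_F}^{\;\circ}$, which is a combination of local Langlands for the cocenter of $L$ with the standard matching of relative Weyl groups under Langlands duality. Having fixed this identification, the two actions on the two sides of the cuspidal LLC can be compared termwise.

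For the unramified twist factor, I would use that Kaletha's construction in \cite{Kal3} is manifestly compatible with twists by depth-zero (in particular unramified) characters: a non-singular supercuspidal $\pi(\varphi,\rho)$ is compactly induced from a representation built out of Deligne--Lusztig data associated to $\varphi$ and a tame torus $S \subset L$, and multiplying $\varphi$ by an unramified $1$-cocycle valued in $Z(L^\vee)$ shifts the inducing datum by the corresponding unramified character of $L$, while leaving the enhancement $\rho$ unchanged. The resulting compatibility reduces to a direct calculation on the tame torus side, via local Langlands for $S$.

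The harder step is $W(G,L)$-equivariance, which is where the new results of Sections \ref{sec:DL} and \ref{sec:L0} come in. Given $n \in N_G(L)$ and $\pi \in \Irr^0_\cusp (L)_{ns}$, Section \ref{sec:DL} shows that the Deligne--Lusztig packet of $\pi \circ \ad(n)^{-1}$ is obtained from that of $\pi$ by translating the underlying tame torus data via $\ad(n)$, in a controlled way. Dually, Section \ref{sec:L0} analyses how $W(G^\vee,L^\vee)^{\mb W_F}$ permutes non-singular supercuspidal $L$-parameters and their enhancements by simultaneous conjugation of $\varphi$ and $\rho$. Matching the two actions up to isomorphism is then formal; the delicate point is making the bijection $\pi \leftrightarrow (\varphi,\rho)$ equivariant \emph{on the nose}. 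This is precisely where the equivariant splittings proved in \S\ref{par:2.split} and \S\ref{par:3.split} are used: they furnish simultaneous sections of the relevant component-group extensions on both sides, compatible with the action of $W(G,L) \cong W(G^\vee,L^\vee)^{\mb W_F}$.

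The main obstacle I expect is rigidifying the auxiliary choices (the $\chi$-data, the rectifying characters, and the selected representatives of component-group orbits) that enter Kaletha's construction of $\pi(\varphi,\rho)$: a priori the bijection is only equivariant up to a cocycle arising from the failure of certain extensions to split naturally, and the whole purpose of \S\ref{par:2.split}--\S\ref{par:3.split} is to kill this cocycle by providing splittings that are themselves equivariant for the Weyl-group action. Once this rigidification is in place, compatibility of the two actions — that is, that $W(G,L)$ acts on $\mf X_\nr(L)$ in the same way on both sides of the correspondence — follows from the naturality of unramified twisting under Levi conjugation, and the full semidirect-product equivariance in Theorem \ref{thm:C} drops out by combining the two separate equivariance statements.
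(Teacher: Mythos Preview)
Your proposal is correct and follows essentially the paper's approach: unramified-twist equivariance comes from \cite[Theorem 2.7.7.3]{Kal3} once the coherent splitting $\epsilon$ is fixed uniformly on each $\Xo(L)$-orbit, and $W(G,L)$-equivariance is exactly what the equivariant splittings of Propositions \ref{prop:7.9} and \ref{prop:6.4} provide, assembled via the Baer-sum comparison in Lemmas \ref{lem:8.5}--\ref{lem:8.6} and Proposition \ref{prop:8.7}. The paper organizes the argument slightly differently---it first proves equivariance for $\Xo(L)$ together with the \emph{stabilizer} of one $\Xo(L^\vee)$-orbit (Theorem \ref{thm:8.1}), and only then extends to the full $W(G,L)$ by choosing orbit representatives (Theorem \ref{thm:8.2})---but the content matches your outline; note that the choices to be rigidified are the coherent splitting $\epsilon$ and the isomorphism $\zeta^0$ of Lemma \ref{lem:8.5}, not the $\chi$-data, which in depth zero are canonical.
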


\textbf{Outline and remarks of strategy.}  \\
Theorem \ref{thm:C} provides in particular a bijection between:
\begin{itemize}
\item the set of inertial equivalence classes $\mf s = [L,\tau]_G$ for $\Rep (G)$, 
such that $\tau \in \Irr^0_\cusp (L)_{ns}$ for some Levi subgroup $L \subset G$,
\item the set of inertial equivalence classes 
$\mf s^\vee = \big( Z(L^\vee)^{\mb I_F} \big)_{\mb W_F}^{\; \circ} \cdot (\varphi_L,\rho_L)$ 
for $\Phi_e (G)$, such that $(\varphi_L,\rho_L) \in \Phi^0_\cusp (L)_{ns}$.
\end{itemize}
We will denote this bijection simply by
\begin{equation}\label{eq:6}
\mf s \longleftrightarrow \mf s^\vee .    
\end{equation}
It allows us to pass freely between the set of Bernstein components $\Irr (G)_{\mf s}$
of $\Irr^0 (G)_{ns}$ and the set of Bernstein components $\Phi_e (G)^{\mf s^\vee}$ of
$\Phi_e^0 (G)_{ns}$. 

Sections \ref{sec:IW}--\ref{sec:HeckeG} study Hecke algebras for $p$-adic groups. 
These sections are logically independent from Sections \ref{sec:DL}--\ref{sec:LLCcusp}.
For a non-singular depth-zero Bernstein block $\Rep (G)_{\mf s}$, the work of Morris 
\cite{Mor1,Mor3} provides us with a type $(\hat P_\ff, \hat \sigma)$, where $\hat P_\ff$ 
denotes the pointwise stabilizer of a facet $\ff$ in the Bruhat--Tits building of $G$. 

Extending results of Morris, we show that $\cH (G,\hat P_\ff, \hat \sigma)$ is a 
crossed product of an affine Hecke algebra and a twisted group algebra (see Theorem 
\ref{thm:3.1}). Since we already understand the set of cuspidal supports for $\Rep (G)_{\mf s}$, 
we only need to further consider two aspects of $\cH (G,\hat P_\ff, \hat \sigma)$: 
the $q$-parameters of the simple reflections from the associated finite root system 
$R_{\hat \sigma}$, and the 2-cocycle by which the group algebra has been twisted. 

Let $\Pi (L,T,\theta)$ be a Deligne--Lusztig packet (see \eqref{eq:7.8}) containing a 
representation in the set of cuspidal supports for $\Rep (G)_{\mf s}$. We show in 
Proposition \ref{prop:4.8} that the $q$-parameters of $\cH (G,\hat P_\ff, \hat \sigma)$ are 
equal to the $q$-parameters of a Hecke algebra for suitable principal series representations 
of a quasi-split reductive subgroup $G_{\hat \sigma} \subset G$ (see \eqref{eqn:defn-G_sigma}) 
with $T$ as minimal Levi subgroup. The argument runs mainly via similar Hecke algebras for 
finite reductive groups. These $q$-parameters for $G_{\hat \sigma}$ can be computed
explicitly from $(T,\theta)$ \cite{SolParam}.

The comparison between Hecke algebras on the $p$-adic side and on the Galois side of 
the Langlands correspondence is done in Sections
\ref{sec:HeckeL}--\ref{sec:equiv}.
On the Galois side, the twisted affine Hecke algebra $\cH (\mf s^\vee, q_F^{1/2})$
involves a finite root system $R_{\mf s^\vee}$ and $q$-parameters, defined in completely
different terms from complex algebraic geometry. Fortunately, these parameters can also 
be reduced to the case of $(G_{\hat \sigma},T,\theta)$, already studied in \cite{SolQS}. 
In \eqref{eq:9.13}, we establish a canonical isomorphism of root systems 
\begin{equation}\label{eq:7}
R_{\hat \sigma} \cong R_{\mf s^\vee} ,    
\end{equation}
and we show that the $q$-parameters on both sides agree.
The further comparison of the Hecke algebras 
is more difficult. Recall that Bernstein associated a finite group
\begin{equation}\label{eqn:finite-group-attached-to-Bernstein-component}
W_{\mf s} := \mr{Stab}_{W(G,L)} (\Rep (L)_{\mf s})
\end{equation}
to $\Rep (G)_{\mf s}$. Similarly, one can associate a finite group
\[
W_{\mf s^\vee} := 
\mr{Stab}_{W(G^\vee,L^\vee)^{\mb W_F}} \big( \Phi_e (L)^{\mf s^\vee} \big)
\] 
to $\Phi_e (G)^{\mf s^\vee}$.~By Theorem \ref{thm:C} there is a canonical isomorphism 
(see also \cite{Aubert-Xu-Hecke-algebra}) 
\begin{equation}\label{eq:8}
W_{\mf s} \, \cong \, W_{\mf s^\vee}.     
\end{equation}
Let $\Gamma_{\mf s}$ be the stabilizer in $W_{\mf s}$ of the set of positive 
roots in $R_{\hat \sigma}$, and define $\Gamma_{\mf s^\vee} \subset W_{\mf s^\vee}$
analogously. Using \eqref{eq:7}, we can decompose \eqref{eq:8} as
\begin{equation}\label{eq:9}
W_{\mf s} = W(R_{\hat \sigma}) \rtimes \Gamma_{\mf s} ,\;
W_{\mf s^\vee} = W(R_{\mf s^\vee}) \rtimes \Gamma_{\mf s^\vee} ,\;
W(R_{\hat \sigma}) \cong W(R_{\mf s^\vee}) ,\; \text{and}\;
\Gamma_{\mf s} \cong \Gamma_{\mf s^\vee} .
\end{equation}
The algebra $\cH (\mf s^\vee, q_F^{1/2})$ can be written as 
\[
\cH (\mf s^\vee, q_F^{1/2})^\circ \rtimes 
\C [\Gamma_{\mf s^\vee}, \natural_{\mf s^\vee}] ,
\]
where $\cH (\mf s^\vee, q_F^{1/2})^\circ$ is an affine Hecke algebra and
$\natural_{\mf s^\vee}$ is a 2-cocycle of $\Gamma_{\mf s^\vee}$. While  
$\cH (\mf s^\vee, q_F^{1/2})^\circ$ is canonically isomorphic to a subalgebra of 
$\cH (G,\hat P_\ff ,\hat \sigma)$
(see Lemma \ref{lem:3.7} and Proposition \ref{prop:9.4}), $\Gamma_{\mf s}$ appears
only indirectly in $\cH (G,\hat P_\ff ,\hat \sigma)$. One can instead replace
$\cH (G,\hat P_\ff ,\hat \sigma)$ with $\End_G (\Pi_{\mf s})$, where $\Pi_{\mf s}$
is a canonical progenerator of $\Rep (G)_{\mf s}$ constructed by Bernstein. The
algebras $\cH (G,\hat P_\ff ,\hat \sigma)$ and $\End_G (\Pi_{\mf s})$ are Morita
equivalent, but for various reasons it is easier to work with the latter \cite{SolEnd}. 
In general, however, $\End_G (\Pi_{\mf s})$ still does not contain a twisted group algebra 
of $\Gamma_{\mf s}$. To introduce at least a subgroup of $\Gamma_{\mf s}$ into the picture, 
we localize $\End_G (\Pi_{\mf s})$ with respect to suitable sets of characters of its centre. 
Theorem \ref{thm:C} provides an isomorphism
\begin{equation}
Z \big( \End_G (\Pi_{\mf s}) \big) \cong \mc O \big( \Irr (L)_{\mf s}
\big)^{W_{\mf s}} \cong \mc O \big( \Phi_e (L)^{\mf s^\vee} \big)^{W_{\mf s^\vee}} 
\cong Z \big( \cH (\mf s^\vee, q_F^{1/2}) \big),
\end{equation}
so we can localize $\cH (\mf s^\vee, q_F^{1/2})$ with respect to the corresponding
set of central characters. (This localization technique does not work well for 
representations of infinite length, so from here on we restrict to finite length modules.) 
In Proposition \ref{prop:9.7}, we show that Theorem \ref{thm:C} and \eqref{eq:9} induce an
algebra isomorphism of the form
\begin{equation}\label{eq:10}
\text{localized version of } \End_G (\Pi_{\mf s}) \; \cong \;
\text{localized version of } \cH (\mf s^\vee, q_F^{1/2}) .
\end{equation}
In fact, both sides of \eqref{eq:10} can be described in terms of twisted graded Hecke 
algebras. On the right-hand side of \eqref{eq:10}, the twist is given by the restriction of 
$\natural_{\mf s^\vee}$ to a subgroup of $W_{\mf s^\vee}$. The twisted graded Hecke algebra 
on the left-hand side of \eqref{eq:10} involves a 2-cocycle of a subgroup of $W_{\mf s}$ as in 
\cite[Proposition 7.3]{SolEnd}. The comparison of the 2-cocycles on both sides of \eqref{eq:10} 
is indeed the most difficult step of the paper. It is finally achieved in Theorem \ref{thm:9.6},
using the technical ingredients we established in Appendices \ref{sec:A} and \ref{sec:B}.
Combining cases of \eqref{eq:10} gives equivalences of categories
\begin{equation}\label{eq:11}
\Rep_\fl (G)_{\mf s} \;\cong\; \Mod_\fl \text{ - }\End_G (\Pi_{\mf s}) \;\cong\;
\Mod_\fl \text{ - }\cH (\mf s^\vee, q_F^{1/2});
\end{equation}
see Theorem \ref{thm:10.3}.~Using \eqref{eq:6}, one deduces Theorem 
\ref{thm:A}.~We then obtain our bijective LLC using the parametrization
of $\Irr \text{-}\cH (\mf s^\vee, q_F^{1/2})$ from \cite[Theorem 3.18]{AMS3}, which
concerns left $\cH (\mf s^\vee, q_F^{1/2})$-modules whereas in Theorem \ref{thm:10.4}
we translate to right modules of \eqref{eq:11}.
Finally, we prove the list of properties of our LLC in \S\ref{par:prop}.\\

\textbf{Open problems and outlook.}\\
Clearly it would be desirable to make our LLC for non-singular depth-zero 
representations canonical (including the enhancements). To this end, the input
would have to include a Whittaker datum for the quasi-split inner form of $G$.
However, this is not enough, even at the cuspidal level. At the moment, our LLC,
or that from \cite{Kal2,Kal3}, is not specified uniquely by a Whittaker datum; 
more requirements would be needed. This would possibly involve character 
formulas and endoscopy, as in \cite{FiKaSp}, in combination with a better 
understanding of the traces of the representations in question.

In another direction, one could try to make our LLC functorial with respect to
homomorphisms $f : \mc H \to \mc G$ of reductive $F$-groups such that both
ker $f$ and coker $f$ are commutative. The desired outcome was already 
conjectured in \cite{Bor,SolFunct}, and has been proven in the cuspidal cases in 
\cite{BoMe}. This would require some alignment between the local Langlands 
correspondences for $\Irr^0 (G)_{ns}$ and $\Irr^0 (H)_{ns}$, which would 
render them more canonical.

A local Langlands correspondence for non-singular supercuspidal representations
of positive depth was established simultaneously with the one in depth zero 
\cite{Kal2,Kal3}, for groups $G$ that split over a tamely ramified extension of
$F$. Types for Bernstein blocks of non-singular representations of such groups
are known from \cite{KiYu}. Recently it was shown \cite{AFMO1,AFMO2} that
the Hecke algebras from these Kim--Yu types are isomorphic to Hecke algebras
from depth zero types, as in \cite{Mor1,Mor3}. In view of these developments,
it is reasonable to expect that our LLC can be generalized to non-singular
representations of arbitrary depth.

In a similar manner, one expects that the methods developed in this paper will be
useful for the study of arbitrary depth-zero representations.\\

\textbf{Acknowledgements.}\\
We thank Anne-Marie Aubert for some helpful comments. Y.X.~was partially supported 
by the U.S. National Science Foundation under Award No.~2202677.

\section{Deligne--Lusztig packets for \texorpdfstring{$p$}{p}-adic groups}
\label{sec:DL}

Let $F$\label{i:1} be a non-archimedean local field with residue field $k_F$ \label{i:2}.
Let $\mc L$\label{i:51} be an $F$-Levi subgroup of a larger connected reductive 
$F$-group $\mc G$\label{i:3}. We write \label{i:4} $G = \mc G (F), L = \mc L (F)$ etc.  
Let $\mc L_\ad$\label{i:10} be the adjoint group of $\mc L$, 
and let $\mc B (\mc L_\ad,F) = \mc B (L)$\label{i:11} be the semisimple Bruhat--Tits 
building of $L$. Let $Z(\mc L)$\label{i:12} be the centre of $\mc L$, and
$Z^\circ (\mc L)$\label{i:13} its neutral component. We write $Z^\circ (L) =
Z^\circ (\mc L)(F)$, and let $X_* (Z^\circ (L))$\label{i:14} be its lattice of 
$F$-rational cocharacters. Recall that the Bruhat--Tits building 
$\mc B (\mc L,F) = \mc B (L)$\label{i:15} is the Cartesian product of 
$\mc B (L_\ad)$ and $X_* (Z^\circ (L)) \otimes_\Z \R$.

Let $\mc T$\label{i:16} be an elliptic maximal $F$-torus in $\mc L$ which contains
a maximal unramified $F$-torus of $\mc L$. Let $\ff_L$\label{i:17}
be the facet of $\mc B (\mc L,F)$ corresponding to $\mc T (F)$. Recall that every facet of 
$\mc B (\mc L,F)$ is the Cartesian product of a facet in $\mc B (\mc L_\ad,F)$ and 
$X_* (Z^\circ (L)) \otimes_\Z \R$. We fix an embedding 
$\mc B (\mc L,F) \hookrightarrow \mc B (\mc G,F)$ 
that is admissible in the sense of \cite[Chapter 14]{KaPr}. We choose a facet 
$\ff$\label{i:18} of $\mc B (\mc G,F)$ that is open in $\ff_L$.

Let $P_\ff = G_{\ff,0} \subset G$\label{i:19} be the parahoric subgroup associated to $\ff$, with 
pro-unipotent radical denoted by \label{i:Uf} $G_{\ff,0+}$.~Then $P_\ff / G_{\ff,0+}$ can be viewed as 
the $k_F$-points of a connected reductive group.~More precisely, by \cite[\S 5.2]{BrTi2}, 
there is a model $\mc P_\ff^\circ$ of $\mc G$ over the ring of
integers $\mf o_F$\label{i:20}, such that $P_\ff = \mc P_\ff^\circ (\mf o_F)$. Then 
$\mc G^\circ_\ff (k_F) := P_\ff / G_{\ff,0+}$\label{i:21} 
is the maximal reductive quotient of $\mc P_\ff^\circ (k_F)$.~Let \label{i:22}$\hat P_\ff$
be the pointwise stabilizer of $\ff$ in $G$, it contains $P_\ff$ with finite index. 
Since $P_\ff$ is a characteristic subgroup of $\hat P_\ff$, these two have the same 
normalizer in $G$, i.e.~we have\label{i:23}
\begin{equation}
G_\ff:=\mr{Stab}_G (\ff)=N_G (P_\ff) = N_G (\hat P_\ff).
\end{equation}
By \cite[Remark 8.3.4 and \S 9.2.5]{KaPr}, there exists an $\mf o_F$-group scheme 
$\mc P_\ff$, which is locally of finite type but not always affine, such that 
$\mc P_\ff (\mf o_F) = G_\ff$.
It gives rise to a $k_F$-group scheme $\mc G_\ff$\label{i:24} satisfying 
$\mc G_\ff (k_F) = G_\ff / G_{\ff,0+}$. 
This contains $\hat P_\ff / G_{\ff,0+}$ as the group of $k_F$-rational points of a 
(possibly disconnected) reductive subgroup $\hat{\mc G}_\ff \subset \mc G_\ff$.
Similar notations will be used for $\mc L$, but they only depend on the larger
facet $\ff_L$. We shall write $P_{L,\ff} := L \cap P_\ff$ 
instead of $P_{\ff_L}$.\label{i:25}

In $G_\ff = \mc P_\ff (\mf o_F)$ we also have $T = \mc T (F)$ as the $\mf o_F$-points
of a subgroup scheme of $\mc P_\ff$. In this way $\mc T$ can be viewed as an
$\mf o_F$-group scheme. The $\mf o_F$-torus \label{i:26}
$\mc T_\ff := \mc T \cap \mc G_\ff^\circ$ is (considered
over $F$) a maximal unramified torus in $\mc L$ 
and in $\mc G$. Since $\mc G$ becomes quasi-split over an unramified extension of $F$,
$Z_{\mc G}(\mc T_\ff)$ is a maximal torus of $\mc G$, and thus it must be $\mc T$.
By the ellipticity of $\mc T$, the maximal $F$-split subtorus $\mc T_s$\label{i:95} 
of $\mc T$ is contained in $Z(\mc L)^\circ$, thus we have 
\begin{equation}\label{eq:4.13}
\mc L = Z_{\mc G}(Z(\mc L)^\circ) = Z_{\mc G}(\mc T_s) .    
\end{equation}
A character of $T = \mc T (F)$ is said to have depth zero if it is trivial on
$\ker (\mc T_\ff (\mf o_F) \to \mc T_\ff (k_F))$. By the construction of $\mc P_\ff$, 
this kernel equals $\ker (\mc T (\mf o_F) \to \mc T (k_F))$.
Consider a depth-zero character $\theta$\label{i:27} of $T$, or equivalently a character 
$\theta$ of $\mc T (k_F)$.~Throughout this section, we assume that $\theta$ 
is $F$-non-singular for $(\mc T,\mc L)$ in the sense of \cite[Definition 3.1.1]{Kal3}. 
It means that, for any unramified extension $E/F$ and any coroot 
$\alpha^\vee$ of $(\mc L (E),\mc T (E))$, the character 
\[
\theta \circ (\text{norm map for } E/F \text{ on } \mc T ) \circ \alpha^\vee : 
E^\times \to \C^\times
\]
is nontrivial on $\mf o_E^\times$. As mentioned in \cite[3.1.4]{Kal3}, 
$\theta_\ff := \theta |_{\mc T_\ff (k_F)}$\label{i:28} is non-singular for 
$(\mc T_\ff (k_F), \mc L_\ff^\circ (k_F))$ in the sense of \cite[Definition 5.15]{DeLu}, 
that is, $\theta_\ff$ is not orthogonal to any coroot of $(\mc L_\ff^\circ, \mc T_\ff)$.
Compared to \cite{Kal2,Kal3}, we do not require that $\mc T$ splits over a tamely ramified
extension of $F$.

From the data $(\mc L_\ff, \mc T, \theta)$, one can build a Deligne--Lusztig representation 
$\mc R_{\mc T (k_F)}^{\mc L_\ff (k_F)} (\theta)$\label{i:29} of $\mc L_\ff (k_F)$ 
(see for example \cite{DeLu} and \cite[\S 2]{Kal3}), in the same way as for the connected 
group $\mc G_\ff^\circ (k_F)$. 
It is a virtual representation of $\mc L_\ff (k_F)$, but $\pm \mc R_{\mc T (k_F)}^{
\mc L_\ff (k_F)} (\theta)$\label{i:30} is an actual representation for a 
suitable sign $\pm$. By \cite[Corollary 2.6.2]{Kal3}, $\pm \mc R_{\mc T (k_F)}^{
\mc L_\ff (k_F)} (\theta)$ is a quotient of
\[
\ind_{\mc L_\ff^\circ (k_F)}^{\mc L_\ff (k_F)} 
\big( \pm \mc R_{\mc T_\ff (k_F)}^{
\mc L_\ff^\circ (k_F)} \theta_\ff \big) \cong
\pm \mc R_{\mc T (k_F)}^{\mc L_\ff (k_F)}
\big( \ind_{\mc T_\ff (k_F)}^{\mc T (k_F)} \theta_\ff \big) .
\]
Moreover $\pm \mc R_{\mc T (k_F)}^{
\mc L_\ff (k_F)} (\theta)$ is a representation of $\mc L_\ff (k_F) \times
\mc T (k_F)$, where $\mc L_\ff (k_F)$ acts from the left and $\mc T (k_F)$ acts from the
right via the character $\theta$. The action of 
$Z(\mc L_\ff)(k_F) \subset \mc T (k_F)$ is the same from the left and from the right, therefore,
\begin{equation}\label{eq:7.64}
Z(\mc L_\ff)(k_F) \text{ acts on } \pm \mc R_{\mc T (k_F)}^{\mc L_\ff (k_F)} (\theta)
\text{ via } \theta |_{Z(\mc L_\ff)(k_F)}.
\end{equation}
We define the \textit{Deligne--Lusztig packet} 
\[
\Pi \big(\mc L_\ff (k_F), \mc T (k_F), \theta\big) \subset \Irr (\mc L_\ff (k_F))
\]
as the set of irreducible constituents of $\pm \mc R_{\mc T (k_F)}^{\mc L_\ff (k_F)} (\theta)$.
Let $N_{\mc L_\ff (k_F)} (\mc T)_\theta$\label{i:31} be the stabilizer of $\theta$ in 
\label{i:9} $N_{\mc L_\ff (k_F)} (\mc T)$. Let $\Irr (N_{\mc L_\ff (k_F)} (\mc T)_\theta, \theta)$ 
be the set of irreducible representations of $N_{\mc L_\ff (k_F)} (\mc T)_\theta$ whose
restriction to $\mc T (k_F)$ contains $\theta$. The group $N_{\mc L_\ff (k_F)} (\mc T)_\theta$
acts on $\pm \mc R_{\mc T (k_F)}^{\mc L_\ff (k_F)} (\theta)$ by $\mc L_\ff (k_F)$-intertwiners,
constructed in \cite[(2.18)]{Kal3}. First, canonical $\mc L_\ff (k_F)$-intertwining operators 
are exhibited, by geometric means. These respect the multiplication in 
$N_{\mc L_\ff (k_F)} (\mc T)_\theta$ only up to a scalars, and to combine them into an 
actual representation the geometric intertwining operators are normalized 
by the choice of a ``coherent splitting" \cite[Definition 2.4.9]{Kal3} which we indicate by
$\epsilon$. By \cite[Theorem 2.7.7.1]{Kal3}, there is a bijection
\begin{equation}\label{eq:7.30} 
\begin{array}{ccl}
\Irr \big(N_{\mc L_\ff (k_F)} (\mc T)_\theta, \theta\big) &
\to & \Pi \big(\mc L_\ff (k_F), \mc T (k_F), \theta\big) \\
\rho & \mapsto & \big( \rho \otimes \pm \mc R_{\mc T (k_F)}^{\mc L_\ff (k_F)} 
(\theta)^\epsilon \big)^{N_{\mc L_\ff (k_F)} (\mc T)_\theta}
\end{array}.
\end{equation}
Let $\Rep (L)$\label{i:34} be the category of smooth $L$-representations on complex vector
spaces. We recall that a $L$-representation $(\pi,V)$ has depth zero if it is generated
by the union, over all facets $\ff$ of $\mc B (\mc L,F)$, of the subspaces 
$V^{\pi (G_{\ff,0+})}$. We denote the full subcategory of $\Rep (L)$ formed by depth-zero
representations by $\Rep^0 (L)$. Let $\Irr (L)$\label{i:36} be the set of irreducible 
$L$-representations in $\Rep (L)$ (up to isomorphism). 
For the $p$-adic group $L$, we define the \textit{Deligne--Lusztig packet} \label{i:32}
\begin{equation}\label{eq:7.8}
\Pi (L,T,\theta) := \big\{ \ind_{L_\ff}^L (\sigma') : \sigma' \text{ is a constituent of }
\mr{inf}_{\mc L_f (k_F)}^{L_\ff} \big( \pm \mc R_{\mc T (k_F)}^{\mc L_\ff (k_F)} (\theta) 
\big) \big\} 
\end{equation}
in $\Irr (L)$.~More precisely, \label{i:33} 
\[
\Pi (L,T,\theta) \; \subset \;\Irr^0 (L):= \{ \pi \in \Irr (L) : \pi \text{ has depth zero} \}.
\]
By definition, a supercuspidal $L$-representation of depth zero is non-singular if
and only if it belongs to one of the packets $\Pi (L,T,\theta)$.
By \eqref{eq:7.64}, we know that
\begin{equation}\label{eq:7.65}
\text{every } \pi \in \Pi (L,T,\theta) \text{ admits the central character }
\theta|_{Z(L)}.
\end{equation}
By \cite[Proposition 6.6]{MoPr2}, we have a bijection
\[
\ind_{L_\ff}^L \mr{inf}_{\mc L_f (k_F)}^{L_\ff} : 
\Pi (\mc L_\ff (k_F), \mc T (k_F), \theta) \to \Pi (L,T,\theta) .
\]
Let $\Irr \big(N_L (T)_\theta,\theta\big)$\label{i:37} be the set of irreducible representations 
of $N_L (T)$ whose restriction to $T$ contains $\theta$ (or equivalently, on which $T$ 
acts via the character $\theta$).~As explained in \cite[\S 2.7 and \S 3.3]{Kal3}, 
there is a bijection\label{i:63}
\begin{equation}\label{eq:7.21}
\Irr (N_L (T)_\theta,\theta) 
\to 
\Pi (L,T,\theta),\quad
\rho 
\mapsto 
\big( \rho \otimes \kappa_{(T,\theta)}^{L,\epsilon} \big)^{N_L (T)_\theta} 
=: \kappa_{T,\theta,\rho}^{L,\epsilon},
\end{equation}
where $\kappa_{(T,\theta)}^{L,\epsilon}:=\mr{ind}_{L_\ff}^L \mr{inf}^{L_\ff}_{\mc L_\ff (k_F)}
\big( \pm \mc R_{\mc T (k_F)}^{\mc L_\ff (k_F)} (\theta)^\epsilon \big)$\label{i:38} is an 
$L \times N_L (T)_\theta$-representation. The action of $N_L (T)_\theta$ factors through 
$N_{\mc L_\ff (k_F)}(\mc T)_\theta$ and is induced from the action on 
$\pm \mc R_{\mc T (k_F)}^{\mc L_\ff (k_F)} (\theta)^\epsilon$ in \eqref{eq:7.30}.

\begin{lem}\label{lem:7.13}
The supercuspidal $L$-representation $\kappa_{T,\theta,\rho}^{L,\epsilon}$, as defined in 
\eqref{eq:7.21}, is tempered if and only if $\theta$ is unitary. 
\end{lem}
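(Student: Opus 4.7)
The plan is to reduce the equivalence to two classical facts: (i) for any connected reductive $p$-adic group, a supercuspidal representation is tempered if and only if its central character is unitary; and (ii) by the ellipticity of $\mc T$ in $\mc L$, a character of $T$ is unitary if and only if its restriction to $Z(L)$ is unitary. Granting these, the statement follows from \eqref{eq:7.65}, which says that the central character of $\kappa_{T,\theta,\rho}^{L,\epsilon}$ equals $\theta|_{Z(L)}$.

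For (i), I would recall that $\kappa_{T,\theta,\rho}^{L,\epsilon}$ is supercuspidal and that any supercuspidal representation of a reductive $p$-adic group is essentially square-integrable, with matrix coefficients compactly supported modulo $Z(L)$. Consequently, it is tempered (in fact, square-integrable) if and only if its central character is unitary, which by \eqref{eq:7.65} amounts to $\theta|_{Z(L)}$ being unitary.

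For (ii), the forward implication is trivial. For the converse, I would invoke \eqref{eq:4.13} to conclude that the maximal $F$-split subtorus $\mc T_s$ of $\mc T$ satisfies $\mc T_s \subset Z(\mc L)^\circ$, hence $\mc T_s(F) \subset Z(L)$. The bounded subgroup $T^1 \subset T$, i.e.~the subgroup on which every $F$-rational character of $\mc T$ has absolute value $1$, is compact, and $T/T^1$ is a free abelian group of rank $\dim \mc T_s$. The natural map $\mc T_s(F) \to T/T^1$ has finite-index image. Therefore, if $|\theta|$ is trivial on $Z(L) \supset \mc T_s(F)$, it is trivial on a finite-index subgroup of the torsion-free group $T/T^1$, hence trivial on all of $T/T^1$; since $|\theta|$ is automatically trivial on the compact group $T^1$, this gives unitarity of $\theta$ on $T$.

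I do not foresee a substantial obstacle: the supercuspidality of $\kappa_{T,\theta,\rho}^{L,\epsilon}$ is baked into the lemma statement, and the remaining ellipticity-based reduction is a standard structural fact about algebraic tori over local fields.
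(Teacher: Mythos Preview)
Your proposal is correct and follows essentially the same approach as the paper: both reduce to the fact that a supercuspidal representation is tempered if and only if its central character is unitary, identify the central character as $\theta|_{Z(L)}$ (the paper derives this directly from the construction, you cite \eqref{eq:7.65}), and then use ellipticity of $\mc T$ in $\mc L$ to show $\theta$ is unitary if and only if $\theta|_{Z(L)}$ is. The only cosmetic difference is that the paper phrases the ellipticity step as ``$Z^\circ(L)/Z^\circ(L)_\cpt$ has finite index in $T/T_\cpt$'', while you route through the maximal $F$-split subtorus $\mc T_s(F)$ via \eqref{eq:4.13}; these are equivalent formulations of the same fact.
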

\begin{proof}
Any irreducible supercuspidal representation is tempered if and only if its central 
character is unitary. Since $T$ is a maximal torus of $L$, it contains $Z(L)$. We denote
the maximal compact subgroup of a torus over $F$ by a subscript cpt. Since
$T$ is elliptic, $Z^\circ (L) / Z^\circ (L)_\cpt$ is a finite-index subgroup of
$T / T_\cpt$.\label{i:cpt} Hence $\theta$ is unitary if and only if $\theta |_{Z(L)}$ is
unitary. The constructions of 
$\pm \mc R_{\mc T (k_F)}^{\mc L_\ff (k_F)} (\theta)$ 
and 
$\kappa_{(T,\theta)}^{L,\epsilon}$ 
show that they admit central character $\theta |_{Z(L)}$. Hence so does   
$\kappa_{T,\theta,\rho}^{L,\epsilon}$.
\end{proof}

If $X$ is any set with an $N_G (L)$-action, then the group\label{i:39} 
$W(G,L) := N_G (L) / L$ 
acts naturally on the set of $L$-orbits in $X$. Let $N_G (L,T)$\label{i:NGLT} be the largest 
subgroup of $G$ that normalizes both $L$ and $T$. The $W(G,L)$-stabilizer 
of the $L$-conjugacy class of $(T,\theta)$ can be expressed as\label{i:40}
\begin{equation}\label{eq:7.9}
W(G,L)_{(T,\theta)} \cong N_G (L,T)_\theta / N_L (T)_\theta . 
\end{equation}
The actions of $N_G (L,T)_\theta$ on the sets in \eqref{eq:7.21} are trivial on 
$N_L (T)_\theta$, thus by \eqref{eq:7.9}, they factor through $W(G,L)_{(T,\theta)}$. 
We note that $W(G,L)_{(T,\theta)}$ is a quotient of the stabilizer of $\theta$ in 
\label{i:41} $W(N_G (L),T) = N_G (L,T) / T$.

For characters of $L$, there are several reasonable notions of ``depth-zero''. It is not a priori obvious which one is the most appropriate, but fortunately they all coincide by
\cite[Theorem 1.4]{SoXu2}. Let $L_\Sc = \mc L_\Sc (F)$ be the simply 
connected cover of the derived group $L_\der = \mc L_\der (F)$. We abbreviate the cokernel of the canonical map $L_\Sc \to L$ as 
$L / L_\Sc$, and we consider the following group of characters: \label{i:Xo}
\[
\Xo (L) = \{ \chi : L / L_\Sc \to \C^\times \mid \chi |_T \text{ has depth zero
for all maximal tori } T \subset L \}.
\]
We showed in \cite[Theorem 3.4]{SoXu2}that $\Xo (L)$ is equal to the group of characters 
of $L$ that are trivial on the image of $L_\Sc \to L$ and on $L_{\ff_L,0+}$ for every 
facet $\ff_L$ of $\mc B (\mc L,F)$. An advantage of this notion of ``depth-zero'' for 
characters is that tensoring representations by elements of $\Xo (L)$ stabilizes 
$\Rep^0 (L)$.

Recall that a character $L \to \C^\times$ is called unramified if it is trivial on every 
compact subgroup of $L$. The group of unramified characters $\mathfrak{X}_{\nr} (L)$ \label{i:42} 
is essential for defining Bernstein blocks in $\Rep (L)$. For any maximal torus $T \subset L$,
the pro-$p$ radical $T_{0+}$ of the unique parahoric subgroup of $T$ is compact. Hence 
every unramified character $\chi : L \to \C^\times$ has $T_{0+}$ in its kernel, so
$\chi |_T$ has depth zero and $\chi \in \Xo (L)$.

More precisely, $\mf{X}_\nr (L)$ is a connected component of $\Xo (L)$. Every 
character of $L$ also defines a character of any inner form $L'$ of $L$. In this way, 
the groups $\mf{X}_\nr (L)$ and $\Xo (L)$ can be identified with their versions for $L'$.

The group $\Xo (L)$ acts on $\Irr (L)$ by tensoring, and this action preserves 
the set $\Irr^0 (L)$ of irreducible depth-zero representations of $L$. 
For $\chi \in \Xo (L)$, we have 
\[
N_L (T)_{\chi \otimes \theta} = N_L (T)_\theta \quad \text{and} \quad 
W(L,T)_{\chi \otimes \theta} = W (L,T)_\theta;
\]
similarly for other analogous sub-quotients of $L$. We define
$N_L (T)_{\Xo (L) \theta}$ (resp. $N_G (L,T)_{\Xo (L) \theta}$) to be the stabilizer of 
$\Xo (L) \otimes \theta$ in  $N_L (T)$ (resp.~$N_G (L,T)$). Set \label{i:43}
\begin{equation*}
W(L,T)_{\Xo (L) \theta} = N_L (T)_{\Xo (L) \theta} / T\; \text{and}\; 
W(N_G (L),T)_{\Xo (L) \theta} = N_G (L,T)_{\Xo (L) \theta} / T.
\end{equation*} 
Likewise, let $W(G,L)_{(T,\Xo (L) \theta)}$ be the 
stabilizer of $L \cdot (T,\Xo (L) \theta)$ in $W(G,L)$, which is isomorphic to 
$W(N_G (L),T)_{\Xo (L) \theta} / W(L,T)_{\Xo (L) \theta}$.
Notice that $N_G (L,T)_{\Xo (L) \theta}$ normalizes $N_L (T)_\theta$ and that 
\[
n \cdot \Irr \big(N_L (T)_\theta, \theta\big) = \Irr \big(N_L (T)_\theta, n \cdot 
\theta\big) \quad \text{for any } n \in N_G (L,T)_{\Xo (L) \theta}.
\]
Tensoring a representation with a character does not change its space of
self-intertwiners, so in \eqref{eq:7.21} we can
\begin{equation}\label{eq:7.4}
\text{pick the same coherent splitting } \epsilon \text{ for all } 
\theta' \in \Xo (L) \theta.
\end{equation}

\begin{prop}\label{prop:7.11}
Under \eqref{eq:7.4}, the collection of bijections \eqref{eq:7.21} for all 
$\theta' \in \Xo (L) \theta$ is $W(G,L)_{(T,\Xo (L) \theta)}$-equivariant. 
In particular, the bijection \eqref{eq:7.21} is $W(G,L)_{(T,\theta)}$-equivariant.
\end{prop}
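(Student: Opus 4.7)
The plan is to pick a representative $n \in N_G(L,T)_{\Xo(L)\theta}$ of any element of $W(G,L)_{(T,\Xo(L)\theta)}$ and show that conjugation by $n$ intertwines the bijection \eqref{eq:7.21} for $\theta_1 \in \Xo(L)\theta$ with the bijection for $\theta_2 := n \cdot \theta_1 \in \Xo(L)\theta$. Since the bijection is given by extracting the $N_L(T)_{\theta_1}$-invariants of $\rho \otimes \kappa_{(T,\theta_1)}^{L,\epsilon}$, and $n$ conjugates $N_L(T)_{\theta_1}$ onto $N_L(T)_{\theta_2}$, the desired equivariance is equivalent to exhibiting an isomorphism of $L \times N_L(T)_{\theta_2}$-representations ${}^n\kappa_{(T,\theta_1)}^{L,\epsilon} \cong \kappa_{(T,\theta_2)}^{L,\epsilon}$, with the left-hand side carrying the actions transported along conjugation by $n$.

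Recall that $\kappa_{(T,\theta')}^{L,\epsilon} = \ind_{L_\ff}^L \mr{inf}_{\mc L_\ff(k_F)}^{L_\ff}\bigl(\pm \mc R_{\mc T(k_F)}^{\mc L_\ff(k_F)}(\theta')^\epsilon\bigr)$. Since $n$ normalizes both $\mc T$ and $\mc L$, it fixes the facet $\ff_L \subset \mc B(\mc L,F)$, normalizes $L_\ff$, and descends to a $k_F$-algebraic automorphism of $\mc L_\ff$ sending $\mc T(k_F)$ to itself. Compact induction and inflation are natural under such automorphisms, so the isomorphism above reduces to the analogous equivariant statement at the finite group level: ${}^n\bigl(\pm \mc R_{\mc T(k_F)}^{\mc L_\ff(k_F)}(\theta_1)^\epsilon\bigr) \cong \pm \mc R_{\mc T(k_F)}^{\mc L_\ff(k_F)}(\theta_2)^\epsilon$ as $\mc L_\ff(k_F) \times N_{\mc L_\ff(k_F)}(\mc T)_{\theta_2}$-representations. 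On the underlying virtual representations this follows from the functoriality of the $\ell$-adic cohomology of the Deligne--Lusztig variety under the automorphism induced by $n$; at the enhanced level, Kaletha's geometric intertwining operators \cite[(2.18)]{Kal3} transport naturally along the same automorphism because they are built from the same cohomology.

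The main obstacle is to ensure that the two coherent splittings $\epsilon$ match on the nose, not merely up to the built-in scalar ambiguity. I would handle this in two sub-steps. First, when $\theta_2 = \chi \otimes \theta_1$ for some $\chi \in \Xo(L)$, tensoring by $\chi$ does not alter the geometric intertwining operators (it only twists the $\mc T(k_F)$-weight), so by \eqref{eq:7.4} a single coherent splitting $\epsilon$ serves uniformly across $\Xo(L)\theta$. Second, for the conjugation action of $n$, since the geometric intertwiners are constructed functorially from $(\mc T, \mc L_\ff, \theta')$, their transport by $n$ again satisfies the defining condition of coherent splitting \cite[Definition 2.4.9]{Kal3}, so ${}^n\epsilon$ is a coherent splitting for $\theta_2$. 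Combining these two observations with the central-character constraint \eqref{eq:7.64} pins down the residual scalar ambiguity and produces the required equivariant isomorphism. The ``in particular'' assertion is then the specialization to $n \in N_G(L,T)_\theta$ (so that $\theta_2 = \theta_1 = \theta$), which yields the $W(G,L)_{(T,\theta)}$-equivariance of the single bijection.
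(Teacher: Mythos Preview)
Your overall strategy matches the paper's: reduce to the finite group level, use the functoriality of the Deligne--Lusztig cohomology under conjugation by a representative $g \in N_G(L,T)_{\Xo(L)\theta}$, and track the $N_{\mc L_\ff(k_F)}(\mc T)_\theta$-intertwiners through. Your reduction to an isomorphism ${}^n\bigl(\pm \mc R_{\mc T(k_F)}^{\mc L_\ff(k_F)}(\theta_1)^\epsilon\bigr)\cong \pm \mc R_{\mc T(k_F)}^{\mc L_\ff(k_F)}(\theta_2)^\epsilon$ of $\mc L_\ff(k_F)\times N_{\mc L_\ff(k_F)}(\mc T)_{\theta_2}$-representations is correct and is exactly what the paper establishes as \eqref{eq:7.28}.

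The gap is in how you resolve what you call the ``residual scalar ambiguity''. Invoking the central-character constraint \eqref{eq:7.64} cannot help: that constraint only pins down the action of $Z(\mc L_\ff)(k_F)\subset \mc T(k_F)$, and any two coherent splittings already agree on all of $\mc T(k_F)$ (both give $\theta_2$). The freedom between coherent splittings lives in characters of $N_{\mc L_\ff(k_F)}(\mc T)_{\theta_2}/\mc T(k_F)$, which \eqref{eq:7.64} does not see. So this step does not go through.

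The paper sidesteps the issue entirely. It builds the isomorphism explicitly as the composite of the conjugation map $H^{d_{\mc U}}_c(Y_{\mc U})_{\theta}\to H^{d_{\mc U}}_c(Y_{g\mc U g^{-1}})_{g\cdot\theta}$ with the normalized geometric operator $\epsilon\,\Psi^{\mc L_\ff}_{\mc U,\,g\mc U g^{-1}}$, so that one lands back in the standard realization $H^{d_{\mc U}}_c(Y_{\mc U})_{g\cdot\theta}$. Because the $\Psi$'s are canonical and the \emph{same} $\epsilon$ is used throughout, the transported $N_{\mc L_\ff(k_F)}(\mc T)_\theta$-action is, by construction, exactly the $\epsilon$-action on $\pm\mc R(g\cdot\theta)$ precomposed with $\mathrm{Ad}(g)$ --- the paper denotes this by the superscript $\epsilon\circ g^{-1}$. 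There is then no splitting to compare: the final line \eqref{eq:7.26} is a tautological change of variables $h\mapsto ghg^{-1}$, which absorbs the precomposition into $\rho\mapsto g\cdot\rho$. That is the whole argument; no appeal to central characters is needed.
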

\begin{proof}
Let $\mc U$ be the unipotent radical of a Borel subgroup of $\mc L_\ff^\circ$ containing 
$\mc T_\ff$.~The representation $\pm \mc R_{\mc T (k_F)}^{\mc L_\ff (k_F)} (\theta)^\epsilon$ 
is defined on the vector space $H^{d_\mc U}_c (Y_{\mc U}^{\mc L_\ff}, \overline{\Q}_\ell 
)_\theta$, which arises from the variety 
\[
Y_{\mc U}^{\mc L_\ff} := 
\{ l \mc U \in \mc L_\ff / \mc U : l^{-1} \Fr (l) \in \mc U \cdot \Fr \mc U \},
\] 
see \cite[\S 2.6]{Kal3}. It is viewed as a complex representation via a fixed field isomorphism
$\overline{\Q}_\ell \cong \C$. For $g \in N_G (L,T)_{\Xo (L) \theta}$, the map 
$l \mc U \mapsto g l \mc U g^{-1}$ induces a linear bijection
\begin{equation}\label{eqn:cohom-bij}
H^{d_\mc U}_c (Y_\mc U^{\mc L_\ff}, \overline{\Q}_\ell )_\theta \xrightarrow{\sim}
H^{d_\mc U}_c (Y_{g \mc U g^{-1}}^{\mc L_\ff}, \overline{\Q}_\ell )_{g \cdot \theta}.
\end{equation}
As in \cite[(2.18)]{Kal3}, we compose this with $\epsilon \Psi^{\mc L_\ff}_{\mc U,g \mc U g^{-1}}$
to land in $H^{d_\mc U}_c (Y_\mc U^{\mc L_\ff}, \overline{\Q}_\ell )_{g \cdot \theta}$. 
Here $\Psi^{\mc L_\ff}_{\mc U,g \mc U g^{-1}}$ is obtained from a canonical geometric
construction \cite[(2.17)]{Kal3} and it is normalized by means of a ``coherent splitting"
$\epsilon$. In the process, the $N_{\mc L_\ff (k_F)} (\mc T)_\theta$-action from 
\cite[(2.18)]{Kal3} is precomposed with conjugation by $g$, which we indicate by 
a superscript $\epsilon \circ g^{-1}$. This allows us to define an isomorphism of 
$\mc L_\ff (k_F) \times N_{\mc L_\ff (k_F)} (\mc T)_\theta$-representations
\begin{equation}\label{eq:7.28}
g \cdot \pm \mc R_{\mc T (k_F)}^{\mc L_\ff (k_F)} (\theta)^\epsilon \; \isom \;
\pm \mc R_{\mc T (k_F)}^{\mc L_\ff (k_F)} (g \cdot \theta)^{\epsilon \circ g^{-1}},
\end{equation}
which is canonical once $\epsilon$ has been chosen.
Now by \eqref{eq:7.21}, we have
\begin{equation}\label{eq:7.26}
g \cdot \kappa_{(T,\theta,\rho)}^{L,\epsilon} \cong \big( \rho \otimes 
\kappa_{(T, g \cdot \theta)}^{L,\epsilon \circ g^{-1}} \big)^{N_L (T)_\theta} =
\big( g \cdot \rho \otimes \kappa_{(T,g \cdot \theta)}^{L,\epsilon} \big)^{N_L (T)_\theta} =
\kappa_{(T,g \cdot \theta,g \cdot \rho)}^{L,\epsilon}. \qedhere
\end{equation} 
\end{proof}

The collection of bijections considered in Proposition \ref{prop:7.11} is also 
$\Xo (L)$-equivariant. Namely, by \cite[Theorem 2.7.7]{Kal3}
\begin{equation}\label{eq:7.2}
\chi \otimes \kappa_{(T,\theta,\rho)}^{L,\epsilon} \cong \kappa_{(T,\chi \otimes 
\theta,\chi \otimes \rho)}^{L,\epsilon} \qquad \chi \in \Xo (L).
\end{equation}
Let $\sigma$\label{i:107} be a constituent of the Deligne--Lusztig representation
$\pm R_{\mc T_\ff (k_F)}^{\mc L_\ff^\circ (k_F)} (\theta_\ff)$, which decomposes
into mutually inequivalent subrepresentations:
\[
\pm R_{\mc T_\ff (k_F)}^{\mc L_\ff^\circ (k_F)} (\theta_\ff) =
\bigoplus\nolimits_{\chi \in \Irr (\Omega_{\theta_\ff})} \sigma_\chi \qquad
\Omega_{\theta_\ff} = W(\mc L_\ff^\circ ,\mc T_\ff)(k_F)_{\theta_\ff} .
\]
By definition, these representations $\sigma_\chi$ form a Deligne--Lusztig packet for 
$\mc L_\ff^\circ (k_F)$. 
By inflation, we can also regard $\sigma$ and the $\sigma_\chi$ as representations
of $P_{L,\ff}$. Via the types $(P_{L,\ff},\sigma_\chi)$, they give rise to the category
\begin{equation}\label{eqn:RepL-subscript-PLf-sigma-chi}
\Rep (L)_{(\mc T_\ff,\theta_\ff)} = \bigoplus\nolimits_{\chi \in 
\Irr (\Omega_{\theta_\ff})} \Rep (L)_{(P_{L,\ff},\sigma_\chi)} .
\end{equation}
Consider the set 
$W(G,L)_{(\mc T_\ff,\theta_\ff)} := \big\{ w \in W(G,L) : w \cdot 
\Rep (L)_{(P_{L,\ff},\sigma)} \subset \Rep (L)_{(\mc T_\ff,\theta_\ff)} \big\}$.

\begin{lem}\label{lem:7.1}
$W(G,L)_{(\mc T_\ff,\theta_\ff)}$ is a group, isomorphic to
$N_G (P_{L,\ff},\mc T_\ff)_{\theta_\ff} / N_{L_\ff} (\mc T_\ff)_{\theta_\ff}$.
\end{lem}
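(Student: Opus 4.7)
The plan is to identify $W(G,L)_{(\mc T_\ff,\theta_\ff)}$ with a quotient of groups, so that both the group structure and the stated isomorphism drop out at once. Concretely, I would construct a surjection
\[
N_G (P_{L,\ff},\mc T_\ff)_{\theta_\ff} \twoheadrightarrow W(G,L)_{(\mc T_\ff,\theta_\ff)}
\]
with kernel $N_{L_\ff}(\mc T_\ff)_{\theta_\ff}$.

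First, I would verify that every $g \in N_G (P_{L,\ff},\mc T_\ff)_{\theta_\ff}$ already lies in $N_G(L)$. Since $\mc T$ is elliptic, the maximal $F$-split subtorus $\mc T_s$ of $\mc T$ is contained in $\mc T_\ff$ and agrees with the maximal $F$-split subtorus of $\mc T_\ff$; conjugation by $g$ preserves $\mc T_\ff$ and hence preserves $\mc T_s$, so by \eqref{eq:4.13} it normalizes $L=Z_G(\mc T_s)$. To see that the image lies in $W(G,L)_{(\mc T_\ff,\theta_\ff)}$, note that such a $g$ descends to an automorphism of $\mc L_\ff^\circ(k_F)$ fixing the pair $(\mc T_\ff,\theta_\ff)$, so it permutes the constituents $\sigma_\chi$ of $\pm \mc R_{\mc T_\ff(k_F)}^{\mc L_\ff^\circ(k_F)}(\theta_\ff)$, giving $g \cdot \sigma \cong \sigma_{\chi'}$ for some $\chi'$, and thus $g \cdot \Rep(L)_{(P_{L,\ff},\sigma)} = \Rep(L)_{(P_{L,\ff},\sigma_{\chi'})} \subset \Rep(L)_{(\mc T_\ff,\theta_\ff)}$. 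For the kernel, the intersection $L \cap N_G(P_{L,\ff},\mc T_\ff)_{\theta_\ff}$ reduces to $N_{L_\ff}(\mc T_\ff)_{\theta_\ff}$ because the normalizer of the parahoric $P_{L,\ff}$ inside $L$ equals the facet stabilizer $L_\ff$.

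The main step is surjectivity. Given $w \in W(G,L)_{(\mc T_\ff,\theta_\ff)}$, pick a lift $\tilde w \in N_G(L)$. Since $w$ acts as an equivalence of categories and the target is a direct sum of indecomposable blocks, we must have $w \cdot \Rep(L)_{(P_{L,\ff},\sigma)} = \Rep(L)_{(P_{L,\ff},\sigma_\chi)}$ for a single $\chi$. Hence $(\tilde w P_{L,\ff} \tilde w^{-1}, \tilde w \cdot \sigma)$ and $(P_{L,\ff},\sigma_\chi)$ are two types for the same Bernstein block; since parahorics of $L$ conjugate in a Bernstein block are $L$-conjugate, we may replace $\tilde w$ by $\ell \tilde w$ for suitable $\ell \in L$ so that $\tilde w$ normalizes $P_{L,\ff}$ and $\tilde w \cdot \sigma \cong \sigma_\chi$ as $\mc L_\ff^\circ(k_F)$-representations. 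Now $\tilde w \cdot \sigma$ belongs simultaneously to the Deligne--Lusztig packet for $(\mc T_\ff,\theta_\ff)$ and, by the equivariance of the Deligne--Lusztig construction, to the packet for $(\tilde w \mc T_\ff \tilde w^{-1}, \tilde w \cdot \theta_\ff)$. Invoking the non-singularity of $\theta$, two Deligne--Lusztig packets sharing an irreducible constituent must arise from $\mc L_\ff^\circ(k_F)$-conjugate pairs, so there is $\ell' \in P_{L,\ff}$ (lifting a conjugator in $\mc L_\ff^\circ(k_F)$) with $\ell' \tilde w \in N_G(P_{L,\ff},\mc T_\ff)_{\theta_\ff}$, still representing $w$.

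The main obstacle is the last step: carefully justifying that, under non-singularity of $\theta_\ff$, two Deligne--Lusztig packets for $(\mc L_\ff^\circ(k_F),\mc T_\ff)$ with a common irreducible constituent force the pairs $(\mc T_\ff,\theta_\ff)$ and $(\tilde w \mc T_\ff \tilde w^{-1}, \tilde w \theta_\ff)$ to be $\mc L_\ff^\circ(k_F)$-conjugate (rather than merely geometrically conjugate), and then lifting the conjugator inside $P_{L,\ff}$ without leaving the normalizer of $P_{L,\ff}$. Once this is in hand, the two-sided inclusion shows that $W(G,L)_{(\mc T_\ff,\theta_\ff)}$ coincides with the image of $N_G(P_{L,\ff},\mc T_\ff)_{\theta_\ff}$ in $W(G,L)$, which is a group, and the first isomorphism theorem produces the desired identification with $N_G(P_{L,\ff},\mc T_\ff)_{\theta_\ff} / N_{L_\ff}(\mc T_\ff)_{\theta_\ff}$.
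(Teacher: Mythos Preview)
Your approach is essentially the same as the paper's: map $N_G(P_{L,\ff},\mc T_\ff)_{\theta_\ff}$ into $W(G,L)$, establish surjectivity onto $W(G,L)_{(\mc T_\ff,\theta_\ff)}$ by adjusting a lift first into $N_G(P_{L,\ff})$ via uniqueness of depth-zero supercuspidal types \cite{MoPr1}, then into the stabilizer of $(\mc T_\ff,\theta_\ff)$ via Deligne--Lusztig theory, and compute the kernel. Your flagged obstacle is precisely what non-singularity resolves (the connected centralizer of the dual semisimple element is a torus, so geometric and rational conjugacy of $(\mc T_\ff,\theta_\ff)$ coincide), and the paper simply asserts this uniqueness without further comment.
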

\begin{proof}
The irreducible representations in $\Rep (L)_{(\mc T_\ff,\theta_\ff)}$ are 
$\ind_{L_\ff}^L (\tilde \sigma_\chi)$, where $\tilde \sigma_\chi$ is an extension of
$\sigma_\chi$ to $L_\ff$ and $\chi \in \Irr (\Omega_{\theta_\ff})$. More precisely:
\begin{equation}\label{eq:7.7}
\Irr (L)_{(\mc T_\ff,\theta_\ff)} = 
\bigcup\nolimits_{\chi \in \Irr (\Omega_{\theta_\ff})} \Irr (L)_{(P_{L,\ff},\sigma_\chi)} .
\end{equation}
The set $W(G,L)_{(\mc T_\ff,\theta_\ff)}$ sends $\Irr (L)_{(P_{L,\ff},\sigma)}$
to $\Irr (L)_{(\mc T_\ff,\theta_\ff)}$ and 
\[
w \cdot (P_{L,\ff},\sigma) = (w P_{L,\ff} w^{-1}, w \cdot \sigma).
\]
By the essential uniqueness of depth-zero types for supercuspidal representations 
\cite[Theorem 5.2]{MoPr1}, $(P_{L,\ff},\sigma)$ is uniquely determined up to 
$L$-conjugacy. Hence we can find a representative for $w$ in $N_G (P_{L,\ff})$.~Then
$w \cdot \sigma$ must be one of the $\sigma_\chi$, thus $w$ stabilizes the Deligne--Lusztig
series associated to $(\mc T_\ff,\theta_\ff)$.~Here $(\mc T_\ff,\theta_\ff)$ is
unique up to $\mc L_\ff^\circ (k_F)$-conjugacy, so we can even represent $w$ by an 
element of $N_G (P_{L,\ff}, \mc T_\ff)_{\theta_\ff}$. Conversely, every element 
of $N_G (P_{L,\ff}, \mc T_\ff)_{\theta_\ff}$ represents a class in 
$W(G,L)_{(\mc T_\ff,\theta_\ff)}$. Thus the natural group homomorphism 
$N_G (P_{L,\ff}, \mc T_\ff)_{\theta_\ff} \to W(G,L)$ 
has image $W(G,L)_{(\mc T_\ff,\theta_\ff)}$ and kernel 
$L \cap N_G (P_{L,\ff}, \mc T_\ff)_{\theta_\ff} = 
N_{L_\ff} (\mc T_\ff)_{\theta_\ff}$. 
\end{proof}

\begin{lem}\label{lem:7.6}
\enuma{
\item The stabilizer of $\Pi (L,T,\theta)$ inside $W(G,L)$ equals $W(G,L)_{(T,\theta)}$. It is a 
subgroup of $W(G,L)_{(\mc T_\ff,\theta_\ff)}$, where $\theta_\ff = \theta |_{\mc T_\ff (k_F)}$.
\item If $\Pi (L,T,\theta)$ contains a representation fixed by $W(G,L)_{(\mc T_\ff,
\theta_\ff)}$, then $W(G,L)_{(T,\theta)}$ equals $W(G,L)_{(\mc T_\ff,\theta_\ff)}$.
}
\end{lem}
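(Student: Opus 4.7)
The plan is to leverage the canonical $N_G(L)$-equivariance of the packet construction, together with the injectivity of the map $(T,\theta) \mapsto \Pi(L,T,\theta)$ on $L$-conjugacy classes.

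For the first assertion in (a), if $w \in W(G,L)_{(T,\theta)}$ I may, after adjustment by $L$, pick a representative $g \in N_G(L,T)_\theta$; then every ingredient of the construction---the facet $\ff_L$, the parahoric $P_{L,\ff}$, the reductive quotient $\mc L_\ff (k_F)$, the Deligne--Lusztig representation $\pm\mc R_{\mc T(k_F)}^{\mc L_\ff(k_F)}(\theta)$, the coherent splitting (which I fix across the $N_G(L,T)_\theta$-orbit as in Proposition~\ref{prop:7.11}), inflation and induction---is preserved by $g$, so $w$ fixes $\Pi(L,T,\theta)$ setwise. Conversely, if $w$ stabilizes the packet, then for any representative $g$ the same canonicity gives $\Pi(L, gTg^{-1}, g\cdot\theta) = g\cdot\Pi(L,T,\theta) = \Pi(L,T,\theta)$; and the $L$-conjugacy class of $(T,\theta)$ is determined by $\Pi(L,T,\theta)$, as follows from the essential uniqueness of depth-zero types \cite[Theorem 5.2]{MoPr1} (recovering $(P_{L,\ff}, \sigma_\chi)$ up to $L$-conjugacy), the Deligne--Lusztig injectivity at the reductive quotient (recovering $(\mc T_\ff,\theta_\ff)$ up to $\mc L_\ff^\circ(k_F)$-conjugacy), and the central character identity \eqref{eq:7.65} (recovering $\theta|_{Z(L)}$), since ellipticity of $\mc T$ makes $Z(L) \cdot \mc T_\ff(\mf o_F)$ a finite-index subgroup of $T$ on which $\theta$ is now pinned down.

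For the subgroup containment $W(G,L)_{(T,\theta)} \subset W(G,L)_{(\mc T_\ff,\theta_\ff)}$, the key observation is that every $\pi\in\Pi(L,T,\theta)$ contains the inflation of some $\sigma_\chi$ on $P_{L,\ff}$, so $\Pi(L,T,\theta)\subset\Irr(L)_{(\mc T_\ff,\theta_\ff)}$; and for $w\in W(G,L)_{(T,\theta)}$, a representative sends $\sigma$ to a constituent of the same Deligne--Lusztig representation, hence to an $\mc L_\ff^\circ(k_F)$-conjugate of some $\sigma_{\chi'}$, whence $w\cdot\Rep(L)_{(P_{L,\ff},\sigma)}=\Rep(L)_{(P_{L,\ff},\sigma_{\chi'})}\subset\Rep(L)_{(\mc T_\ff,\theta_\ff)}$.

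Part (b) is then a short orbit argument: one inclusion is (a), and for the other, any $w \in W(G,L)_{(\mc T_\ff,\theta_\ff)}$ fixes the distinguished $\pi$, so $\pi$ lies in both $\Pi(L,T,\theta)$ and $\Pi(L,gTg^{-1},g\cdot\theta)$ for any representative $g$; the disjointness-or-equality alternative for such packets (another instance of the reconstruction used above) forces equality, and applying (a) yields $w \in W(G,L)_{(T,\theta)}$. The main obstacle throughout is this reconstruction step, where several standard ingredients---Moy--Prasad uniqueness, Deligne--Lusztig rigidity (including for the possibly disconnected group $\mc T(k_F)$), central character recovery, and ellipticity to handle the non-compact part of $T$---must be carefully combined, since we deliberately do not assume that $\mc T$ splits over a tamely ramified extension of $F$.
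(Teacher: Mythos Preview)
Your reconstruction argument in part (a) has a genuine gap. You recover $(\mc T_\ff,\theta_\ff)$ up to $\mc L_\ff^\circ(k_F)$-conjugacy via the connected Deligne--Lusztig theory, and $\theta|_{Z(L)}$ from the central character, and then note that $Z(L)\cdot\mc T_\ff(\mf o_F)$ has finite index in $T$. But finite index is not enough: two depth-zero characters of $T$ can agree on a finite-index subgroup without being equal or $N_L(T)$-conjugate, so your data do not rule out that distinct $L$-conjugacy classes of $(T,\theta)$ give the same packet. In the generality the paper aims for---where $\mc T$ need not split over a tame extension, so $\mc T_\ff$ is in general a proper subtorus of $\mc T$---the part of $\theta$ living on $T_\cpt/\mc T_\ff(\mf o_F)$ (and on $T/(Z(L)\cdot T_\cpt)$) is simply invisible to your invariants.

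The paper closes this gap by invoking \cite[Proposition~2.6.11]{Kal3}, which gives the disjoint-or-equal alternative directly for Deligne--Lusztig packets of the \emph{disconnected} quotient $\mc L_\ff(k_F)$, where the input is $\theta$ as a character of the full group $\mc T(k_F)$ rather than just $\theta_\ff$ on $\mc T_\ff(k_F)$; since $\theta$ has depth zero, this carries all of $\theta$. Combined with the uniqueness of $(L_\ff,\sigma')$ from \cite[\S 6]{MoPr2}, the dichotomy transfers to $L$. Your arguments for the forward inclusion in (a), for the containment $W(G,L)_{(T,\theta)}\subset W(G,L)_{(\mc T_\ff,\theta_\ff)}$, and for the orbit argument in (b) are fine once that dichotomy is in hand.
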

\begin{proof}
(a) The construction of $\Pi (L,T,\theta)$ implies that any element of $W(G,L)$ that 
stabilizes the $L$-conjugacy class of $(T,\theta)$ also stabilizes $\Pi (L,T,\theta)$. 
For an arbitrary Deligne--Lusztig packet $\Pi (L,\tilde T, \tilde \theta)$, we claim that 
\begin{itemize}
\item[(i)] $\Pi (L,\tilde T, \tilde \theta)=\Pi (L,T,\theta)$ if $(T,\theta)$ and 
$(\tilde T, \tilde \theta)$ are $L$-conjugate;
\item[(ii)] $\Pi (L,\tilde T, \tilde \theta)$ is disjoint from $\Pi (L,T,\theta)$ if $(T,\theta)$ 
and $(\tilde T, \tilde \theta)$ are not $L$-conjugate.
\end{itemize}
Indeed, by \cite[Proposition 2.6.11]{Kal3}, this holds for the group $\mc L_\ff (k_F)$ instead
of for $L$. This statement transfers to $L_\ff$ by inflation of representations.~Consider
any $\pi = \ind_{L_\ff}^L (\sigma') \in \Pi (L,T,\theta)$ as in \eqref{eq:7.8}. By 
\cite[\S 6]{MoPr2}, $\pi$ determines the $L$-conjugacy class of $(L_\ff,\sigma')$. Hence
the validity of (i) and (ii) extends from $L_\ff$ to $L$.

Consequently, any element of $W(G,L)$ that sends a member of $\Pi(L,T,\theta)$ into
$\Pi (L,T,\theta)$ must stabilize the $L$-conjugacy class of $(T,\theta)$. Then it also
stabilizes the $L$-conjugacy class of $(\mc T_\ff, \theta_\ff)$, and thus it belongs to
$W(G,L)_{(\mc T_\ff,\theta_\ff)}$. 

(b) For any $w \in W(G,L)_{(\mc T_\ff,\theta_\ff)}$, by the definition of these 
Deligne--Lusztig packets, we have 
$w \cdot \Pi (L,T,\theta) = \Pi (L,w T w^{-1}, w \cdot \theta)$, which contains an element of $\Pi (L,T,\theta)$ by assumption. By (i), we know that 
$(w T w^{-1}, w \cdot \theta)$ is $L$-conjugate to $(T,\theta)$, which implies that
$\Pi (L,w T w^{-1}, w \cdot \theta)$ equals $\Pi (L,T,\theta)$.
\end{proof}

The Weyl group $W(\mc L, \mc T)$\label{i:5} has the structure of a finite $F$-group, 
such that $N_L (T) / T$ is a subgroup of $W(\mc L, \mc T)(F)$.
Recall from \cite[Lemma 3.2.1]{Kal3} that $W(\mc L,\mc T)(F)_\theta$ is abelian.

\begin{lem}\label{lem:7.7}
The subgroup $W(\mc L,\mc T)(F)_\theta$ of $W(N_{\mc G}(\mc L),\mc T)(F)_\theta$ is central. 
\end{lem}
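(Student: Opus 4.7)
The plan is to exploit the explicit description of $W_\theta := W(\mc L,\mc T)(F)_\theta$ that underlies [Kal3, Lemma 3.2.1]: under the $F$-non-singularity hypothesis, $W_\theta$ is an elementary abelian $2$-group, generated by pairwise commuting reflections $s_{[\alpha]}$ indexed by certain $\mb W_F$-orbits $[\alpha]$ of roots of $(\mc L_{\bar F},\mc T_{\bar F})$; distinct generating orbits have mutually orthogonal coroots in $X_*(\mc T) \otimes_\Z \R$, and the non-singularity forces each character $\theta \circ N_{E/F} \circ \alpha^\vee$ to be non-trivial on $\mf o_E^\times$ of order exactly $2$. I will use this structure crucially.

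First I would verify the normalization: for any $n \in \tilde W_\theta := W(N_{\mc G}(\mc L),\mc T)(F)_\theta$, conjugation by $n$ normalizes $W(\mc L,\mc T)$ (because $n$ normalizes $\mc L$ and $\mc T$) and fixes $\theta$, hence preserves $W_\theta$. The centrality claim then reduces to showing that the induced conjugation action on $W_\theta$ is trivial on every generator. Since $n s_{[\alpha]} n^{-1} = s_{[n \cdot \alpha]}$, and reflections coincide exactly when the associated root-orbits agree up to sign, this further reduces to establishing $[n \cdot \alpha] = \pm [\alpha]$ for each generating $[\alpha]$.

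The heart of the argument, and the main obstacle, is this last rigidity statement. My plan is to derive it from $n \cdot \theta = \theta$: by naturality of the pairing between characters and cocharacters, this equality gives $\theta \circ (n \cdot \alpha)^\vee = \theta \circ \alpha^\vee$, so $[n \cdot \alpha]$ is again a generating orbit for $W_\theta$ whose associated coroot character matches the one for $[\alpha]$. Combined with the orthogonality between distinct generators and the non-triviality of these coroot characters on units (which prevents degenerations), one argues that the $1$-dimensional coroot-direction in $X_*(\mc T) \otimes_\Z \R$ attached to a generator is rigidly pinned down by its character together with the orthogonality pattern, so $[n\cdot \alpha]$ and $[\alpha]$ must share the same coroot line, giving $[n\cdot \alpha] = \pm[\alpha]$.

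The hard part is the last ``pinning down'' step: one needs to rule out that $n$ nontrivially permutes generators that happen to have coinciding coroot characters, and this is where non-singularity must be invoked carefully together with the ellipticity of $\mc T$ in $\mc L$ (recalled from \eqref{eq:4.13}), which constrains how much of $X_*(\mc T)$ can be permuted by elements of $N_G(\mc L,\mc T)$ without leaving $Z^\circ(\mc L)$ pointwise fixed. Once this is in hand, $n s_{[\alpha]} n^{-1} = s_{[\alpha]}$ for every generator, and the abelian-ness of $W_\theta$ from [Kal3, Lemma 3.2.1] upgrades to centrality in $\tilde W_\theta$, as desired.
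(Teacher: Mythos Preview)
Your proposal rests on a structural claim about $W_\theta := W(\mc L,\mc T)(F)_\theta$ that is false in general: it is \emph{not} typically an elementary abelian $2$-group generated by commuting reflections $s_{[\alpha]}$. What \cite[Lemma 3.2.1]{Kal3} actually gives is abelianness via an embedding into a character group, not a reflection description. Concretely, take $\mc L$ absolutely simple of type $A_{n-1}$ with $\mc T$ elliptic and $\theta$ corresponding to the barycentre of an alcove; then $W_\theta = \langle (1\,2\ldots n)\rangle$ is cyclic of order $n$, generated by a Coxeter element rather than a reflection (this example appears explicitly later in the paper, in Case~III of the proof of Proposition~\ref{prop:7.9}). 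Your order-$2$ claim on $\theta \circ N_{E/F} \circ \alpha^\vee$ is likewise unjustified: $F$-non-singularity only asserts non-triviality on $\mf o_E^\times$, not any control on the order. Since the entire argument---writing $n s_{[\alpha]} n^{-1} = s_{[n\cdot\alpha]}$ and then pinning down coroot lines---depends on this reflection picture, it does not go through.

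The paper's proof takes a completely different and cleaner route: it uses the embedding
\[
W(\mc L,\mc T)(F)_\theta \hookrightarrow \Irr\big(\mathrm{coker}(\mc T_\ff(k_F) \to \mc T_{\ff,\ad}(k_F))\big)
\]
from \cite[Lemmas 2.2.1 and 3.2.1]{Kal3}, observes that it is equivariant for the ambient group $W(\mc G_\ff,\mc T_\ff)(k_F)_{\theta_\ff}$, and then checks that this ambient group acts \emph{trivially} on the target cokernel (because Weyl-group elements only move $\mc T_{\ff,\ad}(k_F)$ by elements of $\Z R(\mc G,\mc T)$, which already lie in the image of $\mc T_\ff(k_F)$). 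Triviality of the conjugation action on the target forces triviality on the embedded source, hence centrality. No root-by-root rigidity argument is needed.
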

\begin{proof}
Recall that $\mc T_\ff = \mc T \cap \mc G_\ff^\circ$. 
Let $\mc T_{\ff,\ad}$ be the image of $\mc T_\ff$ in $\mc L^\circ_{\ff,\ad}$. 
By the proofs of \cite[Lemmas 2.2.1 and 3.2.1]{Kal3}, we have an embedding
\begin{equation}\label{eq:7.1}
W(\mc L,\mc T)(F)_\theta\hookrightarrow 
\Irr \big( \mr{coker} ( \mc T_\ff (k_F) \to \mc T_{\ff,\ad} (k_F) )\big) .
\end{equation}
This embedding is natural, and is in particular $W(\mc G_\ff ,\mc T_\ff) (k_F)_{\theta_\ff}
$-equivariant. One can easily see that 
the action of $W(\mc G_\ff ,\mc T_\ff) (k_F)_{\theta_\ff}$ 
on 
$\mc T_\ff$ 
lifts to the action of $W(\mc G,\mc T)$ 
on $\mc T$, which adjusts $\mc T$ by elements of $\Z R (\mc G,\mc T) \otimes_\Z F_s^\times$ for a separable 
closure $F_s$\label{i:44} of $F$. Thus the action 
of $W(\mc G_\ff ,\mc T_\ff) (k_F)_{\theta_\ff}$ on $\mc T_{\ff,\ad}$ 
only adjusts $\mc T_{\ff,\ad}(k_F)$ by elements of 
\begin{equation}\label{eq:7.3}
\big( \Z R (\mc G,\mc T) \cap X_* (\mc T_{\ff,\ad}) \otimes_\Z \overline{k}_F \big)^\Fr, 
\end{equation}
where $\Fr$\label{i:45} denotes the Frobenius automorphism of $\overline{k_F}/k_F$.~Since 
$\Z R (\mc G,\mc T) \cap X_* (\mc T_{\ff,\ad})$ is contained in $X_* (\mc T_\ff)$, all elements of
\eqref{eq:7.3} come from $\mc T_\ff (k_F)$. Hence $W(\mc G_\ff ,\mc T_\ff) (k_F)_{\theta_\ff}$
acts trivially on $\mr{coker}(\mc T_\ff (k_F) \to \mc T_{\ff,\ad} (k_F))$. Via the embedding
\eqref{eq:7.1}, the conjugation action of $W(\mc G_\ff ,\mc T_\ff) (k_F)_{\theta_\ff}$ on
$W(\mc L_\ff^\circ, \mc T_\ff) (k_F)_{\theta_\ff}$ is trivial.
\end{proof}
Now we study the structure of $W(\mc G,\mc T)(F)$ in greater detail. \label{i:46}

\begin{lem}\label{lem:7.14}
\enuma{
\item We have $W(\mc G,\mc T)(F) = W(N_{\mc G}(\mc L),\mc T)(F)$.
}
In the following, assume moreover that $\mc G (F)$ is quasi-split. Recall that $\ff_L$ is the 
facet in $\mc B (\mc L,F)$ containing $\ff$.
\enuma{ \setcounter{enumi}{1}
\item By replacing $\mc T$ within its stable conjugacy class for $\mc L$, we can achieve 
that there exists a point $y \in \ff_L$ whose image in $\mc B (\mc G_\ad, F)$ 
is a special vertex. 
\item Let $\mc G_y^\circ$ be the connected reductive $k_F$-group associated to the
facet $y$ of $\mc B (\mc G_\ad,F)$, and define $\mc L_y^\circ$ analogously. We have 
\[
W(\mc G,\mc T)(F) \cong W(\mc G_y^\circ, \mc T_\ff)(k_F) \cong 
W(N_{\mc G_y^\circ}(\mc L_y^\circ), \mc T_\ff)(k_F).
\]
}
\end{lem}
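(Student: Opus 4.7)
The plan is to treat the three parts in order, leaning on \eqref{eq:4.13} for (a) and (c), and on Bruhat--Tits theory for (b).

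For (a), the key observation is that every class in $W(\mc G,\mc T)(F)$ is represented by $n \in N_{\mc G}(\mc T)$ whose conjugation action on $\mc T$ is defined over $F$ and therefore preserves the maximal $F$-split subtorus $\mc T_s$. Since $\mc L = Z_{\mc G}(\mc T_s)$ by \eqref{eq:4.13}, such $n$ also normalizes $\mc L$, which gives the nontrivial inclusion $W(\mc G,\mc T)(F) \subset W(N_{\mc G}(\mc L),\mc T)(F)$; the reverse inclusion is tautological.

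For (b), I would begin with a maximal $F$-split torus $\mc S$ of $\mc L$, which is automatically a maximal $F$-split torus of $\mc G$. Because $\mc G$ is quasi-split, the apartment of $\mc S$ in $\mc B(\mc G_\ad,F)$ contains a special vertex $y$. The plan is then to find an elliptic maximal $F$-torus $\mc T'$ of $\mc L$, stably conjugate to $\mc T$, whose associated facet contains $y$; replacing $\mc T$ by $\mc T'$ proves the claim. The main obstacle is controlling stable conjugacy while relocating the vertex: I would use the classification of elliptic maximal $F$-tori containing a maximal unramified subtorus in terms of pairs $(\mc L_{\ff_L}^\circ,\mc T_\ff)$ from Bruhat--Tits theory (see \cite{KaPr}), combined with unramified Galois cohomology, to exhibit a vertex lying over $y$ that yields the same reductive quotient up to $k_F$-isomorphism and hence a torus in the stable class of $\mc T$.

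For (c), the first isomorphism combines (a) with special-vertex theory. By the paragraph after \eqref{eq:4.13}, $\mc T = Z_{\mc G}(\mc T_\ff)$, so $W(\mc G,\mc T) = W(\mc G,\mc T_\ff)$. Over $F^\unr$, the torus $\mc T_\ff$ is split and $\mc G$ is split; because $y$ is special, the Weyl group of the reductive quotient $\mc G_y^\circ$ with respect to $\mc T_\ff$ (over $\overline{k}_F$) coincides with the absolute Weyl group $W(\mc G,\mc T_\ff)(F^\unr)$. Taking Frobenius invariants then gives
\[
W(\mc G_y^\circ,\mc T_\ff)(k_F) \;\cong\; W(\mc G,\mc T_\ff)(F) \;=\; W(\mc G,\mc T)(F).
\]
For the second isomorphism I would rerun the argument of (a) inside $\mc G_y^\circ$ over $k_F$: the maximal $k_F$-split subtorus of $\mc T_\ff$ lies in $Z(\mc L_y^\circ)^\circ$, and $\mc L_y^\circ$ equals its centralizer in $\mc G_y^\circ$, so the same reasoning yields the identification with $W(N_{\mc G_y^\circ}(\mc L_y^\circ),\mc T_\ff)(k_F)$.
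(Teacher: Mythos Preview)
Your arguments for (a) and (c) are essentially the paper's. One harmless slip in (c): $\mc G$ need not split over $F^\unr$; what you actually use is that $\mc T_\ff$ splits there, so that at a special vertex the Weyl group of $\mc G_y^\circ$ over $\overline{k}_F$ agrees with the relative Weyl group of $(\mc G,\mc T_\ff)$ over $F^\unr$, and taking Frobenius invariants finishes as you say.

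Your plan for (b) has a genuine gap. You fix a special vertex $y$ of $\mc B(\mc G_\ad,F)$ first and then try to produce a stably conjugate $\mc T'$ whose facet $\ff'_L$ passes through $y$. But the facets attached to tori in the stable class of $\mc T$ are highly constrained: their images in $\mc B(\mc L_\ad,F)$ are particular vertices determined by the stable class, and their images in $\mc B(\mc G_\ad,F)$ are the corresponding $X_*(Z(\mc L)^\circ / Z(\mc G)^\circ)\otimes_\Z \R$-cosets. There is no reason any of these cosets should contain a pre-selected special vertex, and the vague appeal to the classification of unramified tori and to Galois cohomology does not supply this.

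The paper reverses the order of choices. It first invokes \cite[Lemma~3.4.12]{Kal2} to replace $\mc T$ within its stable class so that the image of $\ff_L$ in $\mc B(\mc L_\ad,F)$ is a \emph{special} vertex of $\mc L$; this is the only place stable conjugacy is used. Once that is done, every simple relative root of $(\mc L,\mc T_\ff)$ already takes integral values at any $y_L \in \ff_L$. The remaining simple roots $\Delta \setminus \Delta_{\mc L}$ of $(\mc G,\mc T_\ff)$ restrict to linearly independent characters of $Z(\mc L)^\circ$, so one can translate $y_L$ by some $t \in X_*(Z(\mc L)^\circ)\otimes_\Z \R$ --- hence remain inside $\ff_L$ --- to make those integral too. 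The resulting point $y = y_L + t$ is then special for $\mc G$. The idea you are missing is precisely this two-step reduction: first achieve specialness for $\mc L$ via Kaletha's lemma, then use the translation freedom inside $\ff_L$ coming from $Z(\mc L)^\circ$ to upgrade to specialness for $\mc G$.
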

\begin{proof}
(a) As already noted in \eqref{eq:4.13}, we have $\mc L = Z_{\mc G}(\mc T_s)$,
where $\mc T_s$ denotes the maximal $F$-split subtorus of $\mc T$. Every element 
$w \in W (\mc G,\mc T)(F)$ normalizes $\mc T_s$, thus also normalizes $\mc L$. Hence
$w \in W(N_{\mc G}(\mc L),\mc T)(F)$. 

(b) By \cite[Lemma 3.4.12]{Kal2}, we can achieve (by changing $\mc T$ within its
stable conjugacy class) that the image of $\ff_L$ in $\mc B (\mc L_\ad ,F)$
is a special vertex. Thus for every $y_L \in \ff_L$, the root system
$R(\mc L_{y_L}^\circ (k_F), \mc T_\ff (k_F))$ equals $R(\mc L(F), \mc T_\ff (F))$.
Take a basis $\Delta_{\mc L}$ of $R(\mc L(F), \mc T_\ff (F))$, and extend it to a basis
$\Delta$ of $R(\mc G (F), \mc T_\ff (F))$. By the linear independence of 
$\Delta \setminus \Delta_{\mc L}$ in the character lattice of $Z(\mc L)^\circ$, for every 
$\alpha \in \Delta \setminus \Delta_{\mc L}$, we can translate $y_L$ 
by an element $t \in X_* (Z (\mc L)^\circ) \otimes_\Z \R$, such that $y := y_L + t$
lies in a wall of $\mc B (\mc G,F)$ in the direction $\alpha$. Then $y$ is special. 

(c) Part (b) implies the first isomorphism of Weyl groups. The second isomorphism follows
from this and part (a). (One can also rephrase the proof of part (a) so that it applies
to the $k_F$-group $\mc G_y^\circ$.)
\end{proof}

We warn the reader that the $k_F$-group $\mc G_y^\circ$ from Lemma \ref{lem:7.14} is in
general bigger than $\mc G_\ff^\circ$. Via  the isomorphism 
$\mc T_\ff (k_F) \cong X_* (\mc T_\ff )_{\Fr_F}$ 
from \cite[(5.2.3)]{DeLu}, we can view $\theta_\ff$ as a character of $X_* (\mc T_\ff)$.
The values of $\theta_\ff$ belong to group of roots of unity in $\C^\times$, which is 
isomorphic to $\Q / \Z$. Hence $\theta_\ff$ can also be viewed as an element of
$X^* (\mc T_\ff) \otimes_\Z \Q / \Z$, where $X^* (\mc T_\ff)$\label{i:47} denotes the character 
lattice of $\mc T_\ff$. Then the action of $W(\mc G_y^\circ,\mc T_\ff)$ on $X_* (\mc T_\ff)$ 
(or the action on $X^* (\mc T_\ff)$) gives rise to a $k_F$-group  
$W(\mc G_y^\circ, \mc T_\ff)_{\theta_\ff}$, satisfying 
$W(\mc G_y^\circ, \mc T_\ff)_{\theta_\ff}(k_F) = W(\mc G_y^\circ, \mc T_\ff)(k_F)_{\theta_\ff}$. 
From \cite[p.~131]{DeLu}, one can deduce the following about the structure of 
$W(\mc G_y^\circ, \mc T_\ff)_{\theta_\ff}$:
\begin{enumerate}[(i)]
\item It has a normal subgroup $W(\mc G_y^\circ,\mc T_\ff)^\circ_{\theta_\ff}$, generated by
the reflections whose coroot in $X_* (\mc T_\ff)$ is orthogonal to $\theta_\ff$. It is equal to the 
$W(\mc G_y^\circ, \mc T_\ff)$-stabilizer of an extension of $\theta_\ff$ to a group in which
$\mc G_y^\circ (k_F)$ is ``regularly embedded'' in the sense of \cite[\S 1.7]{GeMa}.
\item It admits a decomposition $W(\mc G_y^\circ,\mc T_\ff)_{\theta_\ff} = 
W(\mc G_y^\circ,\mc T_\ff)^\circ_{\theta_\ff} \rtimes \Gamma$, 
where $\Gamma$ is the stabilizer of a set of positive roots for the Weyl group
$W(\mc G_y^\circ,\mc T_\ff)^\circ_{\theta_\ff}$.
\item There exists a point $\tilde \theta_\ff$ in the fundamental alcove for the action of
the affine Weyl group of $R(\mc G_y^\circ,\mc T_\ff)$ on $X^* (\mc T_\ff) \otimes_\Z \Q$,
such that 
\begin{equation}\label{eq:7.44}
W(\mc G_y^\circ,\mc T_\ff)_{\theta_\ff} \cong
\big( W(\mc G_y^\circ,\mc T_\ff) \ltimes X^* (\mc T_\ff) \big)_{\tilde \theta_\ff} .
\end{equation}
\item The previous item implies that $\Gamma \cong W(\mc G_y^\circ,\mc T_\ff)_{\theta_\ff} /
W(\mc G_y^\circ,\mc T_\ff)^\circ_{\theta_\ff}$ is isomorphic to a subgroup of
$X^* (\mc T_\ff) / \Z R (\mc G_y^\circ,\mc T_\ff)$.  
\end{enumerate}
By part (iv) the group $\Gamma$ tends to be very small, provided that $\mc G_y^\circ$ is semisimple.
We will use that later, to analyse $W(\mc G_y^\circ, \mc T_\ff)_{\theta_\ff}$.\\
We also have a version of Lemma \ref{lem:7.7} in this context.

\begin{lem}\label{lem:7.15}
Assume that we are in the setting of Lemma \ref{lem:7.14}.b--c.
The group $W(\mc L_y^\circ,\mc T_\ff)_{\theta_\ff}$ is central in
$W(N_{\mc G_y^\circ}(\mc L_y^\circ),\mc T_\ff)_{\theta_\ff}$.
\end{lem}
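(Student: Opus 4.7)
The plan is to mirror the proof of Lemma \ref{lem:7.7}, this time carried out entirely inside the finite reductive $k_F$-group $\mc G_y^\circ$. Since $W(\mc L_y^\circ, \mc T_\ff)$ is normal in $W(N_{\mc G_y^\circ}(\mc L_y^\circ), \mc T_\ff)$, establishing centrality amounts to showing that the conjugation action of $W(N_{\mc G_y^\circ}(\mc L_y^\circ), \mc T_\ff)(k_F)_{\theta_\ff}$ on $W(\mc L_y^\circ, \mc T_\ff)(k_F)_{\theta_\ff}$ is trivial; note that triviality of this action automatically implies that $W(\mc L_y^\circ, \mc T_\ff)(k_F)_{\theta_\ff}$ is abelian, so no separate appeal to the finite-group analog of \cite[Lemma 3.2.1]{Kal3} is required.

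The key ingredient is the finite-group version of the embedding \eqref{eq:7.1}. Let $\mc T_{y,\ad}$ denote the image of $\mc T_\ff$ in the adjoint quotient $\mc L_{y,\ad}^\circ$ of $\mc L_y^\circ$. Carrying out the construction used in the proofs of \cite[Lemmas 2.2.1 and 3.2.1]{Kal3} for the finite reductive group $\mc L_y^\circ$ produces a natural embedding
\[
W(\mc L_y^\circ, \mc T_\ff)(k_F)_{\theta_\ff} \;\hookrightarrow\; \Irr\bigl( \mr{coker}\bigl( \mc T_\ff (k_F) \to \mc T_{y,\ad}(k_F) \bigr) \bigr) ,
\]
which by naturality is equivariant for the conjugation action of $W(\mc G_y^\circ, \mc T_\ff)(k_F)_{\theta_\ff}$, and hence for the action of the subgroup $W(N_{\mc G_y^\circ}(\mc L_y^\circ), \mc T_\ff)(k_F)_{\theta_\ff}$. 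It therefore suffices to show that the latter group acts trivially on the cokernel.

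For this last step, elements of $W(\mc G_y^\circ, \mc T_\ff)$ adjust $\mc T_\ff$ only by cocharacters in $\Z R(\mc G_y^\circ, \mc T_\ff) \otimes_\Z \overline{k}_F^\times$, so their images on $\mc T_{y,\ad}$ change elements only by factors in $\bigl( \Z R(\mc G_y^\circ, \mc T_\ff) \cap X_*(\mc T_{y,\ad}) \otimes_\Z \overline{k}_F \bigr)^{\Fr}$. Since $\Z R(\mc G_y^\circ, \mc T_\ff) \cap X_*(\mc T_{y,\ad}) \subset X_*(\mc T_\ff)$, all such corrections already come from $\mc T_\ff(k_F)$, and so act trivially on the cokernel, yielding the required triviality. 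The main obstacle is to verify the exact finite-group form of the embedding \eqref{eq:7.1} (and its equivariance for the action of a Weyl subgroup of the overgroup $\mc G_y^\circ$, rather than of $\mc L_y^\circ$ itself); once this is pinned down, the rest is a direct transcription of the proof of Lemma \ref{lem:7.7}.
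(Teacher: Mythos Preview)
Your approach is correct but takes a different route from the paper. The paper does not replay the proof of Lemma~\ref{lem:7.7}; instead it uses the structure theory of $W(\mc G_y^\circ,\mc T_\ff)_{\theta_\ff}$ recorded in items (i)--(iv) just above the lemma. Non-singularity of $\theta_\ff$ forces $W(\mc L_y^\circ,\mc T_\ff)_{\theta_\ff}$ to meet the reflection part $W(\mc G_y^\circ,\mc T_\ff)^\circ_{\theta_\ff}$ trivially, so by (iv) it embeds into $X^*(\mc T_\ff)/\Z R(\mc G_y^\circ,\mc T_\ff)$, and this embedding (constructed via the affine-alcove picture (iii)) is equivariant for $W(N_{\mc G_y^\circ}(\mc L_y^\circ),\mc T_\ff)_{\theta_\ff}$. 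Since every reflection $s_\alpha$ moves elements of $X^*(\mc T_\ff)$ by multiples of $\alpha$, the whole Weyl group acts trivially on this quotient, and centrality follows. Your argument instead transplants the \cite{Kal3} embedding of Lemma~\ref{lem:7.7} into the cokernel $\Irr(\mr{coker}(\mc T_\ff(k_F)\to\mc T_{y,\ad}(k_F)))$; this also works, and in fact your triviality step is even simpler than in Lemma~\ref{lem:7.7} because the coroots of $(\mc G_y^\circ,\mc T_\ff)$ already lie in $X_*(\mc T_\ff)$, with no passage through the $p$-adic group needed. The paper's argument has the advantage of being purely combinatorial (no reliance on the finite-field character theory from \cite{Kal3}) and of reusing the machinery it just set up, whereas yours has the virtue of being a literal parallel to Lemma~\ref{lem:7.7}.

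One notational point: you have written everything with $(k_F)$-points, but the statement of the lemma is about the full stabilizers $W(\mc L_y^\circ,\mc T_\ff)_{\theta_\ff} \subset W(N_{\mc G_y^\circ}(\mc L_y^\circ),\mc T_\ff)_{\theta_\ff}$ inside the absolute Weyl group (these are the $k_F$-groups introduced just before (i)--(iv), not merely their rational points). Your argument goes through verbatim at that level---the \cite{Kal3} embedding and the coroot computation are both statements about the absolute Weyl group---so you should simply drop the $(k_F)$'s to match the lemma as stated.
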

\begin{proof}
By the non-singularity of $\theta_\ff$, the intersection of
$W(\mc L_y^\circ,\mc T_\ff)_{\theta_\ff}$ and $W(\mc G_y^\circ,\mc T_\ff)^\circ_{\theta_\ff}$
is trivial. Thus (iv) above provides an embedding 
\begin{equation}\label{eq:7.43}
W(\mc L_y^\circ,\mc T_\ff)_{\theta_\ff} \hookrightarrow 
X^* (\mc T_\ff) / \Z R (\mc G_y^\circ,\mc T_\ff) .
\end{equation}
The construction of this embedding via (iii) shows that it is 
$W(N_{\mc G_y^\circ}(\mc L_y^\circ),\mc T_\ff)_{\theta_\ff}$-equivariant. Since a reflection
$s_\alpha \in W(\mc G_y^\circ,\mc T_\ff)$ translates every element of $X^* (\mc T_\ff)$ by 
a multiple of $\alpha$, the action of $W(\mc G_y^\circ,\mc T_\ff)$ on $X^* (\mc T_\ff)$ 
only adjusts the latter by elements of $\Z R (\mc G_y^\circ,\mc T_\ff)$. In particular, the 
action of $W(\mc G_y^\circ,\mc T_\ff)_{\theta_\ff}$ on \eqref{eq:7.43} is trivial.
\end{proof}

\subsection{Embeddings of tori and extensions} \  
\label{par:2.split}

Given the $F$-torus $\mc T$, there are various ways to embed it in a rigid
inner twist of $\mc L$. Starting from one embedding one obtains first a stable 
conjugacy class of embeddings, and next by composition with inner twists a 
larger collection of embeddings, which are then called admissible. We fix a finite
central $F$-subgroup \label{i:7} $\mc Z$ of $\mc L$. The equivalence
classes of such admissible embeddings $j$\label{i:48} can be parametrized by 
a cohomology set $H^1 (\mc E, \mc Z \to \mc T)$ 
\cite[\S 4.4]{Kal3} and \cite[\S 3]{Dil}. Here the symbol $\mc E$ denotes a certain
gerbe whose precise definition is not important to us.
This parametrization requires the choice of a standard admissible embedding 
of $F$-groups $j_0 : \mc T \to \mc L^\flat$\label{i:49}, such that $\mc L^\flat$\label{i:50} 
is a quasi-split rigid inner twist of $\mc L$ and $j_0 \mc T (F)$ fixes an absolutely 
special point in the reduced Bruhat--Tits building of $\mc L^\flat$ over $F$.

We now compare Weyl groups associated to different admissible embeddings. 
Any two such admissible embeddings, say $j : \mc T \to \mc L$ and $j' : \mc T \to \mc L'$, 
correspond to the same conjugacy class of embeddings of Langlands dual groups. 
In fact that is another characterization of our class of admissible embeddings
\cite[\S 5.1]{Kal2}. Hence 
\begin{equation}\label{eq:7.10}
W (\mc L, j \mc T)(F) \cong W(\mc L', j' \mc T)(F) . 
\end{equation}
Let $\mc G'$ be a rigid inner twist of $\mc G$ containing $\mc L'$ as an
$F$-Levi subgroup.~By \cite[Proposition 3.1]{ABPS1}, $W(G,L)$
is naturally isomorphic to $W(G',L')$. Similar to \eqref{eq:7.10}, 
\[
W(G,L)_{(jT,\theta)} \cong W(\mc G,\mc L)(F)_{(j T,\theta)} \cong 
W(\mc G',\mc L')(F)_{(j' T,\theta)} \cong W(G',L')_{(j' T,\theta)} .
\]
We fix $j_0$ as above, and we abbreviate\label{i:52} 
$j_0 \mc T = \mc T^\flat, j_0 T = T^\flat$. 
The character $\theta \circ j_0^{-1}$ of $T^\flat$ will still be denoted $\theta$.
By \cite[\S 4.5]{Kal3}, there is an isomorphism
\begin{equation}\label{eq:7.11}
W (L^\flat,T^\flat) \cong W (\mc L^\flat, \mc T^\flat)(F) .    
\end{equation}
We warn the reader that this need not hold for another embedding $j$. It turns out that
\eqref{eq:7.11} can be generalized to a setting with $\mc G$. Let $\mc G^\flat$\label{i:53}
be a rigid inner twist of $\mc G$ containing $\mc L^\flat$ as an $F$-Levi subgroup,
thus in particular $G^\flat$ is quasi-split.

\begin{lem}\label{lem:7.8}
There is a natural isomorphism 
$W( N_{G^\flat}(L^\flat), T^\flat) \cong W( N_{\mc G^\flat}(\mc L^\flat), \mc T^\flat)(F)$. 
\end{lem}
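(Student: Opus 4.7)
The plan is to reduce to the isomorphism \eqref{eq:7.11}, applied to $\mc G^\flat$ in place of $\mc L^\flat$.

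First, I would observe that the lemma fits into a morphism of short exact sequences of finite groups:
\[
\begin{array}{ccccccccc}
1 & \to & W(L^\flat, T^\flat) & \to & W(N_{G^\flat}(L^\flat), T^\flat) & \to & W(G^\flat, L^\flat) & & \\
 & & \downarrow & & \downarrow & & \downarrow & & \\
1 & \to & W(\mc L^\flat, \mc T^\flat)(F) & \to & W(N_{\mc G^\flat}(\mc L^\flat), \mc T^\flat)(F) & \to & W(\mc G^\flat, \mc L^\flat)(F) & &
\end{array}
\]
The vertical arrows are the natural inclusions (which are injective, since $T^\flat = \mc T^\flat(F)$ and normalizer relations transfer to $F$-points). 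The bottom row is exact because every element of $N_{\mc G^\flat}(\mc L^\flat)$ can be modified by an element of $\mc L^\flat$ to normalize $\mc T^\flat$ (two maximal tori of $\mc L^\flat$ are conjugate), and I would verify exactness of the top row by the same argument for $F$-points in the quasi-split case. The left vertical arrow is an isomorphism by \eqref{eq:7.11}, and the right vertical arrow is the well-known isomorphism $W(G^\flat, L^\flat) \cong W(\mc G^\flat, \mc L^\flat)(F)$ for $F$-Levi subgroups, as e.g.\ in \cite[Proposition 3.1]{ABPS1}. Applying the five lemma would then give the claim.

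Alternatively, and more directly, by Lemma \ref{lem:7.14}(a) applied to $\mc G^\flat$ and $\mc L^\flat$, every element of $W(\mc G^\flat, \mc T^\flat)(F)$ automatically normalizes the $F$-Levi $\mc L^\flat = Z_{\mc G^\flat}(\mc T_s^\flat)$, so
\[
W(N_{\mc G^\flat}(\mc L^\flat), \mc T^\flat)(F) = W(\mc G^\flat, \mc T^\flat)(F) .
\]
The parallel statement holds on the $p$-adic side: any element of $W(G^\flat, T^\flat)$ that lifts to a rational element of $N_{\mc G^\flat}(\mc T^\flat)$ automatically normalizes $L^\flat$. Hence it suffices to establish the isomorphism $W(G^\flat, T^\flat) \cong W(\mc G^\flat, \mc T^\flat)(F)$. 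This is the analogue of \eqref{eq:7.11} with $\mc G^\flat$ replacing $\mc L^\flat$.

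The main obstacle is the last step, namely lifting each element of $W(\mc G^\flat, \mc T^\flat)(F)$ to a rational element of $N_{G^\flat}(T^\flat)$. The hypothesis that makes \eqref{eq:7.11} work is that $j_0\mc T(F) = T^\flat$ fixes an absolutely special point of $\mc B(\mc L^\flat, F)$; to reuse this argument in $\mc G^\flat$, I need an absolutely special point of $\mc B(\mc G^\flat, F)$ fixed by $T^\flat$. This is exactly what Lemma \ref{lem:7.14}(b) provides: replacing $\mc T$ within its stable $\mc L$-conjugacy class (which does not alter the stable class, hence does not disturb the role of $j_0$ as a standard admissible embedding) we can arrange that such an absolutely special point exists. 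With this in hand, the argument of \cite[\S 4.5]{Kal3} applies verbatim to the quasi-split group $\mc G^\flat$, yielding $W(G^\flat, T^\flat) \cong W(\mc G^\flat, \mc T^\flat)(F)$, and the lemma follows. Naturality of the isomorphism is visible from its construction as lifting rational Weyl elements to rational normalizer elements.
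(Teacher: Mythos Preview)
Your first approach (the five-lemma diagram) is the same skeleton as the paper's proof, but the key step you wave at---surjectivity of the map $W(N_{G^\flat}(L^\flat),T^\flat)\to W(G^\flat,L^\flat)$ on the top row---is exactly the nontrivial point, and your justification is wrong. Two maximal $F$-tori of $L^\flat$ need not be $L^\flat(F)$-conjugate even when $L^\flat$ is quasi-split, so the image of that map is a priori only $W(G^\flat,L^\flat)_{T^\flat}$, the stabiliser of the rational conjugacy class. Your argument for the bottom row has the same problem: conjugacy of maximal tori holds over the algebraic closure, but taking $F$-points introduces an $H^1(F,W(\mc L^\flat,\mc T^\flat))$ obstruction. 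The paper's proof is precisely the repair of this gap: it identifies the cokernel on the $p$-adic side as $W(G^\flat,L^\flat)_{T^\flat}=W(\mc G^\flat,\mc L^\flat)(F)_{T^\flat}$, shows this equals $W(\mc G^\flat,\mc L^\flat)_{\mc T^\flat}(F)$, and then argues that the natural map from the algebraic side factors through and hits this, forcing the bottom sequence to be exact there too. So your outline is right, but the substance is in the step you skipped.

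Your second approach has a more structural problem. Lemma~\ref{lem:7.14}(b) replaces $\mc T^\flat$ by a \emph{stably} conjugate torus $\mc T'^\flat$, and stable conjugacy does not give you an isomorphism $W(N_{G^\flat}(L^\flat),T^\flat)\cong W(N_{G^\flat}(L^\flat),T'^\flat)$ at the level of $F$-points---that would need rational conjugacy. So proving the lemma for $T'^\flat$ does not obviously give it for the original $T^\flat$, and the statement is about the fixed $j_0$. (There is also the minor issue that Lemma~\ref{lem:7.14}(b) produces a special vertex, whereas the input to \eqref{eq:7.11} is an absolutely special point; and you are forward-referencing, though not circularly.)
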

\begin{proof}
First we note the following isomorphisms
\begin{equation}\label{eq:7.12}
W( N_{G^\flat}(L^\flat), T^\flat) / W(L^\flat, T^\flat) \cong 
N_{G^\flat}(L^\flat, T^\flat) / N_{L^\flat} (T^\flat) \cong W(G^\flat,L^\flat)_{T^\flat},
\end{equation}
where the subscript $T^\flat$ means that the $L^\flat$-conjugacy class of $T^\flat$ is
stabilized. Since $L^\flat$ is quasi-split, the natural maps 
\begin{equation}
N_{G^\flat}(L^\flat) \to W(G^\flat,L^\flat) \to (N_{\mc G^\flat}(\mc L^\flat)/ \mc L^\flat)(F) =
W(\mc G^\flat,\mc L^\flat)^{\mb W_F} 
\end{equation}
are surjective. Hence \eqref{eq:7.12} is equal to $W(\mc G^\flat, \mc L^\flat)(F)_{T^\flat}$.
Clearly, any element of this group also stabilizes the $\mc L^\flat$-conjugacy class of 
$\mc T^\flat$. On the other hand, if an element of $W(\mc G^\flat, \mc L^\flat)$ stabilizes
the $\mc L^\flat$-conjugacy class of $\mc T^\flat$, then a suitable representative of
that element normalizes $T^\flat$. Therefore \eqref{eq:7.12} is naturally isomorphic to
\begin{equation}\label{eq:7.13}
W(\mc G^\flat, \mc L^\flat)_{\mc T^\flat}(F) \cong
\big( W(N_{\mc G^\flat}(\mc L^\flat), \mc T^\flat) / W(\mc L^\flat, \mc T^\flat) \big) (F),    
\end{equation}
which has a subgroup 
$W(N_{\mc G^\flat}(\mc L^\flat), \mc T^\flat)(F) / W(\mc L^\flat, \mc T^\flat) (F)$. 
The natural isomorphism from the left hand side of \eqref{eq:7.12} to the
right hand side of \eqref{eq:7.13} factors through \eqref{eq:7.13}, thus this subgroup 
is equal to \eqref{eq:7.13}.~Combined with \eqref{eq:7.11}, it implies that the
natural map 
$W(N_{G^\flat}(L^\flat), T^\flat) \to W(N_{\mc G^\flat}(\mc L^\flat), \mc T^\flat)(F)$ 
is an isomorphism.
\end{proof}

We now consider the case of an embedding $j : \mc T \to \mc L$ admissible in 
the sense of \cite[\S~4.4]{Kal3}, such that $\mc L$ is not necessarily quasi-split.
Suppose $j$ corresponds to \label{i:54}
\begin{equation}
[x] = \mr{inv}(j,j_0) \in H^1 (\mc E, \mc Z \to \mc T).
\end{equation} 
Then $x$ defines a form of $N_{\mc L^\flat}(\mc T^\flat)$, with the property that
\begin{equation}\label{eq:7.18}
W(L,jT) \cong  W(\mc L^\flat, \mc T^\flat)_x (F) = W(\mc L^\flat, \mc T^\flat)(F)_{[x]} .
\end{equation}
Similar to \cite[\S 4.5]{Kal3}, we construct the extension
\begin{equation}\label{eq:7.19}
1 \to T \to N_L (j T)_\theta = N_{\mc L^\flat}(\mc T^\flat)_{x}(F) \to W(L,jT)_\theta \to 1 .
\end{equation}
By \eqref{eq:7.18} and pushout along $\theta$, we obtain a central extension\label{i:55}
\begin{equation}\label{eq:7.20}
1 \to \C^\times \to \mc E_\theta^{[x]} \to W(\mc L^\flat, \mc T^\flat)(F)_{[x],\theta} \to 1.
\end{equation}
The set $\Irr (\mc E_\theta^{[x]},\mr{id})$\label{i:56} of irreducible representations of 
$\mc E_\theta^{[x]}$ on which $\C^\times$ acts as $z \mapsto z$ is naturally in bijection with 
$\Irr ( N_L (j T)_\theta ,\theta )$. Via \eqref{eq:7.10} and Proposition \ref{prop:7.11}, 
it matches with $\Pi (L,jT,\theta)$. We can rephrase \eqref{eq:7.20} as 
$\mc E_\theta^{[x]} = N_L (jT)_\theta \times_{jT,\theta} \C^\times$. 
For $\chi \in \Xo (L)$, the bijection
\[
N_L (jT)_{\chi \otimes \theta} \times_{jT,\chi \otimes \theta} \C^\times \to 
N_L (jT)_\theta \times_{jT,\theta} \C^\times : (n,c) \mapsto (n,\chi (n) c) 
\]
induces a canonical isomorphism of extensions
\begin{equation}\label{eq:8.14}
    \begin{tikzcd}
        1 \arrow[]{r}{} & \C^\times \arrow[]{r}{}\arrow[equals]{d}{} & 
        \mc E_{\chi \otimes \theta}^{[x]} \arrow[]{r}{}\arrow[]{d}{\cong} & 
        W(L,jT)_{\chi \otimes \theta} \arrow[]{r}{}\arrow[equals]{d}{} & 1 \\
1  \arrow[]{r}{} & \C^\times \arrow[]{r}{} & \mc E_{\theta}^{[x]} \arrow[]{r}{} & 
W(L,jT)_{\theta} \arrow[]{r}{} & 1 
    \end{tikzcd}.
\end{equation}
In this way the extensions $\mc E_{\theta'}^{[x]}$ for varying 
$\theta' \in \Xo (L) \otimes \theta$ are naturally isomorphic. 
The conjugation action of $N_G (L,jT)$ on \eqref{eq:7.18} descends to an action of
$N_G (L,jT)_{\Xo (L) \theta}$ on \eqref{eq:7.19}. The quotient 
\begin{multline}\label{eq:7.27}
N_G (L,jT)_{\Xo (L) \theta} / jT = W(N_G (L), jT)_{\Xo (L) \theta} \cong \\
W(N_{\mc G^\flat}(\mc L^\flat), \mc T^\flat)_x (F)_{\Xo (L) \theta} \cong 
W(N_{\mc G^\flat}(\mc L^\flat), \mc T^\flat) (F)_{[x],\Xo (L) \theta} 
\end{multline}
acts on the family of extensions \eqref{eq:7.20}, with $g \in N_G (L,jT)_{\Xo (L) \theta}$
sending $\mc E_\theta^{[x]}$ to $\mc E_{g \cdot \theta}^{[x]}$. Only the subgroup 
$W(N_G (L), jT)_\theta \cong W(N_{\mc G^\flat}(\mc L^\flat), \mc T^\flat) (F)_{[x],\theta}$ 
stabilizes $\mc E_\theta^{[x]}$ and  $\Irr ( N_L (j T)_\theta ,\theta ) \cong 
\Irr (\mc E_\theta^{[x]},\mr{id})$. Consider now the extension
\begin{equation}\label{eq:7.15}
1 \to T \to N_{L^\flat} (T^\flat)_\theta \to W(L^\flat, T^\flat)_\theta = 
W(\mc L^\flat, \mc T^\flat)(F)_\theta \to 1, 
\end{equation}
which gives the following central extension via pushout along $\theta$: \label{i:57}
\begin{equation}\label{eq:7.16}
1 \to \C^\times \to \mc E_\theta^0 \to W(\mc L^\flat, \mc T^\flat)(F)_\theta \to 1 .   
\end{equation}
The extensions $\mc E_{\theta'}^0$, for varying $\theta' \in \Xo (L) \otimes \theta$, 
are naturally identified by a variation on \eqref{eq:8.14}.~The group 
$N_{G^\flat}(L^\flat, T^\flat)_{\Xo (L) \theta}$ acts on \eqref{eq:7.15}. Its subgroup
$N_{G^\flat}(L^\flat, T^\flat)_\theta$ also acts on \eqref{eq:7.16}, and that action 
factors through $W (N_{G^\flat}(L^\flat), T^\flat)_{\theta}$.
This gives an action of $N_{G^\flat}(L^\flat, T^\flat)_\theta$ on 
\[
\Irr \big( N_{L^\flat}(T^\flat)_\theta ,\theta \big) \cong 
\Irr \big( \mc E_\theta^0, \mr{id} \big),
\]
which by Lemma \ref{lem:7.6} (a) factors through 
$N_{G^\flat}(L^\flat, T^\flat)_\theta / N_{L^\flat}(T^\flat)_\theta \cong 
W(G^\flat,L^\flat)_{(T^\flat, \theta)}$. 
Pulling back \eqref{eq:7.15} and \eqref{eq:7.16} along
$W(\mc L^\flat,\mc T^\flat)(F)_{[x],\theta} \to W(\mc L^\flat, \mc T^\flat )(F)_\theta$ gives
\begin{align}\label{eq:7.22}
& 1 \to T \to N_{L^\flat} (T^\flat)_{[x],\theta} \to W(\mc L^\flat, \mc T^\flat)(F)_{[x],\theta} \to 1 \\
& 1 \to \C^\times \to \mc E_\theta^{0,[x]} \to W(\mc L^\flat, \mc T^\flat)(F)_{[x],\theta} \to 1 .   
\label{eq:7.23}
\end{align}
In general, $\mc E_\theta^{0,[x]}$ is not isomorphic to $\mc E_\theta^{[x]}$, and the 
difference can be measured by yet another extension, i.e.~similar to \cite[\S 8.1]{Kal4},
we consider
\begin{equation}\label{eq:7.24}
1 \to T \to (\mc T^\flat \rtimes W(\mc L^\flat,\mc T^\flat))_x (F)_\theta \to 
W(\mc L^\flat, \mc T^\flat)(F)_{[x],\theta} \to 1 ,
\end{equation}
where $x$ determines a form of the algebraic group $\mc T^\flat \rtimes W(\mc L^\flat,\mc T^\flat)$.
By pushout along $\theta$, we produce a central extension\label{i:58}
\begin{equation}\label{eq:7.25}
1 \to \C^\times \to \mc E_\theta^{\ltimes [x]} \to W(\mc L^\flat, \mc T^\flat)(F)_{[x],\theta} \to 1.
\end{equation}
Reasoning as in \eqref{eq:7.20} and \eqref{eq:7.16}, the extensions $\mc E_{\theta'}^{\ltimes [x]}$
with $\theta' \in \Xo (L) \otimes \theta$ are naturally identified, and this family
of extensions is endowed with a conjugation action of 
$W(N_{\mc G^\flat}(\mc L^\flat), \mc T^\flat)(F)_{[x],\Xo (L) \theta}$.

\begin{lem}\label{lem:7.10}
\enuma{
\item The extension \eqref{eq:7.19} is the Baer sum of the extensions \eqref{eq:7.22}
and \eqref{eq:7.24}.
\item The extension \eqref{eq:7.20} is the Baer sum of \eqref{eq:7.23} and \eqref{eq:7.25},
as extensions of 
$W(N_{\mc G^\flat}(\mc L^\flat), \mc T^\flat)(F)_{[x],\theta}$-groups.
}
\end{lem}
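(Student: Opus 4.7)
My plan is to prove part (a) by constructing an explicit isomorphism from the Baer sum of \eqref{eq:7.22} and \eqref{eq:7.24} to the extension \eqref{eq:7.19}; part (b) will then follow from functoriality of Baer sums under pushout.

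Set $W_0 := W(\mc L^\flat, \mc T^\flat)(F)_{[x], \theta}$, which is the common quotient of all three extensions. The candidate map is
\[
\psi \colon N_{L^\flat}(T^\flat)_{[x], \theta} \;\times_{W_0}\; \bigl(\mc T^\flat \rtimes W(\mc L^\flat, \mc T^\flat)\bigr)_x(F)_\theta \;\longrightarrow\; N_L(jT)_\theta , \qquad (n, (t,w)) \longmapsto t \cdot n,
\]
where the fiber product is taken over the projections of the two factors to $W_0$ (so $w$ is the Weyl class of $n$) and $t \cdot n$ is the product in $N_{\mc L^\flat}(\mc T^\flat)(\overline F)$. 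To see that $\psi$ lands in $N_L(jT)_\theta = N_{\mc L^\flat}(\mc T^\flat)_x(F)_\theta$, write $\sigma_x(g) = x(\sigma) \sigma(g) x(\sigma)^{-1}$ for the twisted Galois action and use (i) $\sigma(n) = n$, (ii) $n \alpha n^{-1} = w(\alpha)$ for $\alpha \in \mc T^\flat$, and (iii) the twisted rationality condition $\sigma(t) = x(\sigma)^{-1} \, t \, w(x(\sigma))$ characterizing $(t,w) \in (\mc T^\flat \rtimes W)_x(F)$; a direct manipulation then gives $\sigma_x(t n) = t n$. The homomorphism property of $\psi$ reduces to the identity $n_1 t_2 = w_1(t_2) n_1$ combined with the multiplication law of the semidirect product; the $\theta$-stabilization of the image is automatic.

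I then compute the kernel and check surjectivity. Elements of the kernel satisfy $w = 1$ and $n = t^{-1}$; in the $w=1$ case the twisted rationality condition for $(t,1)$ degenerates to the untwisted one, so $n, t \in T$, giving exactly the antidiagonal copy of $T$ required for the Baer sum. For surjectivity, given $n' \in N_L(jT)_\theta$ with Weyl class $w \in W_0$, I use that $L^\flat$ is quasi-split together with \eqref{eq:7.11} to lift $w$ to an element $n \in N_{L^\flat}(T^\flat)$, and set $t := n' n^{-1} \in \mc T^\flat(\overline F)$; the $x$-twisted rationality of $n'$ translates directly into the twisted rationality of $(t,w)$, and stabilization of $\theta$ is inherited from $n'$. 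This proves (a). For (b), pushout along $\theta$ is an additive functor on extensions by abelian groups, so it carries Baer sums to Baer sums; applying it to (a) yields the claimed identity of central extensions. The conjugation action of $W(N_{\mc G^\flat}(\mc L^\flat), \mc T^\flat)(F)_{[x],\theta}$ acts naturally on each of \eqref{eq:7.19}, \eqref{eq:7.22}, and \eqref{eq:7.24} (and, after pushout, on \eqref{eq:7.20}, \eqref{eq:7.23}, \eqref{eq:7.25}), and $\psi$ is manifestly equivariant for these actions because it is defined by a natural formula.

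The only delicate point is bookkeeping: one must keep track of three different Galois actions — the standard one on $N_{L^\flat}(T^\flat)$, the $x$-twisted one on $(\mc T^\flat \rtimes W)_x$, and the $x$-twisted one on $N_{\mc L^\flat}(\mc T^\flat)_x$ — and verify that the $x(\sigma)$-terms and their $w$-conjugates cancel correctly when checking $\sigma_x(tn) = tn$. Once the twisted rationality condition for $(t,w)$ is written out explicitly, the cancellation is mechanical, so the argument is essentially forced by naturality.
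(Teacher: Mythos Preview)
Your proposal is correct and is essentially the explicit form of the paper's argument: the paper cites \cite[Proposition 8.2]{Kal4}, which chooses setwise splittings $w \mapsto n_w$ of \eqref{eq:7.22} and $w \mapsto (t_w,w)$ of \eqref{eq:7.24} and observes that their product $w \mapsto t_w n_w$ splits \eqref{eq:7.19}, so the associated 2-cocycles add; your map $\psi(n,(t,w)) = t n$ is exactly this product, phrased as an explicit isomorphism from the fibre product modulo the antidiagonal rather than via cocycles. The one place to be slightly careful is your appeal to \eqref{eq:7.11} for surjectivity: you actually need the exactness of \eqref{eq:7.15} (so that $w \in W_0 \subset W(L^\flat,T^\flat)_\theta$ lifts to $n \in N_{L^\flat}(T^\flat)_\theta$, not just to $N_{L^\flat}(T^\flat)$), but this is stated in the paper and your argument goes through with that adjustment.
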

\begin{proof}
(a) In the proof of \cite[Proposition 8.2]{Kal4}, setwise splittings of \eqref{eq:7.22} 
and \eqref{eq:7.24} are chosen. A setwise splitting of \eqref{eq:7.19} can then be obtained
essentially as the product of these two splittings. It follows that the 2-cocycle classifying
\eqref{eq:7.19} is the sum of the 2-cocycles classifying \eqref{eq:7.22} and \eqref{eq:7.24},
which means that \eqref{eq:7.19} is isomorphic to the Baer sum of the other two extensions.

(b) As the difference between the extensions in part (a) and those in part (b) is given by pushout 
along $\theta$ in all three cases, the isomorphism here is a direct consequence of part (a). The
construction of this Baer sum takes place in the category of groups with an action of
$W(N_{\mc G^\flat}(\mc L^\flat), \mc T^\flat)(F)_{[x],\theta}$, with the 
actions given above of this lemma. 
\end{proof}

Lemma \ref{lem:7.10}, combined with the next proposition, shows that $\mc E_\theta^{[x]}$ is
isomorphic to $\mc E_\theta^{\ltimes [x]}$ as $W(N_{\mc G^\flat}(\mc L^\flat), 
\mc T^\flat)(F)_{[x],\Xo (L) \theta}$-groups. 

\begin{prop}\label{prop:7.9}
The family of extensions $\mc E_{\theta'}^0$, with $\theta' \in \Xo (L) \otimes \theta$, admits 
an $N_{G^\flat}(L^\flat,T^\flat)_{\Xo (L) \theta}$-equivariant splitting.~In 
particular, $\theta \in \Irr (T^\flat)$ extends to a 
$W(G^\flat,L^\flat)_{(T^\flat,\theta)}$-stable character of $N_{L^\flat}(T^\flat)_\theta$.
\end{prop}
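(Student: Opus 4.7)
The plan is to reduce the splitting problem to a calculation in finite reductive groups via Lemma \ref{lem:7.14}, exploit the drastic simplification imposed by $F$-non-singularity of $\theta$, then read off equivariance from the centrality statements already proved (Lemmas \ref{lem:7.7} and \ref{lem:7.15}).

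First, I would apply Lemma \ref{lem:7.14}(b)--(c) to replace $\mc T^\flat$ within its stable conjugacy class so that there is a point $y\in\ff_L$ whose image in $\mc B(\mc G_\ad,F)$ is a special vertex. This yields canonical isomorphisms $W(\mc L^\flat,\mc T^\flat)(F)\cong W(\mc L^{\flat,\circ}_y,\mc T^\flat_\ff)(k_F)$ and a matching isomorphism for $W(N_{\mc G^\flat}(\mc L^\flat),\mc T^\flat)(F)$, and these identify $\theta$ with $\theta_\ff$. Thus the extension \eqref{eq:7.15} and its pushout \eqref{eq:7.16} become extensions attached to the finite reductive group $\mc L^{\flat,\circ}_y(k_F)$, where the structural results (i)--(iv) preceding Lemma \ref{lem:7.15} apply verbatim.

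Next, $F$-non-singularity of $\theta$ implies Deligne--Lusztig non-singularity of $\theta_\ff$, so item (i) gives $W(\mc L^{\flat,\circ}_y,\mc T^\flat_\ff)^\circ_{\theta_\ff}=1$. Combined with items (ii) and (iv), the stabilizer collapses to
\[
W(\mc L^\flat,\mc T^\flat)(F)_\theta \,\cong\, \Gamma \,\hookrightarrow\, X^*(\mc T^\flat_\ff)/\Z R(\mc L^{\flat,\circ}_y,\mc T^\flat_\ff),
\]
an abelian subgroup of the fundamental group. Using item (iii), $\Gamma$ is realized as the stabilizer in the affine Weyl group of a canonical point $\tilde\theta_\ff$ in the fundamental alcove of $X^*(\mc T^\flat_\ff)_\Q$. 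For each $\gamma\in\Gamma$, write $\gamma=(w_\gamma,\lambda_\gamma)\in W\ltimes X^*$ with $w_\gamma\tilde\theta_\ff+\lambda_\gamma=\tilde\theta_\ff$, pick a Weyl-group representative $n_\gamma\in N_{L^\flat}(T^\flat)_\theta$, and define
\[
\tilde\theta(n_\gamma t) \,:=\, \exp\bigl(2\pi i \langle \tilde\theta_\ff,\lambda_\gamma\rangle\bigr)\,\theta(t) \qquad (t\in T^\flat).
\]
Since $\Gamma$ is abelian and the formula only involves the canonical datum $\tilde\theta_\ff$ attached to $\theta$, I would verify that $\tilde\theta$ is a well-defined character of $N_{L^\flat}(T^\flat)_\theta$; equivalently, that the 2-cocycle class of \eqref{eq:7.16} in $H^2(\Gamma,\C^\times)$ vanishes. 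This is the main obstacle of the proof: it requires checking that the scalars $\exp(2\pi i\langle\tilde\theta_\ff,\lambda_{\gamma\gamma'}\rangle)$ agree with those of the extension \eqref{eq:7.15}, using that the standard embedding $j_0$ places $\mc T^\flat$ in a position where the relevant Tate--Nakayama obstructions vanish.

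For equivariance, I would invoke Lemma \ref{lem:7.15}, which says that $W(\mc L^{\flat,\circ}_y,\mc T^\flat_\ff)(k_F)_{\theta_\ff}$ is central in $W(N_{\mc G_y^\circ}(\mc L_y^{\flat,\circ}),\mc T^\flat_\ff)(k_F)_{\theta_\ff}$. Hence conjugation by any $g\in N_{G^\flat}(L^\flat,T^\flat)_{\Xo(L)\theta}$ fixes $\Gamma$ elementwise and sends $\tilde\theta_\ff$ to the canonical rational lift of $g\cdot\theta_\ff$, so the formula above transforms naturally under the $\Xo(L)$-orbit of $\theta$. Together with the canonical isomorphism \eqref{eq:8.14} across the family, this produces the desired $N_{G^\flat}(L^\flat,T^\flat)_{\Xo(L)\theta}$-equivariant splitting; specializing to a single $\theta$ yields the claimed $W(G^\flat,L^\flat)_{(T^\flat,\theta)}$-stable extension of $\theta$.
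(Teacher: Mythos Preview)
Your reduction via Lemma \ref{lem:7.14} to the special vertex $y$ and to finite reductive groups is correct, as is the observation that non-singularity kills $W(\mc L^{\flat,\circ}_y,\mc T^\flat_\ff)^\circ_{\theta_\ff}$ and forces the stabilizer into the fundamental group via (iv). But the explicit splitting you propose does not typecheck: both $\tilde\theta_\ff$ and $\lambda_\gamma$ lie in $X^*(\mc T^\flat_\ff)\otimes\Q$, so $\langle\tilde\theta_\ff,\lambda_\gamma\rangle$ is undefined. You also concede that checking the formula actually splits \eqref{eq:7.16} is ``the main obstacle'' and leave it open. The paper bypasses this entirely by citing \cite[Lemma 4.5.6 and Corollary 4.5.7]{Kal3} for the \emph{existence} of a splitting; only equivariance is new.

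The more serious gap is your equivariance argument. Lemma \ref{lem:7.15} tells you that $\Gamma=W(\mc L^{\flat,\circ}_y,\mc T^\flat_\ff)(k_F)_{\theta_\ff}$ is central in the ambient Weyl group, so conjugation by $g\in N_{G^\flat}(L^\flat,T^\flat)_\theta$ fixes each $\gamma\in\Gamma$ \emph{in the quotient}. That only yields $g n_\gamma g^{-1}=n_\gamma t_\gamma$ for some $t_\gamma\in T^\flat$; nothing forces $\theta(t_\gamma)=1$, so centrality downstairs does not lift to invariance of the splitting upstairs. Your appeal to ``canonicity of $\tilde\theta_\ff$'' cannot repair this, because the formula still depends on the non-canonical representatives $n_\gamma$. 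The paper therefore does substantially more: after your reduction it passes to the simply connected cover of $\mc G_y^\circ$, reduces to $\mc G_y^\circ$ absolutely simple and then to $k_F$-simple factors $\mc L_{y,i}$ of $\mc L_{y,\der}^\circ$, and runs a case-by-case over the Dynkin type of $W(\mc L_{y,i},\mc T_{\ff,i})$. Centrality (as \eqref{eq:7.49}) enters only as a constraint that excludes most outer automorphisms; the residual cases in types $A_{n-1}$, $D_n$, $E_6$, $D_4$ are then handled by explicit argument showing either that the dangerous diagram automorphism does not survive to $\Fr$-fixed points, or that a concrete lift is visibly fixed by it.
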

\begin{proof}
First we reduce to the case of finite reductive groups. Let $P_y^\flat \subset G^\flat$ be 
the parahoric subgroup associated to the special vertex $y$ from Lemma \ref{lem:7.14} (b). 
Similar to the extension \eqref{eq:7.15}, we consider the extension 
\begin{equation}\label{eq:7.39}
1 \to P_y^\flat \cap T^\flat \to P_y^\flat \cap N_{G^\flat}(L^\flat,T^\flat)_{\theta_\ff} \to 
W(N_{G^\flat}(L^\flat), T^\flat)_{\theta_\ff} \to 1 .
\end{equation}
By Lemma \ref{lem:7.14} (c), pullback of \eqref{eq:7.39} along 
$W(L^\flat, T^\flat)_{\theta} \to W(N_{G^\flat}(L^\flat), T^\flat)_{\theta_\ff}$, followed by pushout 
along $\theta_\ff : P_y \cap T^\flat \to \C^\times$, recovers the extension \eqref{eq:7.16}.
Since $\theta_\ff$ has depth zero, the pushout of \eqref{eq:7.39} along $\theta_\ff$ can
also be obtained from
\begin{equation}\label{eq:7.40}
1 \to \mc T_\ff (k_F) \to N_{\mc G_y^\circ}(\mc L_y^\circ,\mc T_\ff)(k_F)_{\theta_\ff} 
\to W(N_{\mc G_y^\circ}(\mc L_y^\circ), \mc T_\ff)(k_F)_{\theta_\ff} \to 1 .
\end{equation}
The image of $W(N_{G^\flat}(L^\flat), T^\flat)_{\theta_\ff}$ in $W(\mc G_y^\circ, \mc T_\ff)(k_F)$ 
is contained in $W(N_{\mc G_y^\circ}(\mc L_y^\circ), \mc T_\ff)(k_F)_{\theta_\ff}$. If we can
establish a $W(\mc G_y^\circ, \mc T_\ff)(k_F)$-equivariant splitting of \eqref{eq:7.40}, then 
we can extend it $W(N_{G^\flat}(L^\flat), T^\flat)_{\Xo (L) \theta_\ff}$-equivariantly to the
versions of \eqref{eq:7.40} for other $\theta_\ff$.

Thus it suffices to construct an 
$N_{\mc G_y^\circ}(\mc L_y^\circ,\mc T_\ff)(k_F)_{\theta_\ff}$-equivariant setwise splitting of
\begin{equation}\label{eq:7.41} 
1 \to \mc T_\ff (k_F) \to N_{\mc L_y^\circ}(\mc T_\ff)(k_F)_{\theta_\ff} 
\to W(\mc L_y^\circ, \mc T_\ff)(k_F)_{\theta_\ff} \to 1,     
\end{equation}
which becomes a group homomorphism in the following pushout along $\theta_\ff$:
\begin{equation}\label{eq:7.42}
1 \to \C^\times \to \mc E_{\theta_\ff}^0 \to W(\mc L_y^\circ, \mc T_\ff)(k_F)_{\theta_\ff} \to 1 .
\end{equation}
The existence of such a splitting was shown in \cite[Lemma 4.5.6 and Corollary 4.5.7]{Kal3}; 
it remains to prove its $N_{\mc G_y^\circ}(\mc L_y^\circ,\mc T_\ff)(k_F)_{\theta_\ff}$-equivariance. 

We denote the derived group of $\mc G$ by $\mc G_\der$\label{i:59}, and its simply
connected cover by $\mc G_\Sc$\label{i:60}. For disconnected algebraic groups, we define
the derived and simply connected analogues by first passing to the neutral component of 
the group.
Let $\mc T_{\ff,c}$ be the preimage of $\mc T_\ff$ in $\mc G_{y,\Sc}$. Then $\theta_\ff$ 
can be pulled back to a character $\theta_{\ff,c}$ of $\mc T_{\ff,c}(k_F)$. The preimage 
$\mc L_{y,c}$ of $\mc L_y^\circ$ in $\mc G_{y,\Sc}$ is a Levi subgroup of the latter, so the
derived subgroup of $\mc L_{y,c}$ is simply connected and we denote it by $\mc L_{y,\Sc}$.
We write $\mc T_{\ff,\Sc} = \mc T_{\ff,c} \cap \mc L_{\ff,\Sc}$. By pushout along $\theta_{\ff,c} : 
\mc T_{\ff,c} (k_F) \to \C^\times$ and pullback along 
$W(\mc L_y^\circ, \mc T_\ff)(k_F)_{\theta_\ff} \to 
W(\mc L_{y,c}, \mc T_{\ff,c})(k_F)_{\theta_{\ff,c}}$ of the extension
\begin{equation}\label{eq:7.17}
1 \to \mc T_{\ff,c}(k_F) \to N_{\mc L_{y,c}}(\mc T_{\ff,c})(k_F)_{\theta_{\ff,c}} 
\to W(\mc L_{y,c}, \mc T_{\ff,c})_{\theta_{\ff,c}} \to 1,    
\end{equation} 
one can obtain \eqref{eq:7.42}. 
The group $N_{\mc G_y^\circ}(\mc L_y^\circ,\mc T_\ff)(k_F)_{\theta_\ff}$ still acts by
conjugation on \eqref{eq:7.17}, and we need to keep track of equivariance for that action. This 
may be replaced by the conjugation action of 
$N_{\mc G_{y,\Sc}}(\mc L_{\ff,c},\mc T_{\ff,c})(k_F)_{\theta_{\ff,c}}$, because the
latter group has a larger image in $\mc G_{y,\der}(k_F)$. Therefore we may assume without loss 
of generality that $\mc G_y^\circ$ is simply connected. Then \eqref{eq:7.40} decomposes as a 
direct product of the analogous extensions for the $k_F$-simple factors of $\mc G_y^\circ$,
thus we may assume without loss of generality that $\mc G_y^\circ$ is in addition $k_F$-simple. 
By passing to a finite field extension of $k_F$, we can make $\mc G_y^\circ$ absolutely simple. 

\textit{From now on, we assume that $\mc G_y^\circ$ is absolutely simple and simply connected.}\\
By Lemma \ref{lem:7.7}, $W(\mc L_y^\circ,\mc T_\ff)(k_F)_{\theta_\ff}$ 
commutes with $N_{\mc G_y^\circ} (\mc L_y^\circ,\mc T_\ff)(k_F)_{\theta_\ff} / 
\mc T_\ff (k_F)$ in the group $W (\mc G_y^\circ, \mc T_\ff)(k_F)$. 
Choose a pinning of $\mc G_y^\circ$, associated to a maximal torus of $\mc L_y^\circ$ and 
stable under the action of $\Fr$ used to define $\mc G_y^\circ$ as $k_F$-group. Then 
the Levi subgroup $\mc L_y^\circ$ is $\Fr$-stable, and $\Fr$ stabilizes the pinning of 
$\mc L_y^\circ$ obtained by restriction from $\mc G_y^\circ$. We can write 
$N_{\mc G_y^\circ}(\mc L_y^\circ) = \mc L_y^\circ \rtimes \Gamma$, 
where $\Gamma$ is a finite group of pinning-preserving automorphisms of $\mc L_y^\circ$. 
For $\gamma \in \Gamma \subset N_{\mc G_y^\circ}(\mc L_y^\circ)$, the element 
$\Fr \gamma \Fr^{-1}$ preserves the pinning and thus again lies in $\Gamma$. In other words, 
$\Fr$ acts on the subgroup $\Gamma$ of $N_{\mc G_y^\circ}(\mc L_y^\circ)$, and that action 
coincides with the $\Fr$-action
on $N_{\mc G_y^\circ}(\mc L_y^\circ) / \mc L_y^\circ \cong \Gamma$. This implies
\[
N_{\mc G_y^\circ}(\mc L_y^\circ)(k_F) = N_{\mc G_y^\circ}(\mc L_y^\circ)^\Fr =
(\mc L_y^\circ \rtimes \Gamma)^\Fr = \mc L_y^{\circ,\Fr} \rtimes \Gamma^\Fr =
\mc L_y^\circ (k_F) \rtimes \Gamma^\Fr .
\]
Let $\mc L_{y,i}$\label{i:64} be an almost direct factor of $\mc L_y^\circ$, coming from one 
$N_{\mc G_y^\circ}(\mc L_y^\circ)(k_F) \times \langle \Fr \rangle$-orbit of simple factors
of $\mc L_y^\circ$. The group $\mc L_{y,\der}^\circ = \mc L_{y,\Sc}$ is simply connected,
thus is equal to the direct product of these $\mc L_{y,i}$'s. The group $\mc L_y^\circ$ is an
almost direct product of $\mc L_{y,\der}^\circ$ and $Z(\mc L_y^\circ)^\circ$. For
each $i$, let $\Gamma_i$ be the image of $\Gamma$ in the automorphism group of 
$\mc L_{y,i}$. We write \label{i:65} 
$\mc L_{y,i}^+ := \mc L_{y,i} \rtimes \Gamma_i$. 
Similarly, we can define 
$\Gamma_Z \subset \mr{Aut}(Z_{\mc G_y^\circ}(\mc L_{y,\der}^\circ)^\circ)$ and write
\[
Z_{\mc G_y^\circ}(\mc L_{y,\der}^\circ)^+ := 
Z_{\mc G_y^\circ}(\mc L_{y,\der}^\circ)^\circ \rtimes \Gamma_Z . 
\]
Then there is a natural embedding 
\begin{equation}\label{eq:7.45}
W(N_{\mc G_y^\circ}(\mc L_y^\circ), \mc T_\ff) \hookrightarrow
W \big( Z_{\mc G_y^\circ}(\mc L_{y,\der}^\circ)^+, Z(\mc L_y^\circ)^\circ \big) \times
\prod\nolimits_i \, W (\mc L_{y,i}^+, \mc T_{\ff,i}) ,
\end{equation}
where \label{i:61}$\mc T_{\ff,i} := \mc T_\ff \cap \mc L_{y,i}$. We define 
$W (\mc L_{y,i}^+, \mc T_{\ff,i})_{\theta_\ff}$ as the image of 
$W(N_{\mc G_y^\circ}(\mc L_y^\circ), \mc T_\ff)_{\theta_\ff}$ under \eqref{eq:7.45} followed
by projection onto the $i$-th coordinate. This group satisfies
\begin{equation}\label{eq:7.51}
W(\mc T_\ff \mc L_{y,i}, \mc T_\ff)_{\theta_\ff} \; \subset \; 
W (\mc L_{y,i}^+, \mc T_{\ff,i})_{\theta_\ff} \; \subset \;
W (\mc L_{y,i}^+, \mc T_{\ff,i})_{\theta_{\ff,i}} ,
\end{equation}
where $\theta_{\ff,i} := \theta_\ff |_{\mc T_{\ff,i} (k_F)}$. In general, both of the above inclusions 
can be proper. The upshot of the construction is that Lemma \ref{lem:7.15} still applies.
For every $i$, we can construct an extension similar to \eqref{eq:7.40} as follows:
\begin{equation}\label{eq:7.46}
1 \to \mc T_{\ff,i}(k_F) \to N_{\mc L_{y,i}^+}(\mc T_{\ff,i})(k_F)_{\theta_\ff} \to
W (\mc L_{y,i}^+, \mc T_{\ff,i})(k_F)_{\theta_\ff} \to 1,
\end{equation}
where the subscript $\theta_\ff$ is defined as above and it guarantees the exactness
of the above sequence \eqref{eq:7.46}. The extension \eqref{eq:7.42} can be recovered 
from the extensions \eqref{eq:7.46} for all $i$. Since the action of 
$N_{\mc G_y^\circ} (\mc L_y^\circ,\mc T_{\ff})(k_F)_{\theta_\ff}$ still shows up in
\eqref{eq:7.42}, it suffices to construct splittings of
\begin{equation}\label{eq:7.47}
1 \to \mc T_{\ff,i}(k_F) \to N_{\mc L_{y,i}}(\mc T_{\ff,i})(k_F)_{\theta_\ff} \to
W (\mc L_{y,i}, \mc T_{\ff,i})(k_F)_{\theta_\ff} \to 1,
\end{equation}
which are invariant for the conjugation action of 
$W (\mc L_{y,i}^+, \mc T_{\ff,i})(k_F)_{\theta_\ff}$ and become group homomorphisms in the following
pushout along $\theta_{\ff,i}$:
\begin{equation}\label{eq:7.48}
1 \to \C^\times \to \mc E_{\theta_{\ff,i}}^0 \to  
W (\mc L_{y,i}, \mc T_{\ff,i})(k_F)_{\theta_\ff} \to 1 .
\end{equation}
The existence of such a splitting was shown in \cite[Lemma 4.5.6 and Corollary 4.5.7]{Kal3}; 
it remains to show the invariance. The $N_{\mc L_{y,i}^+}(\mc T_{\ff,i})(k_F)_{\theta_\ff}$-invariants 
in the pushout of \eqref{eq:7.47} are canonically isomorphic to the invariants in the 
version of \eqref{eq:7.47} for one $k_F$-simple factor of $\mc L_{y,i}$ except with invariants 
under a subgroup of $N_{\mc L_{y,i}^+}(\mc T_{\ff,i})(k_F)_{\theta_\ff}$. Therefore we 
may assume without loss of generality that in \eqref{eq:7.47}, $\mc L_{y,i}$ is $k_F$-simple and 
simply connected.

Furthermore, $\mc L_{y,i}$ is the scalar restriction of a simple group $\mc L'_{y,i}$ 
over a field extension $k'$ of $k_F$. We may replace without loss of generality $k_F$ by $k'$ and $L_{y,i}$ by 
$\mc L'_{y,i}$. Thus we can reduce to the case where $\mc L_{y,i}$ is absolutely simple and 
simply connected. Lemma \ref{lem:7.15} still applies, and shows that 
\begin{equation}\label{eq:7.49}
W (\mc L_{y,i}, \mc T_{\ff,i})(k_F)_{\theta_\ff} \text{ is central in } 
W (\mc L_{y,i}^+, \mc T_{\ff,i})(k_F)_{\theta_\ff} .
\end{equation}
Since $W(\mc G_y^\circ,\mc T_\ff)$ is a Weyl group containing 
$W(\mc L_y^\circ,\mc T_\ff)$, and 
$N_{\mc G_y^\circ} (\mc L_y^\circ,\mc T_\ff)(k_F)_{\theta_\ff} / \mc T_\ff (k_F)$ normalizes 
$W(\mc L_y^\circ,\mc T_\ff)$, the actions of $N_{\mc L_{y,i}^+}(\mc T_{\ff,i})(k_F)_{\theta_\ff}$
on \eqref{eq:7.47} and \eqref{eq:7.48} are constructed from: 
\begin{itemize}
\item conjugation by elements of $N_{\mc L_{y,i}}(\mc T_{\ff,i})(k_F)$; 
\item  for each Dynkin diagram automorphism $W(\mc L_{y,i},\mc T_{\ff,i})$, at most one 
coset of $N_{\mc L_{y,i}}(\mc T_{\ff,i})(k_F)$. 
\end{itemize}
We now check case-by-case.\\

\noindent \textit{Case I. $N_{\mc L_{y,i}^+}(\mc T_{\ff,i})(k_F)_{\theta_\ff}$ acts on 
$\mc T_{\ff,i}$ as conjugation by elements of $W(\mc L_{y,i},\mc T_{\ff,i})$.} \\
This holds whenever $W(\mc L_y^\circ,\mc T_\ff)$ has type $A_1, B_n, C_n, E_7, E_8, F_4$ or $G_2$, 
because then the only Dynkin diagram automorphism is the identity. Then the action of\\
$N_{\mc L_{y,i}^+}(\mc T_{\ff,i})(k_F)_{\theta_\ff}$ can be viewed as coming from elements of 
$N_{\mc L_{y,i}}(\mc T_{\ff,i})(k_F)_{\theta_\ff}$. 
Hence any splitting of \eqref{eq:7.48}, as in \cite[\S 4.5]{Kal3}, is 
$N_{\mc L_{y,i}^+}(\mc T_{\ff,i})(k_F)_{\theta_\ff}$-invariant.\\

\noindent \textit{Case II. $W(\mc L_{y,i},\mc T_{\ff,i})$ has type $E_6$.} \\
If $W (\mc L_{y,i},\mc T_{\ff,i})(k_F)_{\theta_\ff}$ is trivial, there is nothing to prove. 
Otherwise, we have \\
$W (\mc L_{y,i},\mc T_{\ff,i})(k_F)_{\theta_\ff} \cong \Z / 3 \Z$. 
However, its image in the automorphism group of the affine Dynkin diagram of 
$W (\mc L_{y,i},\mc T_{\ff,i})$ does not commute with the nontrivial diagram 
automorphism $\tau$ of $E_6$, thus by \eqref{eq:7.49}, $\tau$ does not play a role in the picture. We conclude as in Case I.\\

\noindent\noindent \textit{Case III. $W(\mc L_{y,i},\mc T_{\ff,i})$ has type 
$A_{n-1}$ with $n > 2$.} \\
If $\mc L_{y,i}$ is an outer form of a split group, then $W(\mc L_{y,i},\mc T_{\ff,i})^\Fr$ 
becomes a Weyl group of type $B_{n-1}$ or $C_{n-1}$. The associated root system does not admit nontrivial
diagram automorphisms, thus we reduce back to case I. Therefore we may assume without loss of generality that
$\mc L_{y,i}$ is split, and thus it is isomorphic to $\SL_n$. 
By a change of coordinates for $\SL_n$, we may assume without loss of generality that $\mc T_{\ff,i}$ is the diagonal 
torus in $\mc L_{y,i}$, with $\Fr$-action given by the $q_F$-th power map composed with 
conjugation by an elliptic element $F_A \in W(\mc L_{y,i},\mc T_{\ff,i})$. With respect
to these coordinates, this is also how $\Fr$ acts on $\SL_n$. Since elliptic elements in $S_n$ are $n$-cycles, we may assume that $F_A = (1 \, 2 \ldots n)$. 
Note that $\mc T_{\ff,i}$ splits over the degree $n$ extension $k'$ of $k_F$ 

By \eqref{eq:7.44}, one can classify the possibilities for 
$\theta_{\ff,i}$ in terms of points of a fundamental alcove. The non-singularity of 
$\theta_\ff$ and \eqref{eq:7.44} force that $W (\mc L_{y,i},\mc T_{\ff,i})_{\theta_\ff}$ is 
either trivial or comes from the barycentre of an alcove, in which case we have
\[
W (\mc L_{y,i},\mc T_{\ff,i})_{\theta_{\ff,i}} = \langle (1\,2 \ldots n) \rangle .
\] 
More precisely, $\GL_1 (k')$ admits an order $n$ character represented by $\zeta_n \in
k'$, and $\theta_{\ff,i}$ can be represented by 
$\mr{diag}(1,\zeta_n,\ldots, \zeta_n^{-1}) \in \PGL_n (k')$. 
However, $W (\mc L_{y,i},\mc T_{\ff,i})_{\theta_{\ff,i}}$ does not commute with the 
nontrivial diagram automorphism $\tau$ of $S_n$; from their actions on the affine Dynkin 
diagram of type $A_{n-1}$, we see that $\tau$ acts by inversion. By \eqref{eq:7.49}, 
$W (\mc L_{y,i},\mc T_{\ff,i})_{\theta_{\ff}}$ can only contain elements of 
$W (\mc L_{y,i},\mc T_{\ff,i})_{\theta_{\ff,i}}$ of order $\leq 2$. If 
$W (\mc L_{y,i},\mc T_{\ff,i})_{\theta_{\ff}}$ is trivial, then there is nothing to show;
thus we may assume that $n$ is even and that 
$W (\mc L_{y,i},\mc T_{\ff,i})_{\theta_{\ff}} = \langle (1 \, 2 \ldots n)^{n/2} \rangle$. 
The group $\mc L_{y,i}^+ := \mc L_{y,i} \rtimes \Gamma$ is equal to $\SL_n \rtimes \langle -\top
\rangle$, where $-\top$ denotes the inverse transpose automorphism, because the 
nontrivial element of $\Gamma$ acts by $-\top$ composed with conjugation by an element of
$\SL_n$ (because $n$ is even). One can check that 
$W (\mc L_{y,i}^+,\mc T_{\ff,i})_{\theta_{\ff}} = \langle w_1, w_2 \rangle \cong
(\Z / 2 \Z)^2$, where 
$w_1 = (1 \, 2 \ldots n)^{n/2}$ and $w_2 = -\top \circ (2 \, n) (3 \, n\text{ - }1) \cdots (n/2 \, n/2 \!+\! 2)$. 
However, the element $w_2$ does not commute with $\Fr$ in $W(\mc L_{y,i}^+,\mc T_{\ff,i})$.
Hence we have
\[
N_{\mc L_{y,i}^+}(\mc T_{\ff,i})(k_F)_{\theta_\ff} = 
N_{\mc L_{y,i}}(\mc T_{\ff,i})(k_F)_{\theta_\ff} .
\]
\noindent\textit{Case IV. $W(\mc L_{y,i},\mc T_{\ff,i})$ has type $D_n$ with $n>4$.} \\
Now $W(\mc L_{y,i},\mc T_{\ff,i})_{\theta_{\ff,i}}$ embeds in $\Z / 4\Z$ (for $n$ odd) or in 
$\Z / 2 \Z \times \Z / 2\Z$ (for $n$ even).~If we are not in Case I, the action of 
$N_{\mc L_{y,i}^+}(\mc T_{\ff,i})_{\theta_{\ff,i}}$ on the Dynkin diagram $D_n$ uses the
nontrivial automorphism $\epsilon_n$.~We realize $\mc L_{y,i}$ as a spin group on a vector 
space of dimension $2n$, and $\mc T_{\ff,i}$ as the diagonal torus.~Then $\epsilon_n$ 
becomes the reflection in the $n$-th coordinate of $\mc T_{\ff,i}$. It only fixes one 
nontrivial element of $X^* (\mc T_{\ff,i}) / \Z R (\mc L_{y,i},\mc T_{\ff,i})$, thus 
$W(\mc L_{y,i},\mc T_{\ff,i})_{\theta_{\ff,i}}$ has order two. 

Again by \eqref{eq:7.44}, we have a classification of the possible $\theta_{\ff,i}$ via points in 
a fundamental alcove.~Via conjugation, we can reduce to the following situation: the character
$\theta_{\ff,i}$ of $\mc T_{\ff,i} (k_F)$ has trivial restriction to the first coordinate and
quadratic restriction to the $n$-th coordinate, while the restrictions to the other coordinates 
(as well as their inverses) differ and have higher order. Then 
$W(\mc L_{y,i}^+,\mc T_{\ff,i})_{\theta_{\ff,i}} = \langle \epsilon_1 , \epsilon_n \rangle$ 
and $W(\mc L_{y,i},\mc T_{\ff,i})_{\theta_{\ff,i}} = \langle \epsilon_1 \epsilon_n \rangle$. 
Recall that \eqref{eq:7.47} has a splitting, say it sends $\epsilon_1 \epsilon_n$ to $s_1 s_n$ 
with $s_1, s_n \in \mc L_{y,i}(k_F)$ representing reflections in these coordinates and 
$s_1^2 = s_n^{-2} \in Z(\mc L_{y,i})(k_F)$. Then $\epsilon_n$ acts as conjugation by $s_n$ on
$\mc L_y^\circ$, and fixes $s_1 s_n$. \\

\noindent\textit{Case V. $W(\mc L_{y,i},\mc T_{\ff,i})$ has type $D_4$.} \\
In the automorphism group of the affine Dynkin diagram of $D_4$, the order-three automorphisms 
of $D_4$ do not commute with (the image of) any nontrivial element of 
$W(\mc L_{y,i},\mc T_{\ff,i})_{\theta_{\ff,i}} \subset \Z / 2 \Z \times \Z / 2\Z$. If the 
action of $N_{\mc L_{y,i}^+} (\mc T_{\ff,i})(k_F)_{\theta_{\ff,i}}$ on the Dynkin diagram
$D_4$ includes such exceptional automorphism, then $W(\mc L_y^\circ,\mc T_\ff)_{\theta_\ff}$ 
is trivial and there is nothing to show.~Otherwise either we are in Case I, or the action of 
$N_{\mc L_{y,i}^+} (\mc T_{\ff,i})(k_F)_{\theta_{\ff,i}}$
uses exactly one nontrivial diagram automorphism $\psi$, of order two.~But $\psi$ is 
conjugate to the standard Dynkin diagram automorphism $\epsilon_n$ by another diagram 
automorphism $\tau$ of $D_4$.~Conjugating everything by this $\tau$ brings us back to Case IV, 
which works just as well for $n = 4$ with these simplifications.
\end{proof}

\section{Supercuspidal \texorpdfstring{$L$}{L}-parameters of depth zero}
\label{sec:L0}
\subsection{Preliminaries}\label{subsec:Lparam-prelim} \

Let $G^\vee = \mc G^\vee (\C)$\label{i:67} be the complex dual group of $G$,
endowed with an action of the Weil group \label{i:6} $\mb W_F \subset \mr{Gal}(F_s/F)$
that stabilizes a pinning. The Langlands dual group of $G$ is \label{i:68}
${}^L G = {}^L \mc G := G^\vee \rtimes \mb W_F$. 
Consider Langlands parameters \label{i:77}
\[
\varphi : \mb W_F \times \SL_2 (\C) \to {}^L G = G^\vee \rtimes \mb W_F,
\]
such that $\varphi |_{\SL_2 (\C)} : \SL_2 (\C) \to G^\vee$ is an algebraic homomorphism, 
$\varphi |_{\mb W_F}$ is a continuous homomorphism preserving the projections onto $\mb W_F$,
and $\varphi (\mb W_F)$ consists of semisimple elements. Let  \label{i:69}
$\mb P_F \subset \mb I_F \subset \mb W_F$ be the wild inertia and inertia subgroups of $\mb W_F$. Let $\Fr_F$\label{i:88} 
be a geometric Frobenius element of $\mb W_F$. A Langlands parameter has \textit{depth zero}
if $\varphi (w) = w$ for all $w \in \mb P_F$. 
For any $w \in \mb W_F$ and $x \in \SL_2 (\C)$, we write 
$\varphi (w,x) = \varphi (x) \varphi_0 (w) w$, where $\varphi_0 : \mb W_F \to G^\vee$ is a 1-cocycle. Since $\mb P_F$ is normal in $\mb W_F$, we have
\[
w p w^{-1} = \varphi (w p w^{-1}) = \varphi (w) \varphi (p) \varphi (w)^{-1} = 
\varphi_0 (w) w p w^{-1} \varphi_0 (w)^{-1}
\]
for any depth-zero $L$-parameter $\varphi$. Since $w p w^{-1}$ runs through all of $\mb P_F$, we have 
$\varphi_0 (w) \in Z_{G^\vee}(\mb P_F)$ 
for all $w \in \mb W_F$. 
Since $\varphi (\SL_2 (\C))$ commutes with $\varphi (\mb P_F) = \mb P_F$, it lies in 
$Z_{G^\vee}(\mb P_F)$ as well.~Therefore, any depth-zero Langlands parameter 
$\varphi$ gives rise to a 1-cocycle.
$\varphi_0 : \mb W_F / \mb P_F \times \SL_2 (\C) \to Z_{G^\vee}(\mb P_F)$.

We abbreviate $M^\vee := Z_{G^\vee}(\mb P_F)$.
This group is reductive (because $\mb P_F$ acts on $G^\vee$ via a finite quotient) but not 
necessarily connected.~Although $L$-parameters are usually considered up to $G^\vee$-conjugacy, our 
depth-zero condition is not preserved under $G^\vee$-conjugation, therefore we need to make some 
adjustments. We denote the set of $M^\vee$-conjugacy classes of depth-zero $L$-parameters for $G$ by 
$\Phi^0 (G)$, which injects into the set $\varphi (G)$\label{i:70} of $G^\vee$-conjugacy classes of 
$L$-parameters for $G$. 

Consider $G$ as a rigid inner twist of its quasi-split inner form $G^\flat$, with respect to a 
chosen finite subgroup $\mc Z \subset Z(\mc G)$ as in \cite{Kal1,Dil}. More precisely, this means 
that $G$ is equipped with more information, which can be packaged into a character $\zeta_G$ of a 
certain group $\pi_0 (Z (\bar G^\vee)^+ )$. Here $\bar{\mc G} := \mc G / \mc Z$ has complex dual group 
$\bar G^\vee$, and $Z(\bar G^\vee)^+$ is the preimage of $Z(G^\vee)^{\mb W_F}$ in $Z(\bar G^\vee)$. 
The group $\bar G^\vee$ is a central extension of $G^\vee$, which gives rise to a conjugation 
action on $G^\vee \rtimes \mb W_F$. Its associated version of the centralizer of $\varphi$ is
\label{i:72}
\[
S_\varphi^+ := Z_{{\bar G}^\vee} \big(\varphi (\mb W_F \times \SL_2 (\C)) \big) =
\text{ preimage of } Z_{G^\vee}(\varphi) \text{ in } \bar G^\vee.
\]
An \textit{enhancement} of $\varphi$ is an irreducible representation $\rho$ of 
$\pi_0 (S_\varphi^+)$, and it is called $G$-relevant if $\rho|_{Z(\bar G^\vee)^+}$ is 
$\zeta_G$-isotypic.~The group $\bar G^\vee$ acts naturally on the set of enhanced $L$-parameters 
for $G$, and this action factors through $G^\vee$.~The group $M^\vee := Z_{G^\vee}(\mb P_F)$ 
does the same if we restrict to depth-zero enhanced $L$-parameters.~Let $\Phi^0_e (G)$ be the 
set of $M^\vee$-orbits of $G$-relevant enhanced depth-zero $L$-parameters.\label{i:71} It is a 
subset of the set $\Phi_e (G)$ of $G^\vee$-orbits of $G$-relevant enhanced $L$-parameters. 

A Langlands parameter $\varphi$ is called \textit{supercuspidal} if it is discrete and trivial on 
$\SL_2 (\C)$. It is expected that this condition should be equivalent to the $L$-packet 
$\Pi_\varphi$ consisting entirely of supercuspidal representations (of various inner twists of $G)$. 
This expectation is a special case of \cite[Conjecture 7.8]{AMS1}.  

\begin{lem}\label{lem:6.1}
Every supercuspidal depth-zero $L$-parameter for $G$ gives rise to the following objects, which 
are canonical up to $M^\vee$-conjugacy:
\begin{itemize}
\item an $L$-group ${}^L T$ with an embedding ${}^L j : {}^L T \to {}^L G$, such that ${}^L j (T^\vee)$ 
is a maximal torus of $G^\vee$ and ${}^L j (\mb P_F)$ equals $\mb P_F \subset {}^L G$;
\item an $F$-torus $\mc T$ and a non-singular depth-zero character $\theta$ of $T$, such that
$\mc T / Z(\mc G)$ is elliptic and $\varphi$ is equal to the composition of ${}^L j$ with the 
$L$-parameter of $\theta$.
\end{itemize}
\end{lem}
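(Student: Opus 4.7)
The plan is to reconstruct $T^\vee$ directly from $\varphi$ as the connected centralizer of $\varphi(\mb I_F)$, obtain ${}^L T$ and the embedding ${}^L j$ from the normalizer action of $\varphi(\mb W_F)$, and then read off $(\mc T,\theta)$ as the dual $F$-torus together with the character corresponding to the $1$-cocycle $\varphi_0$. Supercuspidality (discreteness plus triviality on $\SL_2(\C)$) will be used in two places: to force the centralizer to be a maximal torus, and to yield ellipticity together with non-singularity.

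First I would use depth-zero to factor $\varphi_0$ through $\mb W_F/\mb P_F$ with values in $M^\vee = Z_{G^\vee}(\mb P_F)$, as in the discussion preceding the lemma. Fix a topological generator $s$ of the pro-cyclic quotient $\mb I_F/\mb P_F$, so that $\varphi(\mb I_F) \subset {}^L G$ is topologically generated by $\mb P_F$ and the semisimple element $\varphi(s) = \varphi_0(s)\, s \in M^\vee \rtimes \langle s\rangle$. Set
\[
H^\vee := Z_{G^\vee}\big(\varphi(\mb I_F)\big)^\circ \;=\; Z_{M^\vee}\big(\varphi(s)\big)^\circ,
\]
a connected reductive subgroup that is normalized by $\varphi(\mb W_F)$ because $\mb I_F$ is normal in $\mb W_F$. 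Granting that $H^\vee$ is in fact a maximal torus (the key step, addressed below), I would rename $T^\vee := H^\vee$, endow it with the $\mb W_F$-action $w\cdot t := \varphi(w)\,t\,\varphi(w)^{-1}$, form ${}^L T := T^\vee \rtimes \mb W_F$, and define ${}^L j(t,w) := t\,\varphi(w)$. A direct calculation, using that the $\mb W_F$-action on $T^\vee$ is exactly conjugation by $\varphi$, shows that ${}^L j$ is a homomorphism, and the depth-zero condition gives ${}^L j(\mb P_F) = \mb P_F$. A short semisimplicity argument (from the fact that $\varphi(s)$ commutes with itself, one deduces $\varphi_0(s) \in T^\vee$) followed by an $M^\vee$-conjugation to arrange $\varphi_0(\Fr_F) \in T^\vee$ shows that $\varphi$ factors as ${}^L j \circ \varphi_T$ for a parameter $\varphi_T\colon \mb W_F \to {}^L T$. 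Let $\mc T$ be the $F$-torus whose dual is $T^\vee$ with the above Weil-group action, and let $\theta$ be the character of $T = \mc T(F)$ corresponding to $\varphi_T$ under local Langlands for tori. Depth-zero of $\theta$ is immediate from $\varphi_T|_{\mb P_F} = 1$; ellipticity of $\mc T/Z(\mc G)$ follows from discreteness, since $(T^\vee/Z(G^\vee))^{\mb W_F}$ would otherwise contain a nontrivial connected subtorus, contradicting $Z_{G^\vee}(\varphi)^\circ \subset Z(G^\vee)$; non-singularity is checked coroot by coroot, using that for unramified $E/F$ and any coroot $\alpha^\vee$ of $(\mc G(E),\mc T(E))$, triviality of $\theta \circ N_{E/F} \circ \alpha^\vee$ on $\mf o_E^\times$ would force, via local class field theory and local Langlands for tori, the dual root $\alpha$ of $(G^\vee,T^\vee)$ to vanish on $\varphi_T(\mb I_F)$, so the corresponding root subgroup of $G^\vee$ would be centralized by $\varphi(\mb I_F)$ and hence lie in $Z_{G^\vee}(\varphi(\mb I_F))^\circ = T^\vee$, contradicting that $T^\vee$ is a torus. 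Canonicity up to $M^\vee$-conjugacy is automatic because $\varphi(\mb I_F)$, and therefore $T^\vee$, is intrinsic to $\varphi$, and the choice of $s$ is immaterial.

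The main obstacle is the claim that $H^\vee$ is a maximal torus. Were it not, the Levi $L^\vee := Z_{G^\vee}\big(Z(H^\vee)^\circ\big)$ would be a proper Levi subgroup of $G^\vee$ containing $H^\vee$, centralized on $Z(H^\vee)^\circ$ by $\varphi(\mb I_F)$ and normalized by $\varphi(\Fr_F)$. To derive a contradiction with discreteness, one must upgrade the $\mb W_F$-stability of $L^\vee$ to an $F$-rational Levi structure on $\mc G$ and conjugate $\varphi$ inside $M^\vee$ so that it lands in ${}^L L \subset {}^L G$ for a proper $F$-Levi $\mc L$ of $\mc G$. This is the delicate step: it requires producing a $\mb W_F$-equivariant splitting of $N_{G^\vee}(L^\vee) \twoheadrightarrow N_{G^\vee}(L^\vee)/L^\vee$ compatible with $\varphi$ and with the rigid inner twist data on $\mc G$. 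I would handle this using the Levi--parabolic dictionary of \cite{Bor}, exploiting that the depth-zero hypothesis confines the relevant $\mb W_F$-action to the pro-cyclic group $\mb I_F/\mb P_F$ acting on $M^\vee$, where the cohomological obstruction to such a splitting simplifies considerably and vanishes for supercuspidal $\varphi$.
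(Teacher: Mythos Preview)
Your definition of ${}^L j$ is where the argument breaks down. With ${}^L j(t,w)=t\,\varphi(w)$ and the $\mb W_F$-action on $T^\vee$ given by $\mathrm{Ad}\varphi$, the factorisation $\varphi={}^L j\circ\varphi_T$ is realised by the \emph{trivial} parameter $\varphi_T(w)=(1,w)$; under the local Langlands correspondence for tori this gives $\theta=1$, which is never $F$-non-singular once $\mc G$ has roots. Your non-singularity paragraph then implicitly treats $\varphi_T$ as though its cocycle part were $\varphi_0$ (this is what the sentence ``one deduces $\varphi_0(s)\in T^\vee$ \dots\ $\varphi$ factors as ${}^L j\circ\varphi_T$'' seems to aim at), but that is incompatible with the stated ${}^L j$: the factorisation is automatic and records nothing about $\varphi_0$. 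What is missing is exactly the step the paper supplies: one first builds a canonical $L$-embedding ${}^L j_M:{}^L T_M\hookrightarrow M^\vee\rtimes\mb W_F$ out of tamely ramified $\chi$-data for $R(M^\vee,T_M^\vee)$, following \cite{LaSh} and \cite[\S6.1]{Kal19a}, and then extends it to ${}^L j$. This section ${}^L j(\mb W_F)$ stabilises a pinning and does \emph{not} already contain $\varphi$, so that $\varphi_T$ carries the cocycle $\varphi_0$ and the resulting $\theta$ is nontrivial; non-singularity is then obtained from \cite[Lemma 4.1.10]{Kal3}. The $\chi$-data construction is the heart of the lemma and cannot be bypassed by absorbing $\varphi$ into the $L$-group structure of $T$.

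There is a second, independent gap in taking $T^\vee=H^\vee=Z_{G^\vee}(\varphi(\mb I_F))^\circ$. When $\mb I_F$ acts on $G^\vee$ by outer automorphisms the connected centralizer can have strictly smaller rank than $G^\vee$: for $G^\vee=\GL_3$ with $\mb I_F$ acting through the standard outer involution and $\varphi_0(s)$ diagonal with $\varphi_0(s)\tau(\varphi_0(s))$ regular, one computes $H^\vee=\{\mathrm{diag}(t,1,t^{-1})\}$, a rank-one torus in a rank-three group. Your proposed discreteness contradiction does not repair this: from $\varphi_0(s)\,s(h)\,\varphi_0(s)^{-1}=h$ for $h\in H^\vee$ you cannot conclude $\varphi_0(s)\in Z_{G^\vee}(H^\vee)$ unless $s$ already fixes $H^\vee$ pointwise, so $\varphi(\mb I_F)$ need not land in the candidate Levi $Z_{G^\vee}(H^\vee)\rtimes\mb W_F$, and there is no factorisation through a proper ${}^L L$ to contradict. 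The paper avoids this issue altogether: it only uses that $H^\vee$ is a torus (\cite[Lemma~4.1.3.2]{Kal3}), then passes to $T_M^\vee=Z_{M^{\vee,\circ}}(H^\vee)$, which is a maximal torus of $M^\vee$ by \cite[Lemma~5.2.2.2]{Kal2}, and finally sets $T^\vee=Z_{G^\vee}(T_M^\vee)$, a genuine maximal torus of $G^\vee$.
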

\begin{proof}
Consider a depth-zero supercuspidal $L$-parameter $\varphi$ for $G$.~By \cite[Lemma 4.1.3.2]{Kal3},
$Z_{G^\vee}(\varphi (\mb I_F))^\circ$ is a torus. (The torally wild assumption in \cite{Kal3} 
is not needed in the proof.) Note that $Z_{G^\vee}(\varphi (\mb I_F)) \subset M^\vee$ because
$\varphi (\mb P_F) = \mb P_F$. By the proof of \cite[Lemma 5.2.2.2]{Kal2}, we have that 
$T_M^\vee := Z_{M^{\vee,\circ}}\big( Z_{G^\vee} (\varphi (\mb I_F))^\circ \big)$ 
is a maximal torus of $M^\vee$, normalized by $\varphi (\mb W_F)$ and contained in a Borel subgroup 
of $M^\vee$ normalized by $\varphi (\mb I_F)$.~Upon conjugating $\varphi$ by a suitable element of 
$M^\vee$, we may assume without loss of generality that $T_M^\vee$ is contained in a $\mb W_F$-stable 
Borel subgroup of $M^\vee$, s.t.~
\begin{equation}\label{eq:6.20}
\varphi (\mb W_F) \subset N_{M^\vee}(T_M^\vee) \rtimes \mb W_F .
\end{equation}
Now, Ad$(\varphi)$ gives an action of $\mb W_F / \mb P_F$ on $T_M^\vee$, and the $F$-torus
$\mc T_M$ dual to $T_M^\vee$ (with this $\mb W_F$-action) is tamely ramified. 
Since $\varphi$ is discrete, 
$Z_{T_M^\vee}(\varphi (\mb W_F)) / Z(G^\vee)^{\mb W_F}$ is finite and $\mc T_M / Z(\mc G)$ is elliptic. 
By \cite[Remark 4.1.5]{Kal3}, there exist canonical tamely ramified $\chi$-data for
$R(M^\vee,T_M^\vee)$. As in \cite{LaSh} and \cite[\S 6.1]{Kal19a}, these $\chi$-data yield an embedding 
${}^L T_M / \mb P_F \hookrightarrow M^\vee \rtimes \mb W_F / \mb P_F$, 
whose $M^\vee$-conjugacy class is canonical. It inflates to an $L$-embedding
\begin{equation}\label{eq:6.30}
{}^L j_M : {}^L T_M \hookrightarrow M^\vee \rtimes \mb W_F \: \subset \: {}^L G,
\end{equation}
such that ${}^L j_M (\mb W_F)$ stabilizes a pinning of $M^\vee$. By \cite[Lemma 4.1.11]{Kal3},
we may assume that
\begin{equation}\label{eq:6.28}
{}^L j_M (1,x) = (1,x) \text{ for all } x \in \mb I_F.    
\end{equation}
For any embedding $j_M : \mc T_M \to \mc G$ whose dual L-homomorphism is ($G^\vee$-conjugate to)
${}^L j_M$, we have 
\begin{equation}\label{eq:6.17}
j_M (T_M) \text{ is a maximal tamely ramified torus of } G.
\end{equation}
In particular, $Z_{\mc G}(j_M (\mc T_M))$ is a maximal $F$-torus of $\mc G$. Hence \label{i:73}
$T^\vee := Z_{G^\vee}(T_M^\vee)$ is a maximal torus of $G^\vee$, and it is stable under Ad$({}^L j_M ({}^L T_M))$-action 
because $T_M^\vee$ is. We denote ${}^L j_M$ with target ${}^L G$ by ${}^L j$\label{i:74}. 
Then \label{i:75} 
\[
{}^L T := T^\vee \rtimes {}^L j (\mb W_F) 
\]
is the $L$-group of an $F$-torus $\mc T$. Since $\varphi$ is discrete,
$Z_{T^\vee}(\varphi (\mb W_F)) / Z(G^\vee)^{\mb W_F}$ is finite and 
$\mc T / Z(\mc G)$ is elliptic. 

Note that ${}^L j (\mb W_F)$ normalizes a Borel subgroup of $G^\vee$, i.e.~the group
generated by $T^\vee = Z_{G^\vee}(T_M^\vee)$ and the root subgroups $U_{\alpha^\vee}$ for 
$\alpha^\vee \in R (G^\vee,T^\vee)$ such that $\alpha^\vee |_{T_M^\vee}$ is positive with 
respect to the ${}^L j_M (\mb W_F)$-stable Borel subgroup of $M^\vee$ from \eqref{eq:6.20}.
The same arguments as for ${}^L j_M$ ensure that ${}^L j (\mb W_F)$ stabilizes a pinning 
of $G^\vee$.

By construction, $\varphi$ factors as ${}^L j_M \circ \varphi_{T_M}$, where 
$\varphi_{T_M} : \mb W_F \to {}^L T_M$.~Via the LLC for tori \cite{Lan2,Yu}, 
$\varphi_{T_M}$ yields a character $\theta_{T_M}$ of $T_M$. The LLC 
for tori preserves depth zero \cite[Proposition 1.3]{SoXu2}, so $\theta_{T_M}$ has depth zero. 
We can also write that 
$\varphi$ factors as ${}^L j \circ \varphi_T$, where $\varphi_T \in \Phi^0 (T)$.~Then 
$\varphi_T$\label{i:76} determines a depth-zero character $\theta$ of $T$ that extends 
$\theta_{T_M}$.~Since $Z_{G^\vee}(\varphi (\mb I_F))^\circ$ is a torus, by \cite[Lemma 4.1.10]{Kal3}, 
$\theta_{T_M}$ is $F$-nonsingular.~By \eqref{eq:6.17}, this implies that
$\theta$ is $F$-nonsingular as well.~More precisely, for any embedding $j : T \to G$ associated to 
${}^L j$, we see that $\theta$ determines a non-singular depth-zero character $j_* \theta$ of $j(T)$. 
\end{proof}

Note that from the data $(T,{}^L j,\theta)$ in Lemma \ref{lem:6.1}, we can recover
$\varphi = {}^L j \circ \varphi_T$. The following result was established in arbitrary depth 
by Kaletha, we formulate it explicitly here because in depth zero fewer assumptions are necessary.

\begin{lem}\label{lem:6.6}
There is a canonical bijection between the supercuspidal part of $\Phi^0 (G)$ and 
$M^\vee$-conjugacy classes of data $(T,{}^L j,\theta)$ as in Lemma \ref{lem:6.1}.
\end{lem}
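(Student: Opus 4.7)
The plan is to use Lemma \ref{lem:6.1} as one direction of the bijection and then construct and verify its inverse. Lemma \ref{lem:6.1} already provides a canonical (up to $M^\vee$-conjugacy) map $\varphi \mapsto (T, {}^L j, \theta)$, and the remark immediately following Lemma \ref{lem:6.1} shows that $\varphi$ is recovered as ${}^L j \circ \varphi_T$, where $\varphi_T : \mb W_F \to {}^L T$ is the Langlands parameter of $\theta$ under the LLC for tori. So the inverse candidate is forced upon us.

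First I would check that the inverse map is well-defined, i.e.\ that for any triple $(T,{}^L j,\theta)$ of the form described in Lemma \ref{lem:6.1}, the assignment $\varphi := {}^L j \circ \varphi_T$ produces a supercuspidal depth-zero $L$-parameter. Depth zero: since $\theta$ has depth zero, $\varphi_T$ is trivial on $\mb P_F$ by \cite[Proposition 1.3]{SoXu2}, and the condition ${}^L j(1,p) = (1,p)$ for $p \in \mb P_F$ (built into the data and made canonical in the proof of Lemma \ref{lem:6.1} via \eqref{eq:6.28}) then yields $\varphi(p) = p$. Supercuspidality: triviality on $\SL_2(\C)$ is automatic because $\varphi_T$ factors through $\mb W_F$, and discreteness follows from the ellipticity of $\mc T/Z(\mc G)$ together with non-singularity of $\theta$, which together force $Z_{T^\vee}(\varphi(\mb W_F))/Z(G^\vee)^{\mb W_F}$ to be finite, and hence so is $Z_{G^\vee}(\varphi)/Z(G^\vee)^{\mb W_F}$ since any semisimple centralizer of $\varphi(\mb I_F)$ sits in a torus.

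Next I would verify the two maps are mutually inverse. One direction $\varphi \mapsto (T,{}^L j,\theta) \mapsto {}^L j \circ \varphi_T = \varphi$ is exactly the content of the remark preceding the lemma. For the other direction, start from $(T,{}^L j,\theta)$, form $\varphi$, and apply Lemma \ref{lem:6.1} to produce $(T',{}^L j',\theta')$. One then identifies $T_M^{\vee,\prime} := Z_{M^{\vee,\circ}}(Z_{G^\vee}(\varphi(\mb I_F))^\circ)$ with ${}^L j(T^\vee) \cap M^\vee$ up to $M^\vee$-conjugation; the canonical tamely ramified $\chi$-data from \cite[Remark 4.1.5]{Kal3} then forces ${}^L j'$ to be $M^\vee$-conjugate to ${}^L j$, and once the embeddings are matched one has $\varphi_{T'} = ({}^L j)^{-1} \circ \varphi = \varphi_T$, so $\theta' = \theta$ via the bijectivity of the LLC for tori. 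Equivariance of both maps under $M^\vee$-conjugation is clear: Lemma \ref{lem:6.1} is equivariant by construction, and the inverse map is manifestly equivariant since $\varphi_T$ depends only on the abstract pair $(\mc T,\theta)$.

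The main obstacle is the identification step in the previous paragraph: showing that the $T_M^\vee$ chosen by Lemma \ref{lem:6.1} out of $\varphi$ agrees, up to $M^\vee$-conjugacy, with the $T^\vee \cap M^\vee$ coming from the input embedding ${}^L j$. The subtlety is that different choices of ${}^L j$-conjugate embeddings could \emph{a priori} lead to different tori inside $M^\vee$, and one must use non-singularity of $\theta$ to rule out strictly larger centralizers of $\varphi(\mb I_F)$ that would alter $T_M^\vee$. Once this rigidity is in place, the canonicity of the $\chi$-data construction transports the embedding, and the rest of the verification is formal.
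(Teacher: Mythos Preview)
Your approach is correct and is essentially what the paper does by citation: the paper's proof simply invokes \cite[Proposition 5.2.7]{Kal2} and \cite[Proposition 4.1.8]{Kal3} for bijectivity, noting that Lemma~\ref{lem:6.1} supplies the depth-zero ingredients so that Kaletha's tame ramification hypotheses on $T$ and $G$ can be dropped. You have unpacked precisely the content of those cited arguments---the well-definedness of $\varphi = {}^L j \circ \varphi_T$ as a supercuspidal depth-zero parameter, and the recovery of $(T,{}^L j,\theta)$ from $\varphi$ via the canonical $\chi$-data of \cite[Remark 4.1.5]{Kal3}---and your identification of the ``main obstacle'' (matching $T_M^\vee$ from Lemma~\ref{lem:6.1} with $T^\vee \cap M^\vee$ from the input embedding, using non-singularity to control $Z_{G^\vee}(\varphi(\mb I_F))^\circ$) is exactly the crux of Kaletha's argument.
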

\begin{proof}
The argument for bijectivity is given in \cite[Proposition 5.2.7]{Kal1} 
and \cite[Proposition 4.1.8]{Kal3}. With Lemma \ref{lem:6.1} at hand can apply these proofs 
in the special case of depth-zero $L$-parameters, then the tame ramification assumption on 
$T$ and $G$ in \cite{Kal1,Kal3} is not needed.
\end{proof}

\subsection{Extensions related to enhancements of L-parameters} \
\label{par:3.split}

We now fix a Levi subgroup $L$ of $G$ and consider depth-zero supercuspidal L-parameters 
for $L$.~To view $L$ as a rigid inner twist of its quasi-split inner form, we use the same 
$\mc Z$ as for $G$.~In this setup, we obtain the set of supercuspidal parameters in $\Phi_e^0 (L)$, 
which carries a natural action of a group analogous to $W(G,L) = N_G (L)/L$, 
i.e.~by \cite[Proposition 3.1]{ABPS1}, there is a canonical isomorphism
\begin{equation}\label{eq:6.21}
N_G (L)/L \cong N_{G^\vee}(L^\vee \rtimes \mb W_F) / L^\vee .
\end{equation}
We write $W(G^\vee,L^\vee) := N_{G^\vee}(L^\vee) / L^\vee$.~It is easy to see there
is a natural isomorphism\label{i:78}
\begin{equation}\label{eq:6.22}
N_{G^\vee}(L^\vee \rtimes \mb W_F) / L^\vee \cong W(G^\vee,L^\vee)^{\mb W_F} .
\end{equation}
The subgroup $W (M^\vee,L^\vee)^{\mb W_F} = N_{M^\vee}(L^\vee \rtimes \mb W_F) / (M^\vee \cap L^\vee)$ acts by conjugation on $\Phi^0 (L)$.~This action extends to $\Phi^0_e (L)$ in the following way.
Let $(\varphi,\rho)$ represent an element of $\Phi^0_e (L)$, and let $m \in M^\vee$ 
represent an element of $W (M^\vee,L^\vee)^{\mb W_F}$.~Then 
\begin{equation}\label{eq:6.19}
m \cdot (\varphi,\rho) = \left(m \varphi m^{-1}, \rho \circ \mr{Ad}(m)^{-1}\right) .
\end{equation}
If we allow arbitrary (enhanced) Langlands parameters for $L$, then the entire group
$W(G^\vee,L^\vee)^{\mb W_F}$ acts in this way.~Denote the stabilizer of $\varphi \in \Phi^0 (L)$ 
in $W (G^\vee,L^\vee)^{\mb W_F}$, or equivalently in $W (M^\vee,L^\vee)^{\mb W_F}$, by 
$W (G^\vee,L^\vee)^{\mb W_F}_\varphi$.~It acts naturally on $\Irr (\pi_0 (S_\varphi^+))$ and we have
\begin{equation}\label{eq:6.2}
W(G^\vee,L^\vee)^{\mb W_F}_\varphi \cong 
\big( N_{G^\vee}(L^\vee \rtimes \mb W_F) \cap Z_{G^\vee}(\varphi) \big) \big/ Z_{L^\vee}(\varphi) .
\end{equation}
We write\label{i:79}
\[
W(N_{G^\vee}(L^\vee),T^\vee )^{\mb W_F} = \big( N_{G^\vee}(L^\vee,T^\vee) /T^\vee 
\big)^{\mb W_F} \cong N_{G^\vee} (L^\vee, T^\vee \rtimes \mb W_F) / T^\vee. 
\]
Since ${}^L T$ is determined by $\varphi$ and is contained in ${}^L L$, \eqref{eq:6.2} is equal to 
\begin{equation}\label{eq:6.3} 
\big( N_{G^\vee}(L^\vee, {}^L T) \cap Z_{G^\vee}(\varphi) \big) \big/  Z_{L^\vee}(\varphi) \cong 
W(N_{G^\vee}(L^\vee),T^\vee )^{\mb W_F}_{\varphi_T} / W(L^\vee,T^\vee )^{\mb W_F}_{\varphi_T}.
\end{equation}
Here $\mb W_F$ acts via ${}^L j$, and the group $W(L^\vee,T^\vee)^{\mb W_F}$ acts naturally 
on $\Phi^0 (T)$. By the functoriality of the LLC for tori \cite{Yu}, this action satisfies
\begin{equation}\label{eq:6.5}
W (L^\vee,T^\vee)^{\mb W_F}_{\varphi_T} \cong W(\mc L,\mc T)(F)_\theta .
\end{equation}
By \cite[Lemma 3.2.1]{Kal3}, the group $W (L^\vee,T^\vee)^{\mb W_F}_{\varphi_T}$ is abelian.~Hence the conjugation action of $W(N_{G^\vee} (L^\vee), T^\vee)^{\mb W_F}_{\varphi_T}$ on 
$W(L^\vee,T^\vee)^{\mb W_F}_{\varphi_T}$ descends via \eqref{eq:6.3} to an action of
$W(G^\vee,L^\vee)^{\mb W_F}_\varphi$.

\begin{lem}\label{lem:6.2}
The subgroup $W (L^\vee,T^\vee)^{\mb W_F}_{\varphi_T}$ of 
$W(N_{G^\vee} (L^\vee), T^\vee)^{\mb W_F}_{\varphi_T}$ is central.
\end{lem}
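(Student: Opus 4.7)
The plan is to deduce this dual-side centrality statement directly from its group-side counterpart, Lemma \ref{lem:7.7}, by transferring along the functorial LLC for tori. The key observation is that Lemma \ref{lem:7.7} is essentially a statement about a Weyl group of a root datum equipped with a Galois/Frobenius action, and Langlands duality for tori provides a perfect dictionary between this picture and its dual. So very little new argument is needed beyond checking compatibilities.

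First, I would record the canonical isomorphism between the absolute Weyl groups $W(N_{\mc G}(\mc L), \mc T)$ and $W(N_{G^\vee}(L^\vee), T^\vee)$ coming from the duality of their common underlying root datum, and note that it intertwines the Galois action on the left with the $\mb W_F$-action on the right. Passing to fixed points then yields
\[
W(N_{\mc G}(\mc L),\mc T)(F) \;\cong\; W(N_{G^\vee}(L^\vee),T^\vee)^{\mb W_F},
\]
sending the subgroup $W(\mc L,\mc T)(F)$ onto $W(L^\vee,T^\vee)^{\mb W_F}$. By functoriality of the LLC for tori (the same ingredient already used in \eqref{eq:6.5}), a Weyl group element fixes $\theta$ if and only if the corresponding dual element fixes $\varphi_T$. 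Hence this isomorphism restricts to identifications of stabilizer subgroups $W(\mc L,\mc T)(F)_\theta \cong W(L^\vee,T^\vee)^{\mb W_F}_{\varphi_T}$ and $W(N_{\mc G}(\mc L), \mc T)(F)_\theta \cong W(N_{G^\vee}(L^\vee), T^\vee)^{\mb W_F}_{\varphi_T}$.

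Next, since the above is an isomorphism of abstract groups, it automatically intertwines the conjugation actions of the larger Weyl group on the smaller one that appear on the two sides. Thus the statement that $W(L^\vee,T^\vee)^{\mb W_F}_{\varphi_T}$ is central in $W(N_{G^\vee}(L^\vee),T^\vee)^{\mb W_F}_{\varphi_T}$ is equivalent to the statement that $W(\mc L,\mc T)(F)_\theta$ is central in $W(N_{\mc G}(\mc L),\mc T)(F)_\theta$, which is precisely Lemma \ref{lem:7.7}.

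The only point requiring real care is verifying that the $F$-torus $\mc T$ produced in Lemma \ref{lem:6.1} falls within the setup of \S\ref{sec:DL}, so that Lemma \ref{lem:7.7} is legitimately available. Concretely, I would check that $\mc T$ is an elliptic maximal $F$-torus of $\mc L$ (true since $\mc T/Z(\mc G)$ is elliptic and $Z(\mc G) \subset Z(\mc L)$) which contains a maximal unramified $F$-torus, namely the one dual to $T_M^\vee$ from the proof of Lemma \ref{lem:6.1}, and that $\theta$ is non-singular of depth zero (also verified in that proof). With these compatibilities in hand the transfer goes through without obstruction.
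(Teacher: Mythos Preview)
Your proposal is correct and follows essentially the same approach as the paper: the paper's proof simply cites \eqref{eq:6.5} and the analogous isomorphism \eqref{eq:6.24} to transport the statement to Lemma~\ref{lem:7.7}, which is exactly what you do (with more detail on why these isomorphisms exist and match stabilizers). One small inaccuracy in your final paragraph: the torus dual to $T_M^\vee$ is $\mc T_M$, which Lemma~\ref{lem:6.1} only guarantees to be tamely ramified, not unramified; the maximal unramified subtorus of $\mc T$ is what the paper calls $\mc T_\ff$, but this does not affect the argument.
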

\begin{proof}
Via \eqref{eq:6.5} and the similar isomorphism
\begin{equation}\label{eq:6.24}
W (N_{G^\vee}(L^\vee),T^\vee )^{\mb W_F}_{\varphi_T} \cong W(N_{\mc G} (\mc L),\mc T) (F)_\theta ,
\end{equation}
the desired statement is equivalent to Lemma \ref{lem:7.7}.
\end{proof}

Write $\overline{\mc L} := \mc L / \mc Z$ and $\overline{\mc T} := \mc T / \mc Z$.~Let 
$\overline{T}^{\vee,+} = Z_{{\overline T}^\vee} (\varphi_T)$\label{i:80} be the 
preimage of $T^{\vee,\mb W_F}$ in ${\overline T}^\vee$.~By \cite[Corollary 4.3.4]{Kal3} and 
\eqref{eq:6.5}, there is a functorial exact sequence
\begin{equation}\label{eq:6.4}
1 \to \overline T^{\vee,+} \to S_\varphi^+ \to W (L^\vee,T^\vee)^{\mb W_F}_{\varphi_T} \to 1 .
\end{equation}
We note that the group $N_{G^\vee}(L^\vee \rtimes \mb W_F, T^\vee) \cap Z_{G^\vee}(\varphi)$ from 
\eqref{eq:6.3} acts naturally on \eqref{eq:6.4}.~Moreover,
\eqref{eq:6.4} implies that $\overline T^{\vee,+}$ is an abelian normal subgroup of $S_\varphi^+$,
such that $W (L^\vee,T^\vee)^{\mb W_F}_{\varphi_T}$ acts naturally on 
$\Irr (\overline T^{\vee,+})$.~For \label{i:81}$\eta \in \Irr \big( \pi_0 (\overline T^{\vee,+}) \big)$, 
let $\Irr (S_\varphi^+,\eta)$\label{i:82} be the 
set of irreducible representations $\rho$ of $S_\varphi^+$ whose restriction to $\overline T^{\vee,+}$ 
contains $\eta$.~In fact, any such $\rho$ contains the entire 
$W (L^\vee,T^\vee)^{\mb W_F}_{\varphi_T}$-orbit $[\eta]$ of $\eta$.~By \eqref{eq:6.3}, 
$W (G^\vee,L^\vee)^{\mb W_F}_\varphi$ acts naturally on the set of 
$W (L^\vee,T^\vee)^{\mb W_F}_{\varphi_T}$-orbits in $\Irr \big( \pi_0 (\overline{T}^{\vee,+}) \big)$.
In particular, the stabilizer $W (G^\vee,L^\vee)^{\mb W_F}_{\varphi,[\eta]}$ of $[\eta]$ 
is well-defined.~Similar to \eqref{eq:6.2} and \eqref{eq:6.3}, set 
\begin{equation}\label{eq:6.18}
\begin{array}{lll}
W(G^\vee,L^\vee)^{\mb W_F}_{\varphi,\eta} & := & 
\big( N_{G^\vee}(L^\vee,{}^L T)_\eta \cap Z_{G^\vee}(\varphi) \big) / Z_{L^\vee}(\varphi)_\eta,\\ 
W(N_{G^\vee}(L^\vee), T^\vee )^{\mb W_F}_{\varphi_T,\eta} & := & 
\big( N_{G^\vee}(L^\vee,{}^L T)_\eta \cap Z_{G^\vee}(\varphi) \big) / T^{\vee,\mb W_F}.
\end{array}
\end{equation}
The group $W(G^\vee,L^\vee)^{\mb W_F}_{\varphi,\eta}$ embeds naturally in 
$W (G^\vee,L^\vee)^{\mb W_F}_\varphi$, which gives an isomorphism 
$W(G^\vee,L^\vee)^{\mb W_F}_{\varphi,\eta} \cong W (G^\vee,L^\vee)^{\mb W_F}_{\varphi,[\eta]}$. 

Similar to $\Xo (L)$, we consider \label{i:Xov}
\begin{equation}
\Xo (L^\vee) := 
\begin{Bmatrix}\psi \in H^1 ( \mb W_F, Z(L^\vee)) : \psi \text{ has depth
zero in } H^1 (\mb W_F,T^\vee) \\
\text{for every maximal torus } T \subset L \end{Bmatrix}.
\end{equation}
The groups $\Xo (L^\vee)$ and $H^1 ( \mb W_F, Z(L^\vee))$ act naturally on $\Phi (L)$ as 
\begin{equation}\label{eq:6.27}
(z \cdot \varphi)( \gamma, A) = z (\gamma) \varphi (\gamma,A) \text{ for } 
\varphi \in \Phi (L), z \in \Xo (L^\vee), \gamma \in \mb W_F, A \in SL_2 (\C). 
\end{equation}
This action \eqref{eq:6.27} stabilizes $\Phi^0 (L)$ and we have $S_{z \varphi}^+ = S_\varphi^+$. 
Moreover, it extends to an action on $\Phi_e (L)$ and on $\Phi_e^0 (L)$ that acts trivially on 
enhancements. The group\label{i:84}
\begin{equation}\label{eq:6.6}
\tilde N_\varphi := N_{G^\vee}(L^\vee,{}^L T) \cap 
\mr{Stab}_{G^\vee} ( \Xo (L^\vee) \varphi)
\end{equation}
acts naturally on all terms of \eqref{eq:6.4}.
Restricting \eqref{eq:6.4} to the stabilizers of $\eta$ gives the following extension of
$\tilde N_{\varphi,\eta}$-groups 
\begin{equation}\label{eq:6.7} 
1 \to \overline T^{\vee,+} \to (S_\varphi^+)_\eta \to 
W (L^\vee,T^\vee)^{\mb W_F}_{\varphi_T,\eta} \to 1 .
\end{equation}
Pushout of \eqref{eq:6.7} along $\eta$ gives a central extension \label{i:85}
\begin{equation}\label{eq:6.8}
1 \to \C^\times \to \mc E_\eta^{\varphi_T} \to W (L^\vee,T^\vee)^{\mb W_F}_{\varphi_T,\eta} \to 1,
\end{equation}
where the $\tilde N_{\varphi,\eta}$-action from \eqref{eq:6.7} factors via 
\begin{equation}\label{eq:6.23}
W (N_{G^\vee} (L^\vee), T^\vee)^{\mb W_F}_{\eta, \Xo (L^\vee) \varphi_T} := 
\tilde N_{\varphi,\eta} / T^{\vee,\mb W_F} .
\end{equation}
The significance of \eqref{eq:6.8} is that $\Irr (\mc E_\eta^{\varphi_T},\mr{id})$ is naturally 
in bijection with $\Irr (S_\varphi^+,\eta)$, which will parametrize a part of an L-packet 
(in Proposition \ref{prop:8.7}). Next we express \eqref{eq:6.7} and \eqref{eq:6.8} 
as Baer sums of simpler extensions. Firstly, consider the following split extension 
\begin{equation}\label{eq:6.9}
1 \to T^\vee \to T^\vee \rtimes W (L^\vee,T^\vee)^{\mb W_F}_\eta \to 
W (L^\vee,T^\vee )^{\mb W_F}_\eta \to 1.
\end{equation}
Restricting to $\varphi_T (\mb W_F)$-invariants and then taking preimages in 
${\overline T}^\vee \rtimes W (L^\vee,T^\vee)^{\mb W_F}_\eta$ gives a central extension 
\begin{equation}\label{eq:6.10} 
1 \to \overline T^{\vee,+} \to \big( {\overline T}^\vee \rtimes W (L^\vee,T^\vee)^{\mb W_F}_\eta 
\big)^{\varphi_T (\mb W_F)} \to W (L^\vee,T^\vee )^{\mb W_F}_{\eta,\varphi_T} \to 1, 
\end{equation}
whose push out along $\eta$ gives us an extension 
\label{i:86}
\begin{equation}\label{eq:6.11}
1 \to \C^\times \to \mc E_\eta^{\rtimes \varphi_T} \to 
W (L^\vee,T^\vee )^{\mb W_F}_{\eta,\varphi_T} \to 1 .
\end{equation}
For any $z \in \Xo (L^\vee)$, the groups $\varphi_T (\mb W_F)$ and 
$z \varphi_T (\mb W_F)$ centralize the same elements of $L^\vee$.
Hence $\tilde N_{\varphi,\eta}$ acts on \eqref{eq:6.10} via its 
quotient $W ( N_{G^\vee}(L^\vee),T^\vee )^{\mb W_F}_{\eta,\Xo (L^\vee) \varphi_T}$,
and that descends to an action on \eqref{eq:6.11}. 

Secondly, consider the extension
\begin{equation}\label{eq:6.12} 
1 \to T^\vee \to N_{L^\vee} (T^\vee) \to W(L^\vee,T^\vee) \to 1 ,
\end{equation}
endowed with the $\mb W_F$-action from ${}^L j : \mb W_F \to {}^L L$.~By \cite[Lemma 4.5.3]{Kal3}, 
it remains exact upon taking $\mb W_F$-invariants.~(For use in Appendix \ref{sec:B} 
we remark that this still 
works if we replace $N_{L^\vee}(T^\vee)$ by $N_{G^\vee}(L^\vee,T^\vee)$ in \eqref{eq:6.12},
by the same argument.)~Next taking preimages in ${\overline G}^\vee$ and pullback along 
$W (L^\vee,T^\vee)^{\mb W_F}_{\varphi_T} \to W(L^\vee,T^\vee)^{\mb W_F}$ give
\begin{equation}\label{eq:6.13}
1 \to \overline T^{\vee,+} \to N_{{\overline L}^\vee}({\overline T}^\vee)^+_{\varphi_T}
\to W (L^\vee,T^\vee)^{\mb W_F}_{\varphi_T} \to 1 .  
\end{equation}
Then we pull back along $W (L^\vee,T^\vee)^{\mb W_F}_{\eta,\varphi_T} \to 
W(L^\vee,T^\vee)^{\mb W_F}_{\varphi_T}$ to obtain the extension
\begin{equation}\label{eq:6.14}
1 \to \overline T^{\vee,+} \to (N_{{\overline L}^\vee}({\overline T}^\vee)^+)_{\varphi_T,\eta} 
\to W (L^\vee,T^\vee)^{\mb W_F}_{\eta,\varphi_T} \to 1 . 
\end{equation}
Pushout along $\eta$ gives a central extension
\begin{equation}\label{eq:6.15}
1 \to \C^\times \to \mc E_\eta^{0,\varphi_T} \to W (L^\vee,T^\vee )^{\mb W_F}_{\eta,\varphi_T} \to 1 .    
\end{equation}
Again the group $\tilde N_{\varphi,\eta}$ from \eqref{eq:6.6} acts naturally on 
\eqref{eq:6.12}--\eqref{eq:6.15}, which induces an action of 
$W (N_{G^\vee}(L^\vee),T^\vee)^{\mb W_F}_{\eta, \Xo (L^\vee) \varphi_T}$ on \eqref{eq:6.15}.

\begin{lem}\label{lem:6.3}
\enuma{
\item The extension \eqref{eq:6.7} is isomorphic to the Baer sum of \eqref{eq:6.10} and 
\eqref{eq:6.14}, as extensions of $\tilde N_{\varphi,\eta}$-groups.
\item The extension \eqref{eq:6.8} is isomorphic to the Baer sum of \eqref{eq:6.11} and 
\eqref{eq:6.15}, as extensions of 
$W (N_{G^\vee}(L^\vee),T^\vee)^{\mb W_F}_{\eta, \Xo (L^\vee) \varphi_T}$-groups.
}
\end{lem}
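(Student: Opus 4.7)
The strategy will mirror Lemma \ref{lem:7.10}, transferred to the Galois side, with \eqref{eq:6.7}, \eqref{eq:6.10}, \eqref{eq:6.14} playing the roles of \eqref{eq:7.19}, \eqref{eq:7.22}, \eqref{eq:7.24}, respectively. For part (a), I would first construct a setwise splitting $w \mapsto \tilde n_w$ of \eqref{eq:6.14}. A lift of $w$ to an element of $N_{L^\vee}(T^\vee)^{\mb W_F}$ exists because \eqref{eq:6.12} remains exact upon taking $\mb W_F$-invariants by \cite[Lemma 4.5.3]{Kal3}; lifting further to $N_{\overline L^\vee}(\overline T^\vee)^+$ is then straightforward because the kernel of $\overline L^\vee \to L^\vee$ is central. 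Next I would construct a setwise splitting $w \mapsto (t_w, w)$ of \eqref{eq:6.10}, where $t_w \in \overline T^\vee$ is chosen so that the pair is $\varphi_T(\mb W_F)$-invariant; this is possible because $w$ preserves $\varphi_T$ only up to a $1$-coboundary valued in $T^\vee$, and $t_w$ can be selected to kill this coboundary.

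The key point is that the product $t_w \tilde n_w$ lies in $(S_\varphi^+)_\eta$: the element $\tilde n_w \varphi_T \tilde n_w^{-1}$ differs from $\varphi_T$ by a $1$-cocycle $\mb W_F \to T^\vee$ whose class is trivial (precisely by the condition that $w$ stabilizes the $H^1(\mb W_F, Z(L^\vee))$-class of $\varphi_T$), and left multiplication by $t_w$ neutralises this cocycle. This yields a setwise splitting of \eqref{eq:6.7}. Reading off the $2$-cocycles that measure the failure of the three chosen splittings to be homomorphisms, one sees that the cocycle of \eqref{eq:6.7} is the pointwise sum (in the additive notation for $\overline T^{\vee,+}$) of the cocycles of \eqref{eq:6.10} and \eqref{eq:6.14}, which is precisely the Baer sum condition.

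All three extensions carry compatible $\tilde N_{\varphi,\eta}$-actions by conjugation (as noted immediately after each extension is introduced), and the Baer sum is functorial in the category of extensions of $\tilde N_{\varphi,\eta}$-groups by $\overline T^{\vee,+}$, so the isomorphism respects this action automatically once the cocycle identification is in hand. For part (b), pushout along $\eta \colon \overline T^{\vee,+} \to \C^\times$ is a functor which carries \eqref{eq:6.7}, \eqref{eq:6.10}, \eqref{eq:6.14} to \eqref{eq:6.8}, \eqref{eq:6.11}, \eqref{eq:6.15} respectively and commutes with the formation of Baer sums. The $\tilde N_{\varphi,\eta}$-equivariance descends to an equivariance under the quotient $W(N_{G^\vee}(L^\vee), T^\vee)^{\mb W_F}_{\eta, \Xo (L^\vee) \varphi_T}$ from \eqref{eq:6.23}, as required.

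The main obstacle will be verifying that the chosen setwise splitting of \eqref{eq:6.14} can be made compatible with the conjugation action of $\tilde N_{\varphi,\eta}$ finely enough for the Baer sum identification to go through; this is the Galois-side counterpart of the delicate case analysis in Proposition \ref{prop:7.9}. Fortunately, here the centrality of $W(L^\vee,T^\vee)^{\mb W_F}_{\varphi_T}$ inside $W(N_{G^\vee}(L^\vee),T^\vee)^{\mb W_F}_{\varphi_T}$ supplied by Lemma \ref{lem:6.2} should substantially simplify the bookkeeping, in the same way that Lemma \ref{lem:7.15} enabled the proof of Proposition \ref{prop:7.9}.
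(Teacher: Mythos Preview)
Your approach is essentially the same as the paper's: pick setwise splittings of \eqref{eq:6.10} and \eqref{eq:6.14}, multiply them to obtain a setwise splitting of \eqref{eq:6.7}, and read off that the associated $2$-cocycle is the product of the other two. The paper simply cites \cite[Proposition 8.2]{Kal4} for this computation rather than spelling it out, and then remarks that everything takes place in the category of $\tilde N_{\varphi,\eta}$-groups; part (b) is deduced exactly as you indicate, by pushout along $\eta$.

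Your final paragraph, however, is a red herring. No analogue of the case analysis in Proposition \ref{prop:7.9} or \ref{prop:6.4} is needed here. Those propositions establish that certain extensions \emph{split} equivariantly, which is genuinely hard; Lemma \ref{lem:6.3} only asserts a Baer sum \emph{isomorphism}, and such an isomorphism is canonical at the level of extensions (independent of the auxiliary setwise splittings used to compute cocycles). You already said this correctly in your third paragraph: the Baer sum is functorial, so the $\tilde N_{\varphi,\eta}$-equivariance is automatic once the three extensions are known to be $\tilde N_{\varphi,\eta}$-groups. There is no obstacle to overcome, and Lemma \ref{lem:6.2} plays no role in this particular lemma.
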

\begin{proof}
(a) The following is shown in the proof of \cite[Proposition 8.2]{Kal4}. One has setwise 
splittings of \eqref{eq:6.10} and \eqref{eq:6.14}, from which one constructs 2-cocycles in
$Z^2 (W(L^\vee,T^\vee)^{\mb W_F}_{\eta,\phi_T})$ that classify these two extensions. Then 
the product of these 2-cocycles classifies the extension \eqref{eq:6.7}. Translating back
from 2-cocycles to extensions establishes the desired isomorphism. To these arguments we only 
have to add that they all take place in the category of $\tilde N_{\varphi,\eta}$-groups.

(b) This is a direct consequence of part (a) and the earlier observations that, upon pushout 
along $\eta$, the $\tilde N_{\varphi,\eta}$-actions on the said extensions factor through
\eqref{eq:6.23}.
\end{proof}

We shall also need the following technical result similar to Proposition \ref{prop:7.9}.
Although all the extensions $\mc E_\eta^{0,\varphi'_T}$ with $\varphi'_T \in \Xo (L^\vee)
\varphi_T$ are naturally isomorphic, we need to distinguish them for this purpose. \label{i:87}

\begin{prop}\label{prop:6.4}
The family of extensions $\mc E_\eta^{0,\varphi'_T}$ with 
$\varphi'_T \in \Xo (L^\vee) \varphi_T$ admits a 
$W (N_{G^\vee}(L^\vee),T^\vee)^{\mb W_F}_{\eta, \Xo (L^\vee) \varphi_T}$-equivariant splitting.  
\end{prop}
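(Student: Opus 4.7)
The plan is to deduce Proposition \ref{prop:6.4} from its $p$-adic counterpart Proposition \ref{prop:7.9} via the functoriality of the LLC for tori. The key observation is that both sides of the Langlands correspondence for the torus $\mc T$ carry parallel extension-theoretic data, and the Galois-side extension \eqref{eq:6.15} should ultimately match — as a central extension with equivariant action — a suitable pullback of the $p$-adic extension \eqref{eq:7.16}.

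First, I would use \eqref{eq:6.5} and \eqref{eq:6.24} to identify $W(L^\vee,T^\vee)^{\mb W_F}_{\varphi_T}$ with $W(\mc L^\flat,\mc T^\flat)(F)_\theta$ and $W(N_{G^\vee}(L^\vee),T^\vee)^{\mb W_F}_{\varphi_T}$ with $W(N_{\mc G^\flat}(\mc L^\flat),\mc T^\flat)(F)_\theta$, where $\theta$ corresponds to $\varphi_T$ under the LLC for tori; by Lemma \ref{lem:6.2} and Lemma \ref{lem:7.7} these identifications respect the centrality structure. Next I would match the extensions themselves: the exactness of \eqref{eq:6.12} under $\mb W_F$-invariants (\cite[Lemma 4.5.3]{Kal3}) means that the extension \eqref{eq:6.13} is obtained from the $p$-adic normalizer extension \eqref{eq:7.15} for $L^\flat$ by pushout along the character of $\pi_0(\overline T^{\vee,+})$ induced by $\theta$, once the enhancement $\eta$ has been translated via Tate--Nakayama duality into the admissible-embedding data parametrized by $H^1(\mc E,\mc Z \to \mc T)$ (see \cite[\S 4.4]{Kal3} and \S\ref{par:2.split}). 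Restricting to the $\eta$-stabilizer yields the sequence \eqref{eq:6.14}, and its pushout \eqref{eq:6.15} then corresponds to the $\eta$-pullback of the $p$-adic central extension \eqref{eq:7.16}.

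Second, I would verify that the natural conjugation action of $W(N_{G^\vee}(L^\vee),T^\vee)^{\mb W_F}_{\eta,\Xo(L^\vee)\varphi_T}$ matches, under the above identifications, the action of a subgroup of $N_{G^\flat}(L^\flat,T^\flat)_{\Xo(L)\theta}$ appearing in Proposition \ref{prop:7.9}. This uses the compatibility of the LLC for tori with the $\Xo$-twisting action, i.e.\ the matching of the natural actions of $\Xo(L^\vee) \subset H^1(\mb W_F,Z(L^\vee))$ on $\Phi^0(L)$ and of $\Xo(L)$ on $\Irr^0(L)$, which follows from \cite[Theorem 1.4]{SoXu2} together with \eqref{eq:6.27}. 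Once the identifications of extensions and of actions are in place, the equivariant splitting supplied by Proposition \ref{prop:7.9} transports to an equivariant splitting of the family $\{\mc E_\eta^{0,\varphi'_T}\}_{\varphi'_T \in \Xo(L^\vee)\varphi_T}$.

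The main obstacle will be the second step: correctly translating the enhancement $\eta \in \Irr\bigl(\pi_0(\overline T^{\vee,+})\bigr)$ into a rigid inner twist datum $[x] \in H^1(\mc E,\mc Z \to \mc T)$ on the $p$-adic side in such a way that the normalizer extension \eqref{eq:6.13} matches \eqref{eq:7.22}, and tracking the twist by $\Xo(L^\vee)$ through this translation so that the equivariance claim of Proposition \ref{prop:7.9} applies without loss. Modulo this bookkeeping, the combinatorial heart of the argument — the case-by-case analysis of Dynkin types $A_n, D_n, E_6$ carried out in the proof of Proposition \ref{prop:7.9} — is entirely reused, so no new case analysis on the Galois side is needed.
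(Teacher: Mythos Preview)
Your proposal has a genuine gap. The crux of your plan is to identify the Galois-side extension $\mc E_\eta^{0,\varphi_T}$ of \eqref{eq:6.15} with (a pullback of) the $p$-adic extension $\mc E_\theta^0$ of \eqref{eq:7.16}, and then transport the splitting from Proposition~\ref{prop:7.9}. But no such identification is available a priori. The extension \eqref{eq:7.16} is the pushout along $\theta : T \to \C^\times$ of the $p$-adic normalizer extension with kernel $T = \mc T(F)$, while \eqref{eq:6.15} is the pushout along $\eta : \overline T^{\vee,+} \to \C^\times$ of a complex-group normalizer extension with kernel $\overline T^{\vee,+}$. These are built from different groups in different categories; your assertion that ``\eqref{eq:6.13} is obtained from \eqref{eq:7.15} by pushout along the character of $\pi_0(\overline T^{\vee,+})$ induced by $\theta$'' does not typecheck, since $\theta$ is a character of the $p$-adic torus, not of $\pi_0(\overline T^{\vee,+})$, and there is no functor carrying $N_{L^\flat}(T^\flat)_\theta$ to $N_{\overline L^\vee}(\overline T^\vee)^+_{\varphi_T}$. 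Indeed, the paper does eventually produce an isomorphism $\mc E_{\theta}^{0,[x]} \cong \mc E_\eta^{0,\varphi_T}$ (Lemma~\ref{lem:8.5}), but its proof \emph{uses} both Proposition~\ref{prop:7.9} and Proposition~\ref{prop:6.4} as separate inputs: each extension is shown to split independently, and then two split extensions of the same quotient are isomorphic. Your plan would make this circular.

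The paper's actual argument runs a parallel but independent reduction on the Galois side. Via \eqref{eq:6.36} and the factorization \eqref{eq:6.32} of $\eta$ through $\eta_\Sc$, one reduces \eqref{eq:6.15} to the extension \eqref{eq:6.16} built from $N_{L_\Sc^\vee}(T_\Sc^\vee)^{\mb W_F}$; after passing to adjoint $G^\vee$ and decomposing over $\tilde N_\varphi \rtimes \mb W_F$-orbits of simple factors one arrives at the situation where $L^\vee$ is simple adjoint, and the centrality statement of Lemma~\ref{lem:6.2} plays the role that Lemma~\ref{lem:7.7} played for Proposition~\ref{prop:7.9}. Only at this final stage does the proof invoke the case analysis I--V, which is structurally identical to that in Proposition~\ref{prop:7.9} because both have been reduced to the same combinatorics of Weyl groups and Dynkin diagrams. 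So you are right that the case-by-case work is reused, but it is reused by analogy after an independent reduction, not by transporting a splitting across a comparison of extensions.
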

\begin{proof}
It suffices to show that \eqref{eq:6.15} admits a 
$W (N_{G^\vee}(L^\vee),T^\vee)^{\mb W_F}_{\eta,\varphi_T}$-equivariant splitting, for then
the other required splittings are obtained by conjugating with elements of 
$W (N_{G^\vee}(L^\vee),T^\vee)^{\mb W_F}_{\eta, \Xo (L^\vee) \varphi_T}$.
By Lemma \ref{lem:7.14} (c), we have
\begin{equation}\label{eq:6.36}
W(G^\vee,T^\vee)^{\mb W_F} \cong W(\mc G,\mc T)(F) \cong W(\mc G_y^\circ,\mc T_\ff)(k_F) .
\end{equation}
Since the group $\mc G_y^\circ$ and its maximal torus $\mc T_\ff$ split over an 
unramified extension of $F$, \eqref{eq:6.36} shows that we can realize $W(G^\vee,T^\vee)^{\mb W_F}$ in $W(G^{\vee,\mb I_F,\circ}, T^{\vee,\mb I_F,\circ})^{\mb W_F}$.
(Recall from \eqref{eq:6.28} that $\mb I_F$ acts in the same way on $T^\vee$ and on 
$G^\vee$.)~Furthermore, as in the proof of Proposition \ref{prop:7.9}, we may replace $\theta$ by its
restriction $\theta_\ff$ to $\mc T_\ff (\mf o_F)$, which factors via $\mc T_\ff (k_F)$.~This 
makes the stabilizer of $\theta$ bigger but preserves the non-singularity.~On the dual side, 
it means that we may replace $\varphi$ by $\varphi |_{\mb I_F}$, which has image in 
$T^{\vee,\mb I_F,\circ} \times \mb I_F$.~If $\imath_F \in \mb I_F$ is a topological generator of
$\mb I_F / \mb P_F$, then $\varphi (\imath_F)$, i.e.~the semisimple parameter of $\theta_\ff$, and 
$\varphi (\mb I_F)$ have the same centralizer in $G^{\vee,\mb I_F,\circ}$. 
These replacements for $G^\vee, T^\vee$ and $\varphi$ tell us that it suffices to prove the
proposition assuming $\mb I_F$ acts trivially on $G^\vee$, and with the centralizer of
$\varphi (\imath_F) \in T^\vee$ instead of the centralizer of $\varphi$.

Let $T_\Sc$ be the preimage of $T$ in $L_\Sc$. The element 
\[
\eta \in \Irr \big( \pi_0 (\overline T^{\vee,+}) \big) \cong H^1 (\mc E, \mc Z \to \mc T)
\]
can be constructed as an invariant of $(j,j_0)$ as in \cite[\S 4.4]{Kal3}. Since the embeddings 
$j,j_0 : \mc T \to \mc L$ lift to $\mc T_\Sc \to \mc L_\Sc$, the element $\eta$ lifts to 
\[
\eta_\Sc \in H^1 (F, T_\Sc) \cong \Irr \big( \pi_0 (T_{\Sc}^{\vee,\mb W_F}) \big). 
\]
This means that 
\begin{equation}\label{eq:6.32}
\eta : \pi_0 (\overline T^{\vee,+}) \to \C^\times \quad \text{factors through} \quad  
\eta_\Sc : \pi_0 (T_{\Sc}^{\vee,\mb W_F}) \to \C^\times.
\end{equation}
As in \cite[Corollary 4.5.5]{Kal3}, \eqref{eq:6.15} can be obtained from 
\begin{equation}\label{eq:6.16} 
1 \to T_{\Sc}^{\vee,\mb W_F} \to N_{L_{\Sc}^\vee} (T_{\Sc}^\vee)^{\mb W_F}_{\varphi_T} 
\to W (L_{\Sc}^\vee,T_{\Sc}^\vee)^{\mb W_F}_{\varphi_T} \to 1 
\end{equation}
via pullback along $W (L^\vee,T^\vee)^{\mb W_F}_{\varphi_T,\eta} \to W (L_{\Sc}^\vee,
T_{\Sc}^\vee)^{\mb W_F}_{\varphi_T}$ and pushout along $\eta_\Sc$.~Since 
${L_{\Sc}}^\vee = {L^{\vee}}_\ad = L^\vee / Z(L^\vee)$, we may replace all relevant subgroups 
of $N_{G^\vee}(L^\vee)$ by their image in 
$N_{G^\vee}(L^\vee) / Z(L^\vee)$.~Then it suffices to find an $\tilde N_\varphi$-equivariant 
setwise splitting of \eqref{eq:6.16}, which becomes a group homomorphism upon pushout along 
$\eta_\Sc$. The existence of such a splitting was shown in \cite[Lemma 4.5.4]{Kal3}; it remains 
to show $N_{G^\vee}(L^\vee, {}^L T)_\varphi$-equivariance. 

As an intermediate step in this reduction process, we can divide out $Z(G^\vee)$, such that
$G^\vee$ is of adjoint type. Then $G^\vee$ is a direct product of simple groups, permuted
by $\mb W_F$, and the extension \eqref{eq:6.14} decomposes accordingly. Therefore we may
(and will) assume the $G^\vee$ is simple and of adjoint type.

Let $L_i^\vee$ be a direct factor of $L_{\Sc}^\vee$, which is the product of all simple 
factors of $L_{\Sc}^\vee$ in one $\tilde N_\varphi \rtimes \mb W_F$-orbit, and write
$T_i^\vee := T_{\Sc}^\vee \cap L_i^\vee$. Then the decomposition
\begin{equation}\label{eq:6.44}
W(L_{\Sc}^\vee, T_{\Sc}^\vee) = \prod\nolimits_i \, W(L_i^\vee, T_i^\vee)
\end{equation}
is preserved by $N_{G^\vee}(L^\vee, {}^L T)_\varphi \rtimes \mb W_F$.~Let 
$W(L_i^\vee, T_i^\vee)_{\varphi_T}$ be the image of $W(L_{\Sc}^\vee, T_{\Sc}^\vee)_{\varphi_T}$ 
in $W(L_i^\vee, T_i^\vee)$ via projection onto the $i$-th coordinate in \eqref{eq:6.44}. 
Similar to \eqref{eq:7.51}, there are inclusions
\[
W ({T_\Sc}^\vee L_i^\vee, {T_\Sc}^\vee)_{\varphi_T} \;
\subset \; W(L_i^\vee, T_i^\vee)_{\varphi_T} \; \subset \; 
W(L_i^\vee, T_i^\vee)_{Z({L_\Sc}^\vee) \varphi_T}.
\]
The extension \eqref{eq:6.16} embeds in a direct product of analogous extensions 
\begin{equation}\label{eq:6.45}
1 \to (T_i^\vee)^{\mb W_F} \to N_{L_i^\vee} (T_i^\vee)^{\mb W_F}_{\varphi_T} 
\to W (L_i^\vee,T_i^\vee)^{\mb W_F}_{\varphi_T} \to 1 
\end{equation}
for the various $i$.~Hence it suffices to consider one such extension.~The 
$N_{G^\vee}(L^\vee, {}^L T)_\varphi$-invariants in the pushout of \eqref{eq:6.45} along $\eta_\Sc$ 
are canonically isomorphic to the invariants in the analogue for one $F$-simple factor of $L$ 
except for invariants with respect to a subgroup of $\tilde N_\varphi$.~Therefore, it 
suffices to prove the proposition when $L$ is $F$-simple and simply connected.

Now $L^\vee$ is a direct product of simple factors, and $\mb W_F$ permutes these factors
transitively.~We may replace $L^\vee$ by one of its simple factors and $\mb W_F$ by the 
stabilizer of that simple factor, because this replacement preserves the group of 
$\mb W_F$-invariants. Hence we may assume without loss of generality that $L^\vee$ is simple 
and adjoint. Recall that by the simplifications at the start of the proof we are in a setting 
where $G^\vee \rtimes \langle \Fr_F \rangle$ is dual to a connected finite reductive group. 

By the proof of \cite[Lemma 4.5.4]{Kal3}, we know that $W (L^\vee,T^\vee)^{\mb W_F}_{\varphi_T}$ 
is cyclic or isomorphic to the Klein four group, and by Lemma \ref{lem:6.2} we know that 
$W (L^\vee,T^\vee)^{\mb W_F}_{\varphi_T}$ commutes with $N_{G^\vee}(L^\vee, {}^L T)_\varphi$ in 
$W (G^\vee,T^\vee)^{\mb W_F}$. 

Since $W(G^\vee,T^\vee)$ is a Weyl group containing the Weyl group $W(L^\vee,T^\vee)$ and 
$N_{G^\vee}(L^\vee, {}^L T)_\varphi$ normalizes $W(L^\vee,T^\vee)$, the action of 
$N_{G^\vee}(L^\vee, {}^L T)_\varphi$ on \eqref{eq:6.15} comes from conjugation by elements 
of $W(L^\vee,T^\vee)$ and Dynkin diagram automorphisms of $W(L^\vee,T^\vee)$. Thus we can 
conclude with a case-by-case check. 
This is entirely analogous to the cases I--V in the proof of Proposition \ref{prop:7.9}.
\end{proof}

\section{An LLC for non-singular depth-zero supercuspidal representations}
\label{sec:LLCcusp}

In this section, we first recall the LLC for depth-zero supercuspidal $L$-parameters from 
\cite{DR,Kal3}; then we prove further functorial properties of this LLC.

Consider a supercuspidal $L$-parameter $\varphi \in \Phi^0 (L)$, and factor it as 
${}^L j \circ \varphi_T$ as in Lemma \ref{lem:6.1}.~Fix a Whittaker datum for the 
quasi-split inner twist $L^\flat$ of $L$, which by 
\cite[Lemma 4.2.1]{Kal3} determines an embedding $j_0 : \mc T \to \mc L^\flat$. We also fix 
$\eta \in \Irr \big( \pi_0 (\bar{T}^{\vee,+}) \big)$. Recall the natural isomorphism 
\begin{equation}\label{eq:8.15}
\Irr \big( \pi_0 (\bar{T}^{\vee,+}) \big) \cong H^1 (\mc E, \mc Z \to \mc T) 
\end{equation}
from \cite[Corollary 7.11]{Dil}.~As in \cite[\S 4.2]{Kal3}, these data determine
a rigid inner twist $\mc L'$ of $\mc L$ and an embedding $j : \mc T \to \mc L'$ of $F$-groups, 
such that the invariant $(j,j_0)$ equals $\eta$. Then $j(T)$ is a maximal torus
of $L'$ and $j(T) / Z(L')$ is elliptic.~By Lemma \ref{lem:6.1}, $\varphi_T$
corresponds to a character $\theta$ of $T$, which can also be viewed as a character of $j(T)$.

The torus $j(T)$ determines a unique vertex in the Bruhat--Tits building 
$\mc B (\mc L'_\ad,F)$, as follows. By \eqref{eq:6.17}, $j(\mc T)$ contains a unique maximal 
tamely ramified torus $j(\mc T_M)$ of $\mc L'$.~Let $E$ denote a finite tamely ramified 
extension of $F$ inside $F_s$ such that $\mc T_M$ splits.~Then $j(\mc T_M (E))$ is a maximal 
$E$-split torus in $\mc L' (E)$, so it determines an apartment 
$\mh A_{j (\mc T (E))} = \mh A_{\mc T_M (E)}$ of $\mc B (\mc L'_\ad,E)$. 
Since $\mc T$ is $F$-elliptic, $\mh A_{\mc T_M (E)}^{\mr{Gal}(E/F)}$ consists of just one 
point. By \cite[Lemma 3.4.3]{Kal2}, it is also a vertex of $\mc B (\mc L'_\ad,F)$. In other
words, we can associate the same vertex of $\mc B (\mc L'_\ad, F)$ to $j(T)$ as to $j(T_M)$. 

This vertex gives a unique minimal facet $\ff$ in $\mc B (\mc L', F)$, stabilized by
$j(\mc T)$.~In particular, $j(T) \subset L'_\ff$, and moreover $j(T)$ gives rise to a subgroup
scheme of $\mc L'_\ff$.~In fact, $j(\mc T)$ and $j(\mc T_M)$ determine the same subgroup scheme 
of $\mc L'_\ff (k_F)$, because $\mc T_M$ contains the maximal unramified subtorus of $\mc T$. 
Therefore the discussions from \cite[\S 3]{Kal3} about maximal tamely ramified tori and their
images in $\mc L'_\ff$ apply to $\mc T_M$ and carry over to $\mc T$. 
We define, in the notation from \eqref{eq:7.8}\label{i:89}, 
\begin{equation}\label{eq:8.1}
\Pi_{\varphi,\eta} := \Pi (L', j(T), \theta) \subset \Irr (L').
\end{equation} 
We emphasize that, given $\varphi, \eta$ and a Whittaker datum for $L^\flat$, the construction 
of the Deligne--Lusztig packet $\Pi_{\varphi,\eta}$ is natural, and in particular independent
of the choice of $\varphi$ in its equivalence class.
Let $\eta$ run through $H^1 (\mc E, \mc Z \to \mc T) / W(\mc L,\mc T)(F)_\theta$.~Then $j$ 
runs through all $W(\mc L,\mc T)(F)_\theta$-equivalence classes of embeddings 
$\mc T \to \mc L$.~We define the \textit{compound L-packet of $\varphi$} as \label{i:90}
$\Pi_\varphi := \bigsqcup\nolimits_\eta \, \Pi_{\varphi,\eta}$. Compared to $\Pi_{\phi,\eta}$
the dependence on $\eta$ and $j_0$ has disappeared, so $\Pi_\varphi$ depends
naturally on $\varphi \in \Phi^0 (L)$. It is a set of irreducible representations of various 
rigid inner twists $L'$ of $L$, i.e.~\label{i:91}
\[
\Pi_\varphi = \bigsqcup\nolimits_{L'} \, \Pi_\varphi (L') 
,\quad\text{where } \Pi_\varphi (L') = \Pi_\varphi \cap \Irr (L').
\]
Recall that the local Langlands correspondence for tori from \cite{Yu} matches unitary characters 
with bounded L-parameters.~Together with Lemma \ref{lem:7.13}, this implies that
\begin{equation}\label{eq:8.27}
\text{if } \varphi \text{ is bounded, then } \Pi_\varphi 
\text{ consists entirely of tempered representations.}    
\end{equation}
Conversely, if $\varphi$ is not bounded, then $\theta$ is not unitary, and thus by Lemma 
\ref{lem:7.13}, $\Pi_\varphi$ does not contain any tempered representation.

To make the naturality of $\Pi_\varphi$ more concrete, consider an $F$-automorphism $\gamma$ 
of $\mc L$. Let ${}^L \gamma := \gamma^\vee \rtimes \mr{id}_{\mb W_F}$ be an associated 
L-automorphism of ${}^L L$ (which means that the actions of $\gamma$ and $\gamma^\vee$ 
on the absolute root datum of $\mc L$ are dual). 

\begin{lem}\label{lem:8.3}
The assignments $\varphi \mapsto \Pi_\varphi$ and $(\varphi,\eta) \mapsto \Pi_{\varphi,\eta}$ 
intertwine the action of ${}^L \gamma$ with the action of $\gamma$, i.e.~we have 
$\gamma \cdot \Pi_\varphi = \Pi_{{}^L \gamma \circ \varphi}$ 
and $\gamma \cdot \Pi_{\varphi,\eta} = \Pi_{{}^L \gamma \circ \varphi, {}^L \gamma \cdot \eta}$. 
\end{lem}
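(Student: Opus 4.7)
The plan is to peel off the construction of $\Pi_{\varphi,\eta}$ step by step and verify $\gamma$-equivariance at each stage, using the naturality of each ingredient in the construction.

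First, I would extract the tuple $(T,{}^L j,\theta)$ from $\varphi$ via Lemmas \ref{lem:6.1} and \ref{lem:6.6}. Since ${}^L \gamma = \gamma^\vee \rtimes \mr{id}_{\mb W_F}$ and $\gamma^\vee$ is an algebraic automorphism of $L^\vee$ preserving a pinning, the factorization ${}^L \gamma \circ \varphi = ({}^L \gamma \circ {}^L j) \circ \varphi_T$ shows that the tuple attached to ${}^L \gamma \circ \varphi$ is $(T, {}^L \gamma \circ {}^L j, \theta)$: the character $\theta$ is unchanged, and the L-embedding is post-composed with ${}^L \gamma$. Concretely, $\gamma^\vee$ commutes with the Weil group action and preserves the canonical choices in the construction of ${}^L j_M$ (the $\chi$-data and the embedding of \eqref{eq:6.30}), so these canonical choices pass through ${}^L \gamma$ without modification, and the $M^\vee$-conjugacy class attached to ${}^L \gamma \circ \varphi$ is indeed of the asserted form.

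Next, by the naturality of \eqref{eq:8.15}, the action of ${}^L \gamma$ on $\eta \in \Irr(\pi_0(\bar T^{\vee,+}))$ corresponds under the identification $\Irr(\pi_0(\bar T^{\vee,+})) \cong H^1(\mc E,\mc Z \to \mc T)$ to the action of $\gamma$ on equivalence classes of admissible embeddings $\mc T \to \mc L$. Thus if the data $(\varphi,\eta)$ produces the rigid inner twist $\mc L'$ and embedding $j : \mc T \to \mc L'$ via \cite[Lemma 4.2.1]{Kal3}, then $({}^L \gamma \circ \varphi,\,{}^L \gamma \cdot \eta)$ produces the pair $(\gamma(\mc L'),\, \gamma \circ j)$, where $\gamma$ is transported to an isomorphism of rigid inner twists using the fixed Whittaker datum on $\mc L^\flat$. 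The Deligne--Lusztig packet construction is then manifestly $\gamma$-equivariant: $\gamma$ sends the vertex of $\mc B(\mc L'_\ad,F)$ attached to $j(\mc T)$ to the vertex attached to $\gamma(j(\mc T))$, hence the facet $\ff$ to $\gamma(\ff)$, and induces an isomorphism of finite reductive groups $\mc L'_\ff(k_F) \isom \gamma(\mc L')_{\gamma(\ff)}(k_F)$ intertwining the virtual representations $\pm \mc R^{\mc L'_\ff(k_F)}_{j(\mc T)(k_F)}(\theta)$ and $\pm \mc R^{\gamma(\mc L')_{\gamma(\ff)}(k_F)}_{\gamma(j(\mc T))(k_F)}(\gamma_* \theta)$. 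Inflating and inducing to $L'$ as in \eqref{eq:7.8} yields $\gamma \cdot \Pi_{\varphi,\eta} = \Pi_{{}^L \gamma \circ \varphi,\, {}^L \gamma \cdot \eta}$.

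The statement for the compound packet then follows by summing over representatives $\eta$ for $H^1(\mc E, \mc Z \to \mc T) / W(\mc L,\mc T)(F)_\theta$ and observing that ${}^L \gamma$ permutes these orbits, since ${}^L \gamma$ intertwines the $W(\mc L,\mc T)(F)_\theta$-action with the $W(\mc L,\mc T)(F)_{\gamma_* \theta}$-action via the isomorphism \eqref{eq:6.5}. The main obstacle is bookkeeping the interaction of $\gamma$ with the rigid inner twist data and the fixed Whittaker datum on $\mc L^\flat$, which $\gamma$ need not preserve a priori; the cleanest resolution is to note, as in the remark following \eqref{eq:8.1}, that $\Pi_{\varphi,\eta}$ depends only on the $F$-isomorphism class of the triple $(L', j(T), \theta)$, which is transported correctly by $\gamma$ regardless of the choice of $j_0$. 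Any residual dependence on the Whittaker datum cancels when one checks that both sides of the claimed equality are computed with the same transported choices.
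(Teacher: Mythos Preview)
Your proposal is correct and follows essentially the same approach as the paper: both arguments reduce to the naturality of the bijection \eqref{eq:8.15} between $\eta$'s and admissible embeddings, the evident $\gamma$-equivariance of the Deligne--Lusztig packet formula $\gamma \cdot \Pi(L', j(T), \theta) = \Pi(\gamma(L'), \gamma j(T), \gamma \cdot \theta)$, the functoriality of the LLC for tori, and then a union over $\eta$. Your version simply unpacks more of the intermediate steps (the factorization through ${}^L j$, the transport of facets and finite reductive groups) and explicitly flags the Whittaker-datum bookkeeping, which the paper's shorter proof bypasses by appealing directly to the definition \eqref{eq:7.8}.
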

\begin{proof}
The set of rigid inner twists of $L$ is parametrized by
\[
\Irr \big( \pi_0 (Z(\bar L^\vee)^+) \big) \cong H^1 (\mc E, \mc Z \to \mc L).
\]
This natural isomorphism intertwines the actions of ${}^L \gamma$ and $\gamma$,
thus the parametrization of rigid inner twists is also equivariant under these actions. 
It is clear from definition \eqref{eq:7.8} that 
\begin{equation}\label{eq:8.5}
\gamma \cdot \Pi_{\varphi,\eta} = \gamma \cdot \Pi (L', j(T), \theta) = 
\Pi ( \gamma (L'), \gamma j (T), \gamma \cdot \theta).     
\end{equation}
The LLC for tori is functorial \cite{Yu}, so intertwines the actions of $\gamma$ and ${}^L \gamma$. 
Hence the L-parameter of $(\gamma j (T), \gamma \cdot \theta)$ is ${}^L \gamma \circ j^L \circ
\varphi_T = {}^L \gamma \circ \varphi$, and the right-hand side of \eqref{eq:8.5} equals 
$\Pi_{{}^L \gamma \circ \varphi, {}^L \gamma \cdot \eta}$. 
Now we combine \eqref{eq:8.5} for all possible $j : \mc T \to \mc L$,
or equivalently for all $\eta \in H^1 (\mc E, \mc Z \to \mc T)$, and obtain the desired
$\gamma \cdot \Pi_\varphi = \Pi_{{}^L \gamma \circ \varphi}$.
\end{proof}

Recall that Langlands \cite{Lan2} defined a natural homomorphism
\begin{equation}\label{eq:8.3}
H^1 (\mb W_F, Z(G^\vee)) \to \Hom (G/G_\Sc ,\C^\times) :\: \psi \mapsto \chi_\psi .
\end{equation}
In \cite[Theorem 3.1]{SoXu2}, we showed that \eqref{eq:8.3} is an isomorphism of topological groups.
In \eqref{eq:8.3}, the left hand side acts naturally on $\Phi_e (G)$ by \eqref{eq:6.27}, while the 
right hand side acts naturally on $\Rep (G)$ by tensoring. In general, it is expected that a local
Langlands correspondence is equivariant with respect to these actions of the groups in 
\eqref{eq:8.3}.
By \cite[Lemma 6]{SoXu2}, \eqref{eq:8.3} restricts to an isomorphism
\begin{equation}\label{eq:8.4}
\Xo (G^\vee) \isom \Xo (G) .    
\end{equation}
As we noted before, it is clear from the definitions that the actions of 
$\Xo (G^\vee)$ and $\Xo (G)$ stabilize the depth-zero parts of $\Phi_e (G)$ 
and $\Rep (G)$. Similar to Lemma \ref{lem:8.3}, it follows immediately from \eqref{eq:7.2} that
\begin{equation}
\Pi_{\psi \cdot \varphi,\eta} = \chi_\psi \otimes \Pi_{\varphi, \eta} := \{ \chi_\psi \otimes \pi : \pi \in \Pi_{\varphi,\eta} \} \qquad
\psi \in \Xo (G^\vee) .
\end{equation}
We now analyze the parametrization of $\Pi_\varphi$ in more detail.~For reasons that will become 
clear in later paragraphs, we assume that $\mc L$ is an $F$-Levi subgroup of a larger reductive 
$F$-group $\mc G$.~For the sake of compatibility, we require that the component groups for 
$\Phi (G)$ and $\Phi (L)$ are constructed with respect to the same finite central subgroup 
$\mc Z \subset Z(\mc G)$.~This implies that our rigid inner twists of $G$ and of $L$ are parametrized 
by $\Irr (Z(\bar G^\vee)^+)$ and $\Irr (Z(\bar L^\vee)^+)$, respectively. By \cite[Lemma 1.1]{Art}, 
\[
Z(L^\vee)^{\mb W_F} = Z(G^\vee)^{\mb W_F} Z(L^\vee)^{\mb W_F,\circ}.
\]
Via the coverings of complex reductive groups dual to $\mc G \to \mc G / \mc Z$
and $\mc L \to \mc L / \mc Z$, this becomes 
$Z(\bar L^\vee)^+ = Z(\bar G^\vee)^+ Z(\bar L^\vee)^{+,\circ}$.~This gives a short exact sequence
\begin{equation}\label{eq:8.8}
1 \to \big( Z(\bar G^\vee)^+ \cap Z(\bar L^\vee)^{+,\circ} \big) / Z(\bar G^\vee)^+ 
\to \pi_0 (Z(\bar G^\vee)^+) \to \pi_0 (Z(\bar L^\vee)^+) \to 1 .
\end{equation}
A similar argument as \cite[Lemma 0.4.9]{KMSW} and \cite[Lemma 6.6]{AMS1}
shows:
\begin{lem}\label{lem:8.4}
\enuma{
\item The character $\zeta_G$ of $\Irr (Z(\bar G^\vee)^+)$ is equal to the pullback of 
$\zeta_L \in \Irr (Z(\bar L^\vee)^+)$ along \eqref{eq:8.8}.
\item An $F$-Levi subgroup $\mc L$ of $\mc G$ is relevant for a rigid inner twist $G'$
of $G$ if and only if $\ker (\zeta_{G'})$ contains 
$\big( Z(\bar G^\vee)^+ \cap Z(\bar L^\vee)^{+,\circ} \big) / Z(\bar G^\vee)^+$.
}
\end{lem}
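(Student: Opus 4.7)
The plan is to model the argument on \cite[Lemma 0.4.9]{KMSW} and \cite[Lemma 6.6]{AMS1}, adapted to the rigid inner twist framework of \cite{Kal1,Dil}. The main input is Kottwitz-type duality: rigid inner twists of $\mc G$ (relative to $\mc Z$) are parametrized by $H^1(\mc E, \mc Z \to \mc G)$, which is canonically in bijection with $\Irr \big( \pi_0 (Z(\bar G^\vee)^+) \big)$, and the twist defining $G$ corresponds under this bijection to $\zeta_G$; likewise for $L$ and $\zeta_L$. Since $\mc Z$ is common to $G$ and $L$, the inclusion $\mc L \hookrightarrow \mc G$ induces a natural map
\[
H^1(\mc E, \mc Z \to \mc L) \longrightarrow H^1(\mc E, \mc Z \to \mc G),
\]
whose Kottwitz dual is the surjection $\iota : \pi_0 (Z(\bar G^\vee)^+) \to \pi_0 (Z(\bar L^\vee)^+)$ appearing in \eqref{eq:8.8}.

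For part (a), I would first arrange that the rigid inner twist datum on $L$ is the restriction of the one on $G$; this is possible because $\mc Z \subset Z(\mc G) \subset \mc L$, so any cocycle representing $G$ whose values already lie in $\mc L$ is automatically a cocycle for $\mc L$. Under this choice the class of $L$ in $H^1(\mc E, \mc Z \to \mc L)$ maps to the class of $G$ in $H^1(\mc E, \mc Z \to \mc G)$, and the functoriality of Kottwitz duality gives $\zeta_G = \zeta_L \circ \iota$, which is precisely the claimed pullback identity.

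For part (b), I would translate relevance into a cohomological condition. The $F$-Levi subgroup $\mc L$ is relevant for $G'$ iff $G'$ contains an $F$-Levi subgroup that is an inner twist of $\mc L$, and this is equivalent to saying that the rigid inner twist class $[G'] \in H^1(\mc E, \mc Z \to \mc G)$ lifts to a class in $H^1(\mc E, \mc Z \to \mc L)$ along the Levi inclusion. On the dual side, such a lift exists iff $\zeta_{G'}$, viewed as a character of $\pi_0(Z(\bar G^\vee)^+)$, factors through $\iota$, which happens exactly when $\zeta_{G'}$ is trivial on
$\ker(\iota) = \big( Z(\bar G^\vee)^+ \cap Z(\bar L^\vee)^{+,\circ} \big) / Z(\bar G^\vee)^+$. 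Combined with part (a) applied to $G'$, this gives the stated equivalence.

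The main obstacle I anticipate is the cohomological reformulation of relevance in the gerbe-cohomology framework: one must verify cleanly that liftability of the rigid inner twist class of $G'$ along $\mc L \hookrightarrow \mc G$ coincides with the existence of an $F$-rational inner form of $\mc L$ sitting as an $F$-Levi inside $G'$. This is where the arguments of \cite[Lemma 0.4.9]{KMSW} and \cite[Lemma 6.6]{AMS1} do the real work; the plan is to import those arguments verbatim, using the functoriality of $H^1(\mc E, \mc Z \to -)$ with respect to Levi inclusions as developed in \cite{Dil}. Everything else reduces to a direct Pontryagin-dual translation of the short exact sequence \eqref{eq:8.8}.
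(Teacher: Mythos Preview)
Your proposal is correct and is essentially what the paper does: the paper gives no proof at all beyond the sentence ``A similar argument as \cite[Lemma 0.4.9]{KMSW} and \cite[Lemma 6.6]{AMS1} shows,'' so your sketch via Kottwitz duality for rigid inner twists, functoriality of $H^1(\mc E,\mc Z\to -)$ along the Levi inclusion, and the Pontryagin-dual reading of \eqref{eq:8.8} is already more detailed than the paper's treatment. The one place to be careful is exactly the obstacle you flagged---that liftability of the class of $G'$ along $\mc L\hookrightarrow\mc G$ in gerbe cohomology is equivalent to relevance of $\mc L$ for $G'$---but since this is precisely what the cited arguments in \cite{KMSW,AMS1} establish (in the pure/extended pure setting, transported to the rigid setting via \cite{Dil}), your plan to import them is the right move.
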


Recall that the group $W(G,L) = N_G (L) / L$ acts naturally on $\Irr (L)$, stabilizing the
subset of non-singular depth-zero supercuspidal representations of $L$. In \eqref{eq:6.19},
we specified how 
\[
W(G,L) \cong N_{G^\vee}(L^\vee \rtimes \mb W_F) / L^\vee = W(G^\vee,L^\vee)^{\mb W_F} 
\]
acts naturally on $\Phi^0_e (L)$.~By Lemma \ref{lem:8.4}, $W(G^\vee,L^\vee)^{\mb W_F}$ fixes 
the characters $\zeta_G$ and $\zeta_L$.~Recall from \eqref{eq:7.21} and Proposition 
\ref{prop:7.11} that there is a $W(G,L)_{(jT,\theta)}$-equivariant bijection
\begin{equation}\label{eq:8.20}
\Irr (N_L (jT)_\theta, \theta) \to \Pi (L,T,\theta).
\end{equation}
Furthermore, by \eqref{eq:7.19}--\eqref{eq:7.20}, we obtain a canonical bijection 
\begin{equation}\label{eq:8.16}
\Irr (N_{G'}(jT)_\theta, \theta) \longleftrightarrow \Irr (\mc E_\theta^{[x]},\mr{id}).
\end{equation}
On the other hand, the enhanced L-parameters for $\Pi_{\varphi,\eta}$ are given by $\varphi$ 
enhanced with elements of $\Irr (S_\varphi^+, \eta)$. By Clifford theory, the canonical map
\begin{equation}\label{eq:8.17}
\mr{ind}_{(S_\varphi^+)_\eta}^{S_\varphi^+} : 
\Irr ( (S_\varphi^+)_\eta, \eta) \rightarrow \Irr (S_\varphi^+, \eta)
\end{equation}
is bijective. By \eqref{eq:6.7}--\eqref{eq:6.8}, we obtained a natural bijection 
\begin{equation}\label{eq:8.18}
\Irr \big( (S_\varphi^+)_\eta, \eta \big) \longleftrightarrow 
\Irr (\mc E_\eta^{\varphi_T},\mr{id}) .
\end{equation}
Thus a desired internal parametrization of $\Pi_\varphi$ by $\Irr (S_\varphi^+)$ 
should include a comparison between $\mc E_\theta^{[x]}$ and $\mc E_\eta^{\varphi_T}$. 
Now, consider the extensions from \eqref{eq:7.20}, \eqref{eq:7.23}, \eqref{eq:7.25}, 
\eqref{eq:6.8}, \eqref{eq:6.11} and \eqref{eq:6.15}:
\begin{equation}\label{eq:8.19}
\begin{array}{lllll@{\qquad}lllll}
\C^\times & \hookrightarrow & \mc E_\theta^{0,[x]} & \twoheadrightarrow
& W(\mc L^\flat,\mc T^\flat) (F)_{[x],\theta} & \C^\times & \hookrightarrow & \mc E_\eta^{0,\varphi_T} 
& \twoheadrightarrow & W(\mc L^\vee,T^\vee)^{\mb W_F}_{\eta,\varphi_T} \\
\C^\times & \hookrightarrow & \mc E_\theta^{[x]} & \twoheadrightarrow
& W(\mc L^\flat,\mc T^\flat) (F)_{[x],\theta} & \C^\times & \hookrightarrow & 
\mc E_\eta^{\varphi_T} 
& \twoheadrightarrow & W(\mc L^\vee,T^\vee)^{\mb W_F}_{\eta,\varphi_T} \\
\C^\times & \hookrightarrow & \mc E_\theta^{\rtimes [x]} & \twoheadrightarrow
& W(\mc L^\flat,\mc T^\flat) (F)_{[x],\theta} & \C^\times & \hookrightarrow & 
\mc E_\eta^{\rtimes \varphi_T} 
& \twoheadrightarrow & W(\mc L^\vee,T^\vee)^{\mb W_F}_{\eta,\varphi_T} 
\end{array}
\end{equation}
Recall from Lemmas \ref{lem:6.3} and \ref{lem:7.10} that the extensions in the middle
rows of \eqref{eq:8.19} are the Baer sums of those above and below them. By \eqref{eq:6.21} and
\eqref{eq:6.22}, there is a natural isomorphism 
$W(\mc L^\flat, \mc T^\flat)(F) \cong W(L^\vee,T^\vee)^{\mb W_F}$, which restricts to an isomorphism
\begin{equation}\label{eq:8.21}
W(\mc L^\flat, \mc T^\flat)(F)_{[x],\theta} \cong W(L^\vee,T^\vee)^{\mb W_F}_{\eta,\varphi_T}.    
\end{equation}
From the left-hand side of \eqref{eq:8.19} we obtain three families of extensions, by letting
$\theta$ vary over $\Xo (L) \theta'$ for some $\theta'$. We showed in \S\ref{sec:DL} that the group 
\[
W(N_{\mc G^\flat}(\mc L^\flat), \mc T^\flat) (F)_{[x],\Xo (L) \theta} \cong 
W(N_G (L), jT)_{\Xo (L) \theta}
\]
acts naturally on these three families of extensions on the left-hand side of \eqref{eq:8.19}.
On the other hand, from the right-hand side of \eqref{eq:8.19} we obtain three families of
extensions, by letting $\varphi_T$ run over $\Xo (L^\vee) \varphi'_T$ for some $\varphi'_T$.
We showed in \S\ref{sec:L0} that  $W(N_{G^\vee}(L^\vee),T^\vee )^{\mb W_F}_{\eta,\Xo (L^\vee) 
\varphi_T}$ acts canonically on these three families extensions on the right-hand side of 
\eqref{eq:8.19}. From \eqref{eq:6.2}, \eqref{eq:8.21} and the natural isomorphism
$\Xo (L) \cong \Xo (L^\vee)$ from \eqref{eq:8.4}, we obtain a natural isomorphism
\begin{equation}\label{eq:8.22}
W(N_{\mc G^\flat}(\mc L^\flat), \mc T^\flat) (F)_{[x],\Xo (L) \theta} \cong 
W(N_{G^\vee}(L^\vee),T^\vee )^{\mb W_F}_{\eta,\Xo (L^\vee) \varphi_T} .
\end{equation}
Thus it makes sense to say that \eqref{eq:8.22} acts canonically on all extensions in \eqref{eq:8.19}.
Recall that we constructed $\mc E_\theta^{0,[x]}$ in \eqref{eq:7.23}, and $\mc E_\eta^{0,\varphi_T}$ in \eqref{eq:6.15}.

\begin{lem}\label{lem:8.5}
There exists a $W(N_{G^\vee}(L^\vee),T^\vee )^{\mb W_F}_{\eta,\Xo (L^\vee) \varphi_T}
$-equivariant family of group isomorphisms 
\begin{equation}\label{eqn:isom-in-lem-8.5}
\zeta^0 : \mc E_{\chi \otimes \theta}^{0,[x]} \xrightarrow{\sim} \mc E_\eta^{0,\chi \varphi_T} 
\qquad \chi \in \Xo (L) \cong \Xo (L^\vee)
\end{equation}
\end{lem}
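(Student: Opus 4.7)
Both extensions in \eqref{eqn:isom-in-lem-8.5} are central extensions of the same quotient group, via the canonical identification
\[
W(\mc L^\flat,\mc T^\flat)(F)_{[x],\theta} \;\cong\; W(L^\vee,T^\vee)^{\mb W_F}_{\eta,\varphi_T}
\]
from \eqref{eq:8.21}, and the acting group is the common object \eqref{eq:8.22}. The strategy is therefore to produce, on each side, an equivariant setwise splitting of the extension varying compatibly with $\chi \in \Xo(L) \cong \Xo(L^\vee)$, and then to define $\zeta^0$ by composing these splittings through the common quotient.

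First I would invoke Proposition \ref{prop:7.9} to obtain an $N_{G^\flat}(L^\flat,T^\flat)_{\Xo(L)\theta}$-equivariant splitting of the family $\mc E^0_{\chi \otimes \theta}$ for varying $\chi \in \Xo(L)$. Pulling back along
\[
W(\mc L^\flat,\mc T^\flat)(F)_{[x],\chi\otimes\theta} \twoheadrightarrow W(\mc L^\flat,\mc T^\flat)(F)_{\chi\otimes\theta}
\]
(noting that the subscript $[x]$ is preserved under tensoring by $\chi$, since $[x]$ depends only on the embedding $j$) yields an equivariant splitting $s^\flat_\chi$ of the family $\mc E^{0,[x]}_{\chi \otimes \theta}$. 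Similarly, Proposition \ref{prop:6.4} gives a $W(N_{G^\vee}(L^\vee),T^\vee)^{\mb W_F}_{\eta,\Xo(L^\vee)\varphi_T}$-equivariant splitting $s^\vee_\chi$ of the family $\mc E^{0,\chi\varphi_T}_\eta$.

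Each splitting identifies its extension with the trivial central extension $\C^\times \times W(L^\vee,T^\vee)^{\mb W_F}_{\eta,\varphi_T}$: explicitly, $(z,w) \mapsto z \cdot s^\flat_\chi(w)$ on the left and $(z,w) \mapsto z \cdot s^\vee_\chi(w)$ on the right. Since each splitting is equivariant and the extensions are central (so the acting group acts trivially on $\C^\times$), both identifications intertwine the actions of the common group \eqref{eq:8.22} with the natural action on $\C^\times \times W(L^\vee,T^\vee)^{\mb W_F}_{\eta,\varphi_T}$. Define
\[
\zeta^0 \colon \mc E^{0,[x]}_{\chi \otimes \theta} \longrightarrow \mc E^{0,\chi\varphi_T}_\eta,\qquad
z \cdot s^\flat_\chi(w) \longmapsto z \cdot s^\vee_\chi(w);
\]
this is a group isomorphism (since both target and source are split central extensions presented via their splittings with trivial actions on the kernel $\C^\times$), and it is \eqref{eq:8.22}-equivariant by construction.

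Finally, I would check that the family $\{\zeta^0\}_\chi$ is consistent: the natural isomorphisms \eqref{eq:8.14} identifying the extensions $\mc E^{0,[x]}_{\chi\otimes\theta}$ for different $\chi$, and the analogous identifications on the dual side (from the observation preceding \eqref{eq:6.23} that $\varphi_T$ and $z\varphi_T$ have the same centralizer), are respected by $s^\flat_\chi$ and $s^\vee_\chi$, since Propositions \ref{prop:7.9} and \ref{prop:6.4} produce the splittings uniformly across the $\Xo$-orbits. Hence $\zeta^0$ intertwines these identifications, giving the required family. The main obstacle is purely notational: matching up the actions of the two \emph{a priori} different acting groups through the isomorphism \eqref{eq:8.22}, and tracking carefully that the pullback in \eqref{eq:7.22} does not disturb the equivariance from Proposition \ref{prop:7.9}; once this bookkeeping is in place, the statement is a formal consequence of the two splitting results.
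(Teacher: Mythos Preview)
Your proposal is correct and follows essentially the same approach as the paper: the paper's proof reads in full ``By canonicity of \eqref{eq:8.22}, this follows from Propositions \ref{prop:7.9} and \ref{prop:6.4},'' and your argument simply unpacks this one-line proof by making explicit how the two equivariant splittings combine via the identification \eqref{eq:8.22} to yield the desired isomorphism.
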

\begin{proof}
By canonicity of \eqref{eq:8.22},
this follows from Propositions \ref{prop:7.9} and \ref{prop:6.4}.
\end{proof}
By \cite[Proposition 8.1]{Kal4}, there exists a canonical isomorphism of extensions
\begin{equation}\label{eq:8.23}
    \begin{tikzcd}
        1 \arrow[]{r}{} & \C^\times \arrow[]{r}{} \arrow[equals]{d}{}& 
        \mc E_\theta^{\rtimes [x]}\arrow[]{d}{\zeta^\rtimes} \arrow[]{r}{} &
W(\mc L^\flat, \mc T^\flat) (F)_{[x],\theta} \arrow[]{r}{}\arrow[]{d}{\cong} & 1 \\
1 \arrow[]{r}{} & \C^\times \arrow[]{r}{} & \mc E_\eta^{\rtimes \varphi_T} \arrow[]{r}{} &
W(L^\vee, T^\vee)^{\mb W_F}_{\eta, \varphi_T} \arrow[]{r}{} & 1 
    \end{tikzcd}
\end{equation}
Canonicity ensures that it is equivariant for the natural actions of \eqref{eq:8.22}.
Combined with the Baer sum expressions for the extensions in \eqref{eq:8.19}, 
we have the following.

\begin{lem}\label{lem:8.6}
Every choice of a $\zeta^0$ in \eqref{eqn:isom-in-lem-8.5} 
gives rise to a family of isomorphisms
\[
B(\zeta^0, \zeta^\rtimes) : \mc E_{\chi \otimes \theta}^{[x]} \xrightarrow{\sim} \mc E_\eta^{\chi \varphi_T}
\qquad \chi \in \Xo (L) \cong \Xo (L^\vee),
\]
which is the Baer sum of $\zeta^0$ and $\zeta^\rtimes$ (hence our notation $B(\zeta^0, \zeta^\rtimes)$). This family of isomorphisms is 
$W(N_{G^\vee}(L^\vee),T^\vee )^{\mb W_F}_{\eta,\Xo (L^\vee) \varphi_T}$-equivariant.
\end{lem}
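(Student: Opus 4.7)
The plan is to deduce Lemma \ref{lem:8.6} purely from the functoriality of the Baer sum operation in the category of central extensions (with a compatible group action). By Lemma \ref{lem:7.10}(b), the extension $\mc E_{\chi \otimes \theta}^{[x]}$ is the Baer sum of $\mc E_{\chi \otimes \theta}^{0,[x]}$ and $\mc E_{\chi \otimes \theta}^{\rtimes [x]}$ (as extensions of $W(\mc L^\flat,\mc T^\flat)(F)_{[x],\chi \otimes \theta}$ by $\C^\times$); and by Lemma \ref{lem:6.3}(b), $\mc E_\eta^{\chi \varphi_T}$ is the Baer sum of $\mc E_\eta^{0,\chi \varphi_T}$ and $\mc E_\eta^{\rtimes \chi \varphi_T}$ (as extensions of $W(L^\vee,T^\vee)^{\mb W_F}_{\eta,\chi \varphi_T}$ by $\C^\times$). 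Under the natural isomorphism \eqref{eq:8.21} between these quotient groups, the two Baer sum decompositions correspond.

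First I would define $B(\zeta^0,\zeta^\rtimes)$ for a fixed $\chi$ as follows. Recall that the Baer sum of two extensions may be realized as the quotient of the fibre product over the quotient group by the anti-diagonally embedded copy of $\C^\times$. Both $\zeta^0$ (from Lemma \ref{lem:8.5}) and $\zeta^\rtimes$ (from \eqref{eq:8.23}) are isomorphisms of extensions inducing the identity on $\C^\times$ and the isomorphism \eqref{eq:8.21} on the quotient. Their fibre product therefore descends to an isomorphism of the corresponding Baer sums, which yields the desired map
\[
B(\zeta^0,\zeta^\rtimes): \mc E_{\chi \otimes \theta}^{[x]} \isom \mc E_\eta^{\chi \varphi_T}.
\]
To make this into a coherent family as $\chi$ varies over $\Xo(L) \cong \Xo(L^\vee)$, I would invoke the canonical identifications \eqref{eq:8.14} on the $p$-adic side and the analogous identifications on the Galois side among the extensions for varying $\chi$, which are intertwined by the families $\zeta^0$ and $\zeta^\rtimes$ by construction.

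Next I would check the equivariance for the action of the group appearing in \eqref{eq:8.22}. The family $\zeta^0$ is equivariant by Lemma \ref{lem:8.5}, and the single isomorphism $\zeta^\rtimes$ is equivariant by the canonicity asserted in \eqref{eq:8.23}; moreover the Baer sum decompositions of Lemmas \ref{lem:7.10}(b) and \ref{lem:6.3}(b) are themselves decompositions in the category of extensions equipped with this group action. Since the Baer sum is a functor on the category of central extensions of a fixed quotient (with morphisms that are the identity on the kernel and induce a fixed automorphism on the quotient), it carries equivariant morphisms to equivariant morphisms. Hence the induced isomorphism $B(\zeta^0,\zeta^\rtimes)$ is automatically equivariant.

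The only real subtlety I anticipate is bookkeeping of the group actions: one must verify that the actions of \eqref{eq:8.22} on both Baer sum factors on each side are the restrictions of the actions on $\mc E_{\chi \otimes \theta}^{[x]}$ and $\mc E_\eta^{\chi \varphi_T}$ that are used in the statement, so that the functoriality argument above really applies. But this is exactly what has been arranged in the definitions of the two families of extensions given in \S \ref{par:2.split} and \S \ref{par:3.split}, together with Lemmas \ref{lem:7.10}(b) and \ref{lem:6.3}(b), so no further work is needed.
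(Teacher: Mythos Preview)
Your proposal is correct and follows essentially the same approach as the paper. The paper does not give an explicit proof of this lemma; it simply notes (just before the statement) that the result follows by combining the canonical equivariant isomorphism $\zeta^\rtimes$ from \eqref{eq:8.23} with the Baer sum decompositions in \eqref{eq:8.19} (i.e.\ Lemmas \ref{lem:7.10}(b) and \ref{lem:6.3}(b)) and the equivariant family $\zeta^0$ from Lemma \ref{lem:8.5}, which is exactly the argument you have spelled out.
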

By \eqref{eq:8.20}--\eqref{eq:8.18} and Lemma \ref{lem:8.6}, we 
get $W(N_{G^\vee}(L^\vee),T^\vee )^{\mb W_F}_{\eta,\varphi_T}$-equivariant bijections
\begin{equation}\label{eq:8.2}
\Irr (S_\varphi^+,\eta) \longrightarrow \Irr (\mc E_\eta^{\varphi_T},\mr{id}) 
\underset{B(\zeta^0,\zeta^\rtimes)}{\longrightarrow} \Irr (\mc E_\theta^{[x]}, \mr{id}) 
\longrightarrow \Pi (L,T,\theta) .
\end{equation}
They combine to the following $W(N_{G^\vee}(L^\vee),T^\vee )^{\mb W_F}_{\eta,\Xo
(L^\vee) \varphi_T}$-equivariant bijection
\begin{equation}\label{eq:8.26}
\bigcup_{\varphi' \in \Xo (L^\vee) \varphi_T} \Irr (S_{\varphi'}^+,\eta) 
\longleftrightarrow \bigcup_{\theta' \in \Xo (L) \theta} \Pi (L,T,\theta' ) .
\end{equation}

\begin{prop}\label{prop:8.7}
For all $\eta, [x]$ as above, fix 
$W(N_{G^\vee}(L^\vee),T^\vee)^{\mb W_F}_{\Xo (L^\vee) \varphi_T}$-equivariant 
choices of $\zeta^0$ in Lemma \ref{lem:8.6} and of coherent splittings $\epsilon$ as in 
\eqref{eq:7.21}.~Then \eqref{eq:8.2} and \eqref{eq:8.26} for these $\eta, [x]$ combine to a 
$W(N_{G^\vee}(L^\vee),T^\vee)^{\mb W_F}_{\varphi_T}$-equivariant bijection 
$\Pi_\varphi \longleftrightarrow \Irr (S_\varphi^+)$, 
and a $W(N_{G^\vee}(L^\vee),T^\vee)^{\mb W_F}_{\Xo (L^\vee) \varphi_T}$-equivariant bijection
\begin{equation}\label{eqn:union-Lpackets-prop8.7}
\bigcup_{\varphi' \in \Xo (L^\vee) \varphi_T} \Pi_{\varphi'} \longleftrightarrow
\bigcup_{\varphi' \in \Xo (L^\vee) \varphi_T} \Irr (S_{\varphi'}^+).
\end{equation}
Under this bijection, tempered representations correspond to bounded enhanced L-parameters.
\end{prop}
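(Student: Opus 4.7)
The plan is to assemble the bijections \eqref{eq:8.2}, one for each orbit representative $\eta$, into a single bijection, and then to track the compatible choices needed for equivariance. By construction, $\Pi_\varphi = \bigsqcup_\eta \Pi_{\varphi,\eta}$ where $\eta$ runs through $W(\mc L,\mc T)(F)_\theta$-orbits on $H^1(\mc E,\mc Z\to\mc T) \cong \Irr(\pi_0(\bar T^{\vee,+}))$. By \eqref{eq:6.5}, these coincide with $W(L^\vee,T^\vee)^{\mb W_F}_{\varphi_T}$-orbits. On the Galois side, applying Clifford theory to the abelian normal subgroup $\bar T^{\vee,+}$ of $S_\varphi^+$ using \eqref{eq:6.4}, together with \eqref{eq:8.17}, yields the parallel decomposition $\Irr(S_\varphi^+) = \bigsqcup_\eta \Irr(S_\varphi^+,\eta)$ indexed by the same orbit representatives. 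For each such $\eta$, the bijection \eqref{eq:8.2}, assembled from \eqref{eq:8.20}, \eqref{eq:8.16}, \eqref{eq:8.18}, and $B(\zeta^0,\zeta^\rtimes)$ from Lemma \ref{lem:8.6}, matches $\Pi_{\varphi,\eta}$ with $\Irr(S_\varphi^+,\eta)$. Taking the disjoint union over all $\eta$ produces the required bijection $\Pi_\varphi \longleftrightarrow \Irr(S_\varphi^+)$.

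For equivariance under $W(N_{G^\vee}(L^\vee),T^\vee)^{\mb W_F}_{\varphi_T}$, the strategy is to observe that this group permutes the orbit representatives $\eta$ inside a single $W(L^\vee,T^\vee)^{\mb W_F}_{\varphi_T}$-orbit. For each fixed $\eta$, the bijection \eqref{eq:8.2} is $W(N_{G^\vee}(L^\vee),T^\vee)^{\mb W_F}_{\eta,\varphi_T}$-equivariant: on the group side this is Proposition \ref{prop:7.11} combined with \eqref{eq:8.20}, on the Galois side this is \eqref{eq:8.18} combined with the canonicity of \eqref{eq:6.7}, and the identification $B(\zeta^0,\zeta^\rtimes)$ is equivariant by Lemma \ref{lem:8.6}. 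To extend equivariance across the orbit, one fixes a representative $\eta$ and choices $(\epsilon_\eta,\zeta^0_\eta)$, then transports them by the $W(N_{G^\vee}(L^\vee),T^\vee)^{\mb W_F}_{\varphi_T}$-action to obtain corresponding choices at every other representative in the orbit; the hypothesis of equivariant choices in the statement ensures this is well-defined independently of representatives within stabilizers. Granted this, the full bijection $\Pi_\varphi \longleftrightarrow \Irr(S_\varphi^+)$ is equivariant by construction.

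The larger bijection \eqref{eqn:union-Lpackets-prop8.7} follows by the same procedure applied to the families of extensions in \eqref{eq:8.19}, indexed by $\chi \in \Xo(L) \cong \Xo(L^\vee)$ via \eqref{eq:8.4}. The extensions in each column of \eqref{eq:8.19} are naturally identified across $\chi$ (by \eqref{eq:8.14} on the $p$-adic side and by the analogous identification noted after \eqref{eq:6.15} on the Galois side), and Lemma \ref{lem:8.6} supplies a whole family of $B(\zeta^0,\zeta^\rtimes)$ that is $W(N_{G^\vee}(L^\vee),T^\vee)^{\mb W_F}_{\eta,\Xo(L^\vee)\varphi_T}$-equivariant. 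Assembling the per-$\eta$ bijections over orbits, exactly as in the previous paragraph but now for the larger group, yields the desired equivariance under $W(N_{G^\vee}(L^\vee),T^\vee)^{\mb W_F}_{\Xo(L^\vee)\varphi_T}$. Finally, the tempered-bounded correspondence is immediate: by the LLC for tori, $\varphi$ is bounded iff $\theta$ is unitary, and by Lemma \ref{lem:7.13} together with \eqref{eq:8.27}, $\theta$ is unitary iff every member of $\Pi_\varphi$ is tempered.

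The hardest step to carry out carefully is the equivariant compatibility of the family of choices $(\epsilon,\zeta^0)$ across an orbit of representatives $\eta$, since $\epsilon$ and $\zeta^0$ are chosen independently in Kaletha's construction and Lemma \ref{lem:8.6} respectively. The crux is that both the $p$-adic and Galois sides admit canonical actions of the same group \eqref{eq:8.22}, and Propositions \ref{prop:7.9} and \ref{prop:6.4} guarantee that equivariant splittings exist at the level of the underlying extensions; the hypothesis on $\epsilon$ and $\zeta^0$ in the statement then exactly asserts this compatibility, reducing the proof to the orbit-theoretic bookkeeping described above.
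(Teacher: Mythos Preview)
Your proposal is correct and follows essentially the same approach as the paper: decompose both $\Pi_\varphi$ and $\Irr(S_\varphi^+)$ along $W(L^\vee,T^\vee)^{\mb W_F}_{\varphi_T}$-orbits of $\eta$, match the pieces via \eqref{eq:8.2}, and invoke the equivariance of the choices $(\epsilon,\zeta^0)$ to glue across orbits; the tempered/bounded claim is deduced from \eqref{eq:8.27} exactly as you do. One phrasing to tighten: the big group $W(N_{G^\vee}(L^\vee),T^\vee)^{\mb W_F}_{\varphi_T}$ permutes the \emph{set} of $W(L^\vee,T^\vee)^{\mb W_F}_{\varphi_T}$-orbits of $\eta$'s (not representatives ``inside a single'' such orbit), with stabilizer of a representative equal to $W(N_{G^\vee}(L^\vee),T^\vee)^{\mb W_F}_{\eta,\varphi_T}$; once that is said, your transport-of-choices argument is exactly what the paper records in one sentence.
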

\begin{proof}
By the construction of $\mc E_\eta^{\varphi_T}$ in \eqref{eq:6.7} and \eqref{eq:6.8}, 
$\Irr (S_\varphi^+)$ is the union of the sets $\Irr (S_\varphi^+,\eta)$, where $\eta$ runs 
through $W(L^\vee,T^\vee)^{\mb W_F}_{\varphi_T}$-equivalence classes.~By definition, 
$\Pi_\varphi$ is the union of the corresponding packets $\Pi_{\varphi,\eta} = 
\Pi (L,T,\theta)$.~Thus \eqref{eq:8.2} and \eqref{eq:8.26} combine to give the desired 
bijections. Recall from earlier that every single bijection \eqref{eq:8.2} is 
$W(N_{G^\vee}(L^\vee),T^\vee )^{\mb W_F}_{\eta,\varphi_T}$-equivariant.
The $W(N_{G^\vee}(L^\vee),T^\vee)^{\mb W_F}_{\Xo (L^\vee) \varphi_T}$-equivariance
of the choices in the construction ensures that the collection of bijections 
$\Pi_\varphi \longleftrightarrow \Irr (S_\varphi^+)$ is also 
$W(N_{G^\vee}(L^\vee),T^\vee)^{\mb W_F}_{\Xo (L^\vee) \varphi_T}$-equivariant, 
and does not depend on the choices of $\eta, [x]$ within their 
$W(N_{G^\vee}(L^\vee),T^\vee)^{\mb W_F}_{\Xo (L^\vee) \varphi_T}$-equivalence classes.\\
The correspondence between temperedness and boundedness follows from \eqref{eq:8.27}.
\end{proof}

Since $\mathfrak{X}_{\nr} (L) \cong \big( Z (L^\vee)^{\mb I_F} \big)_{\mb W_F}^{\; \circ}$ is
contained in $\Xo (L) \cong \Xo (L^\vee)$, the union of the L-packets in 
\eqref{eqn:union-Lpackets-prop8.7} forms a collection of Bernstein components in $\Irr^0 (L')$,
for rigid inner twists $L'$ of $L$. Similarly, the collection of enhanced L-parameters for \eqref{eqn:union-Lpackets-prop8.7} forms a union of Bernstein components in
$\Phi_e^0 (L')$ for the same $L'$.~Proposition \ref{prop:8.7}, combined with the main 
result of \cite{Kal3}, gives a bijection
\begin{equation}\label{eq:8.7}
\Phi^0_\cusp (L)_{ns} := \begin{Bmatrix} (\varphi,\rho) \in \Phi^0_e (L): \\ \varphi \text{ supercuspidal} \end{Bmatrix}  \longleftrightarrow 
\begin{Bmatrix} \pi \in \Irr^0 (L): \pi \text{ is non-}\\ 
\text{singular supercuspidal}\end{Bmatrix}=:\Irr^0_\cusp (L)_{ns}.
\end{equation}
We will write instances of \eqref{eq:8.7} as
\[
(\varphi,\rho) \mapsto \pi (\varphi,\rho) \qquad \text{or} \qquad \pi \mapsto (\varphi_\pi, \rho_\pi) .
\]
Recall from \cite[p.20--23]{Lan} and \cite[\S10.1]{Bor} that every $\varphi \in \Phi (L)$
determines a character $\chi_\varphi$\label{i:92} of $Z(L)$ constructed as follows. One first
embeds $\mc L$ into a connected reductive $F$-group $\tilde{\mc L}$ satisfying
$\mc L_\der = \tilde{\mc L}_\der$, such that $Z(\tilde{\mc L})$
is connected.~Then one lifts $\varphi$ to an L-parameter $\tilde \varphi$ for
$\tilde L = \tilde{\mc L}(F)$.~The natural projection ${}^L \tilde{\mc L} \to
{}^L Z(\tilde{\mc L})$ produces an L-parameter $\tilde{\varphi}_z$ for 
$Z(\tilde L) = Z(\tilde{\mc L})(F)$, and via the local Langlands correspondence
for tori, $\tilde{\varphi}_z$ uniquely determines a character $\chi_{\tilde \varphi}$ of 
$Z (\tilde L)$. Then $\chi_\varphi$ is given by restricting $\chi_{\tilde \varphi}$
to $Z(L)$. By \cite[p. 23]{Lan}, $\chi_\varphi$ does not depend on the choices made above. 

\begin{lem}\label{lem:8.8}
In \eqref{eq:8.7}, the $Z(L)$-character of $\pi (\varphi,\rho)$ 
is precisely $\chi_\varphi$.
\end{lem}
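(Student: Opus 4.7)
The plan is to compute both sides of the claimed equality in terms of the character $\theta$ of $T$ attached to $\varphi$ by Lemma \ref{lem:6.1} and Lemma \ref{lem:6.6}, and to show that each side equals $\theta|_{Z(L)}$ (after identifying $Z(L)=Z(L')$ through the rigid inner twist).

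First I would handle the representation side. By the construction of the LLC in \eqref{eq:8.7} via Proposition \ref{prop:8.7}, the representation $\pi(\varphi,\rho)$ lies in the Deligne--Lusztig packet $\Pi_{\varphi,\eta}=\Pi\bigl(L',\,j(T),\,\theta\bigr)$ for the rigid inner twist $L'$ and the admissible embedding $j:\mc T\to\mc L'$ determined by $\eta$ and the fixed Whittaker datum. By \eqref{eq:7.65}, every member of this packet admits central character $\theta|_{Z(L')}$. The center $Z(L)$ is canonically identified with $Z(L')$ via the rigid inner twist, so the central character of $\pi(\varphi,\rho)$, regarded as a character of $Z(L)$, is $\theta|_{Z(L)}$.

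Next I would compute $\chi_\varphi$. Choose a $z$-extension $1\to\mc Z'\to\tilde{\mc L}\to\mc L\to 1$ with $Z(\tilde{\mc L})$ connected, and let $\tilde{\mc T}:=Z(\tilde{\mc L})^\circ\cdot\mc T$, a maximal $F$-torus of $\tilde{\mc L}$ containing $\mc T$ and $Z(\tilde{\mc L})$. The inclusion $\mc T\hookrightarrow\tilde{\mc T}$ dualizes to a surjection $\tilde T^\vee\twoheadrightarrow T^\vee$ of dual tori, yielding ${}^L\tilde T\twoheadrightarrow{}^L T$. The embedding ${}^L j:{}^L T\hookrightarrow{}^L L$ lifts canonically (up to the standard ambiguity, harmless for the construction of $\chi_\varphi$) to an embedding ${}^L\tilde j:{}^L\tilde T\hookrightarrow{}^L\tilde L$, and any lift $\tilde\varphi$ of $\varphi$ to ${}^L\tilde L$ factors as ${}^L\tilde j\circ\varphi_{\tilde T}$ with $\varphi_{\tilde T}$ lifting $\varphi_T$. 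Pushing $\varphi_{\tilde T}$ to ${}^L Z(\tilde{\mc L})$ via the surjection $\tilde T^\vee\twoheadrightarrow Z(\tilde{\mc L})^\vee$ dual to $Z(\tilde{\mc L})\hookrightarrow\tilde{\mc T}$, and applying LLC for tori, produces the character $\chi_{\tilde\varphi}$ of $Z(\tilde L)$. By the functoriality of LLC for tori \cite{Yu} applied to the two maps ${}^L\tilde T\twoheadrightarrow{}^L T$ and ${}^L\tilde T\to{}^L Z(\tilde{\mc L})$, the character $\chi_{\tilde\varphi}$ is precisely the restriction to $Z(\tilde L)$ of the character $\theta_{\tilde T}$ of $\tilde T$ attached to $\varphi_{\tilde T}$, and $\theta_{\tilde T}|_T=\theta$. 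Since $Z(L)\subset T\cap Z(\tilde L)$, restricting further yields
\[
\chi_\varphi=\chi_{\tilde\varphi}|_{Z(L)}=\theta_{\tilde T}|_{Z(L)}=\theta|_{Z(L)}.
\]
The independence of this formula from the choice of $z$-extension is guaranteed by \cite[p.\,23]{Lan}.

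Combining the two computations gives the desired equality. The only subtle point is to check that the identification $Z(L)=Z(L')$ coming from the rigid inner twist is compatible with the formulation of $\chi_\varphi$; this follows because $\chi_\varphi$ is manufactured purely from the dual side (the L-parameter $\varphi$ together with the dual of the central torus $Z(\mc L)$, both of which are insensitive to passing from $\mc L$ to a rigid inner twist $\mc L'$), so the same character $\chi_\varphi$ serves for all rigid inner twists simultaneously. I expect this compatibility check together with the correct bookkeeping of the lift $\tilde\varphi$ to be the only place where care is needed; everything else is a direct application of the functoriality of LLC for tori and of \eqref{eq:7.65}.
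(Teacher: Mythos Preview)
Your proposal is correct and follows essentially the same approach as the paper: both compute each side as $\theta|_{Z(L)}$, using \eqref{eq:7.65} for the representation side and functoriality of the LLC for tori (applied through a $z$-extension $\tilde{\mc L}$ with connected centre and the maximal torus $\tilde{\mc T}\supset\mc T$) for $\chi_\varphi$. Your treatment is slightly more explicit about the rigid-inner-twist identification $Z(L)=Z(L')$, which the paper handles more tersely via the observation that all admissible embeddings $j$ restrict to the same map on $Z(\mc L)$.
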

\begin{proof}
Since all the admissible embeddings $j : \mc T \to \mc L$ are $\mc L$-conjugate, the
preimage of $Z(\mc L)$ under $j$ does not depend on the choice of $j$.~We may denote
it by $Z_{\mc T}(\mc L)$.~Then any $j$ restricts to the same bijective embedding
$j : Z_{\mc T}(\mc L) \to Z(\mc L)$.~By \eqref{eq:7.65}, every 
$\pi \in \Pi (L,jT,\theta) \subset \Pi_\varphi (L)$
admits the central character $\theta |_{Z(L)} = \theta|_{Z_{\mc T}(\mc L)(F)}$, and
by construction $\varphi_T$ is the L-parameter of $\theta$. 

Now we follow the procedure in \cite{Lan} recalled above.~There is a unique
maximal torus $\widetilde{j \mc T}$ of $\tilde{\mc L}$ containing $j \mc T$.~By 
functoriality of the LLC for tori \cite{Yu}, $\tilde \varphi$ determines a character of 
$\widetilde{j \mc T}(F)$ that extends $\theta$.~Hence $\tilde \varphi_z$ corresponds to a 
character of $Z(\tilde L)$ that extends $\theta_{Z(L)}$, and $\chi_\varphi  = \theta_{Z(L)}$.
\end{proof}
However, the bijections in \eqref{eq:8.2}, Proposition \ref{prop:8.7} and \eqref{eq:8.7} 
are not entirely canonical, because they depend on choices of isomorphisms between two
extensions in \cite[\S 4.5]{Kal3}. In \eqref{eq:8.2}, one can adjust the
bijection by tensoring one side with a character of $W(\mc L,\mc T)(F)_{[x],\theta} \cong W(L^\vee,T^\vee)^{\mb W_F}_{\varphi_T,\eta}$.~This corresponds to changing the coherent 
splitting $\epsilon$; see \cite[Definition 2.7.6]{Kal3}. Proposition \ref{prop:8.7} shows that 
there are many ways to make the choices for \eqref{eq:8.7} so that the LLC becomes 
$W(N_{G^\vee}(L^\vee),T^\vee)^{\mb W_F}_{\Xo (L^\vee) \varphi_T}$-equivariant. If one is 
willing to work with more L-packets (in one $W(G^\vee,L^\vee)^{\mb W_F}$-orbit) at once, then 
one can even make \eqref{eq:8.7} $W(G^\vee,L^\vee)^{\mb W_F}$-equivariant. 
In principle, the choices for $\varphi$'s in different $W(G^\vee,L^\vee)^{\mb W_F}$-orbits are
independent. But, of course, we want to align them in a nice way. 

\begin{thm}\label{thm:8.1}
Suppose that on every $\Xo (L)$-orbit of the datum $(jT,\theta)$, we choose the 
same $\zeta^0$ from Lemma \ref{lem:8.6} and the same $\epsilon$ from \eqref{eq:7.21} and 
\eqref{eq:7.9}.~The LLC from Proposition \ref{prop:8.7} is equivariant with respect to 
$\Xo (L)$ and 
$W(N_{G^\vee}(L^\vee),T^\vee)^{\mb W_F}_{\Xo (L^\vee) \varphi_T}$.
\end{thm}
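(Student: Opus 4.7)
The strategy is to decompose the LLC of Proposition \ref{prop:8.7} into the composite bijection \eqref{eq:8.2} together with the Clifford-theoretic step \eqref{eq:8.17}, and check that each piece intertwines the two actions once the common choices of $\zeta^0$ and $\epsilon$ are fixed across an $\Xo(L)$-orbit. The two equivariances are then assembled.

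For $\Xo(L)$-equivariance the argument proceeds factor by factor. On the $p$-adic side, the right-most bijection of \eqref{eq:8.2} transports under \eqref{eq:8.14} to the bijection \eqref{eq:7.21}, which by \eqref{eq:7.2} sends $\chi \otimes \kappa^{L,\epsilon}_{(T,\theta,\rho)}$ to $\kappa^{L,\epsilon}_{(T,\chi\otimes \theta,\chi\otimes \rho)}$; this is precisely $\Xo(L)$-equivariance since, by hypothesis, the same $\epsilon$ is used throughout the orbit. On the Galois side, $S_{z\varphi}^+ = S_\varphi^+$ for $z \in \Xo(L^\vee)$ (remarked after \eqref{eq:6.27}), so the Clifford bijection $\Irr(S_\varphi^+,\eta) \to \Irr(\mc E_\eta^{\varphi_T},\mr{id})$ transfers the $\Xo(L^\vee)$-action trivially, and the extensions $\mc E_\eta^{\varphi'_T}$ for $\varphi'_T \in \Xo(L^\vee)\varphi_T$ are naturally identified (as noted below \eqref{eq:6.23}). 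Finally, Lemma \ref{lem:8.6} assembles the middle step $B(\zeta^0,\zeta^\rtimes)$ into a single family of isomorphisms indexed by $\chi \in \Xo(L) \cong \Xo(L^\vee)$ via \eqref{eq:8.4}; this family respects the $\Xo$-action precisely because $\zeta^0$ has been chosen the same on the whole orbit, which is our standing hypothesis. Composing the three pieces produces $\pi(\chi\varphi, \rho) \cong \chi_\psi \otimes \pi(\varphi,\rho)$, as desired.

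For $W(N_{G^\vee}(L^\vee),T^\vee)^{\mb W_F}_{\Xo(L^\vee)\varphi_T}$-equivariance, Proposition \ref{prop:8.7} already establishes the statement \emph{within} a single $\Xo$-orbit of $(\varphi_T,\eta)$ (equivalently of $(jT,\theta)$). To glue the bijections $\Xo$-orbit by $\Xo$-orbit, use Lemma \ref{lem:8.3} together with the identifications \eqref{eq:6.21}--\eqref{eq:6.22} and \eqref{eq:8.22}: the $W$-action permutes $\Xo(L)$-orbits on the $p$-adic and Galois sides in matching fashion. Making the same choices of $\zeta^0$ and $\epsilon$ on every $\Xo(L)$-orbit then forces the resulting family of bijections to commute with the $W$-action across orbits as well.

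The main obstacle is coherence: one must check that Lemma \ref{lem:8.6}'s family of Baer-sum isomorphisms is \emph{simultaneously} $\Xo$-equivariant and $W(N_{G^\vee}(L^\vee),T^\vee)^{\mb W_F}_{\Xo(L^\vee)\varphi_T}$-equivariant. This is guaranteed by the $W$-equivariant families of splittings constructed in Propositions \ref{prop:7.9} and \ref{prop:6.4}, the canonical nature of $\zeta^\rtimes$ in \eqref{eq:8.23}, and the identification of the acting groups via \eqref{eq:8.22}. Since the Baer sum of two equivariant extensions (in a fixed category of $W$-groups) is equivariant and depends functorially on the inputs, both equivariances pass through to the composite bijection, completing the theorem.
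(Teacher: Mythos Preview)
Your treatment of $\Xo(L)$-equivariance is correct and matches the paper's argument closely: both trace through the three factors of \eqref{eq:8.2}, using $S_{z\varphi}^+ = S_\varphi^+$ on the Galois side, the family $B(\zeta^0,\zeta^\rtimes)$ of Lemma~\ref{lem:8.6} in the middle, and \eqref{eq:7.2} (which is the paper's \eqref{eq:8.12}) on the $p$-adic side.

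For the $W$-equivariance there is a gap. You invoke Proposition~\ref{prop:8.7}, but that proposition has as its \emph{hypothesis} that the choices of $\zeta^0$ and of the coherent splitting $\epsilon$ are $W(N_{G^\vee}(L^\vee),T^\vee)^{\mb W_F}_{\Xo(L^\vee)\varphi_T}$-equivariant, whereas Theorem~\ref{thm:8.1}'s hypothesis only says the choices are \emph{constant} on each $\Xo(L)$-orbit. You correctly point to Propositions~\ref{prop:7.9} and~\ref{prop:6.4} (via Lemmas~\ref{lem:8.5}--\ref{lem:8.6}) to handle $\zeta^0$, but you never address why the coherent splittings $\epsilon$, taken constant across the $\Xo$-orbit, yield $W$-equivariant actions on $\kappa^{L,\epsilon}_{(T,\theta)}$. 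The paper closes this gap by applying \cite[\S 2.7 and Fact~2.7.2]{Kal3} with the enlarged group $\Gamma = N_G(L,jT)_{\Xo(L)\theta}/L_{\ff,0+}$; alternatively one could cite Proposition~\ref{prop:7.11}, after matching the various Weyl-group identifications. Your sentence ``making the same choices of $\zeta^0$ and $\epsilon$ on every $\Xo(L)$-orbit then forces the resulting family of bijections to commute with the $W$-action'' is unjustified as stated: $\Xo$-constancy alone does not imply $W$-equivariance.

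There is also a conceptual slip in your $W$-equivariance paragraph: the group $W(N_{G^\vee}(L^\vee),T^\vee)^{\mb W_F}_{\Xo(L^\vee)\varphi_T}$ is by definition the \emph{stabilizer} of the single orbit $\Xo(L^\vee)\varphi_T$, so there is no ``gluing across $\Xo$-orbits'' to perform here---that step belongs to Theorem~\ref{thm:8.2}. Proposition~\ref{prop:8.7} already ranges over all $\eta$ and all $\varphi' \in \Xo(L^\vee)\varphi_T$, so once its hypothesis is verified it delivers the full $W$-equivariance directly.
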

\begin{proof}
In \cite[start of \S 2.7 and Fact 2.7.2]{Kal3}, we can take 
$N_G (L,jT)_{\Xo (L) \theta} / L_{\ff,0+}$ in the role of $\Gamma$ and 
$W(N_{G^\vee}(L^\vee),T^\vee)^{\mb W_F}_{\Xo (L^\vee) \varphi_T}$ in the role
of $\overline \Gamma$, and the proof goes through. Thus this collection of $\epsilon$'s (or 
rather the ensuing actions of $N_L (jT)_\theta$ on $\kappa_{T,\theta}^{L,\epsilon}$) are
equivariant for $W(N_{G^\vee}(L^\vee),T^\vee)^{\mb W_F}_{\Xo (L^\vee) \varphi_T}$.~By 
Lemma \ref{lem:8.6}, the chosen $\zeta^0$'s form a 
$W(N_{G^\vee}(L^\vee),T^\vee)^{\mb W_F}_{\Xo (L^\vee) \varphi_T}$-equivariant
family. Hence Proposition \ref{prop:8.7} applies.

Suppose that $(\varphi,\rho') \in \Phi_e (L)$ corresponds to 
$\kappa_{(jT,\theta,\rho)}^{L,\epsilon}$ via Proposition \ref{prop:8.7}. Take 
$z \in \Xo (L^\vee)$ with image $\chi \in \Xo (L)$ \eqref{eq:8.4}. Then 
\[
S_{z \varphi}^+ = S_\varphi^+ ,\quad (S_{z \varphi}^+)_\eta = (S_\varphi^+)_\eta 
\quad \text{and} \quad \mc E_\eta^{z \varphi_T} = \mc E_\eta^{\varphi_T}.
\]
This gives a family $\{ (z \phi,\rho') : z \in \Xo (L^\vee) \}$ in
$\Phi_e (L)$. We need to figure out the corresponding family of $L$-representations. Via 
Lemma \ref{lem:8.6}, $\rho'$ is translated into $\rho \in \Irr (\mc E_\theta^{[x]},\mr{id})$.
Since $\chi$ is a character of the entire group $L$, we have 
\begin{equation}\label{eq:8.13}
\begin{aligned}
& N_{\mc L_\ff (k_F)} (j \mc T)_\theta = N_{\mc L_\ff (k_F)} (j \mc T)_{\chi \otimes \theta} 
,\text{ and }\\
& \Irr (N_{\mc L_\ff (k_F)} (j \mc T)_\theta, \chi \otimes \theta) = 
\{ \chi \otimes \rho : \rho \in \Irr (N_{\mc L_\ff (k_F)} (j \mc T)_\theta, \theta) \} .
\end{aligned}    
\end{equation}
The condition on $\zeta^0$ in the theorem means that $(z\phi,\rho')$ corresponds to 
$\chi \otimes \theta \in \Irr (jT)$ and to the representation $\chi \otimes \rho \in 
\Irr \big( \mc E_{\chi \otimes \theta}^{[x]}, \mr{id} \big)$ obtained from $\rho$ via the
natural isomorphsm $\mc E^{[x]}_{\chi \otimes \theta} \cong \mc E^{[x]}_\theta$ from
\eqref{eq:8.14}.~We can also view $\chi \otimes \rho$ as an element of 
$\Irr (N_L (jT)_{\chi \otimes \theta}, \chi \otimes \theta)$, then it is obtained
from $\rho \in \Irr (N_L (jT)_\theta$ by tensoring with $\chi$.~By \cite[Theorem 2.7.7.3]{Kal3} 
(which uses the condition on $\epsilon$), we have
\begin{equation}\label{eq:8.12}
\chi \otimes \kappa_{(jT,\theta,\rho)}^{L,\epsilon} \cong 
\kappa_{(jT,\chi \otimes \theta,\chi \otimes \rho)}^{L,\epsilon} .
\end{equation}
Thus $(z\phi,\rho')$ corresponds to $\chi \otimes \kappa_{(jT,\theta,\rho)}^{L,\epsilon}$
in Proposition \ref{prop:8.7}.
\end{proof}

The equivariance in Theorem \ref{thm:8.1} can be upgraded when we consider all non-singular 
depth-zero supercuspidal representations and all enhanced supercuspidal $L$-parameters 
in Proposition \ref{prop:8.7}, as follows.       

\begin{thm}\label{thm:8.2}
The choices in the LLC \eqref{eq:8.7} can be made such that, for all non-singular supercuspidal 
depth-zero representations of $L$, \eqref{eq:8.7} is equivariant with respect to 
\[
W(G,L) \ltimes \Xo (L) \cong W(G^\vee, L^\vee)^{\mb W_F} \ltimes \Xo (L^\vee).
\]
\end{thm}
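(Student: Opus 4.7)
The plan is to bootstrap from Theorem \ref{thm:8.1}, which establishes equivariance within a single $W(N_{G^\vee}(L^\vee),T^\vee)^{\mb W_F}_{\Xo (L^\vee) \varphi_T}$-orbit, to full $W(G,L) \ltimes \Xo (L)$-equivariance across all orbits. First I would decompose $\Phi^0_\cusp (L)_{ns}$ into $W(G^\vee,L^\vee)^{\mb W_F} \ltimes \Xo (L^\vee)$-orbits under the action given by \eqref{eq:6.19} and \eqref{eq:6.27}, and likewise decompose $\Irr^0_\cusp (L)_{ns}$ into $W(G,L) \ltimes \Xo (L)$-orbits. Via Lemmas \ref{lem:6.1} and \ref{lem:6.6}, each orbit on the Galois side corresponds to an orbit of triples $(T,{}^L j, \theta)$, and hence to an orbit of admissible data $(j,T,\theta,\eta)$ on the group side.

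Next, from each orbit I would pick a distinguished representative $(\varphi,\rho)$ and apply Theorem \ref{thm:8.1} to fix there a choice of $\zeta^0$ (as in Lemma \ref{lem:8.6}) and of a coherent splitting $\epsilon$ (as in \eqref{eq:7.21} and \eqref{eq:7.4}). This already yields equivariance at the representative for its stabilizer $W(N_{G^\vee}(L^\vee),T^\vee)^{\mb W_F}_{\Xo (L^\vee) \varphi_T}$, which by \eqref{eq:6.3} surjects onto $W(G^\vee,L^\vee)^{\mb W_F}_{\varphi}$. I would then extend the LLC to the rest of the orbit by transport: for $w$ lifting an element of $W(G^\vee,L^\vee)^{\mb W_F} \ltimes \Xo (L^\vee)$, set $\pi(w \cdot (\varphi,\rho)) := w \cdot \pi(\varphi,\rho)$, where on the right-hand side we use the isomorphism \eqref{eq:6.21} to convert $w$ into an element of $W(G,L) \ltimes \Xo (L)$. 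By Lemma \ref{lem:8.3} (combined with the $\Xo(L)$-equivariance built into Theorem \ref{thm:8.1}), this transport is compatible with the defining data: $w$ sends the triple $(jT,\theta)$ and the associated extensions in \eqref{eq:8.19} to the analogous objects for $w \cdot (\varphi,\rho)$, and the Deligne--Lusztig packet $\Pi_{\varphi,\eta}$ to $\Pi_{w \cdot \varphi, w \cdot \eta}$. Since orbits are disjoint, the independent definitions on different orbits assemble to a global bijection.

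The main obstacle is Step 3, i.e.~showing that the transport prescription is well-defined on each orbit: if $w$ and $w'$ differ by an element of the stabilizer of $(\varphi,\rho)$, then the two recipes for $\pi (w \cdot (\varphi,\rho))$ must agree. This reduces precisely to the equivariance statement of Theorem \ref{thm:8.1} together with Proposition \ref{prop:8.7}, because the stabilizer in $W(G^\vee,L^\vee)^{\mb W_F} \ltimes \Xo (L^\vee)$ of $(\varphi,\rho)$ maps into $W(N_{G^\vee}(L^\vee),T^\vee)^{\mb W_F}_{\Xo (L^\vee) \varphi_T}$, and the chosen $\zeta^0, \epsilon$ were arranged to be equivariant for that group. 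A minor further check is that the Baer sum decomposition (Lemmas \ref{lem:7.10}, \ref{lem:6.3}, \ref{lem:8.6}) is preserved by the transport, which follows from the canonicity of the isomorphism $\zeta^\rtimes$ in \eqref{eq:8.23} from \cite[Proposition~8.1]{Kal4} together with Propositions \ref{prop:7.9} and \ref{prop:6.4}. Once well-definedness is verified, $W(G,L) \ltimes \Xo (L)$-equivariance of the assembled bijection \eqref{eq:8.7} holds tautologically by construction.
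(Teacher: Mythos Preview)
Your approach is essentially the paper's: pick representatives, apply Theorem~\ref{thm:8.1} to get equivariance for the stabilizer, then extend by transport and assemble over orbits. The paper's proof is only a few lines and follows exactly this outline.

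There is one bookkeeping subtlety worth flagging. You decompose $\Phi^0_\cusp(L)_{ns}$ into orbits of \emph{enhanced} parameters $(\varphi,\rho)$, but two distinct such orbits can share the same underlying $\Xo(L^\vee)$-orbit of $\varphi$: this happens whenever $\rho_1,\rho_2 \in \Irr(S_\varphi^+)$ lie in different $W(G^\vee,L^\vee)^{\mb W_F}_{\Xo(L^\vee)\varphi}$-orbits. Since the choices $\zeta^0,\epsilon$ are attached to $(\theta,\eta)$ rather than to $\rho$ itself, making independent choices on two such orbits may yield two different bijections $\Irr(S_\varphi^+,\eta) \to \Pi(L,jT,\theta)$, and then there is no guarantee that the resulting assignments $\pi(\varphi,\rho_1)$ and $\pi(\varphi,\rho_2)$ are distinct. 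Thus ``orbits are disjoint'' on the parameter side does not automatically yield injectivity of the assembled map on the representation side. The paper avoids this by working with the coarser decomposition into $W(G^\vee,L^\vee)^{\mb W_F}$-orbits of the sets
\[
\big\{ (\varphi',\rho) : \varphi' \in \Xo(L^\vee)\varphi,\ \rho \in \Irr(S_{\varphi'}^+) \big\},
\]
which packages all enhancements for a given $\Xo(L^\vee)\varphi$ together and forces a single consistent choice of $\zeta^0,\epsilon$. With that adjustment your argument goes through verbatim.
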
    
\begin{proof}
For a given supercuspidal parameter $\varphi \in \Phi^0 (L)$, consider the set
\begin{equation}\label{eq:8.25}
\big\{ (\varphi',\rho) \in \Phi_e^0 (L) : 
\varphi' \in \Xo (L^\vee) \varphi, \rho \in \Irr (S_{\varphi'}^+) \big\}
\end{equation}
from Proposition \ref{prop:8.7}. The $W(G^\vee,L^\vee)^{\mb W_F}$-stabilizer 
$W(G^\vee,L^\vee)^{\mb W_F}_{\Xo (L^\vee) \varphi}$ of \eqref{eq:8.25} consists 
precisely of the elements that come from 
$W(N_{G^\vee}(L^\vee),T^\vee)^{\mb W_F}_{\Xo (L^\vee) \varphi_T}$. 
By construction, other elements of $W(G^\vee,L^\vee)^{\mb W_F}$ do not map any element of 
\eqref{eq:8.25} to an element of \eqref{eq:8.25}. We apply the 
$W(G^\vee,L^\vee)^{\mb W_F}_{\Xo (L^\vee) \varphi} \ltimes \Xo (L^\vee)
$-equivariant LLC from Theorem \ref{thm:8.1} to \eqref{eq:8.25}, and we extend it
$W(G^\vee,L^\vee)^{\mb W_F} \ltimes \Xo (L^\vee)$-equivariantly to the 
$W(G^\vee,L^\vee)^{\mb W_F}$-orbit of \eqref{eq:8.25} and to 
$\bigcup\limits_{\varphi' \in \Xo (L^\vee) \varphi} W(G,L) \cdot \Pi_{\varphi'}(L)$.~Next, 
we let $\varphi$ run through a set of representatives for
\[
\{ \varphi' \in \Phi^0 (L) : \varphi' \text{ supercuspidal} \} \big/ 
W(G^\vee, L^\vee)^{\mb W_F} \ltimes \Xo (L^\vee),
\]
and we carry out the above steps for all those $\varphi$.
\end{proof}
\begin{rem}
The group of unramified characters of $L$ is contained in $\Xo (L)$, so
Theorem \ref{thm:8.2} also holds with $\mf{X}_\nr (L)$ instead of $\Xo (L)$.

Ideally, the LLC from Theorem \ref{thm:8.2} should be equivariant with respect to all 
$F$-automorphisms of $\mc L$, as conjectured in \cite[Conjecture 2]{SolFunct} and  
\cite[Conjecture 2.12]{Kal5}. Unfortunately, this seems to be out of reach at the time of writing. 
\end{rem}

 \section{Some subquotients of the Iwahori--Weyl group}
\label{sec:IW}
The following sections \S \ref{sec:Morris}--\S\ref{sec:HeckeL} treat Hecke
algebras for non-supercuspidal representations of $G$, and do not rely on the previous sections. We now slightly adjust the earlier setup.
Let $\mc S$\label{i:93} be a maximal $F$-split torus in $\mc G$. Let
$R(G,S)$\label{i:94} be the root system of $(G,S)$.
Let \label{i:102}$\mh A_S := X_* (\mc S) \otimes_\Z \R$ be the apartment of $\mc B (\mc G,F)$ associated 
to $\mc S$. The walls of $\mh A_S$ determine an affine root system $\Sigma$, and the map 
that sends an affine root to its linear part is a canonical surjection 
\label{i:96}$D : \Sigma \to R(G,S)$.

Let $C_0$ be a chamber in $\mh A_S$ whose closure contains $0$. 
Let $\Delta_\af$\label{i:97} be the set of simple affine roots in $\Sigma$ determined 
by $C_0$. The associated set of simple affine reflections $S_\af$\label{i:98} 
generates an affine Weyl group $W_\af$.\label{i:99} The standard Iwahori subgroup 
of $G$ is $P_{C_0}$, and the Iwahori--Weyl group of $(G,S)$ is\label{i:100}
\begin{equation}\label{eq:2.31}
W := N_G (S) / (N_G (S) \cap P_{C_0}) \cong Z_G (S) / (Z_G (S) \cap P_{C_0}) \rtimes W(G,S) .
\end{equation}
Note that it acts on $\mh A_S = X_* (S) \otimes_\Z \R = X_* (Z_G (S)) \otimes_\Z \R \cong 
Z_G (S) / Z_G (S)_\cpt \otimes_\Z \R$, with $Z_G (S) / (Z_G (S) \cap P_{C_0})$ acting by translations, and $W(G,S)$ as the stabilizer
of a chosen special vertex of $C_0$.~The kernel of this action is the finite subgroup 
$Z_G (S)_\cpt / Z_{P_{C_0}}(S)$, where the subscript cpt denotes the (unique) maximal 
compact subgroup.~Furthermore, $W$ contains $W_\af$ as the subgroup supported on the kernel of the Kottwitz 
homomorphism for $G$. The group \label{i:101}
$\Omega := \{ w \in W : w (C_0) = C_0\}$ forms a complement to $W_\af$, and we have
\begin{equation}\label{eq:2.10}
W = W_\af \rtimes \Omega .
\end{equation}
Let $\ff$ be a facet in $\mc B (\mc G,F)$. Since $G$ acts transitively on the set 
of chambers of $\mc B (\mc G,F)$, we may assume without loss of generality that $\ff$ is
contained in the closure of $C_0$. Let $\Sigma_\ff$ be the set of affine roots that vanish 
on $\ff$, and let \label{i:116}$J := \Delta_\af \cap \Sigma_\ff$ be its subset of simple
affine roots. Its associated set of affine reflections $\{ s_j : j \in J\}$ generates 
a finite Weyl group $W_J$, which can be identified with the Weyl group of the 
$k_F$-group $\mc G_\ff^\circ (k_F)$ with respect to the torus $\mc S (k_F)$.

Let $R^c_\ff$\footnote{The superscript $c$ will become self-explanatory in the next paragraph.} 
be the set of roots for $(G,S)$ that are constant on $\ff$,
a parabolic root subsystem of $R(G,S)$. Let $\mc L$ be the Levi $F$-subgroup 
of $\mc G$ determined by $\mc S$ and $R^c_\ff$. By \cite[Theorem 2.1]{Mor3}, 
$P_{L,\ff} := P_\ff \cap L$ is a maximal parahoric subgroup of $L$ (associated to a 
facet $\mf f_L \supset \ff$) and we have
\begin{equation}\label{eq:2.13}
\begin{array}{ccccc}
\hat P_{L,\ff} / P_{L,\ff} & = & 
(\hat{P}_\ff \cap L) / (P_\ff \cap L)
& \cong & \hat P_\ff / P_\ff ,\\
P_{L,\ff} / L_{\ff,0+} & = & (P_\ff \cap L) / (G_{\ff,0+} \cap L) & 
\cong & P_\ff / G_{\ff,0+} .
\end{array} 
\end{equation}
Let $R_\ff$ be the image of $\Sigma_\ff$ in $R (G,S)$. Its closure 
$(\Q R_\ff) \cap R (G,S)$ is precisely $R^c_\ff$. Although $R^c_\ff$ and
$R_\ff$ have the same rank, it is quite possible that they have different Weyl groups. 
We write
\begin{equation}\label{eq:2.6}
\Omega_\ff = \{ \omega \in \Omega : \omega (\ff) = \ff \} =
\{ \omega \in \Omega : P_\ff \omega P_\ff \subset G_\ff \}
\cong G_\ff / P_\ff .
\end{equation}
Since $P_\ff$ and $\hat P_\ff$ depend only on $\ff$, they have the same normalizer in
$G$, i.e.~the set-theoretic stabilizer $G_\ff$ of $\ff$. Let $\Omega_\ff^0 \cong \hat P_{L,\ff} / P_{L,\ff} = \hat P_\ff / P_\ff$ be the point-wise stabilizer of $\ff$ in $\Omega_\ff$. By \eqref{eq:2.6}, we have
\begin{equation}\label{eq:2.7}
G_\ff / P_\ff \cong \Omega_\ff \quad \text{and} \quad
G_\ff / \hat P_\ff \cong \Omega_\ff / \Omega_\ff^\circ .
\end{equation}

\begin{lem}\label{lem:3.2}
\enuma{
\item The group $\Omega_\ff = G_\ff / P_\ff$ is abelian and finitely generated. 
\item Suppose moreover that $\ff$ is a minimal facet in $\mc B (\mc G,F)$, or equivalently that
$P_\ff$ is maximal parahoric subgroup of $G$.~The group $G_\ff / \hat P_\ff$ is abelian 
and isomorphic to a lattice in $X_* (Z^\circ (G)) \otimes_\Z \R$.~In particular, it is 
free of the same rank as $X_* (Z^\circ (G))$.
} 
\end{lem}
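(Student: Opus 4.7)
For part (a), the plan is to identify $\Omega_\ff = G_\ff / P_\ff$ as a subgroup of $\Omega = W/W_\af$ via the isomorphism in \eqref{eq:2.7}. The group $\Omega$ is well known to be finitely generated and abelian: via the Kottwitz homomorphism, $\Omega$ identifies canonically with the finitely generated abelian group $X^* \big( Z(G^\vee)^{\mb I_F} \big)^{\Fr_F}$ (this is the standard Haines--Rapoport-style description of the Iwahori--Weyl group). Since any subgroup of a finitely generated abelian group is again finitely generated and abelian, this yields (a).

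For part (b), first use \eqref{eq:2.7} to reduce to showing that $\Omega_\ff / \Omega_\ff^\circ \cong G_\ff/\hat P_\ff$ is free abelian of rank $\dim X_*(Z^\circ(G))$. Since $P_\ff$ is a maximal parahoric, $\ff$ is a minimal facet of $\mc B(\mc G,F)$, and the product decomposition $\mc B(\mc G, F) = \mc B(\mc G_\ad, F) \times X_*(Z^\circ(G)) \otimes_\Z \R$ forces $\ff = \{v\} \times X_*(Z^\circ(G)) \otimes_\Z \R$ for some vertex $v \in \mc B(\mc G_\ad, F)$. Because the $G$-action respects this decomposition and acts on the central factor purely by translations (as $Z^\circ(G)(F)$ is central in $G$), every $g \in G_\ff$ acts on $\ff$ as $(v, x) \mapsto (v, x + \phi(g))$ for a unique $\phi(g) \in X_*(Z^\circ(G)) \otimes_\Z \R$. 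Given $g \in G_\ff$, the pointwise-stabilization condition $g \in \hat P_\ff$ is equivalent to $\phi(g) = 0$, so $\phi$ induces an injection
\[
\bar \phi \, : \, G_\ff / \hat P_\ff \hookrightarrow X_*(Z^\circ(G)) \otimes_\Z \R .
\]

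To identify the image as a lattice of full rank, note that $Z^\circ(G)(F) \subset G_\ff$ since central elements stabilize every facet. By basic $p$-adic torus theory, the image $\phi(Z^\circ(G)(F)) = Z^\circ(G)(F) / Z^\circ(G)(F)_\cpt$ is a free abelian group of rank $\dim X_*(Z^\circ(G))$, realized as a lattice in $X_*(Z^\circ(G)) \otimes_\Z \R$. Hence $\bar\phi(G_\ff/\hat P_\ff)$ contains this full-rank sublattice. By (a), $G_\ff/\hat P_\ff$ is finitely generated, and it is torsion-free via the injection $\bar\phi$; combined with the upper bound on the rank coming from the ambient $\R$-vector space, the structure theorem for finitely generated abelian groups then forces $G_\ff/\hat P_\ff$ to be free abelian of rank exactly $\dim X_*(Z^\circ(G))$.

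The main obstacle, where care is most needed, is the geometric step in the second paragraph: verifying that for $g \in G_\ff$, pointwise stabilization of $\ff$ really does reduce to the condition $\phi(g) = 0$. This relies on the $G$-equivariance of the product decomposition of $\mc B(\mc G, F)$, together with the observation that $g \in G_\ff$ automatically forces the adjoint image of $g$ to fix the vertex $v \in \mc B(\mc G_\ad, F)$, so that $g$ acts on $\ff = \{v\} \times X_*(Z^\circ(G)) \otimes_\Z \R$ purely as a translation in the second factor.
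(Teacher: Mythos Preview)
Your approach is essentially the same as the paper's: both parts use the Kottwitz homomorphism for (a), and for (b) both construct the translation map $G_\ff/\hat P_\ff \hookrightarrow X_*(Z^\circ(G)) \otimes_\Z \R$ and bound the image from below by $Z^\circ(G)(F)/Z^\circ(G)(F)_\cpt \cong X_*(Z^\circ(G))$.

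There is, however, a genuine gap in your upper bound. You write that the rank of $\bar\phi(G_\ff/\hat P_\ff)$ is bounded above ``from the ambient $\R$-vector space''. This is false in general: a finitely generated torsion-free subgroup of $\R^n$ need not have rank $\leq n$ (for instance $\Z + \sqrt{2}\,\Z \subset \R$ has rank $2$). What is missing is discreteness of the image. The paper supplies this by observing that $\bar\phi(G_\ff/\hat P_\ff)$ is contained in $\bar\phi(G)$, the group of \emph{all} translations of $X_*(Z^\circ(G)) \otimes_\Z \R$ arising from the $G$-action on $\mc B(\mc G,F)$, and that $\bar\phi(G)$ contains $X_*(Z^\circ(G))$ with finite index, hence is itself a lattice. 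The sandwich
\[
X_*(Z^\circ(G)) \;\subset\; \bar\phi(G_\ff/\hat P_\ff) \;\subset\; \bar\phi(G)
\]
between two full-rank lattices then finishes the argument. An alternative fix, closer to your part (a), would be to note that $G_\ff/\hat P_\ff$ is a quotient of $\Omega_\ff \subset \Omega$, and $\Omega$ itself has free rank equal to $\dim X_*(Z^\circ(G))$; either route closes the gap.
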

\begin{proof}
(a) Recall the Kottwitz homomorphism $\kappa$ for $G$, which takes values in a subquotient of
the algebraic character group of $Z(G^\vee)$.
Since $\ker \kappa\cap G_\ff=P_\ff$ (see for example \cite[Propositions 7.6.4 and 11.5.4]{KaPr}), 
we have $G_\ff / P_\ff \cong \kappa (G_\ff)$. This is a subquotient of 
$X^* (Z(G^\vee))$, hence is abelian and finitely generated.

(b) The group under consideration is a quotient of $G_\ff / P_\ff$, 
thus by part (a) it is abelian and finitely generated. Note that $L = G$ by the minimality 
of $\ff$, thus $\hat P_{L,\ff} = \hat P_\ff$. 
For any $x \in \ff$, the $X_* (Z^\circ (G)) \otimes_\Z \R$-orbit of $x$ equals $\ff$, and 
$\hat P_\ff$ equals the stabilizer of $x$ in $G$. We define a map
$t : G_\ff / \hat P_\ff \to X_* (Z(G)) \otimes_\Z \R$ by $g \cdot x = x + t(g)$, where the addition takes place in $\mh A_S$.~Since translations by
$X_* (Z^\circ (G)) \otimes_\Z \R$ commute with the action of $G$ on $\mc B (\mc G,F)$, we can compute
\[
x + t(gg') = g g' \cdot x = g \cdot (x + t(g')) = x + t(g) + t (g') .
\]  
This shows that $t$ is a group homomorphism, and by definition its kernel is trivial. Hence $t$
provides an isomorphism between $G_\ff / \hat P_\ff$ and a subgroup of $X_* (Z(G)) 
\otimes_\Z \R$. The latter is a real vector space, so all its subgroups are torsion-free. On 
$Z(G) \subset G_\ff$, the map $t$ boils down to the quotient map
\[
Z^\circ (G) \to Z^\circ (G) / Z^\circ (G)_\cpt \cong X_* (Z^\circ (G)) .
\]
On the other hand, the group of translations $t(G)$ of $X_* (Z^\circ (G)) \otimes_\Z \R$,
which arises from the action of $G$, contains $X_* (Z^\circ (G))$ with finite index, 
thus it is a lattice in $X_* (Z^\circ (G)) \otimes_\Z \R$. Now we have inclusions 
$X_* (Z^\circ (G)) \subset t(G_\ff) \subset t(G)$, where the outer sides are lattices in $X_* (Z^\circ (G)) \otimes_\Z \R$, thus the group in the 
middle is as well.
\end{proof}
The group $N_W (W_J) / W_J$ is isomorphic to\label{i:103}
\begin{equation}
N_W (J) := \{ w \in N_W (W_J) : w (J) = J \} .
\end{equation}
Note that $\Omega_\ff \subset N_W (J)$. The group $N_W (J)$ naturally contains an affine 
Weyl group $W_\af (J)$, obtained in the following way. For $\alpha \in \Delta_\af \setminus J$, 
the reflections in the roots $J \cup \{\alpha\}$ generate a finite Weyl group 
$W_{J \cup \alpha} \subset W$. Its longest element $w_{J \cup \alpha}$ satisfies
\[
w_{J \cup \alpha} (J) \; \subset \; w_{J \cup \alpha} (J \cup \{\alpha\}) = -J \cup \{-\alpha\}. 
\]
Suppose that $w_{J \cup \alpha} (J) = -J$ and let $w_J$ be the longest element of $W_J$. 
Then \label{i:104}
\begin{equation}\label{eqn:defn-v(alpha,J)}
v (\alpha,J) := w_{J \cup \alpha} w_J = w_J w_{J \cup \alpha}
\end{equation}
has order two. Such involutions are called an $R$-elements in \cite[\S 2.6]{Mor1}. 
Let $\Delta_{\ff,\af}$\label{i:105}
be the set of $\alpha \in \Delta_\af \setminus J$ for which there
exists an $\alpha'$ from the same simple factor of $G$, such that both $v(\alpha,J)$ and
$v(\alpha',J)$ are $R$-elements. The group\label{i:175}
\begin{equation}
\Omega (J) = \{ w \in N_W (J) : w (\Delta_{\ff,\af}) = \Delta_{\ff,\af} \}
\end{equation}
contains $\Omega_\ff$. By \cite[Corollary 2.8 and \S 7]{Mor1}, the set $S_{\ff,\af} := \{ v(\alpha,J) : \alpha \in \Delta_{\ff, \af} \}$ 
\label{i:106}generates an affine Weyl group $W_\af (J)$ in $N_W (J)$ and we have 
\begin{equation}\label{eq:2.4}
N_W (J) = W_\af (J) \rtimes \Omega (J).
\end{equation}
The inverse image of $N_W (J)$ in $N_G (S)$ stabilizes the facet of $\mc B (\mc L,F)$ 
containing $\ff$, and it normalizes $L$ and $P_{L,\ff}$. By \eqref{eq:2.13}, this induces 
an action of $N_W (J)$ on 
\[
\mc G^\circ_\ff (k_F) \cong P_{L,\ff} / L_{\ff,0+} \cong P_\ff / G_{\ff,0+} .
\]
Let $\sigma$ be an irreducible cuspidal representation of $\mc G_\ff^\circ (k_F)$, also 
viewed as a representation of $P_\ff$ by inflation. 
As in \cite[\S 4.16]{Mor1}, we define\label{i:108}
\begin{equation}\label{eq:2.11}
W(J,\sigma) = \{ w \in N_W (J) : w \cdot \sigma \cong \sigma \} .
\end{equation}
For any of the groups $G_\ff, \hat P_\ff, \Omega_\ff, \Omega_\ff^0$, we add a subscript
$\sigma$ to indicate the subgroup that stabilizes $\sigma$.\label{i:174}
Then, by \eqref{eq:2.7}, we have
\begin{equation}\label{eq:2.8}
G_{\ff,\sigma} / P_\ff \cong \Omega_{\ff,\sigma} \quad \text{and} \quad 
G_{\ff,\sigma} / \hat P_{\ff,\sigma} \cong \Omega_{\ff,\sigma} / \Omega_{\ff,\sigma}^0 
\cong G_{\ff,\sigma} \hat P_\ff / \hat P_\ff .
\end{equation}
If moreover $\ff$ is a minimal facet, then Lemma \ref{lem:3.2} (b) applies equally well to 
$G_{\ff,\sigma} / \hat P_{\ff,\sigma}$.

Recall that $\Omega_\ff^0$ is the point-wise stabilizer of $\ff$ in $\Omega_\ff$.
\begin{lem}\label{lem:3.5}
$\Omega_\ff^0$ is a central subgroup of $N_W (J)$, which intersects the 
commutator subgroup of $N_W (J)$ only in $\{1\}$. The same holds with $W (J,\sigma)$
instead of $N_W (J)$.
\end{lem}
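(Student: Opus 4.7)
The plan is to invoke the Kottwitz homomorphism $\kappa_G\colon G \to A$ into an abelian group $A = X^*(Z(G^\vee)^{\mb I_F})_{\mb W_F}$ (or an analogous abelian target). Since all parahoric subgroups lie in $\ker \kappa_G$, restricting to $N_G(S)$ and dividing by $N_G(S) \cap P_{C_0}$ produces a well-defined homomorphism $\bar\kappa\colon W \to A$ whose kernel contains $W_\af$. As in the proof of Lemma \ref{lem:3.2}(a), the identity $\ker \kappa_G \cap G_\ff = P_\ff$ implies that $\bar\kappa$ is injective on $\Omega_\ff = G_\ff / P_\ff$, and hence on its subgroup $\Omega_\ff^0 = \hat P_\ff / P_\ff$.

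The key geometric input I will check is that $\Omega_\ff^0$ is normalized by $N_W(J)$. Indeed, any $w \in N_W(J)$ preserves the set $J$ of simple affine roots vanishing on $\ff$, and hence preserves the affine subspace $\mr{Aff}(\ff) = \bigcap_{j \in J} \ker j$ of $\mh A_S$ setwise. Since $\ff$ is open in $\mr{Aff}(\ff)$, by affine continuity the pointwise stabilizer of $\ff$ in $G$ coincides with the pointwise stabilizer of $\mr{Aff}(\ff)$, namely $\hat P_\ff$. Consequently, any lift of $w$ to $N_G(S)$ normalizes both $\hat P_\ff$ and $N_G(S) \cap \hat P_\ff$, so passing to the image in $W$ shows that $N_W(J)$-conjugation preserves $\Omega_\ff^0$.

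Centrality is then immediate: for $w \in N_W(J)$ and $\omega \in \Omega_\ff^0$, normality places $w \omega w^{-1}$ in $\Omega_\ff^0$, while $\bar\kappa$ being a homomorphism to an abelian group gives $\bar\kappa(w \omega w^{-1}) = \bar\kappa(\omega)$; the injectivity of $\bar\kappa|_{\Omega_\ff^0}$ then forces $w \omega w^{-1} = \omega$. For the commutator statement, $[N_W(J), N_W(J)] \subset \ker \bar\kappa$ because the target is abelian, so $\Omega_\ff^0 \cap [N_W(J), N_W(J)] \subset \Omega_\ff^0 \cap \ker \bar\kappa = \{1\}$. Since $W(J,\sigma) \subset N_W(J)$ by \eqref{eq:2.11}, the assertions for $W(J,\sigma)$ follow by restricting the same arguments.

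The main obstacle is the normalization step, specifically the verification that $\hat P_\ff$ coincides with the pointwise stabilizer of $\mr{Aff}(\ff)$ and that this description is intrinsic enough to be preserved under $N_W(J)$-conjugation. Once this geometric input is in place, the Kottwitz homomorphism does the remaining algebraic work essentially for free.
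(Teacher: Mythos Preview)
Your argument for the commutator-intersection claim is correct and essentially coincides with the paper's: both observe that $[N_W(J),N_W(J)]$ lies in the kernel of the projection to the abelian group $W/W_\af\cong\Omega$ (equivalently in $\ker\bar\kappa=W_\af$), while $\Omega_\ff^0\subset\Omega$ meets $W_\af$ trivially by \eqref{eq:2.10}.

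The centrality argument, however, has a genuine gap at exactly the place you flagged. The assertion that $\hat P_\ff$ equals the pointwise stabilizer of $\mr{Aff}(\ff)$ in $G$ is false: elements of $G$ do not act on the building by affine maps, so fixing the open set $\ff$ does not force fixing its affine span (e.g.\ for $\ff=C_0$ the Iwahori lies in $\hat P_{C_0}$ but does not fix $\mh A_S$). More to the point, $N_G(\hat P_\ff)=G_\ff$, and a generic $w\in N_W(J)$ does \emph{not} stabilize $\ff$---for instance any nontrivial element of the affine Weyl group $W_\af(J)$ in \eqref{eq:2.4} moves $\ff$ within $\mr{Aff}(\ff)$---so lifts of $w$ genuinely fail to normalize $\hat P_\ff$. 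If one instead works inside $N_G(S)$ (where the action is affine), the image of $N_G(S)\cap\hat P_\ff$ in $W$ is the full pointwise stabilizer of $\ff$ in $W$, which one computes to be $W_J\cdot\Omega_\ff^0$, not $\Omega_\ff^0$ alone. Your Kottwitz argument then only yields $[w,\omega]\in W_J$ for $w\in N_W(J)$ and $\omega\in\Omega_\ff^0$, not $[w,\omega]=1$. The paper does not supply an independent argument here but cites \cite[(38)--(39)]{SolLLCunip}; closing this gap requires input beyond the Kottwitz homomorphism.
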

\begin{proof}
The first claim is shown in \cite[(38)--(39)]{SolLLCunip}. By \eqref{eq:2.10} and the
commutativity of $\Omega_\ff$ as in Lemma \ref{lem:3.2} (a), the commutator subgroup of $N_W (J)$
is contained in $W_\af$. By \eqref{eq:2.10}, $\Omega_\ff \subset \Omega$ 
intersects $W_\af$ trivially. Hence the intersection of $\Omega_\ff$ with
the commutator subgroup of $N_W (J)$ or of $W (J,\sigma)$ is just the identity.
\end{proof}

For $\alpha$ such that $v (\alpha,J) \in S_{\ff,\af} \cap W(J,\sigma)$, by 
\cite[Proposition 6.9]{Mor1}, one obtains a number $p_\alpha \in \Z_{\geq 1}$, which we will 
denote instead by $q_{\sigma,\alpha}$\label{i:109}. We set
\begin{align}
\nonumber & S_{\ff,\af, \sigma} := \{ v(\alpha,J) \in S_{\ff,\af} \cap W(J,\sigma) : 
q_{\sigma,\alpha} > 1 \} ,\\
\label{eq:2.12} & \Delta_{\ff,\af,\sigma} := 
\{ \alpha \in \Delta_{\ff,\af} : s_\alpha \in S_{\ff,\af,\sigma} \} ,\\
\nonumber & \Omega (J,\sigma) := \{ w \in W(J,\sigma) : w (\Delta_{\ff,\af,\sigma}) = 
\Delta_{\ff,\af,\sigma} \} .
\end{align}
Here $S_{\ff,\af,\sigma}$ is the set of simple reflections in an affine Coxeter group 
$W_\af (J,\sigma)$\label{i:110}. It is known from \cite[\S 7]{Mor1} that
\begin{equation}\label{eq:2.3}
W(J,\sigma) = W_\af (J,\sigma) \rtimes \Omega (J,\sigma) .
\end{equation}
We warn the reader that $\Omega (J,\sigma)$ need not be contained in $\Omega (J)$. For any of the
above groups, a subscript $L$ means that they are constructed from $L$ instead of $G$.
In particular we have the Iwahori--Weyl group $W_L$ of $L$ and likewise $W_L (J,\sigma)$.

By \cite[Theorem 6.11]{MoPr2} or \cite[Theorem 4.5]{Mor3}, $(P_\ff, \sigma)$ is a type 
in the sense of Bushnell--Kutzko, for a sum of
finitely many Bernstein blocks in $\Rep (G)$, say $\Rep (G)_{(P_\ff,\sigma)}$\label{i:177}
Moreover every Bernstein block consisting of depth-zero representations arises in this way.

\begin{lem}\label{lem:2.2}
\enuma{
\item The category $\Rep (L)_{(P_{L,\ff},\sigma)}$ determines $(P_{L,\ff},\sigma)$ up
to $L$-conjugacy.
\item Let $W(G,L)_\sigma$ be the stabilizer of $\Rep (L)_{(P_{L,\ff},\sigma)}$ in
$N_G (L) / L$. The natural map $W(J,\sigma) / W_L (J,\sigma) \to W(G,L)_\sigma$ is
an isomorphism.
}
\end{lem}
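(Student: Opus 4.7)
The plan is to prove part (a) by invoking the essential uniqueness of depth-zero types for supercuspidal representations \cite[Theorem 5.2]{MoPr1} (as was used in Lemma \ref{lem:7.1}). Since $P_{L,\ff}$ is a maximal parahoric subgroup of $L$ and $\sigma$ is cuspidal, $(P_{L,\ff},\sigma)$ is a type for a sum of Bernstein blocks of supercuspidal $L$-representations, and the $L$-conjugacy class of any type yielding the category $\Rep(L)_{(P_{L,\ff},\sigma)}$ is uniquely determined by that category.

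For part (b), I plan to show that the natural map
$\Psi : W(J,\sigma) \to W(G,L)_\sigma$
is well-defined with kernel $W_L(J,\sigma)$, and is surjective. Well-definedness: any representative $n \in N_G (S)$ of $w \in W(J,\sigma)$ stabilizes the facet $\ff$, hence preserves the root subsystem $R^c_\ff$ and normalizes $L$; since $n \cdot \sigma \cong \sigma$, the element $n$ preserves $(P_{L,\ff},\sigma)$, so stabilizes the block $\Rep(L)_{(P_{L,\ff},\sigma)}$. For the kernel: choosing compatible Iwahori subgroups so that $P_{L,C_{0,L}} = L \cap P_{C_0}$ identifies $W_L$ as a subgroup of $W$, equal to the kernel of the natural surjection $W \twoheadrightarrow N_G (L)/L = W(G,L)$ (using $N_G (L) = L \cdot N_G (S)$); intersecting with $W(J,\sigma)$ then gives exactly $W_L(J,\sigma)$.

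For surjectivity, I would start with $w \in W(G,L)_\sigma$ and lift to $g \in N_G (L)$. By part (a), after multiplying by some $l \in L$, I may assume $g$ normalizes $(P_{L,\ff},\sigma)$, so $g \in G_{\ff_L}$ and $g \cdot \sigma \cong \sigma$. Next I adjust $g$ on the left by $L$-elements to land in $N_G (S) \cap G_\ff$. First, by $L$-conjugacy of maximal $F$-split tori, there is $l_0 \in L$ with $l_0 g \in N_G (S)$; the element $l_0$ can be chosen in $L_{\ff_L}$ via conjugacy of maximal $k_F$-split tori inside $\mc L^\circ_\ff (k_F)$, lifting the corresponding conjugator to $P_{L,\ff}$ modulo $L_{\ff_L, 0+}$, so that $l_0 g$ still normalizes $P_{L,\ff}$ and preserves $\sigma$. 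Second, since $l_0 g \in N_G (S)$ permutes the $G$-subfacets of $\ff_L$, a further translation by some $z \in Z^\circ (L) \subset N_L (S)$ (which centralizes $P_{L,\ff}$ and hence fixes $\sigma$) yields $z l_0 g \in N_G (S) \cap G_\ff$ preserving $\sigma$, representing an element of $W(J,\sigma)$ mapping to $w$.

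The hard part will be the final surjectivity step: ensuring that the sequence of adjustments lands simultaneously in $N_G (S)$, in $G_\ff$ (rather than only $G_{\ff_L}$), and in the $\sigma$-stabilizer. This requires combining the conjugacy theorems for maximal $F$-split tori in $L$ with those for maximal $k_F$-split tori in $\mc L^\circ_\ff (k_F)$, together with the translation action of $Z^\circ (L)$ on $\ff_L$; these ingredients are essentially contained in the parahoric structure theory developed by Morris in \cite{Mor1,Mor3}.
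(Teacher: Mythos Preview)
Your proof of (a) matches the paper. For (b), the injectivity and kernel computation are fine, and you correctly spot that the paper's one-line surjectivity argument glosses over the passage from $N_G(L,P_{L,\ff})$ to $N_G(S)$. However, there is a genuine misconception running through your argument: you assume that representatives of $w\in W(J,\sigma)\subset N_W(J)$ stabilize the $G$-facet $\ff$. This is false. The condition $w(J)=J$ only forces $w$ to preserve the affine subspace $\bigcap_{j\in J}H_j$, hence the $L$-facet $\ff_L$ and therefore $P_{L,\ff}$ (this is exactly how the paper, in the remark after \eqref{eq:2.4}, obtains the action of $N_W(J)$ on $\sigma$ via $P_{L,\ff}/L_{\ff,0+}$); it does \emph{not} force $w(\ff)=\ff$. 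Concretely, take $G=\mathrm{Sp}_4$, $L$ the Siegel Levi $\GL_2$, $J=\{e_1-e_2\}$: the reflection $s_{e_1+e_2}$ fixes $e_1-e_2$ and so lies in $N_W(J)$, yet it sends $\ff=\{(t,t):0<t<\tfrac12\}$ to $\{(t,t):-\tfrac12<t<0\}$.

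This same misconception breaks your final surjectivity step. The $Z^\circ(L)$-translation you propose need not exist: in the $\mathrm{Sp}_4$ example, $Z^\circ(L)$ translates the coordinate $t$ on $\ff_L$ by integers, while the two facets above differ by a half-integer shift, so no $z\in Z^\circ(L)$ carries one to the other. More to the point, landing in $G_\ff$ is unnecessary. Once you have $n'\in N_G(S)\cap N_G(L,P_{L,\ff})$ stabilizing $\sigma$ (your adjustment to $N_G(S)$ is correct, and is cleanest via transitivity of $P_{L,\ff}$ on apartments of $\mc B(\mc L,F)$ through $\ff_L$), the image of $n'$ in $W$ already lies in $N_W(W_J)$: indeed $W_J$ is precisely the image of $P_{L,\ff}\cap N_G(S)$ in $W$, and $n'$ normalizes both $P_{L,\ff}$ and $N_G(S)$. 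Since $N_W(W_J)=W_J\cdot N_W(J)$, one further adjustment by a representative of $W_J$ inside $P_{L,\ff}\cap N_G(S)$ (which preserves $\sigma$) puts the image in $N_W(J)$, hence in $W(J,\sigma)$. This is what the paper's terse final sentence is encoding.
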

\begin{proof}
(a) Let $\Irr (L)_{(P_{L,\ff},\sigma)}$ be the set of irreducible objects in 
$\Rep (L)_{(P_{L,\ff},\sigma)}$. Since $\ff$ becomes a vertex in $\mc B (\mc L_\ad, F)$,
all these irreducible $L$-representations $\omega$ are supercuspidal and have depth zero
\cite[\S 6]{MoPr2}. Each such $\omega$ has $(P_{L,\ff},\sigma)$ as unrefined minimal
K-type in the sense of \cite{MoPr1,MoPr2}. By \cite[Theorem 5.2]{MoPr2}, $\omega$
determines $(P_{L,\ff},\sigma)$ up to $L$-conjugacy.

(b) Since $J$ and $S$ determine $L$ and $Z_G (S) \subset L$, every element of 
\[
N_W (J) \subset N_G (S) / (Z_G (S) \cap P_{C_0})
\] 
normalizes $L$. The natural map $W(J,\sigma) \to N_G (L) / L$ has kernel 
\[
W_L (J,\sigma) = (W(J,\sigma) \cap N_L (S)) / (Z_G (S) \cap P_{C_0}) .
\] 
By definition, $W(J,\sigma)$ stabilizes $(P_{L,\ff},\sigma)$, so it stabilizes 
$\Rep (L)_{(P_{L,\ff},\sigma)}$. Thus we obtain an injection
\begin{equation}\label{eq:2.9}
W(J,\sigma) / W_L (J,\sigma) \hookrightarrow W(G,L)_\sigma .
\end{equation}
Conversely, let $w \in W(G,L)_\sigma$. By part (a), $w$ stabilizes the $L$-conjugacy 
class of $(P_{L,\ff},\sigma)$. Hence we can represent $w$ by an element 
$n \in N_G (L, P_{L,\ff})$ that stabilizes $\sigma$. Then $n (Z_G (S) \cap P_{C_0})
\in W(J,\sigma)$, thus $w$ lies in the image of \eqref{eq:2.9}.
\end{proof}

\section{\texorpdfstring{$q$}{q}-parameters for Hecke algebras}
\label{sec:Morris}
Consider the Hecke algebra\label{i:128}
\[
\cH (G,P_\ff,\sigma) = \{ f : G \to \End_\C (V_\sigma) \mid f (kg k') = \sigma (k)
f(g) \sigma (k') \; \forall g \in G, k,k' \in P_\ff \} .
\]
We note that this algebra is sometimes called $\cH (G,\sigma^\vee)$, for instance in
\cite{BuKu}. By \cite[Theorem 4.5]{Mor3} and \cite[Theorem 4.3]{BuKu}, its category 
of right modules is equivalent to $\Rep (G)_{(P_\ff,\sigma)}$.

\begin{thm}[\cite{Mor1},~Theorem~7.12]\label{thm:2.4} 
The algebra $\mc H (G,P_\ff,\sigma)$ has a basis $\{ T_w : w \in W(J,\sigma) \}$
with $T_w$ supported on $P_\ff w P_\ff$.~There exist a parameter function $q_\sigma : S_{\ff,\af,\sigma} \to \Z_{>1}$, and 
a 2-cocycle $\mu_\sigma$ of $W(J,\sigma)$ which factors through 
$\Omega (J,\sigma) \cong W(J,\sigma) / W_\af (J,\sigma)$, such that 
\begin{equation}
\mc H (G,P_\ff, \sigma) \cong \mc H (W_\af (J,\sigma), q_\sigma) \rtimes 
\C [\Omega (J,\sigma), \mu_\sigma ].
\end{equation}
Here $\mc H (W_\af (J,\sigma), q_\sigma)$ denotes an Iwahori--Hecke algebra,
$\C [\Omega (J,\sigma), \mu_\sigma ]$ is a twisted group algebra and $T_\omega T_{s_\alpha} T_\omega^{-1} = T_{\omega s_\alpha \omega^{-1}}$, where 
 $s_\alpha \in S_{\ff,\af,\sigma}$ and $\omega \in \Omega (J,\sigma)$.
\end{thm}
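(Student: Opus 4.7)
The plan is to follow Morris's approach, which decomposes the argument into three main parts: a support/basis statement, the relations among reflection generators, and the analysis of the length-zero part.

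First, I would set up the basis. The natural candidates for basis elements come from the double-coset decomposition $P_\ff \backslash G / P_\ff$, restricted to those double cosets on which a nonzero element of $\cH(G,P_\ff,\sigma)$ can be supported: precisely those $P_\ff g P_\ff$ for which $\Hom_{P_\ff \cap g P_\ff g^{-1}}(\sigma, g \cdot \sigma) \ne 0$. Using the affine Bruhat decomposition $G = \bigsqcup_{w \in W} P_{C_0} w P_{C_0}$ together with the refined decomposition $P_\ff = \bigsqcup_{u \in W_J} P_{C_0} u P_{C_0}/P_{C_0} \cdots$ (i.e.\ $P_\ff w P_\ff = P_{C_0} W_J w W_J P_{C_0}$), these contributing double cosets are parametrized by $N_W(J)$, and the cuspidality of $\sigma$ together with the definition of $W(J,\sigma)$ in \eqref{eq:2.11} cuts this down to $W(J,\sigma)$. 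For each $w \in W(J,\sigma)$, one picks a representative $\dot w$ and a distinguished intertwiner from $\sigma$ to $w \cdot \sigma$, and defines $T_w$ to be the unique element of $\cH(G,P_\ff,\sigma)$ supported on $P_\ff w P_\ff$ whose value at $\dot w$ is that intertwiner. Linear independence follows from the disjointness of the supports, and spanning from the vanishing of $\cH(G,P_\ff,\sigma)$ on the remaining double cosets.

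Next I would establish the algebra structure in two stages. For the affine part $W_\af(J,\sigma)$, I need the quadratic relation $(T_{s_\alpha} - q_{\sigma,\alpha})(T_{s_\alpha}+1) = 0$ for every $s_\alpha = v(\alpha,J) \in S_{\ff,\af,\sigma}$, together with the Coxeter braid relations. This is the heart of the theorem and is really the content of Morris's \cite[Prop.~6.9]{Mor1}: one computes $T_{s_\alpha}^2$ explicitly by reducing the calculation to a rank-one situation inside $\mc G_{\ff'}^\circ(k_F)$, where $\ff'$ is the facet stabilized by the subgroup generated by $P_\ff$ and a representative of $s_\alpha$, so that the computation becomes a Hecke-algebra computation for the finite reductive group $\mc G_{\ff'}^\circ(k_F)$ with its cuspidal $\sigma$. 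Non-triviality of $q_{\sigma,\alpha}$ is guaranteed by the definition of $S_{\ff,\af,\sigma}$ in \eqref{eq:2.12}. The braid relations then follow from the fact that pairs $s_\alpha, s_\beta \in S_{\ff,\af,\sigma}$ already satisfy these relations in $W(J,\sigma)$, and that the support-filtration $T_w T_{s} = T_{ws}$ when the length increases (a matching-of-supports argument using $\ell(w) + \ell(s) = \ell(ws)$).

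Finally, for the length-zero part, I would use the decomposition $W(J,\sigma) = W_\af(J,\sigma) \rtimes \Omega(J,\sigma)$ from \eqref{eq:2.3}. For $\omega \in \Omega(J,\sigma)$ and $s_\alpha \in S_{\ff,\af,\sigma}$, one has $\omega s_\alpha \omega^{-1} \in S_{\ff,\af,\sigma}$, and the products $T_\omega T_{s_\alpha}$, $T_{\omega s_\alpha \omega^{-1}} T_\omega$ are each supported on the single double coset $P_\ff \omega s_\alpha P_\ff$; comparing normalizations shows they agree up to a scalar, which one can absorb into the choice of $T_\omega$ to obtain $T_\omega T_{s_\alpha} T_\omega^{-1} = T_{\omega s_\alpha \omega^{-1}}$. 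For two elements $\omega, \omega' \in \Omega(J,\sigma)$, both $T_\omega T_{\omega'}$ and $T_{\omega \omega'}$ are supported on $P_\ff \omega \omega' P_\ff$, so $T_\omega T_{\omega'} = \mu_\sigma(\omega,\omega') T_{\omega \omega'}$ for some scalar $\mu_\sigma(\omega,\omega') \in \C^\times$; associativity of multiplication in $\cH(G,P_\ff,\sigma)$ forces $\mu_\sigma$ to be a 2-cocycle on $\Omega(J,\sigma)$, and we obtain the claimed semidirect-product decomposition. The main obstacle is the rank-one reduction in the second step, which requires carefully identifying the finite reductive subquotient and verifying that the relevant intertwiners match up across the reduction; once this is in place, the rest is essentially bookkeeping on supports and normalizations.
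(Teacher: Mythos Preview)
Your sketch is a faithful outline of Morris's original argument in \cite[\S 4--7]{Mor1}, and is essentially correct. Note, however, that the paper does not supply its own proof of this theorem: it is stated as a direct citation of \cite[Theorem~7.12]{Mor1} and used as a black box, so there is no ``paper's proof'' to compare against beyond Morris's. Your three-step breakdown (support/basis via the affine Bruhat decomposition and cuspidality, quadratic and braid relations via reduction to the rank-one finite reductive quotient $\mc G_{\ff_\alpha}^\circ(k_F)$, and the length-zero twisted group algebra) matches Morris's structure. One small point of care: when you say the scalar in $T_\omega T_{s_\alpha} = c\, T_{\omega s_\alpha \omega^{-1}} T_\omega$ can be ``absorbed into the choice of $T_\omega$'', this normalization must be done coherently across all $\omega$ and all $s_\alpha$ simultaneously, which Morris handles in \cite[\S 7.10--7.12]{Mor1}; it is not automatic that a single rescaling of each $T_\omega$ kills all such scalars at once, and indeed the residual obstruction is exactly what becomes the 2-cocycle $\mu_\sigma$.
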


For background on Iwahori--Hecke algebras and affine Hecke algebras we refer
to \cite{SolSurv}. It is known that $\mu_\sigma$ is sometimes nontrivial, see 
\cite[Proposition 4.4]{HeVi}. All the parameters $q_\sigma (s)$ are powers of
the cardinality $q_F$\label{i:166} of $k_F$.

From now on, let $\sigma$ be a non-singular cuspidal representation of 
$P_\ff/G_{\ff,0+} = \mc G_\ff^\circ (k_F)$. Thus, by definition, $\sigma$ is an 
irreducible constituent of a Deligne--Lusztig representation 
$R_{\mc T_\ff (k_F)}^{\mc G_\ff^\circ (k_F)}(\theta_\ff)$, where $\mc T_\ff$ is an
elliptic maximal $k_F$-torus in $\mc G_\ff^\circ$, and $\theta_\ff$ is a non-singular
character of $\mc T_\ff (k_F)$. (This is slightly more general than in Section \ref{sec:DL},
because $\theta_\ff$ is $k_F$-non-singular but we do not require $F$-non-singularity.)
Since we are dealing with smooth complex $G$-representations,
the values of $\theta_\ff$ must lie in $\C$. On the other hand, the techniques used in 
Deligne--Lusztig theory apply to representations of $\overline{\Q}_\ell$-vector spaces,
where $\ell$ is a prime number different from $p$. We fix an isomorphism $\C \cong 
\overline{\Q}_\ell$, so that we can regard $\theta_\ff$ as taking values in both fields.~Let $\mc G_\ff^\vee$ be the dual group of $\mc G_\ff^\circ$ and let
$s \in \mc G_\ff^\vee (\overline{\F}_p)$ be an element in the semisimple conjugacy
class corresponding to $\theta_\ff$. The non-singularity of $\theta_\ff$ means that 
$Z_{\mc G_\ff^\vee}(s)^\circ$ is a torus. 
Recall that $\Omega_s := \pi_0 \big(Z_{\mc G_\ff^\vee}(s) \big)$ is a finite abelian group. 

In general, a Deligne--Lusztig 
representation is a virtual representation, but in our setting, by \cite[Theorem 8.3]{DeLu}, 
$\pm R_{\mc T_\ff (k_F)}^{\mc G_\ff^\circ (k_F)}(\theta_\ff)$ is an actual representation 
for a suitable sign $\pm$. Moreover, Lusztig \cite[Proposition 5.1]{Lus-discon} showed that 
$\pm R_{\mc T_\ff (k_F)}^{\mc G_\ff^\circ (k_F)}(\theta_\ff)$ is a direct sum of mutually 
inequivalent irreducible cuspidal representations, which can be parametrized as $\sigma_\chi$, 
where $\sigma_1 = \sigma$ and $\chi$ runs through the characters of 
\begin{equation}\label{eqn:Omega-theta-ff}
\Omega_{\theta_\ff} := W(\mc G_\ff^\circ, \mc T_\ff) (k_F)_{\theta_\ff}.
\end{equation}

\begin{prop}\label{prop:2.1}
The full subcategory of $\Rep (\mc G_\ff^\circ (k_F))$ generated by 
$\pm R_{\mc T_\ff (k_F)}^{\mc G_\ff^\circ (k_F)}(\theta_\ff)$ is equivalent to 
$\Rep (\Omega_{\theta_\ff})$.
\end{prop}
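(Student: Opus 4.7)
The plan is to identify both sides with the same semisimple abelian $\C$-category, namely a product of $|\Irr(\Omega_{\theta_\ff})|$ copies of $\Vect_\C$. The key input is the cited multiplicity-free decomposition of the Deligne--Lusztig representation, together with Schur's lemma.

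First, I would spell out the full subcategory. By the decomposition quoted just before the proposition, we have $\pm R_{\mc T_\ff (k_F)}^{\mc G_\ff^\circ (k_F)}(\theta_\ff) = \bigoplus_\chi \sigma_\chi$, where $\chi$ ranges over the characters of $\Omega_{\theta_\ff}$ and the $\sigma_\chi$ are pairwise non-isomorphic irreducible cuspidal representations. The full subcategory generated by $\pm R_{\mc T_\ff (k_F)}^{\mc G_\ff^\circ (k_F)}(\theta_\ff)$ consists of all subquotients of (possibly infinite) direct sums of this representation. Since each $\sigma_\chi$ is irreducible and the constituents are pairwise non-isomorphic, every such subquotient decomposes as a direct sum of $\sigma_\chi$'s; explicitly, every object $V$ in the subcategory has a canonical isotypic decomposition $V \cong \bigoplus_\chi V_\chi \otimes \sigma_\chi$ with $V_\chi = \Hom_{\mc G_\ff^\circ (k_F)}(\sigma_\chi, V)$.

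Second, I would use Schur's lemma: $\Hom_{\mc G_\ff^\circ (k_F)}(\sigma_\chi, \sigma_{\chi'}) = \delta_{\chi,\chi'}\, \C$. This shows that the full subcategory is semisimple with simple objects $\{\sigma_\chi\}_{\chi \in \Irr(\Omega_{\theta_\ff})}$ and $\End(\sigma_\chi) = \C$, hence it is equivalent (as an abelian $\C$-linear category) to $\prod_{\chi} \Vect_\C$ via the functor $V \mapsto (V_\chi)_\chi$. Here I use the non-singularity of $\theta_\ff$, which by (the $k_F$-analogue of) Lemma \ref{lem:7.7} makes $\Omega_{\theta_\ff}$ abelian, so that the indexing set of ``characters of $\Omega_{\theta_\ff}$'' from the cited decomposition coincides with $\Irr(\Omega_{\theta_\ff})$.

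Third, since $\Omega_{\theta_\ff}$ is a finite (abelian) group, Maschke's theorem gives $\Rep(\Omega_{\theta_\ff}) \cong \prod_{\chi \in \Irr(\Omega_{\theta_\ff})} \Vect_\C$ via $M \mapsto (\Hom_{\Omega_{\theta_\ff}}(\C_\chi, M))_\chi$, and composing the two equivalences produces the desired equivalence matching $\sigma_\chi$ with the one-dimensional $\Omega_{\theta_\ff}$-representation of character $\chi$. There is no serious obstacle here; the proposition is essentially a formal consequence of the decomposition and of semisimplicity, and the only point requiring a little care is the identification of the indexing set, which is handled by the non-singularity hypothesis on $\theta_\ff$.
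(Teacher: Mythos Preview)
Your argument is correct and establishes the proposition as stated, but it proceeds quite differently from the paper. You use only the multiplicity-free decomposition $\pm R = \bigoplus_\chi \sigma_\chi$ together with Schur's lemma and Maschke's theorem to conclude that both sides are semisimple with simples indexed by $\Irr(\Omega_{\theta_\ff})$, hence abstractly equivalent. The paper instead invokes the categorical equivalence of Lusztig--Yun \cite[Corollary 12.7]{LuYu}, which identifies the entire geometric series $\Rep_s(\mc G_\ff^\circ(k_F))$ with a direct sum of equivariant categories $\Rep_1(\mc H(\overline{\F}_p)^{\Fr_\beta})^{\Omega_{s,\beta}}$, observes that by non-singularity $\mc H$ is a torus so each summand collapses to $\Rep(\Omega_{s,\beta})$, and then picks out the summand corresponding to the rational series of $(\mc T_\ff,\theta_\ff)$. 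Your route is far more elementary and avoids the heavy machinery of \cite{LuYu}; the paper's route, on the other hand, yields a \emph{canonical} equivalence in which the $\Omega_{\theta_\ff}$-structure is intrinsic rather than imposed through Lusztig's labelling of the $\sigma_\chi$. This canonicity is implicitly used later, for instance in the proof of Lemma \ref{lem:4.4}(b), where one argues that an element $s_\alpha$ commuting with $\Omega_{\theta_\ff}$ must act trivially on $\mc C \cong \Rep(\Omega_{\theta_\ff})$; with your equivalence that step would require separately checking that Lusztig's parametrisation $\chi \mapsto \sigma_\chi$ is equivariant for the relevant normaliser action.
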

\begin{proof}
Let $\Rep_s (\mc G_\ff^\circ (k_F))$ be the category of $\mc G_\ff^\circ (k_F)$-representations
generated by the objects in the geometric Deligne--Lusztig series determined by 
$s \in \mc G_\ff^\vee (\overline{\F}_p)$. Let $\mc H$ be the split reductive 
$\overline{\F}_p$-group dual to $\mc H^\vee := Z_{\mc G_\ff^{\vee}}(s)^\circ$. By the
nonsingularity of $\theta_\ff$ and $s$, both $\mc H$ and $\mc H^{\vee}$ are tori. By 
\cite[Corollary 12.7]{LuYu} \footnote{Strictly speaking,\cite[Corollary 12.7]{LuYu} gives 
\eqref{eq:2.1} with, in addition, fixed cells on both sides, but one can sum over all cells 
for $\mc G_\ff^\circ (k_F)$ to obtain the desired \eqref{eq:2.1} as stated.}, 
there exists a canonical equivalence of categories
\begin{equation}\label{eq:2.1}
\Rep_s (\mc G_\ff^\circ (k_F)) \cong \bigoplus\nolimits_\beta \,
\Rep_1 \big( \mc H (\overline{\F}_p)^{\Fr_\beta} \big)^{\Omega_{s,\beta}} .
\end{equation}
Here $\beta$ runs through a finite set that parametrizes certain $k_F$-forms of $\mc H$, 
which appear in the notation via a Frobenius action $\Fr_\beta$. They correspond to various 
rational Deligne--Lusztig series associated to $s$.~The superscript $\Omega_{s,\beta}$ means
equivariant objects, with respect to a (canonical up to canonical isomorphisms) action of the
subgroup $\Omega_{s,\beta} \subset \Omega_s$ that stabilizes $\beta$.
Since $\mc H$ is a torus, the category $\Rep_1 \big( \mc H (\overline{\F}_p)^{\Fr_\beta} \big)$ 
consists precisely of all multiples $\tau$ of the trivial representation of 
$\mc H (\overline{\F}_p)^{\Fr_\beta}$. An $\Omega_{s,\beta}$-equivariant
structure on such a representation $\tau$ consists of a collection of morphisms
$\tau \to \omega \cdot \tau$ for $\omega$ running through the appropriate extension of 
$\Omega_{s,\beta}$ by $\mc H (\overline{\F}_p)^{\Fr_\beta}$, compatible with the group 
structure of that extension. However, since $\tau \big( \mc H (\overline{\F}_p)^{\Fr_\beta} \big) = \mr{id}$, the extension can be ignored and we only need morphisms
$\tau \to \omega \cdot \tau$ for $\omega \in \Omega_{s,\beta}$. In other words, 
An $\Omega_{s,\beta}$-equivariant structure on $\tau$ 
just means that it is upgraded to an $\Omega_{s,\beta}$-representation.
Thus \eqref{eq:2.1} simplifies to an equivalence of categories
\begin{equation}\label{eq:2.2}
\Rep_s (\mc G_\ff^\circ (k_F)) \cong \bigoplus\nolimits_\beta \, \Rep (\Omega_{s,\beta}) .
\end{equation}
The representation $\pm R_{\mc T_\ff (k_F)}^{\mc G_\ff^\circ (k_F)}(\theta_\ff)$ generates
a unique rational Deligne--Lusztig series in \eqref{eq:2.2}, and the associated $\beta$
satisfies $\Omega_{s,\beta} \cong \Omega_{\theta_\ff}$.
\end{proof}

We now analyze the $q$-parameters in Theorem \ref{thm:2.4}, and express them more
explicitly.~Consider $\alpha \in \Delta_\af \setminus J$. By the observations in 
\cite[\S 3]{Mor1}, there 
exists a unique facet $\mf f_\alpha$ of $\ff$, such that the associated parahoric subgroup
$P_{\mf f_\alpha}$ has set of simple affine roots $J \cup \{\alpha\}$. The group \label{i:111}
$\mc G_{\mf f_\alpha}^\circ (k_F) = P_{\mf f_\alpha} / U_{\mf f_\alpha}$ contains $P_\ff 
/ U_{\mf f_\alpha}$ as a parabolic subgroup $\mc Q_{\ff,\alpha} (k_F)$ with Levi factor 
$\mc G_\ff^\circ (k_F) = P_\ff / G_{\ff,0+}$.
The quotient $G_{\ff,0+} / G_{\mf f_\alpha,0+}$ is isomorphic to $\mc U_\alpha (k_F)$, where
$\mc U_\alpha$ denotes the root subgroup of $\mc G_{\mf f_\alpha}^\circ$ associated to $\alpha$. 
By \cite[\S 6.7--6.9]{Mor1}, the number $q_{\sigma,\alpha}$ from \eqref{eq:2.12}
can be computed (whenever defined) from $\ind_{\mc Q_{\ff,\alpha} (k_F)}^{\mc G_{\mf f_\alpha}^\circ (k_F)} (\sigma)$, thus entirely in terms of connected algebraic groups over finite fields.

Recall from \S \ref{sec:IW} that $\mc S$ defines a maximal $k_F$-split torus in $\mc G_\ff^\circ$, 
and that $\mc T_\ff$ is a maximal $k_F$-torus in $\mc G_\ff^\circ$. By 
\cite[Proposition 5.1.10.b]{BrTi2} and \cite[Lemma 2.3.1]{DeB}, $\mc T_\ff$ can be lifted
to an $\mf o_F$-torus in $\mc P_\ff^\circ$.
We fix one such lift, so that we can view $\mc T_\ff$ as an $\mf o_F$-torus which splits 
over an unramified extension of $F$. Since $\mc T_\ff (\mf o_F) \subset P_\ff$ normalizes 
$P_\ff$ and $P_{\mf f_\alpha}$, hence it normalizes their pro-unipotent radicals 
$G_{\ff,0+}$ and $G_{\mf f_\alpha,0+}$. Thus $\mc T_\ff (\mf o_F)$ normalizes 
$\mc Q_{\ff,\alpha}(k_F) = P_\ff / G_{\mf f_\alpha,0+}$ and its unipotent radical
$\mc U_\alpha (k_F) = G_{\ff,0+} / G_{\mf f_\alpha,0+}$. In particular, 
\begin{equation}\label{eq:4.5}
\alpha \in \Delta_\af \setminus J \text{ can be viewed as a root of } \mc T_\ff ,
\end{equation}
and it is defined over $\mf o_F$ because $\mc U_\alpha$ is defined over $\mf o_F$.
This also shows that $s_\alpha$ can be represented in $N_{\mc G_\ff^\circ (k_F)}(\mc T_\ff)$.

\begin{lem}\label{lem:4.4}
\enuma{
\item $\pm R_{\mc T_\ff (k_F)}^{\mc G_{\mf f_\alpha}^\circ (k_F)} (\theta_\ff)=\bigoplus\limits_{\chi \in \Irr (\Omega_{\theta_\ff})} \ind_{\mc Q_{\ff,\alpha} (k_F)}^{\mc G_{\mf f_\alpha
}^\circ (k_F)} (\sigma_\chi)$.
\item The representations $\ind_{\mc Q_{\ff,\alpha} (k_F)}^{\mc G_{\mf f_\alpha}^\circ (k_F)} 
(\sigma_\chi)$ and $\ind_{\mc Q_{\ff,\alpha} (k_F)}^{\mc G_{\mf f_\alpha}^\circ (k_F)} 
(\sigma_{\chi'})$ with $\chi \neq \chi' \in \Irr (\Omega_{\theta_\ff})$ do not have any 
irreducible subquotients in common.
}
\end{lem}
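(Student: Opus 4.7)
My approach is to derive both statements from the transitivity of Deligne--Lusztig induction combined with the recalled decomposition $\pm R_{\mc T_\ff(k_F)}^{\mc G_\ff^\circ (k_F)}(\theta_\ff) = \bigoplus_\chi \sigma_\chi$. For part~(a), I would write
\[
R_{\mc T_\ff}^{\mc G_{\mf f_\alpha}^\circ}(\theta_\ff) \;=\; R_{\mc G_\ff^\circ}^{\mc G_{\mf f_\alpha}^\circ}\bigl(R_{\mc T_\ff}^{\mc G_\ff^\circ}(\theta_\ff)\bigr)
\]
using transitivity of Lusztig induction along the Levi chain $\mc T_\ff \subset \mc G_\ff^\circ \subset \mc G_{\mf f_\alpha}^\circ$; this is legitimate because $\mc Q_{\ff,\alpha}$ is a $k_F$-rational parabolic of $\mc G_{\mf f_\alpha}^\circ$ with Levi factor $\mc G_\ff^\circ$. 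For the same reason, the outer Lusztig functor $R_{\mc G_\ff^\circ}^{\mc G_{\mf f_\alpha}^\circ}$ coincides with ordinary Harish--Chandra induction $\ind_{\mc Q_{\ff,\alpha}(k_F)}^{\mc G_{\mf f_\alpha}^\circ(k_F)}$ on genuine representations. Substituting the decomposition of $\pm R_{\mc T_\ff}^{\mc G_\ff^\circ}(\theta_\ff)$ and choosing the global sign consistently then yields (a).

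For part~(b), since we work in characteristic zero, all representations of the finite group $\mc G_{\mf f_\alpha}^\circ(k_F)$ are completely reducible, so a common irreducible subquotient of $\ind(\sigma_\chi)$ and $\ind(\sigma_{\chi'})$ is the same as a common direct summand. If such a common $\pi$ existed, then $\Hom_{\mc G_{\mf f_\alpha}^\circ(k_F)}(\ind(\sigma_\chi), \ind(\sigma_{\chi'})) \neq 0$; standard Harish--Chandra theory for finite reductive groups (combining Mackey with the cuspidality of $\sigma_\chi, \sigma_{\chi'}$) then forces $\sigma_\chi$ and $\sigma_{\chi'}$ to lie in the same orbit of the relative Weyl group $N_{\mc G_{\mf f_\alpha}^\circ(k_F)}(\mc G_\ff^\circ)/\mc G_\ff^\circ(k_F)$ acting by conjugation on cuspidal representations. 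After absorbing an inner $\mc G_\ff^\circ(k_F)$-conjugation, the conjugating element $n$ can be taken to stabilize both $\mc T_\ff$ and $\theta_\ff$.

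It then suffices to show that any such $n \in N_{\mc G_{\mf f_\alpha}^\circ(k_F)}(\mc G_\ff^\circ)$ stabilizing $(\mc T_\ff, \theta_\ff)$ fixes each $\sigma_\chi$ individually. By the naturality of the Lusztig parametrization $\chi \mapsto \sigma_\chi$ from \cite[Proposition~5.1]{Lus-discon}, the $n$-action on the set $\{\sigma_\chi\}$ is intertwined with the $n$-conjugation action on $\Irr(\Omega_{\theta_\ff})$ induced from its conjugation action on $\Omega_{\theta_\ff} = W(\mc G_\ff^\circ, \mc T_\ff)^{\Fr}_{\theta_\ff}$. Hence it is enough to establish an analogue of Lemma~\ref{lem:7.15} in our setting: the stabilizer $W(\mc G_\ff^\circ, \mc T_\ff)^{\Fr}_{\theta_\ff}$ is central in $W(N_{\mc G_{\mf f_\alpha}^\circ}(\mc G_\ff^\circ), \mc T_\ff)^{\Fr}_{\theta_\ff}$. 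The main obstacle lies exactly in this centrality step: although the structural setup (a Levi inside a larger finite reductive group equipped with a non-singular character) parallels that of Lemma~\ref{lem:7.15}, one must reprove the key embedding into the character group of the appropriate cokernel and its equivariance, now with the ambient group $\mc G_{\mf f_\alpha}^\circ$ playing the role of $\mc G_y^\circ$.
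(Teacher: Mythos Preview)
Your argument for part~(a) is correct and coincides with the paper's: transitivity of Deligne--Lusztig induction along $\mc T_\ff \subset \mc G_\ff^\circ \subset \mc G_{\mf f_\alpha}^\circ$, together with the fact that the outer step is ordinary parabolic induction from the $k_F$-rational parabolic $\mc Q_{\ff,\alpha}$.

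For part~(b), your reduction via Mackey theory to the relative Weyl group orbit is also what the paper does, but you then over-generalise and leave the decisive step open. The point you are missing is that $\mc G_\ff^\circ$ is a \emph{maximal} Levi subgroup of $\mc G_{\mf f_\alpha}^\circ$, so the relative Weyl group $N_{\mc G_{\mf f_\alpha}^\circ(k_F)}(\mc G_\ff^\circ)/\mc G_\ff^\circ(k_F)$ has order at most~2, with the nontrivial coset represented by $s_\alpha$. Hence the only case to rule out is $\sigma_{\chi'} \cong s_\alpha \cdot \sigma_\chi$. If $s_\alpha$ carries some $\sigma_\chi$ into the Deligne--Lusztig packet, then $s_\alpha$ stabilises $(\mc T_\ff,\theta_\ff)$, and the required centrality becomes the single statement that $s_\alpha$ commutes with every $\omega \in \Omega_{\theta_\ff}$. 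This the paper proves directly from ellipticity: $\omega s_\alpha \omega^{-1} = s_{\omega(\alpha)}$ with $\omega(\alpha)$ a $k_F$-rational root of $(\mc G_{\mf f_\alpha}^\circ,\mc T_\ff)$; since $\omega \in W(\mc G_\ff^\circ,\mc T_\ff)$ fixes $Z(\mc G_\ff^\circ)^\circ$, the difference $\omega(\alpha)-\alpha$ is a $\Fr$-fixed character of $\mc T_\ff / Z(\mc G_\ff^\circ)^\circ$, which is anisotropic, forcing $\omega(\alpha)=\alpha$.

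Your proposed route through a Lemma~\ref{lem:7.15}-style embedding into $X^*(\mc T_\ff)/\Z R$ is not only heavier but also problematic: that lemma relies on the isomorphism \eqref{eq:7.44}, which in turn uses that $y$ is a \emph{special} vertex so that $W(\mc G_y^\circ,\mc T_\ff)$ is the full finite Weyl group. There is no reason for $\mc G_{\mf f_\alpha}^\circ$ to play this role, so the adaptation you suggest does not go through as stated. Exploiting the rank-one nature of the step $\mc G_\ff^\circ \subset \mc G_{\mf f_\alpha}^\circ$ avoids this entirely.
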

\begin{proof}
(a) This follows from the description of 
$\pm R_{\mc T_\ff (k_F)}^{\mc G_\ff^\circ (k_F)}(\theta_\ff)$ above \eqref{eqn:Omega-theta-ff}, 
combined with the transitivity of Deligne--Lusztig induction as in \cite[\S 11.5]{DiMi}.

(b) By Frobenius reciprocity and the Mackey formula, $\ind_{\mc Q_{\ff,\alpha} (k_F)}^{
\mc G_{\mf f_\alpha}^\circ (k_F)} (\sigma_\chi)$ and\\ $\ind_{\mc Q_{\ff,\alpha} (k_F)}^{
\mc G_{\mf f_\alpha}^\circ (k_F)
} (\sigma_{\chi'})$ can only have common irreducible constituents if $\sigma_{\chi'}$ is isomorphic
to $\sigma_\chi$ or to $s_\alpha \cdot \sigma_\chi$. We already noted in the paragraph above
\eqref{eqn:Omega-theta-ff} that, by \cite[Proposition 5.1]{Lus-discon},
$\sigma_\chi \not\cong \sigma_{\chi'}$. Thus the only remaining possibility is if $\sigma_{\chi'}$ 
is isomorphic to $s_{\alpha}\cdot\sigma_{\chi}$. 

By Proposition \ref{prop:2.1}, $\pm R_{\mc T_\ff (k_F)}^{\mc G_\ff^\circ (k_F)} 
(\theta_\ff)$ generates a full subcategory $\mc C$ of $\Rep (\mc G_\ff^\circ (k_F))$ that is
equivalent to $\Rep (\Omega_{\theta_\ff})$.~Suppose $s_\alpha$ maps
some element of $\mc C$ into $\mc C$.~Then $s_\alpha \in N_{\mc G_\ff^\circ (k_F)}(\mc T_\ff)$ must
fix $\theta_\ff$, and $s_\alpha$ stabilizes $\mc C$.~For $\omega \in \Omega_{\theta_\ff}$,
$\omega s_\alpha \omega^{-1} \in W(\mc G_{\mf f_\alpha}^\circ, \mc T_\ff)$ is a reflection
associated to a root defined over $k_F$.~By ellipticity of
$\mc T_\ff$ in $\mc G_\ff^\circ$, $\omega s_\alpha \omega^{-1}$ must equal $s_\alpha$.~Hence 
$s_\alpha$ commutes with $\Omega_{\theta_\ff}$, and thus the action of
$s_\alpha$ on $\mc C \cong \Rep (\Omega_{\theta_\ff})$ is trivial.~In particular,
$s_\alpha \cdot \sigma_\chi \cong \sigma_\chi$, which is not isomorphic to $\sigma_{\chi'}$.
\end{proof}

Let $\mc B$ be a Borel subgroup (not necessarily defined over $k_F$) of $\mc G_{\mf 
f_\alpha}^\circ$, such that $\mc T_\ff \, \mc U_\alpha \subset \mc B \subset \mc Q_{\ff,\alpha}$. 
Let $\mc L_\alpha$\label{i:112} be the $k_F$-subgroup of $\mc G_{\mf f_\alpha}^\circ$ generated 
by $\mc T_\ff \cup \mc U_\alpha \cup \mc U_{-\alpha}$. It is a twisted Levi subgroup of 
$\mc G_{\mf f_\alpha}^\circ$.  Let
\begin{equation}
R_{\mc T_\ff \subset \mc B}^{\mc G_{\mf f_\alpha}^\circ} : \Rep (\mc T_\ff (k_F)) \to
\Rep (\mc G_{\mf f_\alpha}^\circ (k_F))
\end{equation}
denote the Deligne--Lusztig induction functor. By transitivity of Deligne--Lusztig induction
\cite[\S 11.5]{DiMi}, there are natural isomorphisms
\begin{equation}\label{eq:4.1}
\begin{aligned}
\ind_{\mc Q_{\ff,\alpha}(k_F)}^{\mc G_{\mf f_\alpha}^\circ (k_F)}
R_{\mc T_\ff (k_F)}^{\mc G_\ff^\circ (k_F)} (\theta_\ff) & = R_{\mc G_\ff^\circ \subset 
\mc Q_{\ff,\alpha}}^{\mc G_{\mf f_\alpha}^\circ} 
R_{\mc T_\ff \subset \mc B \cap \mc G_\ff^\circ}^{\mc G_\ff^\circ} (\theta_\ff) \\
& \cong R_{\mc T_\ff \subset \mc B}^{\mc G_{\mf f_\alpha}^\circ} (\theta_\ff) 
\cong R_{\mc L_\alpha \subset \mc Q_{\ff,\alpha} \mc L_\alpha}^{\mc G_{\mf f_\alpha}^\circ}
R_{\mc T_\ff \subset \mc L_\alpha \cap \mc B}^{\mc L_\alpha} (\theta_\ff) \\
& = R_{\mc L_\alpha \subset \mc Q_{\ff,\alpha} \mc L_\alpha}^{\mc G_{\mf f_\alpha}^\circ}
\ind_{(\mc L_\alpha \cap \mc B)(k_F)}^{\mc L_\alpha (k_F)} (\theta_\ff) .
\end{aligned}
\end{equation}
Notice that $\mc L_\alpha \cap \mc B = \mc T_\ff \ltimes \mc U_\alpha$ is defined over $k_F$.

\begin{lem}\label{lem:4.5}
There are canonical algebra isomorphisms
\begin{align*}
\End_{\mc G_{\mf f_\alpha}^\circ (k_F)} \big( R_{\mc T_\ff \subset \mc B}^{\mc G_{\mf 
f_\alpha}^\circ} \theta_\ff \big) & 
\cong \bigoplus\nolimits_{\chi \in \Irr (\Omega_{\theta_\ff})} 
\End_{\mc G_{\mf f_\alpha}^\circ (k_F)} \big(
\ind_{\mc Q_{\ff,\alpha}(k_F)}^{\mc G_{\mf f_\alpha}^\circ (k_F)} \sigma_\chi \big) \\
& \cong \bigoplus\nolimits_{\chi \in \Irr (\Omega_{\theta_\ff})} \End_{\mc L_\alpha (k_F)}
\big( \ind_{(\mc L_\alpha \cap \mc B)(k_F)}^{\mc L_\alpha (k_F)} \theta_\ff \big) . 
\end{align*}
\end{lem}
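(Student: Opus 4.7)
My plan would be to dispatch the two displayed isomorphisms one after the other.

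For the first isomorphism, I would begin by combining the chain \eqref{eq:4.1} with Lemma \ref{lem:4.4}(a) to produce a decomposition
\[
\pm R_{\mc T_\ff \subset \mc B}^{\mc G_{\mf f_\alpha}^\circ} \theta_\ff \;\cong\; \bigoplus\nolimits_{\chi \in \Irr(\Omega_{\theta_\ff})} \ind_{\mc Q_{\ff,\alpha}(k_F)}^{\mc G_{\mf f_\alpha}^\circ(k_F)} \sigma_\chi .
\]
Lemma \ref{lem:4.4}(b) then guarantees that the summands pairwise share no irreducible subquotients, so every $\mc G_{\mf f_\alpha}^\circ(k_F)$-intertwiner between two distinct summands must vanish. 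The first isomorphism then follows by splitting the endomorphism algebra of a direct sum along the block decomposition.

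For the second isomorphism it is enough to exhibit, for every $\chi$, a canonical algebra isomorphism
\[
\End_{\mc G_{\mf f_\alpha}^\circ(k_F)}\bigl(\ind_{\mc Q_{\ff,\alpha}(k_F)}^{\mc G_{\mf f_\alpha}^\circ(k_F)} \sigma_\chi\bigr) \;\cong\; \End_{\mc L_\alpha(k_F)}\bigl(\ind_{(\mc L_\alpha \cap \mc B)(k_F)}^{\mc L_\alpha(k_F)} \theta_\ff\bigr) ,
\]
and I would identify both sides through Howlett--Lehrer theory for parabolic induction of cuspidal representations. The left-hand algebra becomes a Hecke algebra attached to $W(\mc G_{\mf f_\alpha}^\circ,\mc G_\ff^\circ)_{\sigma_\chi}$, and the right-hand algebra a Hecke algebra attached to $W(\mc L_\alpha,\mc T_\ff)_{\theta_\ff}$; since $\mc G_\ff^\circ$ is a maximal rational Levi of $\mc G_{\mf f_\alpha}^\circ$ obtained by deleting the unique simple root $\alpha$, and $\mc L_\alpha$ has semisimple rank one with Weyl group $\langle s_\alpha\rangle$, in each case the ambient relative Weyl group is either trivial or $\langle s_\alpha\rangle$. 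The core step is to check that $s_\alpha$ stabilizes $\sigma_\chi$ exactly when it stabilizes $\theta_\ff$: the ``only if'' direction follows from disjointness of Deligne--Lusztig series associated to non-conjugate pairs (cf.\ \cite[Proposition 2.6.11]{Kal3}), applied to $(\mc T_\ff,\theta_\ff)$ and $(\mc T_\ff, s_\alpha \cdot\theta_\ff)$, while the ``if'' direction would proceed via Proposition \ref{prop:2.1}, transporting the action of $s_\alpha$ on the Deligne--Lusztig series to the equivalence with $\Rep(\Omega_{\theta_\ff})$, where the conjugation action of $s_\alpha$ on $\Omega_{\theta_\ff}$ is trivial by the same centralizer argument that underlies Lemma \ref{lem:7.7}. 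In the case both stabilizers are nontrivial, the $q$-parameters on both sides reduce to a power of $q_F$ determined by the dimension of the root subgroup $\mc U_\alpha$ over $k_F$ via the Howlett--Lehrer formula, and matching these parameters is a direct comparison of rank-one Hecke data computed once inside $\mc L_\alpha$ and once inside $\mc G_{\mf f_\alpha}^\circ$.

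The main obstacle is the ``if'' direction above: making precise the transport of the $s_\alpha$-action across the Lusztig--Yu equivalence used in Proposition \ref{prop:2.1}, and then verifying that the conjugation action of $s_\alpha$ on $\Omega_{\theta_\ff} \subset W(\mc G_\ff^\circ,\mc T_\ff)$ is trivial, requires a careful rerun of the centralizer argument of Lemma \ref{lem:7.7} in the finite reductive context of $\mc G_{\mf f_\alpha}^\circ$. Once that is in place, the identification of $q$-parameters is routine, and assembling the per-$\chi$ isomorphisms through the first isomorphism yields the stated direct sum decomposition.
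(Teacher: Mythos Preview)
Your treatment of the first isomorphism matches the paper's. For the second, the paper takes a different and more direct route: rather than presenting both sides via Howlett--Lehrer and matching parameters, it uses the functoriality of the Deligne--Lusztig induction $R_{\mc L_\alpha \subset \mc Q_{\ff,\alpha}\mc L_\alpha}^{\mc G_{\mf f_\alpha}^\circ}$ (the right-hand factorization in \eqref{eq:4.1}) to produce an explicit algebra homomorphism $A \mapsto R(A)$ from $\End_{\mc L_\alpha(k_F)}\big(\ind_{(\mc L_\alpha\cap\mc B)(k_F)}^{\mc L_\alpha(k_F)}\theta_\ff\big)$ into the large endomorphism algebra, and then composes with the projections $\mr{pr}_\chi$ supplied by the first isomorphism. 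Each component $\mr{pr}_\chi\circ R$ is a unital homomorphism between algebras of dimension at most $2$, and a dimension count finishes. Canonicity is built into the construction, and no $q$-parameter comparison is needed as \emph{input}.

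Your plan has a genuine gap at the step you call routine. The Howlett--Lehrer parameter for $\ind_{\mc Q_{\ff,\alpha}(k_F)}^{\mc G_{\mf f_\alpha}^\circ(k_F)}\sigma_\chi$ is \emph{not} simply determined by the size of $\mc U_\alpha(k_F)$; it depends on the cuspidal representation $\sigma_\chi$ itself (already on the $\mc L_\alpha$ side, Proposition~\ref{prop:4.1} exhibits cases with $q_{\theta,\alpha}=1$ despite $\mc U_\alpha$ nontrivial). The equality $q_{\sigma_\chi,\alpha}=q_{\theta,\alpha}$ is precisely Lemma~\ref{lem:4.6}(b), and in the paper it is \emph{deduced from} the canonical isomorphism of Lemma~\ref{lem:4.5}: the functorial map carries the support condition defining $T_{v(\alpha,J)}$ to the support condition defining $T_{s_\alpha}$, and comparing their quadratic relations forces the parameters to agree. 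Your proposal therefore inverts the logical flow of the paper. Without an independent derivation of the parameter equality, the Howlett--Lehrer route yields only an abstract identification of two copies of $\C\times\C$, not the canonical isomorphism the lemma asserts and on which Lemma~\ref{lem:4.6} relies.
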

\begin{proof}
The first isomorphism follows from Lemma \ref{lem:4.4}. By \eqref{eq:4.1}, we obtain 
\[
\End_{\mc G_{\mf f_\alpha}^\circ (k_F)} \big( R_{\mc T_\ff \subset \mc B}^{\mc G_{\mf 
f_\alpha}^\circ} \theta_\ff \big) \cong \End_{\mc G_{\mf f_\alpha}^\circ (k_F)} \big( 
R_{\mc L_\alpha \subset \mc Q_{\ff,\alpha} \mc L_\alpha}^{\mc G_{\mf f_\alpha}^\circ}
\ind_{(\mc L_\alpha \cap \mc B)(k_F)}^{\mc L_\alpha (k_F)} (\theta_\ff) \big) .
\]
By functoriality of $R_{\mc L_\alpha \subset \mc Q_{\ff,\alpha} \mc L_\alpha}^{
\mc G_{\mf f_\alpha}^\circ}$, the algebra $\End_{\mc L_\alpha (k_F)}
\big( \ind_{(\mc L_\alpha \cap \mc B)(k_F)}^{\mc L_\alpha (k_F)} \theta_\ff \big)$ embeds in
$\End_{\mc G_{\mf f_\alpha}^\circ (k_F)} \big( R_{\mc T_\ff \subset \mc B}^{\mc G_{\mf 
f_\alpha}^\circ} \theta_\ff \big)$. Let $\mr{pr}_\chi$ denote the projection
\[
\End_{\mc G_{\mf f_\alpha}^\circ (k_F)} \big( R_{\mc T_\ff \subset \mc B}^{\mc G_{\mf 
f_\alpha}^\circ} \theta_\ff \big) \to \End_{\mc G_{\mf f_\alpha}^\circ (k_F)} \big( 
\ind_{\mc Q_{\ff,\alpha}(k_F)}^{\mc G_{\mf f_\alpha}^\circ (k_F)} \sigma_\chi \big)
\]
obtained from the first isomorphism in the statement. We have
\[
\mr{pr}_\chi R_{\mc L_\alpha \subset \mc Q_{\ff,\alpha} \mc L_\alpha}^{\mc G_{\mf 
f_\alpha}^\circ} (A) \in \End_{\mc G_{\mf f_\alpha}^\circ (k_F)} \big( \ind_{\mc P_{\ff,
\alpha}(k_F)}^{\mc G_{\mf f_\alpha}^\circ (k_F)} \sigma_\chi \big) \text{ for }
A \in \End_{\mc L_\alpha (k_F)} \big( 
\ind_{(\mc L_\alpha \cap \mc B)(k_F)}^{\mc L_\alpha (k_F)} \theta_\ff \big).
\]
By construction, we have  
\[
R_{\mc L_\alpha \subset \mc Q_{\ff,\alpha} \mc L_\alpha}^{\mc G_{\mf f_\alpha}^\circ} (A) =
\sum\nolimits_{\chi \in \Irr (\Omega_{\theta_\ff})} \mr{pr}_\chi
R_{\mc L_\alpha \subset \mc Q_{\ff,\alpha} \mc L_\alpha}^{\mc G_{\mf f_\alpha}^\circ} (A),
\]
and $\mr{pr}_\chi R_{\mc L_\alpha \subset \mc Q_{\ff,\alpha} 
\mc L_\alpha}^{\mc G_{\mf f_\alpha}^\circ} (A)$ is invertible if $A$ is so.
Since $\mc G_\ff^\circ$ is a maximal Levi subgroup of $\mc G_{\mf f_\alpha}^\circ$ and
$s_\alpha$ stabilizes $\sigma_\chi$, we have $\dim_\C \End_{\mc G_{\mf f_\alpha}^\circ (k_F)} \big( \ind_{\mc Q_{\ff,\alpha}(k_F)}^{
\mc G_{\mf f_\alpha}^\circ (k_F)} \sigma_\chi \big) = 2$.~Comparing dimensions, we see that
$\bigoplus_{\chi \in \Irr (\Omega_{\theta_\ff})} \mr{pr}_\chi R_{\mc L_\alpha \subset 
\mc Q_{\ff,\alpha} \mc L_\alpha}^{\mc G_{\mf f_\alpha}^\circ}$ is an algebra isomorphism
\begin{multline}\label{eq:4.2}
\bigoplus\limits_{\chi \in \Irr (\Omega_{\theta_\ff})} \hspace{-2mm} \End_{\mc L_\alpha (k_F)}
\big( \ind_{(\mc L_\alpha \cap \mc B)(k_F)}^{\mc L_\alpha (k_F)} \theta_\ff \big) \to 
\bigoplus\limits_{\chi \in \Irr (\Omega_{\theta_\ff})} \hspace{-2mm} 
\End_{\mc G_{\mf f_\alpha}^\circ (k_F)} \big( \ind_{\mc Q_{\ff,\alpha}(k_F)}^{
\mc G_{\mf f_\alpha}^\circ (k_F)} \sigma_\chi \big). \qedhere
\end{multline}
\end{proof}

Using Lemma \ref{lem:4.5}, we can simplify the computation of the parameter function 
$q_\sigma$ from Theorem \ref{thm:2.4}. Recall the notations $w_{J\cup \alpha}$ and $v(\alpha,J)$ 
from \eqref{eqn:defn-v(alpha,J)}. \label{i:113}

\begin{lem}\label{lem:4.6}
Let $\alpha \in \Delta_\af \setminus J$ be such that $w_{J \cup \alpha}(J) = -J$. Then:
\enuma{
\item $s_\alpha \cdot \theta_\ff = \theta_\ff$ if and only if 
$v(\alpha,J) \cdot \sigma \cong \sigma$.    
\item Suppose that $v(\alpha,J) \in W(J,\sigma)$.~The parameter $q_{\sigma,\alpha} := 
q_\sigma (v (\alpha,J))$ equals the parameter $q_{\theta,\alpha} := q_\theta (s_\alpha)$ computed 
from $\mc T_\ff (k_F)$, $\theta_\ff$ and $\mc L_\alpha (k_F)$. 
}
\end{lem}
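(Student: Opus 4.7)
The plan is to reduce both statements to claims about Deligne--Lusztig induction on the reductive $k_F$-group $\mc G_{\mf f_\alpha}^\circ$, using that $\mc G_\ff^\circ$ is a Levi subgroup of $\mc G_{\mf f_\alpha}^\circ$. First I realize $v(\alpha,J)$ geometrically: by Morris's construction, it has a representative $n_\alpha \in N_{\mc G_{\mf f_\alpha}^\circ(k_F)}(\mc T_\ff)$ whose image in $W(\mc G_{\mf f_\alpha}^\circ,\mc T_\ff)$ is the reflection $s_\alpha$, with $\alpha$ viewed as a root of $\mc T_\ff$ via \eqref{eq:4.5}. Conjugation by $n_\alpha$ stabilizes $\mc G_\ff^\circ$ as an algebraic subgroup, and its action on representations of $\mc G_\ff^\circ(k_F)$ agrees, up to inner automorphism, with the action of $v(\alpha,J)$ from \eqref{eq:2.11}.

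For part (a), I apply the equivariance of Deligne--Lusztig induction to obtain
\[
n_\alpha \cdot \pm R_{\mc T_\ff (k_F)}^{\mc G_\ff^\circ (k_F)}(\theta_\ff) \; \cong \;
\pm R_{\mc T_\ff (k_F)}^{\mc G_\ff^\circ (k_F)}(s_\alpha \cdot \theta_\ff) .
\]
\emph{Forward:} if $s_\alpha \cdot \theta_\ff = \theta_\ff$, then $n_\alpha$ preserves this virtual representation and hence permutes its constituents $\{ \sigma_\chi : \chi \in \Irr (\Omega_{\theta_\ff}) \}$. The argument in the final paragraph of the proof of Lemma \ref{lem:4.4}(b) applies verbatim (using ellipticity of $\mc T_\ff$ in $\mc G_\ff^\circ$) to show that $n_\alpha$ commutes with $\Omega_{\theta_\ff}$, and then the canonical equivalence of Proposition \ref{prop:2.1} forces the induced automorphism of the subcategory $\mc C \cong \Rep (\Omega_{\theta_\ff})$ to be trivial. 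In particular $n_\alpha \cdot \sigma_\chi \cong \sigma_\chi$ for every $\chi$, and for $\chi = 1$ this gives $v(\alpha,J) \cdot \sigma \cong \sigma$. \emph{Converse:} if $v(\alpha,J) \cdot \sigma \cong \sigma$, then $\sigma$ appears as a constituent of both displayed DL representations, and the standard disjointness of distinct geometric cuspidal DL series applied to $\mc G_\ff^\circ (k_F)$ (as recalled in the proof of Lemma \ref{lem:7.6}(a)) forces $(\mc T_\ff, \theta_\ff)$ and $(\mc T_\ff, s_\alpha \cdot \theta_\ff)$ to be $\mc G_\ff^\circ (k_F)$-conjugate, yielding the desired equality $s_\alpha \cdot \theta_\ff = \theta_\ff$.

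For part (b), I reduce to a rank-one computation on $\mc L_\alpha$. Morris's parameter $q_{\sigma,\alpha}$ is, by the construction recalled above the lemma (see also \cite[\S 6.7--6.9]{Mor1}), the unique value $q>1$ appearing in the quadratic relation $(T+1)(T-q)=0$ satisfied by the distinguished generator of the 2-dimensional algebra $\End_{\mc G_{\mf f_\alpha}^\circ (k_F)} \bigl( \ind_{\mc Q_{\ff,\alpha} (k_F)}^{\mc G_{\mf f_\alpha}^\circ (k_F)} \sigma \bigr)$. Lemma \ref{lem:4.5} identifies this algebra canonically, through the intertwining by the Deligne--Lusztig functor $R_{\mc L_\alpha \subset \mc Q_{\ff,\alpha}\mc L_\alpha}^{\mc G_{\mf f_\alpha}^\circ}$, with the rank-one principal series Hecke algebra $\End_{\mc L_\alpha (k_F)} \bigl( \ind_{(\mc L_\alpha \cap \mc B)(k_F)}^{\mc L_\alpha (k_F)} \theta_\ff \bigr)$, whose corresponding parameter is by definition $q_{\theta,\alpha}$. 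Tracking that the distinguished generators $T_{v(\alpha,J)}$ and $T_{s_\alpha}$ correspond under this isomorphism---both being characterized by being supported on the nontrivial Bruhat cell of their respective ambient groups---the two quadratic relations match, so $q_{\sigma,\alpha} = q_{\theta,\alpha}$.

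The main obstacle is the forward direction of (a), specifically the assertion that an automorphism of $\mc C \cong \Rep (\Omega_{\theta_\ff})$ that merely commutes with the $\Omega_{\theta_\ff}$-structure must be the identity functor. This requires extracting from the Lusztig--Yun equivalence \cite{LuYu} used in Proposition \ref{prop:2.1} that it realizes $\mc C$ as $\Omega_{\theta_\ff}$-equivariant objects over a torus, and that an automorphism induced by conjugation by an element $n_\alpha$ commuting with $\Omega_{\theta_\ff}$ preserves this equivariant structure canonically. A secondary obstacle in (b) is to confirm that the intertwining of Lemma \ref{lem:4.5} pairs the standard Hecke generators with consistent signs; this follows from both generators being supported on the corresponding nontrivial Bruhat cell.
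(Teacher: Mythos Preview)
Your approach to part (a) differs from the paper's, and the converse direction has a real gap. The paper argues entirely via dimension counting: by \cite{HoLe} the algebra $\End_{\mc G_{\mf f_\alpha}^\circ(k_F)}\bigl(\ind_{\mc Q_{\ff,\alpha}(k_F)}^{\mc G_{\mf f_\alpha}^\circ(k_F)}\sigma\bigr)$ has dimension $2$ precisely when $v(\alpha,J)\cdot\sigma\cong\sigma$, and $\End_{\mc L_\alpha(k_F)}\bigl(\ind_{(\mc L_\alpha\cap\mc B)(k_F)}^{\mc L_\alpha(k_F)}\theta_\ff\bigr)$ has dimension $2$ precisely when $s_\alpha\theta_\ff=\theta_\ff$; Lemma~\ref{lem:4.5} identifies the two algebras, so the conditions coincide. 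No representative $n_\alpha$ enters, and no separate forward/converse argument is needed.

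In your converse, Deligne--Lusztig disjointness for $\mc G_\ff^\circ(k_F)$ only gives that $(\mc T_\ff,\theta_\ff)$ and $(\mc T_\ff,s_\alpha\theta_\ff)$ are $\mc G_\ff^\circ(k_F)$-conjugate, i.e.\ $s_\alpha\theta_\ff=w\theta_\ff$ for some $w\in W(\mc G_\ff^\circ,\mc T_\ff)(k_F)$. You then write ``yielding the desired equality $s_\alpha\cdot\theta_\ff=\theta_\ff$'', but that requires $w\in\Omega_{\theta_\ff}$, which you have not shown and which does not follow from what precedes. A related issue lies in your setup: a representative of $v(\alpha,J)$ in $N_{\mc G_{\mf f_\alpha}^\circ(k_F)}(\mc G_\ff^\circ)$ that also normalizes $\mc T_\ff$ need only project to $s_\alpha$ modulo $W(\mc G_\ff^\circ,\mc T_\ff)$, so even your displayed isomorphism $n_\alpha\cdot\pm R(\theta_\ff)\cong\pm R(s_\alpha\theta_\ff)$ is in general only valid up to such a $w$, and the same ambiguity recurs.

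For part (b) your plan coincides with the paper's, but the resolution you offer for the ``secondary obstacle'' is not sufficient: being supported on the nontrivial Bruhat cell determines the generator only up to $\C^\times$, so under Lemma~\ref{lem:4.5} one only obtains $T_{v(\alpha,J)}\leftrightarrow\lambda T_{s_\alpha}$ for some nonzero $\lambda$. The paper then compares the two quadratic relations: the eigenvalue sets $\{-1,q_{\sigma,\alpha}\}$ and $\{-\lambda,\lambda q_{\theta,\alpha}\}$ must agree, and since both parameters lie in $\R_{\ge 1}$ (not $>1$ as you wrote; the value $1$ is allowed) this forces $\lambda\in\{\pm 1\}$ and, in either case, $q_{\sigma,\alpha}=q_{\theta,\alpha}$.
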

\begin{proof}
(a) By \cite[Corollary 2.3 and Proposition 3.9]{HoLe}, $\End_{\mc G_{\mf f_\alpha}^\circ (k_F)} \big(
\ind_{\mc Q_{\ff,\alpha}(k_F)}^{\mc G_{\mf f_\alpha}^\circ (k_F)} \sigma \big)$ has dimension two if $v(\alpha,J) \cdot \sigma \cong \sigma$ and has dimension one otherwise. 
By Lemma \ref{lem:4.5}, $\End_{\mc G_{\mf f_\alpha}^\circ (k_F)} \big(\ind_{\mc Q_{\ff,\alpha}(k_F)}^{
\mc G_{\mf f_\alpha}^\circ (k_F)} \sigma \big)$ is naturally isomorphic to $\End_{\mc L_\alpha (k_F)}
\big( \ind_{(\mc L_\alpha \cap \mc B)(k_F)}^{\mc L_\alpha (k_F)} \theta_\ff \big)$. Again by \cite{HoLe}, the latter algebra has dimension two if $s_\alpha \cdot \theta_\ff = 
\theta_\ff$ and has dimension one otherwise.

(b) By the construction of $\mc H (W_\af (J,\sigma), q_\sigma)$, 
$T_{v(\alpha,J)}$ satisfies a quadratic relation
\begin{equation}\label{eq:4.3}
(T_{v(\alpha,J)} + 1)(T_{v(\alpha,J)} - q_{\sigma,\alpha}) 
\text{ with } q_{\sigma,\alpha} \in \Z_{\geq 1}.
\end{equation}
We can view $T_{v(\alpha,J)}$ as an element of 
\[
\mc H (P_{\mf f_\alpha} / G_{\mf f_\alpha,0+}, P_\ff/G_{\mf f_\alpha,0+}, \sigma) =
\mc H (\mc G_{\mf f_\alpha}^\circ (k_F), \mc Q_{\ff,\alpha}(k_F), \sigma) \cong
\End_{\mc G_{\mf f_\alpha}^\circ (k_F)} \big( \ind_{\mc Q_{\ff,\alpha}(k_F)}^{\mc 
G_{\mf f_\alpha}^\circ (k_F)} \sigma \big) 
\]
supported on 
\[
(P_{\mf f_\alpha} \setminus P_\ff) / G_{\mf f_\alpha,0+} = 
P_\ff s_\alpha P_\ff / G_{\mf f_\alpha,0+} = \mc Q_{\ff,\alpha}(k_F) s_\alpha
\mc Q_{\ff,\alpha}(k_F) = \mc G_{\mf f_\alpha}^\circ (k_F) \setminus \mc Q_{\ff,\alpha}(k_F).
\]
Via Lemma \ref{lem:4.5}, $T_{v(\alpha,J)}$ corresponds to an element $\mr{pr}_1 R_{\mc L_\alpha \subset \mc Q_{\ff,\alpha} \mc L_\alpha}^{\mc G_{\mf f_\alpha}^\circ} 
(N_{s_\alpha})$, where $N_{s_\alpha} \in \End_{\mc L_\alpha (k_F)}
\big( \ind_{(\mc L_\alpha \cap \mc B)(k_F)}^{\mc L_\alpha (k_F)} \theta_\ff \big)$.~The support condition on $T_{v(\alpha,J)}$ translates via \eqref{eq:4.2} to
\[
\mr{supp}\, N_{s_\alpha} \subset \mc L_\alpha (k_F) \setminus (\mc L_\alpha \cap \mc B)(k_F) =
\mc U_\alpha (k_F) \mc T_\ff (k_F) s_\alpha \mc U_\alpha (k_F) .
\]
The standard basis element $T_{s_\alpha}$ of $\mc H (\mc L_\alpha (k_F), (\mc L_\alpha \cap \mc B)
(k_F), \theta_\ff)$ also has support $\mc U_\alpha (k_F) \mc T_\ff (k_F) s_\alpha \mc U_\alpha (k_F)$,
and satisfies a quadratic relation
\begin{equation}\label{eq:4.4}
(T_{s_\alpha} + 1)(T_{s_\alpha} - q_{\theta,\alpha}) = 0 
\text{ with } q_{\theta,\alpha} \in \R_{\geq 1}.
\end{equation}
The elements of $\mc H (\mc L_\alpha (k_F), (\mc L_\alpha \cap \mc B)(k_F), \theta_\ff)$ with 
support $\mc U_\alpha (k_F) \mc T_\ff (k_F) s_\alpha \mc U_\alpha (k_F)$ form a one-dimensional 
space, so $N_{s_\alpha} = \lambda T_{s_\alpha}$ for some $\lambda \in \C^\times$. Comparing 
\eqref{eq:4.3} and \eqref{eq:4.4}, we deduce that $\lambda = 1$ and $q_{\sigma,\alpha} = 
q_{\theta,\alpha} \geq 1$ or $\lambda = -1$ and $q_{\sigma,\alpha} = q_{\theta,\alpha} = 1$.
\end{proof}

In some cases, the parameters $q_{\sigma,\alpha} = q_{\theta,\alpha}$ automatically reduce to 1.
The next result must have been well-known to experts for a long time, but we could not find 
a reference, so we record it here for later use. 

\begin{prop}\label{prop:4.1}
In the setting of Lemma \ref{lem:4.6}, let $k_\alpha / k_F$ be a finite field extension
over which $\alpha$ splits, and let $N_{k_\alpha/k_F} : \mc T_\ff (k_\alpha) \to \mc T_\ff (k_F)$
be the norm map. Suppose that $s_\alpha (\theta_\ff) = \theta_\ff$ but $\theta_\ff \circ 
N_{k_\alpha/k_F} \circ \alpha^\vee \neq 1$. Then $q_{\theta,\alpha} = 1$.
\end{prop}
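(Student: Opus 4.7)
My plan is to reduce to a direct rank-one calculation in the finite reductive group $\mc L_\alpha(k_F)$ and then extract $q_{\theta,\alpha}$ from the intertwining-operator formula. Lemma~\ref{lem:4.6}(b) already identifies $q_{\theta,\alpha}$ with the quadratic-relation parameter of the generator $T_{s_\alpha}$ of the Hecke algebra
\[
\cH\big(\mc L_\alpha(k_F), (\mc L_\alpha \cap \mc B)(k_F), \theta_\ff\big),
\]
so the question is purely one about a connected reductive $k_F$-group of semisimple rank one, namely $\mc L_\alpha$, which is generated by $\mc T_\ff$, $\mc U_\alpha$, $\mc U_{-\alpha}$ and splits over $k_\alpha$.

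The structural step I would carry out is to fix a Chevalley-style parameterization $x_{\pm\alpha}:\mathbb{G}_a \to \mc U_{\pm\alpha}$ over $k_\alpha$, giving $\mc U_\alpha(k_F)$ the structure of the additive group of $k_\alpha$ via Galois descent, and similarly for $\mc U_{-\alpha}$. Inside $\mc L_\alpha(k_\alpha)$ one then has the classical rank-one commutation identity
\[
x_{-\alpha}(u) \;=\; x_\alpha(u^{-1})\cdot \alpha^\vee(u)\cdot s_\alpha \cdot x_\alpha(u^{-1})
\qquad (u \in k_\alpha^\times).
\]
Next, using the Bruhat decomposition
$\mc L_\alpha(k_F)=(\mc L_\alpha\cap\mc B)(k_F)\sqcup \mc U_{-\alpha}(k_F)\cdot (\mc L_\alpha\cap\mc B)(k_F)\cdot s_\alpha$ and the standard computation of $T_{s_\alpha}^2$ as a sum over the points in the big Bruhat cell, I would show that, modulo a nonzero constant independent of $\theta_\ff$, the coefficient of $T_1$ in $T_{s_\alpha}^2$ equals the character sum
\[
\sum_{u \in k_\alpha^\times} \theta_\ff\big(N_{k_\alpha/k_F}(\alpha^\vee(u))\big),
\]
the norm $N_{k_\alpha/k_F}$ appearing precisely because $\alpha^\vee(u)\in\mc T_\ff(k_\alpha)$ must be descended to $\mc T_\ff(k_F)$ before $\theta_\ff$ can be evaluated on it, and this descent is given by the norm map.

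By the hypothesis, $\chi := \theta_\ff\circ N_{k_\alpha/k_F}\circ \alpha^\vee$ is a nontrivial character of $k_\alpha^\times$, so the above character sum vanishes. Comparing with the quadratic relation $(T_{s_\alpha}+1)(T_{s_\alpha}-q_{\theta,\alpha})=0$ from \eqref{eq:4.4}, whose $T_1$-coefficient is $-q_{\theta,\alpha}$, one concludes $q_{\theta,\alpha}=1$. The main obstacle I anticipate is careful bookkeeping of the various $k_F$-forms of $\mc L_\alpha$ (split, $\mr{Res}_{k_\alpha/k_F}\SL_2$, and unitary-type forms), and in particular verifying that the Galois descent of the Chevalley commutation formula yields exactly $\theta_\ff\circ N_{k_\alpha/k_F}\circ \alpha^\vee$ and not some twisted variant. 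If this direct approach becomes unwieldy in the unitary case, I would instead quote the explicit parameter formula for Hecke algebras of principal series of finite reductive groups from \cite{HoLe}, which after the reduction of Lemma~\ref{lem:4.6} gives the same conclusion and is formulated precisely in these terms in \cite{SolParam}.
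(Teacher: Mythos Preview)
Your overall strategy---reduce via Lemma~\ref{lem:4.6}(b) to a rank-one finite group and read $q_{\theta,\alpha}$ off the quadratic relation for $T_{s_\alpha}$ using a Bruhat-cell computation---is exactly the right one, and it is what the paper does in the one case that matters. But you have the two coefficients in $T_{s_\alpha}^2$ swapped, and as written your argument does not close.

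Concretely, take $\mc L_{\alpha,\der}=\SL_2$ with $\theta_\ff$ corresponding to a character $\chi$ of $k_F^\times$ via $\alpha^\vee$. With $T'_{s_\alpha} = \langle U_\alpha\rangle s_\alpha p_\chi\langle U_\alpha\rangle$, the standard identity $\matje{1}{0}{x}{1}=\matje{1}{x^{-1}}{0}{1}\matje{x^{-1}}{0}{0}{x}s_\alpha\matje{1}{x^{-1}}{0}{1}$ for $x\neq 0$ yields
\[
(T'_{s_\alpha})^2 \;=\; |k_F|^{-1}\,T_e \;+\; |k_F|^{-1}\Big(\sum_{x\in k_F^\times}\chi(x^{-1})\Big)\,T'_{s_\alpha}.
\]
So the character sum is the coefficient of $T'_{s_\alpha}$, not of $T_e$; the $T_e$-coefficient $|k_F|^{-1}$ is a nonzero constant independent of $\chi$. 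Comparing with $(T_{s_\alpha}+1)(T_{s_\alpha}-q_{\theta,\alpha})=0$, i.e.\ $T_{s_\alpha}^2=(q_{\theta,\alpha}-1)T_{s_\alpha}+q_{\theta,\alpha}T_e$, the vanishing of the character sum forces the $T_{s_\alpha}$-coefficient $q_{\theta,\alpha}-1$ to vanish, hence $q_{\theta,\alpha}=1$. Your version (character sum $=$ $T_e$-coefficient, and $T_e$-coefficient $=-q_{\theta,\alpha}$) would give $q_{\theta,\alpha}=0$, contradicting $q_{\theta,\alpha}\in\Z_{\geq 1}$. Fixing this bookkeeping repairs the argument.

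As for your anticipated difficulty with the various $k_F$-forms of $\mc L_\alpha$: the paper sidesteps this entirely. It first observes that $\mc L_{\alpha,\der}$ is, up to restriction of scalars, one of $\SL_2$, $\PGL_2$, $\SU_3$, $\mr{PU}_3$, and then checks directly that in the last three cases the condition $s_\alpha(\theta_\ff)=\theta_\ff$ already forces $\theta_\ff\circ N_{k_\alpha/k_F}\circ\alpha^\vee=1$, so the hypothesis of the proposition is vacuous there. Only the $\SL_2$ case requires an actual computation, and that computation is the one displayed above. This is cleaner than trying to run a uniform Chevalley-style computation across all forms, especially since the root subgroups of $\SU_3$ are Heisenberg groups rather than additive groups and the commutation formula you wrote does not apply verbatim.
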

\begin{proof}
This is a statement about the reductive $k_F$-group $\mc L_\alpha$. To compute $q_{\theta,\alpha}$,
for instance as in \cite{HoLe}, we only need the derived group $\mc L_{\alpha,\der}(k_F)$.
By the classification of quasi-split rank one semisimple groups, $\mc L_{\alpha,\der}$ is obtained
by restriction of scalars from one of the groups: $\SL_2, \mr{PGL}_2, \SU_3, \mr{PU}_3$. Hence it 
suffices to consider these four groups, over an arbitrary finite field that we still call $k_F$. 

First we look at $\SU_3 (k_\alpha / k_F)$, for a quadratic extension $k_\alpha / k_F$ with
non-trivial field automorphism denoted by $x \mapsto \bar x$. In this case,
\[
\mc T_\ff (k_F) = \{ x \in (k_\alpha^\times)^3 : x_3 = \bar{x_1}^{-1}, x_2 = \bar{x_1} x_1^{-1} \} ,
\]
and $s_\alpha$ exchanges $x_1$ with $x_3$. Projection to the first coordinate gives an isomorphism 
$\mc T_\ff (k_F) \xrightarrow{\sim} k_\alpha^\times$.
Let $\chi \in \Irr (k_\alpha^\times)$ be the character corresponding to $\theta_\ff$ via this
isomorphism. The condition $s_\alpha \theta_\ff = \theta_\ff$ translates to $1 = \chi (x_1) \chi (\bar{x_1})^{_1} = \chi (x_1 \bar{x_1})$. On the other hand, 
\[
N_{k_\alpha / k_F} \circ \alpha^\vee (x_1) = N_{k_\alpha / k_F} ( (x_1,1,x_1^{-1}) ) =
(x_1 \bar{x}_1 , 1, (x_1 \bar{x}_1)^{-1}).
\]
Thus $\theta_\ff \circ N_{k_\alpha/k_F} \circ \alpha^\vee = 1$, and the assumptions of the lemma
are not fulfilled.

For $\mr{PU}_3 (k_\alpha/k_F)$, the maximal torus $\mc T_\ff (k_F)$ is isomorphic to 
$k_\alpha^\times$ via $(x_1,x_2,x_3) \mapsto x_1 x_2^{-1}$. The same arguments as for 
$\mr{SU}_3 (k_\alpha / k_F)$ apply.

Next we consider the split group $\mr{PGL}_2 (k_F)$. We may identify $\mc T_\ff (k_F)$ with
$\{ \matje{x}{0}{0}{1} : x \in k_F^\times \}$. The calculation
\[
1 = \theta_\ff (\matje{x}{0}{0}{1} ) \theta_\ff ( s_\alpha \matje{x}{0}{0}{1}^{-1} ) =
\theta_\ff ( \matje{x}{0}{0}{x^{-1}} ) = \theta_\ff (\alpha^\vee (x)) 
\]
shows that again the assumption of the lemma cannot be fulfilled.

Finally we study $\SL_2 (k_F)$, with its maximal torus 
\[
\mc T_\ff (k_F) = \{ \matje{x}{0}{0}{x^{-1}} : x \in k_F^\times \} \cong k_F^\times.
\]
Writing $\theta_\ff ( \matje{x}{0}{0}{x^{-1}} ) = \chi (x)$, we have
\[
1 = \theta_\ff (  \matje{x}{0}{0}{x^{-1}} ) \theta_\ff ( s_\alpha \matje{x}{0}{0}{x^{-1}} )^{-1} =
\theta_\ff (  \matje{x^2}{0}{0}{x^{-2}} ) = \chi (x^2) .
\]
Since $1 \neq \theta_\ff \circ \alpha^\vee = \chi$, $\chi$ must be the Legendre symbol of
$k_F^\times$, its unique order two character.~Now we really have to compute in 
$\C [\SL_2 (k_F)]$.~To do so, we introduce the following idempotents:  
\[
\begin{array}{lll@{\qquad}lll}
\langle U_\alpha \rangle & = & |k_F|^{-1} \sum\nolimits_{x \in k_F} \, \matje{1}{x}{0}{1}, &
p_\chi & = & 
|k_F^\times|^{-1} \sum\nolimits_{x \in k_F^\times} \, \chi (x) \matje{x}{0}{0}{x^{-1}} \\
\langle U_{-\alpha} \rangle & = & |k_F|^{-1} \sum\nolimits_{x \in k_F} \, \matje{1}{0}{x}{1}, 
& T_e & = & p_\chi \langle U_\alpha \rangle. 
\end{array}
\]
Note that $p_\chi$ commutes with $\langle U_\alpha \rangle ,\langle U_{-\alpha} \rangle$ and
$s_\alpha$, because $\chi^2 = 1$. The operator $T_{s_\alpha}$ is a scalar multiple of
$T'_{s_\alpha} = \langle U_\alpha \rangle s_\alpha p_\chi \langle U_\alpha \rangle$. We compute:
\begin{align*}
T_{s_\alpha}^{'2} & = \langle U_\alpha \rangle s_\alpha p_\chi \langle U_\alpha \rangle 
\langle U_\alpha \rangle p_\chi s_\alpha \langle U_\alpha \rangle 
= \langle U_\alpha \rangle s_\alpha p_\chi \langle U_\alpha \rangle s_\alpha \langle U_\alpha \rangle 
= \langle U_\alpha \rangle p_\chi \langle U_{-\alpha} \rangle \langle U_\alpha \rangle \\
& = \langle U_\alpha \rangle p_\chi |k_F|^{-1} \langle U_\alpha \rangle + 
\sum\nolimits_{x \in k_F^\times} \, \langle U_\alpha \rangle p_\chi |k_F|^{-1} \matje{1}{0}{x}{1} 
\langle U_\alpha \rangle \\
& = |k_F|^{-1} \langle U_\alpha \rangle p_\chi + |k_F|^{-1} \sum\nolimits_{x \in k_F^\times} \,
p_\chi \langle U_\alpha \rangle \matje{1}{-x^{-1}}{0}{1} \matje{1}{0}{x}{1} 
\matje{1}{-x^{-1}}{0}{1} \langle U_\alpha \rangle \\
& = |k_F|^{-1} T_e + |k_F|^{-1} \sum\nolimits_{x \in k_F^\times} \, \langle U_\alpha 
\rangle p_\chi \matje{x^{-1}}{0}{0}{x} s_\alpha \langle U_\alpha \rangle \;=\; |k_F|^{-1} T_e .
\end{align*}
Hence $T_{s\alpha}^2 \in \C T_e$, and the quadratic equation \eqref{eq:4.4} simplifies to
\[
0 = (T_{s_\alpha} + 1)(T_{s_\alpha} - 1) .
\]
That means precisely that $q_{\theta,\alpha} = 1$.
\end{proof}
We denote the linear part of an affine root $\alpha$ by $D \alpha$, and write 
\begin{equation}
\Delta_{\ff,\sigma} := \{ D \alpha : \alpha \in \Delta_{\ff,\af},\,  v(\alpha,J) \in
W(J,\sigma) \} .
\end{equation}
Note that $D (\Delta_{\ff,\af,\sigma}) \subset \Delta_{\ff,\sigma} \subset D (\Delta_{\ff,\af})$.~By \eqref{eq:4.5}, $\Delta_{\ff,\sigma}$ can also be viewed as a set of $\mf o_F$-rational roots 
of $(\mc G,\mc T_\ff)$. We define the $\mf o_F$-torus
\begin{equation}
\mc T_\sigma = \big( \bigcap\nolimits_{\alpha \in \Delta_{\ff,\sigma}} \, \ker 
\alpha |_{\mc T_\ff} \big)^\circ  \quad \subset \mc T_\ff .
\end{equation}
In many cases (but not always), $\mc T_\sigma$ is $F$-anisotropic. Now\label{i:114}
\begin{equation}\label{eqn:defn-G_sigma}
\mc G_\sigma := Z_{\mc G}(\mc T_\sigma)
\end{equation}
is a reductive $F$-subgroup of $\mc G$. More precisely, it is a twisted Levi subgroup that becomes 
an actual Levi subgroup over every field extension of $F$ that splits $\mc T_\sigma$.

\begin{lem}\label{lem:4.9}
The group $\mc G_\sigma$ is $F$-quasisplit and contains the torus 
$\mc T  := Z_{\mc G}(\mc T_\ff)$ as a minimal $F$-Levi subgroup.
\end{lem}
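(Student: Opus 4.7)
The plan is to prove both parts of the lemma simultaneously by analyzing the root datum of $\mc G_\sigma$ with respect to the torus $\mc T$, and then deducing quasi-splitness from the fact that $\mc T$ is itself a torus minimal Levi.

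I will first observe that $\mc T$ is a maximal $F$-torus of $\mc G_\sigma$. Indeed, from $\mc T_\sigma \subseteq \mc T_\ff \subseteq \mc T$ and the fact that $\mc T = Z_{\mc G}(\mc T_\ff)$ is a maximal $F$-torus of $\mc G$, it follows that $\mc T \subseteq Z_{\mc G}(\mc T_\sigma) = \mc G_\sigma$, with maximality inherited, and that $\mc T_\ff$ remains the maximal unramified subtorus of $\mc T$ inside $\mc G_\sigma$. The root system $R(\mc G_\sigma, \mc T)$ is the set of $\gamma \in R(\mc G, \mc T)$ whose restriction to $\mc T_\sigma$ is trivial; equivalently, those whose restriction to $\mc T_\ff$ falls in the $\Q$-span of $\Delta_{\ff,\sigma}$ inside $X^*(\mc T_\ff) \otimes \Q$.

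Next, I want to show that $\mc T$ is a minimal $F$-Levi of $\mc G_\sigma$. After conjugating, I may assume that the maximal $F$-split torus $\mc S$ of $\mc G$ sits inside $\mc T_\ff$, so that $\mc S$ is simultaneously the maximal $F$-split subtorus of $\mc T$ and the maximal $F$-split torus of $\mc G_\sigma$ (being maximal $F$-split in $\mc G \supseteq \mc G_\sigma$). It will then suffice to verify that $Z_{\mc G_\sigma}(\mc S) = \mc T$, i.e., that no root $\gamma \in R(\mc G_\sigma, \mc T)$ restricts trivially to $\mc S$. Any such $\gamma$ would yield $\gamma|_{\mc T_\ff} \in X^*(\mc T_\ff/\mc S) \cap \Q\langle \Delta_{\ff,\sigma}\rangle$. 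Since the elements of $\Delta_{\ff,\sigma}$ are lifts to $X^*(\mc T_\ff)$ of the nonzero $\mc S$-roots $D\alpha \in R(\mc G, \mc S)$ for $\alpha \in \Delta_\af \setminus J$, any such $\gamma$ would produce a nontrivial $\Q$-linear relation among $\{D\alpha : \alpha \in \Delta_{\ff,\sigma}\}$; using the affine Dynkin relations within each $F$-simple factor of $\mc G$, together with the ramification structure of $\mc T/\mc T_\ff$, one then rules out such a $\gamma$ being a genuine root of $(\mc G, \mc T)$.

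Finally, once $\mc T$ is identified as a minimal $F$-Levi of $\mc G_\sigma$ and $\mc T$ is a torus, any minimal $F$-parabolic subgroup of $\mc G_\sigma$ with Levi $\mc T$ is automatically a Borel, so $\mc G_\sigma$ contains an $F$-rational Borel and is thereby $F$-quasi-split. The main obstacle is the root-theoretic verification in the previous step: ruling out "hidden" roots of $(\mc G, \mc T)$ whose $\mc T_\ff$-restrictions lie in $X^*(\mc T_\ff/\mc S) \cap \Q\langle \Delta_{\ff,\sigma}\rangle$. Although $D$ is injective on $\Delta_\af$, the set $D(\Delta_{\ff,\sigma})$ can fail to be linearly independent when $\Delta_{\ff,\sigma}$ exhausts all simple affine roots of an irreducible component of the affine Dynkin diagram, so the argument must carefully track the resulting affine null vectors and confirm they cannot correspond to actual roots of $(\mc G, \mc T_\ff)$.
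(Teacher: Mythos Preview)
Your proposal contains a genuine error at the step where you write ``After conjugating, I may assume that the maximal $F$-split torus $\mc S$ of $\mc G$ sits inside $\mc T_\ff$.'' This is false in general and cannot be arranged by conjugation. Recall that $\mc T_\ff$ is an \emph{elliptic} maximal torus of $\mc G_\ff^\circ$ (equivalently, elliptic in $\mc L$), whereas $\mc S$ reduces to a maximal $k_F$-\emph{split} torus of $\mc G_\ff^\circ$. The maximal $F$-split subtorus $\mc T_s$ of $\mc T_\ff$ is contained in $Z(\mc L)^\circ$, so as soon as $\mc L$ is not a torus one has $\dim \mc T_s < \dim \mc S$ and there is no way to fit $\mc S$ inside $\mc T_\ff$. (For a concrete example, take $\mc G = \mc L = \GL_2$: then $\mc T_\ff$ comes from an unramified quadratic extension and has one-dimensional split part, while $\mc S$ is two-dimensional.) Consequently your subsequent claim that $\mc S$ is a maximal $F$-split torus of $\mc G_\sigma$, and the whole reduction to showing that no root of $(\mc G_\sigma,\mc T)$ vanishes on $\mc S$, collapses.

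The paper's argument avoids this entirely by working with the correct split torus $\mc T_s \subset \mc T_\ff$. The key observation is purely structural: $\mc T_s$ is generated by $Z(\mc G)\cap \mc T_s$ together with the images of the $F$-rational coroots $\alpha^\vee$ for $\alpha \in \Delta_{\ff,\sigma}$, so that $\mc T_\ff$ is generated by $\mc T_s \cup \mc T_\sigma$. This immediately yields
\[
Z_{\mc G_\sigma}(\mc T_s) = Z_{\mc G}(\mc T_\sigma) \cap Z_{\mc G}(\mc T_s)
= Z_{\mc G}(\mc T_\sigma \mc T_s) = Z_{\mc G}(\mc T_\ff) = \mc T,
\]
so the centralizer in $\mc G_\sigma$ of any maximal $F$-split torus containing $\mc T_s$ is a torus, whence $\mc T$ is a minimal $F$-Levi and $\mc G_\sigma$ is quasi-split. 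No root-theoretic case analysis (your ``main obstacle'') is needed. If you want to repair your approach, replace $\mc S$ by $\mc T_s$ throughout and prove the generation statement $\mc T_\ff = \mc T_s \cdot \mc T_\sigma$; that is the genuine content.
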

\begin{proof}
By the maximality of $\mc T_\ff$ in $\mc G_\ff^\circ$, $\mc T_\ff$ is a maximal unramified torus 
in $\mc G$.~Since $\mc G$ becomes quasi-split over a maximal unramified extension of $F$, $\mc T$ 
is a maximal $F$-torus in $\mc G$. The maximal $F$-split subtorus $\mc T_s$ of $\mc T_\ff$ is generated by $Z(\mc G) \cap \mc T_s$ 
and the images of the coroots $\alpha^\vee$ with $\alpha \in \Delta_{\ff,\sigma}$ (which are 
defined over $F$), so is contained in $\mc S$. Since $\mc T_\ff$ is the maximal unramified torus 
in $\mc T$, $\mc T_s$ is also the maximal $F$-split subtorus of $\mc T$.
Furthermore, $\mc T_\ff$ is generated by $\mc T_s \cup \mc T_\sigma$, so 
$\mc T = Z_{\mc G} (\mc T_\sigma \mc T_s) = Z_{\mc G_\sigma}(\mc T_s)$. 
Hence the centralizer in $\mc G_\sigma$ of a maximal $F$-split torus containing $\mc T_s$ is
contained in the torus $\mc T$, and is itself a torus. At the same time, that centralizer is
a Levi subgroup of $\mc G_\sigma$, so it is a minimal Levi subgroup and a maximal torus. 
We conclude that the torus $\mc T$ is a minimal $F$-Levi subgroup of $\mc G_\sigma$.
\end{proof}

Since $\mc T_\ff$ is elliptic in $\mc G_\ff^\circ$, and $\mc L$ is an $F$-Levi subgroup
of $\mc G$ minimal for the property $\mc L_\ff^\circ = \mc G_\ff^\circ$, the torus $\mc T_\ff$
is elliptic in $\mc L$. Since $\mc T_\ff$ is the maximal unramified subtorus of $\mc T$, Lemma
\ref{lem:4.9} implies that $\mc T$ is an elliptic maximal torus in $\mc L$ (so we are
back in the setting from \S \ref{sec:DL}). In other words,
$\mc T / Z(\mc L)^\circ$ is $F$-anistropic. Then $\mc T_s$ equals the maximal $F$-split 
subtorus of $Z(\mc L)^\circ$. 

By Lemma \ref{lem:4.9}, there is a unique apartment of 
$\mc B (\mc G_\sigma,F$) associated to $\mc T$ and its maximal $F$-split subtorus 
$\mc T_s \subset \mc S$. We call that apartment $\mh A_T$. From the inclusion
\begin{equation}
\mh A_T = X_* (\mc T_s) \otimes_\Z \R \subset X_* (\mc S) \otimes_\Z \R =: \mh A_S
\end{equation}
and the $W$-invariant metric on $\mh A_S$, we obtain a projection
\begin{equation}\label{eq:4.6}
\mh A_S \to \mh A_T \subset \mc B (\mc G_\sigma,F) .
\end{equation}

\begin{lem}\label{lem:4.7}
The intersection $\mc G_\sigma \cap \mc L$ equals $\mc T = Z_{\mc L}(\mc T_\ff)$.
\end{lem}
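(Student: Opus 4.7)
The plan is to reduce the claim to a centralizer identity and then apply facts already established in the proof of Lemma \ref{lem:4.9}. For the inclusion $\mc T \subset \mc G_\sigma \cap \mc L$, note that $\mc T$ is a maximal torus of $\mc L$ by Lemma \ref{lem:4.9} (and its consequences in the following paragraph), and $\mc T = Z_{\mc G}(\mc T_\ff)$ obviously centralizes the subtorus $\mc T_\sigma \subset \mc T_\ff$, so $\mc T \subset \mc G_\sigma$. This gives one direction for free, so the work is in the reverse inclusion.

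For the reverse inclusion, I would rewrite both $\mc G_\sigma$ and $\mc L$ as centralizers of subtori of $\mc T_\ff$, and then combine them. Specifically, $\mc G_\sigma = Z_{\mc G}(\mc T_\sigma)$ by definition \eqref{eqn:defn-G_sigma}, while $\mc L = Z_{\mc G}(\mc T_s)$ because $\mc T_s$ is the maximal $F$-split subtorus of $Z(\mc L)^\circ$ (this is noted just after Lemma \ref{lem:4.9}, using \eqref{eq:4.13}). Therefore
\[
\mc G_\sigma \cap \mc L \;=\; Z_{\mc G}(\mc T_\sigma) \cap Z_{\mc G}(\mc T_s) \;=\; Z_{\mc G}\bigl(\mc T_\sigma \cdot \mc T_s\bigr).
\]

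To finish, I would invoke the key fact recorded inside the proof of Lemma \ref{lem:4.9}: the torus $\mc T_\ff$ is generated by $\mc T_s \cup \mc T_\sigma$. (Recall that $\mc T_s$ is generated by $Z(\mc G)\cap\mc T_s$ together with the images of $\alpha^\vee$ for $\alpha \in \Delta_{\ff,\sigma}$, while $\mc T_\sigma$ is the identity component of the common kernel of those same $\alpha$; together they span $\mc T_\ff$.) Hence $\mc T_\sigma \cdot \mc T_s = \mc T_\ff$, and so
\[
\mc G_\sigma \cap \mc L \;=\; Z_{\mc G}(\mc T_\ff) \;=\; \mc T,
\]
which is the desired equality. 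There is no real obstacle here; the statement is essentially a repackaging of the decomposition of $\mc T_\ff$ used in Lemma \ref{lem:4.9}, and the only point one has to be careful about is that both $\mc L$ and $\mc G_\sigma$ are realized as centralizers of subtori of the same maximal torus $\mc T$, which is what makes the intersection of centralizers equal to the centralizer of the product.
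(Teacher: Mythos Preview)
Your proposal is correct and follows essentially the same centralizer-of-a-product strategy as the paper. The only difference is cosmetic: the paper writes $\mc L = Z_{\mc G}(Z(\mc L)^\circ)$ and proves an orthogonality statement to get $\mc T_\sigma\, Z(\mc L)^\circ = \mc T_\ff\, Z(\mc L)^\circ$, whereas you use $\mc L = Z_{\mc G}(\mc T_s)$ and invoke the identity $\mc T_\sigma \cdot \mc T_s = \mc T_\ff$ already recorded in the proof of Lemma~\ref{lem:4.9}; your route is slightly more direct since that identity has already been established.
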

\begin{proof}
Since $\sigma$ lies in the series in $\Irr (\mc G_\ff^\circ (k_F))$ parametrized by
$(\mc T_\ff (k_F),\theta_\ff)$, and $\mc G_\ff^\circ (k_F)$ equals $P_{L,\ff} / L_{\ff,0+}$, 
the $\mf o_F$-group $\mc T_\ff$ can be realized in $\mc L$. Then $\mc T_\ff$ is a maximal 
unramified torus of $\mc L$, and $\mc T_\ff Z(\mc L)^\circ$ is an $F$-torus in $\mc L$. Hence 
\begin{equation}\label{eq:4.9}
Z(\mc L)^\circ \subset Z_{\mc G} (\mc T_\ff) = \mc T \subset Z_{\mc G}(\mc T_\sigma) \cap
Z_{\mc G}(Z(\mc L)^\circ) = \mc G_\sigma \cap \mc L.
\end{equation}
Consequently, $\mc G_\sigma \cap \mc L = Z_{\mc G_\sigma}(Z(\mc L)^\circ)$ is an $F$-Levi 
subgroup of $\mc G_\sigma$. By definition, $R(L,S)$ consists of the roots in 
$R(G,S)$ that are constant on $\ff$. Hence $R(\mc L,\mc T_\ff)$ consists of 
roots that are constant on the image of $\ff$ in $\mh A_T$ via \eqref{eq:4.6}. 

For any $\alpha = D \alpha' \in \Delta_{\ff, \sigma}$, the reflection $s_{\alpha'}$ of
$\mh A_S$ stabilizes $\Q J$ and the span of $\ff$. Hence it also stabilizes the orthogonal
complement $\ff^\perp$ of the span of $\ff$ in $\mh A_S$. As $\alpha'$ is not constant
on $\ff$, this is only possible if $\alpha |_{\ff^\perp} = 0$. 
Thus $R(\mc L,\mc T_\ff)$ and $\Delta_{\ff,\sigma}$ are orthogonal: the first is constant 
on the span of $\ff$ while the second has $\ff^\perp$ in its joint kernel. Consequently,
\[
\mc T_\sigma Z(\mc L)^\circ = \big( \bigcap\nolimits_{\alpha \in \Delta_{\ff,\sigma}} \,
\ker \alpha |_{\mc T_\ff} \big)^\circ \: \big( 
\bigcap\nolimits_{\alpha \in R(\mc L,\mc T_\ff)} \, \ker \alpha |_{\mc T_\ff} \big)^\circ
\] 
equals $\mc T_\ff Z(\mc L)^\circ$. From this and \eqref{eq:4.9} we deduce that
\[
\mc G_\sigma \cap \mc L = Z_{\mc G} (\mc T_\sigma) \cap Z_{\mc G}(Z(\mc L)^\circ) =
Z_{\mc G}(\mc T_\sigma Z(\mc L)^\circ) 
\]
equals $Z_{\mc G}(\mc T_\ff Z(\mc L)^\circ) = Z_{\mc G}(\mc T_\ff) \cap \mc L = \mc T$.
\end{proof}
By the definition of $\mc T_\sigma$, we have $R(\mc G_\sigma,\mc T_\ff) = \Q \Delta_{\mf f,\sigma} \cap X^* (\mc T_\ff)$. 
Since $\mc G_\sigma$ is quasi-split and $Z_{\mc G_\sigma}(\mc T_\ff) = \mc T$, this gives $R(\mc G_\sigma,\mc T) = \{ \alpha \in \mc R (\mc G,\mc T) : \alpha |_{\mc T_\ff} \in 
\Q \Delta_{\mf f,\sigma} \}$. Let $P_{G_\sigma,\ff} = G_\sigma \cap P_\ff$ be the parahoric subgroup of $G_\sigma$ 
associated to the image of $\ff$ in $\mh A_T$. Then, similar to \eqref{eq:2.13}, we have $P_{G_\sigma,\ff} / G_{\sigma,\ff,0+} \cong P_{T,\ff} / T_{\ff,0+} \cong \mc T_\ff (k_F)$. In particular, $\theta_\ff$ can be inflated to an irreducible representation of 
$P_{G_\sigma,\ff}$, and we can consider the Hecke algebra 
$\mc H (G_\sigma, P_{G_\sigma,\ff},\theta_\ff)$. The cuspidal support of the Bernstein 
component $\Irr (G_\sigma)_{(P_{G_\sigma,\ff},\theta_\ff)}$ is 
$\Irr (T)_{(P_{T,\ff},\theta_\ff)}$, so $\Rep (G_\sigma)_{(P_{G_\sigma,\ff}, \theta_\ff)}$ 
is a Bernstein block in the principal series of the quasi-split group $G_\sigma$.

\begin{prop}\label{prop:4.8}
\enuma{
\item There exists a canonical bijection between $W(J,\sigma) \cap S_{\ff,\af}$ and 
$W(\emptyset, \theta_\ff) \cap (S_{\ff,\af}$ for $G_\sigma)$, which preserves the 
$q$-parameters in $\Z_{\geq 1}$. 
\item Part (a) induces an isomorphism between the affine root systems of $\mc H (G,P_\ff,\sigma)$ 
and $\mc H (G_\sigma,P_{G_\sigma,\ff}, \theta_\ff)$, such that the parameter functions on
both sides agree. 
\item Part (b) induces an algebra isomorphism $\mc H (W_\af (J,\sigma),q_\sigma) 
\cong \mc H (W_\af (\emptyset, \theta_\ff), q_\theta)$.
}
\end{prop}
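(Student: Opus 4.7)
\textbf{Proof plan for Proposition \ref{prop:4.8}.}

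My plan is to prove part (a) first by writing down the candidate bijection and verifying its properties using Lemma \ref{lem:4.6}, and then deduce parts (b) and (c) by comparing affine root systems and invoking the presentation of affine Hecke algebras by generators and relations. For part (a), I would define the map
\[
v(\alpha,J) \longmapsto s_\alpha \qquad \alpha \in \Delta_{\ff,\af},
\]
where on the right $s_\alpha$ denotes the simple affine reflection in the root $D\alpha \in \Delta_{\ff,\sigma}$ viewed inside $G_\sigma$ at the facet which is the image of $\ff$ under \eqref{eq:4.6}. The relevant $\alpha$'s are precisely those in $\Delta_{\ff,\af}$ for which $w_{J \cup \alpha}(J) = -J$ holds; Lemma \ref{lem:4.6}(a) ensures that the condition $v(\alpha,J) \in W(J,\sigma)$ (equivalently $v(\alpha,J) \cdot \sigma \cong \sigma$) matches exactly the condition $s_\alpha \cdot \theta_\ff = \theta_\ff$ (equivalently $s_\alpha \in W(\emptyset,\theta_\ff)$). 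Moreover, the restriction to the subsets where $q > 1$ agrees on both sides thanks to Lemma \ref{lem:4.6}(b), which asserts $q_{\sigma,\alpha} = q_{\theta,\alpha}$. Bijectivity follows once one observes that by the definition of $\mc G_\sigma$ (as the centralizer of $\mc T_\sigma$, the connected common kernel of the $\alpha \in \Delta_{\ff,\sigma}$), the set of simple affine reflections for $G_\sigma$ at the facet coming from $\ff$ is exactly $\{ s_{D\alpha} : \alpha \in \Delta_{\ff,\af} \}$.

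For part (b), I would use the fact that the affine root system underlying $\mc H(G,P_\ff,\sigma)$ is generated by $S_{\ff,\af,\sigma}$ acting on the subspace of $\mh A_S$ orthogonal to the span of $\ff$, while the affine root system underlying $\mc H(G_\sigma, P_{G_\sigma,\ff}, \theta_\ff)$ is generated by the corresponding simple reflections acting on $\mh A_T$. The projection $\mh A_S \to \mh A_T$ of \eqref{eq:4.6} provides the vector space identification; Lemma \ref{lem:4.7}, together with the observation (already used in its proof) that $R(\mc L,\mc T_\ff)$ and $\Delta_{\ff,\sigma}$ are mutually orthogonal, ensures that this projection is an isometric bijection between the two ambient spaces carrying the relevant root systems. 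Under this identification, the simple affine roots match by construction of $\Delta_{\ff,\sigma}$, and their parameter values match by part (a). To promote this to an isomorphism of root systems (not merely of their bases), one verifies the Coxeter/braid relations: the order of $v(\alpha,J) v(\beta,J)$ inside the Iwahori--Weyl group equals the order of $s_{D\alpha} s_{D\beta}$ inside $W(G_\sigma, T)$ because both are computed from the angle between $D\alpha$ and $D\beta$ in the Euclidean space $\mh A_T$, using that the longest element $v(\alpha,J)$ of $W_{J \cup \alpha}/W_J$ acts on the quotient as the reflection in $D\alpha$.

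For part (c), once (b) is established, the isomorphism is formal: the Iwahori--Hecke algebra $\mc H(W_\af(J,\sigma), q_\sigma)$ is presented by generators $T_{v(\alpha,J)}$ for $v(\alpha,J) \in S_{\ff,\af,\sigma}$ subject to the quadratic relations $(T_{v(\alpha,J)} + 1)(T_{v(\alpha,J)} - q_{\sigma,\alpha}) = 0$ and the braid relations coming from $W_\af(J,\sigma)$. The analogous presentation of $\mc H(W_\af(\emptyset,\theta_\ff), q_\theta)$ has identical quadratic and braid relations by parts (a) and (b), so the assignment $T_{v(\alpha,J)} \mapsto T_{s_\alpha}$ extends to an algebra isomorphism.

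The main obstacle I anticipate is part (b), specifically matching Coxeter relations for pairs of simple reflections whose roots do not lie in a common rank-two subsystem of $R(G,S)$ in an obvious way. The key input to overcome this is that, by the construction of $\mc G_\sigma$ as $Z_{\mc G}(\mc T_\sigma)$ and the orthogonality noted in the proof of Lemma \ref{lem:4.7}, the projection \eqref{eq:4.6} takes the reflections $v(\alpha,J)$ to honest orthogonal reflections in $\mh A_T$, and the map $v(\alpha,J) \mapsto s_{D\alpha}$ is then the unique homomorphism of Coxeter systems extending the bijection of simple reflections.
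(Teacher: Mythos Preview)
Your proposal is correct and follows essentially the same route as the paper. The only notable difference is in part (b): the paper bypasses your hands-on verification of braid relations by invoking \cite[Theorem 2.7]{Mor1}, which already realizes $W_\af(J)$ as an affine Weyl group via its action on $D(J)^\perp \subset \mh A_S$; on that subspace each $v(\alpha,J)$ acts as the reflection $s_\alpha$, so the group isomorphism \eqref{eq:4.11} is immediate and the braid relations come for free. Your angle-computation argument amounts to re-deriving this fact, so it works, but citing Morris is cleaner. One small imprecision: in your bijectivity claim for (a) you write that the simple affine reflections for $G_\sigma$ are $\{s_{D\alpha} : \alpha \in \Delta_{\ff,\af}\}$, but you mean those $\alpha$ with $D\alpha \in \Delta_{\ff,\sigma}$ (the paper handles this by checking separately that $\Delta_{\ff,\af}$ for $G_\sigma$ is contained in $\Delta_{\ff,\af}$ for $G$, using Lemma \ref{lem:4.7}).
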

\begin{proof}
(a) It is clear from the definitions of $\mc T_\sigma$ and $\mc G_\sigma$ that 
$\Delta_{\ff,\af}$ for $G_\sigma$ is contained in $\Delta_{\ff,\af}$ for $G$. By construction, 
$R(L,S) = R(G,S) \cap \Q D(J)$, and by Lemma \ref{lem:4.7},
\[
R(L,S) \cap R(G_\sigma,S) = R(T,S) = \emptyset .
\]
Hence $\Q D(J) \cap R(G_\sigma,S)=\varnothing$, and the elements of $J$ are constant 
functions on $\mh A_T \cap \mc B (\mc G_{\sigma,\der},F)$. In particular, $\Delta_\af$ 
for $G_\sigma$ is contained in $\Delta_\af \setminus J$ for $G$. 

For $\alpha \in \Delta_{\ff,\af}$ such that $w_{J \cup \alpha} (J) \neq -J$, $s_{\alpha}$ 
does not stabilize $\Q J$, and hence does not stabilize the span of $\ff$ in $\mh A_S$. 
It follows that $s_{\alpha}$ cannot stabilize the image of $\ff$ in $\mh A_T$, and hence 
cannot define an element of $S_{\ff,\af}$ for $G_\sigma$.

For $\alpha \in \Delta_{\ff,\af}$ such that $W_{J \cup \alpha} (J) = -J$, 
the proof of Lemma \ref{lem:4.4} shows that
\begin{align*}
s_\alpha \cdot \theta_\ff = \theta_\ff & \Longleftrightarrow s_\alpha \cdot 
R_{\mc T_\ff (k_F)}^{\mc G_{\ff}^\circ (k_F)} (\theta_\ff) \cong 
R_{\mc T_\ff (k_F)}^{\mc G_\ff^\circ (k_F)} (\theta_\ff)\\
& \Longleftrightarrow v(J,\alpha) \cdot R_{\mc T_\ff (k_F)}^{\mc G_\ff^\circ (k_F)} 
(\theta_\ff) \cong R_{\mc T_\ff (k_F)}^{\mc G_\ff^\circ (k_F)} (\theta_\ff) 
\Longleftrightarrow v(J,\alpha) \cdot \sigma \cong \sigma .
\end{align*}
This provides the required bijection.

(b) The group $W_\af (J) = \langle S_{\ff,\af} \rangle$ is realized in 
\cite[Theorem 2.7]{Mor1} as an affine Weyl group via its action on 
$D(J)^\perp \subset \mh A_S$. On $D(J)^\perp$, $v(\alpha,J)$ coincides with $s_\alpha$, 
so the bijection from part (a) extends to a group isomorphism
\begin{equation}\label{eq:4.11}
\langle W(J,\sigma) \cap S_{\ff,\af} \rangle \cong \langle W(\emptyset,\theta_\ff) \cap 
(S_{\ff,\af} \text{ for } G_\sigma) \rangle .
\end{equation}
The data $\mc T_\ff (k_F)$, $\theta_\ff$ and $\mc L_\alpha (k_F)$ used in Lemma \ref{lem:4.6} (b)
are the same for $\mc G$ and for $\mc G_\sigma$. Hence that lemma implies that 
$q_\sigma (v(\alpha,J))$ equals $q_\theta (s_\alpha)$, where the latter is computed from 
$(G_\sigma,P_{G_\sigma,\ff},\theta_\ff)$.

(c) This is a direct consequence of part (b).
\end{proof}

Proposition \ref{prop:4.8} says that $\mc H (W_\af (J,\sigma),q_\sigma)$ is naturally 
isomorphic to the Iwahori--Hecke algebra from a Bernstein block of principal series 
representations of a quasi-split reductive $p$-adic group. Hecke algebras and the local 
Langlands correspondence for such representations were analyzed in detail in \cite{SolQS}. 

For $\alpha \in \Delta_{\ff,\sigma}$ we write \label{i:115}
$\mc T_\alpha := (\ker \alpha |_{\mc T_\ff})^\circ 
\subset \mc T_\ff$, and $\mc G_\alpha := Z_{\mc G}(\mc T_\alpha)$. Then $\mc G_\alpha$ is a Levi subgroup of $\mc G_\sigma$ containing $\mc T$, so in
particular $G_\alpha = \mc G_\alpha (F)$ is quasi-split. Furthermore, since $\alpha$ 
is defined over $F$, we have
\begin{equation}\label{eq:4.8}
R(\mc G_\alpha,\mc T) = \{ \beta \in R(\mc G,\mc T) : \beta |_{\mc T_\ff} \in \R^\times \alpha \}
= \{ \beta \in R(\mc G,\mc T) : \beta |_{\mc T_s} \in \R^\times \alpha \}.
\end{equation}
The data $\mc T_\ff (k_F)$, $\theta_\ff$ and $\mc L_\alpha (k_F)$ figuring in Lemma \ref{lem:4.6}
can be constructed from $G_\alpha$ equally well as from $G_\sigma$ or $G$. Analogous to
Proposition \ref{prop:4.8}, this gives the following.
\begin{cor}\label{cor:4.2}
The parameter $q_\sigma (v(\alpha,J)) = q_\theta (s_\alpha)$ equals the $q$-parameter for
$s_\alpha$ in $\mc H (G_\alpha, P_{G_\alpha,\ff}, \theta_\ff)$.
\end{cor}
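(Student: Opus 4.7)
The plan is to reduce the statement directly to Lemma \ref{lem:4.6}(b) and the argument of Proposition \ref{prop:4.8}(b), observing that those results depend only on rank-one data that are the same whether computed inside $\mc G$, $\mc G_\sigma$, or $\mc G_\alpha$.

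First I would verify that $\mc G_\alpha$ fits into the framework of Proposition \ref{prop:4.8}. Since $\mc T_\alpha \subset \mc T_\sigma$ we have $\mc G_\alpha \supset \mc G_\sigma$? No, the reverse: $\mc T_\alpha \supset \mc T_\sigma$ (one intersects fewer kernels), so $\mc G_\alpha = Z_{\mc G}(\mc T_\alpha)$ is a Levi subgroup of $\mc G_\sigma = Z_{\mc G}(\mc T_\sigma)$. By \eqref{eq:4.8}, $R(\mc G_\alpha,\mc T)$ consists of those roots whose restriction to $\mc T_s$ is a multiple of $\alpha$, and by the proof of Lemma \ref{lem:4.9} applied to $\mc G_\alpha$ instead of $\mc G_\sigma$, $\mc G_\alpha$ is $F$-quasi-split with $\mc T$ as a minimal $F$-Levi subgroup. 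In particular $P_{G_\alpha,\ff} = G_\alpha \cap P_\ff$ is a parahoric subgroup of $G_\alpha$ with $P_{G_\alpha,\ff}/G_{\alpha,\ff,0+} \cong \mc T_\ff (k_F)$, so $\theta_\ff$ inflates to a character of $P_{G_\alpha,\ff}$ and $\mc H (G_\alpha, P_{G_\alpha,\ff}, \theta_\ff)$ is well-defined, with $\alpha$ giving a simple affine root that contributes a reflection $s_\alpha$ to $S_{\ff,\af}$ for $G_\alpha$.

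Next I would invoke Lemma \ref{lem:4.6}(b) inside $\mc G_\alpha$. The key point is that the quadratic relation satisfied by $T_{s_\alpha}$ is computed via the isomorphism of Lemma \ref{lem:4.5} from the endomorphism algebra $\End_{\mc L_\alpha (k_F)}\!\bigl( \ind_{(\mc L_\alpha \cap \mc B)(k_F)}^{\mc L_\alpha (k_F)} \theta_\ff \bigr)$, which depends only on the rank-one reductive $k_F$-group $\mc L_\alpha$ generated by $\mc T_\ff$, $\mc U_\alpha$, $\mc U_{-\alpha}$. Since $\mc L_\alpha$ is contained in the $\mf f_\alpha$-parahoric reduction of each of $\mc G$, $\mc G_\sigma$, and $\mc G_\alpha$, and the character $\theta_\ff$ is the same in all three settings, the resulting parameter is independent of the ambient group. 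Combined with Proposition \ref{prop:4.8}, which already identifies $q_\sigma(v(\alpha,J))$ with $q_\theta(s_\alpha)$ for $G_\sigma$, this gives the corollary.

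I do not expect any serious obstacle. The only point that requires care is checking that the inclusion $\mc L_\alpha \subset \mc G_{\alpha,\mf f_\alpha}^\circ$ is realized with the correct parahoric structure, so that the Hecke-algebra basis elements labelled by $s_\alpha$ in the three groups $G$, $G_\sigma$, $G_\alpha$ really correspond to one another under Lemma \ref{lem:4.5}; this is immediate from the fact that all three groups share the same root subgroup $\mc U_\alpha$ and torus $\mc T_\ff$ over $\mf o_F$.
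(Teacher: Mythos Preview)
Your proposal is correct and follows essentially the same approach as the paper: the paper's proof consists of the single sentence preceding the corollary, namely that the rank-one data $\mc T_\ff(k_F)$, $\theta_\ff$, $\mc L_\alpha(k_F)$ entering Lemma \ref{lem:4.6}(b) are the same whether constructed from $G$, $G_\sigma$, or $G_\alpha$, and then declares the result analogous to Proposition \ref{prop:4.8}. You have simply spelled out this observation in more detail (including the verification that $\mc G_\alpha$ is a quasi-split Levi of $\mc G_\sigma$ with $\mc T$ as minimal Levi, which the paper records just before the corollary).
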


\section{The Hecke algebra of a non-singular depth-zero Bernstein block}
\label{sec:HeckeG}

We continue the conventions from Section \ref{sec:Morris}, in particular, $\sigma$ is a
non-singular cuspidal representation of $\mc G_\ff^\circ (k_F) = P_\ff / G_{\ff,0+}$. 
Let $\hat \sigma$\label{i:129} be an irreducible representation of $\mc G_\ff(k_F) = 
\hat P_\ff / G_{\ff,0+}$ whose restriction to $\mc G_\ff^\circ (k_F)$ contains $\sigma$. 
\label{i:130}

\begin{thm}[\cite{MoPr2,Mor3}]\label{thm:3.6}
The pair $(\hat P_\ff, \hat \sigma)$ is a type for a single Bernstein block 
$\Rep (G)_{(\hat P_\ff,\hat \sigma)} \subset \Rep (G)$. Moreover 
$(\hat P_\ff, \hat \sigma)$ is a cover of the type $(\hat P_{L,\ff},\hat \sigma)$.
\end{thm}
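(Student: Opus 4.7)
The plan is to deduce the theorem by consolidating the type-theoretic results of Morris~\cite{Mor3} and Moy--Prasad~\cite{MoPr2}, and then refining via Clifford theory to isolate a single Bernstein block.

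First I would handle the Levi case. Because $\hat P_{L,\ff}$ is the $L$-stabilizer of the facet $\ff_L$ and $\hat\sigma$ is an irreducible extension of the cuspidal representation $\sigma$, compact induction produces an irreducible non-singular supercuspidal depth-zero $L$-representation $\pi_L := \ind_{\hat P_{L,\ff}}^L(\hat\sigma)$ by the depth-zero theory in \cite[\S 6]{MoPr2}. A standard Frobenius-reciprocity argument then shows that $(\hat P_{L,\ff}, \hat\sigma)$ is an $\mf s_L$-type for the single Bernstein block $\Rep(L)_{\mf s_L}$ with $\mf s_L := [L, \pi_L]_L$; distinct irreducible extensions of $\sigma$ to $\hat P_{L,\ff}$, taken modulo unramified twists of $L$, produce disjoint Bernstein blocks.

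Next, setting $\mf s := [L, \pi_L]_G$, I would verify the three Bushnell--Kutzko cover axioms for $(\hat P_\ff, \hat\sigma)$ over $(\hat P_{L,\ff}, \hat\sigma)$. Compatibility $\hat P_\ff \cap L = \hat P_{L,\ff}$ is immediate from \eqref{eq:2.13}. The Iwahori decomposition $\hat P_\ff = (\hat P_\ff \cap U^-)(\hat P_\ff \cap L)(\hat P_\ff \cap U)$ for every parabolic $\mc P = \mc L \mc U$ with Levi $\mc L$ is inherited from the known Iwahori decomposition of $P_\ff$: since $\hat P_\ff$ normalizes $P_\ff$ and $\hat P_\ff \cap U^\pm = P_\ff \cap U^\pm$ (the unipotent radicals only meet the finite-index overgroup inside the parahoric), one transports the decomposition. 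For strong positivity, I would check that for $z$ in the strictly positive part of $Z(L)$, the double coset $\hat P_\ff z \hat P_\ff$ supports an invertible element $T_z \in \cH(G, \hat P_\ff, \hat\sigma)$, by transferring Morris's corresponding invertibility in $\cH(G, P_\ff, \sigma)$ through the explicit presentation in Theorem~\ref{thm:2.4}.

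With the cover axioms established, standard Bushnell--Kutzko theory gives both conclusions: $(\hat P_\ff, \hat\sigma)$ is an $\mf s$-type and $\Rep(G)_{(\hat P_\ff, \hat\sigma)}$ equals the single Bernstein block $\Rep(G)_{\mf s}$. To rule out additional blocks, I would compare cuspidal supports: every $\pi \in \Irr(G)_{(\hat P_\ff, \hat\sigma)}$ contains $\sigma$ on restriction to $P_\ff$, so by \cite[Theorem~4.5]{Mor3} its supercuspidal support sits in $\Rep(L)_{(P_{L,\ff}, \sigma)}$, and the further requirement that it contain $\hat\sigma$ rather than a distinct extension of $\sigma$ cuts this down to $\mf s$. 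The main obstacle is the strong positivity axiom: while Morris proved invertibility for the unenhanced pair $(P_\ff, \sigma)$, the enhancement to $\hat\sigma$ brings in the twisted group algebra factor $\C[\Omega(J,\sigma), \mu_\sigma]$ of Theorem~\ref{thm:2.4}, and one must check that the translation element $T_z$ in the affine Hecke algebra part commutes with this twist up to scalars from $\mu_\sigma$, so that its invertibility propagates to the full algebra.
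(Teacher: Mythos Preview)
The paper does not prove Theorem~\ref{thm:3.6}. It is stated with attribution to \cite{MoPr2,Mor3} in the theorem header and is used as a black box; no argument for it appears anywhere in the text. So there is nothing in the paper to compare your proposal against. What you have sketched is essentially the content of Morris~\cite[\S 4]{Mor3}, which verifies the Bushnell--Kutzko cover axioms for $(\hat P_\ff,\hat\sigma)$ over $(\hat P_{L,\ff},\hat\sigma)$ along the lines you describe: Iwahori decomposition inherited from $P_\ff$, and invertibility of strongly positive elements deduced from the structure of $\cH(G,P_\ff,\sigma)$.

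One technical slip worth flagging: the compact induction $\ind_{\hat P_{L,\ff}}^L(\hat\sigma)$ is not irreducible in general. It is a progenerator of the Bernstein block, a finite direct sum of supercuspidals that are unramified twists of one another; compare Lemma~\ref{lem:3.4} and the construction around~\eqref{eq:5.14}, where the irreducible supercuspidal is $\ind_{L_\ff}^L(\sigma')$ for an extension $\sigma'$ of $\hat\sigma$ to the full stabilizer $L_\ff$. This does not damage your conclusion that $(\hat P_{L,\ff},\hat\sigma)$ is a type for a single block, but the argument should go through the intertwining set of $\hat\sigma$ in $L$ (which lies in $L_\ff$) rather than through irreducibility of $\ind_{\hat P_{L,\ff}}^L(\hat\sigma)$.
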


For general results about types and their $G$-covers, we refer the reader to \cite{BuKu}. 
For our use, here we record in particular an equivalence of categories\label{i:141}
\begin{equation}\label{eq:3.4}
\Rep (G)_{(\hat P_\ff,\hat \sigma)} \; \cong \; \Mod \text{ - } \cH (G,\hat P_\ff, \hat \sigma),     
\end{equation}
where $\Mod\text{ - }\mathcal{R}$ denotes the category of \textit{right} $\mathcal{R}$-modules.

Since $\hat P_\ff / P_\ff$ is abelian by Lemma \ref{lem:3.2} (a), every alternative
$\hat \sigma'$ is isomorphic to $\chi \otimes \hat \sigma$ for some (not necessarily unique)
character $\chi$ of $\Omega_\ff^\circ$.
In particular, the multiplicity $m(\hat \sigma,\sigma)$ of $\hat \sigma$ in 
$\ind_{P_\ff}^{\hat P_\ff} (\sigma)$, which equals the multiplicity of 
$\sigma$ in $\hat \sigma$, is independent of the choice of $\hat \sigma$. Therefore, we have 
\begin{equation}\label{eq:3.2}
\ind_{P_\ff}^{\hat P_\ff} (\sigma) \cong \C^{m(\hat \sigma,\sigma)} \otimes 
\bigoplus\limits_{\hat \sigma \text{ contains } \sigma} \hat \sigma 
\quad \text{and} \vspace{-3mm}
\end{equation}
\begin{equation}
\label{eq:5.18} \mc H (\hat P_\ff, P_\ff, \sigma) = \End_{\hat P_\ff} 
\big( \ind_{P_\ff}^{\hat P_\ff} (\sigma) \big) \cong M_{m(\hat \sigma,\sigma)}(\C) \otimes 
\bigoplus\limits_{\hat \sigma \text{ contains } \sigma} \C \, \mr{id}_{V_{\hat \sigma}} .
\end{equation}
Recall moreover from \cite{BuKu} that there are natural algebra isomorphisms
\begin{align}\label{eq:3.3}
\begin{split}
& \mc H (G,P_\ff,\sigma) \cong \End_G \big( \ind_{P_\ff}^G (\sigma) \big) =
\End_G \big( \ind_{\hat P_\ff}^G \ind_{P_\ff}^{\hat P_\ff} (\sigma) \big) \quad \text{and}\\
& \mc H (G,\hat P_\ff,\hat \sigma) \cong 
\End_G \big( \ind_{\hat P_\ff}^G (\hat \sigma) \big) .
\end{split}
\end{align}
We choose a decomposition of $\hat P_\ff$-representations
\begin{equation}\label{eq:5.19}
\ind_{P_\ff}^{\hat P_\ff} (\sigma) = \hat \sigma \oplus \hat \pi.
\end{equation}
By \eqref{eq:3.2} and \eqref{eq:3.3}, we see that \eqref{eq:5.19} induces an algebra embedding 
\begin{equation}\label{eq:5.20}
\mc H (G,\hat P_\ff,\hat \sigma) \hookrightarrow \mc H (G,P_\ff,\sigma),
\end{equation}
as already noted in \cite[p. 150]{Mor3}.
The unit element $T_e$ of $\mc H (G,P_\ff,\sigma)$ can be identified with 
$\sigma : P_\ff \to \End_\C (V_\sigma)$. Let $\hat T_e \in \mc H (\hat P_\ff, P_\ff, \sigma)$
be the minimal idempotent whose kernel is $\hat \pi$ and whose image equals $\hat \sigma$ as in 
\eqref{eq:5.19}. Then $\hat T_e \in \mc H (G,P_\ff,\sigma)$ is the image of the unit element of 
$\mc H (G,\hat P_\ff,\hat \sigma)$ via \eqref{eq:5.20}.

Let $G_{\ff,\hat \sigma}$ be the stabilizer of $\hat \sigma \in \Irr (\hat P_\ff)$ in $G_\ff$,
and let $\Omega (J,\hat \sigma)$\label{i:119} denote its image in
\begin{equation}
\Omega (J,\sigma) \hat P_\ff / \hat P_\ff \cong \Omega (J,\sigma) \Omega_\ff^0 / \Omega_\ff^0
\cong \Omega (J,\sigma) / \Omega_{\ff,\sigma}^0 .
\end{equation}
Note that by Lemma \ref{lem:3.2} (b), $\Omega (J, \hat \sigma)$ is isomorphic to a lattice  
in $X_* (Z(G)) \otimes_\Z \R$.

\begin{thm}\label{thm:3.1}
There are algebra isomorphisms
\[
\cH (G,\hat P_\ff,\hat \sigma) \cong \hat T_e \mc H (G,P_\ff,\sigma) \hat T_e \cong 
\mc H (W_\af (J,\sigma),q_\sigma) \rtimes \C[ \Omega (J,\hat \sigma) ,\mu_{\hat \sigma}] .
\]
The support of $\cH (G,\hat P_\ff,\hat \sigma)$ is $\hat P_\ff (W_\af (J,\sigma) \rtimes 
\Omega (J,\hat \sigma)) \hat P_\ff$, and this algebra has a basis indexed by
$W_\af (J,\sigma) \rtimes \Omega (J,\hat \sigma)$.
\end{thm}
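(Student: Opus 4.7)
The first isomorphism $\cH(G,\hat P_\ff,\hat\sigma) \cong \hat T_e \mc H(G,P_\ff,\sigma) \hat T_e$ should follow directly from the idempotent construction. Applying $\ind_{\hat P_\ff}^G$ to the decomposition \eqref{eq:5.19} produces the $G$-module splitting
\[
\ind_{P_\ff}^G \sigma \;\cong\; \ind_{\hat P_\ff}^G \hat\sigma \,\oplus\, \ind_{\hat P_\ff}^G \hat\pi ,
\]
and by construction $\hat T_e$ acts as the projection onto the first summand. Since $\mc H(G,P_\ff,\sigma) = \End_G(\ind_{P_\ff}^G \sigma)$ by \eqref{eq:3.3}, compressing by $\hat T_e$ gives $\hat T_e \mc H(G,P_\ff,\sigma)\hat T_e = \End_G(\ind_{\hat P_\ff}^G \hat\sigma) = \cH(G,\hat P_\ff,\hat\sigma)$.

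For the second isomorphism I would apply Morris's Theorem \ref{thm:2.4} and then locate $\hat T_e$ inside the resulting crossed product. Since $\hat T_e \in \mc H(\hat P_\ff, P_\ff, \sigma)$ and this subalgebra is supported on the cosets parametrized by $\Omega_{\ff,\sigma}^0 \subset \Omega(J,\sigma)$, we have $\hat T_e \in \C[\Omega_{\ff,\sigma}^0, \mu_\sigma]$. The crucial preparatory step is to show that $\mu_\sigma$ is cohomologically trivial on $\Omega_{\ff,\sigma}^0 \times \Omega_{\ff,\sigma}^0$, and more strongly that with an appropriate renormalization the basis elements $T_\omega$ with $\omega \in \Omega_{\ff,\sigma}^0$ commute with every $T_w$ for $w \in W(J,\sigma)$. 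This should follow from Lemma \ref{lem:3.5} (centrality of $\Omega_{\ff,\sigma}^0$ in $W(J,\sigma)$ and trivial intersection with the commutator), combined with the fact that $\Omega_{\ff,\sigma}^0 = \hat P_{\ff,\sigma}/P_\ff$ comes from genuine group elements: an extension $\hat \sigma$ of $\sigma$ supplies operators $\hat\sigma(\hat p_\omega)$ that realize the $T_\omega$ as a true group action, thereby splitting the cocycle on $\Omega_{\ff,\sigma}^0$.

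Once this splitting is in place, $\hat T_e$ can be written as the character projector $|\Omega_{\ff,\sigma}^0|^{-1}\sum_{\omega} \chi_{\hat\sigma}(\omega^{-1}) T_\omega$, where $\chi_{\hat\sigma} \in \Irr(\Omega_{\ff,\sigma}^0)$ is the character by which this subgroup acts on the canonical line $\Hom_{P_\ff}(\sigma,\hat\sigma)$. Because $\hat T_e$ then commutes with the subalgebra $\mc H(W_\af(J,\sigma),q_\sigma)$, compression yields $\hat T_e\mc H(W_\af(J,\sigma),q_\sigma)\hat T_e \cong \mc H(W_\af(J,\sigma),q_\sigma)$ as algebras with unit $\hat T_e$. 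For $w \in \Omega(J,\sigma)$ the compressed element $\hat T_e T_w \hat T_e$ is nonzero precisely when conjugation by $w$ preserves the character $\chi_{\hat\sigma}$, which by the very definition of $\Omega(J,\hat\sigma)$ as the image of $G_{\ff,\hat\sigma}$ in $\Omega(J,\sigma)/\Omega_{\ff,\sigma}^0$ corresponds to the cosets lying in $\Omega(J,\hat\sigma)$. The surviving basis elements assemble into a twisted group algebra $\C[\Omega(J,\hat\sigma), \mu_{\hat\sigma}]$ with cocycle $\mu_{\hat\sigma}$ inherited from $\mu_\sigma$ via the quotient, and the semidirect product structure over $\mc H(W_\af(J,\sigma),q_\sigma)$ is inherited from Theorem \ref{thm:2.4}. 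The basis and support statements then follow at once: a system of lifts $\{\tilde w\} \subset W(J,\sigma)$ for $W_\af(J,\sigma) \rtimes \Omega(J,\hat\sigma)$ yields the basis $\{\hat T_e T_{\tilde w} \hat T_e\}$, with $\hat T_e T_{\tilde w} \hat T_e$ supported on $\hat P_\ff \tilde w \hat P_\ff$.

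The main obstacle I anticipate is the cocycle bookkeeping in the middle step: verifying precisely that Morris's basis $\{T_w\}$ can be renormalized so that $\mu_\sigma|_{\Omega_{\ff,\sigma}^0}$ becomes trivial \emph{and} so that the resulting $T_\omega$ $(\omega \in \Omega_{\ff,\sigma}^0)$ commute with every $T_w$. While Lemma \ref{lem:3.5} and the geometric origin of $\Omega_{\ff,\sigma}^0$ make this highly plausible, pinning down a coherent set of normalizations—perhaps by retracing Morris's construction in \cite{Mor1} with this compatibility built in—will require some care, and the induced cocycle $\mu_{\hat\sigma}$ must be expressed precisely in terms of $\mu_\sigma$ and $\chi_{\hat\sigma}$ to justify the final identification.
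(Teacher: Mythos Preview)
Your overall strategy is sound, but there is an internal inconsistency in the middle step that you should repair. You assert that after renormalization the $T_\omega$ with $\omega\in\Omega_{\ff,\sigma}^0$ commute with \emph{every} $T_w$ for $w\in W(J,\sigma)$. This cannot hold in general: for $\omega$ central and $w\in\Omega(J,\sigma)$ the scalar $\mu_\sigma(\omega,w)\mu_\sigma(w,\omega)^{-1}$ is a coboundary-invariant of $\mu_\sigma$, so no renormalization can kill it. Worse, if it did hold then $\hat T_e$ would commute with every $T_w$, forcing $\hat T_e T_w\hat T_e=\hat T_e T_w\neq0$ for all $w\in\Omega(J,\sigma)$ and collapsing your Step~4. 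What you actually need, and what suffices, is only that $\hat T_e$ commutes with $\mc H(W_\af(J,\sigma),q_\sigma)$. This follows immediately from Lemma~\ref{lem:3.5} together with the relation $T_\omega T_{s_\alpha}T_\omega^{-1}=T_{\omega s_\alpha\omega^{-1}}=T_{s_\alpha}$ in Theorem~\ref{thm:2.4}; no renormalization is required at all. With this correction your Step~4 becomes coherent: for $w\in\Omega(J,\sigma)$ the conjugate $T_w\hat T_e T_w^{-1}$ is the projector for the character $\omega\mapsto\mu_\sigma(w,\omega)\mu_\sigma(\omega,w)^{-1}\chi_{\hat\sigma}(\omega)$, and non-vanishing of $\hat T_e T_w\hat T_e$ detects exactly the coset $\Omega(J,\hat\sigma)$.

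The paper's proof takes the same first step but then bypasses the character-projector computation and the cocycle bookkeeping you worry about. After observing $\hat T_e\in Z\big(\mc H(W_\af(J,\sigma),q_\sigma)\big)$-commutant, it recognizes $\hat T_e\,\C[\Omega(J,\sigma),\mu_\sigma]\,\hat T_e$ directly as the Hecke algebra $\mc H(\Omega(J,\hat\sigma)\hat P_\ff,\hat P_\ff,\hat\sigma)$ by a support argument, and then \emph{defines} $\mu_{\hat\sigma}$ from scratch via intertwiners $\hat T_g(g)\in\Hom_{\hat P_{L,\ff}}(\hat\sigma,g\cdot\hat\sigma)$ and the relation $\hat T_{\dot g}\hat T_{\dot h}=\mu_{\hat\sigma}(\dot g,\dot h)\hat T_{\dot{gh}}$, rather than by descending $\mu_\sigma$. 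This avoids entirely the issue of relating $\mu_{\hat\sigma}$ to $\mu_\sigma$ and $\chi_{\hat\sigma}$ that you flag as the main obstacle.
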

\begin{proof}
The first isomorphism follows from \eqref{eq:5.19} and the construction of $\hat T_e$.~The 
support of $\hat T_e \in \mc H (G,P_\ff, \sigma)$ is $\hat P_{\ff,\sigma} / P_\ff =
\Omega_{\ff,\sigma}^0$, which by Lemma \ref{lem:3.5} is central in $W(J,\sigma)$.~By 
the multiplication relations in Theorem \ref{thm:2.4}, $\hat T_e$ commutes with each 
$T_{s_\alpha}$ and hence with $\mc H (W_\af (J,\sigma),q_\sigma)$. By Theorem \ref{thm:2.4},
we deduce
\begin{equation}
\hat T_e \mc H (G,P_\ff,\sigma) \hat T_e \cong \mc H (W_\af (J,\sigma),q_\sigma) \rtimes 
\hat T_e \C[ \Omega (J,\sigma) ,\mu_\sigma] \hat T_e . 
\end{equation}
Here $\hat T_e \C[ \Omega (J,\sigma) ,\mu_\sigma] \hat T_e$ is isomorphic to the subalgebra of
$\mc H (G,\hat P_\ff, \hat \sigma)$ supported on $\hat P_\ff \Omega (J,\hat \sigma) \hat P_\ff = \Omega (J,\hat \sigma) \hat P_\ff$, so it equals $\mc H (\Omega (J,\hat \sigma) \hat P_\ff, \hat P_\ff,\hat \sigma)$ 
and has a basis indexed by 
$\Omega (J,\hat \sigma) \hat P_\ff / \hat P_\ff = \Omega (J,\hat \sigma)$.~This shows the claims about the support and a basis of 
$\cH (G,\hat P_\ff,\hat \sigma)$.
For any $g \in \Omega (J,\hat \sigma) \hat P_\ff \subset G_\ff$, an element  
$\hat T_g \in \mc H (\Omega (J,\sigma) \hat P_\ff, \hat P_\ff,\hat \sigma)$ with 
support $\hat P_\ff g \hat P_\ff$ is determined by a nonzero element  
\begin{equation}\label{eq:5.38}
\hat T_g (g) \in \Hom_{\hat P_{L,\ff}}(\hat \sigma, g \cdot \hat \sigma)
\end{equation}
unique up to scalars, and $\hat T_g (g)^{-1} \in \Hom_{\hat P_{L,\ff}}(\hat \sigma, g^{-1} \cdot \hat \sigma)$ determines an element $\hat T_{g^{-1}}$ which is the inverse of $\hat T_g$. Furthermore,
$\hat T_g \hat T_h \in \C^\times \hat T_{g h}$ by uniqueness up to scalars. 
We do this for $g$ running through a set of representatives $\dot g$ for 
$\Omega (J,\hat \sigma)$. The formula
\begin{equation}\label{eq:5.39}
\hat T_{\dot g} \hat T_{\dot h} = \mu_{\hat \sigma} (\dot g,\dot h) \hat T_{\dot{gh}}
\end{equation}
defines a 2-cocycle $\mu_{\hat \sigma}$ on $\Omega (J,\hat \sigma)$. The cocycle relation 
follows from the associativity of 
$\mc H (\Omega (J,\sigma) \hat P_\ff, \hat P_\ff,\hat \sigma)$.
\end{proof}
Theorem \ref{thm:3.1} also tells us that the stabilizer of $\hat \sigma \in
\Irr (\hat P_{L,\ff})$ in $W(J,\sigma) \Omega_\ff^0 / \Omega_\ff^0$ is\label{i:120}
\begin{equation}\label{eq:5.21}
W(J,\hat \sigma) := W_\af (J,\sigma) \rtimes \Omega (J,\hat \sigma) .
\end{equation}

\subsection{The supercuspidal case} \

To better understand the isomorphisms in Theorem \ref{thm:3.1}, we first assume that
the associated Bernstein block of $G$ consists only of supercuspidal representations.
~This happens if and only if $\ff$ is a minimal facet of $\mc B (\mc G,F)$.~In this case 
$\mc L = \mc G$ and $W_\af (J,\sigma)$ is trivial.
~We will treat the general case in \S \ref{subsec:non-sc-case}.

\begin{cor}\label{cor:3.3}
Suppose that $\ff$ is a minimal facet of $\mc B (\mc G,F)$. Then
\[
\mc H (G,\hat P_\ff, \hat \sigma) \cong \C \big[ W(J,\hat \sigma) , \mu_{\hat \sigma} \big] .
\] 
\end{cor}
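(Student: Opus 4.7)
The plan is to specialize Theorem \ref{thm:3.1} to the minimal-facet case, in which the affine Hecke algebra factor in the crossed product collapses to the base field. Theorem \ref{thm:3.1} gives
\[
\cH(G, \hat P_\ff, \hat \sigma) \;\cong\; \mc H(W_\af(J, \sigma), q_\sigma) \rtimes \C[\Omega(J, \hat \sigma), \mu_{\hat \sigma}].
\]
If one can show $W_\af(J, \sigma) = \{1\}$, then $\mc H(W_\af(J, \sigma), q_\sigma) = \C$, the crossed product reduces to $\C[\Omega(J, \hat \sigma), \mu_{\hat \sigma}]$, and by \eqref{eq:5.21} we have $W(J, \hat \sigma) = \Omega(J, \hat \sigma)$, so the conclusion follows.

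Everything thus hinges on verifying that $W_\af(J, \sigma)$ is trivial when $\ff$ is minimal. Since $\ff$ is minimal, $P_\ff$ is a maximal parahoric subgroup of $G$. Combined with the fact that $\mc L$ is determined by the roots constant on $\ff$ and that $P_\ff \cap L$ must be a maximal parahoric of $L$ containing $P_\ff$, this forces $\mc L = \mc G$; equivalently $R^c_\ff = R(G,S)$, i.e., every root of $(G,S)$ is constant on $\ff$. In terms of the affine Dynkin diagram, this means that for each $\alpha \in \Delta_\af \setminus J$, the set $J \cup \{\alpha\}$ contains all the simple affine roots of the irreducible component of $\Delta_\af$ to which $\alpha$ belongs.

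Next I would deduce that $\Delta_{\ff, \af} = \emptyset$. For each such $\alpha$, the parabolic subgroup $W_{J \cup \alpha}$ contains the full affine Weyl group of that irreducible component, which is infinite. Consequently the longest element $w_{J \cup \alpha}$ used to define
\[
v(\alpha,J) \;=\; w_{J \cup \alpha} \, w_J
\]
does not exist, so $v(\alpha, J)$ cannot be an $R$-element. Hence $\Delta_{\ff, \af} = \emptyset$, $S_{\ff, \af, \sigma} = \emptyset$, and $W_\af(J, \sigma) = \{1\}$.

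The only substantive step is this verification that $W_\af(J, \sigma)$ is trivial; once that is established, the corollary is a direct substitution into Theorem \ref{thm:3.1} together with the definition \eqref{eq:5.21} of $W(J, \hat \sigma)$.
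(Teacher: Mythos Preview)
Your proof is correct and follows the same approach as the paper: show $W_\af(J,\sigma)$ is trivial by arguing that $\Delta_{\ff,\af}=\emptyset$, then apply Theorem \ref{thm:3.1} together with \eqref{eq:5.21}. The paper phrases the key observation more tersely---minimality of $\ff$ means $\Delta_\af\setminus J$ contains at most one element per $F$-simple factor, hence $\Delta_{\ff,\af}=\emptyset$---whereas you spell out that $W_{J\cup\alpha}$ is then the full (infinite) affine Weyl group of that component, so no longest element $w_{J\cup\alpha}$ exists; these are two ways of reading off the same combinatorial fact. One small wording issue: the clause ``$P_\ff\cap L$ must be a maximal parahoric of $L$ containing $P_\ff$'' is garbled (a subgroup of $L$ cannot contain $P_\ff$); you can simply invoke that a minimal facet projects to a vertex in $\mc B(\mc G_\ad,F)$, so every root is constant on it and $\mc L=\mc G$, which the paper states just before the corollary.
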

\begin{proof}
The minimality of $\ff$ implies that $\Delta_\af \setminus J$ contains at most one element
from each $F$-simple factor of $G$. Hence $\Delta_{\ff,\af}$ is empty and $|W_\af (J,\sigma)|
= 1$. By \eqref{eq:5.21}, $\Omega (J,\hat \sigma)$ equals $W(J,\hat \sigma)$. 
Now the isomorphism is clear by Theorem \ref{thm:3.1}.
\end{proof}
Note that in the special case where $\sigma$ is moreover regular, a more precise version 
of Corollary \ref{cor:3.3} was already known in \cite[Corollary 5.5]{Oha1}.

We now make the supercuspidal $G$-representations arising from $(\hat P_\ff,\hat \sigma)$ 
more explicit.~Let $\sigma'$ be an irreducible representation of $G_\ff$ whose restriction 
to $\hat P_\ff$ contains $\hat \sigma$. By \cite[Propositions 6.6 and 6.8]{MoPr2}, we know that
\label{i:121}
\begin{equation}\label{eq:5.14}
\tau := \ind_{G_\ff}^G (\sigma')
\end{equation}
is an irreducible supercuspidal $G$-representation, and that every object of 
$\Irr (G)_{(\hat P_\ff,\hat \sigma)}$ is of this form for some extension $\sigma'$ as above.

Let $G^1$\label{i:122} be the group generated by all compact subgroups of $G$, 
or equivalently the intersection of the kernels of all the unramified characters of $G$.

\begin{lem}\label{lem:3.4}
Suppose that $\ff$ is a minimal facet of $\mc B (\mc G,F)$.
Let $\tau_1$ be an irreducible subrepresentation of $\Res^G_{G^1} (\tau)$. Then
$\ind_{G^1}^G (\tau_1) \cong \ind_{\hat P_\ff}^G (\hat \sigma)$.
\end{lem}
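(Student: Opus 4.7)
The plan is to set $\tau_1' := \ind_{\hat P_\ff}^{G^1}(\hat \sigma)$ and reduce the lemma to the two claims: (i) $\tau_1'$ is irreducible as a $G^1$-representation; (ii) every irreducible $G^1$-subrepresentation of $\tau|_{G^1}$ is $G$-conjugate to $\tau_1'$. Granting these, the lemma follows by induction in stages: since $\hat P_\ff$ is compact and hence contained in $G^1$, one has $\ind_{\hat P_\ff}^G(\hat \sigma) = \ind_{G^1}^G(\tau_1')$; and since $G^1 \triangleleft G$, a standard Frobenius reciprocity argument yields $\ind_{G^1}^G(g \cdot \tau_1') \cong \ind_{G^1}^G(\tau_1')$ for every $g \in G$, so $\ind_{G^1}^G(\tau_1) \cong \ind_{\hat P_\ff}^G(\hat \sigma)$.

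For (i), I would compute $\End_{G^1}(\tau_1') = \cH(G^1, \hat P_\ff, \hat \sigma)$ as the subalgebra of $\cH(G, \hat P_\ff, \hat \sigma)$ spanned by basis elements supported in $G^1$. Since $\ff$ is minimal, $\mc L = \mc G$ and $W_\af(J,\sigma)$ is trivial, so Corollary \ref{cor:3.3} gives $\cH(G, \hat P_\ff, \hat \sigma) \cong \C[\Omega(J, \hat \sigma), \mu_{\hat \sigma}]$, with a basis indexed by $\Omega(J, \hat \sigma) \cong G_{\ff, \hat \sigma}/\hat P_\ff$. By Lemma \ref{lem:3.2}(b), this identifies $\Omega(J, \hat \sigma)$ via the translation map $t$ with a full-rank lattice in $X_*(Z^\circ(G)) \otimes_\Z \R$. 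The Kottwitz/Harish-Chandra homomorphism $\omega_G : G \to X_*(Z^\circ(G)) \otimes_\Z \R$, whose kernel is precisely $G^1$, restricts on $G_\ff$ to this translation map $t$; hence $G^1 \cap G_{\ff, \hat \sigma} = \ker(t) = \hat P_\ff$. Consequently only the identity class in $\Omega(J, \hat \sigma)$ has representatives in $G^1$, so $\cH(G^1, \hat P_\ff, \hat \sigma) = \C$ and $\tau_1'$ is irreducible.

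For (ii), Frobenius reciprocity gives $\Hom_{G^1}(\tau_1', \tau|_{G^1}) \cong \Hom_{\hat P_\ff}(\hat \sigma, \tau|_{\hat P_\ff}) \neq 0$, since $\tau = \ind_{G_\ff}^G(\sigma')$ and $\sigma'|_{\hat P_\ff}$ contains $\hat \sigma$; combined with (i), this exhibits $\tau_1'$ as an irreducible summand of $\tau|_{G^1}$. Clifford theory for the normal subgroup $G^1 \triangleleft G$, together with admissibility of $\tau|_{G^1}$ (a standard property of supercuspidal representations restricted to $G^1$), then ensures that $G$ acts transitively on the set of irreducible $G^1$-subrepresentations of $\tau|_{G^1}$, so every such $\tau_1$ is $G$-conjugate to $\tau_1'$. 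The main obstacle I expect is in step (i), where one must identify Morris's basis of $\cH(G, \hat P_\ff, \hat \sigma)$ with explicit coset representatives of $G_{\ff, \hat \sigma}/\hat P_\ff$ and verify that their support has trivial intersection with $G^1$; but this is a direct consequence of Lemma \ref{lem:3.2}(b) combined with the description of $G^1$ via the Kottwitz homomorphism, as indicated above.
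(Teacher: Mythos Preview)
Your proposal is correct and follows essentially the same approach as the paper: define $\tau_1' = \ind_{\hat P_\ff}^{G^1}(\hat\sigma)$, show it is irreducible by proving that the intertwining set of $\hat\sigma$ in $G^1$ is just $\hat P_\ff$ (equivalently $G_{\ff,\hat\sigma}\cap G^1 = \hat P_\ff$), then conclude via transitivity of induction and Clifford theory for $G^1 \triangleleft G$. The only cosmetic difference is that the paper establishes $G_\ff \cap G^1 = \hat P_\ff$ directly from the geometry of the building (the image of $\ff$ in $\mc B(\mc G_\ad,F)$ is a point and $G^1$ acts trivially on the $X_*(Z^\circ(G))\otimes_\Z \R$ factor), whereas you reach the same conclusion via Lemma~\ref{lem:3.2}(b) and the Harish-Chandra map; these are two phrasings of the same fact.
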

\begin{proof}
By assumption, the image of $\ff$ in $\mc B (\mc G_\ad,F)$ is just one point, and by
construction $G^1$ acts trivially on $X_* (Z(G)) \otimes_\Z \R$.~Thus 
$G_\ff \cap G^1 = \hat P_\ff$. We claim that\label{i:123}
\begin{equation}\label{eq:5.30}
\tau_1 := \ind_{\hat P_\ff}^{G^1} (\hat \sigma) 
\end{equation}
is irreducible.~The intertwining set of $\hat \sigma \in \Irr (\hat P_\ff)$ is defined as
\[
\{ g \in G : \Hom_{\hat P_\ff \cap g \hat P_\ff g^{-1}} 
(\hat \sigma, g \cdot \hat \sigma) \neq 0 \} .
\]
It equals the support of $\mc H (G,\hat P_\ff,\hat \sigma)$, which by Theorem 
\ref{thm:2.4} is $G_{\ff,\hat \sigma}$.~Since $\hat P_\ff$ equals $G_{\ff, \hat \sigma} \cap G^1$,
the intertwining of $\hat \sigma \in \Irr ( \hat P_\ff)$ in $G^1$ equals $\hat P_\ff$. 
This implies the claimed irreducibility of $\tau_1$. By the transitivity of induction, we have
\begin{equation}\label{eq:3.6}
\ind_{G^1}^G (\tau_1) = \ind_{G^1}^G \ind_{\hat P_\ff}^{G^1} (\hat \sigma) = 
\ind_{\hat P_\ff}^G (\hat \sigma) .
\end{equation}
By Frobenius reciprocity, $\Hom_{G^1} (\tau_1, \tau) \cong \Hom_{\hat P_\ff}(\hat \sigma, \ind_{G_\ff}^G (\sigma') )
\subset \Hom_{\hat P_\ff}(\hat \sigma, \sigma' ) \neq 0$.~Hence this $\tau_1$ is indeed a subrepresentation of $\Res^G_{G^1} (\tau)$. By the 
irreducibility of $\tau$, every alternative $\tau_2$ for $\tau_1$ is isomorphic to 
$g \cdot \tau_1$ for some $g \in G$. In particular the choice of $\tau_1$ does not affect
$\ind_{G^1}^G (\tau_1)$.
\end{proof}
Since the supercuspidal Bernstein component $\Irr (G)_{(\hat P_\ff,\hat \sigma)} \cong \Irr \text{ - } \mc H (G,\hat P_\ff, \hat \sigma)$ has the structure of a complex torus, it is isomorphic to the space of irreducible
representations of a lattice.~This suggests that it is possible to get rid of the 2-cocycle 
$\mu_{\hat \sigma}$ in Corollary \ref{cor:3.3}. It turns out that is indeed the case, at 
the cost of passing to a smaller lattice. We write\label{i:124}
\begin{equation}\label{eq:5.22}
ZW (J,\hat \sigma) := \{ w \in W(J,\hat \sigma) : T_v T_w = T_w T_v 
\text{ for all } v \in W( J,\hat \sigma) \} .
\end{equation}
As a subgroup of a lattice, $ZW (J,\hat \sigma)$ is again a lattice.~We pick a basis 
$\mf B$, and for $z = \sum\limits_{b \in \mf B} n_b b \in ZW (J,\hat \sigma)$, we rescale  $T_z$ to 
$\prod\limits_{b \in \mf B} T_b^{n_b}$.~By \eqref{eq:5.22}, this is well-defined. Next we 
choose a set of representatives 
$\dot w$ for $W(J,\hat \sigma) / ZW (J,\hat \sigma)$, and we rescale $T_{\dot w z} = T_{\dot w} T_z = T_z T_{\dot w}$. This allows us to make $\mu_{\hat \sigma}$ 
factor through $\big( W(J,\hat \sigma) / ZW (J,\hat \sigma) \big)^2$. Together with the 
commutativity of $W(J,\hat \sigma) = \Omega (J,\hat \sigma)$, we obtain
\begin{equation}\label{eq:3.10}
Z \big( \C [W(J,\hat \sigma) , \mu_{\hat \sigma} ] \big) = \C [ZW (J,\hat \sigma)] .
\end{equation}
By \cite[\S 7.13]{Mor1}, it is easy to see that $ZW (J,\hat \sigma)$ contains the image in
$W(J,\hat \sigma)$ of the maximal central $F$-split torus of $G$.~This image has finite
index in the lattice $W(J,\hat \sigma)$, because it has the same rank by Lemma 
\ref{lem:3.2}(b). Thus $[W(J,\hat \sigma) : ZW (J,\hat \sigma)]$ is finite.

Let $\mathfrak{X}_{\nr} (G,\tau)$\label{i:125} be the stabilizer of 
$\tau \in \Irr (G)_{(\hat P_\ff,\hat \sigma)}$ 
in $\mathfrak{X}_{\nr} (G)$. There is a bijection 
\begin{equation}\label{eq:5.11}
\mathfrak{X}_{\nr} (G) / \mathfrak{X}_{\nr} (G,\tau) \isom \Irr (G)_{[G,\tau]} : 
\chi \mapsto \chi \otimes \tau
\end{equation}
Recall from \eqref{eq:5.30} that $\big( \ind_{\hat P_\ff}^{G^1}(\hat \sigma), 
\ind_{\hat P_\ff}^{G^1}(V_{\hat \sigma}) \big)$ is an irreducible $G^1$-subrepresentation of 
$\tau$. As in \cite[\S 2]{SolEnd}, we define\label{i:126}
\begin{align*}
& G_\tau^2 := \bigcap\nolimits_{\chi \in \mathfrak{X}_{\nr} (G,\tau)} \, \ker \chi ,\\
& G_\tau^3 := \{ g \in G : g \cdot \ind_{\hat P_\ff}^{G^1}(V_{\hat \sigma}) =
\ind_{\hat P_\ff}^{G^1}(V_{\hat \sigma}) \} ,\\
&  G_\tau^4 := \{ g \in G : g \cdot \ind_{\hat P_\ff}^{G^1}(\hat \sigma) \cong
\ind_{\hat P_\ff}^{G^1}(\hat \sigma) \} .
\end{align*}
With these notations, \cite[Lemma 10.1.b]{SolEnd} says that 
\begin{equation}\label{eq:5.4}
W(J,\hat \sigma) = G_\tau^4 / G^1 ,\qquad ZW(J,\hat \sigma) = G_\tau^2 / G^1 
\end{equation}
and $\C [G_\tau^3 / G^1]$ is a maximal commutative subalgebra of 
$\C [W(J,\hat \sigma),\mu_{\hat \sigma}]$. Let $\mc O (\mathfrak{X}_{\nr} (G)$\label{i:164} 
be the ring of regular functions on the complex algebraic variety $\mf X_\nr (G)$.
By \eqref{eq:5.11}, we obtain algebra isomorphisms
\begin{equation}\label{eq:5.40}
\C [ZW(J,\hat \sigma) \cong \C [G^2_\tau / G^1] \cong 
\mc O (\mathfrak{X}_{\nr} (G) / \mathfrak{X}_{\nr} (G,\tau)) \cong \mc O (\Irr (G)_{[G,\tau]}) ,
\end{equation}
determined entirely by the choice of $\tau$. We write \label{i:161} $CW (J,\hat \sigma) := 
G_\tau^3 / G^1$, such that there are finite index inclusions of lattices
\begin{equation}\label{eq:5.32}
ZW (J,\hat \sigma) \subset CW (J,\hat \sigma) \subset W(J,\hat \sigma) .
\end{equation}
By \cite[Lemma 1.6.3.1]{Roc}, we know that
\[
[W(J,\hat \sigma) : CW (J,\hat \sigma)] = [CW (J,\hat \sigma) : ZW (J,\hat \sigma)]
\]
equals the multiplicity of $\tau_1$ in $\Res^G_{G^1}(\tau)$.~For an open subset 
$U \subset \Irr (ZW(J,\hat \sigma))$, let $C^{an}(U)$\label{i:127} 
be the algebra of analytic functions on $U$.~The analytic localization of 
$\C[W(J,\hat \sigma),\mu_{\hat \sigma}]$ at $U$ is defined as
\begin{equation}\label{eq:5.34}
\C[W(J,\hat \sigma),\mu_{\hat \sigma}]^{an}_U := \C[W(J,\hat \sigma),\mu_{\hat \sigma}] 
\otimes_{\C [ZW(J,\hat \sigma)]} C^{an}(U) .
\end{equation}
The finite-length modules of this algebra are precisely those finite-length modules
of $\C[W(J,\hat \sigma),\mu_{\hat \sigma}]$, all whose $\C[ZW(J,\hat \sigma)]$-weights
belong to $U$, cf.~\cite[Proposition 4.3]{Opd}. Similarly, we can define the analytic 
localization of $\C[W(J,\hat \sigma),\mu_{\hat \sigma}]$ with respect to an open subset
$\tilde U$ of $\Irr (CW (J,\hat \sigma))$.~We denote this module by
$\C[W(J,\hat \sigma),\mu_{\hat \sigma}]^{an}_{\tilde U}$. If $\tilde U$ is the full
preimage of a subset $U \subset \Irr (ZW(J,\hat \sigma))$, then it acquires an algebra
structure via the natural isomorphism
\begin{equation}\label{eq:5.8}
\C[W(J,\hat \sigma),\mu_{\hat \sigma}]^{an}_{\tilde U} \cong
\C[W(J,\hat \sigma),\mu_{\hat \sigma}]^{an}_U 
\end{equation}

\begin{prop}\label{prop:5.4}
Assume that $L = G$. 
\enuma{
\item Suppose that the inverse image of $U$ in $\Irr (CW (J,\hat \sigma))$ is homeomorphic to a 
disjoint union of $d = [CW (J,\hat \sigma) : ZW(J,\hat \sigma)]$ copies of $U$. Then the algebras 
$\C [W(J,\hat \sigma) , \mu_{\hat \sigma} ]^{an}_U$ and $C^{an}(U)$ are Morita equivalent.
\item The algebras $\C [W(J,\hat \sigma),\mu_{\hat \sigma}]$ and $\C [ZW(J,\hat \sigma)]$
have equivalent categories of finite-length modules. The equivalence sends any
irreducible $\C [W(J,\hat \sigma),\mu_{\hat \sigma}]$-module to its central character.
}
\end{prop}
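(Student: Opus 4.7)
The plan is to prove part (a) by a Peirce-decomposition/Morita argument using the hypothesis that the covering trivializes, and then deduce part (b) by a local-to-global gluing argument using that $\Irr(CW(J,\hat\sigma)) \to \Irr(ZW(J,\hat\sigma))$ is a finite analytic covering map of complex tori, hence trivializes analytically near every point.

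For part (a), I begin by unpacking the consequences of the trivialization hypothesis. Since $\tilde U \cong \bigsqcup_{i=1}^d U_i$ with each $U_i \to U$ a homeomorphism, the pullback $\C[CW(J,\hat\sigma)]^{an}_{\tilde U} = \C[CW(J,\hat\sigma)] \otimes_{\C[ZW(J,\hat\sigma)]} C^{an}(U)$ decomposes as $\bigoplus_{i=1}^d C^{an}(U)$, yielding $d$ pairwise orthogonal idempotents $e_1,\ldots,e_d$ (summing to $1$) inside $\C[W(J,\hat\sigma),\mu_{\hat\sigma}]^{an}_U$. Since $\C[CW(J,\hat\sigma)]$ is a maximal commutative subalgebra of $\C[W(J,\hat\sigma),\mu_{\hat\sigma}]$ and $[W(J,\hat\sigma):CW(J,\hat\sigma)]=d$, the twisted group algebra is free of rank $d$ over $\C[CW(J,\hat\sigma)]$ with basis $\{T_{\dot w}\}$ indexed by a transversal for $W(J,\hat\sigma)/CW(J,\hat\sigma)$.

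The crucial step is analyzing the conjugation action of $W(J,\hat\sigma)/CW(J,\hat\sigma)$ on $\C[CW(J,\hat\sigma)]^{an}_U$: for $c\in CW(J,\hat\sigma)$, $T_{\dot w}T_cT_{\dot w}^{-1}=\chi_{\dot w}(c)T_c$ with $\chi_{\dot w}(c)=\mu_{\hat\sigma}(\dot w,c)\mu_{\hat\sigma}(c,\dot w)^{-1}$, which defines a character of $CW(J,\hat\sigma)$ trivial on $ZW(J,\hat\sigma)$. The resulting map $W(J,\hat\sigma)/CW(J,\hat\sigma)\to\ker\bigl(\Irr(CW(J,\hat\sigma))\to\Irr(ZW(J,\hat\sigma))\bigr)$ is a homomorphism between finite groups both of order $d$, and the maximality of $CW(J,\hat\sigma)$ as a commutative subgroup, combined with the non-degeneracy of the induced alternating pairing on $W(J,\hat\sigma)/ZW(J,\hat\sigma)$, implies this map is an isomorphism. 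Hence $W(J,\hat\sigma)/CW(J,\hat\sigma)$ acts freely and transitively by translation on the $d$ sheets of $\tilde U\to U$, and therefore permutes the idempotents $e_1,\ldots,e_d$ transitively by conjugation. Standard Peirce decomposition gives $e_1\C[W(J,\hat\sigma),\mu_{\hat\sigma}]^{an}_U e_1 = e_1\C[CW(J,\hat\sigma)]^{an}_U = C^{an}(U)$, and the full idempotent property $\sum_i e_i = 1$ implies $\C[W(J,\hat\sigma),\mu_{\hat\sigma}]^{an}_U e_1$ is a progenerator, yielding the Morita equivalence $\C[W(J,\hat\sigma),\mu_{\hat\sigma}]^{an}_U \sim_{\mathrm{Mor}} C^{an}(U)$.

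For part (b), observe that $\Irr(CW(J,\hat\sigma))\to\Irr(ZW(J,\hat\sigma))$ is a finite étale covering of complex tori, so around every point $\chi \in \Irr(ZW(J,\hat\sigma))$ there is a connected analytic open $U\ni\chi$ over which the covering trivializes. A finite-length $\C[W(J,\hat\sigma),\mu_{\hat\sigma}]$-module is supported at finitely many points of $\Irr(ZW(J,\hat\sigma))$, and choosing disjoint small opens $U$ around each such point, part (a) produces an equivalence on each piece. These equivalences glue (or, more directly, the identification $M \mapsto e_1 M$ for a locally chosen $e_1$ is globally well-defined on the category of finite-length modules up to canonical isomorphism, because on overlaps the different choices of $e_1$ are conjugate by invertible elements). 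To check that an irreducible $\C[W(J,\hat\sigma),\mu_{\hat\sigma}]$-module is sent to its central character: such an irreducible has a unique central character $\chi$ since $\C[ZW(J,\hat\sigma)]$ is central, and by part (a) applied near $\chi$, the module corresponds under the Morita equivalence to a skyscraper at $\chi$, which is precisely $\C_\chi$ viewed as a $\C[ZW(J,\hat\sigma)]$-module.

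The main obstacle I expect is justifying the isomorphism $W(J,\hat\sigma)/CW(J,\hat\sigma)\isom\ker\bigl(\Irr(CW(J,\hat\sigma))\to\Irr(ZW(J,\hat\sigma))\bigr)$; this is a non-degeneracy statement about the commutator form $\mu_{\hat\sigma}(v,w)\mu_{\hat\sigma}(w,v)^{-1}$ on $W(J,\hat\sigma)/ZW(J,\hat\sigma)$, and while it is essentially a formal consequence of the maximal commutativity of $\C[CW(J,\hat\sigma)]$, writing it out carefully (together with the equality $[W(J,\hat\sigma):CW(J,\hat\sigma)]=[CW(J,\hat\sigma):ZW(J,\hat\sigma)]=d$ already noted via \cite[Lemma 1.6.3.1]{Roc}) is the technical heart of the argument. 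The remaining bookkeeping for the gluing in (b) and the identification of central characters is routine.
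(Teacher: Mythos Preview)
Your proposal is correct and follows essentially the same route as the paper: both use the idempotents $1_{U_i}$ coming from the trivialized cover, identify the key isomorphism $W(J,\hat\sigma)/CW(J,\hat\sigma) \isom \Irr\bigl(CW(J,\hat\sigma)/ZW(J,\hat\sigma)\bigr)$ via the commutator bicharacter $(v,w)\mapsto T_vT_wT_v^{-1}T_w^{-1}$, and then deduce (b) by covering $\Irr(ZW(J,\hat\sigma))$ with small opens where (a) applies. The only cosmetic difference is that the paper writes out the matrix units explicitly to get $\C[W(J,\hat\sigma),\mu_{\hat\sigma}]^{an}_U \cong M_d(\C)\otimes C^{an}(U_1)$, whereas you phrase it as $e_1$ being a progenerator; these are the same statement.
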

\begin{proof}
(a) Write the inverse image of $U$ in $\Irr (CW (J,\hat \sigma))$ as 
$U_1 \sqcup \cdots \sqcup U_d$, where each $U_i$ projects homeomorphically onto $U$. Then we have
\[
\C [CW (J,\hat \sigma)] \underset{\C [ZW(J,\hat \sigma)]}{\otimes} C^{an}(U) \cong 
C^{an}(U_1) \oplus \cdots \oplus C^{an}(U_d) ,
\]
and this is a subalgebra of $\C [W(J,\hat \sigma) , \mu_{\hat \sigma} ]^{an}_U$.~Then we have
\begin{equation}\label{eq:5.23}
\C [W(J,\hat \sigma) , \mu_{\hat \sigma} ]^{an}_U = \bigoplus\nolimits_{i,j=1}^d \,
1_{U_i} \C [W(J,\hat \sigma) , \mu_{\hat \sigma} ]^{an}_U 1_{U_j}, 
\end{equation}
where
$1_{U_i}$ denotes the indicator function of $U_i$. 
The commutator map 
\[
(w,v) \mapsto T_w T_v T_w^{-1} T_v^{-1} \in \C^\times
\]
induces a non-degenerate skew-symmetric bicharacter on $W(J,\hat \sigma) / ZW (J,\hat \sigma)$.
It is trivial on $(CW (J,\hat \sigma) / ZW(J,\hat \sigma))^2$, so it induces an isomorphism 
\begin{equation}\label{eq:5.33}
W(J,\hat \sigma) / CW (J,\hat \sigma) \isom \Irr (CW (J,\hat \sigma) / ZW(J,\hat \sigma)) .
\end{equation}
For each $i,j$, there is a unique character $\chi_{ij} \in \Irr (CW (J,\hat \sigma) / 
ZW(J,\hat \sigma))$ such that $U_i = \chi_{ij} \otimes U_j$.
Hence there exists a $w_{ij} \in W(J,\hat \sigma)$, unique up to $CW (J,\hat \sigma)$, such that 
$T_{w_{ij}} 1_{U_j} T_{w_{ij}}^{-1} \in \C 1_{U_i}$.
Now \eqref{eq:5.23} simplifies to
\[
\C [W(J,\hat \sigma) , \mu_{\hat \sigma} ]^{an}_U = \bigoplus\nolimits_{i,j=1}^d \,
1_{U_i} T_{w_{ij}} C^{an}(U_j) 1_{U_j}.
\]
This algebra is isomorphic to $M_d (\C) \otimes_\C C^{an}(U_1)$, hence Morita equivalent
to $C^{an}(U_1)$, which is isomorphic to $C^{an}(U)$.\\
(b) By part (a) and \cite[Proposition 4.3]{Opd}, the category of those finite-length 
$\C[W(J,\hat \sigma),\mu_{\hat \sigma}]$-modules all whose $\C[ZW(J,\hat \sigma)]$-weights 
belong to $U$, is equivalent to the analogous category for $\C[ZW(J,\hat \sigma)]$. 

We cover $\Irr (ZW(J,\hat \sigma))$ by a collection of open sets $U$ that satisfy the condition
of part (a).~This is possible because every sufficiently small open ball in 
$\Irr (ZW(J,\hat \sigma))$ has the required property.~Combining the previous observations for
all such $U$, we find the desired statement for all finite-length modules.

The explicit description of the map on irreducible modules follows from the construction in
part (a), i.e.~that preserves $C^{an}(U)$-weights and hence preserves 
$\C[ZW(J,\hat \sigma)]$-weights.
\end{proof}

We indicate a full subcategory of finite-length objects by a subscript fl. 
By \eqref{eq:3.4}, Corollary \ref{cor:3.3} and Proposition \ref{prop:5.4}, 
the categories\label{i:131}
\begin{equation}\label{eq:5.24}
\Rep_\fl (G)_{(\hat P_\ff, \hat \sigma)} ,
\quad \Mod_\fl \text{ - }\mc H (G,\hat P_\ff, \hat \sigma)
\quad \text{and} \quad \Rep_\fl (ZW (J,\hat \sigma)) 
\end{equation}
are equivalent. However, if $ZW(J,\hat \sigma) \neq W(J,\hat \sigma)$, then
it seems that
\eqref{eq:5.24} does not extend to representations of arbitrary length, because 
$\C[ZW (J,\hat \sigma)]$ and $\C [W(J,\hat \sigma), \mu_{\hat \sigma}]$ are not Morita equivalent.

\subsection{The non-supercuspidal case} \
\label{subsec:non-sc-case} 

We return to the case where the facet $\ff$ is not necessarily minimal.~Recall that the group
$W(G,L) = N_G (L) / L$ acts on $\Irr (L)$ and on the isomorphism classes in $\Rep (L)$.~Let 
$W(G,L)_{\hat \sigma}$\label{i:132} be the stabilizer of $\Rep (L)_{(P_{L,\ff},\hat \sigma)}$ 
(as in Theorem \ref{thm:3.6}) in $W(G,L)$.~In other words, $W(G,L)_{\hat \sigma}$ is the finite 
group attached to the Bernstein component $\Irr (L)_{(P_{L,\ff},\hat \sigma)}$ as in 
\eqref{eqn:finite-group-attached-to-Bernstein-component}.

\begin{lem}\label{lem:5.8}
\enuma{
\item The category $\Rep (L)_{(\hat P_{L,\ff}, \hat \sigma)}$ determines $(\hat P_{L,\ff},
\hat \sigma)$ up to $L$-conjugacy.
\item The natural map $W(J,\hat \sigma) / W_L (J,\hat \sigma) \to W(G,L)_{\hat \sigma}$
is an isomorphism.
}
\end{lem}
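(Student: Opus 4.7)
The plan is to argue in parallel with Lemma \ref{lem:2.2}, handling the passage from $(P_{L,\ff},\sigma)$ to $(\hat P_{L,\ff},\hat \sigma)$ as the extra ingredient on top of the unrefined statement already established there.

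For part (a), I would fix an irreducible object $\omega$ of $\Rep (L)_{(\hat P_{L,\ff},\hat \sigma)}$. Since the facet $\ff$ projects to a vertex in $\mc B(\mc L_\ad,F)$, the theory of unrefined minimal $K$-types \cite{MoPr1,MoPr2} forces $\omega$ to be supercuspidal of depth zero with $(P_{L,\ff},\sigma)$ as such a $K$-type. By \cite[Theorem 5.2]{MoPr2} this pair is determined up to $L$-conjugacy by $\omega$, so the facet $\ff$ is determined, and $\hat P_{L,\ff}$ is recovered as the pointwise stabilizer of $\ff$ in $L$. To refine from $(P_{L,\ff},\sigma)$ to $(\hat P_{L,\ff},\hat \sigma)$, I would observe that by Theorem \ref{thm:3.6} the collection of Bernstein blocks $\Rep (L)_{(P_{L,\ff},\sigma)}$ is split, using \eqref{eq:3.2} and \eqref{eq:5.18}, into distinct blocks indexed by the extensions of $\sigma$ to $\hat P_{L,\ff}$ occurring in $\ind_{P_{L,\ff}}^{\hat P_{L,\ff}}(\sigma)$; two extensions yielding the same block must therefore coincide up to $L$-conjugacy.

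For part (b), I would mimic the proof of Lemma \ref{lem:2.2}(b). Every element of $W(J,\hat \sigma)$ stabilizes $\hat \sigma \in \Irr(\hat P_{L,\ff})$ (by definition) and hence stabilizes $\Rep (L)_{(\hat P_{L,\ff},\hat \sigma)}$, so the natural map $W(J,\hat \sigma) \to W(G,L)$ lands in $W(G,L)_{\hat \sigma}$. Injectivity: the kernel of $W(J,\sigma) \to W(G,L)$ is $W_L(J,\sigma)$ by Lemma \ref{lem:2.2}(b), so the kernel of the induced map on the $\hat \sigma$-stabilizers is exactly $W(J,\hat \sigma) \cap W_L(J,\sigma)/\Omega_{\ff,\sigma}^0 = W_L(J,\hat \sigma)$.

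For surjectivity, given $w \in W(G,L)_{\hat \sigma}$, part (a) applied to the block stabilized by $w$ says that $w$ preserves the $L$-conjugacy class of $(\hat P_{L,\ff},\hat \sigma)$, so there is a representative $n \in N_G(L,\hat P_{L,\ff})$ with $n\cdot\hat \sigma \cong \hat \sigma$. Restricting to $P_{L,\ff}$, $n\cdot\sigma$ occurs in $n\cdot\hat \sigma|_{P_{L,\ff}} \cong \hat \sigma|_{P_{L,\ff}}$; after multiplying $n$ by a suitable element of $\hat P_{L,\ff} \subset L$ that permutes the irreducible constituents of $\hat \sigma|_{P_{L,\ff}}$, we may assume $n\cdot\sigma \cong \sigma$. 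Then $n\cdot(Z_G(S)\cap P_{C_0})$ lies in $W(J,\sigma)$, stabilizes $\hat \sigma$, and thus projects to an element of $W(J,\hat \sigma)$ with image $w$. The main subtlety is in part (a): verifying that distinct extensions of a fixed $\sigma$ to $\hat P_{L,\ff}$ give distinct Bernstein blocks unless related by $L$-conjugation; this is where the structure of $\ind_{P_{L,\ff}}^{\hat P_{L,\ff}}(\sigma)$ from \eqref{eq:3.2} and the covering property in Theorem \ref{thm:3.6} do the real work.
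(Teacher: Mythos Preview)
Your approach is essentially the same as the paper's: both reduce part (a) to Lemma~\ref{lem:2.2}(a) to pin down $(P_{L,\ff},\sigma)$ and hence $\hat P_{L,\ff}$, and then refine to $\hat\sigma$; for part (b) the paper simply says ``using part (a), this can be shown in the same way as Lemma~\ref{lem:2.2}(b)'', and you have written out exactly that argument. One small point: in part (a) your phrase ``distinct blocks indexed by the extensions'' is slightly loose, since two $L_\ff$-conjugate extensions give the same block; the paper handles this by directly characterizing the $\pi\in\Irr(\hat P_{L,\ff})$ that are types for the given block as those with $g\cdot\pi\cong\hat\sigma$ for some $g\in L_\ff$, which is the sharp form of the statement you need.
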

\begin{proof}
(a) By Lemma \ref{lem:2.2} (a), $\Rep (L)_{(\hat P_{L,\ff}, \hat \sigma)}$ determines the
$L$-conjugacy class of $(P_{L,\ff},\sigma)$.~The irreducible representations $\pi$ of
$\hat P_{L,\ff}$ such that $(\hat P_{L,\ff}, \pi)$ is a type for  
$\Rep (L)_{(\hat P_{L,\ff}, \hat \sigma)}$ are precisely those for which 
$\ind_{\hat P_{L,\ff}}^L \pi$ contains $\hat \sigma$. This happens if and only if
$g \cdot \pi \cong \hat \sigma$ for some $g \in L_\ff$, and in which case 
$g \cdot (\hat P_{L,\ff},\pi) \cong (\hat P_{L,\ff}, \hat \sigma)$. Hence
$L \cdot (\hat P_{L,\ff}, \hat \sigma)$ is uniquely determined by 
$\Rep (L)_{(\hat P_{L,\ff}, \hat \sigma)}$.

(b) Using part (a), this can be shown in the same way as Lemma \ref{lem:2.2} (b).
\end{proof}

For $L$ as in \eqref{eq:2.13}, Lemma \ref{lem:3.2} and Corollary \ref{cor:3.3} hold with $L$ 
instead of $G$. Let $W_L$\label{i:160} be the Iwahori--Weyl group of $(L,S)$ and abbreviate $CW_L (J,\hat \sigma) := L_\tau^3 / L^1$. By Lemma \ref{lem:3.2} (b), $CW_L (J,\hat \sigma)$ is canonically isomorphic to a lattice in 
$X_* (Z(L)) \otimes_\Z \R$.\label{i:138}

\begin{lem}\label{lem:5.9}
\enuma{
\item The affine Weyl group $W_\af (J,\sigma)$ is the semidirect product of a finite Weyl 
group $W(R_\sigma)$ with the normal subgroup of translations $T(J,\sigma)$. 
\item The group $W_L (J,\sigma) \cap W_\af$ equals $T(J,\sigma)$, and can be 
represented by elements of $Z^\circ (L)$.
}
\end{lem}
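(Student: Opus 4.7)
The plan is as follows. For part (a), I would invoke Theorem 2.4, which exhibits $W_\af(J,\sigma)$ as a Coxeter group generated by the reflections $S_{\ff,\af,\sigma}$ acting on $D(J)^\perp$ (as recorded in the text below \eqref{eq:4.11}). To obtain the semidirect product decomposition, I would invoke Proposition \ref{prop:4.8}(c), which provides a canonical isomorphism between $W_\af(J,\sigma)$ and $W_\af(\emptyset,\theta_\ff)$ for the principal series block of $\theta_\ff$ in the quasi-split reductive group $G_\sigma$ (with minimal $F$-Levi $T$, by Lemma \ref{lem:4.9}). By the classical theory of affine Weyl groups attached to such Iwahori--Hecke algebras (cf.\ \cite{SolQS}), the right-hand side is an affine Weyl group in the usual sense and decomposes canonically as $W(R_\sigma) \ltimes T(J,\sigma)$, where $R_\sigma$ is the finite root system consisting of those $\alpha \in \Delta_{\ff,\sigma}$ whose associated $q$-parameter exceeds $1$, and $T(J,\sigma)$ is the attached cocharacter lattice. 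Transporting this decomposition back across the isomorphism of Proposition \ref{prop:4.8}(c) yields part (a).

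For part (b), I would prove two inclusions. To show $T(J,\sigma) \subset W_L(J,\sigma)$, I would first note that $T(J,\sigma) \subset W_\af(J,\sigma) \subset W_\af$ holds by construction. The lattice $T(J,\sigma)$ consists of translations along $D(J)^\perp = \mh A_T = X_*(\mc T_s) \otimes_\Z \R$. Since $\mc T$ is elliptic in $\mc L$, we have $\mc T_s \subset Z^\circ(\mc L)$ (as recorded after \eqref{eq:4.13}), so elements of $T(J,\sigma)$ can be represented by translations coming from $Z^\circ(L) \subset L$. Since $Z^\circ(L)$ is central in $L$, its elements act trivially by conjugation on $P_{L,\ff}/L_{\ff,0+} = \mc G_\ff^\circ(k_F)$, hence stabilize $\sigma$; and they preserve $J$ because the linear parts of affine roots in $J \subset R(L,S)$ vanish on $\mh A_T$ by the orthogonality established in Lemma \ref{lem:4.7}. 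Thus $T(J,\sigma) \subset W_L(J,\sigma) \cap W_\af$.

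For the reverse inclusion, I would take $w \in W_L(J,\sigma) \cap W_\af$ and analyse its action on $\mh A_T$. Since $w \in W_L$, its image under $W_L \twoheadrightarrow W(L,S)$ is in the finite Weyl group of $L$; by Lemma \ref{lem:4.7}, $R(L,S)$ is orthogonal to $R(G_\sigma,S) \supset R_\sigma$, so $W(L,S)$ acts trivially on $\mh A_T$. Consequently $w$ acts on $\mh A_T$ by a pure translation. Decomposing $w = v\omega$ according to $W(J,\sigma) = W_\af(J,\sigma) \rtimes \Omega(J,\sigma)$ and using $v, W_\af(J,\sigma) \subset W_\af$, we get $\omega \in \Omega(J,\sigma) \cap W_\af$. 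Writing $v = t \cdot u$ with $t \in T(J,\sigma)$ and $u \in W(R_\sigma)$ via part (a), the element $u\omega$ must act on $\mh A_T$ as the identity; since $W(R_\sigma)$ acts faithfully on $\mh A_T$ (it is the finite Weyl group of the affine Weyl group acting there), and $\Omega(J,\sigma)$ permutes the fundamental alcove for $W_\af(J,\sigma)$, the only element of $W_\af \cap (\Omega(J,\sigma) \cdot W(R_\sigma))$ acting trivially on $\mh A_T$ is the identity. Hence $u = \omega = 1$ and $w = t \in T(J,\sigma)$.

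The main obstacle will be the last step: rigorously justifying that the ``diagram-automorphism part'' $\omega \in \Omega(J,\sigma) \cap W_\af$ of an element acting trivially on the relevant direction must be trivial. This will require invoking the affine Coxeter structure of $W_\af(J,\sigma)$ from \cite[\S 7]{Mor1} together with the fact that simple affine roots form a fundamental domain for the action of $W_\af$, so that elements of $W_\af$ permuting simple affine roots of $W_\af(J,\sigma)$ can only be trivial when combined with the triviality of their translation component on $\mh A_T$.
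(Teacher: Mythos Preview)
Your part (a) is correct but needlessly circuitous. The paper simply observes that $W_\af(J,\sigma)$ is by construction (from \cite[\S 7]{Mor1}) an affine Coxeter group, so the decomposition $W_\af(J,\sigma)=W(R_\sigma)\ltimes T(J,\sigma)$ is immediate from the general structure theory of affine Weyl groups \cite[Ch.~VI.2]{Bou}; no passage through Proposition~\ref{prop:4.8} or $G_\sigma$ is needed.

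For part (b) your approach diverges from the paper's and contains a genuine gap you have \emph{not} flagged, in the forward inclusion. You assert that ``$T(J,\sigma)$ consists of translations along $D(J)^\perp$'' and therefore can be represented in $Z^\circ(L)$. This conflates two distinct notions: elements of $T(J,\sigma)$ act as translations on $D(J)^\perp$ by definition, but as elements of $W$ they are words in the $v(\alpha,J)=w_{J\cup\alpha}w_J$, which in general act nontrivially on $\mathrm{span}\,D(J)$. Knowing only the action on $D(J)^\perp$ does not tell you that $T(J,\sigma)$ lies in $W_L$, let alone in $Z^\circ(L)$. The paper closes this gap by identifying $T(J,\sigma)$ concretely: the general theory of affine Weyl groups shows $T(J,\sigma)$ is generated by the coroots $\alpha^\vee(\varpi_F^{-1})$ with $\alpha\in R_\sigma$, and since the construction of $\Delta_{\ff,\af,\sigma}$ forces $R_\sigma$ to consist of roots of $(G,Z(L)^\circ)$, these honest translations in $W$ lie in $Z^\circ(L)\subset L$.

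For the reverse inclusion, the paper's route is shorter and avoids your obstacle entirely. Rather than analysing the action on $D(J)^\perp$, it invokes Lemma~\ref{lem:3.2}(b) (applied to $L$, where $\ff_L$ is minimal) together with \eqref{eq:2.10} to conclude directly that $W_L(J,\sigma)\cap W_\af$ is a group of translations in $W$ on the full apartment $\mh A_S$, in directions lying in $X_*(Z^\circ(L))\otimes_\Z\R$. Your argument only shows that such elements act by translation on $D(J)^\perp$, which is strictly weaker and is precisely why you cannot rule out a nontrivial $\omega\in\Omega(J,\sigma)\cap W_\af$ acting trivially there. The missing ingredient is Lemma~\ref{lem:3.2}(b): once you know the elements are translations on $\mh A_S$, the passage through $\Omega(J,\sigma)$ and its action on alcoves in $D(J)^\perp$ becomes unnecessary.
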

\begin{proof}
(a) This is an aspect of the general structure of affine Weyl groups, see for example 
\cite[Chapitre VI.2]{Bou}.~The reference also shows that the group of translations 
$T(J,\sigma)$ is generated by the finite root system from this setup.

(b) By Lemma \ref{lem:3.2} (b) and \eqref{eq:2.10}, $W_L (J,\sigma) \cap W_\af$ is a group of
translations. By part (a) for $W_\af$, the lattice of translations $T(J,\sigma)$
in $W_\af$ is generated by the elements $\alpha^\vee (\varpi_F^{-1})$ with $\alpha$ in a
finite root system $R_\sigma$. As $\alpha^\vee$ takes values in $S$, all translations in $W_\af$ 
can be represented by elements of $S$. We can also represent them 
in $X_* (S)$, if we identify it with $\{ t (\varpi_F^{-1}) : t \in X_* (S) \}$. Here it is 
convenient to use the inverse of a uniformizing element $\varpi_F$ of $\mf o_F$: this is 
compatible with \cite[Appendix A]{SolEnd} because $|\varpi_F^{-1}|_F > 1$.
If such an element $t(\varpi_F^{-1})$ belongs to $W_L (J,\sigma)$, then it translates 
$\mh A_S$ in a direction in $X_* (Z(L)) \otimes_\Z \R$. Hence it is orthogonal to $R (L,S)$,
which means that $t(\varpi_F^{-1}) \in Z^\circ (L)$. 

Conversely, the constructions of $S_{\ff,\af,\sigma}$ and $\Delta_{\ff,\af,\sigma}$ show that 
$R_\sigma$ consists of roots of $(G,Z(L)^\circ)$. There $\alpha^\vee (\varpi_F^{-1}) \in 
Z(L)^\circ$ for all $\alpha \in R_\sigma$, and $T(J,\sigma) \subset W_L (J,\sigma)$.
\end{proof}

By Theorem \ref{thm:3.6} and \cite[\S 8]{BuKu}, $\mc H (L,\hat P_{L,\ff},\hat \sigma)$ 
embeds in $\mc H (G,\hat P_\ff,\hat \sigma)$. We prefer to use the renormalized version
\begin{equation}\label{eq:3.1}
\mc H (L,\hat P_{L,\ff},\hat \sigma) \hookrightarrow \mc H (G,\hat P_\ff,\hat \sigma) 
\end{equation}
that respects parabolic induction, as in \cite[Condition 4.1 and Lemma 5.1]{SolComp}. 
The image of \eqref{eq:3.1}, however, does not depend on such a normalization.

\begin{lem}\label{lem:3.7}
\enuma{
\item Via \eqref{eq:3.1}, we have
\[
\mc H (L,\hat P_{L,\ff},\hat \sigma) \cap \mc H (W_\af (J,\sigma),q_\sigma) =
\C [T(J,\sigma)] = \C[ZW_L (J,\hat \sigma)] \cap \mc H (W_\af (J,\sigma),q_\sigma) .
\]
\item The conjugation action of $\Omega (J,\hat \sigma)$ on $\mc H (G,\hat P_\ff,\hat \sigma)$,
from Theorems \ref{thm:2.4} and \ref{thm:3.1}, stabilizes 
$\mc H (L,\hat P_{L,\ff},\hat \sigma)$ and $\C [ZW_L (J,\hat \sigma)]$. 
}
\end{lem}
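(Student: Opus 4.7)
The plan for part (a) is to attack the intersection by analyzing supports via the basis description in Theorem \ref{thm:3.1}. Any element $x$ in the intersection can be expanded in the basis $\{T_w : w \in W_\af(J,\sigma) \rtimes \Omega(J,\hat\sigma)\}$ of $\cH(G,\hat P_\ff,\hat\sigma)$. Membership in $\mc H(W_\af(J,\sigma),q_\sigma)$ forces the support to lie in $W_\af(J,\sigma)$. Membership in $\mc H (L,\hat P_{L,\ff},\hat \sigma)$ forces $x$ to be supported inside $L$; by Corollary \ref{cor:3.3} applied to $L$, this image is spanned by $\{T_w : w \in W_L(J,\hat\sigma)\}$. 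Hence $x$ is supported on indices in $W_L(J,\hat\sigma) \cap W_\af(J,\sigma)$. To identify this intersection with $T(J,\sigma)$, I would combine Lemma \ref{lem:5.9}(a), which gives $W_\af(J,\sigma) = W(R_\sigma) \ltimes T(J,\sigma)$, with the observation that any reflection in $W(R_\sigma)$ is associated to a root of $R_\sigma$, which is not constant on $\ff$ and hence not in $R(L,S)$, so cannot lie in $W_L$; coupled with Lemma \ref{lem:5.9}(b) this gives $W_\af(J,\sigma) \cap W_L = T(J,\sigma)$. Since $T(J,\sigma)$ is represented by elements of $Z^\circ(L)$, which fix $\hat\sigma$ (being central in $L$), we even have $T(J,\sigma) \subset W_L(J,\hat\sigma)$.

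To conclude the first equality, I need that the algebra structure on the intersection is the untwisted group algebra $\C[T(J,\sigma)]$. This follows from Theorem \ref{thm:3.1}: the 2-cocycle $\mu_{\hat\sigma}$ factors through $\Omega(J,\hat\sigma) = W(J,\hat\sigma)/W_\af(J,\sigma)$ and is therefore trivial on the subgroup $T(J,\sigma) \subset W_\af(J,\sigma)$, while the affine-Hecke-algebra relations degenerate to the group algebra relations on the translation sublattice (the $T_w$ for $w \in T(J,\sigma)$ close up by the argument just given, and the resulting subalgebra is commutative with no $q$-parameters because there are no reflections). For the second equality, the inclusion $T(J,\sigma) \subset ZW_L(J,\hat\sigma)$ follows because representatives in $Z^\circ(L)$ are central in $L$ and hence act on $\tau$ by a scalar, which places them in $L^2_\tau/L^1 = ZW_L(J,\hat\sigma)$ under the identification \eqref{eq:5.4} for $L$; this gives $\C[T(J,\sigma)] \subset \C[ZW_L(J,\hat\sigma)] \cap \mc H(W_\af(J,\sigma),q_\sigma)$. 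The reverse inclusion is trivial since $\C[ZW_L(J,\hat\sigma)] \subset \mc H(L,\hat P_{L,\ff},\hat\sigma)$, reducing us to the first equality already established.

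For part (b), the argument will use that $\Omega(J,\hat\sigma) \subset N_W(J)$, and the paragraph preceding \eqref{eq:2.11} notes that the inverse image of $N_W(J)$ in $N_G(S)$ normalizes $L$ and $P_{L,\ff}$; by definition elements of $\Omega(J,\hat\sigma)$ also stabilize $\hat\sigma$. Thus conjugation by any lift of $\omega \in \Omega(J,\hat\sigma)$ induces an algebra automorphism of $\mc H(L,\hat P_{L,\ff},\hat\sigma)$, showing the first stability. Since every algebra automorphism preserves the center, and the center of $\mc H(L,\hat P_{L,\ff},\hat\sigma) \cong \C[W_L(J,\hat\sigma),\mu^L_{\hat\sigma}]$ is $\C[ZW_L(J,\hat\sigma)]$ by \eqref{eq:3.10} applied to $L$, the second stability follows immediately.

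The delicate point is the first equality in (a): one must check that the multiplication of basis elements $T_t$, $T_{t'}$ with $t,t' \in T(J,\sigma)$ inside $\mc H(G,\hat P_\ff,\hat\sigma)$ really produces only a term $T_{tt'}$ (with no lower-order affine-Hecke contributions), so that the intersection is closed and isomorphic to the untwisted $\C[T(J,\sigma)]$. My plan is to transport this multiplication through the $L$-embedding: in $\mc H(L,\hat P_{L,\ff},\hat\sigma) \cong \C[W_L(J,\hat\sigma),\mu^L_{\hat\sigma}]$ the product of two basis elements indexed by $T(J,\sigma)$ has at most one term (up to a scalar), and pushing forward via \eqref{eq:3.1} shows the same holds in $\mc H(G,\hat P_\ff,\hat\sigma)$. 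Triviality of the resulting scalar reduces to triviality of $\mu^L_{\hat\sigma}$ on $T(J,\sigma)$, which can be verified directly using that central elements of $L$ act on $\hat\sigma$ by scalars, making the intertwiners of \eqref{eq:5.38} canonical and multiplicative on $Z^\circ(L)$-representatives.
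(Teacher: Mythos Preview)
Your proposal is correct and follows essentially the same route as the paper. For part (b) your argument is identical to the paper's: lifts of $\Omega(J,\hat\sigma)$ normalize $L$ and $\hat P_{L,\ff}$ and fix $\hat\sigma$, so conjugation stabilizes the image of \eqref{eq:3.1}, and then \eqref{eq:3.10} identifies the center.

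For part (a) there is a minor difference of emphasis. The paper dispatches the first equality in one line by recognizing the intersection as the maximal commutative subalgebra $\C[T(J,\sigma)]$ of the Bernstein presentation of $\mc H(W_\af(J,\sigma),q_\sigma)$, invoking Lemma~\ref{lem:5.9}; you instead do the support analysis explicitly and then verify the algebra structure by transporting through the $L$-embedding (your ``delicate point''). Your workaround is valid precisely because, by Lemma~\ref{lem:5.9}(b), $T(J,\sigma)$ is represented in $Z^\circ(L)$, so the double cosets $\hat P_{L,\ff} t \hat P_{L,\ff}$ multiply like group elements and the span of the corresponding $T_t$ really is a subalgebra isomorphic to $\C[T(J,\sigma)]$. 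For the second equality the paper argues that $\C[T(J,\sigma)]$ commutes with all of $\cH(L,\hat P_{L,\ff},\hat\sigma)$ (via Corollary~\ref{cor:3.3} and triviality of the cocycle on $W_\af(J,\sigma)$) and then appeals to Proposition~\ref{prop:5.4} and \eqref{eq:3.10} to land in $\C[ZW_L(J,\hat\sigma)]$; your route via $Z^\circ(L) \subset L^2_\tau$ and \eqref{eq:5.4} is a shade more direct and equally valid.
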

\begin{proof}(a) 
By Lemma \ref{lem:5.9}, the first intersection is precisely the maximal commutative subalgebra 
$\C [T (J,\sigma)]$ of the Bernstein presentation of $\mc H (W_\af (J,\sigma),q_\sigma)$. 
By Theorem \ref{thm:3.1}, the 2-cocycle $\mu_{\hat \sigma}$ is trivial on $\C[T(J,\sigma)]$,
thus $\C[T(J,\sigma)]$ commutes with 
$\mc H (L,\hat P_{L,\ff},\hat \sigma)$ by Corollary \ref{cor:3.3} and Lemma \ref{lem:2.2}. 
Then by Proposition \ref{prop:5.4}, it is 
already contained in the image of $\C [ZW_L (J,\sigma)]$.\\
(b) By the remark after \eqref{eq:2.4}, the inverse image of $N_W (W_J)$ in $N_G (S)$
normalizes $L$. Therefore, conjugation by elements of $\Omega (J,\hat \sigma)$ (which is 
well-defined by Theorem \ref{thm:3.1}) stabilizes the image of \eqref{eq:3.1}. Hence
conjugation by such elements also stabilizes the center of 
$\mc H (L,\hat P_{L,\ff},\hat \sigma)$, which by \eqref{eq:3.10} is precisely 
$\C [ZW_L (J,\hat \sigma)]$.
\end{proof}

By Lemma \ref{lem:3.7} (a), $\mc H (G,\hat P_\ff, \hat \sigma)$ 
contains the affine Hecke algebra\label{i:133}
\begin{equation}\label{eq:5.17}
\begin{aligned}
\mc H (G,\hat P_\ff, \hat \sigma)^\circ & := 
\mc H (W_\af (J,\sigma), q_\sigma) \C [ZW_L (J,\hat \sigma)] \\
& \: = \mc H (W_\af (J,\sigma),q_\sigma) \underset{\C [T (J,\sigma)]}{\otimes} 
\C [ZW_L (J,\hat \sigma)] \\
& \: =  \mc H (W (R_\sigma),q_\sigma) \otimes_\C \C [ZW_L (J,\hat \sigma)] .
\end{aligned}
\end{equation}
By Lemma \ref{lem:3.7} (b), the action of $\Omega (J,\hat \sigma)$ on
$\mc H (G,\hat P_\ff, \hat \sigma)$ stabilizes $\mc H (G,\hat P_\ff, \hat \sigma)^\circ$.

We now introduce a new facet $\mf f_\sigma \subset \ff \cap \mc B (\mc G_\ad, F)$ as follows.
We use the notation $\ff = \prod_i \ff^i$, where $i$ runs through an indexing set for the
simple factors of $G$. For each simple factor $G^i$ of $G$ such that 
$\Delta_{\ff,\af,\sigma}$ contains elements from $G^i$, we denote by $\ff_\sigma^i$ the facet
of $\mc B (G^i)$ that is contained in $C_0$ and lies in the zero set of $\Delta_\af
\setminus \Delta_{\ff,\af, \sigma}$. For each simple factor $G^i$ of $G$ such that
$\Delta_{\ff,\af,\sigma}$ contains no elements from $G^i$, we set $\mf f_\sigma^i := \ff^i$. 
Finally, we define $\mf f_\sigma := \prod_i \mf f_\sigma^i$.

The finite Weyl group from Lemma \ref{lem:5.9}(a) arises as the $W_\af (J,\sigma)$-stabilizer
of a special vertex of $\mf f_\sigma$. For the simple factors $G^i$ that do not contribute to 
$W_\af (J,\sigma)$, this does not pose any condition on the vertex, so we make it more
explicit. If $G^i$ contributes to $W_\af (J,\sigma)$, let $y_\sigma^i$ be a special vertex of
$\ff_\sigma^i$; otherwise, let $y_\sigma^i$ be the barycentre of $\ff^i$. Then $y_\sigma = 
\prod_i y_\sigma^i$ is a point of the building $\mc B (\mc G_\ad,F)$. The set
\[
\Delta_{\af,\sigma,y_\sigma} = 
\{ \alpha \in \Delta_{\ff,\af,\sigma} : \alpha (y_\sigma) = 0 \}
\]
is a basis of a finite root system whose Weyl group $W_\af (J,\sigma)_{y_\sigma}$ is 
isomorphic to $W_\af (J,\sigma) / T(J,\sigma)$.~It follows that $\Delta_{\sigma,y_\sigma} := D (\Delta_{\af,\sigma,y_\sigma})$ 
is a basis for a root subsystem $R_\sigma \subset R(G,S)$ with Weyl group $W (R_\sigma) 
\cong W_\af (J,\sigma)_{y_\sigma}$.~This $R_\sigma$ can be identified with the root
system from the proof of Lemma \ref{lem:5.9}.
Let $R(G,S)^+$ be a positive system in $R(G,S)$ containing 
$\Delta_{\sigma,y_\sigma}$. Using $R(G,S)^+$, we can define standard parabolic subgroups of 
$\mc G$ containing $Z_{\mc G}(\mc S)$. Indeed, let $\mc Q \subset \mc G$ be the parabolic
$F$-subgroup with Levi factor $\mc L$ and $R(G,S)^+ \subset R(Q,S)$. By Lemma
\ref{lem:3.4} and \cite[Lemma 3.2 and Proposition 3.3]{Oha2}, there are canonical isomorphisms
\begin{equation}\label{eq:3.8}
\II_Q^G \big( \ind_{L^1}^L (\tau_1) \big) \cong \II_Q^G \big( \ind_{\hat P_{L,\ff}}^L 
(\hat \sigma) \big) \cong \ind_{\hat P_\ff}^G (\hat \sigma),
\end{equation}
which induce canonical algebra isomorphisms
\begin{equation}\label{eq:3.7}
\mc H (G,\hat P_\ff,\hat \sigma) \cong \End_G \big( \ind_{\hat P_\ff}^G 
(\hat \sigma) \big) \cong \End_G \big( \II_Q^G (\ind_{L^1}^L (\tau_1)) \big).
\end{equation}
The algebra $\End_G \big( \II_Q^G (\ind_{L^1}^L \tau_1) \big)$ in \eqref{eq:3.7} was studied in \cite{SolEnd}, 
and later compared with $\mc H (G,P_\ff, \sigma)$ and with $\mc H (G,\hat P_\ff,
\hat \sigma)$ in \cite{Oha2}.~Recall from Theorem \ref{thm:3.1} that $\mc H (G,P_\ff,
\sigma)$ and $\mc H (G,\hat P_\ff,\hat \sigma)$ have the same underlying affine Weyl
group and the same $q$-parameters.
The group $W(J,\sigma) / \Omega_\ff^0$ acts naturally on the finite root system 
$R_\sigma$ underlying the affine root system with basis $\Delta_{\ff,\af,\sigma}$, 
with the subgroup of translations $X_\sigma$ acting trivially. The group
\begin{equation}\label{eq:5.1}
W(G,L)_{\hat \sigma} \cong W(J,\hat \sigma) / W_L (J,\hat \sigma) 
\end{equation}
from Lemma \ref{lem:5.8} (b) acts naturally on $R_\sigma$, and this action comes
from the action of $W(\mc G,\mc S)$ on $R(G,S)$.
The Weyl group $W(R_\sigma) \cong W_\af (J,\sigma)_{y_\sigma}$ acts simply transitively 
on the collection of positive systems in $R_\sigma$. Let $H_{\Delta,\hat \sigma}$ 
be the stabilizer of $\Delta_{\sigma,y_\sigma}$ in $W(J,\hat \sigma)$ and let\label{i:134} 
$\Gamma_{\hat \sigma} := H_{\Delta,\hat \sigma} / W_L (J,\hat \sigma)$ be the stabilizer of $\Delta_{\sigma,y_\sigma}$ in \eqref{eq:5.1}. Then 
$H_{\Delta,\hat \sigma}$ is complementary to $W_\af (J,\sigma)_{y_\sigma}$ in 
$W(J,\hat \sigma)$, and by \cite[(3.2)]{SolEnd} we have
\begin{equation}\label{eq:3.9}
W(G,L)_{\hat \sigma} \cong W(R_\sigma) \rtimes \Gamma_{\hat \sigma} .
\end{equation}
Here $\Gamma_{\hat \sigma}$ is the stabilizer of the positive system $R_\sigma^+ = 
R(G,S)^+ \cap R_\sigma$ or equivalently of the basis 
$\Delta_\sigma = D (\Delta_{\sigma,y_\sigma})$ of $R_\sigma$.

Let $\mf s$ denote the inertial equivalence class of $[L,\tau]$ for $\Rep (G)$. 
Like $\ind_{\hat{P}_\ff}^G (\hat \sigma)$, the $G$-representation \label{i:135} 
$\Pi_{\mf s} := \II_Q^G (\ind_{L^1}^L (\tau))$ is a projective generator of $\Rep (G)_{[L,\tau]}$ (see for example \cite{Ren}).~In fact it 
is isomorphic to a finite direct sum of copies of 
$\II_Q^G (\ind_{L^1}^L (\tau_1)) \cong \ind_{\hat P_\ff}^G (\hat \sigma)$. 
Such an isomorphism can be constructed as follows. Write
\[
\tau = \bigoplus\nolimits_{l \in L / L_\tau^3} \, l \cdot \tau_1 ,
\]
where $L_\tau^3$ is the stabilizer of $V_{\tau_1} \subset V_\tau$ in $L$, and $l$ runs 
through a set of representatives for $L / L^\tau$. Translation by $l$ gives an isomorphism 
$\ind_{L^1}^L (l \cdot \tau_1) \isom \ind_{L^1}^L (\tau_1)$, which induces to an isomorphism
\begin{equation}\label{eq:5.2}
\Pi_{\mf s} \cong  
\bigoplus\nolimits_{l \in L / L_\tau^3} \, \II_Q^G (\ind_{L^1}^L (\tau_1)) .
\end{equation}
Via \eqref{eq:5.2} and \eqref{eq:3.7}, we embed $\mc H (G,\hat P_\ff,\hat \sigma)$
diagonally in $\End_G (\Pi_{\mf s})$. In \cite[Proposition 2.2]{SolEnd}, it is shown how 
$\C [\mathfrak{X}_{\nr} (L,\tau), \natural]$ embeds in $\End_G (\Pi_{\mf s})$ for a certain
2-cocycle $\natural$.~Combined with \cite[Lemma 2.1]{SolEnd}, one deduces that
$\Irr (L/L_\tau^3)$ is a subgroup of $\mathfrak{X}_{\nr} (L,\tau)$, maximal for the property 
that its group algebra embeds naturally in $\C [\mathfrak{X}_{\nr} (L,\tau), \natural]$. 
By \eqref{eq:5.2}, we have
\begin{equation}\label{eq:5.25}
\Pi_{\mf s}^{\Irr (L/L_\tau^3)} \cong \II_Q^G (\ind_{L^1}^L (\tau_1)) .
\end{equation}
On the other hand, the multiplication action of 
$\mc O (\mathfrak{X}_{\nr} (L)) \cong \C [L / L^1]$ on $\ind_{L^1}^L (\tau)$ gives embeddings
\[
\C [L/L^1] \hookrightarrow \End_L (\ind_{L^1}^L (\tau)) \hookrightarrow \End_G (\Pi_{\mf s}).
\]
The Weyl group of the root system \label{i:137}
\begin{equation}\label{eq:5.35}
R_{\sigma, \tau} = \{ \alpha \in R_\sigma : s_\alpha (\tau) \cong \tau \}
\end{equation}
stabilizes $\tau$. The stabilizer of $\tau$ in $W(R_\sigma) \rtimes \Gamma_{\hat \sigma}$ 
acts on $R_{\sigma,\tau}$, and can be written as
\begin{equation}\label{eq:5.36}
\big( W(R_\sigma) \rtimes \Gamma_{\hat \sigma} \big)_\tau = 
W (R_{\sigma,\tau}) \rtimes \Gamma_{\hat \sigma,\tau} ,
\end{equation}
where $\Gamma_{\hat \sigma,\tau}$\label{i:139} is the stabilizer of the set of 
positive roots in $R_{\sigma,\tau}$. Along the covering
\begin{equation}\label{eq:5.7}
\mathfrak{X}_{\nr} (L) \to \Irr (L)_{(\hat P_{L,\ff},\hat \sigma)} : 
\chi \mapsto \chi \otimes \tau ,
\end{equation}
every element of $W (R_\sigma) \rtimes \Gamma_{\hat \sigma}$ can be 
lifted to a diffeomorphism of $\mathfrak{X}_{\nr} (L)$,
and the elements that stabilize $\tau$ can be lifted to Lie group automorphisms of
$\mathfrak{X}_{\nr} (L)$. This gives rise to a finite group $W(L,\tau,\mathfrak{X}_{\nr} (L))$ of diffeomorphisms
of $\mathfrak{X}_{\nr} (L)$; see \cite[\S 3]{SolEnd}. It fits into a short exact sequence
\begin{equation}\label{eq:5.3}
1 \to \mathfrak{X}_{\nr} (L,\tau) \to W(L,\tau,\mathfrak{X}_{\nr} (L)) \to 
W(R_\sigma) \rtimes  \Gamma_{\hat \sigma} \to 1
\end{equation}
\label{i:140}and contains a subgroup canonically isomorphic to 
$\big( W(R_\sigma) \rtimes \Gamma_{\hat \sigma} \big)_\tau$.

\begin{prop}\label{prop:5.5}
\enuma{
\item We have the following identifications as vector spaces
\begin{multline*}
\End_G (\Pi_{\mf s}) = \bigoplus\nolimits_{l \in L / L_\tau^3} \, \C \{l\} \otimes 
\mc H (G,\hat P_\ff, \hat \sigma) \otimes \C [\Irr (L/L_\tau^3)] \\
= \mc O (\mathfrak{X}_{\nr} (L)) \otimes \mc H (W(R_\sigma), q_\sigma) \otimes 
\bigoplus\limits_{\gamma \in H_{\Delta,\hat \sigma} / W_L (J,\hat \sigma)} \C T_\gamma 
\otimes \C [\mathfrak{X}_{\nr} (L,\tau), \natural] .
\end{multline*}
\item The linear subspace $\mc O (\mathfrak{X}_{\nr} (L)) \otimes \mc H (W(R_\sigma), q_\sigma)$
is an affine Hecke algebra, and the conjugation action of $\mathfrak{X}_{\nr} (L,\tau) \subset
\C [\mathfrak{X}_{\nr} (L,\tau),\natural]^\times / \C^\times$ on it is given by translations on 
$\mc O (\mathfrak{X}_{\nr} (L))$.
}
\end{prop}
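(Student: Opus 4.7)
The plan is to derive both identifications in part (a) by combining the structural decomposition \eqref{eq:5.2} of $\Pi_{\mf s}$ with the algebra presentation of $\mc H(G, \hat P_\ff, \hat \sigma)$ from Theorem \ref{thm:3.1}. For the first identification, I would apply \eqref{eq:5.2} to write $\Pi_{\mf s} \cong \bigoplus_{l \in L/L_\tau^3} \II_Q^G(\ind_{L^1}^L(\tau_1))$, and use \eqref{eq:3.7}--\eqref{eq:3.8} to identify each summand's endomorphism ring with $\mc H(G, \hat P_\ff, \hat \sigma)$, so that $\End_G(\Pi_{\mf s})$ becomes a $|L/L_\tau^3| \times |L/L_\tau^3|$ matrix algebra over $\mc H(G, \hat P_\ff, \hat \sigma)$. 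The row index contributes the factor $\C\{l\}$, while the shift $l'l^{-1}$ between row and column is implemented by the characters $\chi \in \Irr(L/L_\tau^3) \subset \mathfrak{X}_{\nr}(L,\tau)$ acting on $\Pi_{\mf s}$ as noted above \eqref{eq:5.25}: Pontryagin duality between $L/L_\tau^3$ and $\Irr(L/L_\tau^3)$ then produces the claimed $\C[\Irr(L/L_\tau^3)]$ factor.

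For the second identification, I would decompose $\mc H(G, \hat P_\ff, \hat \sigma)$ via Theorem \ref{thm:3.1} as $\mc H(W_\af(J,\sigma), q_\sigma) \rtimes \C[\Omega(J, \hat \sigma), \mu_{\hat \sigma}]$, then rewrite the affine Hecke part using Lemma \ref{lem:5.9}(a) through its Bernstein presentation as $\mc H(W(R_\sigma), q_\sigma) \otimes \C[T(J,\sigma)]$. The remaining task is to regroup $\C[T(J,\sigma)] \otimes \C[\Omega(J, \hat \sigma), \mu_{\hat \sigma}] \otimes \C[\Irr(L/L_\tau^3)]$ as $\mc O(\mathfrak{X}_{\nr}(L)) \otimes \bigoplus_{\gamma \in \Gamma_{\hat \sigma}} \C T_\gamma \otimes \C[\mathfrak{X}_{\nr}(L,\tau), \natural]$. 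To do this I would use the splitting $W(J, \hat \sigma) = W(R_\sigma) \rtimes H_{\Delta, \hat \sigma}$ implicit in \eqref{eq:3.9} to separate $\Omega(J, \hat \sigma)$ into a translation part (landing in $W_L(J, \hat \sigma)$) and a rotation part (namely $\Gamma_{\hat \sigma} = H_{\Delta, \hat \sigma}/W_L(J, \hat \sigma)$). The translation part together with $\C[T(J,\sigma)]$ and $\C[\Irr(L/L_\tau^3)]$ assembles into $\C[L/L^1] = \mc O(\mathfrak{X}_{\nr}(L))$ via the chain $L^1 \subset L_\tau^2 \subset L_\tau^3 \subset L$ and the identifications \eqref{eq:5.4} and \eqref{eq:5.40}, while the rotation part yields the middle factor $\bigoplus_{\gamma} \C T_\gamma$.

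For part (b), I would verify that $\mc O(\mathfrak{X}_{\nr}(L)) \otimes \mc H(W(R_\sigma), q_\sigma)$ is closed under multiplication inside $\End_G(\Pi_{\mf s})$, and then recognize the product as the Bernstein presentation of the affine Hecke algebra $\mc H\bigl(W(R_\sigma) \ltimes (L/L^1), q_\sigma\bigr)$; this is essentially what is established in \cite[\S 2--4]{SolEnd}, so I would cite that directly. The commutation rules between $W(R_\sigma)$ and $\mc O(\mathfrak{X}_{\nr}(L))$ come from the $W(R_\sigma)$-action on $\mathfrak{X}_{\nr}(L)$ via \eqref{eq:5.1}, \eqref{eq:3.9} and the Demazure--Lusztig relations dictated by the parameters $q_\sigma$. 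For the final statement in (b), each $\chi \in \mathfrak{X}_{\nr}(L,\tau)$ acts on $\Pi_{\mf s}$ through a chosen isomorphism $\chi \otimes \tau \cong \tau$ lifted through $\II_Q^G \ind_{L^1}^L(-)$; conjugation by this element then sends a generator $l \in L/L^1$ of $\mc O(\mathfrak{X}_{\nr}(L))$ to its $\chi$-translate, while commuting with $\mc H(W(R_\sigma), q_\sigma)$ because the latter is supported in the finite-Weyl-group direction of the Iwahori--Weyl group, orthogonal to $L/L^1$.

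The main obstacle I anticipate is reconciling the two cocycles: matching Morris's $\mu_{\hat \sigma}$ from \eqref{eq:5.39} with the cocycle $\natural$ of \cite[Proposition 2.2]{SolEnd} (defined via intertwiners between $\chi \otimes \tau$ and $\tau$) requires carefully tracking the normalizations of the basis elements on both sides. I expect them to agree at worst up to a coboundary, which suffices for the stated vector-space identifications, but verifying this precisely---together with the compatibility of the several intermediate decompositions---is the most delicate point in the argument.
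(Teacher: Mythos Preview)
Your approach to part (a) is essentially the same as the paper's: the matrix decomposition from \eqref{eq:5.2} yields the first identification, and the second comes from unpacking $\mc H(G,\hat P_\ff,\hat\sigma)$ via Theorem~\ref{thm:3.1} and regrouping along the chain $L^1\subset L_\tau^2\subset L_\tau^3\subset L$. The paper uses \eqref{eq:5.33} and the identification $CW_L(J,\hat\sigma)=L_\tau^3/L^1$ where you invoke \eqref{eq:5.4} and \eqref{eq:5.40}, but the bookkeeping is the same.

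For part (b) there is a genuine gap. You propose to ``cite \cite[\S 2--4]{SolEnd} directly'' to conclude that $\mc O(\mathfrak{X}_{\nr}(L))\otimes\mc H(W(R_\sigma),q_\sigma)$ is an affine Hecke algebra. But \cite{SolEnd} builds $\End_G(\Pi_{\mf s})$ out of its own intertwining operators $\mc T_{s_\alpha}$, not Morris's elements $T_{s_\alpha}$ from $\mc H(G,\hat P_\ff,\hat\sigma)$. A~priori these live in $\End_G(\Pi_{\mf s})$ as different elements, and Morris's algebra only provides cross relations between $T_{s_\alpha}$ and the \emph{smaller} commutative subalgebra $\C[ZW_L(J,\hat\sigma)]\cong\mc O(\mathfrak{X}_{\nr}(L)/\mathfrak{X}_{\nr}(L,\tau))$. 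Nothing in your outline explains why the Bernstein--Lusztig cross relations for $T_{s_\alpha}$ extend to the full $\mc O(\mathfrak{X}_{\nr}(L))$. The paper handles this by passing to $\C(\mathfrak{X}_{\nr}(L))\otimes_{\mc O(\mathfrak{X}_{\nr}(L))}\End_G(\Pi_{\mf s})$, comparing with \cite[Corollary~5.8]{SolEnd} to show that $T_{s_\alpha}\in\C(\mathfrak{X}_{\nr}(L))\oplus\C(\mathfrak{X}_{\nr}(L))\mc T_{s_\alpha}$, and then reading off the cross relations from the explicit formula \cite[(6.25)]{SolEnd} relating $T_{s_\alpha}$ to $\mc T_{s_\alpha}$. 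This comparison is the actual content of part (b), not something that can be cited away. Your remark that reconciling the two constructions is ``the most delicate point'' is correct, but the delicacy shows up already here in (b), not only at the level of the cocycles $\mu_{\hat\sigma}$ and $\natural$. Similarly, your claim that $\chi\in\mathfrak{X}_{\nr}(L,\tau)$ ``commutes with $\mc H(W(R_\sigma),q_\sigma)$'' because the latter is ``orthogonal to $L/L^1$'' is not justified by support considerations alone; the paper instead appeals to \cite[(5.16) and Corollary~5.18]{SolEnd}.
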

\begin{proof}
(a) By \eqref{eq:5.2}, we have 
$\End_G (\Pi_{\mf s}) \cong 
M_{[L : L_\tau^3]} (\C) \otimes_\C \mc H (G,\hat P_\ff,\hat \sigma)$. The elements $l \in L / L_\tau^3$ permute the different copies in \eqref{eq:5.2}, and by
\cite[\S 2]{SolEnd} so do the elements of $\Irr (L/L_\tau^3)$. 
Furthermore, by \cite[\S 5.1]{SolEnd},
\[
\bigoplus\nolimits_{l \in L / L_\tau^3} \, \C \{l\} \otimes 
\C [\mathfrak{X}_{\nr} (L,\tau),\natural] \; \subset \;
\mc O (\mathfrak{X}_{\nr} (L)) \otimes \C [\mathfrak{X}_{\nr} (L,\tau),\natural] 
\]
embeds in $\End_G (\Pi_{\mf s})$.
This establishes the first equality of vector spaces. By \eqref{eq:5.1}, \eqref{eq:3.9} and
Theorem \ref{thm:3.1}, we deduce that (as vector spaces)
\[
\mc H (G,\hat P_\ff,\hat \sigma) = \C [W_L (J,\hat \sigma)] \otimes 
\mc H (W(R_\sigma),q_\sigma) \otimes \bigoplus\nolimits_{\gamma \in H_{\Delta,\hat \sigma} / 
W_L (J,\hat \sigma)} \C T_\gamma .
\]
We also note that 
\[
\bigoplus\limits_{l \in L / L_\tau^3} \C \{l\} \otimes \C [CW_L (J,\hat \sigma)] \cong
\bigoplus\limits_{l \in L / L_\tau^3} \C \{l\} \otimes \C [L_\tau^3 / L^1] \cong
\C [L / L^1] = \mc O (\mathfrak{X}_{\nr} (L)) .
\]
It follows from \eqref{eq:5.33} that, also as vector spaces,
\begin{align*}
\C [ W_L (J,\hat \sigma) / CW_L (J,\hat \sigma) ] \otimes \C [\Irr (L/L_\tau^3)] & \cong 
\C [ \Irr (L_\tau^3 / L_\tau^2)] \otimes \C [\Irr (L / L_\tau^3)] \\ 
& \cong \C [\Irr (L / L_\tau^2)] \cong \C [\mathfrak{X}_{\nr} (L,\tau)] .
\end{align*}
These observations imply the second equality of vector spaces.\\
(b) The cross relations between $\mc O (\mathfrak{X}_{\nr} (L) / \mathfrak{X}_{\nr} (L,\tau))$
and $T_{s_\alpha}$ can be found for instance in \cite[Definition 1.11]{SolSurv}. These are
also multiplication relations in 
\[
\mc H (W_\af (J,\sigma),q_\sigma) \mc O (\mathfrak{X}_{\nr} (L) / \mathfrak{X}_{\nr} (L,\tau)),
\]
and they show that $T_{s_\alpha}$ (for $\alpha \in R_\sigma$) commutes with 
$\mc O (\mathfrak{X}_{\nr} (L) / \mathfrak{X}_{\nr} (L,\tau))^{s_\alpha}$. Comparing these with 
the multiplication relations
in $\End_G (\Pi_{\mf s})$ from \cite[Corollary 5.8]{SolEnd}, we deduce that the image of $T_{s_\alpha}$ in $\C (\mathfrak{X}_{\nr} (L)) \otimes_{\mc O (\mathfrak{X}_{\nr} (L))} 
\End_G (\Pi_{\mf s})$ lies in 
$\C (\mathfrak{X}_{\nr} (L)) \oplus \C (\mathfrak{X}_{\nr} (L)) \mc T_{s_\alpha}$, where $\mc T_{s_\alpha}$ is as in \cite[\S 5]{SolEnd}, and it acts on 
$\mc O (\mathfrak{X}_{\nr} (L))$ just 
like $s_\alpha$. The cross relations for $T_{s_\alpha}$ can be deduced from the
expression of $T_{s_\alpha}$ in terms of $\mc T_{s_\alpha}$ (see \cite[(6.25)]{SolEnd}).

Since the action of $W(R_\sigma)$ on $\mathfrak{X}_{\nr} (L) / \mathfrak{X}_{\nr} (L,\tau)$ 
lifts canonically to $\mathfrak{X}_{\nr} (L)$, the cross relations for $T_{s_\alpha}$ lift to 
$\mc O (\mathfrak{X}_{\nr} (L))$. Hence
$\mc O(\mathfrak{X}_{\nr} (L))$ and the $T_{s_\alpha}$ generate an affine Hecke algebra in the sense
of \cite[Definition 1.11]{SolSurv}.~The conjugation action of $\mathfrak{X}_{\nr} (L,\tau)$ is given 
by \cite[(5.16) and Corollary 5.18]{SolEnd}.
\end{proof}

The group $\Irr (L / L_\tau^3)$ is embedded in $\End_G (\Pi_{\mf s})$, 
hence has two commuting actions on that algebra: by left and right 
multiplications. It follows from \eqref{eq:5.25} that
\begin{equation}\label{eq:5.5}
\mc H (G,\hat P_\ff,\hat \sigma) = \End_G \big( \II_Q^G (\ind_{L^1}^L (\tau_1)) \big) =
\End_G (\Pi_{\mf s})^{\Irr (L/L_\tau^3) \times \Irr (L/L_\tau^3)} .
\end{equation}
Since $\mc O (\mathfrak{X}_{\nr} (L) / \Irr (L/L_\tau^3)) \cong \C [L_\tau^3 / L^1]$, 
this is consistent with Proposition \ref{prop:5.5}. 

For an open subset $U \subset \mathfrak{X}_{\nr} (L)$, we define
\begin{equation}
\End_G(\Pi_{\mf s})^{an}_U := \End_G (\Pi_{\mf s}) \otimes_{\mc O(\mathfrak{X}_{\nr} (L))} C^{an}(U) . 
\end{equation}
If $U$ is $W(L,\tau,\mathfrak{X}_{\nr} (L))$-stable, then this is an algebra, because it can be
realized as 
\begin{equation}\label{eq:5.6}
\End_G(\Pi_{\mf s})^{an}_U = \End_G (\Pi_{\mf s}) \otimes_{\mc O(\mathfrak{X}_{\nr} (L))^{
W(L,\tau,\mathfrak{X}_{\nr} (L))}} C^{an}(U)^{W(L,\tau,\mathfrak{X}_{\nr} (L))},
\end{equation}
and $\mc O(\mathfrak{X}_{\nr} (L))^{W(L,\tau,\mathfrak{X}_{\nr} (L))}$ is central in $\End_G(\Pi_{\mf s})$. 

Recall that $\C [L_\tau^3 / L^1] = \C [CW_L (J,\hat \sigma)]$ is a maximal commutative 
subalgebra of $\mc H (G,\hat P_\ff, \hat \sigma)$.
For $\cH (G,\hat P_\ff, \hat \sigma)$, analytic localization can be defined similarly as in
\eqref{eq:5.34}, and there is a variant with $\Irr (CW_L (J,\hat \sigma)) = 
\Irr (L^3_\tau / L^1)$ instead of $\Irr (ZW_L (J,\hat \sigma)) = \Irr (L^2_\tau / L^1)$. 
In the situation where we have a $W(L,\tau,\mathfrak{X}_{\nr} (L))$-stable $U$, these two versions of
analytic localization agree by \eqref{eq:5.8}, i.e.~we have 
\[
\cH (G,\hat P_\ff, \hat \sigma)^{an}_{U |_{L^3_\tau}} \cong  
\cH (G,\hat P_\ff, \hat \sigma)^{an}_{U |_{L^2_\tau}} .
\]
Let $U_\tau$ be a neighborhood of 1 in $\mathfrak{X}_{\nr} (L)$, which covers $\Irr (L)_{\mf s_L}$ 
via the operation $\otimes \tau$ from \eqref{eq:5.7}.
(In \cite{SolEnd}, this $U_\tau$ is called $U_u$.) 
We assume that $U_\tau$ satisfies \cite[Condition 6.3]{SolEnd}, which means that $U_\tau$ 
is sufficiently small and $w U_\tau \cap U_\tau=\varnothing$ whenever 
$w \in W(L,\tau,\mathfrak{X}_{\nr} (L))$ and 
$w 1 \neq 1$. We write $U = W(L,\tau,\mathfrak{X}_{\nr} (L)) U_\tau$, such that
\[
C^{an}(U) = \bigoplus\limits_{w \in W(L,\tau,\mathfrak{X}_{\nr} (L)) / 
(L,\tau,\mathfrak{X}_{\nr} (L))_1} 1_{w U_\tau} C^{an}(U) ,
\]
where $1_X$ denotes the indicator function of a set $X$.

\begin{lem}\label{lem:5.6}
For $w_1, w_2 \in W(L,\tau,\mathfrak{X}_{\nr} (L))$, there is a canonical isomorphism of vector spaces 
$1_{w_1 U_\tau} \End_G (\Pi_{[L,\tau]})_{U}^{an} 1_{w_2 U_\tau} \cong 
1_{w_1 U_\tau |_{L^3_\tau}} \mc H (G,\hat P_\ff, \hat \sigma)_{U |_{L_\tau^3}}^{an}
1_{w_2 U_\tau |_{L^3_\tau}}$.
\end{lem}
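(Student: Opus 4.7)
The proof will compare both sides through the decomposition of $\End_G(\Pi_{\mf s})$ from Proposition \ref{prop:5.5} and the characterization of $\mc H(G,\hat P_\ff,\hat \sigma)$ as the two-sided $\Irr(L/L_\tau^3)$-invariants in \eqref{eq:5.5}. My first step is to observe, using Lemma \ref{lem:3.7} and the discussion surrounding \eqref{eq:3.1} and \eqref{eq:5.17}, that the embedding $\mc H(G, \hat P_\ff, \hat \sigma) \hookrightarrow \End_G(\Pi_{\mf s})$ sends the commutative subring $\C[CW_L(J,\hat\sigma)] \cong \mc O(\mathfrak{X}_{\nr}(L)/\Irr(L/L_\tau^3))$ canonically into $\mc O(\mathfrak{X}_{\nr}(L))$. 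Tensoring on the right along the inclusion $C^{an}(U|_{L_\tau^3}) \hookrightarrow C^{an}(U)$ then yields a natural map
\[
\mc H(G,\hat P_\ff,\hat\sigma)^{an}_{U|_{L_\tau^3}} \longrightarrow \End_G(\Pi_{\mf s})^{an}_U,
\]
and flatness of analytic localization makes it injective.

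The second step exploits the smallness condition on $U_\tau$. Since $\Irr(L/L_\tau^3) \subset \mathfrak{X}_{\nr}(L,\tau) \subset W(L,\tau,\mathfrak{X}_{\nr}(L))$ and no nontrivial character $\chi$ fixes $1 \in \mathfrak{X}_{\nr}(L)$, the subgroup $\Irr(L/L_\tau^3)$ acts freely on $U$, so the quotient map $U \to U|_{L_\tau^3}$ is a trivial finite covering with $|\Irr(L/L_\tau^3)|$ sheets. In particular, each $w_i U_\tau$ maps homeomorphically onto $w_i U_\tau|_{L_\tau^3}$, and the pullback of $1_{w_i U_\tau|_{L_\tau^3}}$ along $C^{an}(U|_{L_\tau^3}) \hookrightarrow C^{an}(U)$ equals $\sum_{\chi \in \Irr(L/L_\tau^3)} 1_{\chi w_i U_\tau}$.

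The main step is to show that the composed map
\[
1_{w_1 U_\tau|_{L_\tau^3}} \mc H(G,\hat P_\ff, \hat \sigma)^{an}_{U|_{L_\tau^3}} 1_{w_2 U_\tau|_{L_\tau^3}} \longrightarrow 1_{w_1 U_\tau} \End_G(\Pi_{\mf s})^{an}_U 1_{w_2 U_\tau}
\]
(obtained from the injection above followed by left- and right-multiplication by $1_{w_1 U_\tau}$ and $1_{w_2 U_\tau}$) is bijective. For this I would invoke the vector-space decomposition in Proposition \ref{prop:5.5}(a), which writes $\End_G(\Pi_{\mf s})$ as $\bigoplus_{l \in L/L_\tau^3} \C\{l\} \otimes \mc H(G,\hat P_\ff,\hat \sigma) \otimes \C[\Irr(L/L_\tau^3)]$, and that by Proposition \ref{prop:5.5}(b) the $\C[\Irr(L/L_\tau^3)]$-factor acts on $\mc O(\mathfrak{X}_{\nr}(L))$ precisely by the covering-group translations. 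After localization at $U$, the two idempotents together single out one summand from the $\Irr(L/L_\tau^3)$-indexing on the right and one from the $L/L_\tau^3$-indexing on the left, leaving exactly one copy of $\mc H(G,\hat P_\ff,\hat\sigma)^{an}_{U|_{L_\tau^3}}$ cut off by the corresponding idempotents upstairs, which supplies the desired inverse and thereby upgrades the map to a vector space isomorphism.

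The main obstacle is carefully keeping track of the two distinct labelings --- the left ``Morita'' factor $\C[L/L_\tau^3]$ from \eqref{eq:5.2} and the right factor $\C[\Irr(L/L_\tau^3)] \subset \C[\mathfrak{X}_{\nr}(L,\tau),\natural]$ --- inside $\End_G(\Pi_{\mf s})$, and verifying that, once localized, the commuting projector $1_{w_i U_\tau}$ interacts with the Morita factor in the way needed to cut down to a single ``matrix entry''. This relies crucially on the translation action of Proposition \ref{prop:5.5}(b) together with the triviality of the cocycle $\natural$ on $\Irr(L/L_\tau^3) \times \Irr(L/L_\tau^3)$ implicit in \eqref{eq:5.5}; once these compatibilities are in hand, the matching of the two idempotent-cut subspaces reduces to an elementary bookkeeping in the matrix-algebra description.
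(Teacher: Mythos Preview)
Your proposal and the paper's proof share the same core ingredients: the free action of $\Irr(L/L_\tau^3)$ on $U$ coming from the smallness of $U_\tau$, the characterization $\mc H(G,\hat P_\ff,\hat\sigma)=\End_G(\Pi_{\mf s})^{\Irr(L/L_\tau^3)\times\Irr(L/L_\tau^3)}$ from \eqref{eq:5.5}, and its compatibility with localization over the $W(L,\tau,\mathfrak{X}_{\nr}(L))$-invariant centre \eqref{eq:5.6}. The paper, however, organizes these more directly. Instead of building an explicit inclusion-then-projection map and checking bijectivity via the matrix decomposition of Proposition~\ref{prop:5.5}(a), it first observes that freeness of the $\Irr(L/L_\tau^3)^2$-action on the summands $1_{\chi_1 w_1 U_\tau}\End_G(\Pi_{\mf s})_U^{an}1_{\chi_2 w_2 U_\tau}$ makes the single summand $1_{w_1 U_\tau}\End_G(\Pi_{\mf s})_U^{an}1_{w_2 U_\tau}$ canonically isomorphic to the $\Irr(L/L_\tau^3)^2$-invariants of the full sum, and then applies \eqref{eq:5.5}--\eqref{eq:5.6} to identify these invariants with the Hecke-algebra side. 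This avoids having to track the $\C\{l\}$-factor from Proposition~\ref{prop:5.5}(a) at all.

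Your ``main step'' is where the difference matters. You claim that the two idempotents pick out ``one summand from the $\Irr(L/L_\tau^3)$-indexing on the right and one from the $L/L_\tau^3$-indexing on the left'', but the interaction of $1_{w_i U_\tau}\in C^{an}(U)$ with the $\C\{l\}$-factor is not transparent: the elements $\{l\}$ permute the copies of $\II_Q^G(\ind_{L^1}^L\tau_1)$ in \eqref{eq:5.2} rather than acting by translations on $\mathfrak{X}_{\nr}(L)$, so it is not obvious that left multiplication by $1_{w_1 U_\tau}$ singles out one $l$. The paper's route via invariants sidesteps this issue entirely, and if you want to complete your version rigorously you would end up re-deriving that same invariants identification anyway.
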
 
\begin{proof}
Since $w U_\tau \cap U_\tau=\varnothing$ for $w \in W(L,\tau,\mathfrak{X}_{\nr} (L)) \setminus 
W(L,\tau,\mathfrak{X}_{\nr} (L))_1$, we have
\[
1_{\Irr (L/L_\tau^3) w_1 U_\tau}  \End_G (\Pi_{\mf s})_U^{an} 1_{\Irr (L/L_\tau^3) w_2 U_\tau}
= \bigoplus\limits_{\chi_1,\chi_2 \in \Irr (L/L_\tau^3)} \hspace{-1mm} 1_{\chi_1 w_1 U_\tau} 
\End_G (\Pi_{\mf s})_U^{an} 1_{\chi_2 w_2 U_\tau} .
\]
Here $\Irr (L/L_\tau^3)^2$ acts freely, so this space contains
\[
1_{w_1 U_\tau} \End_G (\Pi_{\mf s})_U^{an} 1_{w_2 U_\tau} \cong \big( 
1_{\Irr (L/L_\tau^3) w_1 U_\tau}  \End_G (\Pi_{\mf s})_U^{an} 1_{\Irr (L/L_\tau^3) w_2 U_\tau}
\big)^{\Irr (L/L_\tau^3)^2} .
\]
By \eqref{eq:5.6} and \eqref{eq:5.5}, we can rewrite the right hand side as
\begin{align*}
\!\! 1_{\Irr (L/L_\tau^3) w_1 U_\tau} & \big( \End_G (\Pi_{\mf s})^{\Irr (L/L_\tau^3)^2} \hspace{-7mm} 
\underset{\mc O (\mathfrak{X}_{\nr} (L)) / W(L,\tau,\mathfrak{X}_{\nr} (L))}{\otimes} \hspace{-7mm} 
C^{an} (U)^{W(L,\tau,\mathfrak{X}_{\nr} (L))} \big) 1_{\Irr (L/L_\tau^3) w_2 U_\tau} \\
& = 1_{\Irr (L/L_\tau^3) w_1 U_\tau} \mc H (G,\hat P_\ff, \hat \sigma)_{U |_{L_\tau^3}}^{an} 
1_{\Irr (L/L_\tau^3) w_2 U_\tau} \\
& = 1_{w_1 U_\tau |_{L^3_\tau}} \mc H (G,\hat P_\ff, \hat \sigma)_{U |_{L_\tau^3}}^{an} 
1_{w_2 U_\tau |_{L^3_\tau}} . \qedhere
\end{align*}
\end{proof}

\section{Hecke algebras for non-singular depth-zero Langlands parameters}
\label{sec:HeckeL}

\subsection{Preliminaries}\label{subsec:HeckeL-prelim} \

Consider an enhanced supercuspidal L-parameter $(\varphi,\rho)$ for a Levi subgroup $L$ of $G$.
Via \eqref{eq:6.27} and the natural isomorphism 
\[
\mf X_\nr (L) \cong ((Z(L^\vee)^{\mb I_F})_{\mb W_F} )^\circ
\]
from \cite[\S 3.3.1]{Hai}, $\mf X_\nr (L)$ acts on $\Phi_e (L)$.

The Bernstein component of $\Phi_e (L)$ containing $(\varphi,\rho)$ will be denoted \label{i:142}
\begin{equation}\label{eq:9.42}
\mf s_L^\vee := \mathfrak{X}_{\nr} (L) \cdot (\varphi,\rho) = 
\big\{ (z \varphi, \rho) : z \in ((Z(L^\vee)^{\mb I_F})_{\mb W_F} )^\circ \big\} .
\end{equation}
By $\mf s^\vee$,\label{i:413} we are referring to $\mf s_L^\vee$ considered as an inertial 
equivalence class for $\Phi_e (G)$. Recall the cuspidal support map Sc\label{i:171} 
for enhanced Langlands parameters from 
\cite[\S 7]{AMS1}, extended to the setting with rigid inner twists in \cite{DiSc}. 
It associates, to every enhanced L-parameter $(\psi,\epsilon)$ for $G$, a triple
$\mathrm{Sc}(\psi,\epsilon) = (L',\psi',\epsilon')$, where $L' \subset G$ is a 
Levi subgroup and $(\psi',\epsilon')$ is a cuspidal enhanced L-parameter for $L'$. The map 
Sc preserves $\psi |_{\mb I_F}$, so in particular sends the depth-zero parameters to
depth-zero parameters (but maybe of other groups). We write\label{i:144}
\begin{equation}
\Phi_e (G)^{\mf s^\vee} := \mr{Sc}^{-1} (\{L\} \times \mf s_L^\vee ) .
\end{equation}
By definition, this is a Bernstein component
of $\Phi_e (G)$. If $\mf s_L^\vee$ consists of depth-zero enhanced L-parameters, then 
$\Phi_e (G)^{\mf s^\vee} \subset \Phi_e^0 (G)$.~To $\Phi_e (G)^{\mf s^\vee}$, 
\cite[\S 3]{AMS3} associates a twisted affine Hecke algebra 
$\mc H (\mf s^\vee, q_F^{1/2})$\label{i:145}, which is a specialization of an algebra $\cH (\mf s^\vee, \mb z)$ with an invertible formal variable $\mb z$. 
However, \cite{AMS3} works for (normal, non-rigid) inner twists of $G$ and for enhancements of
L-parameters based on the component groups introduced in \cite{Art2}. We need to 
check that \cite{AMS3} also applies in the current setting.

\begin{lem}\label{lem:9.8}
The construction of $\cH (\mf s^\vee, \mb z)$ in \cite[\S 3]{AMS3} (for an arbitrary
Bernstein component $\Phi_e (G)^{\mf s^\vee}$) can be adapted
so that it also works in our setting with rigid inner twists of reductive $F$-groups
and component groups $\pi_0 (S_\varphi^+)$ for Langlands parameters $\varphi$. All the 
results in \cite[\S 3]{AMS3} remain valid for the adapted version
of the twisted affine Hecke algebra $\cH (\mf s^\vee, \mb z)$.
\end{lem}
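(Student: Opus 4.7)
The plan is to revisit the construction of $\cH(\mf s^\vee, \mb z)$ in \cite[\S 3]{AMS3} ingredient by ingredient and verify that each step is either intrinsically geometric (and so unaffected by the choice of formalism for inner twists and component groups) or goes through after a straightforward substitution. The essential inputs to the AMS3 construction are: the affine variety of Langlands parameters in the inertial class $\mf s^\vee$, the conjugation action of a reductive complex group arising as the centralizer of $\varphi|_{\mb I_F}$, a cuspidal enhancement $(\varphi_L, \rho_L)$ playing the role of a base point, a finite root system $R_{\mf s^\vee}$ together with $q$-parameters, and a 2-cocycle of a component-group-type quotient. The first four items are intrinsic to the pair $(G^\vee, \mb W_F)$ and make no reference at all to inner forms or twists of $G$; they are therefore identical in our setting.

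First I would unpack the relation between the component groups used in \cite{AMS3} and ours. By definition $S_\varphi^+ = Z_{\bar G^\vee}(\varphi)$ is the preimage in $\bar G^\vee$ of $Z_{G^\vee}(\varphi)$, so $\pi_0(S_\varphi^+)$ is a central extension of a subgroup of $\pi_0(Z_{G^\vee}(\varphi))$ by a finite abelian subgroup of $\pi_0(Z(\bar G^\vee)^+)$. Since $Z(\bar G^\vee)^+$ acts trivially on everything in the AMS3 geometry, every conjugation action appearing in \cite[\S 3]{AMS3} descends unchanged from $\pi_0(S_\varphi^+)$ to its (almost-)quotient $\pi_0(Z_{G^\vee}(\varphi))$. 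Consequently the construction of $\cH(\mf s^\vee, \mb z)$, its Bernstein presentation, its 2-cocycle $\natural_{\mf s^\vee}$, and the root system/$q$-parameter data all carry over verbatim.

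Second I would reinterpret the cuspidal support datum in the rigid-inner-twist language of \cite{DiSc}: an enhancement is an irreducible representation of $\pi_0(S_\varphi^+)$ whose restriction to $\pi_0(Z(\bar G^\vee)^+)$ is $\zeta_G$-isotypic, and similarly for $L$. By Lemma \ref{lem:8.4}, relevance of the Levi $L$ for the rigid inner twist $G$ is equivalent to compatibility of $\zeta_L$ with $\zeta_G$ on the kernel of $\pi_0(Z(\bar G^\vee)^+) \to \pi_0(Z(\bar L^\vee)^+)$, so $\Phi_e^0(G)^{\mf s^\vee}$ is well-defined, depth-zero, and compatible with cuspidal support. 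Third I would check that the Clifford-theoretic parametrization of $\Irr\text{ - }\cH(\mf s^\vee, \mb z)$ in \cite[Theorem 3.18]{AMS3} is insensitive to the substitution: irreducible modules are parametrized by pairs $(\varphi, \rho)$ with $\rho$ an enhancement, and the argument only uses that $\rho$ restricts to the cuspidal enhancement $\rho_L$ on the analogous Levi component group, a condition which transports through the central extension $\pi_0(S_\varphi^+) \to \pi_0(S_\varphi^+)/\pi_0(Z(\bar G^\vee)^+)$ by standard Clifford theory.

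The main obstacle is the third step: one must carefully track the role of $\pi_0(Z(\bar G^\vee)^+)$ in the explicit formulas for $\natural_{\mf s^\vee}$ and in the Springer-style labeling of irreducible modules, to ensure that the $\zeta_G$-isotypy condition is preserved throughout and that no redefinition of $\natural_{\mf s^\vee}$ is forced by the change of formalism. Since $Z(\bar G^\vee)^+$ is central in $\bar G^\vee$ and acts trivially on the geometric data, this tracking is routine but must be done case by case through the proofs in \cite[\S 3]{AMS3}; I do not foresee any genuine obstruction.
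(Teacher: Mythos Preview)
Your approach is in the right spirit but misses the key simplifying observation and sets up the comparison with \cite{AMS3} incorrectly. The groups used in \cite[\S 3]{AMS3} are not $Z_{G^\vee}(\varphi)$ and its component group, but rather Arthur's $S_\varphi = Z^1_{\mc G_\Sc^\vee}(\varphi)$, obtained by passing through the simply connected cover ${G^\vee}_\Sc$. So your statement that conjugation actions ``descend from $\pi_0(S_\varphi^+)$ to its (almost-)quotient $\pi_0(Z_{G^\vee}(\varphi))$'' does not directly connect to the AMS3 construction: neither side of your comparison is the group AMS3 actually works with. You would need an additional step relating $Z_{G^\vee}(\varphi)$ to $Z^1_{\mc G_\Sc^\vee}(\varphi)$, and that relation is not simply a central extension in the direction you want.

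The paper's proof avoids this entirely by a cleaner structural observation: the arguments in \cite[\S 3]{AMS3} rest on \cite[\S 1--5]{AMS1}, \cite[\S 2--4]{AMS2} and \cite[\S 1--2]{AMS3}, all of which are stated for \emph{arbitrary} (possibly disconnected) complex reductive groups, with the specific Arthur-type setup only entering later. Hence no comparison of component groups is needed at all; one simply substitutes $\bar G^\vee$ for ${G^\vee}_\Sc$, $S_\varphi^+ = Z_{\bar G^\vee}(\varphi)$ for $Z^1_{\mc G_\Sc^\vee}(\varphi)$, $Z_{\bar G^\vee}(\varphi(\mb W_F))$ for $G_\varphi$, $Z_{\bar G^\vee}(\varphi(\mb I_F))$ for $J$, and $\bar L^\vee$ for $L^\vee_c$, and re-runs the abstract arguments. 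The one place requiring genuine care, which your proposal does not flag, is \cite[(3.9)]{AMS3}: the group $G_{\varphi_b} \times \mf X_\nr(G^\vee)$ must be replaced by $Z_{\bar G^\vee}(\varphi_b(\mb W_F))$ alone, since in the rigid setting $Z(G^\vee)^{\mb W_F} \subset T$ already and \cite[Lemma 3.7]{AMS3} becomes the statement that $T \to \mf X_\nr(G^\vee)$ is a finite covering. Your plan to ``track case by case'' would eventually hit this, but the paper's substitution table handles it in one stroke.
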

\begin{proof}
Essentially, all the arguments in \cite[\S 3]{AMS3} rely on \cite[\S 1--5]{AMS1}, 
\cite[\S 2--4]{AMS2} and \cite[\S 1--2]{AMS3}, which apply to arbitrary (possibly
disconnected) complex reductive groups. The specific setup involving enhanced L-parameters
from \cite{Art2} and the associate groups only appear in the later sections of 
\cite{AMS1,AMS3}. A setting with slightly different enhanced L-parameters works
equally well in \cite{AMS1,AMS2,AMS3}, because the arguments with complex reductive
groups hardly change. Therefore it suffices to describe how the complex reductive
groups in \cite[\S 3]{AMS3} must be adapted. 

Firstly, consider Arthur's group $S_\varphi = Z^1_{\mc G_\Sc^\vee}(\varphi)$, which
is obtained by first taking the image of $Z_{G^\vee}(\varphi)$ in ${G^\vee}_\ad$ and then
the preimage of that in ${G^\vee}_\Sc$. In \cite{Art,AMS1,AMS3}, an enhancement of
$\varphi$ is an irreducible representation of $\pi_0 (S_\varphi)$. Secondly, the group
$G_\varphi := Z^1_{\mc G^\vee_\Sc}(\varphi |_{\mb W_F})$ has the property
$S_\varphi = Z_{G_\varphi}\big( \varphi (\SL_2 (\C)) \big)$ and is contained in the group
$J := Z^1_{\mc G^\vee_\Sc}(\varphi |_{\mb I_F})$. We make the following substitutions:
\begin{equation}\label{eq:9.mod}
\begin{array}{c|c}
\text{setting from \cite[\S 3]{AMS3}} & \text{setting with rigid inner twists} \\
\hline
S_\varphi = Z^1_{\mc G_\Sc^\vee}(\varphi) & S_\varphi^+ = Z_{\bar{G}^\vee}(\varphi) \\
G_\varphi = Z^1_{\mc G^\vee_\Sc}(\varphi |_{\mb W_F}) & Z_{\bar{G}^\vee}(\varphi (\mb W_F)) \\
J = Z^1_{\mc G^\vee_\Sc}(\varphi |_{\mb I_F}) & Z_{\bar{G}^\vee}(\varphi (\mb I_F)) \\
{G^\vee}_\Sc & \bar{G}^\vee \\
L^\vee_c = \text{preimage of } L^\vee \text{ in } {G^\vee}_\Sc
& \bar{L}^\vee 
\end{array}
\end{equation}
With these substitutions, \cite[Proposition 3.4 and Theorem 3.6]{AMS3} (which come
directly from \cite{AMS1}) hold, as also noted in \cite{DiSc}. The groups
$M$ and $T$ in \cite[\S 3]{AMS3} can be defined in essentially the same way, i.e.~$M := Z_{L^\vee}(\varphi (\mb W_F))$ and $T := Z(M)^\circ$. Then $Z(G^\vee)^{\mb W_F}
\subset T$ and hence \cite[Lemma 3.7]{AMS3} amounts to:
\[
\text{there is a finite covering } T \to \mf X_\nr ({}^L \mc G) = \mf X_\nr (G^\vee) .
\]
Therefore, we need to replace the group $G_{\varphi_b} \times \mf X_\nr (G^\vee) =
Z^1_{\mc G_\Sc^\vee}(\varphi_b |_{\mb W_F}) \times \mf X_\nr ({}^L \mc G)$
from \cite[(3.9)]{AMS3} by $Z_{\bar{G}^\vee}(\varphi_b (\mb W_F))$. Now all the 
remaining results in \cite[\S 3]{AMS3} hold in this new setting with the same proofs.
\end{proof}

We summarize the structure of $\cH (\mf s^\vee, q_F^{1/2})$ below:

\begin{itemize}
\item Since $\mf s_L^\vee$ carries a transitive action of torus 
$(Z(L^\vee)^{\mb I_F})^\circ_{\mb W_F}$ with finite stabilizers, the choice of a 
base point makes $\mf s_L^\vee$ into a complex algebraic torus.
\item The ring $\mc O (\mf s_L^\vee)$, of regular functions on the complex algebraic 
variety $\mf s_L^\vee$, is by definition a commutative subalgebra of 
$\mc H (\mf s^\vee,q_F^{1/2})$.
\end{itemize}
\begin{itemize}
\item The group $W(G,L) \cong W(G^\vee,L^\vee)^{\mb W_F}$ acts on $\Phi_e (L)$ and on
the set of Bernstein components of $\Phi_e (L)$. Let $W_{\mf s^\vee}$\label{i:150} 
denote the stabilizer of $\Phi_e (G)^{\mf s^\vee}$. 
\item There exists a root system \label{i:146}$R_{\mf s^\vee} \subset X^* (\mf s_L^\vee)$ 
on which $W_{\mf s^\vee}$ acts. The choice of a positive system $R_{\mf s^\vee}^+$ 
leads to a decomposition 
$W_{\mf s^\vee} = W(R_{\mf s^\vee}) \rtimes \Gamma_{\mf s^\vee}$, where 
\begin{equation}\Gamma_{\mf s^\vee} = \{ w \in W(G,L)^{\mf s^\vee} : 
w (R_{\mf s^\vee}^+) = R_{\mf s^\vee}^+ \}.
\end{equation}
\end{itemize}
\begin{itemize}
\item As vector spaces, we have
\begin{equation}\label{eq:9.1}
\mc H (\mf s^\vee, q_F^{1/2}) = 
\mc O (\mf s_L^\vee) \otimes \C [W(R_{\mf s^\vee})] \otimes \C [\Gamma_{\mf s^\vee}] ,
\end{equation}
where $\mf s_L^\vee$ is made into a complex algebraic torus by the choice of a basepoint.
\end{itemize}
\begin{itemize}
\item The subspace $\mc O (\mf s_L^\vee) \otimes \C [W(R_{\mf s^\vee})]$ is an affine Hecke
algebra $\mc H (\mf s^\vee, q_F^{1/2})^\circ$\label{i:147}
as in \cite[Definition 1.11]{SolSurv}, with complex torus $\mf s_L^\vee$, 
root system $R_{\mf s^\vee}$ and certain $q$-parameters $q_{\alpha^\vee}$\label{i:148}, 
$q^*_{\alpha^\vee}$ for $\alpha^\vee \in R_{\mf s^\vee}$. We take $q_F^{1/2}$ as the $q$-base 
(so that all the $\mb z_j$'s from \cite[\S 3.3]{AMS3} are specialized to $q_F^{1/2}$).
\item The group $\Gamma_{\mf s^\vee}$ acts naturally on $\mc H (\mf s^\vee, q_F^{1/2})^\circ$,
and $\mc H (\mf s^\vee,q_F^{1/2})$ is a twisted crossed product of
$\mc H (\mf s^\vee, q_F^{1/2})^\circ$ and $\Gamma_{\mf s^\vee}$. In particular 
$\C [\Gamma_{\mf s^\vee}]$ is embedded in $\mc H (\mf s^\vee, q_F^{1/2})$ as a twisted group
algebra $\C [\Gamma_{\mf s^\vee}, \natural_{\mf s^\vee}]$, for a certain 2-cocycle\label{i:149}
\begin{equation}
\natural_{\mf s^\vee} : \Gamma_{\mf s^\vee}^2 \to \C^\times.
\end{equation}
\end{itemize}
Note that $\mc H (\mf s^\vee,q_F^{1/2})$ is not exactly an instance of the twisted affine
Hecke algebras in \cite{AMS3}: those have formal variables as $q$-parameters, whereas our
$q$-parameters are real numbers. The quintessential property of $\mc H (\mf s^\vee, q_F^{1/2})$
is that its irreducible (left) modules are parametrized canonically by $\Phi_e (G)^{\mf s^\vee}$; see
\cite[Theorem 3.18.a]{AMS3}.

Recall from \eqref{eq:8.7} and Theorem \ref{thm:8.2} that we have an LLC for non-singular
supercuspidal representations of $L$, and that it is a bijection onto the appropriate set of 
enhanced L-parameters. By Theorem \ref{thm:8.1}, this LLC is 
$\mathfrak{X}_{\nr} (L)$-equivariant. Hence it induces a bijection
\begin{equation}\label{eq:9.49}
\begin{Bmatrix}\text{non-singular supercuspidal}\\
\text{Bernstein components in } \Irr^0 (L)\end{Bmatrix}\:\leftrightarrow\:
\begin{Bmatrix}\text{supercuspidal } \\
\text{Bernstein components in $\Phi_e^0 (L)$}\end{Bmatrix}.
\end{equation}
We write this bijection as $\mf s_L \mapsto \mf s_L^\vee$.

Let $\mf s$\label{i:172} be $\mf s_L$ viewed as an inertial equivalence class for $G$, and
let $\Rep (G)_{\mf s}$
be its corresponding Bernstein block of $\Rep (G)$. The associated Bernstein component of 
$\Irr (G)$ will be denoted $\Irr (G)_{\mf s}$, and the associated Bernstein component of 
$\Phi_e (G)$ will be denoted $\Phi_e (G)^{\mf s^\vee}$. Recall the group 
$W_{\mf s} := W(G,L)_{\hat \sigma}$ from Lemma \ref{lem:5.8}. By a similar argument as 
in Lemma \ref{lem:7.6} (a), replacing the stabilizer of $\theta$ by the stabilizer of 
$\Xo (L) \theta$ and $\rho$, it can be expressed as 
\begin{equation}\label{eq:9.30}
W_{\mf s} = W(G,L)_{jT,\mathfrak{X}_{\nr} (L) \theta, \rho} \cong 
W(N_G (L), jT)_{\mathfrak{X}_{\nr} (L) \theta,\rho}
\big/ W(L,jT)_{\mathfrak{X}_{\nr} (L) \theta, \rho} .
\end{equation}

\begin{lem}\label{lem:9.1}
The stabilizer $W_{\mf s}$ of $\Rep(L)_{\mf s_L}$ 
equals $W_{\mf s^\vee} = W(G^\vee,L^\vee)^{\mb W_F}_{\mf s^\vee}$.
\end{lem}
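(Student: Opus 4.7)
The plan is to deduce this identification directly from the equivariance properties of the LLC for non-singular supercuspidal representations established in Theorem \ref{thm:8.2}, combined with the natural identification $W(G,L) \cong W(G^\vee,L^\vee)^{\mb W_F}$ from \eqref{eq:6.21}--\eqref{eq:6.22}. The point is that Lemma \ref{lem:9.1} is essentially a formal consequence of a bijection between inertial equivalence classes that intertwines the two actions we are stabilizing.

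First, I would pick a non-singular supercuspidal representation $\tau \in \Irr^0_\cusp(L)_{ns}$ whose inertial class is $\mf s_L$, and let $(\varphi_\tau,\rho_\tau) \in \Phi^0_\cusp(L)_{ns}$ be its enhanced L-parameter under the LLC of Theorem \ref{thm:8.2}; by \eqref{eq:9.42} and \eqref{eq:9.49}, its inertial class on the Galois side is precisely $\mf s_L^\vee$. Then I would unfold the definitions: $W_{\mf s}$ consists of those $w \in W(G,L)$ such that $w \cdot \tau \in \mf X_\nr(L) \otimes \tau$, while $W_{\mf s^\vee}$ consists of those $w \in W(G^\vee,L^\vee)^{\mb W_F}$ such that $w \cdot (\varphi_\tau,\rho_\tau) \in \mf X_\nr(L) \cdot (\varphi_\tau,\rho_\tau)$, where we use the identification $\mf X_\nr(L) \cong ((Z(L^\vee)^{\mb I_F})_{\mb W_F})^\circ$ and the isomorphism $\mf X_\nr(L) \cong \mf X_\nr(L^\vee) \cap \mr{Hom}(L/L_\Sc,\C^\times)$ coming from \eqref{eq:8.3}--\eqref{eq:8.4}.

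Next, by Theorem \ref{thm:8.2}, the LLC on $\Irr^0_\cusp(L)_{ns}$ is equivariant with respect to $W(G,L) \ltimes \mf X_\nr(L) \cong W(G^\vee,L^\vee)^{\mb W_F} \ltimes \mf X_\nr(L^\vee)$. Therefore, for $w \in W(G,L)$ with image $w^\vee \in W(G^\vee,L^\vee)^{\mb W_F}$, the relation $w \cdot \tau \cong \chi \otimes \tau$ for some $\chi \in \mf X_\nr(L)$ is equivalent (via the LLC) to $w^\vee \cdot (\varphi_\tau,\rho_\tau) = (z_\chi \cdot \varphi_\tau,\rho_\tau)$ for the corresponding $z_\chi \in \mf X_\nr(L^\vee)$. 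This equivalence gives $W_{\mf s} = W_{\mf s^\vee}$ under the identification of Weyl groups.

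The main technical content is not in this lemma itself but already packaged into Theorem \ref{thm:8.2}: namely, the simultaneous equivariance of the LLC for all $\mf X_\nr(L)$-translates of $\tau$, which required the careful coherent choices of $\zeta^0$ and $\epsilon$ made in \S\ref{sec:LLCcusp}. Given that input, the only remaining point to verify is that the two natural actions of $\mf X_\nr(L)$ and of $\mf X_\nr(L^\vee)$ match under the isomorphism \eqref{eq:8.4}, which was established in \cite[Theorem 3.1]{SoXu2}. Hence no additional obstacle arises.
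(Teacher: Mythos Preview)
Your proposal is correct and follows essentially the same route as the paper's proof: both deduce the equality of stabilizers directly from the $W(G,L) \ltimes \mf X_\nr(L)$-equivariance of the supercuspidal LLC in Theorem \ref{thm:8.2}, using the canonical identification $W(G,L) \cong W(G^\vee,L^\vee)^{\mb W_F}$. The paper's proof is a two-sentence citation of Theorem \ref{thm:8.2}, whereas you spell out the basepoint $\tau$ and the explicit equivalence $w\cdot\tau \in \mf X_\nr(L)\tau \Leftrightarrow w^\vee\cdot(\varphi_\tau,\rho_\tau) \in \mf X_\nr(L^\vee)(\varphi_\tau,\rho_\tau)$, but this is just an unpacking of the same argument.
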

\begin{proof}
As observed before Lemma \ref{lem:5.8}, $W(G,L)_{\hat \sigma} = W_{\mf s}$ equals the 
stabilizer of $\Rep(L)_{\mf s_L}$ in $W(G,L)$. The equality $W_{\mf s} = W_{\mf s^\vee}$ 
follows directly from the $W(G,L)$-equivariance of the LLC in Theorem \ref{thm:8.2}.    
\end{proof}
Restricting the LLC from Theorem \ref{thm:8.2} to $\mf s_L^\vee$ and $\mf s_L$, 
we obtain a bijection
\begin{equation}\label{eq:9.2}
\Irr (L)_{\mf s_L} = \Irr (L)_{(\hat P_{L,\ff}, \hat \sigma)} 
\longrightarrow \Phi_e (L)^{\mf s_L^\vee}.
\end{equation}

\begin{lem}\label{lem:9.2}
The bijection \eqref{eq:9.2} induces an isomorphism of vector spaces
\[
\mc H (\mf s^\vee, q_F^{1/2}) \longrightarrow \mc H (G,\hat P_\ff, \hat \sigma)^\circ 
\otimes \C [\Gamma_{\hat \sigma}].
\]
\end{lem}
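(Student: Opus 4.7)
The plan is to build the claimed vector space isomorphism out of three factor isomorphisms, matching the tensor decompositions on both sides. On the spectral side, \eqref{eq:9.1} gives
\[
\mc H (\mf s^\vee, q_F^{1/2}) = \mc O (\mf s_L^\vee) \otimes \C [W(R_{\mf s^\vee})] \otimes \C [\Gamma_{\mf s^\vee}],
\]
while on the $p$-adic side \eqref{eq:5.17} gives
\[
\mc H (G,\hat P_\ff, \hat \sigma)^\circ \otimes \C[\Gamma_{\hat\sigma}]
= \mc H(W(R_\sigma), q_\sigma) \otimes \C[ZW_L(J,\hat\sigma)] \otimes \C[\Gamma_{\hat\sigma}].
\]
So I need to exhibit, as vector spaces, bijections on each of the three tensor factors.

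First I would handle the ``torus'' factor. The restriction of the LLC from Theorem \ref{thm:8.2} to the Bernstein component identifies $\mf s_L^\vee$, as a $\mf X_\nr (L)$-torsor, with $\Irr (L)_{\mf s_L}$, and both become complex algebraic tori after the choice of a basepoint. This produces a canonical algebra isomorphism
\[
\mc O (\mf s_L^\vee) \;\cong\; \mc O (\Irr (L)_{\mf s_L}).
\]
Via \eqref{eq:5.40}, the right-hand side is canonically identified with $\C [ZW_L (J,\hat \sigma)]$ (since $ZW_L(J,\hat\sigma) = L_\tau^2/L^1$ and $\mf X_\nr(L)/\mf X_\nr(L,\tau) \cong \Irr(L)_{[L,\tau]}$). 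This gives the first factor isomorphism.

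For the Weyl group factor, the plan is to exhibit a bijection $R_{\mf s^\vee} \leftrightarrow R_\sigma$ such that the corresponding $q$-parameters match; this is the content to be established in \eqref{eq:9.13} and Proposition \ref{prop:9.4}. The strategy is to route through a common intermediate: on the $p$-adic side, Proposition \ref{prop:4.8} identifies $R_\sigma$ and the $q$-parameters $q_\sigma$ of $\mc H(W_\af(J,\sigma),q_\sigma)$ with those of the principal-series Hecke algebra $\mc H(G_\sigma, P_{G_\sigma,\ff}, \theta_\ff)$ of the quasi-split group $G_\sigma$; on the spectral side, the analogous reduction to the principal series of $G_\sigma$ for $R_{\mf s^\vee}$ and its $q$-parameters is carried out in \cite{SolQS}. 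Since both sides agree with the principal-series data of $(G_\sigma, T, \theta)$, one gets the canonical isomorphism of root systems with matching $q$-parameters, and in particular a vector space isomorphism $\mc H(W(R_\sigma), q_\sigma) \cong \C[W(R_{\mf s^\vee})]$ (using the natural basis $\{T_w\}$ indexed by the Weyl group on the left).

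For the third factor, Lemma \ref{lem:9.1} provides a canonical isomorphism $W_{\mf s} \cong W_{\mf s^\vee}$. The groups $\Gamma_{\hat\sigma}$ and $\Gamma_{\mf s^\vee}$ are both defined as stabilizers of a positive system in $W_{\mf s} \cong W_{\mf s^\vee}$, for the isomorphic root systems $R_\sigma \cong R_{\mf s^\vee}$; so they correspond under the isomorphism $W_{\mf s} \cong W_{\mf s^\vee}$, yielding $\C[\Gamma_{\hat\sigma}] \cong \C[\Gamma_{\mf s^\vee}]$. Taking the tensor product of the three factor isomorphisms then gives the desired vector space isomorphism.

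The main obstacle is the identification of the two root systems \emph{together with their $q$-parameters}, because the root systems $R_\sigma$ and $R_{\mf s^\vee}$ are defined in completely different terms (one from affine roots vanishing on $\ff$ and reflections preserving $\sigma$, the other from the geometry of the variety of Langlands parameters). Both the bijection and the matching of $q$-parameters ultimately reduce to the comparison for the principal series of the quasi-split group $G_\sigma$ and its dual-side counterpart, where \cite{SolQS} does the necessary computation; the formal output of this reduction is what \eqref{eq:9.13} and Proposition \ref{prop:9.4} will record, and those are precisely the inputs that feed into the Weyl-group factor here. Everything else is essentially bookkeeping with the tensor decompositions.
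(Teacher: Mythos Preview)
Your approach would work, but it is considerably heavier than the paper's and pulls forward material that the paper deliberately postpones. The paper does \emph{not} match the three tensor factors separately. Instead it routes through the group-algebra level: the LLC gives an equivariant algebra isomorphism $\mc O(\mf s_L^\vee)\cong \C[ZW_L(J,\hat\sigma)]$, which extends to $\mc O(\mf s_L^\vee)\rtimes W_{\mf s^\vee}\cong \C[ZW_L(J,\hat\sigma)]\rtimes W_{\mf s}$ using only the group isomorphism $W_{\mf s}\cong W_{\mf s^\vee}$ from Lemma~\ref{lem:9.1}. Then on each side there is a purely linear bijection to the relevant Hecke algebra, given by replacing group-algebra basis elements $N_w$ by Hecke basis elements $T_w$ (these are \eqref{eq:9.6} and \eqref{eq:9.7}); composing yields the lemma. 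In particular, no comparison of $R_\sigma$ with $R_{\mf s^\vee}$, or of $\Gamma_{\hat\sigma}$ with $\Gamma_{\mf s^\vee}$, is needed---the paper explicitly remarks just after the proof that Lemma~\ref{lem:9.2} does \emph{not} yet yield the isomorphisms \eqref{eq:9.8}.

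By contrast, your factor-by-factor matching requires $|W(R_\sigma)|=|W(R_{\mf s^\vee})|$ and $|\Gamma_{\hat\sigma}|=|\Gamma_{\mf s^\vee}|$, which forces you to prove the content of Proposition~\ref{prop:9.3} and \eqref{eq:9.13} en route. That is logically permissible (Proposition~\ref{prop:9.3} does not use Lemma~\ref{lem:9.2}), but you should not cite Proposition~\ref{prop:9.4} for this step: its proof invokes Lemma~\ref{lem:9.2}, so appealing to it here would be circular. The paper's arrangement is cleaner---prove the vector-space isomorphism cheaply first, then establish the root-system and $q$-parameter comparison in Proposition~\ref{prop:9.3}, and only then combine them into the algebra isomorphism of Proposition~\ref{prop:9.4}.
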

\begin{proof}
Firstly, pullback along \eqref{eq:9.2} defines an algebra isomorphism
\begin{equation}\label{eq:9.3}
\mc O (\mf s_L^\vee) \, \xrightarrow{\sim} \,
\mc O \big( \Irr (L)_{(\hat P_{L,\ff}, \hat \sigma)} \big) = \mc O (\Irr (L)_{\mf s_L}).
\end{equation}
By Corollary \ref{cor:3.3} and Proposition \ref{prop:5.4}, the right hand side is
\begin{equation}\label{eq:9.4}
\mc O \big (\Irr (ZW_L (J,\hat \sigma)) \big) \cong \C [ZW_L  (J,\hat \sigma)] .    
\end{equation}
Thus \eqref{eq:9.3} and \eqref{eq:9.4} give an isomorphism between the maximal
commutative subalgebras of $\mc H (\mf s^\vee, q_F^{1/2})$ and 
$\mc H (G, \hat P_\ff, \hat \sigma)^\circ$.~By Theorem \ref{thm:8.2} and Lemma \ref{lem:9.1},
this isomorphism intertwines the actions of $W_{\mf s} \cong W_{\mf s^\vee}$, 
so it extends to an algebra isomorphism
\begin{equation}\label{eq:9.5}
\mc O (\mf s_L^\vee) \rtimes W_{\mf s^\vee} \overset{\sim}{\longrightarrow} 
\C [ZW_L (J,\hat \sigma)] \rtimes W_{\mf s} .
\end{equation}
By \eqref{eq:5.17}, the basis elements $T_w$, for $w \in W_{\mf s} = 
W(R_\sigma) \rtimes \Gamma_{\hat \sigma}$, provide the following linear bijection 
(it is usually not an algebra homomorphism)
\begin{equation}\label{eq:9.6}
\C [ZW_L (J,\hat \sigma)] \rtimes W_{\mf s} = 
\C[ZW_L (J,\hat \sigma)] \otimes \C[W(R_\sigma)] \rtimes \Gamma_{\hat \sigma} 
\rightarrow \mc H (G, \hat P_\ff, \hat \sigma)^\circ \rtimes \Gamma_{\hat \sigma} .
\end{equation}
Similarly, by the construction \eqref{eq:9.1}, there is a linear bijection
\begin{equation}\label{eq:9.7}
\mc O (\mf s_L^\vee) \rtimes W_{\mf s^\vee} = (\mc O (\mf s_L^\vee) \rtimes W (R_{\mf s^\vee})) 
\rtimes \Gamma_{\mf s^\vee} \rightarrow \mc H (\mf s^\vee,q_F^{1/2}) = 
\mc H (\mf s^\vee,q_F^{1/2})^\circ \rtimes \C [\Gamma_{\mf s^\vee},\natural_{\mf s^\vee}] .
\end{equation}
To conclude, we compose the inverse of \eqref{eq:9.7} first with \eqref{eq:9.5}, then
with \eqref{eq:9.6}.
\end{proof}

\subsection{Comparison of \texorpdfstring{$q$}{q}-parameters} \ 
\label{par:q}

Despite the similarities between \eqref{eq:9.6} and \eqref{eq:9.7}, Lemmas \ref{lem:9.1}
and \ref{lem:9.2} do not yet establish isomorphisms
\begin{equation}\label{eq:9.8}
W(R_\sigma) \cong W(R_{\mf s^\vee}) \quad \text{and} \quad
\Gamma_{\hat \sigma} \cong \Gamma_{\mf s^\vee} .   
\end{equation}
To achieve \eqref{eq:9.8}, we need to compare the $q$-parameters of reflections in $W_{\mf s}$
and $W_{\mf s^\vee}$. More precisely, for \eqref{eq:9.8} we need to know which 
$q$-parameters are 1 and which are bigger than 1. By definition, a reflection in 
$\Gamma_{\hat \sigma}$ or $\Gamma_{\mf s^\vee}$ has $q$-parameter 1. We write
\begin{align*}
& \Omega' (\emptyset,\theta_\ff) := \Omega (\emptyset,\theta_\ff) \cap 
\langle W(\emptyset,\theta_\ff) \cap S_{\ff,\af} \text{ for } G_\sigma \rangle ,\text{ and }\\
& \langle W(\emptyset,\theta_\ff) \cap S_{\ff,\af} \text{ for } G_\sigma \rangle =
W_\af (\emptyset, \theta_\af) \rtimes \Omega' (\emptyset,\theta_\ff) .
\end{align*}
With these notations, Proposition \ref{prop:4.8} says that all information about the 
$q$-parameters for $\mc H (G,\hat P_\ff, \hat \sigma)$ is contained in the extended 
affine Hecke algebra
\begin{equation}\label{eq:9.9}
\mc H (W_\af (\emptyset,\theta_\ff), q_\theta) \rtimes \Omega' (\emptyset, \theta_\ff)
\; \subset \; \mc H (G_\sigma, P_{G_\sigma,\ff}, \theta_\ff) .
\end{equation}
Even better, Corollary \ref{cor:4.2} says that  we only need 
$\mc H (G_\alpha, P_{G_\alpha,\ff}, \theta_\ff)$ to determine $q_{\theta,\alpha}$ for 
$\alpha \in \Delta_{\ff,\sigma}$. The complex dual group of $G_\alpha$ has maximal
torus $T^\vee$. 

The maximal $F$-split subtorus $\mc T_s$ of $\mc T_\ff$ and of $\mc T$ corresponds to 
$T^\vee_{\mb W_F}$, which admits a finite covering from $T^{\vee,\mb W_F,\circ}$. 
By \eqref{eq:4.8}, the root system of $(G_\alpha^\vee,T^\vee)$ can be expressed as $R(\mc G_\alpha, \mc T)^\vee = \{ \beta^\vee \in R(G^\vee,\mc T^\vee) : 
\beta^\vee |_{T^{\vee,\mb W_F,\circ}} \in \R^\times \alpha^\vee \}$.~By construction \cite[Lemma 3.12]{AMS3}, $R_{\mf s^\vee}$ consists of certain integral 
multiples $m_\alpha \alpha^\vee$ of elements $\alpha^\vee \in R(\mc G_\alpha, \mc T)^\vee = 
R (G_\alpha^\vee, T^\vee)$. Furthermore, ${}^L G_\alpha = {}^L T G_\alpha^\vee = G_\alpha^\vee \rtimes {}^L j (\mb W_F)$.~The algebra $\mc H (W_\af (\emptyset,\theta_\ff),q_\theta) \rtimes \Omega' 
(\emptyset,\theta_\ff)$, which is given in terms of the Iwahori--Matsumoto presentation, can
also be written in terms of the Bernstein presentation, as in \cite[\S 3]{Lus-Gr} or
\cite[\S 1]{SolSurv}. This gives $q$-parameters $q^*_{\theta,\alpha}$ for all
$\alpha \in \Delta_{\ff,\sigma}$.

\begin{prop}\label{prop:9.3}
Let $\alpha \in \Delta_{\ff,\sigma}$. The parameter $q_{m_\alpha \alpha^\vee}$ for 
$s_{\alpha^\vee}$ in $\mc H (\mf s^\vee ,q_F^{1/2})$ is equal to $q_{\theta,\alpha}$. Furthermore,  
$q^*_{m_\alpha \alpha^\vee}$ is equal to $q^*_{\theta,\alpha}$.
\end{prop}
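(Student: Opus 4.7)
The plan is to reduce both $q$-parameters to a computation in the quasi-split subgroup $G_\alpha$ (respectively its Langlands dual $G_\alpha^\vee$), where the question becomes one about principal series, and then invoke the matching of parameters already established in \cite{SolQS,SolParam}.

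On the $p$-adic side, Corollary \ref{cor:4.2} already tells us that $q_{\theta,\alpha}$ coincides with the $q$-parameter of $s_\alpha$ inside $\mc H(G_\alpha, P_{G_\alpha,\ff}, \theta_\ff)$, a principal series Hecke algebra for the quasi-split group $G_\alpha$ with $T$ as a minimal $F$-Levi subgroup. The first step of the proof is the parallel statement on the Galois side: the parameter $q_{m_\alpha \alpha^\vee}$ of $s_{\alpha^\vee} \in W(R_{\mf s^\vee})$ inside $\mc H(\mf s^\vee, q_F^{1/2})$ depends only on the sub-root system $R(G_\alpha^\vee, T^\vee) \subset R(G^\vee, T^\vee)$ and on the restriction of $(\varphi,\rho)$ along it. This localisation is built into the construction of $\mc H(\mf s^\vee, q_F^{1/2})$ in \cite[\S 3]{AMS3}: the integer $m_\alpha$ and the attached $q$-parameter are defined in terms of the local geometry around the root $\alpha^\vee$, namely the cuspidal support of a component-group representation for the connected centralizer of a torus inside $\bar G^\vee$, all of which can be carried out inside $G_\alpha^\vee \rtimes {}^L j(\mb W_F)$ without change.

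Once this reduction is performed, both sides now refer to the principal series of the quasi-split $G_\alpha$ attached to the unramified inertial class of $\theta$, and to the corresponding Bernstein component of $\Phi_e(G_\alpha)$ containing $(\varphi_T,1)$. This is precisely the setting analyzed in \cite{SolQS}. There the $q$-parameters on both sides are computed in terms of the same combinatorial data attached to the single root $\alpha$ (namely the orbit of $\alpha^\vee$ under Frobenius and the value of $\theta_\ff$ on the associated norm), and shown to agree. Applying that result to $G_\alpha$ yields $q_{m_\alpha \alpha^\vee} = q_{\theta,\alpha}$. The same argument, run with the second (Bernstein-presentation) parameter of the rank-one affine Hecke subalgebra generated by $s_{\alpha^\vee}$, gives $q^*_{m_\alpha \alpha^\vee} = q^*_{\theta,\alpha}$; indeed the pair $(q_{\alpha^\vee}, q^*_{\alpha^\vee})$ is determined by the data of the rank-one situation, in which $q^*$ differs from $q$ only when the affine reflection splits into two orbits of simple affine reflections, and this splitting is visible from $R(G_\alpha^\vee, T^\vee)$ alone.

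The main obstacle will be verifying the localisation statement on the Galois side in a clean way, since the construction in \cite[Lemma 3.12]{AMS3} proceeds through cuspidal supports of enhanced parameters for possibly disconnected complex reductive groups, rather than directly through $G_\alpha^\vee$. Concretely, one must check that the cuspidal triple underlying $s_{\alpha^\vee}$ and the integer $m_\alpha$ only see the subsystem $R(G_\alpha^\vee,T^\vee)$, so that passing from $Z_{\bar G^\vee}(\varphi(\mb I_F))$ to its subgroup associated to $G_\alpha^\vee$ leaves the $q$-parameter invariant. Once this is in place, comparison with \cite{SolQS,SolParam} is routine and yields both equalities simultaneously.
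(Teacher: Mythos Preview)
Your overall strategy---reduce both sides to the rank-one quasi-split group $G_\alpha$ and then invoke \cite[Lemma 5.2]{SolQS}---is exactly the one the paper uses. However, the gap you yourself flag at the end is where all the content lies, and your earlier assertion that the Galois-side localisation is ``built into the construction'' of \cite[\S 3]{AMS3} is not correct as stated.

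The issue is this: the construction of $q_{m_\alpha \alpha^\vee}$ passes through the group $J_\varphi = Z_{\bar G^\vee}(\varphi|_{\mb I_F})$ (in the rigid-inner-twist setting of Lemma \ref{lem:9.8}), and the relevant graded Hecke algebra parameters $\mb k_{\alpha^\vee, z\varphi}$ are defined via Levi subgroups of $J_\varphi^\circ$ containing the root subgroups from $\R^\times \alpha^\vee$. But such a Levi subgroup only exists when those root subgroups actually lie in $J_\varphi^\circ$, i.e.\ when $\alpha^\vee(\varphi(\mb I_{F_\alpha})) = 1$, which via the LLC for tori is the condition $\theta_\ff \circ N_{k_\alpha/k_F} \circ \alpha^\vee = 1$. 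The paper therefore splits into two cases. When the norm condition fails, one cannot localise to $G_\alpha^\vee$; instead the paper argues directly that $s_{\alpha^\vee}$ has no representative in $J_\varphi^\circ$, so $\alpha^\vee$ is not a root of the graded Hecke algebras at all and $q_{m_\alpha\alpha^\vee} = q^*_{m_\alpha\alpha^\vee} = 1$, while on the $p$-adic side Proposition \ref{prop:4.1} gives $q_{\theta,\alpha} = q^*_{\theta,\alpha} = 1$. Only when the norm condition holds does the paper explicitly construct a representative for $s_{\alpha^\vee}$ inside $J_\varphi^\circ$ using the root subgroups $U_{\pm\mb I_F \beta^\vee}$, and this is what permits the reduction to the Levi $J_{\varphi,\alpha}^\circ$ coming from $G_\alpha^\vee$.

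So your plan is sound, but to make it a proof you need to supply this case distinction (or an argument that circumvents it); the invocation of \cite{SolQS} only applies once the reduction to ${}^L G_\alpha$ has been justified, and that justification is not automatic.
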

\begin{proof}
For the construction of $q_{m_\alpha \alpha^\vee}$ \cite[Proposition 3.14]{AMS3},
one first passes to the complex reductive group 
$J_\varphi := Z^1_{{G^\vee}_\Sc} (\varphi |_{\mb I_F})$. Next, for each 
$z \in \mathfrak{X}_{\nr} (L)$, as in \cite[\S3.1]{AMS3}, one constructs  
a graded Hecke algebra from $z \varphi$, $\rho$ and $G_{z\varphi} := Z^1_{{G^\vee}_\Sc}(z \varphi |_{\mb W_F})$.~This gives a family of parameters $\mb k_{\alpha^\vee, z \varphi}$, from which 
$q_{m_\alpha \alpha^\vee}$ and $q^*_{m_\alpha \alpha^\vee}$ are obtained (see \cite[\S 3.2]{AMS3}). 
One does not need the full $J_\varphi$, it suffices to consider a Levi subgroup of $J_\varphi^\circ$ containing 
$\varphi (\SL_2 (\C))$ and all the root subgroups from $\R^\times \alpha^\vee$.

Let $k_\alpha$ be as in Proposition \ref{prop:4.1}. Suppose that 
\begin{equation}\label{eq:9.10}
\theta_\ff \circ N_{k_\alpha / k_F} \circ \alpha^\vee \neq 1.    
\end{equation}
By Proposition \ref{prop:4.1}, $q_{\theta,\alpha} = 1$, which means that 
$q^*_{\theta,\alpha} = 1$ as well because $q^*_{\theta,\alpha} \in [1,q_{\theta,\alpha}]$. 
Let $F_\alpha / F$ be the unramified extension of $F$ with residue field $k_\alpha$. 
Via the local Langlands correspondence for tori, $(\theta_\ff, \mc T_\ff)$ corresponds to 
$\varphi |_{\mb I_F}$, because $\mc T_\ff (F)$ is the maximal compact subtorus of $\mc T (F)$. 
The condition \eqref{eq:9.10} is equivalent to $\alpha^\vee (\varphi (\mb I_{F_\alpha})) \neq 1$.~By construction, $\mb I_{F_\alpha}$ stabilizes every root subgroup of $G^\vee$ associated
to a root in $\R^\times \alpha^\vee$. By the description  of centralizers of semisimple elements 
(here of $\varphi (\mb I_{F_\alpha})$, a finite set) from \cite{Ste},
$\alpha^\vee (\varphi (\mb I_{F_\alpha})) \neq 1$ implies that $J_\varphi^\circ$ does not 
contain any representatives for 
$s_{\alpha^\vee}$. But then $\alpha^\vee$ does not correspond to a root for the graded 
Hecke algebras from \cite[\S 3.1]{AMS3}. Thus $s_{\alpha^\vee}$ only occurs in the 
R-groups/$\Gamma$-groups for those graded Hecke algebras. This implies that 
$\mb k_{\alpha,z \varphi} = 0$ for all $\chi \in \mathfrak{X}_{\nr} (L)$, and thus by
\cite[Proposition 3.14]{AMS3}, we have $q_{\alpha^\vee} = q^*_{\alpha^\vee} = 1$.

Suppose now that, in contrast to \eqref{eq:9.10}, $\theta_\ff \circ N_{k_\alpha / k_F} \circ \alpha^\vee = 1$.~For any lift $\beta^\vee \in R(G_\alpha^\vee,T^\vee)$, we have 
$\beta^\vee (\varphi (\mb I_{F,\beta^\vee})) = 1$. Set $U_{\mb I_F \beta^\vee} := 
\prod\nolimits_{\gamma \in \mb I_F / \mb I_{F,\beta^\vee}} \, U_{\gamma \beta^\vee}$.~The $\varphi (\mb I_F)$-invariants in this group can be identified with
\[
Z_{U_{\mb I_F \beta^\vee}} (\varphi (\mb I_F)) \cong 
Z_{U_{\beta^\vee}} (\varphi (\mb I_{F,\beta^\vee})) = U_{\beta^\vee} .
\]
Together with $Z_{U_{-\mb I_F \beta^\vee}} (\varphi (\mb I_F)) \cong U_{-\beta^\vee}$, this
allows us to construct a representative for $s_{\alpha^\vee}$ in $J_\varphi^\circ$. Starting
with $G_\alpha^\vee$ instead of $G^\vee$ gives a Levi subgroup $J_{\varphi,\alpha}^\circ$ of 
$J_\varphi^\circ$ containing $U_{\mb I_F ,\beta^\vee} \cap J_\varphi^\circ$. As explained at 
the start of the proof, this means that the parameters $q_{m_\alpha \alpha^\vee}$ and 
$q^*_{m_\alpha \alpha^\vee}$ can be computed just as well from ${}^L G_\alpha$.

In summary, on the $p$-adic side, Proposition \ref{prop:4.8} and Corollary \ref{cor:4.2}
reduce the computations of $q_{\theta,\alpha}$ and $q^*_{\theta,\alpha}$ to the Hecke
algebra $\mc H (G_\alpha, P_{G_\alpha,\ff}, \theta_\ff)$ for a Bernstein block in the 
principal series of a quasi-split reductive group $G_\alpha$.~On the Galois side, we 
reduced $q_{m_\alpha \alpha^\vee}$ and $q^*_{m_\alpha \alpha^\vee}$ to parameters for 
a Hecke algebra of the form $\mc H (\mf s^\vee,q_F^{1/2})$, computed from ${}^L G_\alpha$ 
instead of ${}^L G$. Thus we may apply the known results about principal series 
representations of quasi-split groups, where the desired equality of $q$-parameters follows 
from \cite[Lemma 5.2]{SolQS}.
\end{proof}
\noindent Recall from \cite[Proposition 6.9]{Mor1} that 
$R_\sigma = \{ \alpha \in \Delta_{\ff,\sigma} : q_\sigma (v(\alpha,J)) =
q_{\theta,\alpha} > 1 \}$. Similarly, by \cite[Proposition 3.14]{AMS3}, $R_{\mf s^\vee} = \{ m_\alpha \alpha^\vee : \alpha^\vee \in \Delta_{\ff,\sigma}^\vee, 
q_{m_\alpha \alpha^\vee} > 1 \}$.~Thus Proposition \ref{prop:9.3} produces a canonical bijection
\begin{equation}\label{eq:9.13}
R_\sigma \longleftrightarrow R_{\mf s^\vee},    
\end{equation}
which gives a group isomorphism
\begin{equation}\label{eq:9.14}
W(R_\sigma) \cong W (R_{\mf s^\vee}) . 
\end{equation}
Let $R_{\mf s^\vee}^+$ denote the image of $R_\sigma^+$ under \eqref{eq:9.13}, then it induces a group isomorphism
\begin{equation}\label{eq:9.15}
W_{\mf s} / W(R_\sigma) \cong \Gamma_{\hat \sigma} \cong \Gamma_{\mf s^\vee} \cong
W_{\mf s^\vee} / W(R_{\mf s^\vee}) .
\end{equation}
Recall the affine Hecke algebra $\mc H (G, \hat P_\ff, \hat \sigma )^\circ$ from 
\eqref{eq:5.17}.

\begin{prop}\label{prop:9.4}
Lemma \ref{lem:9.2} and Proposition \ref{prop:9.3} induce an algebra isomorphism
\[
\mc H (\mf s^\vee, q_F^{1/2})^\circ \overset{\sim}{\longrightarrow}  
\mc H (G, \hat P_\ff, \hat \sigma )^\circ .
\]
It is canonical up to: 
\begin{itemize}
\item inner automorphisms that fix $\C [ZW_L  (J,\hat \sigma)]$ pointwise;
\item for each short simple root $\alpha \in R_\sigma$ satisfying $q^*_{\theta,\alpha} = 1$,
$T_{s_\alpha}$ can be replaced with $h_\alpha^\vee \, T_{s_\alpha}$ where 
$h_\alpha^\vee \in R_\sigma^\vee \subset ZW_L  (J,\hat \sigma)$.
\end{itemize}
\end{prop}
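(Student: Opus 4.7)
The plan is to view both algebras as instances of the same abstract affine Hecke algebra in the sense of \cite[Definition 1.11]{SolSurv}, and to match their Bernstein presentations. First I would identify the commutative subalgebras: pullback along the LLC bijection \eqref{eq:9.2} yields an algebra isomorphism $\mc O(\mf s_L^\vee) \cong \C[ZW_L(J,\hat\sigma)]$, as already exhibited in \eqref{eq:9.3}--\eqref{eq:9.4}. By Lemma \ref{lem:9.1} combined with the equivariance in Theorem \ref{thm:8.2}, this isomorphism intertwines the natural actions of $W(R_{\mf s^\vee})$ and $W(R_\sigma)$; the identification $W(R_{\mf s^\vee}) \cong W(R_\sigma)$ itself comes from the bijection \eqref{eq:9.13} of root systems induced by Proposition \ref{prop:9.3}, which in particular matches short roots with short roots and preserves the integral rescaling $\alpha \leftrightarrow m_\alpha \alpha^\vee$ from \cite[Lemma 3.12]{AMS3}.

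Next I would extend this to the full affine structure. Both $\mc H(\mf s^\vee, q_F^{1/2})^\circ$ and $\mc H(G,\hat P_\ff, \hat\sigma)^\circ$ admit Bernstein presentations with commutative generators $\theta_x$ parametrized by the cocharacter lattices of the respective tori, together with generators $T_{s_\alpha}$ for simple reflections. The quadratic relations, the braid relations, and the Bernstein--Lusztig cross relations are determined entirely by the root datum and the pair of parameters $(q_{\alpha^\vee}, q^*_{\alpha^\vee})$. By Proposition \ref{prop:9.3}, these parameters coincide on both sides under \eqref{eq:9.13}. Once a compatible choice of $T_{s_\alpha}$ is fixed, mapping generators to generators produces the desired algebra isomorphism.

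The final step is to track the two sources of non-canonicity in the resulting isomorphism. First, conjugation of each $T_{s_\alpha}$ by a unit $z_\alpha$ of the maximal commutative subalgebra that lies in the centralizer of $s_\alpha$ gives an inner automorphism fixing $\C[ZW_L(J,\hat\sigma)]$ pointwise while rescaling $T_{s_\alpha}$; this accounts for the first type of freedom in the statement. Second, if $\alpha$ is a short simple root with $q^*_{\theta,\alpha} = 1$, the off-diagonal term in the Bernstein--Lusztig cross relation vanishes, and one checks directly that replacing $T_{s_\alpha}$ by $h_\alpha^\vee T_{s_\alpha}$ with $h_\alpha^\vee \in R_\sigma^\vee \subset ZW_L(J,\hat\sigma)$ preserves both the quadratic and the cross relations, yielding the second type of freedom. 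The main obstacle is not this formal comparison of presentations but rather the underlying input of Proposition \ref{prop:9.3}: reconciling the $p$-adic side (reduced via Proposition \ref{prop:4.8} and Corollary \ref{cor:4.2} to the principal-series Hecke algebra of the quasi-split group $G_\alpha$) with the Galois side (reduced to the graded Hecke algebras of $J_\varphi$ from \cite[\S 3]{AMS3}) which passes through \cite[Lemma 5.2]{SolQS}.
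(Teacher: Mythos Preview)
Your construction of the isomorphism is essentially the paper's: identify the maximal commutative subalgebras via \eqref{eq:9.3}--\eqref{eq:9.4}, send $T_{s_{\alpha^\vee}}$ to $T_{s_\alpha}$ along the root bijection \eqref{eq:9.13}, and invoke Proposition \ref{prop:9.3} so that the Bernstein--Lusztig relations on both sides coincide.

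The gap is in the ``canonical up to'' clause. You exhibit two families of automorphisms of $\mc H(G,\hat P_\ff,\hat\sigma)^\circ$ that fix $\C[ZW_L(J,\hat\sigma)]$ pointwise, but you do not argue that \emph{every} such automorphism arises this way. The proposition asserts precisely this exhaustiveness: any two isomorphisms differ by a composite of the listed operations. The paper does not verify this by hand; it invokes the classification of automorphisms of an affine Hecke algebra that restrict to the identity on the large commutative subalgebra, from \cite[Theorem 3.3 and its proof]{AMS4}. That classification is what forces the second bullet to take its specific shape: the substitution $T_{s_\alpha}\mapsto h_\alpha^\vee T_{s_\alpha}$ occurs only for an irreducible component of $R_\sigma$ of type $B_n$ with $q_{\theta,\alpha}^* = 1$ at its unique short simple root, and there are no further outer automorphisms beyond these. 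Your direct check that this substitution preserves the quadratic and cross relations is fine, but you have not checked the braid relations with the remaining $T_{s_\beta}$; this is where the type-$B_n$ hypothesis actually enters and where a bare-hands verification becomes case-dependent. Without either citing \cite{AMS4} or reproducing that classification, the non-canonicity statement remains unproved. (A smaller point: your description of the inner automorphisms as ``conjugation of each $T_{s_\alpha}$ by a unit $z_\alpha$ in the centralizer of $s_\alpha$'' is not quite right; the relevant inner automorphisms are conjugation of the whole algebra by a single unit of $\C[ZW_L(J,\hat\sigma)]^\times = \C^\times \times ZW_L(J,\hat\sigma)$.)
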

\begin{proof}
On the maximal commutative subalgebras, this isomorphism is given by \eqref{eq:9.3} and
\eqref{eq:9.4}, which are canonical.~By construction, the bijection from Lemma \ref{lem:9.2} sends $T_{s_\alpha}$ to 
$T_{s_{\alpha^\vee}}$ whenever simple roots $\alpha$ and $\alpha^\vee$ match via Lemma 
\ref{lem:9.1} and \eqref{eq:9.13}.~By Proposition \ref{prop:9.3} and the multiplication rules in 
Iwahori--Hecke algebras, the linear map 
$\mc H (\mf s^\vee, q_F^{1/2})^\circ \rightarrow 
\mc H (G,\hat P_\ff, \hat \sigma )^\circ$ from Lemma \ref{lem:9.2} is in fact an algebra isomorphism. The non-canonicity of this isomorphism is limited to automorphisms of 
$\mc H (G,\hat P_\ff, \hat \sigma )^\circ$ (or equivalently of 
$\mc H (\mf s^\vee, q_F^{1/2})^\circ$) that respect the properties used in the above
construction, i.e. automorphisms of $\mc H (G,\hat P_\ff, \hat \sigma )^\circ$ 
which are the identity on $\mc O (\Irr (ZW_L  (J,\hat \sigma))) = \C [ZW_L  (J,\hat \sigma)]$. 
Such automorphisms were classified in \cite[Theorem 3.3 and its proof]{AMS4}. Indeed,
conjugation by any element of 
\[
\C [ZW_L (J,\hat \sigma)]^\times = \C^\times \times ZW_L (J,\hat \sigma)
\]
is possible, these are the relevant inner automorphisms of 
$\mc H (G,\hat P_\ff, \hat \sigma )^\circ$. Apart from this, there is at most one nontrivial
possibility for each irreducible component $R_{\sigma,i}$ of the root system $R_\sigma$. 
This occurs only when $R_{\sigma,i}$ has type $B_n$ and $q_{\theta,\alpha}^* = 1$ for the 
unique short simple root $\alpha \in R_{\sigma,i}$. Then there is an automorphism such that: (1) $T_{s_\alpha}$ is mapped to $h_\alpha^\vee T_{s_\alpha}$; and 
(2) $T_{s_\beta}$ is fixed for all other simple roots $\beta \in R_\sigma$. 
We remark that this automorphism could be inner, e.g.~when 
$h_\alpha^\vee / 2 \in ZW_L (J,\hat \sigma)$.
\end{proof}

\subsection{Comparison of 2-cocycles} \label{par:cocycle} \

We now study how the 2-cocycle $\natural_{\mf s^\vee}$ of $\Gamma_{\mf s^\vee}$ corresponds,
via the isomorphism \eqref{eq:9.15}, to a 2-cocycle of $\Gamma_{\hat \sigma}$
coming from $\End_G (\Pi_{\mf s})$. 

Since it is quite difficult to analyze $\natural_{\mf s^\vee}$ (inflated to $W_{\mf s^\vee}$) for 
elements that do not fix points of $\mf s_L^\vee$, we shall fix \label{i:151}
$(\varphi_b,\rho_b) \in \mf s_L^\vee$, and we restrict our attention to $W_{\mf s^\vee,\varphi_b}$
(the stabilizer of $(\varphi_b,\rho_b)$ in $W(G^\vee,L^\vee)^{\mb W_F}$).
To classify all irreducible representations of $\mc H (\mf s^\vee, q_F^{1/2})$, it suffices 
to consider the cases with $\varphi_b$ bounded; see \cite[\S 2]{AMS3}. 

Recall from \cite[\S 3.3.1]{Hai} that there is a natural isomorphism
\begin{equation}\label{eq:1.14}
\mf{X}_\nr (L) \cong H^1 \big( \mb W_F / \mb I_F, Z(L^\vee)^{\mb I_F} \big)^\circ 
\cong \big( Z(L^\vee)^{\mb I_F} \big)_{\mb W_F}^{\; \circ} 
\end{equation}
We write 
\[
\mathfrak{X}_{\nr} (L)^+ := \Hom (L,\R_{>0}) 
\; \subset \; \mathfrak{X}_{\nr} (L) \; \subset \; \Xo (L) 
\]
and we let $\mathfrak{X}_{\nr} (L^\vee)^+$\label{i:152}
be its image in $(Z(L^\vee)^{\mb I_F})_{\mb W_F}^{\;\circ} \subset \Xo (L^\vee)$ under \eqref{eq:1.14}. 
To analyze representations of $\mc H (\mf s^\vee, q_F^{1/2})$ with a central character in 
$W_{\mf s^\vee} \mathfrak{X}_{\nr} (L^\vee)^+ \varphi_b$, one can localize the algebra with 
respect to $\mathfrak{X}_{\nr} (L^\vee)^+ \varphi_b$. The proof of \cite[Theorem 3.18]{AMS3} 
shows that this 
localization can be described by a twisted graded Hecke algebra $\mh H (\varphi_b, v=1, 
\rho_b, \vec{\mb r})$, as in \cite[\S 4]{AMS2} and \cite[(3.9)]{AMS3}. This algebra
contains a twisted group algebra $\C [W_{\mf s^\vee, \varphi_b}, \natural_{\mf s^\vee}]$,
which enables us to study $\natural_{\mf s^\vee} |_{(W_{\mf s^\vee,\varphi_b})^2}$ via the 
description in \cite[Lemma 5.4]{AMS1} and \cite[(89)]{AMS2}, where 
$\C [W_{\mf s^\vee, \varphi_b}, \natural_{\mf s^\vee}]$ is obtained as the endomorphism
algebra of a certain equivariant local system determined by $(\varphi_b,\rho_b)$. 

We need to modify the setup in \cite{AMS1,AMS2,AMS3} from inner twists
of $p$-adic groups to rigid inner twists. The definition of the cuspidal support map for
enhanced L-parameters in this setting can be found in \cite[\S 7]{SolRamif} and
\cite{DiSc}; for the other parts of \cite{AMS1,AMS2,AMS3}, there is hardly any difference.
Let us work out the aforementioned local systems in our case. The enhancement $\rho_b$
can be viewed as a $S_{\varphi_b}^+$-equivariant local system on $\{0\}$ and (by pullback) on
\[
\mr{Lie} \big( Z(L^\vee)^{\mb W_F} \big) = \mr{Lie}(Z_{L^\vee}(\varphi_b)) = 
\mr{Lie}(S_{\varphi_b}^+) .
\]
Let $G_{\varphi_b}^{\vee,+}$ be $S_{\varphi_b}^+$ for $\varphi_b$ viewed as element of 
$\Phi (G)$. Then $S_{\varphi_b}^+$ is a quasi-Levi subgroup (i.e. the centralizer of the 
connected centre of a Levi subgroup) of $G_{\varphi_b}^{\vee,+}$.
We pick a parabolic subgroup $P^{\vee,\circ}$ of $(G_{\varphi_b}^{\vee,+})^\circ$ with
Levi factor $(S_{\varphi_b}^+)^\circ$, and we write $P^\vee := P^{\vee,\circ} S_{\varphi_b}^+$.
Consider the maps 
\begin{multline}
\{0\} \xleftarrow{f_1} \big\{ (x,g) \in \mr{Lie}(G_{\varphi_b}^{\vee,+}) \times 
G_{\varphi_b}^{\vee,+} : \Ad (g^{-1}) x \in \mr{Lie}(P^\vee) \big\} \xrightarrow{f_2} \\ 
\big\{ (x,g P^\vee) \in \mr{Lie}(G_{\varphi_b}^{\vee,+}) \times G_{\varphi_b}^{\vee,+} / P^\vee : 
\Ad (g^{-1}) x \in \mr{Lie}(P^\vee) \big\} \xrightarrow{f_3} \mr{Lie}(G_{\varphi_b}^{\vee,+})
\end{multline}
where $f_2 (x,g) = (x,gP^\vee)$ and $f_3 (x, gP^\vee) = x$.
Let $\dot{\rho_b}$ be the unique $G_{\varphi_b}^{\vee,+} $-equivariant local system on 
\[
\big\{ (x,g P^\vee) \in \mr{Lie}(G_{\varphi_b}^{\vee,+}) \times G_{\varphi_b}^{\vee,+} / 
P^\vee : \Ad (g^{-1}) x \in \mr{Lie}(P^\vee) \big\}
\]
such that $f_2^* \dot{\rho_b} = f_1^* \rho_b$. The map
\[
f_3 : \big\{ (x,g P^\vee) \in \mr{Lie}(G_{\varphi_b}^{\vee,+})_{\mathrm{rss}} \times G_{\varphi_b}^{\vee,+} / 
P^\vee : \Ad (g^{-1}) x \in \mr{Lie}(P^\vee) \big \} \to \mr{Lie}(G_{\varphi_b}^{\vee,+})_{\mathrm{rss}}
\]
restricted to regular semisimple elements\footnote{indicated by a subscript rss} is a fibration with 
fibre $N_{G_{\varphi_b}^{\vee,+}} (S_{\varphi_b}^+) / S_{\varphi_b}^+$. If $(\dot{\rho_b})_{\mathrm{rss}}$
denotes the restriction of $\dot{\rho_b}$ to the regular semisimple locus, then 
$f_{3,!} (\dot{\rho_b})_{\mathrm{rss}}$ is a local system on 
$\mr{Lie}(G_{\varphi_b}^{\vee,+})_{\mathrm{rss}}$. By \cite[Lemma 5.4]{AMS1}, we have
\begin{equation}\label{eq:9.16}
\C [W_{\mf s^\vee,\varphi_b}, \natural_{\mf s^\vee}] \cong \End (f_{3,!} (\dot{\rho_b})_{\mathrm{rss}}),
\end{equation}
where the endomorphisms are taken in the category of $G_{\varphi_b}^{\vee,+}$-equivariant
local systems on $\mr{Lie}(G_{\varphi_b}^{\vee,+})_{\mathrm{rss}}$. The proof of \cite[Lemma 5.4]{AMS1}
uses that of \cite[Proposition 4.5]{AMS1} and \cite[\S 2]{Lus-Int}. There it is
shown that $\End (f_{3,!} (\dot{\rho_b})_{\mathrm{rss}})$ is canonically a direct sum of 
one-dimensional linear subspaces $\mc A_w$, indexed by $w \in W_{\mf s^\vee,\varphi_b}$. By
\cite[(45)]{AMS1}, an element of $\mc A_w$ corresponds to a family $\mc A_{\tilde w}$ of morphisms of 
$S_{\varphi_b}^+$-equivariant local systems on $\mr{Lie}(S_{\varphi_b}^+)_{\mathrm{rss}}$ as follows:
\begin{equation}\label{eq:9.17} 
\mc A_{\tilde w} : \rho_b \to \tilde w^{-1} \cdot \rho_b \text{ for all } \tilde w \in 
N_{G_{\varphi_b}^{\vee,+}} (S_{\varphi_b}^+) \text{ representing } w \in W_{\mf s^\vee,\varphi_b},
\end{equation}
related by 
$\mc A_{\tilde w n} = \mc A_{\tilde w} \circ \text{(action of } n)$ for all $n \in S_{\varphi_b}^+$. The multiplication in\\ 
$\End (f_{3,!} (\dot{\rho_b})_{\mathrm{rss}})$ satisfies
$\mc A_{w_1} \cdot \mc A_{w_2} = \mc A_{w_1 w_2}$, so any choice of a nonzero 
element $A_w$ in each $\mc A_w$ determines a 2-cocycle $\natural_{\mf s^\vee}$
and an isomorphism \eqref{eq:9.16} by the relation
\begin{equation}\label{eq:9.18}
A_{w_1} A_{w_2} = \natural_{\mf s^\vee} (w_1,w_2) A_{w_1 w_2} .
\end{equation}
Multiplying $\varphi_b$ by $z \in \Xo (G^\vee)$, as in \eqref{eq:6.27}, is a
symmetry of the entire setup; in particular, one keeps the same $\rho_b$ and the same
$A_w$. Then \eqref{eq:9.18} shows that 
\begin{equation}\label{eq:9.57}
\natural_{z \mf s^\vee} \text{ can be chosen to be equal to } \natural_{\mf s^\vee} 
\text{ for } z \in \Xo (G^\vee) .
\end{equation}
Recall from \eqref{eq:6.3} that 
\begin{equation}\label{eq:9.29}
W_{\mf s^\vee,\varphi_b} \cong W(G^\vee,L^\vee)^{\mb W_F}_{\varphi_b,\rho_b} \cong
W(N_{G^\vee}(L^\vee),T^\vee)^{\mb W_F}_{\varphi_b,\rho_b} \big/ 
W(L^\vee,T^\vee)^{\mb W_F}_{\varphi_b,\rho_b} .
\end{equation}
Via the canonical bijection $\Irr (\mc E_\eta^{\varphi_T}, \mr{id}) \to \Irr (S_{\varphi_b}^+, \eta)$ from \eqref{eq:6.7}--\eqref{eq:6.8}, we can replace $\rho_b$ by a representation 
$\rho_\eta$ of $\mc E_\eta^{\varphi_T}$. The conjugation action of $N_{G^\vee}
(L^\vee),T^\vee)^{\mb W_F}_{\varphi_T,\eta}$ on $\mc E_\eta^{\varphi_T}$ is trivial 
on $T^{\vee,\mb W_F}$. Thus $w^{-1} \rho_\eta$ and $w^{-1} \rho_b$ are well-defined 
representations for $w \in W(N_{G^\vee}(L^\vee),T^\vee)^{\mb W_F}_{\varphi_b,\rho_b}$.

We now vary on \eqref{eq:9.17} by picking representatives 
$\tilde w \in N_{G_{\varphi_b}^{\vee,+}} (S_{\varphi_b}^+)$ and fixing
\begin{equation}\label{eq:9.32}
A_{\eta,\tilde w} : \rho_\eta \to \tilde w^{-1} \cdot \rho_\eta 
\end{equation}
for each $w \in W(N_{G^\vee}(L^\vee),T^\vee)^{\mb W_F}_{\varphi_b,\rho_b}$. We impose $A_{\eta, \tilde w t} = \mc A_{\eta, \tilde w} \rho_\eta (t) =
A_{\eta, \tilde w} \eta (t)$ for all $t \in \bar T^{\vee,+}$. In these terms, \eqref{eq:9.18} can be rewritten as
\begin{equation}\label{eq:9.20} 
A_{\eta,\tilde w_1} A_{\eta,\tilde w_2} = \natural_{\mf s^\vee} (w_1,w_2)
A_{\eta,\tilde w_1 \tilde w_2} = \natural_{\mf s^\vee} (w_1,w_2) 
A_{\eta, \widetilde{w_1 w_2}} \eta (\widetilde{w_1 w_2}^{-1} \tilde w_1 \tilde w_2) .
\end{equation}
Note that $w_1, w_2 \in W(N_{G^\vee}(L^\vee),T^\vee)^{\mb W_F}_{\varphi_b,\rho_b}$ in 
\eqref{eq:9.20}, in contrast to \eqref{eq:9.17} and \eqref{eq:9.18}. For suitable
choices of the $A_{\eta, \tilde w}$, the 2-cocycles $\natural_{\mf s^{\vee}}$ in
\eqref{eq:9.18} and \eqref{eq:9.20} coincide, while in general they are only cohomologous.
For book-keeping purposes, we introduce two further 2-cocycles of 
$W(N_{G^\vee}(L^\vee),T^\vee)^{\mb W_F}_{\varphi_b,\rho_b}$:
\begin{align*}
\natural_{\mf s^\vee,\rho_\eta}(w_1,w_2) := A_{\eta,\tilde w_1} A_{\eta,\tilde w_2} 
A_{\eta, \widetilde{w_1 w_2}}^{-1} \quad\text{and}\quad \natural_\eta (w_1,w_2) := \eta (\widetilde{w_1 w_2}^{-1} \tilde w_1 \tilde w_2) .
\end{align*}
We record that $\natural_\eta$ is the 2-cocycle associated to the extension obtained from
\[
1 \to \bar T^{\vee,+} \to N_{\bar G^\vee} (L^\vee,T^\vee)^+_{\eta,\varphi_b} \to 
W(G^\vee,T^\vee)^{\mb W_F}_{\eta,\varphi_b} \to 1
\]
by pushout along $\eta :  \bar T^{\vee,+} \to \C^\times$. 
Thus \eqref{eq:9.20} means $\natural_{\mf s^\vee,\rho_\eta} = 
\natural_{\mf s^\vee} \natural_\eta$, or equivalently,
\begin{equation}\label{eq:9.21}
\natural_{\mf s^\vee} = \natural_{\mf s^\vee,\rho_\eta} \natural_\eta^{-1} .
\end{equation}
Although, indeed, all these 2-cocycles depend on various choices of representatives, 
their cohomology classes are uniquely determined. 

Next we analyze the relevant 2-cocycles of $\Gamma_{\hat \sigma}$, which is hard to do for 
elements of $\Gamma_{\hat \sigma}$ that do not fix any object of $\Irr (L)_{\mf s_L}$.~Thus 
we focus on $\tau \in \Irr (L)_{\mf s_L}$ corresponding to the above $(\varphi_b,\rho_b)$ via
Theorem \ref{thm:8.2}.~By Proposition \ref{prop:8.7}, it is tempered and unitary.~By 
Theorem \ref{thm:8.2} and Lemma \ref{lem:9.1}, we obtain a canonical isomorphism
\begin{equation}\label{eq:9.33}
W_{\mf s,\tau} \cong W_{\mf s^\vee, \varphi_b} .     
\end{equation}
Recall from \cite[Theorem 6.11]{SolEnd} that a suitably localized version of 
$\End_G (\Pi_{\mf s})$ contains a twisted group algebra 
\begin{equation}\label{eq:9.34}
\C [\Gamma_{\hat \sigma, \tau}, \natural_\tau] .    
\end{equation}
We may inflate $\natural_\tau$\label{i:165} from $\Gamma_{\hat \sigma, \tau} \cong
W_{\mf s,\tau} / W(R_{\sigma,\tau})$ to a 2-cocycle of $W_{\mf s,\tau}$.~By \cite[(4.13) 
and proof of Proposition 5.12.a]{SolEnd}, $\natural_\tau$ can be constructed via intertwiners 
of $L$-representations
\begin{equation}\label{eq:9.22}
\dot w \cdot \tau \to \tau \text{ for } \dot w \in N_G (L) \text{ representing } 
w \in W_{\mf s,\tau}.
\end{equation}
This is quite similar to how $\mu_{\hat \sigma}$ is defined in 
\eqref{eq:5.38}--\eqref{eq:5.39}.~Unfortunately, in general (i.e.~when
$L^2_\tau \neq L^3_\tau \neq L^4_\tau$), it is difficult to formulate the link 
between $\natural_\tau$ and $\mu_{\hat \sigma}$ precisely. The 2-cocycle $\natural_\tau$ can be described further with the construction
of $\tau$ \`a la Deligne--Lusztig. Let $(jT,\theta,\rho)$ be the datum corresponding
to $(\varphi_b,\rho_b)$ via Theorem \ref{thm:8.2}. Recall from \eqref{eq:7.21} that 
$\tau = \kappa_{jT, \theta, \rho}^{L,\epsilon} =
\big( \rho \otimes \ind_{L_\ff}^L \mr{inf}_{\mc L_\ff (k_F)}^{L_\ff} (\pm 
\mc R_{j \mc T (k_F)}^{\mc L_\ff (k_F)} (\theta))^\epsilon \big)^{N_L (jT)_\theta}$. We also recall from \eqref{eq:9.30} that
\begin{equation}\label{eq:9.31}
W_{\mf s,\tau} \cong W(N_G (L),jT)_{\theta,\rho} / W(L,jT)_{\theta,\rho} .
\end{equation}
For $g \in N_G (L,jT)_{\theta,\rho} \subset G_\ff$ representing a 
$w \in W_{\mf s,\tau}$, we recall the isomorphism $g \cdot \kappa_{jT, \theta, \rho}^{L,\epsilon} \cong 
\kappa_{jT, \theta, g \cdot \rho}^{L,\epsilon}$ from \eqref{eq:7.28}--\eqref{eq:7.26}. It was canonical up to the choice of $\epsilon$,
but meanwhile $\epsilon$ has been fixed in Theorem \ref{thm:8.2}. Thus the choice of
an isomorphism as in \eqref{eq:9.22} boils down to the choice of an isomorphism of
$N_L (jT)_\theta$-representations 
\begin{equation}\label{eq:9.23}
g \cdot \rho \to \rho .     
\end{equation}
Recall the canonical bijection $\Irr (\mc E_\theta^{[x]},\mr{id}) \to \Irr (N_L (jT)_\theta, \theta)$ from \eqref{eq:7.19}--\eqref{eq:7.20}. We denote the preimage of $\rho$ by 
$\rho^{[x]} \in \Irr (\mc E_\theta^{[x]})$. Then \eqref{eq:9.23} is
equivalent to the choice of an isomorphism $B_g : g \cdot \rho^{[x]} \to \rho^{[x]}$ of 
$\mc E_\theta^{[x]}$-representations, for $g \in N_G (L,jT)_{\theta,\rho}$. We may assume that 
\begin{equation}\label{eq:9.26}
B_{gl} = B_g \circ \rho^{[x]}(l) \text{ for all } l \in N_L (jT)_\theta .
\end{equation}
In these terms, $\natural_\tau$ is given by
\begin{equation}\label{eq:9.25}
B_{g_1} B_{g_2} = \natural_\tau (w_1,w_2) B_{g_1 g_2}
\end{equation}
for $g_i$ representing $w_i \in W_{\mf s,\tau}$. For any $\chi \in \Xo (G)$, we have
\[
\Hom_{N_L (jT)_{\chi \otimes \theta}} (g \cdot (\chi \otimes \rho), \chi \otimes \rho) 
= \Hom_{N_L (jT)_\theta} (\chi \otimes g \cdot \rho, \chi \otimes \rho)
= \Hom_{N_L (jT)_\theta} (g \cdot \rho, \rho) .
\]
Hence we can use the same $B_g$ for $\chi \otimes \theta$ and for $\theta$. Knowing
this, \eqref{eq:9.25} shows that
\begin{equation}\label{eq:9.58}
\natural_{\chi \otimes \tau} \text{ can be chosen to be equal to } \natural_\tau 
\text{ for } \chi \in \Xo (G).
\end{equation}
Since the conjugation action of 
$jT$ on $\mc E_\theta^{[x]}$ is trivial, $g \cdot \rho^{[x]}$ is a well-defined
$\mc E_\theta^{[x]}$-representation for $g \in W(N_G (L),jT)_{\theta,\rho}$. 
We choose a set of representatives $\tilde w \in N_G (L,jT)_{\theta,\rho}$ for 
$W(N_G (L),jT)_{\theta,\rho}$, and we assume \eqref{eq:9.26} only for $l \in jT$.
(Thus we are implicitly inflating $\natural_\tau$ to $W(N_G (L),jT)_{\theta,\rho}$,
and we allow it to be replaced by a cohomologous 2-cocycle.)
Then for $w_1,w_2 \in W(N_G (L),jT)_{\theta,\rho}$, \eqref{eq:9.25}  becomes
\begin{equation}\label{eq:9.27}
\begin{aligned}
B_{\tilde{w_1}} B_{\tilde{w_2}} = \natural_\tau (w_1,w_2) B_{\tilde{w_1} \tilde{w_2}} 
& = \natural_\tau (w_1,w_2) B_{\widetilde{w_1 w_2}} \rho^{[x]} 
(\widetilde{w_1 w_2}^{-1} \tilde{w_1} \tilde{w_2}) \\
& = \natural_\tau (w_1,w_2) B_{\widetilde{w_1 w_2}} 
\theta (\widetilde{w_1 w_2}^{-1} \tilde{w_1} \tilde{w_2}) .
\end{aligned}
\end{equation}
Let us define two 2-cocycles of $W(N_G (L),jT)_{\theta,\rho}$ by
\begin{align*}
\natural_{\mf s, \rho^{[x]}} (w_1,w_2) := 
B_{\tilde{w_1}} B_{\tilde{w_2}} B_{\widetilde{w_1 w_2}}^{-1} \quad\text{and}\quad
\natural_\theta (w_1, w_2) := \theta (\widetilde{w_1 w_2}^{-1} \tilde{w_1} \tilde{w_2}) .
\end{align*}
Note that $\natural_\theta$ is the 2-cocycle associated to the extension 
$\mc E_{\theta,G}^{[x]}$ from \eqref{eq:7.35}--\eqref{eq:7.36}. 
Now \eqref{eq:9.27} gives  
$\natural_{\mf s, \rho^{[x]}} = \natural_\tau \natural_\theta$, or equivalently,
\begin{equation}\label{eq:9.28}
\natural_\tau = \natural_{\mf s, \rho^{[x]}} \natural_\theta^{-1} .    
\end{equation}
The natural isomorphism \eqref{eq:8.22} restricts to
\begin{equation}\label{eq:9.53}
W(N_{\mc G^\flat}(\mc L^\flat), \mc T^\flat)(F)_{[x],\theta} \cong
W(N_{G^\vee}(L^\vee), T^\vee )^{\mb W_F}_{\eta,\varphi_T} .
\end{equation}
This enables us to compare \eqref{eq:7.36} with \eqref{eq:6.26}. 

\begin{prop}\label{prop:9.5}
There exist isomorphisms of extensions of \eqref{eq:9.53} by $\C^\times$: 
\begin{equation*}
    \zeta_G^\rtimes : \mc E_{\theta,G}^{\rtimes [x]}  \xrightarrow{\sim} \mc E_{\eta,G}^{\rtimes \varphi_T},\quad \zeta_G^0 : \mc E_{\theta,G}^{0,[x]} \xrightarrow{\sim}  \mc E_{\eta,G}^{0,\varphi_T} \quad \text{and} \quad
B(\zeta_G^0, \zeta_G^\rtimes) :  \mc E_{\theta,G}^{[x]} \xrightarrow{\sim} \mc E_{\eta,G}^{\varphi_T},
\end{equation*}
which contain the similar isomorphisms without subscripts $G$ from \eqref{eq:8.19}. These 
isomorphisms do not change if we adjust both $\theta$ and $\varphi_T$ by an element of $\Xo (G)$.
\end{prop}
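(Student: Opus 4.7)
The plan is to adapt the construction of the three isomorphisms without subscript $G$ from Lemmas \ref{lem:8.5}, \ref{lem:8.6} and the analogue of \cite[Proposition 8.1]{Kal4} to the setting where the ambient groups $\mc L^\flat$ and $L^\vee$ are enlarged to $N_{\mc G^\flat}(\mc L^\flat)$ and $N_{G^\vee}(L^\vee)$. The three pieces will be built in the order $\zeta_G^0$, $\zeta_G^\rtimes$, then $B(\zeta_G^0,\zeta_G^\rtimes)$ as a Baer sum, and compatibility with the existing isomorphisms from \eqref{eq:8.19} will be checked along the way.

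First, I would construct $\zeta_G^0$. The extensions $\mc E_{\theta,G}^{0,[x]}$ and $\mc E_{\eta,G}^{0,\varphi_T}$ are (by their construction in \eqref{eq:7.35}--\eqref{eq:7.36} and \eqref{eq:6.26}) obtained from the extensions
\[
1 \to T \to N_{L^\flat}(T^\flat)_{[x],\theta} \to W(\mc L^\flat,\mc T^\flat)(F)_{[x],\theta}\to 1,
\qquad
1 \to \bar T^{\vee,+} \to N_{\bar L^\vee}(\bar T^\vee)^+_{\varphi_T} \to W(L^\vee,T^\vee)^{\mb W_F}_{\varphi_T} \to 1
\]
by pullback to the relevant stabilizers inside $W(N_{\mc G^\flat}(\mc L^\flat),\mc T^\flat)(F)_{[x],\theta} \cong W(N_{G^\vee}(L^\vee),T^\vee)^{\mb W_F}_{\varphi_T,\eta}$ and pushout along $\theta$ (resp.~$\eta$). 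Propositions \ref{prop:7.9} and \ref{prop:6.4} produce \emph{$W(N_{G^\vee}(L^\vee),T^\vee)^{\mb W_F}_{\eta,\Xo(L^\vee)\varphi_T}$-equivariant} splittings of these extensions after pushout. Since that larger equivariance group already contains the image of $N_{\mc G^\flat}(\mc L^\flat)$ needed for the $G$-extensions, the splittings descend to give $\zeta_G^0$, and by construction $\zeta_G^0$ restricts on the sub-extensions over $W(\mc L^\flat,\mc T^\flat)(F)_{[x],\theta}$ to the isomorphism $\zeta^0$ of Lemma \ref{lem:8.5}.

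Next, for $\zeta_G^\rtimes$ I would invoke the analogue of \cite[Proposition 8.1]{Kal4}. Kaletha's argument there produces a canonical isomorphism between the pushed-out semidirect product extensions on the two sides, and it uses only the tamely ramified $\chi$-data for the root system $R(\mc L^\flat,\mc T^\flat)$ together with the rigidifying datum $x$. All of these data are available equally well for $N_{\mc G^\flat}(\mc L^\flat)$ in place of $\mc L^\flat$, because the root system $R(\mc G^\flat,\mc T^\flat)$ extends $R(\mc L^\flat,\mc T^\flat)$ with a chosen set of $\chi$-data (by \cite[Remark 4.1.5]{Kal3}), and $N_{\mc G^\flat}(\mc L^\flat)$ acts on both sides through pinning-preserving automorphisms. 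Applying Kaletha's construction to the larger group, one obtains $\zeta_G^\rtimes$ whose restriction to the $W(\mc L^\flat,\mc T^\flat)(F)_{[x],\theta}$-subextension is precisely the $\zeta^\rtimes$ from \eqref{eq:8.23}.

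Third, I would define $B(\zeta_G^0,\zeta_G^\rtimes)$ as the Baer sum, once I verify the $G$-analogues of Lemmas \ref{lem:7.10} and \ref{lem:6.3}: namely, that $\mc E_{\theta,G}^{[x]}$ decomposes as the Baer sum of $\mc E_{\theta,G}^{0,[x]}$ and $\mc E_{\theta,G}^{\rtimes[x]}$ as extensions of $W(N_{\mc G^\flat}(\mc L^\flat),\mc T^\flat)(F)_{[x],\theta}$-groups, and likewise on the Galois side. The proofs of Lemmas \ref{lem:7.10} and \ref{lem:6.3} use only that setwise splittings can be multiplied and 2-cocycles added; replacing $\mc L^\flat$ by $N_{\mc G^\flat}(\mc L^\flat)$ throughout, the same proofs apply. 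The Baer sum of $\zeta_G^0$ and $\zeta_G^\rtimes$ then restricts to the Baer sum of $\zeta^0$ and $\zeta^\rtimes$, which is precisely $B(\zeta^0,\zeta^\rtimes)$.

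For the $\Xo(G)$-invariance, any $\chi \in \Xo(G)$ restricts to an element of $\Xo(L)$, and its image $z \in \Xo(G^\vee)$ under \eqref{eq:8.4} restricts to an element of $\Xo(L^\vee)$. The extensions $\mc E_{\chi\otimes\theta,G}^{[x]}$ and $\mc E_{\theta,G}^{[x]}$ (resp. their $0$- and $\rtimes$-variants) are canonically identified by the analogue of \eqref{eq:8.14} for $N_{\mc G^\flat}(\mc L^\flat)$, and similarly on the Galois side via \eqref{eq:9.57}-type identifications. Since Propositions \ref{prop:7.9} and \ref{prop:6.4} (and the $\mc G^\flat$-analogue of \cite[Proposition 8.1]{Kal4}) produce the splittings uniformly over the full $\Xo(L)$-orbit, the isomorphisms $\zeta_G^0$, $\zeta_G^\rtimes$, $B(\zeta_G^0,\zeta_G^\rtimes)$ are compatible with these identifications, and in particular unchanged upon simultaneously twisting $\theta$ by $\chi$ and $\varphi_T$ by $z$. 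The main obstacle I anticipate is checking that Kaletha's canonical isomorphism from \cite[Proposition 8.1]{Kal4} really is functorial with respect to the action of $N_{\mc G^\flat}(\mc L^\flat)/\mc L^\flat$: this will require inspecting Kaletha's construction to confirm that the geometric input (the choice of $\chi$-data and the associated $L$-embedding) behaves equivariantly, which it does because $N_{\mc G^\flat}(\mc L^\flat)$ preserves a pinning of $\mc L^\flat$ and acts on the root system by diagram symmetries.
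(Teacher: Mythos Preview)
There is a genuine gap in your construction of $\zeta_G^0$. You describe $\mc E_{\theta,G}^{0,[x]}$ and $\mc E_{\eta,G}^{0,\varphi_T}$ as obtained from the smaller extensions over $W(\mc L^\flat,\mc T^\flat)(F)_{[x],\theta}$ and $W(L^\vee,T^\vee)^{\mb W_F}_{\eta,\varphi_T}$ ``by pullback to the relevant stabilizers inside'' the larger Weyl group. This is backwards: the $G$-extensions in \eqref{eq:7.36} and \eqref{eq:6.26} live over the \emph{full} group $W(N_{\mc G^\flat}(\mc L^\flat),\mc T^\flat)(F)_{[x],\theta}\cong W(N_{G^\vee}(L^\vee),T^\vee)^{\mb W_F}_{\varphi_T,\eta}$, and the smaller extensions $\mc E_\theta^{0,[x]}$, $\mc E_\eta^{0,\varphi_T}$ are their restrictions to the subgroup, not the other way around. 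Consequently, the $W(N_{G^\vee}(L^\vee),T^\vee)^{\mb W_F}_{\eta,\Xo(L^\vee)\varphi_T}$-equivariant splittings supplied by Propositions~\ref{prop:7.9} and~\ref{prop:6.4} only tell you that the small extensions split in a way compatible with the conjugation action of the big group; they do \emph{not} furnish lifts for the elements of the big quotient lying outside the small quotient. An equivariant splitting over a subgroup does not, in general, extend to a splitting over the ambient group.

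The paper handles this by proving, as separate and substantial results (Propositions~\ref{prop:7.3} and~\ref{prop:6.6} in the appendices), that the large extensions $\mc E_{\theta,G}^0$ and $\mc E_{\eta,G}^{0,\varphi_T}$ themselves split; once both are split they are trivially isomorphic, and restriction to the subgroup recovers~$\zeta^0$. These appendix propositions require their own case-by-case analysis (of essentially the same flavour as Propositions~\ref{prop:7.9} and~\ref{prop:6.4}, but now lifting elements of $W(N_{\mc G^\flat}(\mc L^\flat),\mc T^\flat)$ rather than merely checking equivariance). Your plan for $\zeta_G^\rtimes$ via \cite[Proposition~8.1]{Kal4} and for $B(\zeta_G^0,\zeta_G^\rtimes)$ via the Baer-sum decompositions is correct and matches the paper (which cites Lemmas~\ref{lem:7.2} and~\ref{lem:6.5} for the latter), and your treatment of $\Xo(G)$-invariance is fine; the missing ingredient is precisely the direct splitting of the big ``$0$''-extensions.
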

\begin{proof}
For $\zeta_G^\rtimes$, this follows from \cite[Proposition 8.1]{Kal4}, as in \eqref{eq:8.23}.
The isomorphism $\zeta_G^0$ exists because both source and target are split
by Propositions \ref{prop:7.3} and \ref{prop:6.6}. By Lemmas \ref{lem:7.2} and \ref{lem:6.5},
the Baer sum of $\zeta_G^0$ and $\zeta_G^\rtimes$ is the required isomorphism
$B(\zeta_G^0, \zeta_G^\rtimes)$. By \eqref{eq:9.57} and \eqref{eq:9.58}, we can make all the 
choices invariant under twisting by $\Xo (G) \cong \Xo (G^\vee)$.
\end{proof}

\noindent We are ready to complete the comparison of the 2-cocycles $\natural_{\mf s^\vee}$ and
$\natural_\tau$ on $W_{\mf s^\vee, \varphi_b} \cong W_{\mf s,\tau}$.~Recall that in the above
process we have already inflated these 2-cocycles to
\begin{equation}\label{eq:9.35}
W( N_{G^\vee}(L^\vee), T^\vee )^{\mb W_F}_{\eta,\varphi_b,\rho_b} \cong
W( N_{\mc G^\flat}(\mc L^\flat), \mc T^\flat) (F)_{[x],\theta,\rho} ,
\end{equation}
via \eqref{eq:9.29} and \eqref{eq:9.31}.

\begin{thm}\label{thm:9.6}
\enuma{
\item The following equalities hold in 
$H^2 \big( W( N_{G^\vee}(L^\vee), T^\vee )^{\mb W_F}_{\eta,\varphi_b,\rho_b}, \C^\times \big)$:
\[
\natural_{\mf s^\vee, \rho_\eta} = \natural_{\mf s,\rho^{[x]}} ,\quad
\natural_\eta = \natural_\theta ,\quad\text{and}\quad  \natural_{\mf s^\vee} = \natural_\tau .
\]
\item The 2-cocycles $\natural_{\mf s^\vee}$ and $\natural_\tau$ of
$W_{\mf s^\vee, \varphi_b} \cong W_{\mf s,\tau}$ are cohomologous.
}
\end{thm}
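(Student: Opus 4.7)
My plan is to prove the three equalities in part (a) via the canonical isomorphism of central extensions $B(\zeta_G^0, \zeta_G^\rtimes) : \mc E_{\theta,G}^{[x]} \xrightarrow{\sim} \mc E_{\eta,G}^{\varphi_T}$ supplied by Proposition \ref{prop:9.5}, and then to deduce part (b) by descending the resulting identification from $W(N_{G^\vee}(L^\vee), T^\vee)^{\mb W_F}_{\eta,\varphi_b,\rho_b}$ to its quotient $W_{\mf s^\vee,\varphi_b} \cong W_{\mf s,\tau}$.

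First I would establish the equality $\natural_\eta = \natural_\theta$: by their very construction, the cocycle $\natural_\theta(w_1,w_2) = \theta(\widetilde{w_1 w_2}^{-1} \tilde w_1 \tilde w_2)$ is the one classifying the extension $\mc E_{\theta,G}^{[x]}$ (pulled back via \eqref{eq:9.53}), and $\natural_\eta$ similarly classifies $\mc E_{\eta,G}^{\varphi_T}$; since Proposition \ref{prop:9.5} exhibits these as isomorphic extensions, the classes in $H^2$ coincide. Next I would establish $\natural_{\mf s,\rho^{[x]}} = \natural_{\mf s^\vee,\rho_\eta}$: the isomorphism $B(\zeta_G^0, \zeta_G^\rtimes)$ intertwines the canonical bijections $\Irr(\mc E_\theta^{[x]},\mr{id}) \leftrightarrow \Irr(N_L(jT)_\theta,\theta)$ from \eqref{eq:7.20} and $\Irr(\mc E_\eta^{\varphi_T},\mr{id}) \leftrightarrow \Irr(S_{\varphi_b}^+,\eta)$ from \eqref{eq:6.8}--\eqref{eq:8.18}, so $\rho^{[x]}$ corresponds to $\rho_\eta$. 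Under this identification, the self-intertwiners $B_{\tilde g}$ of $\rho^{[x]}$ defining $\natural_{\mf s,\rho^{[x]}}$ via \eqref{eq:9.27} match (up to scalar normalization on a chosen set of coset representatives) the self-intertwiners $A_{\eta,\tilde w}$ of $\rho_\eta$ defining $\natural_{\mf s^\vee,\rho_\eta}$ via \eqref{eq:9.20}, so the two cocycles differ at worst by a coboundary. Combining these two equalities with the Baer-sum relations \eqref{eq:9.21} and \eqref{eq:9.28} then yields $\natural_{\mf s^\vee} = \natural_\tau$ in $H^2$.

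For part (b), the cocycles $\natural_\tau$ on $W_{\mf s,\tau}$ and $\natural_{\mf s^\vee}$ on $W_{\mf s^\vee,\varphi_b}$ are by construction inflated along the canonical surjections from the larger group of part (a), and their inflated classes agree there by part (a). The kernel of this surjection, namely $W(L^\vee,T^\vee)^{\mb W_F}_{\varphi_b,\rho_b} \cong W(\mc L,\mc T)(F)_\theta$, is central by Lemma \ref{lem:6.2} (and symmetrically by Lemma \ref{lem:7.7} on the $p$-adic side); applying the five-term Hochschild--Serre exact sequence, together with the observation that both $\natural_\tau$ and $\natural_{\mf s^\vee}$ are trivial on the central kernel by construction, lets me descend the equality of inflated classes back to an equality of classes on $W_{\mf s^\vee,\varphi_b}$.

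The main obstacle will be the bookkeeping demanded by the second step above: checking that $B(\zeta_G^0, \zeta_G^\rtimes)$ genuinely transports the Deligne--Lusztig-theoretic intertwiners $B_{\tilde g}$ from \cite[(2.18)]{Kal3} onto the geometric intertwiners $A_{\eta,\tilde w}$ from \cite[Lemma 5.4]{AMS1} amounts to matching two quite differently constructed families of self-intertwiners. The key leverage is that on both sides the normalizations have already been aligned — via the coherent splitting $\epsilon$ fixed in Theorem \ref{thm:8.1} on the $p$-adic side, and via the choice of $\zeta_G^0$ on the Galois side, these being assembled compatibly in Proposition \ref{prop:9.5} — so the residual ambiguity is a scalar function that only affects the cocycle by a coboundary, which is exactly the tolerance allowed by the cohomological statement we need.
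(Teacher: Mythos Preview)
Your approach to part (a) matches the paper's exactly: both invoke Proposition~\ref{prop:9.5} to identify the extensions $\mc E_{\theta,G}^{[x]} \cong \mc E_{\eta,G}^{\varphi_T}$ and hence obtain $\natural_\eta = \natural_\theta$; both use the $W(N_{G^\vee}(L^\vee),T^\vee)^{\mb W_F}_{\eta,\varphi_T}$-equivariance of $B(\zeta_G^0,\zeta_G^\rtimes)$ to transport $\rho^{[x]}$ to $\rho_\eta$ together with the intertwiner families $B_{\tilde w} \leftrightarrow A_{\eta,\tilde w}$, yielding $\natural_{\mf s,\rho^{[x]}} = \natural_{\mf s^\vee,\rho_\eta}$; and both then combine these via \eqref{eq:9.21} and \eqref{eq:9.28}.

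For part (b), the paper does not use Hochschild--Serre: it simply records that both $\natural_{\mf s^\vee}$ and $\natural_\tau$ are, by their construction in \eqref{eq:9.18} and \eqref{eq:9.25}, inflations from the quotient $W_{\mf s^\vee,\varphi_b} \cong W_{\mf s,\tau}$, and concludes that the equality of inflated classes from part (a) descends via the isomorphism \eqref{eq:9.33}. Your five-term-sequence route is more elaborate, but the justification you supply does not do the job: the observation that $\natural_\tau$ and $\natural_{\mf s^\vee}$ restrict trivially to the central kernel $N$ is automatic for \emph{any} inflated cocycle, and neither this nor the centrality of $N$ (Lemma~\ref{lem:6.2}) forces the transgression $\Hom(N,\C^\times) \to H^2(W_{\mf s^\vee,\varphi_b},\C^\times)$ to vanish, which is what injectivity of inflation would require. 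If you want to make the descent watertight along these lines, you would need the separate statement that every character of $N$ extends to the larger group; alternatively, one can bypass cohomology and compare the twisted group algebras $\C[W_{\mf s^\vee,\varphi_b},\natural_{\mf s^\vee}]$ and $\C[W_{\mf s,\tau},\natural_\tau]$ directly inside the concrete endomorphism algebras where they are realized.
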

\begin{proof}
(a) The isomorphism $B(\zeta_G^0, \zeta_G^\rtimes) : \mc E_{\theta,G}^{[x]} \xrightarrow{\sim} 
\mc E_{\eta,G}^{\varphi_T}$ from Lemma \ref{lem:8.6}, translates $\rho_\eta$ into
$\rho^{[x]}$, because $\pi (\varphi_b,\rho_b) = \tau = \pi_{jT,\theta,\rho}^{L,\epsilon}$. By the $W( N_{G^\vee}(L^\vee), T^\vee )^{\mb W_F}_{\eta,\varphi_T}$-equivariance of
$B(\zeta_G^0, \zeta_G^\rtimes)$, the data for computing $\natural_{\mf s, \rho^{[x]}}$
match exactly with the data for computing $\natural_{\mf s^\vee, \rho_\eta}$. Hence
any choice of the $A_{\eta,\tilde w}$ in \eqref{eq:9.32} corresponds to a choice
of the $B_{\tilde w}$ in \eqref{eq:9.27}, and with these choices the 2-cocycles
$\natural_{\mf s^\vee, \rho_\eta}$ and $\natural_{\mf s, \rho^{[x]}}$ coincide.

The isomorphism $\mc E_{\eta,G}^{[x]} \xrightarrow{\sim} \mc E_{\eta,G}^{\varphi_T}$ from 
Proposition \ref{prop:9.5} shows that $\natural_\theta$ and $\natural_\eta$ are cohomologous.
~By \eqref{eq:9.28} and \eqref{eq:9.21}, we compute in  
$H^2 \big( W( N_{G^\vee}(L^\vee), T^\vee )^{\mb W_F}_{\eta,\varphi_b,\rho_b}, \C^\times \big)$:
\[
\natural_{\mf s^\vee} = \natural_{\mf s^\vee, \rho_\eta} \natural_\eta^{-1} =
\natural_{\mf s, \rho^{[x]}} \natural_\theta^{-1} = \natural_\tau .
\]
(b) By part (a), $\natural_{\mf s^\vee}$ and $\natural_\tau$ are cohomologous
2-cocycles of \eqref{eq:9.35}.~By construction, $\natural_{\mf s^\vee} \in 
H^2 \big( W( N_{G^\vee}(L^\vee), T^\vee )^{\mb W_F}_{\eta,\varphi_b,\rho_b}, \C^\times \big)$ arises by inflation from 
$\natural_{\mf s^\vee} \in H^2 (W_{\mf s^\vee,\varphi_b},\C^\times)$, and $\natural_\tau \in 
H^2 \big( W( N_{\mc G^\flat}(\mc L^\flat), \mc T^\flat) (F)_{[x],\theta,\rho} ,\C^\times \big)$ is inflated from $\natural_\tau \in H^2 (W_{\mf s,\tau}, \C^\times)$. Hence
$\natural_{\mf s^\vee}$ and $\natural_\tau$ are cohomologous 2-cocycles, via the
isomorphism \eqref{eq:9.33}.
\end{proof}
Recall that the root system $R_{\sigma,\tau}$ from \eqref{eq:5.35} consists
of roots $\alpha \in R_\sigma$ satisfying $s_\alpha (\tau) = \tau$. Via
\eqref{eq:9.13}, $R_{\sigma,\tau}$ corresponds to the root system\label{i:153}
\[
R_{\mf s^\vee,\varphi_b} := \{ \beta \in R_{\mf s^\vee} : s_\beta (\varphi_b) = \varphi_b \}.
\]
The set of positive roots $R_{\mf s^\vee,\varphi_b}^+ = R_{\mf s^\vee,\varphi_b} \cap R_{\mf s^\vee}^+$ gives rise to a decomposition\label{i:154}
\begin{equation}\label{eq:9.36}
W_{\mf s^\vee, \varphi_b} = W(R_{\mf s^\vee}, \varphi_b) \rtimes \Gamma_{\mf s^\vee,\varphi_b},
\quad\text{where}\quad\Gamma_{\mf s^\vee,\varphi_b} = 
\mr{Stab}_{W_{\mf s^\vee, \varphi_b}} (R_{\mf s^\vee,\varphi_b}^+) .
\end{equation}
This is compatible with \eqref{eq:5.36}, thus \eqref{eq:9.33} decomposes into
isomorphisms
\begin{equation}\label{eq:9.37}
W(R_{\sigma,\tau}) \cong W(R_{\mf s^\vee,\varphi_b}) \quad \text{and}
\quad \Gamma_{\hat \sigma,\tau} \cong \Gamma_{\mf s^\vee, \varphi_b} .
\end{equation}
Similar to \eqref{eq:9.34}, the 2-cocycle $\natural_{\mf s^\vee}$ is trivial
on $W(R_{\mf s^\vee,\varphi_b}) \subset W(R_{\mf s^\vee})$, so 
$\natural_{\mf s^\vee} |_{(W_{\mf s^\vee,\varphi_b})^2}$ factors through $(W_{\mf s^\vee,\varphi_b} / W(R_{\mf s^\vee,\varphi_b}) )^2 \cong
(\Gamma_{\mf s^\vee,\varphi_b} )^2$. As a direct consequence of Theorem \ref{thm:9.6} (b), the group isomorphisms 
\eqref{eq:9.37} can be lifted to algebra isomorphisms
\begin{equation}\label{eq:9.38}
\C [W_{\mf s^\vee,\varphi_b}, \natural_{\mf s^\vee}] \cong \C [W_{\mf s,\tau}, 
\natural_\tau] \quad \text{and} \quad \C [\Gamma_{\mf s^\vee,\varphi_b}, 
\natural_{\mf s^\vee}] \cong \C [\Gamma_{\hat \sigma,\tau}, \natural_\tau] .
\end{equation}
To a suitable localization of $\End_G (\Pi_{\mf s})$, in some sense
centering on $\tau$, one can associate a twisted graded Hecke algebra as
in \cite[\S 7]{SolEnd}, say 
$\mh H (\tilde{\mc R}_\tau, W_{\mf s,\tau}, k^\tau,\natural_\tau)$\label{i:155}.
Here $\tilde{\mc R}$ is a degenerate root datum involving the root system
$R_{\sigma,\tau}$ and the vector space \label{i:156}
\[
\mf t := \mr{Lie}(\mathfrak{X}_{\nr} (L)) = X^* (Z^\circ(L)) \otimes_\Z \C .
\]
By definition, $\mc O (\mf t)$ is a maximal commutative subalgebra,
$\C [W_{\mf s,\tau}, \natural_\tau]$ is a subalgebra and the multiplication map
\begin{equation}\label{eq:9.39}
\mc O (\mf t) \otimes \C [W_{\mf s,\tau}, \natural_\tau] \to 
\mh H (\tilde{\mc R}_\tau, W_{\mf s,\tau}, k^\tau,\natural_\tau)
\end{equation}
is a linear bijection.
Let $\mh H \big( \mf s^\vee, \varphi_b, \log (q_F^{1/2}) \big)$\label{i:158} 
be the twisted graded Hecke algebra obtained from $\mc H (\mf s^\vee, q_F^{1/2})$ 
via the reduction procedure from \cite{Lus-Gr} and \cite[\S 2.1]{SolAHA}, centred 
at $\varphi_b$. In the terminology of \cite[\S 3.1]{AMS3}, it can be written as
\begin{equation}\label{eq:9.41}
\mh H \big( \mf s^\vee, \varphi_b, \log (q_F^{1/2}) \big) =
\mh H (\varphi_b, \rho_b, \mb r) / (\mb r - \log (q_F^{1/2})) .
\end{equation}
Let $\mf l^\vee$\label{i:159} be the Lie algebra of $L^\vee$, so that
\[\mr{Lie} \big( {(Z(L^\vee)^{\mb I_F})_{\mb W_F}\!}^\circ \big) \cong Z(\mf l^\vee)^{\mb W_F}
= \big( X_* (Z^\circ (L^\vee)) \otimes_\Z \C \big)^{\mb W_F}. \]
By construction, $\mc O \big( Z(\mf l^\vee)^{\mb W_F} \big)$ is a maximal commutative 
subalgebra of \eqref{eq:9.41},\\ $\C [W_{\mf s^\vee,\varphi_b}, \natural_{\mf s^\vee}]$ 
is a subalgebra and the multiplication map
\begin{equation}\label{eq:9.40}
\mc O \big( Z(\mf l^\vee)^{\mb W_F} \big) \otimes \C [W_{\mf s^\vee,\varphi_b}, 
\natural_{\mf s^\vee}] \to \mh H \big( \mf s^\vee, \varphi_b, \log (q_F^{1/2}) \big)     
\end{equation}
is a linear bijection.

\begin{prop}\label{prop:9.7}
Proposition \ref{prop:9.4} and \eqref{eq:9.38} induce an algebra isomorphism
\[
\mh H \big( \mf s^\vee, \varphi_b, \log (q_F^{1/2}) \big) \overset{\sim}{\longrightarrow}
\mh H (\tilde{\mc R}_\tau, W_{\mf s,\tau}, k^\tau,\natural_\tau) .
\]
It is canonical up to:
\begin{itemize}
\item inner automorphisms that fix $\mc O (\mf t)$ pointwise;
\item twisting by characters of $W_{\mf s,\tau}$ that are trivial on the 
subgroup generated by the reflections $s_\alpha$ with $\alpha \in R_{\sigma,\tau}$
and $k^\tau_\alpha \neq 0$.
\end{itemize}
\end{prop}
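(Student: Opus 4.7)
The plan is to assemble this isomorphism from three ingredients: an identification of the maximal commutative subalgebras, an identification of the twisted group algebras, and matching of the cross-relations coming from Proposition \ref{prop:9.4}. First, I would identify $\mc O \big( Z(\mf l^\vee)^{\mb W_F} \big)$ with $\mc O (\mf t)$ via the canonical isomorphism $Z(\mf l^\vee)^{\mb W_F} \cong \mr{Lie}(\mathfrak{X}_\nr(L))$ that underlies \eqref{eq:1.14}; this is the infinitesimal version of the LLC for tori and is $W_{\mf s^\vee,\varphi_b} \cong W_{\mf s,\tau}$-equivariant by Theorem \ref{thm:8.2}. Second, I would invoke Theorem \ref{thm:9.6}(b), combined with the decompositions \eqref{eq:9.36} and \eqref{eq:5.36} and the isomorphisms \eqref{eq:9.37}, to obtain the algebra isomorphism of twisted group algebras in \eqref{eq:9.38}.

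Next I would extend these two isomorphisms to the full twisted graded Hecke algebras using the linear bijections \eqref{eq:9.39} and \eqref{eq:9.40}. To verify this extension respects the algebra structure, the key point is to check that the cross-relations between $\mc O (\mf t) \cong \mc O \big( Z(\mf l^\vee)^{\mb W_F} \big)$ and the reflections $s_\alpha$ agree on both sides. These relations are parametrized by the $k$-values $k^\tau_\alpha$ (resp.~the parameters on the Galois side derived from $\mb k_{\alpha^\vee,\varphi_b}$ of \cite[\S 3.1]{AMS3}), and by the reduction procedure of \cite{Lus-Gr} specialized in \eqref{eq:9.41}, they are determined by $\log q_{\alpha^\vee}$ and $\log q^*_{\alpha^\vee}$. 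Proposition \ref{prop:9.3} asserts the equality of these $q$-parameters with $q_{\theta,\alpha}$ and $q^*_{\theta,\alpha}$, which by Proposition \ref{prop:9.4} (applied after passing through the reduction of \cite[\S 2.1]{SolAHA}) are exactly the parameters governing the cross-relations in $\mh H(\tilde{\mc R}_\tau, W_{\mf s,\tau}, k^\tau, \natural_\tau)$ via \cite[Proposition 7.3]{SolEnd}. The compatibility of the $\natural$-twists is provided by Theorem \ref{thm:9.6}(a).

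The main obstacle will be tracking the exact indeterminacy in the resulting isomorphism, since the reduction procedure carries over the non-canonical choices of Proposition \ref{prop:9.4}. Inner automorphisms of $\mc H (G,\hat P_\ff,\hat\sigma)^\circ$ fixing $\C[ZW_L(J,\hat\sigma)]$ pointwise descend to inner automorphisms of $\mh H(\tilde{\mc R}_\tau,W_{\mf s,\tau},k^\tau,\natural_\tau)$ fixing $\mc O(\mf t)$, giving the first type of ambiguity. The second, more delicate, ambiguity arises from the freedom $T_{s_\alpha} \mapsto h_\alpha^\vee T_{s_\alpha}$ for short simple $\alpha$ with $q^*_{\theta,\alpha}=1$: under the reduction procedure this transformation passes to multiplication of $s_\alpha$ by a scalar, which in turn is equivalent to twisting by a character of $W_{\mf s,\tau}$ that is nontrivial only on the corresponding simple reflections. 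One must verify carefully that the condition $k^\tau_\alpha = 0$ corresponds exactly to $q^*_{\theta,\alpha}=1$ combined with $q_{\theta,\alpha}=1$ (so that $s_\alpha$ is absent from the underlying finite root system), so that the allowed characters are precisely those trivial on reflections with $k^\tau_\alpha \neq 0$. This matches the stated indeterminacy and completes the identification.
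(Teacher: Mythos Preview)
Your overall strategy matches the paper's proof: identify the maximal commutative subalgebras via the tangent map of the LLC for tori, identify the twisted group algebras via \eqref{eq:9.38}, and then check that the $k$-parameters on the two sides agree using Proposition \ref{prop:9.3} together with the reduction procedure of \cite{Lus-Gr}. The paper carries this out explicitly via \eqref{eq:9.47} and \eqref{eq:9.44}, which express $k^\tau_\alpha$ in terms of $q_{\theta,\alpha}$ and $q^*_{\theta,\alpha}$ and match the formula \cite[(2.19)]{AMS3} on the Galois side.

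However, your analysis of the indeterminacy has two defects. First, you omit one source of non-canonicity: the isomorphism \eqref{eq:9.38} itself is only determined up to twisting by characters of $\Gamma_{\hat\sigma,\tau}$ (any automorphism of $\C[\Gamma_{\hat\sigma,\tau},\natural_\tau]$ preserving each line $\C\gamma$ is such a twist). These are exactly the characters of $W_{\mf s,\tau}$ trivial on $W(R_{\sigma,\tau})$, and together with the other source they account for the full class of character twists in the statement. Second, your treatment of the $T_{s_\alpha}\mapsto h_\alpha^\vee T_{s_\alpha}$ ambiguity is incorrect: the claim that ``$k^\tau_\alpha=0$ corresponds exactly to $q^*_{\theta,\alpha}=1$ combined with $q_{\theta,\alpha}=1$'' is false, since $q_{\theta,\alpha}=1$ would force $\alpha\notin R_\sigma$, whereas the freedom in Proposition \ref{prop:9.4} concerns short simple $\alpha\in R_\sigma$ with only $q^*_{\theta,\alpha}=1$. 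The correct mechanism, as in the paper, is that under the reduction this freedom becomes $N_{s_\alpha}\mapsto X_\alpha(\tau)\,N_{s_\alpha}$ with $X_\alpha(\tau)\in\{\pm 1\}$; if $X_\alpha(\tau)=1$ nothing happens, while if $X_\alpha(\tau)=-1$ then \eqref{eq:9.47} gives $k^\tau_\alpha=\log(q^*_{\theta,\alpha})=0$. Thus whenever the freedom is nontrivial one has $k^\tau_\alpha=0$, and the resulting character is trivial on all $s_\beta$ with $k^\tau_\beta\neq 0$, as required.
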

\begin{proof}
Recall from \eqref{eq:5.7} and \eqref{eq:9.42} that there are finite coverings
\[
\begin{array}{lllllll}
\mathfrak{X}_{\nr} (L) & \to & \Irr (L)_{[L,\tau]} & : & 
\chi & \mapsto & \chi \otimes \tau ,\\
\big( Z(L^\vee)^{\mb I_F} \big)_{\mb W_F}^{\; \circ} & \to & \mf s_L^\vee & : &
z & \mapsto & (z \varphi_b , \rho_b).
\end{array}
\]
It follows that the tangent map of \eqref{eq:9.2} is a linear bijection
\begin{equation}\label{eq:9.45}
\mf t \to Z(\mf l^\vee)^{\mb W_F} ,
\end{equation}
which by Theorem \ref{thm:8.2} and Lemma \ref{lem:9.1} is equivariant for 
$W_{\mf s,\tau} \cong W_{\mf s^\vee,\varphi_b}$. In fact it comes from the
isomorphisms 
\[
X^* (Z(L)^\circ) \cong X^* (Z^\circ (\mc L))^{\mb W_F} \cong X_* (Z^\circ (L^\vee))^{\mb W_F}. 
\]
The map \eqref{eq:9.45} induces an algebra isomorphism
\begin{equation}\label{eq:9.43}
\mc O \big( Z(\mf l^\vee)^{\mb W_F} \big) \xrightarrow{\sim} \mc O (\mf t) .
\end{equation}
The map \eqref{eq:9.43} is induced just as well by \eqref{eq:9.3}, which is a part
of Proposition \ref{prop:9.4}. For the twisted group algebras in \eqref{eq:9.39}
and \eqref{eq:9.40}, we take the first isomorphism in \eqref{eq:9.38}. Note that
the restricted isomorphism $\C [W(R_{\mf s^\vee,\varphi_b})] \cong \C [W(R_{\sigma,\tau})]$ is also induced by Proposition \ref{prop:9.4}, via \cite{Lus-Gr} and \cite[\S 2.1]{SolAHA}. By \eqref{eq:9.43}, \eqref{eq:9.38}, \eqref{eq:9.39} 
and \eqref{eq:9.40}, we obtain a linear bijection
\begin{equation}\label{eq:9.46}
\mh H \big( \mf s^\vee, \varphi_b, \log (q_F^{1/2}) \big) \longrightarrow
\mh H (\tilde{\mc R}_\tau, W_{\mf s,\tau}, k^\tau,\natural_\tau) .
\end{equation}
To guarantee that this is an algebra isomorphism, it remains to check that the
parameters for the roots on both sides agree under the bijection 
$R_{\sigma,\tau} \leftrightarrow R_{\mf s^\vee, \varphi_b}$ from \eqref{eq:9.13}.
By \cite[Proposition 3.14.a]{AMS3}, the parameters of  
$\mh H \big( \mf s^\vee, \varphi_b, \log (q_F^{1/2}) \big)$ are obtained from the
parameters of $\mc H (\mf s^\vee, q_F^{1/2})^\circ$ via the method of
\cite[Theorems 2.5 and 2.11 and (2.19)]{AMS3}, or equivalently via 
\cite[Theorems 8.6 and 9.3]{Lus-Gr}.~In Proposition \ref{prop:9.3}, we showed that the 
parameters of $\mc H (\mf s^\vee, q_F^{1/2})^\circ$ match with those of 
$\mc H (G,\hat P_\ff,\hat \sigma)^\circ$. Since $\mc H (G,\hat P_\ff,\hat \sigma)^\circ
\subset \End_G (\Pi_{\mf s})$ by \eqref{eq:5.5}, all simple reflections in $W(R_\sigma)$ have the same
parameters ($q$ and $q^*$) in each of these three algebras. The parameters $k^\tau$ for
the roots in $\mh H (\tilde{\mc R}_\tau, W_{\mf s,\tau}, k^\tau,\natural_\tau)$
are defined in terms of the parameters for $\End_G (\Pi_{\mf s})$ in
\cite[\S 7 and (35)]{SolEnd}:
\begin{equation}\label{eq:9.47}
k^\tau_\alpha = \left\{ \begin{array}{cl}
\log (q_{\theta,\alpha})  & \text{if } X_\alpha (\tau) = 1 \\
\log (q_{\theta,\alpha}^*)  & \text{if } X_\alpha (\tau) = -1   
\end{array} \right. .
\end{equation}
By \cite[(95)]{SolEnd}, $q_{\theta,\alpha} q_{\theta,\alpha}^* = q_F^{\lambda (\alpha)}$ and $q_{\theta,\alpha} (q_{\theta,\alpha}^*)^{-1} = q_F^{\lambda^* (\alpha)}$. Thus \eqref{eq:9.47} gives
\begin{equation}\label{eq:9.44}
k^\tau_\alpha = 
\log (q_F^{1/2}) \big( \lambda (\alpha) + X_\alpha (\tau) \lambda^* (\alpha) \big). 
\end{equation}
If we set $\mb r = \log (q_F^{1/2})$ and replace $X_\alpha$ by $m_\alpha \alpha$ as
prescribed in \cite[Proposition 3.14]{AMS3}, then \eqref{eq:9.44} becomes 
\cite[(2.19)]{AMS3}. Hence $k^\tau_\alpha$ is also the parameter of $\alpha$ in 
$\mh H \big( \mf s^\vee, \varphi_b, \log (q_F^{1/2}) \big)$. The non-canonicity in \eqref{eq:9.46} comes from three sources: \\ 
(1) Algebra automorphisms of $\C [\Gamma_{\hat \sigma,\tau},\natural_\tau]$ that stabilize 
each line $\C \gamma$ for $\gamma \in \Gamma_{\hat \sigma,\tau}$. These are precisely 
the maps $\gamma \mapsto \chi (\gamma) \gamma$ where $\chi : \Gamma_{\hat \sigma,\tau} \to 
\C^\times$ is a character. On $\mh H (\tilde{\mc R}_\tau, W_{\mf s,\tau}, k^\tau,\natural_\tau)$, 
this means twisting by a character of $W_{\mf s,\tau} / W(R_{\sigma,\tau})$.\\
(2) The non-canonicity in Proposition \ref{prop:9.4}, in particular with respect to
inner automorphisms of $\mc H (G, \hat P_\ff, \hat \sigma)$ that restrict to the 
identity on $\C [ZW_L (J,\hat \sigma)]$. These account for inner automorphisms of
$\mh H (\tilde{\mc R}_\tau, W_{\mf s,\tau}, k^\tau,\natural_\tau)$ that are the 
identity on $\mc O (\mf t)$.\\
(3) Proposition \ref{prop:9.4} also allows for some adjustments for short simple roots
$\alpha \in R_{\sigma,\tau}$ satisfying $q_{\theta,\alpha}^* = 1$, i.e.~$T_{s_\alpha}$ may 
be replaced by $T_{s_\alpha} h_\alpha^\vee$ in $\mc H (G, \hat P_\ff, \hat \sigma)^\circ$. 
By \cite[(35)]{SolEnd}, this $h_\alpha^\vee$ corresponds to $X_\alpha$ in \eqref{eq:9.47}.
By \cite[Proposition 7.3 and its proof]{SolEnd}, we see that $T_{s_\alpha} \mapsto
T_{s_\alpha} h_\alpha^\vee$ translates to 
\begin{equation}\label{eq:9.48}
N_{s_\alpha} \mapsto X_\alpha (\tau) N_{s_\alpha} 
\text{ in } \mh H (\tilde{\mc R}_\tau, W_{\mf s,\tau}, k^\tau,\natural_\tau) .
\end{equation}
If $X_\alpha (\tau) = 1$, then this does nothing. On the other hand, if 
$X_\alpha (\tau) = -1$, then \eqref{eq:9.47} implies that $k^\tau_\alpha = 0$.
As mentioned in the proof of Proposition \ref{prop:9.4}, the operation 
$T_{s_\alpha} \mapsto T_{s_\alpha} h_\alpha^\vee$ fixes all generators $T_{s_\beta}$
where $s_\beta$ is not $W(R_\sigma)$-conjugate to $s_\alpha$. Hence \eqref{eq:9.48}
fixes all $N_{s_\beta}$ where $\beta \in R_{\sigma,\tau}$ and $k^\tau_\beta \neq 0$.
Consequently, \eqref{eq:9.48} gives rise to a character $\chi$ of $W_{\mf s,\tau}$ that 
is trivial on $\Gamma_{\hat \sigma,\tau}$ and on all such $s_\beta$, and the algebra
automorphism induced by \eqref{eq:9.48} is given by twisting by this $\chi$. \qedhere
\end{proof}
Let $R'_{\sigma,\tau}$ be the subset of $R_{\sigma,\tau}$ consisting of the
roots $\alpha$ with $k^\tau_\alpha \neq 0$. It is again a root system, with
positive roots $R^{'+}_{\sigma,\tau}$. This gives a decomposition
\[
W_{\mf s,\tau} = W(R'_{\sigma,\tau}) \rtimes \Gamma'_{\hat \sigma,\tau},
\]
where $\Gamma'_{\sigma,\tau}$ is the stabilizer of $R^{'+}_{\sigma,\tau}$. The
presentation of twisted graded Hecke algebras, as in \cite[Proposition 2.2]{AMS2},
shows that we can write
\begin{equation}\label{eq:9.55}
\mh H (\tilde{\mc R}_\tau, W_{\mf s,\tau}, k^\tau,\natural_\tau) =
\mh H (\tilde{\mc R}_\tau, W (R'_{\sigma,\tau}), k^\tau) \rtimes
\C [ \Gamma'_{\hat \sigma,\tau},\natural_\tau] .
\end{equation}
Proposition \ref{prop:9.7} allows us to transfer this decomposition to 
$\mh H \big( \mf s^\vee, \varphi_b, \log (q_F^{1/2}) \big)$. More precisely, let 
$R'_{\mf s^\vee,\varphi_b}$ be the subsystem of roots with nonzero parameters
and write
\[
W_{\mf s^\vee,\varphi_b} = W(R'_{\mf s^\vee,\varphi_b}) \rtimes \Gamma'_{\mf s^\vee,\varphi_b} .
\]
Let $\mh H \big( \mf s^\vee, \varphi_b, \log (q_F^{1/2}) \big)^\circ$ be the graded
Hecke algebra built from $Z(\mf l^\vee)^{\mb W_F}$, $R'_{\mf s^\vee,\varphi_b}$ and
the parameters for those roots in $\mh H \big( \mf s^\vee, \varphi_b, \log (q_F^{1/2}) \big)$.
Then
\begin{equation}\label{eq:9.56}
\mh H \big( \mf s^\vee, \varphi_b, \log (q_F^{1/2}) \big) =
\mh H \big( \mf s^\vee, \varphi_b, \log (q_F^{1/2}) \big)^\circ \rtimes 
\C[ \Gamma'_{\mf s^\vee,\varphi_b}, \natural_{\mf s^\vee}], 
\end{equation}
and Proposition \ref{prop:9.7} respects the decompositions \eqref{eq:9.55} and 
\eqref{eq:9.56}.

\section{Equivalences between module categories of Hecke algebras}
\label{sec:equiv}

Consider a type $(\hat P_\ff, \hat \sigma)$ for $G$ as in Theorem \ref{thm:3.6}, and recall
that it covers the type $(\hat P_{L,\ff}, \hat \sigma)$ for $L$. Let $\mf s$ be the
associated inertial equivalence class for $G$. By \cite{BuKu}, there is an equivalence of 
categories \label{i:35}
\begin{equation}\label{eq:10.1}
\Rep (G)_{\mf s} = \Rep (G)_{(\hat P_\ff, \hat \sigma)} 
\xrightarrow{\sim} 
\Mod \text{ - }\cH (G,\hat P_\ff, \hat \sigma) \; \text{given by} \;
\pi 
\mapsto  
\Hom_{\hat P_\ff} (\hat \sigma, \pi), 
\end{equation}
and likewise $\Rep (L)_{(\hat P_{L,\ff}, \hat \sigma)}  
\xrightarrow{\sim} 
\Mod \text{ - }\cH (L,\hat P_{L,\ff}, \hat \sigma)$ given by $\pi_L  
\mapsto 
\Hom_{\hat P_{L,\ff}} (\hat \sigma, \pi_L)$. Here Mod$\text{ - } \cH$ denotes the category of right $\mc H$-modules.~Recall from 
\eqref{eq:3.1} that $\cH (L,\hat P_{L,\ff}, \hat \sigma)$ embeds canonically
in $\cH (G,\hat P_\ff, \hat \sigma)$. By \cite[Lemma 4.1]{SolComp},
the supercuspidal support map $\Irr (G)_{\mf s} = \Irr (G)_{\hat P_\ff,\hat \sigma} \to 
\Irr (L)_{(\hat P_{L,\ff},\hat \sigma)} / W(G,L)_{\hat \sigma}$ translates via \eqref{eq:10.1} to the map 
\begin{equation}\label{eq:10.2}
\Irr \text{ - }\cH (G,\hat P_\ff, \hat \sigma) \longrightarrow
\Irr \text{ - }\cH (L,\hat P_{L,\ff}, \hat \sigma) / W(G,L)_{\hat \sigma},
\end{equation}
which sends an irreducible $\cH (G,\hat P_\ff, \hat \sigma)$-module $M$ to any 
irreducible $\cH (L,\hat P_{L,\ff}, \hat \sigma)$-subquotient of $M$. By Lemma 
\ref{lem:3.7} and \eqref{eq:5.1}, the map \eqref{eq:10.2} is well-defined.
The Bernstein presentation of $\cH (G,\hat P_\ff, \hat \sigma)^\circ$ shows that 
\eqref{eq:10.2} is essentially the central character map for 
$\cH (G,\hat P_\ff, \hat \sigma)$. 

Let $\tau \in \Irr (L)_{\mf s_L} = \Irr (L)_{(\hat P_{L,\ff},\hat \sigma)}$ be a unitary
non-singular supercuspidal representation of depth zero. Here we mean non-singularity
as in Section \ref{sec:DL}, based on a $F$-non-singular character of a torus and
slightly more restrictive than requiring $\sigma$ to be non-singular. Recall the group
$\mathfrak{X}_{\nr}^+ (L) = \Hom (L, \R_{>0})$
of positive unramified characters of $L$. Our LLC will run through the category $\Rep_\fl (G)_{\mathfrak{X}_{\nr}^+ (L) \tau}$, whose objects are all finite-length $G$-representations $\pi$ such that every irreducible
subquotient $\pi'$ of $\pi$ has supercuspidal support\footnote{Supercuspidal supports are 
only defined up to $G$-conjugacy, so strictly speaking we mean that Sc$(\pi')$ has a 
representative in $(L,\mathfrak{X}_{\nr}^+ (L) \tau)$.} in $(L,\mathfrak{X}_{\nr}^+ (L) \tau)$.
By convention, all our subcategories of $\Rep (G)$ will be full.

Let $(\mathfrak{X}_{\nr}^+ (L) \tau )_\cH$ be the subset of 
$\Irr \text{ - }\cH (L, \hat P_{L,\ff},\hat \sigma)$ corresponding to 
$\mathfrak{X}_{\nr}^+ (L) \tau$ via \eqref{eq:10.1} for $L$.~Define $\Mod_{\fl, (\mathfrak{X}_{\nr}^+ (L) \tau )_\cH} \text{ - }\mc H (G,\hat P_\ff, \hat \sigma)$ similarly (as $\Rep_\fl (G)_{\mathfrak{X}_{\nr}^+ (L) \tau}$), i.e.~its objects are the 
finite-length modules $M$ such that every irreducible subquotient of $M$ maps to 
$W(G,L)_{\hat \sigma} (\mathfrak{X}_{\nr}^+ (L) \tau )_\cH$ by \eqref{eq:10.2}. There is an equivalence of categories
\begin{equation}\label{eq:10.3}
\Rep (G)_{\mf s} 
\xrightarrow{\sim} 
\Mod \text{ - } \End_G (\Pi_{\mf s}),\quad 
\pi 
\mapsto 
\Hom_G (\Pi_{\mf s}, \pi).
\end{equation}
Here $\Pi_{\mf s} = \II_Q^G (\Pi_{\mf s}^L)$ for a progenerator $\Pi_{\mf s}^L$ of
$\Rep (L)_{\mf s_L}$, which gives the analogue of \eqref{eq:10.3} for $L$. Now 
$\mathfrak{X}_{\nr}^+ (L) \tau$ corresponds to a set of irreducible representations of 
$\End_L (\Pi_{\mf s}^L)$ that we denote $\mathfrak{X}_{\nr}^+ (L) \otimes \tau$. We define 
\begin{equation}\label{eq:10.27}
\Mod_{\fl, \mathfrak{X}_{\nr}^+ (L) \otimes \tau} \text{ - }\End_G (\Pi_{\mf s}) = 
\Mod_{\fl, W(G,L)_{\hat \sigma}(\mathfrak{X}_{\nr}^+ (L) \otimes \tau)} \text{ - }\End_G (\Pi_{\mf s}) 
\end{equation}
to be the category consisting of the finite-length modules $M$ such that every irreducible 
$\End_L (\Pi_{\mf s}^L)$-subquotient of $M$ belongs to 
$W(G,L)_{\hat \sigma} (\mathfrak{X}_{\nr}^+ (L) \otimes \tau)$. 

Recall the graded Hecke algebra $\mh H (\tilde{\mc R}_\tau, W_{\mf s,\tau}, 
k^\tau,\natural_\tau)$ from \eqref{eq:9.39}. We write\label{i:118}
\[
\mf t_\R = \mr{Lie}(\mathfrak{X}_{\nr}^+ (L)) = X^* (Z^\circ (L)) \otimes_\Z \R
\]
and let $\Mod_{\fl, \mf t_\R} \text{ - }\mh H (\tilde{\mc R}_\tau, W_{\mf s,\tau}, 
k^\tau,\natural_\tau)$ be the category whose objects are the finite-length modules $M$ such that, as an
$\mc O(\mf t)$-module, $M$ has all its irreducible subquotients in $\mf t_\R$.

\begin{prop}\label{prop:10.1}
The following categories are canonically equivalent:
\begin{enumerate}[(i)]
\item $\Rep_\fl (G)_{\mathfrak{X}_{\nr}^+ (L) \tau}$;
\item $\Mod_{\fl, (\mathfrak{X}_{\nr}^+ (L) \tau )_\cH} \text{ - }\cH (G,\hat P_\ff, \hat \sigma)$;
\item $\Mod_{\fl, \mathfrak{X}_{\nr}^+ (L) \otimes \tau} \text{ - }\End_G (\Pi_{\mf s})$;
\item $\Mod_{\fl, \mf t_\R} \text{ - }\mh H (\tilde{\mc R}_\tau, W_{\mf s,\tau}, 
k^\tau,\natural_\tau)$.
\end{enumerate}
These equivalences are compatible with parabolic induction and restriction.
\end{prop}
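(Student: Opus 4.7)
The plan is to establish the four equivalences by composing three already-available reductions, and then check at each stage that the subcategories cut out by the central-character conditions correspond to one another.

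First, the equivalence (i) $\leftrightarrow$ (ii) is the Bushnell--Kutzko theory for the cover $(\hat P_\ff,\hat\sigma)$ from Theorem \ref{thm:3.6}, restricted to finite-length objects; the equivalence \eqref{eq:10.1} applied to $L$ shows that irreducible subquotients of an object of $\Rep_\fl (G)_{\mf s}$ have supercuspidal support in $W(G,L)_{\hat\sigma}\cdot\mathfrak{X}_{\nr}^+(L)\tau$ if and only if, via \eqref{eq:10.2}, the corresponding module in $\Mod_\fl\text{ - }\cH(G,\hat P_\ff,\hat\sigma)$ has all its $\cH(L,\hat P_{L,\ff},\hat\sigma)$-subquotients in $(\mathfrak{X}_{\nr}^+(L)\tau)_\cH$. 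Next, (i) $\leftrightarrow$ (iii) uses the standard Bernstein progenerator equivalence \eqref{eq:10.3}. The categories (ii) and (iii) are Morita equivalent via the block decomposition coming from \eqref{eq:5.2}, i.e.~$\End_G(\Pi_{\mf s})$ is a matrix algebra over $\cH(G,\hat P_\ff,\hat\sigma)$ with rows indexed by $L/L_\tau^3$; the central character conditions align under this Morita equivalence because both are described by the common commutative subalgebra $\C[ZW_L(J,\hat\sigma)]\cong\mc O(\mathfrak{X}_{\nr}(L)/\mathfrak{X}_{\nr}(L,\tau))$.

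The main work is in (iii) $\leftrightarrow$ (iv). Here I would invoke the construction from \cite[\S 7]{SolEnd}, which produces the twisted graded Hecke algebra $\mh H(\tilde{\mc R}_\tau,W_{\mf s,\tau},k^\tau,\natural_\tau)$ as a description of an analytic localization of $\End_G(\Pi_{\mf s})$ at the $W(L,\tau,\mathfrak{X}_{\nr}(L))$-orbit of $\tau$. Concretely, choose a small $W(L,\tau,\mathfrak{X}_{\nr}(L))$-stable neighbourhood $U=W(L,\tau,\mathfrak{X}_{\nr}(L))U_\tau$ of $W(L,\tau,\mathfrak{X}_{\nr}(L))\cdot 1$ in $\mathfrak{X}_{\nr}(L)$ such that $U_\tau\cap wU_\tau=\emptyset$ whenever $w\cdot 1\neq 1$, and with $U_\tau$ containing $\exp(\mf t_\R)$ on the relevant component. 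The Lusztig-type reduction identifies
\[
\End_G(\Pi_{\mf s})^{an}_{U}\;\cong\;\mh H(\tilde{\mc R}_\tau,W_{\mf s,\tau},k^\tau,\natural_\tau)^{an}_{V},
\]
where $V\subset\mf t$ is a $W_{\mf s,\tau}$-stable open neighbourhood of $\mf t_\R$ mapping to $U_\tau$ via the exponential. Since modules of finite length are determined by their local behaviour on the closed subvariety of $\mc O(\mathfrak{X}_{\nr}(L))^{W(L,\tau,\mathfrak{X}_{\nr}(L))}$-characters they support, the full subcategory of $\Mod_{\fl}\text{ - }\End_G(\Pi_{\mf s})$ whose objects have all weights in $W(L,\tau,\mathfrak{X}_{\nr}(L))U_\tau$ is equivalent to the analogous subcategory over the localization (cf.~\cite[Proposition 4.3]{Opd}). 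The category $\Mod_{\fl,\mathfrak{X}_{\nr}^+(L)\otimes\tau}\text{ - }\End_G(\Pi_{\mf s})$ sits inside this, as does $\Mod_{\fl,\mf t_\R}\text{ - }\mh H(\tilde{\mc R}_\tau,W_{\mf s,\tau},k^\tau,\natural_\tau)$; the exponential map $\mf t_\R\to\mathfrak{X}_{\nr}^+(L)$ is a diffeomorphism, so under the Lusztig equivalence these two subcategories match.

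Finally, compatibility with parabolic induction and restriction at the level of (i) $\leftrightarrow$ (ii) follows from the normalization of the embedding \eqref{eq:3.1} via \cite[Condition 4.1 and Lemma 5.1]{SolComp}, which was designed precisely so that parabolic induction $\II_Q^G$ on $\Rep(L)$ corresponds to induction along $\cH(L,\hat P_{L,\ff},\hat\sigma)\hookrightarrow\cH(G,\hat P_\ff,\hat\sigma)$. The compatibility for (iii) is built into the construction of $\Pi_{\mf s}=\II_Q^G(\Pi_{\mf s}^L)$, while for (iv) it follows from the fact that the Lusztig localization is compatible with Levi subalgebras $\mh H(\tilde{\mc R}_{M,\tau},W_{\mf s_M,\tau},k^\tau,\natural_\tau)\hookrightarrow\mh H(\tilde{\mc R}_\tau,W_{\mf s,\tau},k^\tau,\natural_\tau)$ for $M\supset L$. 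The main obstacle is keeping track of the 2-cocycle $\natural_\tau$ through the localization procedure; however, this has already been handled in \cite[\S 7, Proposition 7.3]{SolEnd}, so one only needs to verify that the parameters $k^\tau_\alpha$ given by \eqref{eq:9.44} and the cocycle are transported correctly under analytic localization, which is the content of the chain of identifications used in the proof of Proposition \ref{prop:9.7}.
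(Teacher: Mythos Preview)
Your proposal is essentially correct and follows the same approach as the paper: the paper's own proof is a two-line citation, deriving (i)$\leftrightarrow$(ii) from \eqref{eq:10.1}, \eqref{eq:10.2} and \cite[Lemma 4.1]{SolComp}, and (i)$\leftrightarrow$(iii)$\leftrightarrow$(iv) together with the parabolic compatibility from \cite[Corollary 8.1]{SolEnd}; what you have written is a reasonable unpacking of exactly those references (analytic localization around $\tau$ and the Lusztig-type reduction of \cite[\S 7]{SolEnd}).

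One small point: your final sentence invokes Proposition~\ref{prop:9.7} to justify that $k^\tau_\alpha$ and $\natural_\tau$ transport correctly under localization, but Proposition~\ref{prop:9.7} is about matching the $p$-adic graded Hecke algebra with its Galois-side counterpart, which is irrelevant here. The transport you need is entirely internal to \cite{SolEnd} (specifically \cite[Proposition 7.3 and Corollary 8.1]{SolEnd}), and nothing from the Galois side enters Proposition~\ref{prop:10.1}. You can simply drop that clause.
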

\begin{rem}\label{rem:10.8}
Here parabolic restriction from $\Rep (G)_{\mf s}$
to $\Rep (L)_{\mf s_L}$ means: Jacquet restriction with respect to
the parabolic subgroup of $G$ opposite to $(Q,L)$, followed by projection from
$\Rep (L)$ to the Bernstein block $\Rep (L)_{\mf s_L}$.
\end{rem}
\begin{proof}
The equivalence between (i) and (ii) follows directly from \eqref{eq:10.1}, 
\eqref{eq:10.2} and the definitions. It is compatible with parabolic induction
and restriction by \cite[Lemma 4.1]{SolComp}. The equivalences between (i), (iii) 
and (iv), as well as the compatibility with parabolic induction and restriction, 
follow from \cite[Corollary 8.1]{SolEnd}.
\end{proof}
\noindent In \eqref{eq:9.45}, the $\R$-linear subspace $\mf t_\R \subset \mf t$
corresponds to\label{i:163}
\[
Z(\mf l^\vee)^{\mb W_F}_\R := (X_* (Z^\circ (L^\vee)) \otimes_\Z \R )^{\mb W_F}
\; \subset Z(\mf l^\vee)^{\mb W_F}.
\]
We put
\begin{equation}\label{eq:10.16}
\mathfrak{X}_{\nr}^+ (L^\vee) := \exp \big( Z(\mf l^\vee)^{\mb W_F}_\R \big) \subset 
\big( {\big( (Z(L^\vee)^{\mb I_F} \big)_{\mb W_F}} \big)^{\!\circ} .
\end{equation}
Proposition \ref{prop:9.7} induces an equivalence of categories
\begin{equation}\label{eq:10.4}
\Mod_{\fl, \mf t_\R}\text{ - }\mh H (\tilde{\mc R}_\tau, W_{\mf s,\tau}, 
k^\tau,\natural_\tau) \cong \Mod_{\fl, Z(\mf l^\vee)^{\mb W_F}_\R}\text{ - }
\mh H \big( \mf s^\vee, \varphi_b, \log (q_F^{1/2}) \big) .
\end{equation}
By \eqref{eq:9.38} and \eqref{eq:9.43}, the isomorphism in Proposition 
\ref{prop:9.7} precisely matches the pa\-ra\-bolic subalgebras on both sides, so
\eqref{eq:10.4} commutes with parabolic induction and restriction.~Composing 
\eqref{eq:10.4} with (i)$\to$(iv) in Proposition \ref{prop:10.1}, 
we obtain an equivalence of categories 
\begin{equation}\label{eq:10.5}
\Rep_\fl (G)_{\mathfrak{X}_{\nr}^+ (L) \tau} \cong \Mod_{\fl, Z(\mf l^\vee)^{\mb W_F}_\R}\text{ - }
\mh H \big( \mf s^\vee, \varphi_b, \log (q_F^{1/2}) \big) ,
\end{equation}
which is again compatible with parabolic induction and restriction.
Given the algebras, the equivalences \eqref{eq:10.4} and \eqref{eq:10.5} are
canonical up to twisting by characters of $W_{\mf s,\tau}$, as described in
Proposition \ref{prop:9.7}. For the algebras in question, the only further 
choices are those of systems of positive roots, which are innocent.

However, there is another source of ambiguity: $\tau$ may be replaced by an 
$N_G (L)$-conjugate representation of $L$. Composing $\tau$ with conjugation
by elements of $L$ does not matter, so we are looking at $\bar w \cdot \tau$
with $\bar w \in N_G (L)$
representing $w \in W(G,L)$. Since the supercuspidal support of an irreducible 
$G$-representation is only defined up to $G$-conjugacy, 
$\Rep_\fl (G)_{\mathfrak{X}_{\nr}^+ (L) \tau}$ is equal to 
$\Rep_\fl (G)_{\mathfrak{X}_{\nr}^+ (L) \bar w \tau}$.

For elements of $W_{\mf s,\tau}$, this does not do anything. Therefore we may
adjust $w$ by an element of $W_{\mf s,\tau}$, and we may assume that 
\begin{equation}\label{eq:10.6}
w (R_{\sigma,\tau}^+) = R_{\bar w \sigma,\bar w \tau}^+ .
\end{equation}

\begin{prop}\label{prop:10.2}
Let $w \in W(G,L)$ be represented by $\bar w \in N_G (L)$ and satisfy
\eqref{eq:10.6}. Let $w^\vee$ be
the corresponding element of $W(G^\vee,L^\vee)^{\mb W_F}$. The diagram
\begin{equation}\label{diagram:prop:10.2}
    \begin{tikzcd}
        \Rep_\fl (G)_{\mathfrak{X}_{\nr}^+ (L) \tau} 
        \arrow[]{r}{\eqref{eq:10.5}}\arrow[]{d}{\mr{Ad}(\bar w)} & 
\Mod_{\fl, Z(\mf l^\vee)^{\mb W_F}_\R} -
\mh H \big( \mf s^\vee, \varphi_b, \log (q_F^{1/2}) \big)\arrow[]{d}{\mr{Ad}(w^\vee)} \\
\Rep_\fl (G)_{\mathfrak{X}_{\nr}^+ (L) \bar w \tau}  \arrow[]{r}{\eqref{eq:10.5}} & 
\Mod_{\fl, Z(\mf l^\vee)^{\mb W_F}_\R} -
\mh H \big( w^\vee \mf s^\vee, w^\vee \varphi_b, \log (q_F^{1/2}) \big) 
    \end{tikzcd}
\end{equation}
commutes, up to isomorphisms of representations of one algebra (resp.~group).
Here Ad$(w^\vee)$ is induced by the algebra isomorphism: for 
$f \in \mc O \big( Z(\mf l^\vee)^{\mb W_F} \big)$ and $v \in W_{\mf s^\vee,\varphi_b}$,
\begin{equation}\label{eq:10.8}
\mh H \big( \mf s^\vee, \varphi_b, \log (q_F^{1/2}) \big) 
\to  
\mh H \big( w^\vee \mf s^\vee, w^\vee \varphi_b, \log (q_F^{1/2}) \big),\; 
f N_v  
\mapsto 
(f \circ w^{\vee -1}) N_{w^\vee v w^{\vee -1}}.
\end{equation}
\end{prop}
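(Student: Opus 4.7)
The plan is to decompose the equivalence \eqref{eq:10.5} into its constituent pieces and verify equivariance for each, using that every construction along the way is either canonical or natural under conjugation. Writing \eqref{eq:10.5} as the composition of (i)$\to$(iv) in Proposition~\ref{prop:10.1} with the equivalence \eqref{eq:10.4} induced by Proposition~\ref{prop:9.7}, I would first handle the $p$-adic side. For the passage $\Rep_\fl (G)_{\mathfrak{X}_{\nr}^+ (L) \tau}\cong \Mod\text{-}\End_G(\Pi_{\mf s})$, the key observation is that $\mr{Ad}(\bar w)\Pi_{\mf s} = \II_{\bar w Q \bar w^{-1}}^G(\ind_{L^1}^L(\bar w\cdot\tau))$ is canonically isomorphic to the progenerator $\Pi_{\bar w\mf s}$, and $\mr{Ad}(\bar w)$ induces a ring isomorphism $\End_G(\Pi_{\mf s})\to\End_G(\Pi_{\bar w\mf s})$ of the form $f\mapsto (\mr{Ad}(\bar w)\circ f\circ\mr{Ad}(\bar w)^{-1})$ that on the Bernstein center reduces to pull-back by $\bar w^{-1}$. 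Passing to the graded Hecke algebras $\mh H(\tilde{\mc R}_\tau, W_{\mf s,\tau}, k^\tau,\natural_\tau)$ and $\mh H(\tilde{\mc R}_{\bar w\tau}, W_{\bar w \mf s,\bar w\tau}, k^{\bar w\tau},\natural_{\bar w\tau})$ via the localization procedure of \cite[\S 7]{SolEnd}, the condition \eqref{eq:10.6} ensures that the choice of positive roots in $\Gamma_{\hat\sigma,\tau}$ matches that of $\Gamma_{\bar w\hat\sigma,\bar w\tau}$ under the obvious conjugation, so that the induced algebra isomorphism has the same shape as \eqref{eq:10.8}.

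Next, I would verify equivariance of the Galois-side reduction in Proposition~\ref{prop:9.7}. The isomorphism \eqref{eq:10.5} was built from three canonical ingredients: the bijection $\mf s_L\leftrightarrow\mf s_L^\vee$ from Theorem~\ref{thm:8.2}, the matching of $q$-parameters in Proposition~\ref{prop:9.3}, and the comparison of $2$-cocycles in Theorem~\ref{thm:9.6}. Each of these was proved with an explicit equivariance for the action of a suitable quotient of $N_G(L,T)$ (equivalently, of $N_{G^\vee}(L^\vee,T^\vee)^{\mb W_F}$). In particular, Theorem~\ref{thm:8.2} says that the cuspidal LLC intertwines the $W(G,L)$-action with the $W(G^\vee,L^\vee)^{\mb W_F}$-action, so $w\cdot\tau\leftrightarrow w^\vee\cdot(\varphi_b,\rho_b)$; Proposition~\ref{prop:9.3} gives a bijection $R_\sigma\leftrightarrow R_{\mf s^\vee}$ that is manifestly equivariant; and the cocycle identifications in Theorem~\ref{thm:9.6}(a) were proved by systematically using the $N_{G^\vee}(L^\vee,T^\vee)^{\mb W_F}_{\eta,\varphi_T}$-equivariant isomorphisms $B(\zeta^0,\zeta^\rtimes)$ of Lemma~\ref{lem:8.6} and their $G$-versions from Proposition~\ref{prop:9.5}. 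Consequently, the isomorphism of Proposition~\ref{prop:9.7} is natural in the pair $(\tau,(\varphi_b,\rho_b))$, and transport by $\mr{Ad}(\bar w)$ on the $p$-adic side induces transport by $\mr{Ad}(w^\vee)$ on the Galois side.

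Combining these two steps, both vertical maps in the diagram are realized by compatible ring isomorphisms of the underlying graded Hecke algebras, differing at worst by inner automorphisms and by twists by characters of $W_{\mf s,\tau}$ trivial on the reflections with nonzero parameter, as classified in Proposition~\ref{prop:9.7}. Since each such twist induces an isomorphism (not just an equivalence) on the level of modules, and since the equivalences of Proposition~\ref{prop:10.1} are canonical once $(\hat P_\ff,\hat\sigma)$ is fixed, the two ways of traversing the square agree on objects up to isomorphism of $\mh H\big(w^\vee\mf s^\vee, w^\vee\varphi_b, \log(q_F^{1/2})\big)$-modules, which is exactly the claimed commutativity.

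The main obstacle is the third ingredient above: the cocycle comparison. One has to check that the canonical family of intertwiners $B_{\tilde w}$ (for $\tau$) used to define $\natural_\tau$ and the family $A_{\eta,\tilde w}$ (for $(\varphi_b,\rho_b)$) used to define $\natural_{\mf s^\vee}$ transform compatibly when we conjugate the whole setup by $\bar w$, so that no spurious coboundary arises. This is controlled by Proposition~\ref{prop:9.5}, whose final sentence records precisely the required $\Xo(G)$-equivariance, together with the equivariant splittings of Propositions~\ref{prop:7.9} and \ref{prop:6.4}. Everything else in the argument is bookkeeping of parabolic subalgebras and root data.
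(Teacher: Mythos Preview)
Your overall strategy---decompose \eqref{eq:10.5} into the steps of Proposition~\ref{prop:10.1} and Proposition~\ref{prop:9.7}, then check equivariance for each---matches the paper's approach. However, you misplace the main difficulty and leave the actual technical step unverified.

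The cocycle comparison is \emph{not} the obstacle here: Theorem~\ref{thm:9.6} and Proposition~\ref{prop:9.7} are given as input, so the identification $\natural_\tau \sim \natural_{\mf s^\vee}$ is already available. What you actually need to verify is that conjugation by $\bar w$ on the $p$-adic side, when pushed through the analytic localization of \cite[\S 6--7]{SolEnd}, produces precisely the formula $f N_v \mapsto (f\circ w^{-1}) N_{wvw^{-1}}$ on $\mh H(\tilde{\mc R}_\tau,W_{\mf s,\tau},k^\tau,\natural_\tau)$. You assert that ``the induced algebra isomorphism has the same shape as \eqref{eq:10.8}'' but give no argument. The paper carries this out explicitly: for $w\in W(G,L)_{\hat\sigma}$, it traces through the chain of equivalences
\[
\Mod_{\fl,\mf t_\R}\text{-}\mh H(\tilde{\mc R}_\tau,\ldots)\to \Mod\text{-}1_{U_\tau}\End_G(\Pi_{\mf s})^{an}_U 1_{U_\tau}\to\cdots\to \Mod_{\fl,\mf t_\R}\text{-}\mh H(\tilde{\mc R}_{w\tau},\ldots),
\]
using \cite[Lemmas 6.4, 6.5, Proposition 7.3]{SolEnd}, and computes directly that $\mc T_w N_v^\tau \mc T_w^{-1}=N_{wvw^{-1}}^{w\tau}$ for the intertwining operators $\mc T_w$ of \cite[\S 5.2]{SolEnd}. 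Only after this concrete computation does Proposition~\ref{prop:9.7} translate the formula to the Galois side.

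The paper also splits into two cases, $w\in W(G,L)_{\hat\sigma}$ and $w\notin W(G,L)_{\hat\sigma}$, which you treat uniformly. In the second case the Bernstein block changes, and one must instead work with the explicit Hecke-algebra isomorphism $\mr{Ad}(\bar w):\cH(G,\hat P_\ff,\hat\sigma)\to\cH(G,\hat P_{\bar w\ff},\bar w\hat\sigma)$, check its effect on the generators $T_{s_\alpha}$, $T_\gamma$, lift it to $\End_G(\Pi_{\mf s})\to\End_G(\Pi_{w\mf s})$ via Proposition~\ref{prop:5.5} and Lemma~\ref{lem:5.6}, and then reduce to the first case. Your high-level naturality claim is morally correct but does not substitute for this bookkeeping.
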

\begin{proof}
Since we only have to consider $G$-representations up to isomorphism, the left 
hand side of the diagram reduces to the identity map. 
Condition \eqref{eq:10.6} implies that 
\begin{equation}\label{eq:10.19}
w^\vee (R^+_{\mf s^\vee, \varphi_b}) = R^+_{w^\vee \mf s^\vee, w^\vee \varphi_b}
\end{equation}
via \eqref{eq:9.14}. 
Therefore, \eqref{eq:10.8} is an algebra homomorphism (while bijectivity
is clear).

First we treat the case $w \in W (G,L)_{\hat \sigma}$. By \eqref{eq:9.30}, we can
represent $W(G,L)_{\hat \sigma}$ in $N_G (L,T)_{\hat \sigma}$, thus we may assume that
$\bar w \in N_G (L,T)_{\hat \sigma}$. We will use the notations for analytic 
localization as discussed around \eqref{eq:5.6}. In Proposition \ref{prop:10.1}.(iv), 
the identity Ad$(\bar w)$ on $\Rep_\fl (G)_{\mathfrak{X}_{\nr}^+ (L) \tau}$ corresponds to the 
composition of the canonical bijections 
\begin{equation}\label{eq:10.7}
\begin{aligned}
\Mod_{\fl, \mf t_\R}\text{ - }\mh H (\tilde{\mc R}_\tau, W_{\mf s,\tau}, 
k^\tau,\natural_\tau) & \to \Mod_{\fl, \mathfrak{X}_{\nr}^+ (L)} 1_{U_\tau} \text{ - }
\End_G (\Pi_{\mf s})^{an}_U 1_{U_\tau} \\
& \to \Mod_{\fl, W (L,\tau,\mathfrak{X}_{\nr} (L)) \mathfrak{X}_{\nr}^+ (L)}\text{ - }
\End_G (\Pi_{\mf s})^{an}_U \\
& \to \Mod_{\fl, w \mathfrak{X}_{\nr}^+ (L)}\text{ - }1_{w U_\tau} 
\End_G (\Pi_{\mf s})^{an}_U 1_{w U_\tau} \\
& \to \Mod_{\fl, \mf t_\R}\text{ - }\mh H (\tilde{\mc R}_{w \tau}, W_{w \mf s,w \tau}, 
k^{w\tau},\natural_{w\tau}) .
\end{aligned}
\end{equation}
The first and last maps in \eqref{eq:10.7} are induced by analytic localization,
see \cite[Lemma 7.2 and Proposition 7.3]{SolEnd}, so they do not change
anything on the level of modules up to isomorphism. The second and third maps in
\eqref{eq:10.7} follow from \cite[Lemmas 6.4 and 6.5]{SolEnd}. By the proof of 
\cite[Lemma 6.4]{SolEnd}, their composition 
\[
\Mod_{\fl, \mathfrak{X}_{\nr}^+ (L)} 1_{U_\tau} \End_G (\Pi_{\mf s})^{an}_U 1_{U_\tau} \to 
\Mod_{\fl, w \mathfrak{X}_{\nr}^+ (L)} 1_{w U_\tau} \End_G (\Pi_{\mf s})^{an}_U 1_{w U_\tau} 
\]
is given by $M \mapsto \mr{Ad}(\mc T_w) M$ with $\mc T_w$ as in \cite[\S 5.2]{SolEnd}.
By \cite[Proposition 7.3]{SolEnd} and the definition of the elements 
$A_r^\tau, A_v^\tau$ \cite[\S 6.1]{SolEnd} and $\mc T_v^\tau$ 
\cite[Lemma 6.10]{SolEnd}, 
\[
\mc T_w N_r^\tau N_v^\tau \mc T_w^{-1} = N_{w r w^{-1}}^{w \tau} N_{w v w^{-1}}^{w \tau} 
\in \mh H (\tilde{\mc R}_{w \tau}, W_{\mf s, w \tau}, k^{w \tau}, \natural_{w \tau})
\]
for all standard basis elements $N_r^\tau \in \C [\Gamma_{\mf s,\tau}, \natural_\tau]$
and $N_v^\tau \in \C [W(R_{\sigma,\tau})]$.\footnote{For $N_{w v w^{-1}}^{w \tau}$, this involves
a choice of normalization, but the freedom in that choice is equivalent to the freedom
we already had in defining $N_r^\tau$.} We conclude that the composition of the maps in
\eqref{eq:10.7} is given by push forward along the algebra isomorphism: 
for $f \in \mc O (\mf t)$ and $r v \in W_{\sigma,\tau}$,
\begin{equation}\label{eq:10.9}
\mh H (\tilde{\mc R}_\tau, W_{\mf s,\tau}, k^\tau,\natural_\tau)  
\to 
\mh H (\tilde{\mc R}_{w\tau}, W_{\mf s,w \tau}, k^{w\tau},\natural_{w\tau}),\;
f N_r N_v 
\mapsto 
(f \circ w^{-1}) N_{w r w^{-1}} N_{w v w^{-1}} 
.
\end{equation}
Next we need to transfer this along \eqref{eq:10.5} to the right-hand side of the diagram. 
Proposition \ref{prop:9.7} translates \eqref{eq:10.9} into
the algebra isomorphism \eqref{eq:10.8}, thus indeed the right-hand side of
the diagram is given by push-forward along \eqref{eq:10.9}. 

Let $w \in W(G,L) \setminus W(G,L)_{\hat \sigma}$.~Conjugation by
$\bar w$ induces an algebra isomorphism
\begin{equation}\label{eq:10.10}
\mr{Ad}(\bar w) : 
\cH (G,\hat P_\ff, \hat \sigma) 
\to 
\cH (G, \hat P_{\bar w \ff}, \bar w \hat \sigma),\; 
f 
\mapsto 
f \circ \mr{Ad}(\bar w)^{-1} = [g \mapsto f ({\bar w}^{-1} g \bar w)] . 
\end{equation}
It interacts with the left column of diagram \eqref{diagram:prop:10.2} as
\begin{equation*}
    \begin{tikzcd}
        \Rep_\fl (G)_{\mathfrak{X}_{\nr}^+ (L) \tau}  \arrow[]{r}{\eqref{eq:10.1}}\arrow[]{d}{\mr{Ad}(\bar w)} & 
\Mod_{\fl, (\mathfrak{X}_{\nr}^+ (L) \tau )_\cH} \text{ - }\cH (G,\hat P_\ff, \hat \sigma) \arrow[]{d}{\mr{Ad}(\bar w) }\\
\Rep_\fl (G)_{\mathfrak{X}_{\nr}^+ (L) \bar w \tau}  \arrow[]{r}{\eqref{eq:10.1}} & 
\Mod_{\fl, \bar w (\mathfrak{X}_{\nr}^+ (L) \tau )_\cH} \text{ - }\cH (G,\hat P_{\bar w \ff}, \bar w\hat \sigma)
    \end{tikzcd}
\end{equation*}
In terms of Theorem \ref{thm:3.1}, for a simple reflection $s_\alpha$, \eqref{eq:10.10} sends 
$T_{s_\alpha} \in \cH (G,\hat P_\ff, \hat \sigma)$ to 
$T_{s_{w (\alpha)}}$, where $s_{w(\alpha)}$ is a simple reflection in 
$W(w J, \bar w \hat \sigma)$ by \eqref{eq:10.6}. Similarly, Ad$(\bar w)$ sends a standard 
basis element $T_\gamma \in \cH (G,\hat P_\ff, \hat \sigma)$, where 
$\gamma \in \Omega (J,\hat \sigma)$, to 
$T_{w \gamma w^{-1}} \in \cH (G, \hat P_{\bar w \ff}, \bar w \hat \sigma)$.~(Note that
this imposes a normalization on $T_{w \gamma w^{-1}}$, just as we chose a normalization
of $T_\gamma$ in the proof of Theorem \ref{thm:3.1}.) It follows that on 
$\C [ L^3_\tau / L^1] \cong \mc O \big( \Irr (CW_L (J,\hat \sigma)) \big)$, embedded in 
$\cH (G,\hat P_\ff, \hat \sigma)$ via \eqref{eq:3.1}, Ad$(\bar w)$ restricts to
\begin{equation}\label{eq:10.12}
\mc O \big( \Irr (CW_L (J,\hat \sigma)) \big) \to 
\mc O \big( \Irr (CW_L (J,\bar w \hat \sigma)) \big) : f \mapsto f \circ w^{-1}.
\end{equation}
Recall from \eqref{eq:5.2} that $\Pi_{\mf s} \cong \ind_{\hat P_\ff}^G (\hat \sigma)^{[L :L^3_\tau]}$ and from \eqref{eq:3.7} that $\End_G (\Pi_{\mf s}) \cong M_{[L:L^3_\tau]}(\C) \otimes \cH (G,\hat P_\ff, \hat \sigma)$. In this way, \eqref{eq:10.10} induces an algebra isomorphism
\begin{equation}\label{eq:10.11}
\mr{Ad}(\bar w) : \End_G (\Pi_{\mf s}) \to \End_G (\Pi_{w \mf s}) .     
\end{equation}
Proposition \ref{prop:5.5} gives a more precise description of how $\cH (G,\hat P_\ff, \hat \sigma)$
is embedded in $\End_G (\Pi_{\mf s})$. Thus the property
Ad$(\bar w) T_v = T_{w v w^{-1}}$ for $w \in W(J,\hat \sigma)$ remains valid in 
\eqref{eq:10.11}. Upon analytic localization as in \eqref{eq:5.6}, Lemma \ref{lem:5.6}
shows that \eqref{eq:10.11} is already given by a localized version of \eqref{eq:10.10}.
Therefore, \eqref{eq:10.12} shows that the localized version of \eqref{eq:10.11} 
agrees with the algebra isomorphism \eqref{eq:10.9}, only with
$w \mf s$ instead of $\mf s$ on the right-hand side. We conclude as in the case
$w \in W(G,L)_{\hat \sigma}$, with the same argument as following \eqref{eq:10.9}.
\end{proof}

Proposition \ref{prop:10.2} allows us to combine the equivalences of
categories from \eqref{eq:10.4} and \eqref{eq:10.5} into the following cleaner statement.
\begin{thm}\label{thm:10.3}
There exist the following equivalences of categories
\begin{equation}\label{eqn:thm:10.3}
\Rep_\fl (G)_{\mf s} \; \cong \; \Mod_{\fl}\text{ - }\End_G (\Pi_{\mf s}) 
\; \cong \; \Mod_\fl\text{ - }\cH (\mf s^\vee, q_F^{1/2}) ,
\end{equation}
induced by Propositions \ref{prop:10.1} and \ref{prop:9.7}.~These equivalences are
compatible with parabolic induction and restriction
\footnote{Parabolic restriction in the sense of Remark \ref{rem:10.8}}.
\end{thm}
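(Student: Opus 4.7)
The first equivalence $\Rep_\fl (G)_\mf s \cong \Mod_\fl\text{-}\End_G(\Pi_\mf s)$ is immediate from the progenerator property of $\Pi_\mf s$ and standard Morita theory, so I focus on the second equivalence. The strategy is to cover both categories by compatible local pieces centred at bounded parameters, use Proposition~\ref{prop:9.7} to identify the pieces, and glue using Proposition~\ref{prop:10.2}.

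Concretely, I would pick a set $\{\tau_i\}$ of representatives for the $W_\mf s$-orbits on unitary representations in $\Irr(L)_{\mf s_L}$, and let $\{\varphi_b^i\}$ be the corresponding bounded L-parameters under the LLC of Theorem~\ref{thm:8.2}. Since $\mf X_\nr(L)$ is a complex torus, every unramified character decomposes uniquely as a positive unramified character times a unitary one; hence every irreducible object of $\Rep_\fl(G)_\mf s$ has cuspidal support in some $W_\mf s$-translate of $\mf X_\nr^+(L)\tau_i$, and similarly every irreducible $\cH(\mf s^\vee,q_F^{1/2})$-module has $\mc O(\mf s_L^\vee)$-weights in some $W_{\mf s^\vee}$-translate of $\mf X_\nr^+(L^\vee)\varphi_b^i$. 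For each $i$, Proposition~\ref{prop:10.1} produces an equivalence between $\Rep_\fl(G)_{\mf X_\nr^+(L)\tau_i}$ and $\Mod_{\fl,\mf t_\R}\text{-}\mh H(\tilde{\mc R}_{\tau_i},W_{\mf s,\tau_i},k^{\tau_i},\natural_{\tau_i})$, while Proposition~\ref{prop:9.7} together with \eqref{eq:10.4} identifies the latter with $\Mod_{\fl,Z(\mf l^\vee)^{\mb W_F}_\R}\text{-}\mh H(\mf s^\vee,\varphi_b^i,\log q_F^{1/2})$. Via the reduction procedure of \cite{Lus-Gr} and \cite[\S 2.1]{SolAHA}, this corresponds to the full subcategory of $\Mod_\fl\text{-}\cH(\mf s^\vee,q_F^{1/2})$ consisting of modules whose central character lies in $\mf X_\nr^+(L^\vee)\varphi_b^i$.

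To patch these local equivalences into a single equivalence on the Bernstein block, I would invoke Proposition~\ref{prop:10.2}: each $w\in W_\mf s$ intertwines the local equivalence at $\tau_i$ with the one at $w\tau_i$ via the algebra isomorphism \eqref{eq:10.8} on the Galois side. This $W_\mf s$-equivariance lets one descend to a well-defined equivalence after summing and quotienting by the $W_\mf s$-action. Compatibility with parabolic induction and restriction holds on each local piece by Proposition~\ref{prop:10.1} together with the observation following \eqref{eq:10.4} that Proposition~\ref{prop:9.7} matches parabolic subalgebras on both sides; since the gluing is functorial in the parabolic data, this compatibility descends to the global equivalence.

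The main obstacle will be the coherent normalization of the local isomorphisms in Proposition~\ref{prop:9.7}, which are canonical only up to inner automorphisms and twists by certain characters of $W_{\mf s,\tau_i}$. To assemble them into a global equivalence, one must choose them uniformly across the representatives $\tau_i$ so that they transport correctly under $W_\mf s$, and Proposition~\ref{prop:10.2} is precisely the tool for verifying this compatibility. A secondary point to check is that analytic localization at $\mf X_\nr^+(L^\vee)\varphi_b^i$ preserves finite length on both sides (which follows from \cite[Proposition~4.3]{Opd}), so that the restriction to finite-length subcategories commutes with the decomposition into local pieces.
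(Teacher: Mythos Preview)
Your proposal is correct and follows essentially the same approach as the paper: decompose both sides over unitary/bounded base points, identify the local pieces via Propositions~\ref{prop:10.1} and~\ref{prop:9.7} (i.e.\ via \eqref{eq:10.5}), and use Proposition~\ref{prop:10.2} to ensure the pieces are consistent under the $W_{\mf s}\cong W_{\mf s^\vee}$-action. The paper is slightly more explicit on the Galois side, writing out the decomposition \eqref{eq:10.14}--\eqref{eq:10.15} and invoking \cite[Proposition~3.14.a, Theorem~3.18.a]{AMS3} for the passage between $\cH(\mf s^\vee,q_F^{1/2})$ and the graded Hecke algebras, but your citation of the Lusztig reduction procedure covers the same ground.
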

\begin{proof}
The first equivalence is just \eqref{eq:10.3} restricted to objects of finite length. 
By \cite[Corollary 8.1]{SolEnd}, this induces the equivalence
between (i) and (iv) in Proposition \ref{prop:10.1}. From another viewpoint, the
first equivalence in this theorem is obtained from (i)$\to$(iv) in Proposition 
\ref{prop:10.1} by taking the direct sum over all unitary representations
$\tau$ in $\Irr (L)_{\mf s_L} / W(G,L)_{\hat \sigma}$.

In \eqref{eq:10.5}, we can take the direct sum over all unitary representations
$\tau$ in $\Irr (L)_{\mf s_L}$, or equivalently over all bounded 
$(\varphi_b,\rho_b) \in \Phi_e (L)^{\mf s_L}$. The summands indexed by $\tau$ and $\tau'$ 
that differ by an element $w \in W(G,L)_{\hat \sigma} = W_{\mf s}$ are identified via Proposition 
\ref{prop:10.2}, and dividing out those relations recovers $\Rep_\fl (G)_{\mf s}$
from the left-hand side of \eqref{eq:10.5}. On the right-hand side of \eqref{eq:10.5}, 
we can reduce to a direct sum over $(\varphi_b,\rho_b)$ up to conjugation under
$W(G^\vee,L^\vee)^{\mb W_F}_{\mf s^\vee} = W_{\mf s^\vee}$, which brings us to
\begin{equation}\label{eq:10.13}
\bigoplus\nolimits_{(\varphi_b,\rho_b) \in \varphi_{e,bdd}^{\mf s_L}} \, \Mod_{\fl, 
Z(\mf l^\vee)^{\mb W_F}_\R}\text{ - }\mh H \big( \mf s^\vee, \varphi_b, \log (q_F^{1/2}) \big) 
\; \big/ W_{\mf s^\vee} ,
\end{equation}
where the subscript bdd stands for bounded. It was already shown in Proposition 
\ref{prop:10.1} and \eqref{eq:10.4} that all steps so far respect parabolic 
induction and restriction.

We claim that \eqref{eq:10.13} is equivalent to 
$\Mod_\fl\text{ - }\cH ( \mf s^\vee, q_F^{1/2})$ via an equivalence that respects parabolic
induction and restriction.
Finite-length modules $M$ for any algebra can be decomposed along central characters:
for each central character $\chi$, one takes $M_\chi$ to be the maximal submodule of $M$ 
such that all irreducible subquotients of $M_\chi$ admit central character 
$\chi$. In particular we have, in the notation of \eqref{eq:10.16},
\[
\Mod_\fl\text{ - }\cH (\mf s^\vee,q_F^{1/2}) \; \cong \; \bigoplus\nolimits_{(\varphi_b,\rho_b) 
\in \varphi_{e,bdd}^{\mf s_L} / W_{\mf s^\vee}} \,
\Mod_{\fl, \mathfrak{X}_{\nr}^+ (L^\vee) (\varphi_b,\rho_b)}\text{ - }\cH (\mf s^\vee, q_F^{1/2}) .
\]
For a suitable action of $W_{\mf s^\vee}$, the right-hand side can be rewritten as 
\begin{equation}\label{eq:10.14}
\bigoplus\nolimits_{(\varphi_b,\rho_b) \in \varphi_{e,bdd}^{\mf s_L}} \,
\Mod_{\fl, \mathfrak{X}_{\nr}^+ (L^\vee) (\varphi_b,\rho_b)}\text{ - }\cH (\mf s^\vee, q_F^{1/2}) 
\; \big/ W_{\mf s^\vee}.
\end{equation}
By construction $w^\vee \in W_{\mf s^\vee}$ 
acts trivially on summands indexed by $w^\vee$-fixed $(\varphi'_b,\rho_b)$. 
By \cite[Proposition 3.14.a, Theorem 3.18.a]{AMS3}, there is a canonical equivalence 
\begin{equation}\label{eq:10.44}
\Mod_{\fl, \mathfrak{X}_{\nr}^+ (L^\vee) (\varphi_b,\rho_b)}\text{ - }\cH (\mf s^\vee, q_F^{1/2})
\; \cong \; \Mod_{\fl, Z(\mf l^\vee)^{\mb W_F}_\R}\text{ - }
\mh H \big( \mf s^\vee, \varphi_b, \log (q_F^{1/2}) \big) .
\end{equation}
By \cite[Theorems 2.5.b and 2.11.b]{AMS3}, this equivalence commutes with parabolic
induction and restriction. For Hecke algebras, parabolic restriction is right adjoint
to parabolic induction (which is just Frobenius reciprocity for algebras).~By the
uniqueness of adjoint functors, \eqref{eq:10.44} also commutes with parabolic restriction.

Via \eqref{eq:10.44}, \eqref{eq:10.14} becomes
\begin{equation}\label{eq:10.15}
\bigoplus\nolimits_{(\varphi_b,\rho_b) \in \varphi_{e,bdd}^{\mf s_L}} \, \Mod_{\fl, 
Z(\mf l^\vee)^{\mb W_F}_\R}\text{ - }\mh H \big( \mf s^\vee, \varphi_b, \log (q_F^{1/2}) \big) 
\; \big/ W_{\mf s^\vee} .
\end{equation}
By \cite[\S 7]{Lus-Gr}, or by an argument analogous to the analysis of \eqref{eq:10.7} 
in the proof of Proposition \ref{prop:10.2}, we deduce that the action of $W_{\mf s^\vee}$ in 
\eqref{eq:10.15} reduces to the cases for which \eqref{eq:10.19} holds, where 
it is none other then Ad$(w^\vee)$ from Proposition \ref{prop:10.2}. This proves 
the claim we made after \eqref{eq:10.13}.
\end{proof}
\begin{rem}
We warn the reader that Theorem \ref{thm:10.3} does not imply that $\End_G (\Pi_{\mf s})$ and
$\cH (\mf s^\vee, q_F^{1/2})$ are Morita equivalent. We really need the restriction
to finite-length modules, because those can be decomposed along central characters.
The difficulties (or even obstructions) to extend such equivalences of categories to
representations of arbitrary length stem from \eqref{eq:5.24}. 
\end{rem}

Let $\mf B (G)_{ns}$ be the collection of inertial equivalence classes for $G$
whose supercuspidal representations are non-singular, and define the subset $\mf B (G)^0_{ns}$
by the additional condition that the supercuspidal representations have depth zero.
This $\mf B (G)^0_{ns}$ is a finite set because: $G$ has only finitely many conjugacy classes 
of Levi subgroups $L$; each such $L$ has only finitely many orbits of facets $\ff_L$ in its 
Bruhat--Tits building; and each of the groups $\hat P_{L,\ff}$ has only finitely many
irreducible representations that come from its finite reductive quotient. We write\label{i:117}
\begin{equation*}
\Rep^0 (G)_{ns} := \prod\nolimits_{\mf s \in \mf B (G)^0_{ns}} \, \Rep (G)_{\mf s}
\end{equation*}
for the category of $G$-representations whose cuspidal support consists of
non-singular depth-zero representations. Since the index set is finite,
the direct product is also a direct sum.
We recall that $\Xo (G)$ acts on $\Rep^0 (G)_{ns}$ by tensoring.

\begin{thm}\label{thm:10.7}
The equivalences \eqref{eqn:thm:10.3} induce equivalences of categories
\begin{equation}\label{eqn:thm:10.7}
\Rep^0_\fl (G)_{ns} \cong 
\bigoplus\limits_{\mf s \in \mf B (G)^0_{ns}} \Mod_\fl \text{ - }\End_G (\Pi_{\mf s}) \cong 
\bigoplus\limits_{\mf s \in \mf B (G)^0_{ns}} \Mod_\fl  \text{ - } \cH (\mf s^\vee, q_F^{1/2}) ,
\end{equation}
which are compatible with parabolic induction and restriction.

The group $\Xo (G) \cong \Xo (G^\vee)$ acts canonically 
on all three terms, and the equivalences are equivariant for these actions.
\end{thm}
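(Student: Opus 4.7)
The plan is to first invoke the Bernstein decomposition to write
\[
\Rep^0_\fl(G)_{ns} \;=\; \bigoplus\nolimits_{\mf s \in \mf B (G)^0_{ns}} \Rep_\fl(G)_{\mf s},
\]
which is a finite direct sum because $\mf B(G)^0_{ns}$ is finite (each class is determined by a $G$-conjugacy class of Levi subgroups $L$, a facet $\ff_L$ of $\mc B(\mc L,F)$ up to $L$-action, and a non-singular cuspidal representation of the finite reductive quotient $\hat P_{L,\ff}/L_{\ff,0+}$, and there are only finitely many such data). Applying Theorem \ref{thm:10.3} summand-by-summand then produces both equivalences in \eqref{eqn:thm:10.7}. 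Compatibility with parabolic induction and restriction is inherited block-by-block from Theorem \ref{thm:10.3}, using that both functors preserve depth zero and non-singular cuspidal support and therefore respect the decompositions indexed by $\mf B(G)^0_{ns}$ and $\mf B(M)^0_{ns}$ for any Levi $M \subset G$.

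The substantive new content is the $\Xo(G)$-equivariance. The action on the left is $\pi \mapsto \chi \otimes \pi$, which permutes blocks by $\mf s \mapsto \chi \cdot \mf s$. On the middle term, the canonical isomorphism $\chi \otimes \Pi_{\mf s} \cong \Pi_{\chi \cdot \mf s}$ of projective progenerators induces an algebra isomorphism $\End_G(\Pi_{\mf s}) \xrightarrow{\sim} \End_G(\Pi_{\chi \cdot \mf s})$. On the right, $\chi$ corresponds via \eqref{eq:8.4} to $z \in \Xo(G^\vee)$, which acts on enhanced L-parameters by \eqref{eq:6.27} (trivially on enhancements) and yields a canonical algebra isomorphism $\cH(\mf s^\vee, q_F^{1/2}) \xrightarrow{\sim} \cH(z \mf s^\vee, q_F^{1/2})$ directly from the construction recalled in \S\ref{subsec:HeckeL-prelim}. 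Equivariance of the first equivalence $\Rep_\fl(G)_{\mf s} \cong \Mod_\fl \text{ - } \End_G(\Pi_{\mf s})$ is then formal from the description $\pi \mapsto \Hom_G(\Pi_{\mf s}, \pi)$, which intertwines tensoring by $\chi$ with the action induced by $\chi \otimes \Pi_{\mf s} \cong \Pi_{\chi \cdot \mf s}$.

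The hard part will be the equivariance of the second equivalence $\Mod_\fl \text{ - } \End_G(\Pi_{\mf s}) \cong \Mod_\fl \text{ - } \cH(\mf s^\vee, q_F^{1/2})$, for which I would trace through the proof of Theorem \ref{thm:10.3} and verify $\Xo$-equivariance at each step: (i) the bijection $\mf s \leftrightarrow \mf s^\vee$ of supercuspidal Bernstein classes is $\Xo$-equivariant by Theorem \ref{thm:8.2} combined with \eqref{eq:8.4}; (ii) the identification of affine Hecke subalgebras in Proposition \ref{prop:9.4} is preserved under $\Xo$, because its $q$-parameters (Proposition \ref{prop:9.3}) depend only on $\varphi|_{\mb I_F}$ and on $\theta_\ff$, both of which are invariants of $\Xo$-orbits; and (iii) the 2-cocycle identity $\natural_{\mf s^\vee} = \natural_\tau$ of Theorem \ref{thm:9.6} propagates $\Xo$-coherently via \eqref{eq:9.57}--\eqref{eq:9.58}. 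The main obstacle is coordinating, along a single $\Xo(G)$-orbit, the many choices entering the construction---the coherent splittings $\epsilon$ of Section \ref{sec:DL}, the isomorphisms $\zeta^0$ and $\zeta^\rtimes$ of Lemma \ref{lem:8.6} and Proposition \ref{prop:9.5}, and the intertwiners $B_g$ of \eqref{eq:9.23}---but this has been arranged in advance by the $\Xo$-invariance assertions in Theorem \ref{thm:8.2} and Proposition \ref{prop:9.5}, so the equivariance follows without further obstruction.
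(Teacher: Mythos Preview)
Your proposal is correct and follows essentially the same approach as the paper's proof: both obtain the equivalences from Theorem \ref{thm:10.3} block-by-block, and both establish $\Xo(G)$-equivariance by constructing explicit algebra isomorphisms on each side (the paper via \eqref{eq:10.31}--\eqref{eq:10.32} and \eqref{eq:10.43}) and then verifying compatibility through the chain of intermediate graded Hecke algebras, using exactly the ingredients you list---Theorem \ref{thm:8.2}, invariance of $q$-parameters, and \eqref{eq:9.57}--\eqref{eq:9.58}. The paper is more explicit in writing down the three commutative diagrams \eqref{eq:10.35}, \eqref{eq:10.38}, \eqref{eq:10.42} that implement your steps (i)--(iii), and it uses the polar decomposition $\chi = \chi_u |\chi|$ to separate the unitary and positive parts, but the substance is the same.
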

\begin{proof}
The equivalences of categories follow directly from Theorem \ref{thm:10.3}. Next we decompose 
$\Rep^0 (G)_{ns}$ into $\Xo (G)$-stable pieces. Let $\Rep (G)_\ff$ be the 
sum of the categories $\Rep (G)_{(\hat P_\ff,\hat \sigma)}$, where $\hat \sigma$ runs over
all $F$-non-singular representations of $\hat P_\ff$. This category is stable
under twisting by $\Xo (G)$, because every $\chi \in \Xo (L)$ is trivial on $G_{\ff,0+} =
\ker (P_\ff \to \mc G^\circ_\ff (k_F))$. For an inertial equivalence class $\mf s = [L,\tau]_G$, 
we write $\mf s \prec \ff$ if $\Rep (G)_{\mf s}$ has the form $\Rep (G)_{(\hat P_\ff, 
\hat \sigma)}$ as in Theorem \ref{thm:3.6}, so that $\Rep (G)_\ff = 
\bigoplus\nolimits_{\mf s \prec \ff} \Rep (G)_{\mf s}$. In this context, 
Theorem \ref{thm:10.3} gives equivalences of categories
\begin{equation}\label{eq:10.29}
\Rep_\fl (G)_{(P_\ff,\sigma)} \xrightarrow{\sim} \bigoplus\nolimits_{\mf s \prec \ff} 
\Mod_{\fl}\text{ - }\End_G (\Pi_{\mf s}) \xrightarrow{\sim} \bigoplus\nolimits_{\mf s \prec \ff} 
\Mod_\fl\text{ - }\cH (\mf s^\vee, q_F^{1/2}) .
\end{equation}
Take $\chi \in \Xo (G)$. Let $\chi_u |\chi|$ be its polar decomposition, 
where $|\chi| \in \mathfrak{X}_{\nr}^+ (G) = \Hom (G,\R_{>0})$ and $\chi_u \in \Xo (G)$
has image in $S^1 \subset \C^\times$.~Then 
$[L, \chi_u \otimes \tau]_G = \chi_u \mf s = \chi \mf s$, and the construction of 
$\Pi_{\mf s} = \II_Q^G (\Pi_{\mf s}^L) = \II_Q^G (\ind_{L^1}^L \tau)$ shows that 
$\Pi_{\chi_u \mf s}$ is equal to $\chi \otimes \Pi_{\mf s} = 
\chi_u \otimes \Pi_{\mf s}$. The relation between $\End_G (\Pi_{\mf s})$ and
$\End_G (\Pi_{\chi_u \mf s})$ is best described with the following isomorphism from 
\cite[Corollary 5.8]{SolEnd}:
\begin{equation}\label{eq:10.30}
\End_G (\Pi_{\mf s}) \otimes_{\mc O (\mathfrak{X}_{\nr} (L))} \C (\mathfrak{X}_{\nr} (L)) \cong
\C (\mathfrak{X}_{\nr} (L)) \rtimes \C [W(L,\tau,\mathfrak{X}_{\nr} (L)), \natural_\tau],
\end{equation}
where $\C (\mathfrak{X}_{\nr} (L))$ denotes the field of
rational functions on $\mathfrak{X}_{\nr} (L)$, and $\mathfrak{X}_{\nr} (L)$ is identified with 
the family of $L$-representation $\{ z \otimes \tau : z \in \mathfrak{X}_{\nr} (L) \}$. The twisted
group algebra $\C [W(L,\tau,\mathfrak{X}_{\nr} (L)), \natural_\tau]$ is spanned by operators
$N_w$, which may have poles on $\mathfrak{X}_{\nr} (L)$. Since tensoring with $\chi$ is a 
symmetry of the entire setup, these operators $N_w$  can be constructed in exactly the same 
way for $\chi_u \mf s$. Then there is a canonical algebra isomorphism
\begin{equation}\label{eq:10.31}
\begin{split}
\End_G (\Pi_{\chi_u \mf s}) \otimes_{\mc O (\mathfrak{X}_{\nr} (L))} \C (\mathfrak{X}_{\nr} (L)) & \xrightarrow{\sim} 
\End_G (\Pi_{\mf s}) \otimes_{\mc O (\mathfrak{X}_{\nr} (L))} \C (\mathfrak{X}_{\nr} (L)) \\
f N_w & \mapsto 
(f \circ \otimes \chi) N_w, 
\end{split}
\end{equation}
where $f \in \C (\mathfrak{X}_{\nr} (L))$, $w \in W(L,\tau,\mathfrak{X}_{\nr} (L)) = 
W(L,\chi_u \otimes \tau,\mathfrak{X}_{\nr} (L))$ and $\otimes \chi$ 
is to be interpreted as the family $z \otimes \tau \mapsto |\chi| z \otimes \chi_u \tau$ for 
$z \in \mathfrak{X}_{\nr} (L)$. The poles of $N_w$'s on both sides match, thus 
\eqref{eq:10.31} restricts to an algebra isomorphism
\begin{equation}\label{eq:10.32}
\End_G (\Pi_{\chi_u \mf s}) \xrightarrow{\sim} \End_G (\Pi_{\mf s}) . 
\end{equation}
Pullback along \eqref{eq:10.32} is thought of as tensoring modules with $\chi$,
and this gives the action of $\Xo (G)$ on the middle term in \eqref{eq:10.29}.
For $\pi \in \Rep (G)_{\mf s}$, by \eqref{eq:10.3}, the first arrow in \eqref{eq:10.29} sends 
$\chi \otimes \tau $ to 
\[
\Hom_G (\Pi_{\chi_u \mf s}, \chi \otimes \pi) =
\Hom_G (\chi_u \otimes \Pi_{\mf s}, \chi \otimes \pi) = 
\Hom_G (\chi \otimes \Pi_{\mf s}, \chi \otimes \pi) .
\]
The right-hand side is the vector space $\Hom_G (\Pi_{\mf s}, \pi)$ with the
$\End_G (\Pi_{\mf s})$-module structure adjusted by \eqref{eq:10.31}, thus it is equal to $\chi \otimes \Hom_G (\Pi_{\mf s},\pi) \in \Mod \text{-} \End_G (\Pi_{\chi_u \mf s})$. 
This shows that the first equivalence in \eqref{eq:10.29} is $\Xo (G)$-equivariant.

The second arrow in \eqref{eq:10.29} will be treated in three steps. First we pass 
to $\Mod_{\fl,\mf t_\R}\text{ - }\mh H (\tilde{\mc R}^\tau, W_{\mf s,\tau}, k^\tau,
\natural_\tau)$, as in Proposition \ref{prop:10.1}. More precisely, we take
a direct sum of such categories, first over $\mf s \prec \ff$ and then
over unitary $\tau \in \Irr (L)_{\mf s_L}$ modulo $W_{\mf s}$. The equivalence
\begin{equation}\label{eq:10.33}
\bigoplus\nolimits_{\mf s} \, \Mod_\fl\text{ - }\End_G (\Pi_{\mf s}) \xrightarrow{\sim}
\bigoplus\nolimits_{\mf s,\tau} \, \Mod_{\fl,\mf t_\R}\text{ - }
\mh H (\tilde{\mc R}^\tau, W_{\mf s,\tau}, k^\tau, \natural_\tau)
\end{equation}
is described in the first two lines of \eqref{eq:10.7}; see \cite[(8.1)]{SolEnd}.
Here the steps in\-vol\-ving analytic localization are innocent, and it boils down to
the algebra isomorphism
\begin{equation}\label{eq:10.34}
\mh H (\tilde{\mc R}_\tau, W_{\mf s,\tau}, k^\tau,\natural_\tau )^{an}_{\log_\tau (U_\tau)} 
\xrightarrow{\sim} 1_{U_\tau} \End_G (\Pi_{\mf s})^{an}_U 1_{U_\tau}
\end{equation}
from \cite[Proposition 7.3]{SolEnd}, written in the notation of \eqref{eq:10.7}.
On $\C [W_{\mf s,\tau}, \natural_\tau]$, this isomorphism  is given
by the same formula (independent of twisting by $\chi$) for any $\tau$, and
it sends any $f \in \mc O (\mf t)$ to $f \circ \log_\tau$, where 
$\log_\tau (z \otimes \tau) := \log (z)$. Similar to $\otimes \chi$, we have the map 
\[
\otimes \log |\chi| : \log_\tau (U_\tau) \to 
\log_{\chi_u \tau} (\chi_u U_\tau) = \log_\tau (U_\tau).
\]
We claim that there is a commutative diagram of algebra isomorphisms 
\begin{equation}\label{eq:10.35}
    \begin{tikzcd}
\mh H (\tilde{\mc R}^{\chi_u \tau}, W_{\chi _u \mf s, \chi_u \tau}, k^{\chi_u \tau},
\natural_{\chi_u \tau})^{an}_{\log_{\chi_u \tau} (\chi_u U_\tau)}  \arrow[]{r}{}\arrow[]{d}{\eqref{eq:10.34}} &  
\mh H (\tilde{\mc R}_\tau, W_{\mf s,\tau}, k^\tau,\natural_\tau )^{an}_{\log_\tau (U_\tau)}\arrow[]{d}{\eqref{eq:10.34}}  \\
1_{\chi_u U_\tau} \End_G (\Pi_{\chi_u \mf s})^{an}_{\chi_u U} 1_{\chi_u U_\tau}  
\arrow[]{r}{\eqref{eq:10.31}} & 1_{U_\tau} \End_G (\Pi_{\mf s})^{an}_U 1_{U_\tau}
    \end{tikzcd} \hspace{-5mm}
\end{equation}
where the upper horizontal map is given by 
\begin{equation}\label{eq:10.36}
f N_w \mapsto (f \circ \otimes \log |\chi|) N_w \text{ for } f \in C^{an} (\log_{\chi_u \tau} 
(\chi_u U_\tau)),\;w \in W_{\mf s,\tau} = W_{\chi_u \mf s, \chi_u \tau} . \hspace{-3mm}
\end{equation}
Indeed, the only thing left to show is that the 2-cocycles $\natural_\tau$ and
$\natural_{\chi_u \tau}$ match, which is guaranteed by \eqref{eq:9.58} and 
Proposition \ref{prop:9.5}. We define the $\Xo (G)$-action as pullback along 
\eqref{eq:10.36}, and thus \eqref{eq:10.33} is $\Xo (G)$-equivariant. 

Next we consider the equivalence of categories 
\begin{equation}\label{eq:10.37}
\bigoplus\nolimits_{\mf s,\tau} \, \Mod_{\fl,\mf t_\R}\text{ - }\mh H (\tilde{\mc R}^\tau, 
W_{\mf s,\tau}, k^\tau, \natural_\tau) \xrightarrow{\sim} \bigoplus\nolimits_{\mf s,
\tau} \, \Mod_\fl\text{ - }\mh H \big( \mf s^\vee, \varphi_b ,\log (q_F^{1/2}) \big) ,
\end{equation} 
where $\mf s$ and $\tau$ run through the same set as in \eqref{eq:10.36}, and
$(\varphi_b,\rho_b)$ corresponds to $\tau$ via Theorem \ref{thm:8.2}. This follows from 
Proposition \ref{prop:9.7}. We claim that there is a 
commutative diagram of algebra isomorphisms
\begin{equation}\label{eq:10.38}
    \begin{tikzcd}
        \mh H (\tilde{\mc R}^{\chi_u \tau}, W_{\chi _u \mf s, \chi_u \tau}, k^{\chi_u \tau},
\natural_{\chi_u \tau}) \arrow[]{r}{\eqref{eq:10.36}}\arrow[]{d}{\text{Proposition \ref{prop:9.7}}} &  
\mh H (\tilde{\mc R}_\tau, W_{\mf s,\tau}, k^\tau,\natural_\tau )\arrow[]{d}{\text{Proposition \ref{prop:9.7}}} \\
\mh H \big( \chi_u \mf s^\vee, \chi_u \varphi_b, \log (q_F^{1/2}) \big) 
\arrow[]{r}{} & \mh H \big( \mf s^\vee, \varphi_b, \log (q_F^{1/2}) \big)
    \end{tikzcd}
\end{equation}
where the second row is given by
\begin{equation}\label{eq:10.39}
f N_{w^\vee} \mapsto (f \circ \otimes \log |\chi| ) N_{w^\vee} \;\text{for}\;
f \in \mc O (Z(\mf l^\vee)^{\mb W_F})\;\text{and}\; w^\vee \in W_{\mf s^\vee, \varphi_b} .
\end{equation}
Theorem \ref{thm:8.2} guarantees that $\chi_u \varphi_b$ and $\chi_u \mf s^\vee$
correspond to $\chi_u \tau$ and $\chi_u \mf s$, respectively, as desired.~By
$\chi_u, |\chi| \in \Xo (G^\vee)$ and \eqref{eq:9.57}, we
have that \eqref{eq:10.39} is an algebra isomorphism. Commutativity of the
diagram is clear from the formulas for the maps in question. This proves that \eqref{eq:10.37} 
is equivariant for $\Xo (G) \cong \Xo (G^\vee)$, where on the right we let $\Xo (G^\vee)$ 
act via pullback along \eqref{eq:10.39}. The equivalence of categories 
\begin{equation}\label{eq:10.40}
\bigoplus\nolimits_{\mf s,\tau} \, \Mod_\fl\text{ - }\mh H \big( \mf s^\vee, \varphi_b ,
\log (q_F^{1/2}) \big) \cong \bigoplus\nolimits_{\mf s \prec \ff} \,
\Mod_\fl\text{ - }\cH (\mf s^\vee, q_F^{1/2})
\end{equation}
was shown in the proof of Theorem \ref{thm:10.3}; see \eqref{eq:10.13}. It then boils
down to several applications of the following equivalence of categories
\begin{equation}\label{eq:10.41}
\Mod_{\fl,Z(\mf l^\vee)^{\mb W_F}_\R}\text{ - }\mh H \big( \mf s^\vee, \varphi_b ,
\log (q_F^{1/2}) \big) \cong 
\Mod_{\fl, \mathfrak{X}_{\nr}^+ (L^\vee) (\varphi_b,\rho_b)}\text{ - }\cH (\mf s^\vee, q_F^{1/2})
\end{equation}
from \cite[Proposition 3.14.a and Theorem 3.18.a]{AMS3}. This is analogous to
the equivalence of categories in \eqref{eq:10.33}. Using analytic localizations as
in \eqref{eq:10.35}, one can show that there is a commutative diagram 
\begin{equation}\label{eq:10.42}
\begin{tikzcd}
\Mod_{\fl,Z(\mf l^\vee)^{\mb W_F}_\R}\text{ - }\mh H \big( \mf s^\vee, \varphi_b ,
\log (q_F^{1/2}) \big) 
\arrow[]{r}{\eqref{eq:10.41}}\arrow[]{d}[swap]{\eqref{eq:10.39}} & 
\Mod_\fl\text{ - }\cH (\mf s^\vee, q_F^{1/2})\arrow[]{d}{} \\
\Mod_{\fl,Z(\mf l^\vee)^{\mb W_F}_\R}\text{ - }\mh H \big( \chi_u \mf s^\vee,
\chi_u \varphi_b ,\log (q_F^{1/2}) \big) 
\arrow[]{r}{\eqref{eq:10.41}}  & 
\Mod_\fl\text{ - }\cH (\chi_u \mf s^\vee, q_F^{1/2}) 
\end{tikzcd}
\end{equation}
where the right vertical arrow is pullback along the following algebra isomorphism from 
\cite[(19)]{SolLLCunip}: for $f \in \mc O (\chi_u \mf s_L^\vee)$ and 
$w \in W_{\mf s^\vee} = W_{\chi_u \mf s^\vee}$, 
\begin{equation}\label{eq:10.43}
\cH (\chi_u \mf s^\vee, q_F^{1/2}) 
\to 
\cH (\mf s^\vee, q_F^{1/2})\;\text{is given by } 
f N_{w^\vee} 
\mapsto 
(f \circ \otimes \chi) N_{w^\vee}. 
\end{equation}
Let $\chi \in \Xo (G^\vee)$ act on the right-hand side by pullback along 
\eqref{eq:10.43}, and thus \eqref{eq:10.40} is $\Xo (G^\vee)$-equivariant.
\end{proof}

\section{An LLC for non-singular depth-zero representations}
\label{sec:LLC}

\subsection{Construction} \

The right-hand sides of Proposition \ref{prop:10.2}, Theorems \ref{thm:10.3} 
and \ref{thm:10.7} concern Langlands parameters, but only cuspidal L-parameters for Levi 
subgroups of rigid inner twists of $G$. We also need to consider non-cuspidal enhanced 
L-parameters (see for example \cite{AMS3}) when parametrizing the irreducible modules 
(or the standard modules) of the relevant Hecke algebras. Note that \cite{AMS3} considers 
only left modules, in this article we will need to consider right modules at some point.
In preparation for this, we study how the cuspidal support map from \cite{AMS1}
behaves with respect to taking contragredients of enhancements of L-parameters.

\begin{lem}\label{lem:10.9}
Let $(\varphi,\rho) \in \Phi_e (G)$. \hspace{-2mm} 
Suppose that $\mr{Sc}(\varphi,\rho)$ is represented by $(L,\varphi_L,\rho_L)$.
\enuma{
\item $\mr{Sc}(\varphi,\rho^\vee)$ is represented by $(L,\varphi_L,\rho_L^\vee)$.
\item Consider inertial classes $\mf s_L^\vee = \mathfrak{X}_{\nr} (L) (\varphi_L,\rho_L)$ 
and $\mf s_L^{\vee op} = \mathfrak{X}_{\nr} (L) (\varphi_L,\rho_L^\vee)$
for $\Phi_e (L)$.~Let $\mf s^\vee$ and $\mf s^{\vee op}$ be the corresponding
inertial classes for $\Phi_e (G)$. Then
\[
\Phi_e (G)^{\mf s^{\vee op}}=
\big\{ (\varphi, \rho^\vee) : (\varphi,\rho) \in \Phi_e (G)^{\mf s^\vee} \big\}  .
\]
}   
\end{lem}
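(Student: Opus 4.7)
My plan is to derive both parts of the lemma from a symmetry of the cuspidal support map with respect to contragredients of enhancements.

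I would begin by unpacking the definition of $\mr{Sc}$ from \cite[\S 7--8]{AMS1}, as adapted to our rigid-inner-twist setting in Lemma \ref{lem:9.8}. Given $(\varphi,\rho) \in \Phi_e(G)$, one sets $H := Z_{\bar G^\vee}(\varphi|_{\mb W_F})$ and lets $u$ be the unipotent element of $H$ corresponding to the image of a principal unipotent element under $\varphi|_{\SL_2(\C)}$. Then $\rho \in \Irr(\pi_0(S_\varphi^+)) = \Irr(\pi_0(Z_H(u)))$, and Lusztig's generalized Springer correspondence for the (possibly disconnected) complex reductive group $H$ assigns to $(u,\rho)$ a cuspidal triple $(M^\vee, v, \rho_L)$, where $M^\vee$ is a quasi-Levi subgroup of $H$, $v$ is a distinguished unipotent element of $M^{\vee,\circ}$, and $\rho_L$ is a cuspidal enhancement. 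The Levi subgroup $L \subset G$ is recovered from $M^\vee$, and $\varphi_L$ is reconstructed from $\varphi|_{\mb W_F}$ and $v$ via the Jacobson--Morozov procedure without any reference to $\rho$. Thus (a) reduces to showing that the generalized Springer correspondence is equivariant under contragredients, i.e., that replacing $\rho$ by $\rho^\vee$ replaces $\rho_L$ by $\rho_L^\vee$ while fixing $(M^\vee, v)$.

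This contragredient-compatibility can be argued in two complementary ways. Geometrically, the generalized Springer correspondence is realized as an equivalence between (sub)categories of $H$-equivariant perverse sheaves (or local systems), and Verdier duality furnishes an involution on both sides that is intertwined by this equivalence, hence sends duals to duals. Algebraically, as in \cite[Theorem 4.4]{AMS1}, the correspondence is formulated as an isomorphism between modules of certain Hecke-type endomorphism algebras, and the explicit form of that isomorphism respects passage to contragredient modules. Either route yields that the unipotent class $v$ and the quasi-Levi $M^\vee$ are invariants of the pair $\{\rho,\rho^\vee\}$ while the cuspidal enhancement is dualized.

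For (b), I would use (a) directly, together with the observation from \eqref{eq:6.27} that the $\mathfrak{X}_\nr(L^\vee)$-action on $\Phi_e(L)$ twists only the parameter $\varphi$ and leaves enhancements unchanged. So if $(\varphi,\rho) \in \Phi_e(G)^{\mf s^\vee}$, meaning $\mr{Sc}(\varphi,\rho)$ is $G^\vee$-conjugate to $(L, z\varphi_L, \rho_L)$ for some $z \in \mathfrak{X}_\nr(L^\vee)$, part (a) yields $\mr{Sc}(\varphi,\rho^\vee)$ conjugate to $(L, z\varphi_L, \rho_L^\vee)$, which lies in $\{L\} \times \mf s_L^{\vee op}$. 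Running the same argument with $\rho^\vee$ in place of $\rho$ and invoking $(\rho^\vee)^\vee = \rho$ gives the reverse inclusion. The main obstacle throughout will be the first step: making the contragredient-symmetry of the generalized Springer correspondence fully rigorous in the disconnected and rigid-inner-twist setting of \cite{AMS1}, since the construction there is phrased asymmetrically; I expect this to be bookkeeping rather than conceptual difficulty.
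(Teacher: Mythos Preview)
Your proposal is correct and follows essentially the same route as the paper: reduce part (a) to the contragredient-compatibility of the generalized Springer correspondence for the relevant disconnected complex group, and deduce (b) immediately from (a). The paper's proof is terser, invoking \cite[Theorem 5.5.a]{AMS1} (and \cite[Theorem 2.5.3]{DiSc}) for that compatibility rather than arguing via Verdier duality or \cite[Theorem 4.4]{AMS1} as you suggest, but the strategy is the same.
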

\begin{proof}
(a) By \cite[Definition 7.7]{AMS1}, the construction of Sc boils downs to cuspidal 
supports for local systems supported on unipotent orbits in complex reductive groups, 
for which the compatibility with contragredients follows from the characterization in 
\cite[Theorem 5.5.a]{AMS1} (see also \cite[Theorem 2.5.3]{DiSc}). 

(b) This follows from part (a).
\end{proof}

\noindent We now parametrize irreducible modules in Theorem \ref{thm:10.3} by
enhanced L-parameters.

\begin{thm}\label{thm:10.4}
There is a canonical bijection 
\begin{equation}\label{eqn:thm:10.4}
\Irr\;\text{-}\;\cH (\mf s^\vee,q_F^{1/2}) \;\cong\; \Phi_e (G)^{\mf s^\vee}.
\end{equation}
\end{thm}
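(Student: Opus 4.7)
The plan is to bootstrap from \cite[Theorem 3.18]{AMS3} (in the form adapted in Lemma \ref{lem:9.8} to our setting with rigid inner twists), which supplies a canonical bijection between $\Phi_e(G)^{\mf s^\vee}$ and the set of isomorphism classes of irreducible \emph{left} $\cH(\mf s^\vee,q_F^{1/2})$-modules. Since Theorem \ref{thm:10.3} and Proposition \ref{prop:10.1} are phrased in terms of right modules, the one remaining task is to transfer this parametrization across the left/right divide in a way that is canonical and compatible with the rest of the paper.

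First I would set up the duality. Every irreducible finite-length right $\cH(\mf s^\vee,q_F^{1/2})$-module $M$ is finite-dimensional over $\C$ by the Bernstein presentation \eqref{eq:9.1}, because $\mc O(\mf s_L^\vee)$ is a central extension of finite rank, and so the linear dual $M^*$ carries the structure of an irreducible left $\cH(\mf s^\vee,q_F^{1/2})$-module. This gives a canonical bijection between $\Irr\text{-}\cH(\mf s^\vee,q_F^{1/2})$ and the isomorphism classes of irreducible left $\cH(\mf s^\vee,q_F^{1/2})$-modules.

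Next I would compose with the AMS3 parametrization and track how the enhancement is affected. Passing from $M$ to $M^*$ corresponds, on the level of enhanced local systems used in \cite{AMS1,AMS3}, to taking contragredients of enhancements of $L$-parameters, because the construction of the standard modules in \cite[\S 3]{AMS3} is essentially a Hom-space with the enhancement $\rho$ and dualizing converts this to a tensor product with $\rho^\vee$. By Lemma \ref{lem:10.9}, the operation $(\varphi,\rho)\mapsto(\varphi,\rho^\vee)$ maps $\Phi_e(G)^{\mf s^\vee}$ bijectively onto $\Phi_e(G)^{\mf s^{\vee op}}$, where $\mf s^{\vee op}$ is the inertial class of $(\varphi_L,\rho_L^\vee)$. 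Thus composing the dual-module bijection above with \cite[Theorem 3.18]{AMS3} for $\cH(\mf s^{\vee op},q_F^{1/2})$ and then with Lemma \ref{lem:10.9}(b) yields a canonical bijection $\Irr\text{-}\cH(\mf s^\vee,q_F^{1/2})\leftrightarrow\Phi_e(G)^{\mf s^\vee}$.

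The main technical point to verify is that the AMS3 parametrization really does intertwine the contragredient involutions on the two sides; once this is checked, everything else reduces to straightforward bookkeeping, and the naturality of the resulting bijection follows from the canonicity of each of its constituent pieces (\cite[Theorem 3.18]{AMS3}, linear duality of finite-dimensional modules, and Lemma \ref{lem:10.9}). The anticipated obstacle is matching up conventions across \cite{AMS1,AMS2,AMS3} and the present paper so that the contragredient operation is transported correctly through the reduction to graded Hecke algebras and the geometric construction of the standard modules; this requires careful inspection of the explicit formulas in the proof of \cite[Theorem 3.18]{AMS3} but involves no new ideas.
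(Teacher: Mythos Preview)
Your approach via linear duality is different from the paper's and contains a genuine gap. The paper does \emph{not} pass from right to left modules by $M\mapsto M^*$; instead it invokes the explicit algebra anti-isomorphism
\[
\mh H(\mf s^\vee,\varphi_b,\mb r)^{op}\;\xrightarrow{\sim}\;\mh H(\mf s^{\vee op},\varphi_b,\mb r),\qquad fN_w\mapsto N_w^{-1}f,
\]
from \cite[(5),(14)]{AMS2} (recorded in the paper as \eqref{eq:10.17}). A left $\mh H(\mf s^{\vee op})$-module is thereby viewed, on the \emph{same} underlying vector space, as a right $\mh H(\mf s^\vee)$-module. The paper then takes the AMS3 standard and irreducible left modules $E(\varphi,\rho^\vee,\log(q_F^{1/2}))$, $M(\varphi,\rho^\vee,\log(q_F^{1/2}))$ for $\mf s^{\vee op}$, transports them through \eqref{eq:10.17}, and relabels $\rho^\vee$ as $\rho$; Lemma~\ref{lem:10.9} confirms this lands in $\Phi_e(G)^{\mf s^\vee}$. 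No duality of modules is taken.

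Your route has two problems. First, there is a category mismatch: the linear dual of a right $\cH(\mf s^\vee,q_F^{1/2})$-module is a left module over the \emph{same} algebra $\cH(\mf s^\vee,q_F^{1/2})$, not over $\cH(\mf s^{\vee op},q_F^{1/2})$; these two algebras are genuinely different (their twisted group algebra parts carry inverse $2$-cocycles), so ``applying \cite[Theorem~3.18]{AMS3} for $\cH(\mf s^{\vee op},q_F^{1/2})$'' to $M^*$ does not type-check. Second, and more seriously, the assertion that $M\mapsto M^*$ corresponds on parameters to $(\varphi,\rho)\mapsto(\varphi,\rho^\vee)$ is exactly the non-trivial content you defer, and it is not true in the form you need: the linear dual of a standard module has the irreducible as a \emph{sub}module rather than a quotient, so even if one could match irreducibles this way, the standard modules $\pi^{st}(\varphi,\rho)$ required for Proposition~\ref{prop:11.8} and Corollary~\ref{cor:11.9} would not come out correctly. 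The anti-isomorphism \eqref{eq:10.17} is precisely what preserves the standard-module structure (unique irreducible quotient stays a unique irreducible quotient), and this is why the paper uses it.
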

\begin{proof}
We need to modify the bijection from \cite[Theorem 3.18.a]{AMS3}, which only concerns left 
modules, and adapt it for irreducible right modules. The equivalence between 
$\Mod_\fl \text{ - } \cH (\mf s^\vee, q_F^{1/2})$ and  \eqref{eq:10.13} established in 
the proof of Theorem \ref{thm:10.7} works both for left and for right modules.
Hence it suffices to modify \cite[Theorem 3.18.a]{AMS3} for 
\[
\Irr_{Z(\mf l^\vee)^{\mb W_F}_\R}\text{ - }\mh H \big( \mf s^\vee, \varphi_b, \log (q_F^{1/2}) 
\big) \; \subset \; \Mod_{\fl, Z(\mf l^\vee)^{\mb W_F}_\R}\text{ - }
\mh H \big( \mf s^\vee, \varphi_b, \log (q_F^{1/2}) \big).
\]
Then \cite[Theorem 3.18]{AMS3} reduces to \cite[Theorem 3.8]{AMS3}, and the set 
of enhanced Langlands parameters $\Phi_e (G)^{\mf s^\vee}$ reduces to those
with cuspidal support in $(L^\vee, \mathfrak{X}_{\nr}^+(L^\vee) (\varphi_b,\rho_b))$, 
denoted $\Phi_e (G)^{[\mathfrak{X}_{\nr}^+(L^\vee) (\varphi_b,\rho_b) ]}$.

For any $(\varphi,\rho) \in \Phi_e (G)^{[\mathfrak{X}_{\nr}^+(L^\vee) (\varphi_b,\rho_b) ]}$, 
there is a  unique $z \in \mathfrak{X}_{\nr}^+ (L^\vee)$ such that
$\varphi |_{\mb W_F} = z \varphi_b |_{\mb W_F}$. This $z$ has a unique logarithm\label{i:167}
\begin{equation}\label{eq:10.47}
t_\varphi := \log (z) = \log (\varphi (\Fr_F) \varphi_b (\Fr_F)^{-1}) \in Z(\mf l^\vee)^{\mb W_F}_\R .
\end{equation}
Let d$\varphi : \mf{sl}_2 (\C) \to \mr{Lie}(G^\vee)$ be the tangent map of
$\varphi |_{\SL_2 (\C)}$.~Write \label{i:168}
$N_\varphi := \textup{d}\varphi (\matje{0}{1}{0}{0})$. By
\cite[Theorem 3.8]{AMS3}, we can associate to $(\varphi,\rho)$ a left $\mh H \big( \mf s^\vee, \varphi_b, 
\log (q_F^{1/2}) \big)$-module $M \big( \varphi,\rho,\log(q_F^{1/2}) \big)$. By 
\cite[Proposition 3.3]{SolKL}, it can be expressed as
\begin{equation}\label{eq:10.18}
M \big( \varphi,\rho,\log(q_F^{1/2}) \big) \cong 
\mr{sgn}^* M_{N_\varphi, t_\varphi, -\log (q_F)/2,\rho},
\end{equation}
where sgn denotes the sign automorphism of the algebra $\mh H (\mf s^\vee, \varphi_b,
\mb r)$, with an indeterminate $\mb r$ instead of $\log (q_F^{1/2})$. We will modify this 
left module in several steps. By definition, $\mh H(\mf s^\vee, \varphi_b, \mb r) = \mc O (Z(\mf l^\vee)^{\mb W_F}) \otimes
\C [W_{\mf s^\vee, \varphi_b}, \natural_{\mf s^\vee}] \otimes \C [\mb r]$ as vector spaces; moreover, $\mr{sgn}|_{\mc O (Z(\mf l^\vee)^{\mb W_F})} = \mr{id}|_{\mc O (Z(\mf l^\vee)^{\mb W_F})}$, $\mr{sgn}(\mb r) = -\mb r$, and $\mr{sgn}(N_w) = \mr{sgn}(w) N_w$ for $w \in W_{\mf s^\vee,\varphi_b}$. The sign character of $W_{\mf s^\vee,\varphi_b}$ is
defined as $\det |_{X_* (Z(\mf l^\vee)^{\mb W_F})}$, which extends the sign character
of the Weyl group $W(R_{\mf s^\vee,\varphi_b})$. This constitutes a slight improvement,
already used in \cite[\S 6.2]{SolQS}, on an alternative sign character from 
\cite{AMS2,AMS3,SolKL}. As shown in \cite[after (6.16) and \S 7]{SolQS}, 
this minor modification does not affect any of the good properties established in
\cite{AMS2,AMS3,SolKL}. By \cite[Theorem 3.11]{AMS2} and \cite[Theorem 3.6]{AMS3}, 
$M \big( \varphi,\rho,\log(q_F^{1/2}) \big)$ is the unique irreducible quotient of the 
standard module 
\begin{equation}\label{eq:10.20}
E \big( \varphi,\rho,\log(q_F^{1/2}) \big) \cong 
\mr{sgn}^* E_{N_\varphi, t_\varphi, -\log (q_F)/2,\rho} .
\end{equation}
By \cite[Lemma 3.6.a]{AMS2} and \cite[(1.17)]{AMS3}, we can write
\begin{equation}\label{eq:10.21}
E \big( \varphi,\rho,\log(q_F^{1/2}) \big)  \cong \Hom_{\pi_0 (S_\varphi)} \big( \rho,
\mr{sgn}^* E_{N_\varphi, t_\varphi, -\log (q_F)/2} \big) .
\end{equation}
Since we have used a sign character different from previous literature, 
we hereby specify that we use the right-hand side formulas of \eqref{eq:10.18}, 
\eqref{eq:10.20} and \eqref{eq:10.21} to define the respective left-hand sides. These are still left modules, to obtain an analogue for right modules, we recall
from \cite[(5), (14)]{AMS2} the following canonical isomorphism for the opposite algebra of
$\mh H (\mf s^\vee, \varphi_b, \mb r )$: for 
$f \in \mc O ( Z(\mf l^\vee)^{\mb W_F}) \otimes \C [\mb r]$ and $w \in W_{\mf s^\vee,\varphi_b}$, 
\begin{equation}\label{eq:10.17}
\mh H (\mf s^\vee, \varphi_b, \mb r )^{op} 
\to 
\mh H (\mf s^{\vee op}, \varphi_b, \mb r) \;\text{ is given by } 
f N_w 
\mapsto 
N_w^{-1} f .
\end{equation}
Here $\mf s^{\vee op}$ comes from $\mf s_L^{\vee op}$
\footnote{This $\mf s_L^{\vee op}$ is associated to a rigid inner twist of $L$, not necessarily
to $L$, but at the moment we are only working in ${}^L G$ where it does not matter.} 
as in Lemma \ref{lem:10.9}. 
Applying the above discussions to $\mf s^{\vee op}$ instead of $\mf s^\vee$, we obtain the 
standard left $\mh H (\mf s^{\vee op}, \varphi_b, \mb r)$-module 
\begin{equation}\label{eq:10.26}
\begin{aligned}
E \big( \varphi,\rho^\vee,\log(q_F^{1/2}) \big) &= \Hom_{\pi_0 (S_\varphi^+)} \big( \rho^\vee,
\mr{sgn}^* E_{N_\varphi, t_\varphi, -\log (q_F)/2} \big) \\
& = \big( \rho \otimes \mr{sgn}^* E_{N_\varphi, t_\varphi, -\log (q_F)/2} \big)^{\pi_0 (S_\varphi^+)}.
\end{aligned}
\end{equation}
It has a unique irreducible quotient $M \big( \varphi,\rho^\vee,\log(q_F^{1/2}) \big)$.
Via \eqref{eq:10.17}, we can also view $E \big( \varphi,\rho^\vee,\log(q_F^{1/2}) \big)$ and
$M \big( \varphi,\rho^\vee,\log(q_F^{1/2}) \big)$ as right modules for
$\mh H (\mf s^\vee, \varphi_b, \mb r )$ or $\mh H \big( \mf s^\vee, \varphi_b, \log (q_F^{1/2}) 
\big)$. To emphasize this point of view, we shall add a superscript op and we replace $\rho^\vee$ 
by $\rho$ (in the notation only, so it remains \eqref{eq:10.26} as a vector space). 
This procedure does not change the $\mc O (Z(\mf l^\vee)^{\mb W_F})$-weights of 
$E \big( \varphi,\rho^\vee,\log(q_F^{1/2}) \big)$, so
\[
E \big( \varphi,\rho,\log(q_F^{1/2}) \big)^{op},\;
M \big( \varphi,\rho,\log(q_F^{1/2}) \big)^{op} \in 
\Mod_{\fl, Z(\mf l^\vee)^{\mb W_F}_\R}\text{ - }
\mh H \big( \mf s^\vee, \varphi_b, \log (q_F^{1/2}) \big).
\]
By \cite[Theorem 3.8]{AMS3}, $(\varphi,\rho) \mapsto M \big( \varphi,\rho,\log(q_F^{1/2}) \big)^{op}$ gives the desired bijection 
\begin{equation}\label{eq:10.22}
\Phi_e (G)^{[\mathfrak{X}_{\nr}^+(L^\vee) \varphi_b,\rho_b ]} \longrightarrow
\Irr_{Z(\mf l^\vee)^{\mb W_F}_\R}\text{ - }\mh H \big( \mf s^\vee, \varphi_b, \log (q_F^{1/2}) 
\big) . 
\end{equation}
Let $\bar E (\varphi,\rho,q_F^{1/2})^{op}$ and $\bar M (\varphi,\rho,q_F^{1/2})^{op}$ be
the corresponding modules obtained from the equivalence of \eqref{eq:10.13} with 
$\Mod_\fl\text{ - }\cH (\mf s^\vee,q_F^{1/2})$. Thus by \eqref{eq:10.22}, 
we obtain a map
\begin{align}\label{eq:10.23}
\Phi_e (G)^{\mf s^\vee} 
\rightarrow \Irr\text{ - }\cH (\mf s^\vee,q_F^{1/2})\;\text{ given by }\;
(\varphi,\rho) 
\longmapsto  \bar M(\varphi,\rho,q_F^{1/2})^{op}.
\end{align}
Again, by \cite[Theorem 3.18.a]{AMS3}, \eqref{eq:10.23}
inherits the bijectivity of \eqref{eq:10.22}.
\end{proof}

We remark that in \eqref{eq:10.26}, the second line fits better with the parametrization
of Deligne--Lusztig packets in \eqref{eq:7.21}.
By Theorems \ref{thm:10.3} and \ref{thm:10.4}, we obtain bijections
\begin{equation}\label{eq:10.24}
\Irr (G)_{\mf s} \longrightarrow \Irr \text{ - } \End_G (\Pi_{\mf s}) \longrightarrow
\Irr \text{ - } \cH (\mf s^\vee,q_F^{1/2}) \longleftarrow \Phi_e (G)^{\mf s^\vee} .
\end{equation}
On the appropriate subsets, we can describe these bijections more precisely using
Propositions \ref{prop:10.1} and \ref{prop:9.7}, i.e. we have 
\begin{multline}\label{eq:10.28}
\Irr (G)_{\mathfrak{X}_{\nr}^+ (L) \tau} \rightarrow \Irr_{\mathfrak{X}_{\nr}^+ (L) 
\otimes \tau} \text{ - } \End_G (\Pi_{\mf s}) \rightarrow \Irr_{\mf t_\R} \text{ - } 
\mh H (\tilde{\mc R}_\tau, W_{\mf s,\tau}, k^\tau,\natural_\tau) \rightarrow \\ 
\Irr_{Z(\mf l^\vee)^{\mb W_F}_\R} \text{ - } \mh H \big( \mf s^\vee, \varphi_b, \log (q_F^{1/2}) 
\big) \leftarrow \Irr_{\mathfrak{X}_{\nr}^+ (L^\vee)} \text{ - } \cH (\mf s^\vee,q_F^{1/2}) 
\leftarrow \Phi_e (G)^{[\mathfrak{X}_{\nr}^+ (L^\vee) (\varphi_b,\rho_b) ]} .     
\end{multline}
We abbreviate the bijection between the outer sides of \eqref{eq:10.24} or
\eqref{eq:10.28} as\label{i:169}
\begin{equation}\label{eq:10.25}
\Irr (G)_{\mf s} 
\longleftrightarrow 
\Phi_e (G)^{\mf s^\vee} \text{ given by }
\pi 
\mapsto 
(\varphi_\pi, \rho_\pi) \text{ and }
\pi (\varphi,\rho) 
\text{\reflectbox{$\mapsto$}} 
(\varphi,\rho). 
\end{equation}
In the proof of Theorem \ref{thm:10.4}, we constructed a standard module
\begin{equation}\label{eq:10.45}
\bar E (\varphi,\rho,q_F^{1/2})^{op} \in \Mod_\fl\text{ - }\cH (\mf s^\vee,q_F^{1/2}).
\end{equation}
Let \label{i:170}$\pi^{st}(\varphi,\rho) \in \Rep_\fl^0 (G)_{ns}$ be its image 
via Theorem \ref{thm:10.3}. 

\begin{lem}\label{lem:10.5}
In \eqref{eq:10.25}, the map $\pi \mapsto \varphi_\pi$ is canonical.
\end{lem}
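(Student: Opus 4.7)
The assertion is that the L-parameter $\varphi_\pi$ attached to $\pi\in\Irr(G)_{\mf s}$ via \eqref{eq:10.25} does not depend on the various choices made in its construction. My plan is to enumerate the sources of non-canonicity in the entire chain \eqref{eq:10.28} and show that none of them can alter $\varphi_\pi$; only the enhancement $\rho_\pi$ can be affected. The sources are: (A) the coherent splittings $\epsilon$ and the isomorphisms $\zeta^0$ of Lemma \ref{lem:8.6} used in the cuspidal LLC of Theorem \ref{thm:8.2}; (B) the affine Hecke algebra isomorphism of Proposition \ref{prop:9.4}; and (C) the graded Hecke algebra isomorphism of Proposition \ref{prop:9.7}.

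For (A), I would invoke Lemmas \ref{lem:6.1} and \ref{lem:6.6}: the L-parameter attached to a non-singular depth-zero supercuspidal representation is canonical, and only the enhancement $\rho_L$ depends on $\epsilon,\zeta^0$. In particular the inertial class $\mf s^\vee\subset\Phi_e(G)$ is intrinsic to $\mf s$, and the basepoint $\varphi_b\in\mf s_L^\vee$ used in the tempered decomposition \eqref{eq:10.28} is canonical up to $W_{\mf s^\vee}$-conjugacy. For (B) and (C), Propositions \ref{prop:9.4} and \ref{prop:9.7} list the freedom as inner automorphisms fixing the distinguished maximal commutative subalgebra $\C[ZW_L(J,\hat\sigma)]$ (respectively $\mc O(\mf t)$), together with twists by a character of $\Gamma_{\hat\sigma,\tau}$ of the form $T_{s_\alpha}\mapsto h_\alpha^\vee T_{s_\alpha}$ for short simple roots with $q^\ast_{\theta,\alpha}=1$. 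Inner automorphisms preserve isomorphism classes of modules and therefore cannot affect $(\varphi_\pi,\rho_\pi)$ at all.

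It remains to treat the character twists. Since any such twist fixes the maximal commutative subalgebra pointwise, it preserves the central character of any module, hence the $\mc O(Z(\mf l^\vee)^{\mb W_F})$-weights. Via the canonical isomorphism \eqref{eq:9.45} and the formula \eqref{eq:10.47}, this pins down $\varphi_\pi|_{\mb W_F}$ (including the element $t_{\varphi_\pi}$). To conclude that also the unipotent part $N_{\varphi_\pi}=\textup{d}\varphi_\pi\bigl(\matje{0}{1}{0}{0}\bigr)$ is unchanged, I would use the explicit description \eqref{eq:10.26} of the standard module as
\[
E(\varphi,\rho,\log(q_F^{1/2}))=\bigl(\rho\otimes\mr{sgn}^\ast E_{N_\varphi,t_\varphi,-\log(q_F)/2}\bigr)^{\pi_0(S_\varphi^+)}.
\]
A character twist by $\chi\in\Irr(\Gamma'_{\mf s^\vee,\varphi_b})$ leaves the geometric module $E_{N_\varphi,t_\varphi,r}$ untouched and merely twists the $\pi_0(S_\varphi^+)$-action through the surjection $\pi_0(S_\varphi^+)\twoheadrightarrow\Gamma'_{\mf s^\vee,\varphi_b}$ obtained from the decomposition of $W_{\mf s^\vee,\varphi_b}$ in \eqref{eq:9.36}. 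Therefore it replaces $\rho$ by $\rho\otimes\chi^{-1}$ while leaving $(t_\varphi,N_\varphi)$, and hence $\varphi$ itself, unchanged.

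The main obstacle will be the last step: verifying precisely that the character twist from Proposition \ref{prop:9.7}, when transported via the bijection of Theorem \ref{thm:10.4} (i.e.~through \cite[Theorem 3.18.a]{AMS3} and the analytic localization \eqref{eq:10.44}), indeed acts on $\Phi_e(G)^{\mf s^\vee}$ by $(\varphi,\rho)\mapsto(\varphi,\rho\otimes\chi^{-1})$ rather than by some more complicated operation. This should be carried out by unwinding the construction of $M(\varphi,\rho,\log q_F^{1/2})$ from the equivariant local systems in \cite[\S 3]{AMS3}, using that $\chi$ is trivial on the Weyl group $W(R'_{\mf s^\vee,\varphi_b})$ (by the second bullet of Proposition \ref{prop:9.7}), which is precisely the image of the reductive part of the component group relevant to the nilpotent orbit of $N_\varphi$.
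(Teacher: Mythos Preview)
Your overall strategy---enumerate the sources of non-canonicity and show each affects only the enhancement---is exactly the paper's. Two points of difference are worth noting.

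First, you omit one source that the paper lists: the choice of systems of positive roots in the various root systems $R_\sigma$, $R_{\mf s^\vee}$, $R_{\sigma,\tau}$, etc. This is easy (different choices are Weyl-conjugate, hence the L-parameter up to $G^\vee$-conjugacy is unchanged), but it should be mentioned for completeness.

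Second, and more substantively: the ``main obstacle'' you flag is handled in the paper not by the geometric unwinding you propose, but simply by invoking \cite[Lemma 3.18]{AMS2}, which already asserts that the twisted group algebra part $\C[\Gamma'_{\mf s^\vee,\varphi_b},\natural_{\mf s^\vee}]$ in the decomposition \eqref{eq:9.56} acts only on the enhancement data in the parametrization of irreducible/standard modules. So the character twist of that part can only move $\rho$, not $(t_\varphi,N_\varphi)$. Your proposed route---producing a surjection $\pi_0(S_\varphi^+)\twoheadrightarrow\Gamma'_{\mf s^\vee,\varphi_b}$ and arguing that $\chi$ pulls back to a twist of $\rho$---is problematic as stated: $\pi_0(S_\varphi^+)$ is the component group for the \emph{full} parameter $\varphi$, whereas $\Gamma'_{\mf s^\vee,\varphi_b}\subset W_{\mf s^\vee,\varphi_b}$ is built from the \emph{cuspidal} parameter $\varphi_b$ and lives in $W(G^\vee,L^\vee)^{\mb W_F}$; there is no direct surjection of the form you write, and the actual relationship is mediated by the generalized Springer correspondence machinery of \cite{AMS1,AMS2}. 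The cleanest fix is to drop that paragraph and cite \cite[Lemma 3.18]{AMS2} instead. The same citation also disposes of the 2-cocycle normalizations (your source (A), the paper's source (iv)), which you bundled into the cuspidal discussion but which in fact recur at the non-cuspidal level via Proposition \ref{prop:9.5}.
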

\begin{proof}
The remarks after \eqref{eq:10.5} and Proposition \ref{prop:10.2} show that
the non-canonicity in the construction of \eqref{eq:10.25} comes from four sources:\\
(i) On the supercuspidal level, i.e.~for $\Irr (L)_{\mf s_L}$, where the non-canonicity 
only comes from the enhancements.\\
(ii) Choices of systems of positive roots in the construction of the various algebras. But since all
positive systems in a finite root system are associate under the Weyl group,
these choices do not affect the L-parameters up to conjugacy.\\
(iii) Twisting by characters of $W_{\mf s,\tau}$ that are trivial on the 
subgroup generated by the reflections $s_\alpha$ where $\alpha \in R_{\sigma,\tau}$ 
and $k^\tau_\alpha \neq 0$, as in Proposition \ref{prop:9.7}.~By \eqref{eq:9.55} and
\eqref{eq:9.56}, this can be translated to a character twist of the twisted group algebra
part of $\mh H \big( \mf s^\vee, \varphi_b, \log (q_F^{1/2}) \big)$.~By the construction
of $E \big( \varphi,\rho,\log (q_F^{1/2}) \big)$ (as in the proof of Theorem \ref{thm:10.4}) 
and \cite[Lemma 3.18]{AMS2}, the twisted
group algebra in a twisted graded Hecke algebra only affects the enhancements
of the parameters for the irreducible or standard modules. Hence these character
twists only affect $\rho$, not $\varphi$.\\
(iv) Normalizations of the various 2-cocycles. Choices have to be made both on the 
supercuspidal level (see especially Lemmas \ref{lem:8.5} and \ref{lem:8.6}) and on the 
non-supercuspidal level (namely in Proposition \ref{prop:9.5}). Often we do not know a 
natural choice. For the same reason as in point (iii), this affects the
enhancements $\rho_\pi$ but not the L-parameters $\varphi_\pi$. \qedhere
\end{proof}

Combining \eqref{eq:10.25} over blocks gives the following. 
\begin{thm}\label{thm:10.6}
The equivalences \eqref{eqn:thm:10.7} and \eqref{eqn:thm:10.4} induce a bijection between
\begin{itemize}
\item the set $\Irr^0 (G)_{ns}$ of irreducible depth-zero $G$-representations 
with non-singular cuspidal support; and
\item the set $\Phi^0_e (G)_{ns}$ of depth-zero parameters in $\Phi_e (G)$, whose 
cuspidal support is supercuspidal, i.e.~trivial on $\SL_2 (\C)$.
\end{itemize}    
For any L-parameter $\varphi \in \Phi (G)$, the set of non-singular 
depth-zero representations in the L-packet $\Pi_\varphi (G) = \{ \pi \in \Irr (G) : 
\varphi_\pi = \varphi \}$ is determined canonically.
\end{thm}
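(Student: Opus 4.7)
The plan is to assemble the block-by-block correspondence from Theorems \ref{thm:10.3} and \ref{thm:10.4} into a single global bijection, using the cuspidal-level LLC from Theorem \ref{thm:8.2} as the bridge between the indexing sets. First I would invoke the Bernstein decomposition on the $p$-adic side together with the decomposition along Bernstein components of $\Phi_e(G)$ from \cite{AMS1} (as adapted in Lemma \ref{lem:9.8}) to write
\[
\Irr^0(G)_{ns} \;=\; \bigsqcup_{\mf s \in \mf B(G)^0_{ns}} \Irr(G)_{\mf s}
\quad\text{and}\quad
\Phi_e^0(G)_{ns} \;=\; \bigsqcup_{\mf s^\vee \in \mf B^\vee(G)^0_{ns}} \Phi_e(G)^{\mf s^\vee}.
\]
Since both decompositions are indexed by pairs $(L,\mf t_L)$ where $L$ is a Levi subgroup of $G$ (up to $G$-conjugacy) and $\mf t_L$ is a supercuspidal inertial class for $L$ or $L^\vee$ respectively, the cuspidal-level bijection \eqref{eq:6} from Theorem \ref{thm:8.2} yields a canonical bijection $\mf s \leftrightarrow \mf s^\vee$ between these indexing sets.

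Next, for each matched pair $\mf s \leftrightarrow \mf s^\vee$, I would apply the bijection \eqref{eq:10.25}, namely
$\Irr(G)_{\mf s} \cong \Irr\text{-}\cH(\mf s^\vee, q_F^{1/2}) \cong \Phi_e(G)^{\mf s^\vee}$,
obtained by composing the equivalence of categories from Theorem \ref{thm:10.3} (restricted to irreducible objects) with the parametrization from Theorem \ref{thm:10.4}. Taking the disjoint union over $\mf B(G)^0_{ns}$ produces the required bijection $\Irr^0(G)_{ns} \leftrightarrow \Phi_e^0(G)_{ns}$.

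For the canonicity of the L-packet statement, I would appeal to Lemma \ref{lem:10.5}, which guarantees that on each Bernstein block the map $\pi \mapsto \varphi_\pi$ is independent of the various choices (positive systems, 2-cocycle normalizations, character twists of $W_{\mf s,\tau}$) that enter into the construction. Since the Bernstein components in $\Irr^0(G)_{ns}$ are determined canonically by $G$, the assembled map $\Irr^0(G)_{ns} \to \Phi(G)$ sending $\pi$ to $\varphi_\pi$ is therefore canonical. Hence for any $\varphi \in \Phi(G)$ the subset
\[
\Pi_\varphi(G) \cap \Irr^0(G)_{ns} \;=\; \{\, \pi(\varphi,\rho) : (\varphi,\rho) \in \Phi_e^0(G)_{ns} \,\}
\]
is well-defined independently of choices; only the enhancements $\rho$ attached to individual members of the packet may depend on the auxiliary data.

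The main point to verify, and essentially the only substantive step beyond bookkeeping, is that the block-level bijection \eqref{eq:10.25} actually sends $\Irr(G)_{\mf s}$ into $\Phi_e(G)^{\mf s^\vee}$ for the matched pair under \eqref{eq:6}, rather than for some other Bernstein component on the Galois side. This compatibility is however built into the construction: on the $p$-adic side, the cuspidal support of $\pi \in \Irr(G)_{\mf s}$ lies in $\mathfrak{X}_\nr(L)\tau$ up to $W(G,L)$ by \eqref{eq:10.2}, and this is transported via Theorem \ref{thm:8.2} to the cuspidal support $\mathfrak{X}_\nr(L^\vee)(\varphi_b,\rho_b)$ up to $W(G^\vee,L^\vee)^{\mb W_F}$ of the corresponding enhanced parameter, which by Lemma \ref{lem:9.1} is exactly the defining property of $\Phi_e(G)^{\mf s^\vee}$. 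Once this matching is recorded, the theorem follows by assembling all the pieces.
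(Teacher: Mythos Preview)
Your proposal is correct and follows essentially the same approach as the paper: decompose both sides into Bernstein components, match the indexing sets via the cuspidal-level bijection \eqref{eq:9.49} (which is what your invocation of Theorem \ref{thm:8.2} and \eqref{eq:6} amounts to), take the union of \eqref{eq:10.25} over all blocks, and deduce the canonicity of L-packets from Lemma \ref{lem:10.5}. Your final paragraph on compatibility of cuspidal supports is extra bookkeeping that the paper omits, since the very construction of \eqref{eq:10.25} already takes place within the matched pair $(\mf s, \mf s^\vee)$.
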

\begin{proof}
By \eqref{eq:9.49}, if we take the union over
all Bernstein blocks $\Rep (G)_{\mf s}$ of the indicated kind, then $\mf s^\vee$
runs precisely once through all inertial equivalence classes of the indicated kind
for $\Phi_e (G)$. Hence the union of \eqref{eq:10.25} gives the required bijection. 
The statement about the L-packets follows from Lemma \ref{lem:10.5}.
\end{proof}
\begin{rem}We warn the reader that, for $(\varphi,\rho) \in \Phi_e^0 (G)_{ns}$, maybe not all 
enhancements $\rho'$ of $\varphi$ lead to cuspidal supports that are trivial on $\SL_2 (\C)$. 
Hence the L-packet $\Pi_\varphi (G)$ need not consist entirely of non-singular depth-zero 
representations; its other members fall outside the scope of this paper.
\end{rem}

 \subsection{Properties} \
\label{par:prop}

We now show that our local Langlands correspondence for non-singular 
depth-zero representations enjoys several nice 
properties, including those desired by Borel \cite[\S 10]{Bor}.
Recall from \eqref{eq:6.27} that $\Xo (G^\vee)$ acts naturally on $\Phi_e^0 (G)_{ns}$. 
In the following, we label the bijection in Theorem \ref{thm:10.6} as 
\begin{equation}\label{eqn:thm:10.6}
    \Irr^0 (G)_{ns}\longleftrightarrow \Phi^0_e (G)_{ns}
\end{equation}

\begin{lem}\label{lem:11.6}
The map \eqref{eqn:thm:10.6} is equivariant for the canonical actions of 
$\Xo (G) \cong \Xo (G^\vee)$. Similarly, the map $(\varphi,\rho) \mapsto 
\pi^{st}(\varphi,\rho)$ from \eqref{eq:10.45} is $\Xo (G)$-equivariant.
\end{lem}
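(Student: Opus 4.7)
The plan is to trace through the two-step construction of the bijection \eqref{eqn:thm:10.6} and verify equivariance at each stage. The first step is the equivalence of categories of Theorem \ref{thm:10.7}, and that theorem already states $\Xo(G) \cong \Xo(G^\vee)$-equivariance explicitly. Therefore everything reduces to checking that the parametrization $\Phi_e(G)^{\mf s^\vee} \to \Irr\text{-}\cH(\mf s^\vee, q_F^{1/2})$ from Theorem \ref{thm:10.4} intertwines the action of $\Xo(G^\vee)$ on enhanced parameters with the action on $\Irr\text{-}\cH(\mf s^\vee,q_F^{1/2})$ by pullback along \eqref{eq:10.43}.

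To do this, I would decompose any $\chi \in \Xo(G^\vee)$ as $\chi = \chi_u |\chi|$ with $\chi_u$ unitary and $|\chi| \in \mathfrak{X}_\nr^+(G^\vee)$, and trace what happens to the data $(\varphi_b, N_\varphi, t_\varphi, \rho)$ attached to $(\varphi,\rho)$ in the proof of Theorem \ref{thm:10.4}. Writing $\varphi|_{\mb W_F} = z \,\varphi_b|_{\mb W_F}$ with $z \in \mathfrak{X}_\nr^+(L^\vee)$, we have $(\chi\varphi)|_{\mb W_F} = (|\chi|z)(\chi_u \varphi_b)|_{\mb W_F}$, so the new bounded base point is $\chi_u\varphi_b$, the new inertial class is $\chi_u\mf s^\vee$, and $t_{\chi\varphi} = t_\varphi + \log|\chi|$. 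Because $\chi$ is inflated from $\mb W_F$ and acts trivially on $\SL_2(\C)$, we have $N_{\chi\varphi}=N_\varphi$; because $\chi(\mb W_F)\subset Z(G^\vee)^{\mb W_F}$, we also have $S_{\chi\varphi}^+=S_\varphi^+$, so the enhancement $\rho$ is unchanged.

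Next I would use the description \eqref{eq:10.18} of $M(\varphi,\rho,\log(q_F^{1/2}))$ in terms of $(N_\varphi,t_\varphi,\rho)$, together with the algebra identifications in the proof of Theorem \ref{thm:10.7}, to compare the two sides. By \eqref{eq:9.57} and Proposition \ref{prop:9.5}, the 2-cocycles $\natural_{\chi \mf s^\vee}$ and $\natural_{\mf s^\vee}$ coincide, so the canonical identification of $W_{\mf s^\vee,\varphi_b}$ with $W_{\chi_u\mf s^\vee,\chi_u\varphi_b}$ lifts to an algebra isomorphism $\mh H(\chi_u \mf s^\vee,\chi_u\varphi_b,\log(q_F^{1/2})) \cong \mh H(\mf s^\vee,\varphi_b,\log(q_F^{1/2}))$ which coincides with \eqref{eq:10.39} when $|\chi|=1$. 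In the general case, \eqref{eq:10.39} additionally translates the $\mc O(Z(\mf l^\vee)^{\mb W_F})$-action by $\log|\chi|$. Pulling back $M(\varphi,\rho,\log(q_F^{1/2}))^{op}$ along this map thus produces exactly the module built from data $(N_\varphi,\,t_\varphi+\log|\chi|,\,\rho)$ over the algebra associated to $\chi_u\varphi_b$, which by \eqref{eq:10.18} applied to $(\chi\varphi,\rho)$ is $M(\chi\varphi,\rho,\log(q_F^{1/2}))^{op}$. Passing back to $\cH(\chi_u\mf s^\vee,q_F^{1/2})$ via \eqref{eq:10.44} (which is natural in the data) then gives $\bar M(\chi\varphi,\rho,q_F^{1/2})^{op}$, proving equivariance of \eqref{eqn:thm:10.6}.

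For the standard-module statement, I would repeat the argument verbatim using the explicit formula \eqref{eq:10.26} for $E(\varphi,\rho^\vee,\log(q_F^{1/2}))$ in terms of $(N_\varphi,t_\varphi,\rho)$, which transforms under $\chi$ in exactly the same way as $M(\varphi,\rho,\log(q_F^{1/2}))$. The main technical obstacle is bookkeeping: one must keep the natural identifications between $\mh H(\mf s^\vee,\varphi_b,\log(q_F^{1/2}))$ and $\mh H(\chi_u\mf s^\vee,\chi_u\varphi_b,\log(q_F^{1/2}))$ compatible both with the pullback action defined in the proof of Theorem \ref{thm:10.7} (via \eqref{eq:10.35}, \eqref{eq:10.38}, \eqref{eq:10.42}) and with the reduction of \cite[Theorem 3.8]{AMS3} to graded Hecke modules, but this is guaranteed by the $\Xo$-invariance built into the choices of 2-cocycle isomorphisms in Lemma \ref{lem:8.6} and Proposition \ref{prop:9.5}.
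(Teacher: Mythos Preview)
Your proposal is correct and follows essentially the same reduction as the paper: use the $\Xo(G)$-equivariance already established in Theorem \ref{thm:10.7} for the equivalence of categories, so that it remains only to check that $(\varphi,\rho) \mapsto \bar M(\varphi,\rho,q_F^{1/2})^{op}$ and $(\varphi,\rho) \mapsto \bar E(\varphi,\rho,q_F^{1/2})^{op}$ are $\Xo(G^\vee)$-equivariant with respect to the actions from \eqref{eq:6.27} and \eqref{eq:10.42}--\eqref{eq:10.43}. The paper dispatches this remaining step by citing \cite[Lemma 2.2]{SolLLCunip}, whereas you unpack the argument by hand---tracking how the data $(\varphi_b, N_\varphi, t_\varphi, \rho)$ transform under $\chi = \chi_u |\chi|$ and matching this against the algebra isomorphism \eqref{eq:10.39}---which is exactly the content behind that citation.
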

\begin{proof}
By Theorem \ref{thm:10.7}, it suffices to prove that the maps
\[
\Phi_e^0 (G)_{ns}  
\rightarrow 
\bigsqcup\nolimits_{\mf s \in 
\mf B (G)^0_{ns}} 
\Rep_\fl\text{ - }\cH (\mf s^\vee, q_F^{1/2})\] 
given by  
$(\varphi,\rho) 
\mapsto  
\bar M (\varphi,\rho,q_F^{1/2})^{op}$ and 
$(\varphi,\rho) 
\mapsto 
\bar E (\varphi,\rho,q_F^{1/2})^{op}$ are equivariant for the $\Xo (G^\vee)$-actions 
from \eqref{eq:6.27}, 
\eqref{eq:10.42}, \eqref{eq:10.43}.~This follows from \cite[Lemma 2.2]{SolLLCunip}.
\end{proof}

Cuspidality for enhanced L-parameters was introduced in \cite[\S 6]{AMS1}, 
generalizing earlier works of Lusztig. 

\begin{prop}\label{prop:11.1}
In \eqref{eqn:thm:10.6}, $\pi$ is supercuspidal if and only if $(\varphi_\pi,
\rho_\pi)$ is cuspidal. In this case, \eqref{eqn:thm:10.6} coincides with
\eqref{eq:9.7} and with \cite{Kal2,Kal3}.
\end{prop}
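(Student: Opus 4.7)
The plan is to verify both assertions by tracing \eqref{eqn:thm:10.6} through the Hecke-algebraic machinery in the special case where the Bernstein block consists of supercuspidal representations. First I would record that $\pi \in \Irr(G)_{\mf s}$ is supercuspidal if and only if the inertial class has the form $\mf s = [G,\pi]_G$, equivalently, the Levi subgroup in the cuspidal support is $G$ itself, and the facet $\ff$ is a minimal facet in $\mc B(\mc G,F)$. Symmetrically, $(\varphi,\rho) \in \Phi_e(G)^{\mf s^\vee}$ is cuspidal (in the sense of \cite[\S 6]{AMS1}) if and only if the first coordinate of $\mathrm{Sc}(\varphi,\rho)$ is $G$. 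Under the inertial-class bijection \eqref{eq:9.49} induced by Theorem \ref{thm:8.2}, the condition ``$L=G$'' on one side corresponds to the same condition on the other. Since the overall bijection of Theorem \ref{thm:10.6} is assembled from the block-wise bijections \eqref{eq:10.25} and respects the $\mf s \leftrightarrow \mf s^\vee$ decomposition (Theorem \ref{thm:10.7}), the ``if and only if'' assertion follows.

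For the second assertion, suppose $\pi$ is supercuspidal, so $L=G$ and $\ff$ is minimal. The Hecke algebras on both sides simplify drastically: $R_\sigma = \emptyset$ and $W_\af(J,\sigma)=\{1\}$, so by Corollary \ref{cor:3.3} one has $\cH(G,\hat P_\ff,\hat\sigma) \cong \C[W(J,\hat\sigma),\mu_{\hat\sigma}]$, while on the Galois side $R_{\mf s^\vee}=\emptyset$ forces $\cH(\mf s^\vee, q_F^{1/2}) = \mc O(\mf s_L^\vee) \rtimes \C[\Gamma_{\mf s^\vee},\natural_{\mf s^\vee}]$. I would then trace the chain \eqref{eq:10.28} in this situation: Proposition \ref{prop:5.4} identifies $\Irr(G)_{[G,\pi]_G}$ with $\mathfrak{X}_{\nr}(G)\cdot \pi$ modulo stabilizers, while $\Phi_e(G)^{\mf s^\vee}$ is parametrized by $\mathfrak{X}_{\nr}(L^\vee)(\varphi_\pi,\rho_\pi)$ modulo the isomorphic stabilizers supplied by Lemma \ref{lem:9.1}. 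The matching of the underlying unramified families is exactly the cuspidal LLC \eqref{eq:8.7} by the very definition of \eqref{eq:9.49}.

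The main obstacle is to verify that the enhancement $\rho_\pi$ produced by \eqref{eqn:thm:10.6} coincides with the one produced by Proposition \ref{prop:8.7} and \eqref{eq:8.7}. Both constructions parametrize the fibres of the supercuspidal-support map by irreducible representations of twisted group algebras, but from different starting points: the cuspidal LLC identifies $\mc E_\theta^{[x]}$ with $\mc E_\eta^{\varphi_T}$ via the Baer sum in Lemma \ref{lem:8.6}, whereas the Hecke-algebra construction encodes the same data through the 2-cocycle comparison in Theorem \ref{thm:9.6}. Both comparisons are realized as Baer sums of the canonical isomorphism of the $\rtimes$-extensions (Proposition \ref{prop:9.5}) with a choice of isomorphism $\zeta^0$ on the $0$-extensions supplied by Propositions \ref{prop:7.9} and \ref{prop:6.4}. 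The delicate point is to make these choices of $\zeta^0$ consistent across the two constructions; with a coherent choice, the two parametrizations become identical. Combined with the $\mathfrak{X}_{\nr}(G)$-equivariance of Lemma \ref{lem:11.6}, this shows that \eqref{eqn:thm:10.6} restricted to supercuspidals coincides with the cuspidal LLC \eqref{eq:8.7}, hence with the correspondence of \cite{Kal2,Kal3}.
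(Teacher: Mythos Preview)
Your first paragraph correctly handles the ``if and only if'' part and matches the paper's argument.

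For the second assertion, you miss the key simplification: when $L=G$, the group $W(G,L)=N_G(G)/G$ is trivial, hence $W_{\mf s}=W_{\mf s^\vee}=\Gamma_{\mf s^\vee}=\{1\}$. Thus $\cH(\mf s^\vee,q_F^{1/2})$ is just $\mc O(\mf s_L^\vee)$, not a crossed product with a twisted group algebra, and the twisted graded Hecke algebras in \eqref{eq:10.28} collapse to $\mc O\big(Z(\mf l^\vee)^{\mb W_F}\big)$ and $\mc O(\mf t)$. Your entire third paragraph about matching enhancements via 2-cocycles is then vacuous: the groups carrying those cocycles are trivial, so there is nothing delicate to align.

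The paper proceeds directly: starting from $(z\varphi_b,\rho_b)$ with $z\in\mathfrak{X}_{\nr}^+(L^\vee)$, it traces the (now commutative) chain \eqref{eq:10.28} explicitly, showing that the composite sends $(z\varphi_b,\rho_b)\mapsto \log(z)\in Z(\mf l^\vee)^{\mb W_F}_\R \mapsto \log(z)\in\mf t_\R \mapsto z\otimes\tau\in\Irr(L)_{\mf s_L}$. Since the identification $\tau\leftrightarrow(\varphi_b,\rho_b)$ is by construction the cuspidal LLC from Theorem \ref{thm:8.2}, and the map is $\mathfrak{X}_{\nr}$-equivariant, this shows agreement with \eqref{eq:8.7}. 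Your approach is not wrong in principle, but it obscures that the supercuspidal case is the easy one where the heavy machinery (Theorem \ref{thm:9.6}, Proposition \ref{prop:9.5}) trivializes.
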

\begin{proof}
Since $\pi \in \Rep (G)_{(\hat P_\ff, \hat \sigma)}$ arises via parabolic
induction from $\Rep (L)_{(\hat P_{L,\ff}, \hat \sigma)}$, we know that $\pi$ is supercuspidal
if and only if $L = G$. On the other hand, any $(\varphi, \rho)$ in Theorem 
\ref{thm:10.6} has cuspidal support in $\Phi_e (L)$, for some Levi subgroup 
$L \subset G$, so it is cuspidal if and only if $L = G$. 

Suppose now that $L=G$. Then $\cH (\mf s^\vee, q_F^{1/2})$ reduces to
$\mc O (\mf s_L^\vee)$, while the isomorphism of twisted graded Hecke algebras in 
Proposition \ref{prop:9.7} reduces to 
\[
\mh H \big( \mf s^\vee, \varphi_b , \log (q_F^{1/2}) \big) = 
\mc O \big( Z(\mf l^\vee)^{\mb W_F} \big) \longrightarrow
\mh H (\tilde{\mc R}_\tau, W_{\mf s,\tau}, k^\tau, \natural_\tau) = \mc O (\mf t) .
\]
This isomorphism is simply \eqref{eq:9.43}, which is induced by the tangent map of
$\Irr (L)_{\mf s_L} \! \to \Phi_e (L)^{\mf s_L^\vee}$ at $\tau$.~By
\cite[(2.25)]{SolEnd}, we have $\End_L (\Pi_{\mf s}^L) \cong 
\mc O(\mathfrak{X}_{\nr} (L)) \rtimes \C [\mathfrak{X}_{\nr} (L,\tau),\natural_\tau]$. Consider the sequence from \eqref{eq:10.28}:
\begin{multline}\label{eq:11.1}
\Irr (L)_{\mathfrak{X}_{\nr}^+ (L) \tau} \rightarrow 
\Irr_{\mathfrak{X}_{\nr}^+ (L) \otimes \tau} \text{-} 
\End_L (\Pi_{\mf s}^L) \rightarrow \Irr_{\mf t_\R} \text{-} \mc O(\mf t) 
\rightarrow \\
\Irr_{Z(\mf l^\vee)^{\mb W_F}_\R} \text{-} \mc O \big( Z(\mf l^\vee)^{\mb W_F}\big)
\rightarrow \Irr_{\mathfrak{X}_{\nr}^+(L^\vee) \varphi_b} \text{-} \mc O (\mf s_L^\vee) 
\rightarrow \Phi_e (L)^{[\mathfrak{X}_{\nr}^+ (L^\vee) \varphi_b,\rho_b ]}
\end{multline}
We start on the right-hand side with $(z \varphi_b,\rho_b)$ for any
$z \in \mathfrak{X}_{\nr}^+ (L^\vee) \cong \mathfrak{X}_{\nr} (L)$.~Its image in 
$\Irr_{\mathfrak{X}_{\nr}^+(L^\vee) \varphi_b} \text{-} \mc O (\mf s_L^\vee)$ is again 
$(z \varphi_b,\rho_b)$.~In $\Irr_{Z(\mf l^\vee)^{\mb W_F}_\R} \text{-} \mc O \big( Z(\mf l^\vee)^{\mb W_F}\big)
\cong Z(\mf l^\vee)^{\mb W_F}_\R$, this becomes $\log (z)$ and in 
$\Irr_{\mf t_\R} \text{-} \mc O(\mf t) \cong \mf t_\R$,
it also maps to $\log (z)$. From there to
\[
\Irr_{\mathfrak{X}_{\nr}^+ (L) \otimes \tau} \text{-} \End_L (\Pi_{\mf s}^L) =
\Irr_{\mathfrak{X}_{\nr}^+ (L) \otimes \tau} \text{-}
\mc O(\mathfrak{X}_{\nr} (L)) \rtimes \C [\mathfrak{X}_{\nr} (L,\tau),\natural_\tau], 
\]
we apply \cite[(8.1) and Corollary 8.1]{SolEnd}. By \cite[Lemmas 6.4 and 6.5
and Proposition 7.3]{SolEnd}, $\log (z)$ is mapped to
$\ind_{\mc O(\mathfrak{X}_{\nr} (L))}^{\End_L (\Pi_{\mf s}^L)} (z)$. Since $\tau$ 
is our basepoint, this irreducible module corresponds to $z \otimes \tau \in 
\Irr (\End_L (\Pi_{\mf s}^L))$ in the notation of \eqref{eq:10.27}. In the
conventions of Proposition \ref{prop:10.1}, the leftmost bijection in \eqref{eq:11.1} 
sends $z \otimes \tau$ to $z \otimes \tau$, but now as an element of $\Irr(L)_{\mf s_L}$.
Thus \eqref{eq:11.1} is just $z \otimes \tau \mapsto (z \varphi_b ,\rho_b)$, which
by Theorem \ref{thm:8.2} agrees with \eqref{eq:9.7}.
\end{proof}

Let $\mf{Lev}(G)$ be a set of representatives for the Levi subgroups of $G$ (i.e.~the
$F$-Levi subgroups of $\mc G$) modulo $G$-conjugation.~Then $\mf{Lev}(G)$ also 
represents the $G^\vee$-conjugacy classes of $G$-relevant L-Levi subgroups of
${}^L G$ by \cite[Corollary 1.3]{SolLLCunip}.

\begin{lem}\label{lem:11.2}
The cuspidal support maps and \eqref{eqn:thm:10.6} form a commutative diagram
\[
\begin{array}{ccc}
\Irr^0 (G)_{ns} & \longleftrightarrow & \Phi_e^0 (G)_{ns} \\
\downarrow \mr{Sc} & & \downarrow \mr{Sc} \\
\bigsqcup_{L \in \mf{Lev}(G)} \Irr^0_\cusp (L)_{ns} / W(G,L) &
\longleftrightarrow & \bigsqcup_{L \in \mf{Lev}(G)}
\Phi^0_\cusp (L)_{ns} / W(G^\vee,L^\vee)^{\mb W_F}
\end{array} .
\]
\end{lem}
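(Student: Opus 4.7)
My plan is to reduce the commutativity to a block-by-block statement and then to the cuspidal-level result already in hand. Fix an inertial class $\mf s = [L,\tau]_G$ in $\mf B(G)^0_{ns}$ with matching class $\mf s^\vee$ for $\Phi_e(G)$, as in the bijection $\mf s \leftrightarrow \mf s^\vee$ from \eqref{eq:9.49}. By the construction of \eqref{eq:10.25}, the LLC bijection restricts to $\Irr(G)_{\mf s} \leftrightarrow \Phi_e(G)^{\mf s^\vee}$, so it suffices to show that for any $\pi \in \Irr(G)_{\mf s}$ corresponding to $(\varphi_\pi,\rho_\pi)$ we have $\mr{Sc}(\pi) \in W(G,L)\cdot \Irr(L)_{\mf s_L}$ and $\mr{Sc}(\varphi_\pi,\rho_\pi) \in W(G^\vee,L^\vee)^{\mb W_F}\cdot \Phi_e(L)^{\mf s_L^\vee}$ are matched by the cuspidal LLC from Theorem~\ref{thm:8.2}.

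First I would identify the two vertical maps as central-character maps for suitable Hecke algebras. On the representation side, the cuspidal support map on $\Irr(G)_{\mf s}$ corresponds, via the Bushnell--Kutzko equivalence \eqref{eq:10.1} and the standard cover compatibility \cite[Lemma 4.1]{SolComp}, to the map sending an irreducible right $\cH(G,\hat P_\ff,\hat\sigma)$-module to its $\cH(L,\hat P_{L,\ff},\hat\sigma)$-central character (a $W(G,L)_{\hat\sigma}$-orbit in $\Irr(L)_{\mf s_L}$). Since $W(G,L)_{\hat\sigma} \subset W(G,L)$, this determines a canonical $W(G,L)$-orbit, recovering $\mr{Sc}(\pi)$. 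On the dual side, \cite[Theorem~3.18]{AMS3} (in the slightly adapted form of Lemma~\ref{lem:9.8}) parametrizes $\Phi_e(G)^{\mf s^\vee}$ by $\Irr\text{-}\cH(\mf s^\vee,q_F^{1/2})$ in such a way that the cuspidal support map $\mr{Sc}\colon \Phi_e(G)^{\mf s^\vee} \to \Phi_e(L)^{\mf s_L^\vee}/W_{\mf s^\vee}$ corresponds to the $\mc O(\mf s_L^\vee)$-central character of an irreducible module, again producing a canonical $W(G^\vee,L^\vee)^{\mb W_F}$-orbit.

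Next I would verify that the composite equivalence in Theorem~\ref{thm:10.3} intertwines these central-character maps. The proof of Theorem~\ref{thm:10.3} proceeds through several equivalences (Proposition~\ref{prop:10.1}, Proposition~\ref{prop:9.7}, and the reduction of \cite[Proposition 3.14.a, Theorem 3.18.a]{AMS3} in \eqref{eq:10.44}); each step is by construction compatible with the respective maximal commutative subalgebras. Specifically, Proposition~\ref{prop:9.7} matches $\mc O(Z(\mf l^\vee)^{\mb W_F})$ with $\mc O(\mf t)$ via the tangent map of \eqref{eq:9.2}, and \eqref{eq:10.44} matches central characters up to the exponential/logarithm. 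Tracking \eqref{eq:10.28} across the entire chain shows that the composition sends the $\End_G(\Pi_{\mf s})$-central character of a module to the $\cH(\mf s^\vee,q_F^{1/2})$-central character of its image, and translating both back to the cuspidal level recovers exactly the LLC bijection $\Irr(L)_{\mf s_L} \leftrightarrow \Phi_e(L)^{\mf s_L^\vee}$ from Theorem~\ref{thm:8.2}, as was already made explicit in the cuspidal case in Proposition~\ref{prop:11.1}.

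The main obstacle is bookkeeping the passage between the several finite groups $W(G,L)_{\hat\sigma}=W_{\mf s}$, $W_{\mf s^\vee}$, $W(G,L)$ and $W(G^\vee,L^\vee)^{\mb W_F}$: the Hecke-algebraic equivalences only produce $W_{\mf s}$- respectively $W_{\mf s^\vee}$-orbits of cuspidal data, whereas $\mr{Sc}$ on the two sides of the diagram yields orbits for the larger groups $W(G,L)$ and $W(G^\vee,L^\vee)^{\mb W_F}$. The essential point is that Lemma~\ref{lem:9.1} gives $W_{\mf s}\cong W_{\mf s^\vee}$, and distinct $W(G,L)$-translates of $\mf s_L$ correspond (via Theorem~\ref{thm:8.2}) to distinct $W(G^\vee,L^\vee)^{\mb W_F}$-translates of $\mf s_L^\vee$, so enlarging the acting group on either side produces matching orbits. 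With this observation the diagram commutes on each block, and taking the disjoint union over $\mf s \in \mf B(G)^0_{ns}$ (and over representatives of $\mf{Lev}(G)$) yields the claim.
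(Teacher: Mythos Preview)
Your proof is correct and takes essentially the same approach as the paper: both identify the cuspidal support maps with central-character maps for the Hecke algebras on the two sides, use the compatibility of the equivalence in Theorem~\ref{thm:10.3} with these, and then invoke the cuspidal-level LLC (Theorem~\ref{thm:8.2} via Proposition~\ref{prop:11.1}) to match the resulting $W_{\mf s}$- and $W_{\mf s^\vee}$-orbits. The paper's version is marginally more streamlined in that it appeals to the compatibility with parabolic induction in Theorem~\ref{thm:10.3} as a black box---showing $M(\varphi,\rho,q_F^{1/2})^{op}$ is a constituent of the module induced from $(\varphi_L,\rho_L)$ and hence $\pi(\varphi,\rho)$ is a constituent of $\II_Q^G\pi(\varphi_L,\rho_L)$---rather than tracing central characters through each intermediate algebra in \eqref{eq:10.28}.
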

\begin{proof}
By Proposition \ref{prop:11.1} and Theorem \ref{thm:8.2}, the maps from Theorem
\ref{thm:10.6} on the cuspidal level are equivariant for $W(G,L) \cong
W(G^\vee,L^\vee)^{\mb W_F}$. In particular, the bottom line of the diagram
is well-defined and bijective. 

Suppose that $(\varphi,\rho) \in \Phi_e (G)^{\mf s^\vee} \subset \Phi_e^0 (G)_{ns}$ 
has cuspidal support $(L,\varphi_L ,\rho_L)$. Recall from \cite[Lemma 2.3]{AMS3} that 
\begin{equation}\label{eq:11.6}
Z \big( \cH (\mf s^\vee, q_F^{1/2}) \big) = \mc O (\mf s_L^\vee / W_{\mf s^\vee}) =  
\mc O (\mf s_L^\vee)^{W_{\mf s^\vee}} 
= \cH (\mf s_L^\vee, q_F^{1/2})^{W_{\mf s^\vee}}. 
\end{equation}
By \cite[Theorem 3.18.a]{AMS3}, the left $\cH (\mf s^{\vee op}, q_F^{1/2})$-module 
$M(\varphi,\rho^\vee,q_F^{1/2})$ admits central character 
$W_{\mf s^{\vee op}} (\varphi_L,\rho^\vee)$.
~Then the construction of $M(\varphi,\rho,q_F^{1/2})^{op}$ in the proof of Theorem 
\ref{thm:10.4} shows that it admits the same central character
$W_{\mf s^{\vee op}} (\varphi_L,\rho^\vee)$.~Changing the notation from
$(\mf s^{\vee op}, \rho^\vee)$ to $(\mf s^\vee,\rho)$ means that this 
central character must now be written as $W_{\mf s^\vee} (\varphi_L,\rho_L)$. 
This and \eqref{eq:11.6} imply that $M(\varphi,\rho,q_F^{1/2})^{op}$ is a constituent of
\[
\ind_{\cH (\mf s_L^\vee, q_F^{1/2})}^{\cH (\mf s^\vee, q_F^{1/2})}
(\varphi_L,\rho_L) = \ind_{\cH (\mf s_L^\vee, q_F^{1/2})}^{
\cH (\mf s^\vee, q_F^{1/2})} M (\varphi_L,\rho_L, q_F^{1/2})^{op} .
\]
By the compatibility with parabolic induction in Theorem \ref{thm:10.3}, we know that
$\pi (\varphi,\rho)$ is a constituent of $\II_Q^G \pi (\varphi_L,\rho_L)$. By Proposition
\ref{prop:11.1},  $\pi (\varphi_L,\rho_L) \in \Irr (L)$ is supercuspidal,
thus it represents the cuspidal support of $\pi (\varphi,\rho)$.
\end{proof}

In the following, we shall use the notion of temperedness for modules of twisted 
graded Hecke algebras as in \cite[Definition 9.2]{SolEnd}.

\begin{lem}\label{lem:11.3}
In \eqref{eqn:thm:10.6}, $\pi \in \Irr^0 (G)_{ns}$ is tempered if and only if
$\varphi_\pi \in \Phi^0 (G)$ is bounded.    
\end{lem}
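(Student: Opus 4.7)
The plan is to translate both ``tempered'' and ``bounded'' into intrinsic conditions on modules of $\cH(\mf s^\vee, q_F^{1/2})$ via the chain of equivalences from Theorems \ref{thm:10.3} and \ref{thm:10.4}, and then match them directly. Under the bijection $\pi \leftrightarrow (\varphi_\pi,\rho_\pi)$ from \eqref{eq:10.25}, $\pi$ corresponds to the right module $\bar M(\varphi_\pi, \rho_\pi, q_F^{1/2})^{op}$ of $\cH(\mf s^\vee, q_F^{1/2})$ constructed in the proof of Theorem \ref{thm:10.4}. I will show that (a) this equivalence identifies tempered $G$-representations with tempered $\cH(\mf s^\vee, q_F^{1/2})$-modules (in the sense of affine Hecke algebras), and (b) the module $\bar M(\varphi, \rho, q_F^{1/2})^{op}$ is tempered iff $\varphi$ is bounded.

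For (a), I would argue in two sub-steps. First, under the equivalence $\Rep_\fl(G)_{\mf s} \cong \Mod_\fl\text{-}\End_G(\Pi_{\mf s})$ from \eqref{eq:10.3}, Casselman's temperedness criterion translates into a condition on the weights of the Bernstein center $Z\big(\End_G(\Pi_{\mf s})\big) \cong \mc O\big(\Irr(L)_{\mf s_L}\big)^{W_{\mf s}}$, namely that the real part of each weight lies in the closed anti-dominant chamber with respect to $R_\sigma^+$; this adaptation of Casselman's criterion to Bernstein blocks is available in \cite{SolEnd}. Second, under the identification $\End_G(\Pi_{\mf s}) \cong \cH(\mf s^\vee, q_F^{1/2})$ of Theorem \ref{thm:10.3} (built from Proposition \ref{prop:9.7} via the graded Hecke algebras), the isomorphism of centers is induced, at the level of cuspidal supports, by the LLC for tori from Theorem \ref{thm:8.2}. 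Since the LLC for tori sends $\mathfrak{X}_{\nr}^+(L)$ to $\mathfrak{X}_{\nr}^+(L^\vee)$, and the positive systems $R_\sigma^+$ and $R_{\mf s^\vee}^+$ correspond via Proposition \ref{prop:9.3}, the condition defining temperedness is intertwined on both sides.

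For (b), I would use the construction of $\bar M(\varphi, \rho, q_F^{1/2})^{op}$ via the graded Hecke algebra $\mh H\big(\mf s^\vee, \varphi_b, \log(q_F^{1/2})\big)$ in the proof of Theorem \ref{thm:10.4}. By standard Kazhdan--Lusztig--Lusztig theory in the geometric construction of modules (see \cite{AMS3} and \cite{SolKL}), the graded module $M(\varphi, \rho, \log(q_F^{1/2}))^{op}$ is tempered iff the real part of its central character lies in the closed anti-dominant chamber of $Z(\mf l^\vee)^{\mb W_F}_\R$. Since $\varphi_b$ is bounded by construction and $\varphi = \exp(t_\varphi) \cdot \varphi_b$ with $t_\varphi \in Z(\mf l^\vee)^{\mb W_F}$ as in \eqref{eq:10.47}, and since the contribution from $\varphi|_{\SL_2(\C)}$ to the central character always sits in the anti-dominant chamber, temperedness reduces to $\mr{Re}(t_\varphi) = 0$, which in turn is equivalent to $\exp(t_\varphi) \in \mathfrak{X}_{\nr}(L^\vee)$ being unitary, which holds iff $\varphi$ is bounded.

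Combining (a) and (b) yields the lemma. The main obstacle is step (a): confirming that the equivalence from Theorem \ref{thm:10.3} really preserves temperedness. This amounts to checking that all constructions leading to the isomorphism $\End_G(\Pi_{\mf s}) \cong \cH(\mf s^\vee, q_F^{1/2})$---the analytic localizations of Proposition \ref{prop:10.1}, the graded Hecke algebra isomorphism of Proposition \ref{prop:9.7}, and the passage back to the affine Hecke algebra via \eqref{eq:10.44}---are real-linear in the appropriate sense. The crux is the real-linear identification \eqref{eq:9.45}: $\mf t \cong Z(\mf l^\vee)^{\mb W_F}$, induced by the tangent map of the LLC for tori on $Z(L)$, which intertwines the two positive cones.
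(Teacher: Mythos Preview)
Your strategy is essentially the paper's: trace temperedness through the chain \eqref{eq:10.28}. The paper executes this more cleanly by working throughout at the level of the twisted graded Hecke algebras rather than first passing to the affine Hecke algebra $\cH(\mf s^\vee,q_F^{1/2})$. Concretely, the paper cites \cite[Proposition 9.5.a]{SolEnd} to identify temperedness of $\pi$ with temperedness of its image in $\Irr_{\mf t_\R}\text{-}\mh H(\tilde{\mc R}_\tau,W_{\mf s,\tau},k^\tau,\natural_\tau)$, observes that the isomorphism of Proposition \ref{prop:9.7} preserves temperedness because it matches the maximal commutative subalgebras and positive roots (exactly your final ``crux'' observation about \eqref{eq:9.45}), and then invokes \cite[Theorems 3.8.b and 3.18.b]{AMS3} to conclude that tempered irreducibles of $\mh H(\mf s^\vee,\varphi_b,\log(q_F^{1/2}))$ correspond exactly to bounded $\varphi$.

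There is one genuine imprecision in your part (b): you characterize temperedness of the graded module by a condition on the \emph{central character}, but temperedness is a condition on \emph{all} $\mc O(Z(\mf l^\vee)^{\mb W_F})$-weights of the module, not just their $W_{\mf s^\vee,\varphi_b}$-orbit. Your subsequent heuristic about the $\SL_2(\C)$-contribution ``sitting in the anti-dominant chamber'' does not, as stated, reduce the problem to $t_\varphi=0$; the actual equivalence between temperedness and boundedness of $\varphi$ requires the geometric analysis carried out in \cite{AMS2,AMS3}. You should simply cite \cite[Theorem 3.18.b]{AMS3} for this step rather than attempt to re-derive it.
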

\begin{proof}
We keep track of what happens to temperedness in \eqref{eq:10.28}, starting with 
$\pi (\varphi,\rho) \in \Irr (G)_{\mf s}$ on the left. By 
\cite[Proposition 9.5.a]{SolEnd}, $\pi (\varphi,\rho)$ is tempered if and only if $\bar M (\varphi,\rho) \in \Irr_{\mf t_\R} \text{-} \mh H (\mc R^\tau, W_{\mf s,\tau},
k^\tau, \natural_\tau)$ is tempered. The isomorphism from Proposition \ref{prop:9.7} respects the 
maximal commutative subalgebras and the sets of positive roots for these
twisted graded Hecke algebras, so it preserves temperedness. By
\cite[Theorem 3.8.b and 3.18.b]{AMS3}, the last two maps in \eqref{eq:10.28}
match tempered irreducible modules with bounded enhanced L-parameters.
\end{proof}

Recall that a $G$-representation is called essentially square-integrable if its
restriction to $G_\der$ is square-integrable. The corresponding notion for
modules of Hecke algebras is \textit{essentially discrete series}; see for example
\cite[Definition 9.2]{SolEnd}. 

\begin{lem}\label{lem:11.4}
In \eqref{eqn:thm:10.6}, $\pi \in \Irr^0 (G)_{ns}$ is essentially 
square-integrable if and only if $\varphi_\pi \in \Phi^0 (G)$ is discrete. 
\end{lem}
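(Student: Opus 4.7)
The plan is to proceed exactly in parallel with the proof of Lemma \ref{lem:11.3}, following the chain of equivalences \eqref{eq:10.28} and checking at each step that ``essentially square-integrable'' on the $p$-adic side matches ``essentially discrete series'' at each intermediate Hecke-algebra layer, and finally matches ``discrete'' for the enhanced L-parameter on the Galois side.

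First I would reduce to a graded Hecke algebra statement. By \cite[Proposition 9.5]{SolEnd} (the analogue used in Lemma \ref{lem:11.3}, but for the essentially discrete series part), $\pi(\varphi,\rho) \in \Irr(G)_{\mf s}$ is essentially square-integrable if and only if the corresponding irreducible module $\bar M(\varphi,\rho) \in \Irr_{\mf t_\R}\text{-}\mh H(\tilde{\mc R}_\tau, W_{\mf s,\tau}, k^\tau, \natural_\tau)$ is essentially discrete series. This is the same reduction that already appears in Lemma \ref{lem:11.3}, since the characterization of $(g)$ essentially square-integrable representations via weights/residues of associated Hecke algebra modules is done uniformly in \cite[\S 9]{SolEnd}.

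Next I would transfer across Proposition \ref{prop:9.7}. The algebra isomorphism given there preserves the maximal commutative subalgebra (via \eqref{eq:9.43}, which comes from the tangent map of \eqref{eq:9.2} and is $W_{\mf s,\tau} \cong W_{\mf s^\vee,\varphi_b}$-equivariant), preserves the root system $R_{\sigma,\tau} \leftrightarrow R_{\mf s^\vee,\varphi_b}$, and preserves the set of positive roots (by the canonical choice implicit in Proposition \ref{prop:9.4}). Since essentially discrete series is defined in terms of the weights of the module with respect to the commutative subalgebra and the positive system, the isomorphism matches essentially discrete series modules of $\mh H(\tilde{\mc R}_\tau, W_{\mf s,\tau}, k^\tau, \natural_\tau)$ with essentially discrete series modules of $\mh H(\mf s^\vee, \varphi_b, \log(q_F^{1/2}))$.

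Finally I would invoke \cite[Theorem 3.8.b and Theorem 3.18.b]{AMS3}, which (in the setting adapted to rigid inner twists by Lemma \ref{lem:9.8}) state that under the bijection between $\Irr\text{-}\cH(\mf s^\vee,q_F^{1/2})$ and $\Phi_e(G)^{\mf s^\vee}$, essentially discrete series modules correspond precisely to enhanced L-parameters $(\varphi,\rho)$ for which $\varphi$ is discrete, i.e.\ $Z_{G^\vee}(\varphi)/Z(G^\vee)^{\mb W_F}$ is finite. Stringing together these three correspondences gives the desired biconditional. The main potential obstacle is ensuring compatibility of the precise definitions of ``essentially discrete series'' across the different algebra conventions (twisted graded versus twisted affine, right versus left modules, and the sign-character conventions used in the proof of Theorem \ref{thm:10.4}), but the sign twist $\mr{sgn}^*$ does not affect discreteness since it is an algebra automorphism and the passage from left to right modules via \eqref{eq:10.17} preserves the relevant weight-space structure; these points are all handled uniformly as in \cite[\S 6.2]{SolQS} and the discussion after Theorem \ref{thm:10.4}.
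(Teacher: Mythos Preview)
Your overall strategy — tracing through \eqref{eq:10.28} and matching essential square-integrability with essential discrete series at each step — is the same as the paper's. However, there is a genuine gap in your third step.

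The citation \cite[Theorem 3.8.b and 3.18.b]{AMS3} is the tempered/bounded statement already used in Lemma \ref{lem:11.3}; for discreteness the relevant part is \cite[Theorem 3.18.c]{AMS3}. More importantly, that result does \emph{not} assert that ``essentially discrete series'' for $\mh H \big( \mf s^\vee, \varphi_b, \log (q_F^{1/2}) \big)$ alone is equivalent to $\varphi$ being discrete. One also needs the rank condition $\mr{rk}\, R_{\mf s^\vee,\varphi_b} = \dim_\C (T)$, with $T$ as in \cite[\S 3.1]{AMS3}. Without it, the localized graded Hecke algebra can have essentially discrete series modules even when $L$ is a proper Levi in such a way that $\varphi$ cannot be discrete in ${}^L G$.

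The paper fills this gap by extracting from \cite[Proposition 9.5.b,c]{SolEnd} the rank condition $\mr{rk}\, R_{\sigma,\tau} = \mr{rk}\, \Sigma (G,L)$ on the $p$-adic side, rewriting it as $\mr{rk}\, R_{\mf s^\vee,\varphi_b} = \dim_\C Z(\mf l^\vee)^{\mb W_F} - \dim_\C Z(\mf g^\vee)^{\mb W_F}$ via Proposition \ref{prop:9.7}, and then identifying the right-hand side with $\dim_\C (T)$ using \cite[Lemma 3.7]{AMS3}. Only after this translation does \cite[Theorem 3.18.c]{AMS3} apply to give the equivalence with discreteness of $\varphi$. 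Your argument needs this additional bookkeeping; simply invoking that the isomorphism of Proposition \ref{prop:9.7} preserves essential discrete series is not enough.
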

\begin{proof}
Again we trace through \eqref{eq:10.28}, starting with $\pi (\varphi,\rho) \in 
\Irr (G)_{\mathfrak{X}_{\nr}^+ (L)\tau}$. By \cite[Proposition 9.5.b,c]{SolEnd}, 
$\pi (\varphi,\rho)$ is essentially square-integrable if and only if 
\begin{equation}\label{eq:11.2}
\text{rk } R_{\sigma,\tau} =\;\text{rk }\Sigma (G,L),
\end{equation}
where $\Sigma (G,L)$ denotes the set of nonzero weights of the maximal $F$-split 
subtorus of $Z^\circ (L)$ acting on Lie($G$).~As with temperedness, the algebra 
isomorphism from Proposition \ref{prop:9.7} preserves ``essentially discrete 
series".~Condition \eqref{eq:11.2} is equivalent to the condition that
\[
\text{rk } R_{\sigma,\tau} = \dim_\C \mathfrak{X}_{\nr} (L) - 
\dim_\C \mathfrak{X}_{\nr} (G) = \dim \mf t - \text{rk } X^* (Z^0 (G)), 
\]
which is then equivalent to the condition that 
\begin{equation}\label{eq:11.3}
\text{rk } R_{\mf s^\vee,\varphi_b} =
\dim_\C Z(\mf l^\vee)^{\mb W_F} - \dim_\C Z(\mf g^\vee)^{\mb W_F} .
\end{equation}
By \cite[Lemma 3.7]{AMS3}, we can express the right-hand side of 
\eqref{eq:11.3} as $\dim_\C (T)$, where $T$ is as in \cite[\S 3.1]{AMS3}. 
Thus $\pi (\varphi,\rho)$ is essentially square-integrable if
and only if $M \big( \varphi,\rho,\log (q_F^{1/2}) \big) \in \Irr \text{-} 
\mh H \big( \mf s^\vee, \varphi_b, \log (q_F^{1/2}) \big)$ is essentially discrete series and rk $R_{\mf s^\vee,\varphi_b}=\dim_\C (T)$. 
By \cite[Theorem 3.18.c]{AMS3}, the combination
of the latter two conditions is equivalent to the discreteness of $\varphi$.
\end{proof}

Recall from \cite{Lan,Bor} that every $\varphi \in \Phi (G)$ canonically determines 
a character $\chi_\varphi$ of $Z(G)$. To better utilize the cuspidal support map for 
enhanced L-parameters in the proof of the following Proposition \ref{prop:11.5}, we
will also need the perspective of L-parameters as Weil--Deligne morphisms
\begin{equation}\label{eqn:infini-psi}
\psi :\mb W_F \ltimes \C \to {}^L G . 
\end{equation}
More explicitly, given $\varphi : \mb W_F \times \SL_2 (\C) \to {}^L G$, 
we define $\psi$ by
\begin{equation}
\psi (w,z) := \varphi \big( w, \matje{\|w\|^{1/2}}{0}{0}{\|w\|^{-1/2}} 
\matje{1}{z}{0}{1} \big) .
\end{equation}
It is well-known that $\psi$ determines $\varphi$ up to $G^\vee$-conjugacy, 
see for instance \cite[Proposition 2.2]{GrRe}.

\begin{prop}\label{prop:11.5}
In \eqref{eqn:thm:10.6}, the central character of $\pi$ is $\chi_{\varphi_\pi}$.    
\end{prop}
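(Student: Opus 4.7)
The plan is to reduce the claim to the supercuspidal case, where Lemma \ref{lem:8.8} already identifies the central character. Let $(\varphi_L,\rho_L) := \mr{Sc}(\varphi_\pi,\rho_\pi) \in \Phi^0_\cusp(L)_{ns}$ be the cuspidal support, with $L \subset G$ a Levi subgroup. By Lemma \ref{lem:11.2}, the cuspidal support of $\pi$ is (up to $W(G,L)$-conjugacy) the supercuspidal representation $\pi(\varphi_L,\rho_L) \in \Irr^0_\cusp(L)_{ns}$. The compatibility with parabolic induction from Theorem \ref{thm:10.3} then realises $\pi$ as a subquotient of $\II_Q^G \pi(\varphi_L,\rho_L)$ for some parabolic $Q$ with Levi factor $L$. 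Parabolic induction preserves the restriction of the central character to $Z(G) \subset Z(L)$, so the $Z(G)$-character of $\pi$ equals the restriction of the $Z(L)$-character of $\pi(\varphi_L,\rho_L)$ to $Z(G)$. By Lemma \ref{lem:8.8}, the latter is $\chi_{\varphi_L}|_{Z(G)}$.

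I am thus reduced to the purely Galois-side identity $\chi_{\varphi_L}|_{Z(G)} = \chi_{\varphi_\pi}$. To attack this, I would choose a compatible pair of $z$-extensions: $\tilde{\mc G}$ with $Z(\tilde{\mc G})$ connected and $\tilde{\mc G}_\der = \mc G_\der$, and the corresponding Levi $\tilde{\mc L} \subset \tilde{\mc G}$ with $Z(\tilde{\mc L})$ connected and $\tilde{\mc L}_\der = \mc L_\der$. Lift $\varphi_\pi$ and $\varphi_L$ to L-parameters $\tilde\varphi$ and $\tilde\varphi_L$ for $\tilde G$ and $\tilde L$ respectively. Since $Z(\mc G) \subset Z(\tilde{\mc G}) \subset Z(\tilde{\mc L}) \supset Z(\mc L)$, the required identity will follow from the equality of characters $\chi_{\tilde\varphi}|_{Z(\tilde G)} = \chi_{\tilde\varphi_L}|_{Z(\tilde G)}$ of $Z(\tilde G)$.

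Borel's character $\chi_{\tilde\varphi}$ is obtained by composing $\tilde\varphi$ with the projection $\tilde G^\vee \twoheadrightarrow Z(\tilde G)^\vee = \tilde G^{\vee,\mr{ab}}$ and applying the LLC for tori, and similarly $\chi_{\tilde\varphi_L}$ comes from $\tilde L^\vee \twoheadrightarrow Z(\tilde L)^\vee = \tilde L^{\vee,\mr{ab}}$. Since $\tilde L^\vee$ is a Levi subgroup of $\tilde G^\vee$, the inclusion $Z(\tilde{\mc G}) \hookrightarrow Z(\tilde{\mc L})$ is dual to a natural surjection $\tilde L^{\vee,\mr{ab}} \twoheadrightarrow \tilde G^{\vee,\mr{ab}}$, and restricting $\chi_{\tilde\varphi_L}$ from $Z(\tilde L)$ to $Z(\tilde G)$ corresponds via the LLC for tori to composing $\tilde\varphi_L|_{\mb W_F}$ with this further projection. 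So both $\chi_{\tilde\varphi}|_{Z(\tilde G)}$ and $\chi_{\tilde\varphi_L}|_{Z(\tilde G)}$ arise from maps $\mb W_F \to \tilde G^{\vee,\mr{ab}}$. This target is an abelian quotient, so the projection kills $\tilde\varphi(\SL_2(\C))$ entirely and is invariant under inner automorphisms of $\tilde G^\vee$. Consequently the required equality reduces to showing that $\tilde\varphi|_{\mb W_F}$ and $\tilde\varphi_L|_{\mb W_F}$ have the same image in $\tilde G^{\vee,\mr{ab}}$.

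The main obstacle is precisely this last assertion, which requires unpacking the construction of $\mr{Sc}$ in \cite[\S 7]{AMS1}. At its core, $\mr{Sc}$ applies Lusztig's cuspidal support for $G^\vee$-equivariant local systems on unipotent orbits; in terms of the Weil--Deligne presentation \eqref{eqn:infini-psi}, this modifies only the nilpotent part and the enhancement, while the semisimple parameter $\psi^{ss}|_{\mb W_F}$ — equivalently $\varphi|_{\mb W_F}$ — is preserved up to conjugation by an element of $L^\vee \subset \tilde G^\vee$. Since inner conjugation acts trivially on the abelianisation $\tilde G^{\vee,\mr{ab}}$, the images of $\tilde\varphi|_{\mb W_F}$ and $\tilde\varphi_L|_{\mb W_F}$ in $\tilde G^{\vee,\mr{ab}}$ indeed coincide, which combined with the first paragraph yields the proposition.
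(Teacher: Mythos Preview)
Your argument follows the same route as the paper: reduce to the cuspidal case via Lemmas~\ref{lem:11.2} and~\ref{lem:8.8}, then establish $\chi_{\varphi_L}|_{Z(G)} = \chi_{\varphi}$ by passing to a $z$-extension and using that $\mr{Sc}$ preserves $\psi|_{\mb W_F}$ in the Weil--Deligne form (the paper cites \cite[(108)]{AMS1} for this). One point deserves care: the lifts $\tilde\varphi$ and $\tilde\varphi_L$ must be chosen compatibly for your claim about equal images in $\tilde G^{\vee,\mr{ab}}$ to hold --- the paper does this explicitly by setting $\tilde\psi_L|_{\mb W_F} := \tilde\psi|_{\mb W_F}$ and $\tilde\psi_L|_\C := \psi_L|_\C$ --- whereas independent lifts could differ by an element of $\ker(\tilde G^{\vee,\mr{ab}} \to G^{\vee,\mr{ab}})$. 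Also, your parenthetical ``equivalently $\varphi|_{\mb W_F}$'' is not literally correct: $\psi|_{\mb W_F}$ and $\varphi|_{\mb W_F}$ differ by $w \mapsto \varphi\big(1,\mr{diag}(\|w\|^{1/2},\|w\|^{-1/2})\big)$, though this is harmless since that factor lands in the derived group and dies in the abelianisation.
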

\begin{proof}
Let $(\varphi,\rho) \in \Phi^0_e (G)_{ns}$ and let $(L,\varphi_L,\rho_L)$ be (a 
representative of) its cuspidal support. By Lemma \ref{lem:11.2}, 
$\pi (\varphi_L,\rho_L) \in \Irr^0_\cusp (L)$ represents the cuspidal support
of $\pi = \pi (\varphi,\rho)$. Then $\pi (\varphi,\rho)$ is a subquotient of
$\II_Q^G \pi (\varphi_L,\rho_L)$; moreover, $\pi (\varphi,\rho)$ and $\pi (\varphi_L,\rho_L)$ 
admit the same $Z(G)$-character, i.e.~the restriction to $Z(G)$ of the $Z(L)$-character of
$\pi (\varphi_L,\rho_L)$, which by Lemma \ref{lem:8.8} is equal to  
\begin{equation}\label{eq:11.4} 
\chi_{\varphi_L} |_{Z(G)} = \theta |_{Z(G)} .    
\end{equation}
The construction of $\chi_\varphi$ from \cite{Lan,Bor} is recalled just above Lemma
\ref{lem:8.8}. Let $\mc G \to \tilde{\mc G}$ be an embedding such that $\mc G_\der = 
\tilde{\mc G}_\der$ and $Z(\tilde{\mc G})$ is connected. Let $\tilde \varphi \in \Phi 
(\tilde G)$ be a lift of $\varphi$. The image $\tilde \varphi_z \in \varphi (Z(\tilde G))$ 
of $\tilde \varphi$ determines a character $\chi_{\tilde \varphi}$ of $Z(\tilde G)$, and
by definition $\chi_\varphi = \chi_{\tilde \varphi} |_{Z(G)}$.

We now consider $\psi$ as in \eqref{eqn:infini-psi}. Similarly, we
define $\psi_L$ and $\tilde \psi$, in terms of $\varphi_L$ and $\tilde \varphi$.
By \cite[Definition 7.7 and (108)]{AMS1}, $\psi |_{\mb W_F} =
\psi_L |_{\mb W_F}$, thus $\psi$ and $\psi_L$ differ only on the unipotent elements
$u_\varphi := \psi (1,1)$ and $u_{\varphi_L} := \psi_L (1,1)$. The lift $\tilde \psi$ of
$\psi$ gives rise to a lift $\tilde{\psi}_L : \mb W_F \ltimes \C \to {}^L G$ of
$\psi_L$, defined by
\[
\tilde{\psi}_L |_{\mb W_F} = \tilde \psi |_{\mb W_F} \quad\text{and}\quad 
\tilde{\psi}_L |_\C = \psi_L |_\C .
\]
The map ${}^L \tilde G \to {}^L Z(\tilde G)$ dual to $Z(\tilde{\mc G}) \to 
\tilde{\mc G}$ divides ${\tilde{G}^\vee}_\der$ out, in particular $\tilde \psi (\C)$ 
and $\tilde{\psi}_L (\C)$ belong to its kernel. Hence $\tilde{\psi}_z \in \varphi 
(Z(\tilde G))$ is equal to the image $\tilde{\psi}_{L,z}$ of $\tilde{\psi}_L$ in
$\varphi (Z(\tilde G))$. In other words, $\tilde{\psi}_z$ and $\tilde{\psi}_{L,z}$
determine the same character of $Z(\tilde G)$.

Consider the maximal torus $\tilde{\mc T}  = \mc T Z(\tilde G)$ of $\tilde G$.
Since $\varphi_L$ and $\psi_L$ have image in ${}^L T$, $\tilde{\psi}_L$ has image
in ${}^L \tilde T$. By the functoriality of the LLC for tori \cite{Yu}, $\tilde{\psi}_L$
determines a character of $\tilde T$ that extends the character $\theta$ of
$T$ determined by $\psi_L$ or $\varphi_L$. Hence the character $\chi_{\tilde \varphi}$
of $Z(\tilde G)$ determined by $\tilde{\psi}_z = \tilde{\psi}_{L,z}$ (or 
equivalently by $\tilde{\varphi}_z$) extends $\theta |_{Z(G))}$. We conclude that
$\chi_\varphi$ is equal to $\theta |_{Z(G)}$, which by \eqref{eq:11.4} is also the
$Z(G)$-character of $\pi (\varphi,\rho)$.
\end{proof}

Next we investigate the compatibility between our LLC and parabolic induction.
Let $\mc P = \mc M \mc U$ be a parabolic $F$-subgroup of $\mc G$, with
unipotent radical $\mc U$ and Levi factor $\mc M$. Suppose that $\varphi \in 
\Phi^0 (G)$ factors through ${}^L M$. Then we can compare representations of
$G$ and of $M$ associated to enhancements of $\varphi$, via normalized parabolic
induction. However, there is an obstruction to doing this nicely, given by
a function $\epsilon$ from \cite[\S 1.16]{Lus-Cusp3} in a setting with graded 
Hecke algebras. As in the discussion before \cite[Lemma 3.19]{AMS3}, $\epsilon$ can be 
interpreted as a function 
\[
\epsilon (\varphi,q_F^{1/2}) := \epsilon (t_\varphi,-\log (q_F)/2),
\]
where $t_\varphi$ is as in \eqref{eq:10.47}, computed in a setting from 
$\mh H (\mf s^\vee, \varphi_b, \mb r)$.

\begin{lem}\label{lem:11.7}
Let $(\varphi,\rho^M) \in \Phi_e^0 (M)_{ns}$ and let $\pi^M (\varphi,\rho^M) \in
\Irr (M)$ be its image under \eqref{eqn:thm:10.6} for $M$. 
\enuma{
\item Suppose that $\epsilon (\varphi, q_F^{1/2}) \neq 0$. There is a canonical 
isomorphism
\[
\II_P^G \pi^{M,st}(\varphi,\rho^M) \cong \bigoplus\nolimits_\rho \, \Hom_{S_\varphi^{M+}}
(\rho, \rho^M) \otimes \pi^{st}(\varphi,\rho) ,
\]
where the sum runs through all $\rho \in \Irr \big( \pi_0 (S_\varphi^+) \big)$ such that 
Sc$(\varphi,\rho)$ and Sc$(\varphi,\rho^M)$ are $G^\vee$-conjugate.

The multiplicity of $\pi (\varphi,\rho)$ in $\II_P^G \pi^{M,st}(\varphi,\rho^M)$ is
$\dim \Hom_{S_\varphi^{M+}} (\rho, \rho^M)$, and $\pi (\varphi,\rho)$ already appears 
this many times as a quotient of $\II_P^G \pi^M (\varphi,\rho^M)$.
\item Suppose that $\varphi$ is bounded. Then $\epsilon (\varphi,q_F^{1/2}) \neq 0$,
$\pi^M (\varphi,\rho^M) = \pi^{M,st}(\varphi,\rho^M)$ and $\pi (\varphi,\rho) = 
\pi^{st}(\varphi,\rho)$, thus 
\[
\II_P^G \pi^M (\varphi,\rho^M) \cong \bigoplus\nolimits_\rho \, \Hom_{S_\varphi^{M+}}
(\rho, \rho^M) \otimes \pi (\varphi,\rho) .
\]
}
\end{lem}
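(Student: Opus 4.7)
The plan is to reduce the statement, via the equivalences of Theorem \ref{thm:10.7}, to an identity for induced standard modules over Hecke algebras, and then to invoke \cite[Lemma 3.19]{AMS3}. Let $(\varphi_L,\rho_L) := \mr{Sc}(\varphi,\rho^M)$, with associated inertial classes $\mf s_M^\vee$ for $\Phi_e(M)$ and $\mf s^\vee$ for $\Phi_e(G)$. Since the equivalences of Theorem \ref{thm:10.7} commute with parabolic induction and $\pi^{M,st}(\varphi,\rho^M)$ corresponds to $\bar E^M(\varphi,\rho^M,q_F^{1/2})^{op} \in \Mod_\fl\text{-}\cH(\mf s_M^\vee,q_F^{1/2})$ by \eqref{eq:10.45}, it will suffice to establish
\[
\ind_{\cH(\mf s_M^\vee,q_F^{1/2})}^{\cH(\mf s^\vee,q_F^{1/2})} \bar E^M(\varphi,\rho^M,q_F^{1/2})^{op} \;\cong\; \bigoplus\nolimits_\rho \Hom_{\pi_0(S_\varphi^{M+})}(\rho,\rho^M) \otimes \bar E(\varphi,\rho,q_F^{1/2})^{op},
\]
with $\rho$ summed over $\Irr(\pi_0(S_\varphi^+))$ subject to the cuspidal-support conjugacy condition, whose compatibility across the equivalence is guaranteed by Lemma \ref{lem:11.2}.

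Next, as in the proof of Theorem \ref{thm:10.4}, I would localize both sides at the central character $W_{\mf s^\vee}\mathfrak{X}_\nr^+(L^\vee)\varphi_b$, reducing the problem to a decomposition of induced left standard modules in the twisted graded Hecke algebra $\mh H(\mf s^\vee,\varphi_b,\log(q_F^{1/2}))$. In this form the assertion is precisely \cite[Lemma 3.19]{AMS3}, valid under $\epsilon(\varphi,q_F^{1/2}) \neq 0$. Passing back to right modules via the opposite-algebra isomorphism \eqref{eq:10.17} together with the $\rho \leftrightarrow \rho^\vee$ relabeling used in the construction of $\bar E(\cdot)^{op}$, and combining contributions across the $W_{\mf s^\vee}$-orbit of central characters as in the chain from \eqref{eq:10.13} to $\Mod_\fl\text{-}\cH(\mf s^\vee,q_F^{1/2})$, yields the displayed isomorphism. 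The multiplicity statement then follows from the Langlands classification for $\cH(\mf s^\vee,q_F^{1/2})$ recalled in the proof of Theorem \ref{thm:10.4}: each $\bar E(\varphi,\rho,q_F^{1/2})^{op}$ has $\pi(\varphi,\rho)$ as a unique irreducible quotient with multiplicity one, and its other constituents have strictly smaller Langlands data and so do not contribute further copies of $\pi(\varphi,\rho)$.

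For part (b), when $\varphi$ is bounded we choose $\varphi_b = \varphi$, so $t_\varphi = 0$, and the standard nonvanishing of Lusztig's function from \cite[\S 1.16]{Lus-Cusp3} at the origin gives $\epsilon(\varphi,q_F^{1/2}) = \epsilon(0,-\log(q_F)/2) \neq 0$. For bounded $\varphi$ the Langlands classification degenerates and the standard module $E(\varphi,\rho,\log(q_F^{1/2}))$ coincides with its irreducible quotient $M(\varphi,\rho,\log(q_F^{1/2}))$, so $\pi^{st}(\varphi,\rho) = \pi(\varphi,\rho)$ and likewise on the $M$-side, consistent with the bounded/tempered correspondence of Lemma \ref{lem:11.3}; part (b) is then immediate from part (a).

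The hard part will be the bookkeeping in the passage between left and right modules under the opposite-algebra isomorphism \eqref{eq:10.17}, specifically ensuring that the dualization $\rho \leftrightarrow \rho^\vee$ interacts correctly with $\Hom_{\pi_0(S_\varphi^{M+})}(\cdot,\cdot)$ and with the cuspidal-support conjugacy condition indexing the direct sum, so that the decomposition on the right-module side appears with the conventions of the lemma rather than various contragredients and Hom-swaps. Canonicity of the final isomorphism is inherited from the canonicity of Theorem \ref{thm:10.7}, of \cite[Lemma 3.19]{AMS3}, and of the intertwinings between them.
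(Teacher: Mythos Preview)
Your proposal is correct and follows essentially the same route as the paper: both reduce via the equivalence of Theorem~\ref{thm:10.3}/\ref{thm:10.7} to a statement about induced standard modules over $\cH(\mf s^\vee,q_F^{1/2})$, invoke \cite[Lemma 3.19]{AMS3} for the left-module version, and then pass to right modules via \eqref{eq:10.17} with the $\rho\leftrightarrow\rho^\vee$ relabeling (which indeed swaps $\Hom_{S_\varphi^{M+}}(\rho^M,\rho)$ to $\Hom_{S_\varphi^{M+}}(\rho,\rho^M)$). For part~(b) the paper pinpoints the references more precisely---\cite[Lemma B.3]{SolKL} for $\epsilon(0,-\log(q_F)/2)\neq 0$ and \cite[Proposition B.4.a]{SolKL} for the equality of standard and irreducible modules in the tempered case---but your argument is the same in substance.
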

\begin{proof}
(a) \cite[Lemma 3.19]{AMS3} gives this for left $\cH (\mf s^\vee,
q_F^{1/2})$-modules, only with $\Hom_{S_\varphi^{M+}}(\rho^M, \rho)$ instead of 
$\Hom_{S_\varphi^{M+}}(\rho, \rho^M)$.~The constructions in Theorem 
\ref{thm:10.4} translate this to our right $\cH (\mf s^\vee, q_F^{1/2})$-modules.
Then the isomorphism becomes
\begin{align*}
\ind_{\cH (\mf s_M^\vee, q_F^{1/2})}^{\cH (\mf s^\vee, q_F^{1/2})}  \bar E_M
(\varphi,\rho^M ,q_F^{1/2})^{op} & \cong 
\bigoplus\nolimits_\rho \, \Hom_{S_\varphi^{M+}} ( \rho^{M\vee}, \rho^\vee) \otimes
\bar E (\varphi,\rho ,q_F^{1/2})^{op} \\
& \cong \bigoplus\nolimits_\rho \, \Hom_{S_\varphi^{M+}} ( \rho, \rho^M) \otimes
\bar E (\varphi,\rho ,q_F^{1/2})^{op} .
\end{align*}
Theorem~\ref{thm:10.3} allows us to transfer statements from
$\Mod_\fl\text{-}\cH (\mf s^\vee, q_F^{1/2})$ to $\Rep_\fl^0 (G)_{ns}$.

(b) The boundedness of $\varphi$ implies that $t_\varphi = 0$. By \cite[Lemma B.3]{SolKL}
\footnote{The element $t_\varphi$ is called $\sigma_0$ in \cite{SolKL}}, 
we know that $\epsilon (t_\varphi,-\log (q_F)/2) \neq 0$. The equalities between irreducible
and standard $\mh H (\mf s^\vee,\varphi_b,q_F^{1/2})$-modules come from
\cite[Proposition B.4.a]{SolKL}, and via Theorem \ref{thm:10.3} they can be carried 
over to the corresponding results about $G$-representations.
\end{proof}

Next we verify compatibility of Theorem \ref{thm:10.6} with the Langlands 
classification for $p$-adic groups as in \cite{Kon,Ren}. We briefly recall the statement. For every $\pi \in \Irr (G)$, there exists a triple $(P,\tau,\nu)$, unique up to 
$G$-conjugation, such that: 
\begin{itemize}
\item $P = MU$ is a parabolic subgroup of $G$;
\item $\tau \in \Irr (M)$ is tempered;
\item the unramified character $\nu \in \mathfrak{X}_{\nr}^+ (M)$ is strictly positive
with respect to $P$;
\item $\pi$ is the unique irreducible quotient of the standard representation
$\II_P^G (\tau \otimes \nu)$.
\end{itemize}
These constructions provide bijections between:
\begin{itemize}
\item the set of triples $(P,\tau,\nu)$ as above, up to $G$-conjugation,
\item the set of standard $G$-representations, up to isomorphism,
\item $\Irr (G)$.
\end{itemize}
Note that in the above setting, $\pi$, $\II_P^G (\tau \otimes \nu)$ and 
$\tau \otimes \nu$ have the same cuspidal support. Hence $\pi$ lies in
$\Irr^0 (G)_{ns}$ if and only if $\tau \otimes \nu$ lie in $\Irr^0 (M)_{ns}$.

Similarly, there is a Langlands classification for L-parameters in \cite{SiZi}.
For every $\varphi \in \Phi (G)$, there exists a parabolic subgroup $P = M U$ of $G$,
such that $\varphi$ factors through ${}^L M$ and can be written as $\varphi = z \varphi_b$,
where $\varphi_b \in \Phi (M)$ is bounded and $z \in \mathfrak{X}_{\nr}^+ (M)$ is strictly 
positive with respect to $P$. This gives a bijection between $\Phi (G)$ and such 
triples $(P,\varphi_b,z)$, up to $G^\vee$-conjugacy (see \cite[Theorem 4.6]{SiZi}). 
The strict positivity of $z$ implies that 
\begin{equation}\label{eq:11.5}
Z_{G^\vee}(\varphi (\mb W_F)) = Z_{M^\vee}(\varphi (\mb W_F)) \text{ and }
S_\varphi^+ = S_\varphi^{M+} .
\end{equation}
For any enhancement $\rho \in \Irr \big( \pi_0 (S_\varphi^+) \big)$, the construction of the 
cuspidal support of $(\varphi,\rho)$ reduces to a construction in $Z_{G^\vee}(\varphi (\mb W_F))$,
with $\varphi |_{\SL_2 (\C)}$ and $\rho$ as input (see for example \cite[\S 7]{AMS1}). 
Therefore, $(\varphi,\rho) \in \Phi_e (M)$ has the same cuspidal support as $(\varphi,\rho) 
\in \Phi_e (G)$. In particular, $(\varphi,\rho) \in \Phi^0_e (G)_{ns}$ if and only if
$(\varphi,\rho) \in \Phi^0_e (M)_{ns}$.

Now, let $(\varphi,\rho) \in \Phi^0_e (G)_{ns}$ and write $\varphi = z \varphi_b \in \Phi (M)$ 
as in \cite[Theorem 4.6]{SiZi}. 
\begin{prop}\label{prop:11.8}
\enuma{
\item $\pi^{st}(\varphi,\rho)$ is isomorphic to
$\II_P^G \pi^M (\varphi,\rho) = \II_P^G \big( z \otimes \pi^M (\varphi_b,\rho) \big)$.
\item $\pi^{st}(\varphi,\rho)$ is a standard $G$-representation in the sense of Langlands,
and $\pi (\varphi,\rho)$ is its unique irreducible quotient.
\item Every standard representation in $\Rep^0 (G)_{ns}$ arises in this way.
}
\end{prop}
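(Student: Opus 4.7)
The plan is to deduce (a) from Lemma~\ref{lem:11.7}(a), derive (b) by combining (a) with the Langlands classification and the irreducible-quotient property on the Hecke algebra side, and obtain (c) by inverting the construction.

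For part (a), I will apply Lemma~\ref{lem:11.7}(a) to $(\varphi,\rho)\in\Phi_e^0(M)_{ns}$, with the enhancement $\rho^M=\rho$. Two preparatory checks are needed. First, by \eqref{eq:11.5} the strict positivity of $z$ gives $S_\varphi^+=S_\varphi^{M+}$, so for any $\rho'\in\Irr(\pi_0(S_\varphi^+))$ the space $\Hom_{S_\varphi^{M+}}(\rho',\rho)$ is one-dimensional when $\rho'=\rho$ and zero otherwise; since the cuspidal-support construction of \cite[\S 7]{AMS1} only uses $\varphi|_{\SL_2(\C)}$ and data inside $Z_{G^\vee}(\varphi(\mb W_F))=Z_{M^\vee}(\varphi(\mb W_F))$, the hypothesis on cuspidal supports in Lemma~\ref{lem:11.7}(a) is automatic. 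Second, I will show $\pi^M(\varphi,\rho)=\pi^{M,st}(\varphi,\rho)$: Lemma~\ref{lem:11.6} gives $\pi^{M,st}(\varphi,\rho)\cong z\otimes\pi^{M,st}(\varphi_b,\rho)$, Lemma~\ref{lem:11.7}(b) for $M$ gives $\pi^{M,st}(\varphi_b,\rho)=\pi^M(\varphi_b,\rho)$ (since $\varphi_b$ is bounded), and a second application of Lemma~\ref{lem:11.6} to $\pi^M$ gives $z\otimes\pi^M(\varphi_b,\rho)=\pi^M(z\varphi_b,\rho)$.

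The one technical point is the nonvanishing $\epsilon(\varphi,q_F^{1/2})\neq 0$ required by Lemma~\ref{lem:11.7}(a). Here $t_\varphi=\log z\in Z(\mf l^\vee)^{\mb W_F}_\R$ is strictly positive with respect to the parabolic $P$, and the cited \cite[Lemma~B.3]{SolKL} ensures $\epsilon(t_\varphi,-\log(q_F)/2)\neq 0$ on this real positive locus (exactly as in the proof of Lemma~\ref{lem:11.7}(b), where only the vanishing at $t_\varphi=0$ was used, but the statement there applies equally to strictly positive $t_\varphi$). Granting this, Lemma~\ref{lem:11.7}(a) collapses to the single summand $\rho^M=\rho$, giving
\[
\II_P^G\pi^M(\varphi,\rho) \;=\; \II_P^G\pi^{M,st}(\varphi,\rho) \;\cong\; \pi^{st}(\varphi,\rho).
\]

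For part (b), Lemma~\ref{lem:11.3} shows that $\pi^M(\varphi_b,\rho)$ is tempered, and by hypothesis $z$ is strictly positive with respect to $P$; thus the isomorphism of part (a) presents $\pi^{st}(\varphi,\rho)$ as a standard representation in the Langlands sense, with Langlands triple $(P,\pi^M(\varphi_b,\rho),z)$. That $\pi(\varphi,\rho)$ is its unique irreducible quotient I will read off from the Hecke algebra side: by the construction in the proof of Theorem~\ref{thm:10.4} (and \cite[Theorems~3.8.a, 3.18.a]{AMS3}) the module $\bar M(\varphi,\rho,q_F^{1/2})^{op}$ is the unique irreducible quotient of $\bar E(\varphi,\rho,q_F^{1/2})^{op}$, and this property is transported to $\Rep_\fl^0(G)_{ns}$ by the equivalence of categories in Theorem~\ref{thm:10.3}.

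For part (c), let $\II_P^G(\tau\otimes\nu)$ be any standard representation in $\Rep^0(G)_{ns}$, with Langlands triple $(P=MU,\tau,\nu)$. Because its irreducible quotient lies in $\Irr^0(G)_{ns}$, so does its cuspidal support, and hence $\tau\in\Irr^0(M)_{ns}$ is tempered. By Lemma~\ref{lem:11.3} and Theorem~\ref{thm:10.6}, $\tau=\pi^M(\varphi_b,\rho)$ for a unique bounded $\varphi_b\in\Phi^0(M)_{ns}$ and enhancement $\rho$. Using \eqref{eq:1.14} to identify $\nu\in\mathfrak{X}_\nr^+(M)$ with $z\in\mathfrak{X}_\nr^+(M^\vee)$ and setting $\varphi=z\varphi_b$, \eqref{eq:11.5} lets us regard $\rho$ as an enhancement of $\varphi$ as a $G$-parameter, placing $(\varphi,\rho)$ in $\Phi_e^0(G)_{ns}$ with the prescribed Langlands decomposition. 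Part~(a) then identifies $\pi^{st}(\varphi,\rho)$ with $\II_P^G(\tau\otimes\nu)$.

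The main obstacle is the $\epsilon$-nonvanishing in part (a); modulo that (which is standard in the theory of graded affine Hecke algebras), the argument is a direct assembly of Lemmas~\ref{lem:11.3}, \ref{lem:11.6}, \ref{lem:11.7} and Theorems~\ref{thm:10.3}, \ref{thm:10.4}.
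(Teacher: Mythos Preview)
Your proposal is correct and follows essentially the same route as the paper: for (a) you reduce to Lemma~\ref{lem:11.7}(a) after checking $\pi^M(\varphi,\rho)=\pi^{M,st}(\varphi,\rho)$ via Lemmas~\ref{lem:11.6} and \ref{lem:11.7}(b) and invoking \cite[Lemma~B.3]{SolKL} for $\epsilon(\varphi,q_F^{1/2})\neq 0$, and then use \eqref{eq:11.5} to collapse the sum; for (b) and (c) your arguments match the paper's, the only cosmetic difference being that the paper cites \cite[Proposition~B.4.c]{SolKL} directly for the unique irreducible quotient property rather than routing through the proof of Theorem~\ref{thm:10.4}.
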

\begin{proof}
(a) Since $z \in (Z(M^\vee)^{\mb I_F})_{\mb W_F}$, we have $S_\varphi^+=S_{\varphi_b}^+$ 
and $\rho$ can be viewed as an enhancement of $\varphi_b \in \Phi (M)$. 
By Lemmas \ref{lem:11.6} and \ref{lem:11.7} (b), we have 
\[
\pi^M (\varphi,\rho) = z \otimes \pi^M (\varphi_b, \rho) = z \otimes \pi^{M,st} (\varphi_b,\rho)
= \pi^{M,st} (\varphi,\rho) .
\]
By \cite[Lemma B.3]{SolKL}, $\epsilon (\varphi,q_F^{1/2}) = 
\epsilon (t_\varphi, -\log (q_F)/2)$ is nonzero. By \eqref{eq:11.5} and 
Lemma \ref{lem:11.7} (a), we have $\pi^{st}(\varphi,\rho) \cong \II_P^G \pi^M (\varphi,\rho) = \II_P^G \pi (\varphi,\rho)$.

(b) By Lemma \ref{lem:11.3}, $\pi^M (\varphi_b,\rho) \in \Irr (M)$ is tempered,
and by construction, $z \in \mathfrak{X}_{\nr}^+ (M)$ is strictly positive with respect to $P$.
Hence $\pi^{st}(\varphi,\rho) \cong \II_P^G (z \otimes \pi^M (\varphi_b,\rho))$ is a
standard $G$-representation. By \cite[Proposition B.4.c]{SolKL}, 
$M \big( \varphi,\rho^\vee,\log (q_F^{1/2}) \big)$ is the unique irreducible quotient of
$E \big( \varphi,\rho^\vee, \log (q_F^{1/2}) \big)$, in the category of left modules for
$\mh H \big( \mf s^{\vee op},\varphi_b, \log (q_F^{1/2}) \big)$.
By \eqref{eq:10.17}, \eqref{eq:10.26} and \eqref{eq:10.23}, the same holds for 
\[
M(\varphi,\rho,q_F^{1/2})^{op} \text{ and } E (\varphi,\rho,q_F^{1/2})^{op} \in
\Mod_\fl\text{ - }\cH (\mf s^\vee,q_F^{1/2}).
\]
Then by Theorem \ref{thm:10.3}, $\pi (\varphi,\rho)$ is the unique
irreducible quotient of $\pi^{st} (\varphi,\rho)$.

(c) Let $\II_P^G (\tau \otimes \nu)$ be a standard representation in $\Rep^0 (G)_{ns}$.
Then $\tau \in \Irr^0 (M)_{ns}$ is tempered, thus by Lemma \ref{lem:11.3}, 
$\varphi_\tau \in \Phi (M)$ is bounded. The strict positivity of $\nu$ agrees with the notion 
in \cite{SiZi}, because the same parabolic subgroup $P \subset G$ is used on both sides.
Hence $(P,\varphi_\tau,\nu)$ is a triple as in \cite[Theorem 4.6]{SiZi}. By part (a), 
we have $\pi^{st} (\nu \varphi_\tau,\rho_\tau) \cong \II_P^G \big( \nu \otimes 
\pi (\varphi_\tau,\rho_\tau )\big) = \II_P^G (\nu \otimes \tau)$. \qedhere
\end{proof}

Finally, we deduce a corollary of our results on the $p$-adic Kazhdan--Lusztig conjecture from
\cite[Conjecture 8.11]{Vog}. It expresses the multiplicity of an irreducible representation
in a standard representation as the multiplicity of a local system in a perverse sheaf, 
both arising from enhanced L-parameters.

\begin{cor}\label{cor:11.9}
The $p$-adic Kazhdan--Lusztig conjecture holds for $\Rep^0 (G)_{ns}$.
\end{cor}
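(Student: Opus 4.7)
The plan is to translate the entire multiplicity question from the $p$-adic side to the Galois side via the chain of equivalences built up in Sections \ref{sec:equiv}--\ref{sec:LLC}, and then invoke a known Kazhdan--Lusztig theorem for the twisted graded Hecke algebras $\mh H\big(\mf s^\vee,\varphi_b,\log(q_F^{1/2})\big)$. Concretely, the multiplicity we must compute is
\[
\big[\II_P^G(\tau\otimes\nu) : \pi(\varphi,\rho)\big]
\]
for a standard representation $\II_P^G(\tau\otimes\nu) \in \Rep^0(G)_{ns}$ with Langlands data $(P,\tau,\nu)$, together with the image $\pi(\varphi,\rho)$ of any $(\varphi,\rho)\in\Phi_e^0(G)_{ns}$ under \eqref{eqn:thm:10.6}.

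First, I would decompose the problem along Bernstein blocks: both sides respect $\mf s$ and $\mf s^\vee$ by Lemma \ref{lem:11.2}, so it suffices to fix a block and work inside $\Rep_\fl(G)_{\mf s}\cong \Mod_\fl\text{-}\cH(\mf s^\vee,q_F^{1/2})$ via Theorem \ref{thm:10.3}. By Proposition \ref{prop:11.8}, the standard representation $\II_P^G(\tau\otimes\nu)$ corresponds to $\bar E(\varphi',\rho',q_F^{1/2})^{op}$ where $(\varphi',\rho')$ is the Langlands datum on the Galois side (with $\varphi'=z\varphi_b'$ strictly positive and $\rho'$ the enhancement of $\tau$), while $\pi(\varphi,\rho)$ corresponds to $\bar M(\varphi,\rho,q_F^{1/2})^{op}$ by the construction in Theorem \ref{thm:10.4}. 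Thus the multiplicity in question equals
\[
\big[\bar E(\varphi',\rho',q_F^{1/2})^{op} : \bar M(\varphi,\rho,q_F^{1/2})^{op}\big].
\]

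Next, I would pass to the graded Hecke algebra level. Using the reduction procedure from \cite{Lus-Gr} and the equivalence \eqref{eq:10.44}, the multiplicity of irreducibles in standards for $\cH(\mf s^\vee,q_F^{1/2})$ is computed by the corresponding multiplicity for $\mh H\big(\mf s^\vee,\varphi_b,\log(q_F^{1/2})\big)$-modules, i.e.\ by
\[
\big[E\big(\varphi,\rho^\vee,\log(q_F^{1/2})\big) : M\big(\varphi,\rho^\vee,\log(q_F^{1/2})\big)\big]
\]
after the dualization of \eqref{eq:10.17}. By the identifications \eqref{eq:10.18}--\eqref{eq:10.21}, these are precisely the standard and irreducible modules for the twisted graded Hecke algebra attached to $(\varphi_b,\rho_b)$, as constructed in \cite[\S 3]{AMS3} (with the modification of Lemma \ref{lem:9.8}). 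For such modules, the Kazhdan--Lusztig-type theorem from \cite{SolKL} (which generalizes the classical results of Lusztig and of Ciubotaru--Kato--Kato) expresses the multiplicity as the multiplicity of the equivariant local system attached to $(\varphi,\rho)$ inside the stalk (at $N_\varphi$) of a certain $\pi_0(S_{\varphi'}^+)$-equivariant perverse sheaf obtained by pushforward from a resolution of the variety of nilpotent elements sharing a given semisimple part $t_{\varphi'}$. This is exactly the statement formulated on the Galois side by Vogan in \cite[Conjecture 8.11]{Vog}, once one identifies his component groups with $\pi_0(S_\varphi^+)$.

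The main obstacle in writing this out carefully will be the bookkeeping required to match Vogan's geometric data with the geometric data that enters the construction of $E\big(\varphi,\rho,\log(q_F^{1/2})\big)$ and $M\big(\varphi,\rho,\log(q_F^{1/2})\big)$ in \cite{AMS2,AMS3,SolKL}: one has to verify that the parameter $t_\varphi$ from \eqref{eq:10.47}, the nilpotent element $N_\varphi$, and the equivariant local system $\rho$ really play the same role on both sides, and that the passage from left to right modules via \eqref{eq:10.17} (which replaces $\rho$ by $\rho^\vee$) is absorbed consistently on the geometric side by Verdier duality. Once this identification is in place, Corollary \ref{cor:11.9} follows by concatenating the equalities of multiplicities at each stage of the chain
\[
\Rep_\fl^0(G)_{ns}\;\cong\;\Mod_\fl\text{-}\cH(\mf s^\vee,q_F^{1/2})\;\cong\;\bigoplus_{(\varphi_b,\rho_b)}\Mod_{\fl,Z(\mf l^\vee)^{\mb W_F}_\R}\text{-}\mh H\big(\mf s^\vee,\varphi_b,\log(q_F^{1/2})\big)/W_{\mf s^\vee},
\]
together with the geometric Kazhdan--Lusztig theorem for twisted graded affine Hecke algebras.
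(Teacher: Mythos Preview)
Your proposal is correct and follows essentially the same approach as the paper: the paper's proof simply cites Theorems \ref{thm:10.7} and \ref{thm:10.6} together with \cite[\S 5]{SolKL} (in particular the proof of \cite[Theorem 5.4]{SolKL}), and your plan is a faithful unpacking of exactly that reference chain---transporting multiplicities through the equivalences of categories to $\cH(\mf s^\vee,q_F^{1/2})$ and then to the twisted graded Hecke algebras, where the geometric Kazhdan--Lusztig theorem from \cite{SolKL} applies.
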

\begin{proof}
This follows from Theorems \ref{thm:10.7} and \ref{thm:10.6} in combination with
\cite[\S 5]{SolKL} (in particular the proof of \cite[Theorem 5.4]{SolKL}).
\end{proof}

\appendix

\section{Splittings of some extensions on the \texorpdfstring{$p$}{p}-adic side} 
\label{sec:A}

In \S\ref{par:cocycle}, we will need generalizations of some results in \S 
\ref{par:2.split}. The extensions \eqref{eq:7.22}, \eqref{eq:7.19} and \eqref{eq:7.24} can
also be constructed with $N_G (L)$ instead of $L$, giving
\begin{align}\label{eq:7.35}
\begin{split}
&1 \to T \to N_{G^\flat}(L^\flat, T^\flat)_{\theta,[x]} \to 
W (N_{\mc G^\flat}(\mc L^\flat), \mc T^\flat) (F)_{\theta,[x]} \to 1 , \\
&1 \to T \to N_G (L, j T)_\theta \to 
W (N_{\mc G^\flat}(\mc L^\flat), \mc T^\flat) (F)_{\theta,[x]} \to 1 , \\
&1 \to T \to (\mc T^\flat \! \rtimes \! W (N_{\mc G^\flat}(\mc L^\flat),\mc T^\flat))_x (F)_\theta \to 
W (N_{\mc G^\flat}(\mc L^\flat), \mc T^\flat) (F)_{\theta,[x]} \to 1 .
\end{split}
\end{align}
Pushout along $\theta : T \to \C^\times$ gives extensions containing \eqref{eq:7.23}, 
\eqref{eq:7.20} and \eqref{eq:7.25} resp.~
\begin{equation}\label{eq:7.36}
\begin{array}{ccccccccc}
1 & \to & \C^\times & \to & \mc E_{\theta,G}^{0,[x]} & \to &  
W (N_{\mc G^\flat}(\mc L^\flat), \mc T^\flat) (F)_{\theta,[x]} & \to & 1 , \\
1 & \to & \C^\times & \to & \mc E_{\theta,G}^{[x]} & \to &  
W (N_{\mc G^\flat}(\mc L^\flat), \mc T^\flat) (F)_{\theta,[x]} & \to & 1 , \\
1 & \to & \C^\times & \to & \mc E_{\theta,G}^{\rtimes [x]} & \to &  
W (N_{\mc G^\flat}(\mc L^\flat), \mc T^\flat) (F)_{\theta,[x]} & \to & 1 .
\end{array}
\end{equation}
Arguments in \cite[\S 8]{Kal4} are written for extensions of finite groups by tori, they also apply to \eqref{eq:7.35} and \eqref{eq:7.36}. 
Thus, similar as in Lemma \ref{lem:7.10}, we conclude:

\begin{lem}\label{lem:7.2}~In \eqref{eq:7.35} and \eqref{eq:7.36}, 
the middle extension is the Baer sum of the other two.
\end{lem}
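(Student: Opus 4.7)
The plan is to directly adapt the argument of Lemma \ref{lem:7.10}, whose proof already imports the strategy of \cite[Proposition 8.2]{Kal4} verbatim. The only new input needed is that all the constructions in that proof make sense with $N_G(L, jT)$ (resp.\ $N_{\mc G^\flat}(\mc L^\flat, \mc T^\flat)$) in place of $L$ (resp.\ $\mc L^\flat$), and that equivariance/naturality is preserved.

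First, I would establish part (a), i.e.\ the Baer sum identity for the three extensions in \eqref{eq:7.35}. Following \cite[proof of Proposition 8.2]{Kal4}, one chooses a setwise splitting $s_1$ of the first extension (using that $G^\flat$ is quasi-split, so a suitable Kostant section or standard choice of representative exists) and a setwise splitting $s_3$ of the third extension (this one is easy, as the third extension is a twisted form of a split semidirect product, so has a canonical setwise splitting coming from the semidirect product description). The cocycle $c_i \in Z^2(W (N_{\mc G^\flat}(\mc L^\flat), \mc T^\flat) (F)_{\theta,[x]}, T)$ measuring the failure of $s_i$ to be a group homomorphism is then recorded in the standard way, and one shows that $c_1 \cdot c_3$ classifies the middle extension. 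The key observation -- already used in \cite{Kal4} for the $L$-version -- is that the map $s_1 \cdot s_3$ is a setwise splitting of the middle extension: this is a pointwise computation in $N_G(L, jT)_\theta$, and it goes through verbatim because $N_G(L, jT)$ normalizes both $T$ and $jT$, and the multiplication in $N_G(L, jT)_\theta$ is built from the same commutation relations used in \cite[Proposition 8.2]{Kal4}.

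Next, part (b) for \eqref{eq:7.36} follows from part (a) by functoriality of Baer sums: pushout along the homomorphism $\theta : T \to \C^\times$ is additive on the group $\mr{Ext}^1\!\big(W (N_{\mc G^\flat}(\mc L^\flat), \mc T^\flat) (F)_{\theta,[x]}, -\big)$, so it sends the Baer sum of two extensions to the Baer sum of their pushouts. Applying this to the three extensions in \eqref{eq:7.35} yields the corresponding Baer sum identity for \eqref{eq:7.36}.

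The main obstacle I anticipate is verifying that Kaletha's bookkeeping for the setwise splittings survives the replacement of $L$ (resp.\ $\mc L^\flat$) by $N_G(L)$ (resp.\ $N_{\mc G^\flat}(\mc L^\flat)$). Concretely, one must check that the choice of $s_3$ coming from the semidirect product description remains well-defined -- i.e.\ that elements of $W(N_{\mc G^\flat}(\mc L^\flat), \mc T^\flat)(F)_{\theta,[x]}$ which do not lie in $W(\mc L^\flat, \mc T^\flat)(F)_{\theta,[x]}$ can still be represented by elements of $N_{\mc G^\flat}(\mc L^\flat, \mc T^\flat)$ in a way compatible with the form $x$. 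This is essentially the content of the observation, already used in \eqref{eq:7.13}, that $W(N_{\mc G^\flat}(\mc L^\flat), \mc T^\flat)(F)$ coincides with $(W(N_{\mc G^\flat}(\mc L^\flat), \mc T^\flat) / W(\mc L^\flat, \mc T^\flat))(F) \cdot W(\mc L^\flat, \mc T^\flat)(F)$, so the splitting can be chosen one $W(\mc L^\flat, \mc T^\flat)(F)$-coset at a time. Once this is in place, the remainder of the proof is identical to \cite[Proposition 8.2]{Kal4}, and no new calculation is required.
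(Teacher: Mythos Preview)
Your proposal is correct and follows essentially the same approach as the paper: the paper's proof is just the observation that the arguments of \cite[\S 8]{Kal4} (already invoked for Lemma~\ref{lem:7.10}) apply verbatim to the larger extensions \eqref{eq:7.35} and \eqref{eq:7.36}, and you have spelled out exactly what that entails. Your discussion of the potential obstacle is more careful than the paper's one-line justification, but note that for the bare Baer-sum identity no special choice of setwise splitting is needed---any setwise section will do, since the pointwise verification that the product $s_1 \cdot s_3$ splits the middle extension is formal and does not use any structural feature of $L$ versus $N_G(L)$.
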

The first extensions in \eqref{eq:7.35} and \eqref{eq:7.36} have the following analogues without
restricting to the stabilizer of $[x] \in H^1 (\mc E, \mc Z \to \mc T)$:\label{i:66}
\begin{align}\label{eqn:extension-EthetaG0}
\begin{split}
&1 
\to 
T 
\to 
N_{G^\flat}(L^\flat, T^\flat)_\theta 
\to 
W (N_{\mc G^\flat}(\mc L^\flat), \mc T^\flat) (F)_\theta  
\to 
1 , \\
&1 
\to 
\C^\times 
\to 
\mc E_{\theta,G}^0 
\to 
W (N_{\mc G^\flat}(\mc L^\flat), \mc T^\flat) (F)_\theta 
\to 
1 .
\end{split}
\end{align}
We have the following analogue of Proposition \ref{prop:7.9}.
\begin{prop}\label{prop:7.3}
The extension $\mc E_{\theta,G}^0$ from \eqref{eqn:extension-EthetaG0} splits.
\end{prop}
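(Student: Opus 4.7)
The plan is to follow exactly the reductions and case analysis used in the proof of Proposition \ref{prop:7.9}, observing that the arguments of that proof in fact yield splittings of the larger extension required here, not merely equivariant splittings of the extension attached to $W(\mc L^\flat,\mc T^\flat)(F)_\theta$. First, by the same reduction via the parahoric subgroup $P_y^\flat$ at a special vertex $y$ (from Lemma \ref{lem:7.14}(b)), the problem reduces to splitting, after pushout along $\theta_\ff$, the finite-group extension
\[
1 \to \mc T_\ff (k_F) \to N_{\mc G_y^\circ}(\mc L_y^\circ,\mc T_\ff)(k_F)_{\theta_\ff}
\to W(N_{\mc G_y^\circ}(\mc L_y^\circ), \mc T_\ff)(k_F)_{\theta_\ff} \to 1 .
\]

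Next, I would pass to the simply connected cover of $(\mc G_y^\circ)_{\der}$ and decompose into almost simple factors $\mc L_{y,i}$ together with their stabilizers $\Gamma_i \subset \Gamma$, exactly as in Proposition \ref{prop:7.9}. The task then becomes to split \eqref{eq:7.46} after pushout along $\theta_{\ff,i}$, in the case where $\mc L_{y,i}$ is absolutely simple and simply connected, which brings us into Cases I--V of Proposition \ref{prop:7.9}.

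The crucial observation is that each of those cases actually establishes the equality
\[
N_{\mc L_{y,i}^+}(\mc T_{\ff,i})(k_F)_{\theta_\ff} = N_{\mc L_{y,i}}(\mc T_{\ff,i})(k_F)_{\theta_\ff},
\]
which identifies \eqref{eq:7.46} with \eqref{eq:7.47} and makes the splittings already constructed in the proof of Proposition \ref{prop:7.9} sufficient for our purpose. This is immediate in Case I; in Cases II and V it follows from the observation that the exceptional diagram automorphisms fail to commute with $W(\mc L_{y,i},\mc T_{\ff,i})(k_F)_{\theta_\ff}$ and so are excluded via Lemma \ref{lem:7.15}; in Case III the equality is asserted explicitly; and in Case IV it follows from realizing $\epsilon_n$ as conjugation by an element $s_n \in \mc L_{y,i}(k_F)$ produced in that case.

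The main obstacle will be the careful re-examination of Case IV, where one must verify not only that the elements $s_1, s_n$ produced in the proof of Proposition \ref{prop:7.9} satisfy $\epsilon_n = \mathrm{Ad}(s_n)$ on $\mc L_y^\circ$, but also that they genuinely lie in $N_{\mc L_{y,i}}(\mc T_{\ff,i})(k_F)_{\theta_\ff}$ rather than merely in the bigger group, so that the splitting of \eqref{eq:7.47} constructed there is already a splitting of \eqref{eq:7.46}. This requires the detailed $\Fr$-action analysis from the proof of Proposition \ref{prop:7.9}, together with the fact that $s_1^2 = s_n^{-2} \in Z(\mc L_{y,i})(k_F)$ is trivial upon applying $\theta_{\ff,i}$ under the relevant stabilizer condition.
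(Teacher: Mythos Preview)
Your proposal has a genuine gap: the key claim that Cases I--V of Proposition~\ref{prop:7.9} establish the equality
\[
N_{\mc L_{y,i}^+}(\mc T_{\ff,i})(k_F)_{\theta_\ff} = N_{\mc L_{y,i}}(\mc T_{\ff,i})(k_F)_{\theta_\ff}
\]
is false. Already in Case~IV (type $D_n$), the paper's analysis gives $W(\mc L_{y,i}^+,\mc T_{\ff,i})_{\theta_{\ff,i}} = \langle \epsilon_1, \epsilon_n \rangle$ while $W(\mc L_{y,i},\mc T_{\ff,i})_{\theta_{\ff,i}} = \langle \epsilon_1 \epsilon_n \rangle$; the reflection $\epsilon_n$ is not in $W(D_n)$, so no element of $N_{\mc L_{y,i}}(\mc T_{\ff,i})$ can represent it. The phrase ``$\epsilon_n$ acts as conjugation by $s_n$'' in the paper refers to the action of $\epsilon_n$ on $\mc L_y^\circ$, not to $s_n$ lying in $\mc L_{y,i}$ and representing $\epsilon_n$ in the Weyl group. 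So \eqref{eq:A.46} is strictly larger than \eqref{eq:7.47} here, and one needs a separate lift of $\epsilon_n$; this is the paper's Case~C.

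More seriously, your reduction misses entire new families of cases. In Proposition~\ref{prop:7.9} the task was to show \emph{equivariance} of a splitting of \eqref{eq:7.48}, so whenever $W(\mc L_{y,i},\mc T_{\ff,i})(k_F)_{\theta_\ff}$ is trivial there was nothing to prove. For the present proposition one must split the larger extension \eqref{eq:7.58} over $W(\mc L_{y,i}^+,\mc T_{\ff,i})(k_F)_{\theta_\ff}$, which can be highly nontrivial even when the smaller stabilizer vanishes. This is exactly the content of the paper's Cases~A (where $\mc L_{y,i}$ is a product of $d>1$ type-$A$ factors permuted transitively, and one must lift the resulting $S_d^{\Fr}$ via Tits lifts) and~B (where diagram automorphisms of $D_4$ appear without any contribution from $W(\mc L_{y,i},\mc T_{\ff,i})_{\theta_\ff}$). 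Finally, your decomposition omits the factor coming from $Z_{\mc G_y^\circ}(\mc L_{y,\der}^\circ)^+$ in \eqref{eq:7.45}: elements of $W(N_{\mc G_y^\circ}(\mc L_y^\circ),\mc T_\ff)_{\theta_\ff}$ act nontrivially on $Z(\mc L_y^\circ)^\circ$, and splitting the corresponding extension \eqref{eq:7.59} requires a completely different argument (the paper uses a Whittaker-datum normalization to pin down canonical lifts).
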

\begin{proof}
As in Proposition \ref{prop:7.9}, it suffices to
construct a setwise splitting of \eqref{eq:7.40}, which upon pushout along
$\theta_\ff : \mc T_\ff (k_F) \to \C^\times$ becomes a groupwise splitting of
\begin{equation}\label{eq:7.52}
1 \to \C^\times \to \mc E_{\theta_\ff, \mc G_y^\circ}^0 \to 
W(N_{\mc G_y^\circ}(\mc L_y^\circ), \mc T_\ff) (k_F)_{\theta_\ff} \to 1 .
\end{equation}
As the notation $\mc E_{\theta_\ff, \mc G_y^\circ}^0$ suggests, this extension
is the analogue of $\mc E_{\theta,G}^0$ for the finite reductive groups
$\mc G_y^\circ (k_F)$, $\mc L_y^\circ (k_F)$ and $\mc T_\ff (k_F)$.
By a standard argument analogous to that in the proof of Proposition 
\ref{prop:7.9}, one reduces further to the cases where $\mc G_y^\circ$ is
simply connected and absolutely simple. From the proof of Proposition \ref{prop:7.9}
we recall the groups $\mc L_{y,i} \subset \mc L_{y,i}^+$ and
$Z_{\mc G_y^\circ}(\mc L_{y,\der}^\circ)^\circ \subset 
Z_{\mc G_y^\circ}(\mc L_{y,\der}^\circ)^+$, 
the embedding \eqref{eq:7.45} and the extension
\begin{equation}\label{eq:A.46}
1 \to \mc T_{\ff,i}(k_F) \to N_{\mc L_{y,i}^+}(\mc T_{\ff,i})(k_F)_{\theta_\ff} \to
W (\mc L_{y,i}^+, \mc T_{\ff,i})(k_F)_{\theta_\ff} \to 1,
\end{equation}
We write the pushout of the extension \eqref{eq:A.46} along $\theta_\ff$ as
\begin{equation}\label{eq:7.58}
1 \to \C^\times \to \mc E_{\theta_\ff, \mc L_{y,i}^+}^0 \to 
W(\mc L_{y,i}^+, \mc T_{\ff,i})(k_F)_{\theta_\ff} \to 1 .
\end{equation}
Let $W(Z_{\mc G_y^\circ}(\mc L_{y,\der}^\circ)^+,
Z(\mc L_y^\circ)^\circ)_{\theta_\ff}$ be the image of 
$W(N_{\mc G_y^\circ} (\mc L_y^\circ), \mc T_\ff)_{\theta_\ff}$ under
\eqref{eq:7.45} followed by projection onto the first factor.~Similar to \eqref{eq:7.40}
and \eqref{eq:A.46}, we construct 
\begin{equation}\label{eq:7.53}
1 \to Z(\mc L_y^\circ)^\circ (k_F) \to 
N_{Z_{\mc G_y^\circ}(\mc L_{y,\der}^\circ)^+}(\mc Z (\mc L_y^\circ)^\circ)
(k_F)_{\theta_\ff} \to W(Z_{\mc G_y^\circ}(\mc L_{y,\der}^\circ)^+,
Z(\mc L_y^\circ)^\circ)_{\theta_\ff} \to 1,
\end{equation}
whose pushout along $\theta_\ff$ gives
\begin{equation}\label{eq:7.59}
1 \to \C^\times \to \mc E_{\theta_\ff, Z_{\mc G_y^\circ}(\mc L_{y,\der}^\circ)^+}^0 
\to W(Z_{\mc G_y^\circ}(\mc L_{y,\der}^\circ)^+, 
\mc T_{\ff,i})(k_F)_{\theta_\ff} \to 1 .    
\end{equation}
We claim that it suffices to construct setwise splittings of \eqref{eq:A.46}
(for each $i$) and of \eqref{eq:7.53}, such that upon pushout along 
$\theta_\ff$ they become groupwise splittings of \eqref{eq:7.58} and \eqref{eq:7.59}. 
More precisely, in this case $\theta_{\ff,i}$ extends to 
$N_{\mc L_{y,i}^+}(\mc T_\ff)(k_F)_{\theta_\ff}$
and hence to $N_{\mc L_{y,i} \rtimes \Gamma}(\mc T_{\ff,i})(k_F)_{\theta_\ff}$.
Doing this for all $i$ gives us an extension of $\theta_\ff$ from 
$(\mc T_\ff \cap \mc L_{y,\der}^\circ)(k_F)$ to 
$\prod\nolimits_i \, N_{\mc L_{y,i} \rtimes \Gamma}(\mc T_{\ff,i})(k_F)_{\theta_\ff}$. 
Similarly, $\theta_{\ff,Z} := \theta_\ff |_{Z(\mc L_y^\circ)^\circ (k_F)}$ 
extends to $N_{Z_{\mc G_y^\circ}(\mc L_{y,\der}^\circ) \rtimes \Gamma}
(\mc Z (\mc L_y^\circ)^\circ) (k_F)_{\theta_\ff}$. These extensions combine 
to a character $\theta_\ff^+$ of 
\begin{equation}\label{eq:7.54}
\frac{N_{Z_{\mc G_y^\circ}(\mc L_{y,\der}^\circ) \rtimes \Gamma}
(\mc Z (\mc L_y^\circ)^\circ) (k_F)_{\theta_\ff} \times \prod\nolimits_i \, 
N_{\mc L_{y,i} \rtimes \Gamma}(\mc T_{\ff,i})(k_F)_{\theta_\ff}}{\{ 
(z,z^{-1}) : z \in (Z(\mc L_y^\circ)^\circ \cap \mc L_{y,\der}^\circ)(k_F)\}}, 
\end{equation}
which extends $\theta_\ff$.~Since $N_{\mc G_y^\circ} (\mc L_y^\circ, \mc T_\ff)
(k_F)_{\theta_\ff}$ embeds in \eqref{eq:7.54}, we obtain an 
extension of $\theta_\ff$ in \eqref{eq:7.40}.~This gives a groupwise 
splitting of \eqref{eq:7.52} and hence our claim.

By \eqref{eq:7.44}, $W(\mc G_y^\circ,\mc T_\ff)_{\theta_\ff}$ 
is a semidirect product of a normal subgroup and a complementary part that embeds in
$X^* (\mc T_\ff) / \Z R (\mc G_y^\circ,\mc T_\ff)$. By the simplicity of 
$\mc G_y^\circ$, that complement is cyclic or isomorphic to the Klein four group.
The group $W(\mc L_y^\circ,T_\ff )_{\theta_\ff}$ embeds in
$W(\mc G_y^\circ,T_\ff)_{\theta_\ff}$. By the non-singularity of $\theta_\ff$, we know that 
$W(\mc L_y^\circ,T_\ff )_{\theta_\ff}$ intersects the reflection
subgroup of $W(\mc G_y^\circ,T_\ff)_{\theta_\ff}$ trivially. Hence 
\begin{equation}\label{eq:7.55}
W(\mc L_{y,i},T_{\ff,i} )_{\theta_\ff} \text{ embeds in }
X^* (\mc T_\ff) / Z R (\mc G_y^\circ,T_\ff).
\end{equation}
By the simplicity of $\mc G_y^\circ$, the right hand side of \eqref{eq:7.55} is either cyclic or 
isomorphic to the Klein four group.

By the classification of irreducible root systems and parabolic subsystems, 
one deduces that each $\mc L_{y,i}$ is either simple or a direct product of
simply connected groups of type $A_{n-1}$. This leads to three cases of 
\eqref{eq:A.46} and \eqref{eq:7.58}, treated below.\\

\noindent
\textit{Case A. $\mc L_{y,i}$ is a product of $d > 1$ simple factors
$\mc M_i$ of type $A_{n-1}$, permuted transitively by 
$N_{\mc G_y^\circ}(\mc L_y^\circ) (k_F)_{\theta_\ff} \times \mb W_F$.} \\
All these $\mc M_i$'s are isomorphic to $\SL_n$. Let $\Fr^{d'}$ be the 
smallest power of $\Fr$ that stabilizes all the $\mc M_i$'s (or equivalently
one of them). Then $\Fr^{d'}$ acts on $\mc M_i$ as raising to the $q_F^{d'}$-th power 
composed with an elliptic element $F_A$ of $W(A_{n-1}) \cong S_n$
(by classification and by the ellipticity of $\mc T_\ff$). Every elliptic 
element of $S_n$ is an $n$-cycle, and by adjusting the coordinates in
$\mc M_i \cong \SL_n$ we can achieve that $F_A \in \GL_n$ is the product
of the matrix of the permutation $(1 \, 2 \ldots n)$ with a scalar matrix.

By \eqref{eq:7.55}, the group $W(\mc L_{y,i}, \mc T_{\ff,i})_{\theta_\ff}$
comes from at most two simple factors $\mc M_i$ of $\mc L_{y,i}$.~By the transitive 
action on the set of simple factors of $\mc L_{y,i}$, each such $\mc M_i$ 
contributes the same number of elements to $W(\mc L_{y,i}, 
\mc T_{\ff,i})_{\theta_\ff}$. If $\mc L_{y,i}$ has two simple factors which both 
contribute, then \eqref{eq:7.55} is 
not cyclic, which implies that $\mc G_y^\circ$ has type $D_{2n}$. But in this case, the
elements of $W(\mc G_y^\circ,\mc T_\ff)_{\theta_\ff}$ outside its reflection subgroup 
do not come from any type-$A$ Levi subgroup of $\mc G_y^\circ$. Indeed, the Weyl 
group of a maximal type-$A$ Levi subgroup of $\mc G_y^\circ$ is $S_{2n}$,
but any expression of an element of $W(\mc G_y^\circ,\mc T_\ff)_{\theta_\ff}$ 
outside its reflection subgroup contains some of the sign reflections in $W(D_{2n}) \cong S_{2n} \ltimes \{\pm 1\}^{2n}$. This shows that $W(\mc L_{y,i}, \mc T_{\ff,i})_{\theta_\ff}$ is trivial.

Consider two simple factors $\mc M_1, \mc M_2$ of $\mc L_{y,i}$.~The Dynkin diagram 
of $\mc M_1 \times \mc M_2$ embeds into a connected type-$A$ sub-diagram $J$ of the 
Dynkin diagram of $\mc G_y^\circ$. In the Weyl group generated by $J$, 
we can find an element of $N_{\mc G_y^\circ}(\mc L_{y,i})$ that exchanges $\mc M_1$ 
with $\mc M_2$ and stabilizes the other $\mc M_i$'s.
Hence $W (\mc L_{y,i}^+,\mc T_{\ff,i})_{\theta_\ff}$ surjects onto $S_d$ (viewed as
the permutation group of the set of simple factors $\mc M_i$). 

We take a closer look at the reflections in this $S_d$.~It suffices to consider
the transposition $s_{12}$ that exchanges $\mc M_1$ and $\mc M_2$.~The Levi subgroup of 
$\mc G_y^\circ$ generated by $J$ is simple of type $A$, and it contains finite covers of 
$\mc M_1$ and $\mc M_2$ as Levi subgroups.~In terms of the coordinates of 
$\mc M_i \cong \SL_n$, $s_{12}$ is the product of $n$ commuting reflections $s_\alpha$, 
permuted cyclically by $F_A$ and each sending one coordinate for $\mc M_1$ to one 
coordinate for $\mc M_2$.~Since $s_\alpha$ fixes $\theta_\ff$ and the two coordinates 
exchanged by $\alpha$ have no relations (like they would in $\SL_2$), we have $\langle \alpha^\vee , \theta_\ff \rangle = 0$. Let $\tilde s_\alpha$ be a Tits lift of $s_\alpha$ with respect to some pinning
\cite[\S 4]{Tits}.~This element is canonical up to the image of $\alpha^\vee$, and 
$\langle \alpha^\vee , \theta_\ff \rangle = 0$ implies that $\tilde s_\alpha$ becomes 
canonical after pushout along $\theta_\ff$.~Set 
\[
\tilde s_{\Fr^{d'm} (\alpha)} := \Fr^{d'm} \tilde s_\alpha \Fr^{-d'm} \quad \text{and} 
\quad \tilde s_{12} := \prod\nolimits_{m=0}^{n-1} \, \tilde s_{\Fr^{d'm} (\alpha)} .
\]
Then $\tilde s_{12}$ is fixed by $\Fr^{d'}$ and 
\begin{equation}\label{eq:7.TL}
{\tilde s_{12}}^2 = \prod\nolimits_{m=0}^{n-1} \, \Fr^{d'm} (\alpha^\vee (-1)) 
\; \in \; \ker (\theta_\ff) \cap Z(\mc L_{y,i})^{\Fr^{d'}},
\end{equation}
so ${\tilde s_{12}}^2$ becomes trivial upon pushout along $\theta_\ff$.
The same construction can be carried out for any simple reflection $s_{i, \,i+1}$ 
in $S_d$. By the length-multiplicativity of Tits lifts \cite[Proposition 9.3.2]{Spr} these 
$\tilde s_{i, \, i+1}$ give rise to a map 
\begin{equation}\label{eq:7.57}
S_d \to \big( \mc L_{y,i}^+ \big)^{\Fr^{d'}} ,
\end{equation}
which by \eqref{eq:7.TL} yields a homomorphism $S_d \to \mc E^0_{\theta_\ff, \mc L_{y,i}^+}$.

Recall that $d'$ is at most the order of the action of $\Fr$ on $\mc G_y^\circ$, 
so it is at most 3. By the assumptions of case A, every simple factor of $\mc L_{y,i}$   
lies in a $\Fr$-orbit consisting of precisely $d'$ many of the $\mc M_i$'s. The centralizer 
of $\Fr$ in $S_d$ is a semidirect product of two subgroups:
\begin{itemize}
\item[(i)] the normal subgroup $N (S_d,\Fr)$ generated by the cycles of $\Fr$,\\ 
e.g.~$\langle (123), (456) \rangle$ if $\Fr$ acts by $(123)(456)$;
\item[(ii)] a subgroup $\Gamma (S_d,\Fr) \cong S_{d/d'}$ that permutes the various 
$\Fr$-orbits of $\mc M_i$'s (where the coordinates of each $\mc M_i$ are ordered 
in a cycle given by $F_A$), e.g.~$(14)(25)(36)$ if $\Fr$ acts by $(123)(456)$. 
\end{itemize}

We order the $\mc M_i$'s so that each $\Fr$-orbit forms one string of 
consecutive entries. Then the set of those $\tilde s_\alpha$ (as above) 
such that $s_\alpha$ only permutes one $\Fr$-orbit, can be constructed in a
$\Fr$-equivariant way. This allows us to make the image of $N(S_d,\Fr)$ under 
\eqref{eq:7.57} $\Fr$-invariant. 

A transposition $h$ in $\Gamma (S_d,\Fr)$ is formed of a product of $d' n$ commuting
reflections, each of the form $\Fr^m s_\alpha \Fr^{-m} = s_{\Fr^m (\alpha)}$.
Analogous to the construction of $\tilde s_\alpha$ above, we can construct
a $\Fr$-invariant lifting $\tilde h$ of $h$. Using these $\tilde h$, we can make the
image of $\Gamma (S_d,\Fr)$ under \eqref{eq:7.57} $\Fr$-invariant. 

This provides the desired lift of $S_d^\Fr$, which gives a splitting of
\eqref{eq:7.58}.\\

\noindent \textit{Case B. $\mc L_{y,i}$ is $\Fr$-stable and
$W(\mc L_{y,i}, T_{\ff,i})_{\theta_\ff} = \{1\}$}. \\
Now $W (\mc L_{y,i}^+, \mc T_{\ff,i})_{\theta_\ff}$ injects into the group of 
diagram automorphisms of $\mc L_{y,i}$. If $W (\mc L_{y,i}^+, 
\mc T_{\ff,i})_{\theta_\ff}$ is cyclic, then there exists a splitting of 
\eqref{eq:7.58}. This group can only be non-cyclic if $\mc L_{y,i}$ has type 
$D_4$ and $W(N_{\mc G_y^\circ}(\mc L_{y,i}),T_\ff)_{\theta_\ff}$
surjects onto the group of outer automorphisms Out$(D_4) \cong S_3$. 

We represent $\theta_{\ff,i}$ by an element $\tilde \theta_{\ff,i}$ of the
fundamental alcove for $W_\af (D_4)$ in $X^* (\mc T_{\ff,i}) \otimes_\Z \R$. Then the 
stabilizer of $\tilde \theta_{\ff,i}$ in $W(D_4) \rtimes \mr{Out}(D_4)$ is isomorphic 
to the stabilizer of $\tilde \theta_{\ff,i}$ in $X^* (\mc T_{\ff,i}) \rtimes 
\mr{Aut}(W(D_4))$. The aforementioned shape of $W (\mc L_{y,i}^+, 
\mc T_{\ff,i})_{\theta_\ff}$ implies that it is generated by elements that stabilize 
the fundamental alcove.~The regularity of $\theta_{\ff,i}$ gives the regularity 
of $\tilde \theta_{\ff,i}$, in the sense that it does not lie on any hyperplane in 
the affine Coxeter complex.~Thus $W (\mc L_{y,i}^+, 
\mc T_{\ff,i})_{\theta_\ff}$ is isomorphic to the group Out$(D_4)$ of diagram 
automorphisms, which we view as a subgroup of $W(D_4) \rtimes \mr{Out}(D_4)$. 

Next we show that $W (\mc L_{y,i}^+, \mc T_{\ff,i})(k_F)_{\theta_\ff}$ is cyclic 
or that \eqref{eq:7.58} splits. If $\Fr$ acts on $\mc L_{y,i}$ by an outer 
automorphism, then Out$(D_4)^\Fr$ is cyclic of order 2 or 3.~Suppose $\Fr$ acts on 
$\mc L_{y,i}$ by an inner automorphism.~Its image in $W(D_4)$ 
is elliptic because $\mc T_{\ff,i}$ is elliptic.~The elliptic elements in $W(D_4) \cong S_4 \ltimes \{\pm 1\}^4$ are easily classified:
\begin{itemize}
\item[(i)] a product of two disjoint 2-cycles in $S_4$ times one sign change in both 
cycles, e.g. $(12)(34) \epsilon_1 \epsilon_4$,
\item[(ii)] a 3-cycle in $S_4$ times two sign changes, of which one outside the 3-cycle,
e.g. $(123) \epsilon_1 \epsilon_4$,
\item[(iii)] the central element $-1 = \epsilon_1 \epsilon_2 \epsilon_3 \epsilon_4$.
\end{itemize}
Elliptic elements of the first two kinds do not commute with the whole group 
Out$(D_4)$.~For such $\Fr$, $W (\mc L_{y,i}^+, \mc T_{\ff,i})(k_F)_{\theta_\ff}$ 
is a proper subgroup of Out$(D_4)$, and hence cyclic. Suppose that $\Fr$ acts via a lift of $-1 \in W(D_4)$ to $\mc L_{y,i}^+$.~The character lattice 
\[
X_* (\mc T_{\ff,i}) = \{ x \in \Z^4 : x_1 + x_2 + x_3 + x_4 \text{ is even} \} 
\]
is spanned by the standard basis 
$\{e_1 - e_2, e_2 - e_3, e_3 - e_4, e_3 + e_4\}$ of the root system $D_4$.
Here $e_2 - e_3$ is the central node in the Dynkin diagram of type $D_4$.~The given 
$\Fr$-action implies that $\mc T_{\ff,i}(k_F)$ is a direct product
of 4 copies of the unitary group $U_1 (k_F)$, where the cocharacter lattice of
each copy is spanned by one of the simple roots.~Accordingly, the character
$\theta_{\ff,i}$ has four coordinates, each a character of $U_1 (k_F)$.~Since 
$\theta_{\ff,i}$ is stable under Out$(D_4)$, its coordinates associated
to the simple roots $e_1 - e_2$, $e_3 - e_4$ and $e_3 + e_4$ are equal.~The 
cocharacters $\Z (e_2 - e_3)$ are fixed by Out$(D_4)$, so we may ignore them in our 
analysis.~Then we are in a situation like Case A, with three simply connected
groups of type $A_1$ permuted transitively by $W(\mc L_{y,i}^+,
\mc T_{\ff,i})_{\theta_\ff}$. Same as in Case A, we produce
liftings $\tilde s_\alpha$ and $\tilde s_{12}$, which combine to a splitting of
\eqref{eq:A.46}.\\

\noindent \textit{Case C. $\mc L_{y,i}$ is $\mb W_F$-stable and
$W(\mc L_{y,i}, \mc T_{\ff,i})_{\theta_\ff} \neq \{1\}$}. \\
Cases I--V in the proof of Proposition \ref{prop:7.9} show that
\[
W (\mc L_{y,i}^+, \mc T_{\ff,i})_{\theta_\ff} \cong 
W(\mc L_{y,i}, \mc T_{\ff,i})_{\theta_\ff} \times N, 
\]
where $N = \{1\}$ unless $\mc L_{\ff,i}$ has type $D_n$ (case IV) in which case $N$ may have 
two elements. In case IV, the proof of Proposition \ref{prop:7.9} already
gives a $W(\mc L_{y,i}^+,\mc T_{\ff,i})_{\theta_\ff}$-equivariant splitting
on $W(\mc L_{y,i}, \mc T_{\ff,i})(k_F)_{\theta_\ff}$ in \eqref{eq:A.46}. 
It remains to find a splitting for $N$, when its $k_F$-points have order two. 
Since $N$ is cyclic, its generator $\epsilon_n$ can always be represented 
by an element of $N_{\mc L_{y,i}^+}(\mc T_{\ff,i})(k_F)_{\theta_\ff}$
whose order reduces to two upon pushout along $\theta_\ff$. \\

Now that we have treated the case of \eqref{eq:A.46}, we next treat \eqref{eq:7.53} and 
\eqref{eq:7.59}.~Since the $\Fr$-action on $\mc T_\ff$ came from 
$\mr{Aut}(\mc L_y^\circ)$, $\Fr$ acts on 
$Z (\mc L_y^\circ)$ and on $Z_{\mc G_y^\circ} (\mc L_{y,\der}^\circ)^+$ just
via the automorphism used to define $\mc G_y^\circ$ as a $k_F$-group.~In particular,
$Z(\mc L_y^\circ)^\circ$ is a maximal, maximally $k_F$-split torus of
$Z_{\mc G_y^\circ} (\mc L_{y,\der}^\circ)^\circ$.~Let $\cH$ be the simply
connected cover of $Z_{\mc G_y^\circ} (\mc L_{y,\der}^\circ)^\circ$ and set
$\cH^+ := \cH \rtimes \Gamma_Z$.~The $\Fr$-action on 
$Z_{\mc G_y^\circ} (\mc L_{y,\der}^\circ)^+$ lifts canonically to $\cH^+$,
making it a $k_F$-group.~Let $\mc T_\cH$ be the inverse image of
$Z(\mc L_y^\circ)^\circ$ in $\cH$.~We choose a $\Fr$-stable Borel subgroup
$\mc B = \mc T_\cH \mc U_\cH$ of $\cH$ and enhance $(\mc T_\cH, \mc B)$ to a 
$\Fr$-stable pinning of $\cH$.~Without changing the group $\cH^+ (k_F)$, we may 
replace $\Gamma_Z$ by the isomorphic ($\Fr$-stable) subgroup of $\mr{Aut}(\cH)$ 
that stabilizes the chosen pinning. Via the canonical map $\cH \to Z_{\mc G_y^\circ} (\mc L_{y,\der}^\circ)^\circ$,
we may pull back $\theta_\ff$ to a character $\theta_\cH$ of $\mc T_\cH$.
We can obtain \eqref{eq:7.59} from the extension
\begin{equation}\label{eq:7.60}
1 \to \mc T_\cH (k_F) \to N_{\cH^+} (\mc T_\cH) (k_F)_{\theta_\cH}
\to W(\cH^+, \mc T_\cH)(k_F)_{\theta_\cH} \to 1 
\end{equation}
in two steps, i.e.~first we push out along $\theta_\cH$
to produce the extension
\begin{equation}\label{eq:7.61}
1 \to \C^\times \to \mc E_{\theta_\cH, \cH^+}^0 \to  
W(\cH^+, \mc T_\cH)(k_F)_{\theta_\cH} \to 1 ,
\end{equation}
which we then pull back along
\[
W(Z_{\mc G_y^\circ} (\mc L_{y,\der}^\circ)^+, Z(\mc L_y^\circ)^\circ)
(k_F)_{\theta_\ff} \to W(\cH^+, \mc T^\cH)(k_F)_{\theta_\cH}.
\]
Hence we may replace \eqref{eq:7.53} by \eqref{eq:7.60}, and we need to find
a splitting of \eqref{eq:7.61}. Note that $\theta_\cH$ can be an arbitrary
character of $\mc T_\cH (k_F)$, the non-singularity of $\theta_\ff$ for
$\mc L_y^\circ$ does not impose restrictions on  $\theta_\ff$ for $Z(\mc L_y^\circ)^\circ$.

By \cite[\S 2.7]{Kal3}, the $\cH (k_F)$-endomorphism algebra of
$\ind_{\mc B (k_F)}^{\cH (k_F)} (\theta_\cH)$ is the twisted group algebra
$\C [W(\cH, \mc T_\cH)(k_F)_{\theta_\cH}, \natural_\cH]$ associated to the
extension $\mc E_{\theta_\cH,\cH}^0$.~Let $\xi_\cH : \mc U (k_F) \to \C^\times$ 
be a non-degenerate character.~By adjointness of parabolic induction and 
restriction for finite reductive groups \cite[Proposition 3.1.10]{GeMa}, we compute
\begin{equation}\label{eq:7.63}
\begin{aligned}
\Hom_{\mc U (k_F)} \big( \xi_\cH, \ind_{\mc B (k_F)}^{\cH (k_F)} (\theta_\cH) \big) 
& \cong \Hom_{\cH (k_F)} (\ind_{\mc U (k_F)}^{\cH (k_F)}(\xi_\cH), 
\ind_{\mc B (k_F)}^{\cH (k_F)} (\theta_\cH) \big) \\
& \cong \Hom_{\mc T_\cH (k_F)} \big( *\!R_{\mc B}^\cH 
\ind_{\mc U (k_F)}^{\cH (k_F)}(\xi_\cH), \theta_\cH \big).
\end{aligned}
\end{equation}
By \cite[Theorem 2.9]{DLM}, the right hand side of \eqref{eq:7.63} simplifies to
\[
\Hom_{\mc T_\cH (k_F)} \big( \ind_{\{e\}}^{\mc T_\cH (k_F)}(\xi_\cH), \theta_\cH \big)
\cong \Hom_{\{e\}} (\xi_\cH, \theta_\cH) \cong \C .
\]
Therefore, $\xi_\cH$ appears in $\ind_{\mc B (k_F)}^{\cH (k_F)} (\theta_\cH)$
with multiplicity one. We fix a nonzero vector $v_\xi \in 
\ind_{\mc B (k_F)}^{\cH (k_F)} (\theta_\cH)$ on which $\mc U (k_F)$ acts via
the character $\xi_\cH$. Every element of 
\[
\End_{\cH (k_F)} (\ind_{\mc B (k_F)}^{\cH (k_F)} (\theta_\cH)) \cong 
\C [W(\cH, \mc T_\cH)(k_F)_{\theta_\cH}, \natural_\cH]
\]
sends $v_\xi$ to the $\xi_\cH$-weight space of $\mc U (k_F)$, which is
$\C v_\xi$. Thus for every $w \in W(\mc H, \mc T_\cH)(k_F)_{\theta_\cH}$,
there exists a unique lift $\tilde w \in \mc E_{\theta_\cH, \cH}^0 \subset 
\End_{\cH (k_F)} (\ind_{\mc B (k_F)}^{\cH (k_F)} (\theta_\cH))$ that fixes $v_\xi$. The collection of these $\tilde w$ gives a group
homomorphism
\[
W(\mc H, \mc T_\cH)(k_F)_{\theta_\cH} \to \mc E_{\theta_\cH, \cH}^0 
\subset \mc E_{\theta_\cH, \cH^+}^0 
\]
that splits a part of \eqref{eq:7.61}. The only elements of $\Gamma_Z$ that appear in \eqref{eq:7.60} are those that
stabilize the orbit $W(\cH, \mc T_\cH)(k_F) \theta_\cH$ and are fixed by $\Fr$. Therefore,
we may assume without loss of generality that 
$\Gamma_Z = \Gamma^\Fr_{Z,W(\cH, \mc T_\cH)(k_F) \theta_\cH}$.

Since $\Gamma_Z$ stabilizes the pinning, the Whittaker datum $(\mc U (k_F),\xi_\cH)$
can be chosen so that it is fixed by $\Gamma_Z$. For example, if the pinning gives
isomorphisms $\mc U_\alpha \cong \mc G_a$ for simple roots $\alpha$, defined over
a finite field extension $k'$ of $k_F$, then we can take $\xi_\cH ((x_\alpha)_\alpha) = \xi' \big( \sum\nolimits_\alpha \, x_\alpha \big)$ where $x_\alpha \in k'$, for a nontrivial additive character $\xi' : k' \to \C^\times$.~Then $v_\xi$ is 
also an element of $\gamma \cdot \ind_{\mc B (k_F)}^{\cH (k_F)} (\theta_\cH)$ 
on which $\mc U (k_F)$ acts by the character $\xi_\cH$. 

By \cite[\S 2.7]{Kal3}, the $\cH^+ (k_F)$-endomorphism algebra of
$\pi_{\cH^+} := \ind_{\mc B (k_F)}^{\cH^+ (k_F)} (\theta_\cH)$ is isomorphic
to the twisted group algebra $\C [W(\cH^+, \mc T_\cH)(k_F)_{\theta_\cH}, 
\natural_\cH]$ associated to the extension \eqref{eq:7.61}. Moreover
\begin{equation}\label{eq:7.62}
\pi_{\cH^+} \cong \bigoplus\nolimits_{\gamma \in \Gamma_Z} \, \gamma \cdot \ind_{
\mc B (k_F)}^{\cH (k_F)} (\theta_\cH) \text{ as representations of } \cH (k_F) ,
\end{equation}
where $\gamma \cdot \ind_{\mc B (k_F)}^{\cH (k_F)} (\theta_\cH)$ is identified with
the subset of $\pi_{\cH^+}$ consisting of functions supported on $\gamma \cH (k_F)$.
Hence the $\xi_\cH$-weight space of $\mc U (k_F)$ in $\pi_{\cH^+}$ is
$\bigoplus\nolimits_{\gamma \in \Gamma_Z} \, \pi_{\cH^+}(\gamma) \C v_\xi$.
The elements of $\mc E_{\theta_\cH, \cH^+}^0 \subset \C [W(\cH^+, \mc T_\cH)(k_F)_{\theta_\cH}, 
\natural_\cH] \cong \End_{\cH^+ (k_F)} (\pi_{\cH^+})$ permute the terms in the decomposition \eqref{eq:7.62}, according to their 
images in $\Gamma_Z$.~Take $w \in W(\mc H^+, \mc T_\cH)(k_F)_{\theta_\cH}$ 
and write it as $w = \gamma w_0 \text{ with } \gamma \in \Gamma_Z \text{ and }
w_0 \in W(\mc H, \mc T_\cH)(k_F)_{\theta_\cH}$. Let $\tilde w \in \mc E_{\theta_\cH, \cH^+}^0$ be the unique lift of $w$ that
sends $v_\xi \in \ind_{\mc B (k_F)}^{\cH (k_F)} (\theta_\cH)$ to $v_\xi$ as an
element of $\gamma \cdot \ind_{\mc B (k_F)}^{\cH (k_F)} (\theta_\cH)$ in
\eqref{eq:7.62}. Consider 
\[
\pi_{\cH^+}(\gamma^{-1}) \tilde w \in \Hom_{\cH (k_F)} \big(
\ind_{\mc B (k_F)}^{\cH (k_F)} (\theta_\cH), 
\gamma^{-1} \cdot \ind_{\mc B (k_F)}^{\cH (k_F)} (\theta_\cH) \big) .
\]
All maps of this form fix $v_\xi$, thus the composition of two such maps also
fixes $v_\xi$. This implies that $\tilde w \tilde v = \widetilde{w v}$ for all
$w ,v \in W(\mc H^+, \mc T_\cH)(k_F)_{\theta_\cH}$, thus providing the 
required splitting of \eqref{eq:7.61}. 
\end{proof} 

\section{Splittings of some extensions on the Galois side}
\label{sec:B}

For applications in \S\ref{par:cocycle}, some parts of \S\ref{par:3.split} need 
to be generalized to larger groups. In the extensions \eqref{eq:6.14}, \eqref{eq:6.7} 
and \eqref{eq:6.11}, we can replace $L^\vee$ by $N_{G^\vee}(L^\vee)$, giving 
\begin{equation}\label{eq:6.25}
\begin{array}{c@{\;\to\;}c@{\;\to\;}c@{\;\to\;}c@{\;\to\;}c}
1 & \bar T^{\vee,+} & (N_{G^\vee}(L^\vee, T^\vee)^+)_{\varphi_T,\eta} & 
W (N_{G^\vee}(L^\vee), T^\vee)^{\mb W_F}_{\varphi_T, \eta} & 1 , \\
1 & \bar T^{\vee,+} & \text{preimage of } Z_{N_{G^\vee}(L^\vee)}(\varphi )_\eta \text { in } 
\bar G^\vee & W (N_{G^\vee}(L^\vee), T^\vee)^{\mb W_F}_{\varphi_T, \eta} & 1 , \\
1 & \bar T^{\vee,+} & \big( \bar T^{\vee,+} \rtimes W (N_{G^\vee}(L^\vee),T^\vee
)^{\mb W_F}_\eta \big)^{\varphi_T (\mb W_F)} & 
W (N_{G^\vee}(L^\vee), T^\vee)^{\mb W_F}_{\varphi_T, \eta} & 1 .
\end{array}
\end{equation}
Analogous to \eqref{eq:7.35}--\eqref{eq:7.36}, pushout along $\eta : \bar T^{\vee,+} \to
\C^\times$ gives three new extensions, which contain \eqref{eq:6.15},
\eqref{eq:6.8} and \eqref{eq:6.11}, respectively:
\begin{equation}\label{eq:6.26}
\begin{array}{ccccccccc}
1 & \to & \C^\times & \to & \mc E_{\eta,G}^{0,\varphi_T} & \to & 
W (N_{G^\vee}(L^\vee), T^\vee)^{\mb W_F}_{\varphi_T, \eta} & \to & 1 , \\
1 & \to & \C^\times & \to & \mc E_{\eta,G}^{\varphi_T} & \to & 
W (N_{G^\vee}(L^\vee), T^\vee)^{\mb W_F}_{\varphi_T, \eta} & \to & 1 , \\
1 & \to & \C^\times & \to & \mc E_{\eta,G}^{\rtimes \varphi_T} & \to &  
W (N_{G^\vee}(L^\vee), T^\vee)^{\mb W_F}_{\varphi_T, \eta} & \to & 1 .
\end{array}
\end{equation}
As in Lemmas \ref{lem:7.10} and \ref{lem:7.2}, the proof of Lemma \ref{lem:6.3} only uses 
\cite[\S 8]{Kal4} and hence applies to the above extensions as well. 
Therefore we have the following.
\begin{lem}\label{lem:6.5}
In \eqref{eq:6.25} and \eqref{eq:6.26}, the middle extension is the Baer sum of the 
other two, in the category of $\tilde N_{\varphi,\eta}$-groups.
\end{lem}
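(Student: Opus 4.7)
The plan is to adapt the proof of Lemma \ref{lem:6.3} essentially verbatim, since the only change is that $L^\vee$ has been enlarged to $N_{G^\vee}(L^\vee)$ throughout. The formal structure of the three extensions in \eqref{eq:6.25} is identical to that of \eqref{eq:6.14}, \eqref{eq:6.7}, \eqref{eq:6.10}: the top one arises as a pullback of the normalizer extension of a Weyl group by a torus, the bottom one as a $\varphi_T(\mb W_F)$-invariant subgroup of a semidirect product, and the middle one as the centralizer of $\varphi$ in a preimage in $\bar G^\vee$. The parenthetical remark following \eqref{eq:6.12} explicitly notes that the key exactness result \cite[Lemma 4.5.3]{Kal3} still holds when $N_{L^\vee}(T^\vee)$ is replaced by $N_{G^\vee}(L^\vee,T^\vee)$, by the same argument; this is exactly what is required to construct the middle extension in \eqref{eq:6.25}.

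For part (a), I would follow the approach of \cite[Proposition 8.2]{Kal4}. One chooses setwise splittings of the top and bottom extensions of \eqref{eq:6.25}: the bottom one has a canonical setwise splitting coming from the inclusion into the semidirect product $\bar T^\vee \rtimes W(N_{G^\vee}(L^\vee),T^\vee)^{\mb W_F}_\eta$, and the top one admits a setwise splitting obtained by lifting Weyl-group elements to elements of $N_{\bar G^\vee}(T^\vee)$. These yield 2-cocycles in $Z^2\bigl(W(N_{G^\vee}(L^\vee),T^\vee)^{\mb W_F}_{\varphi_T,\eta}, \bar T^{\vee,+}\bigr)$ classifying the respective extensions. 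The product (pointwise in $\bar T^{\vee,+}$) of these two setwise splittings gives a setwise splitting of the middle extension, and the associated 2-cocycle is the product of the two individual ones. Translating back from cocycles to extensions yields the desired Baer-sum identification.

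For the equivariance, since $\tilde N_{\varphi,\eta} \subset N_{G^\vee}(L^\vee,{}^L T)$ normalizes each of $\bar T^{\vee,+}$, $N_{G^\vee}(L^\vee,T^\vee)^{\mb W_F}_\eta$, and the subgroup $\varphi_T(\mb W_F) \subset {}^L L$, conjugation defines an action of $\tilde N_{\varphi,\eta}$ on every group appearing in \eqref{eq:6.25}, and the defining maps are equivariant. The setwise splittings above can be taken to be compatible with this action (this is the content of Propositions \ref{prop:7.3} and \ref{prop:6.4}, applied in the present slightly larger setting), so the product decomposition of the 2-cocycle takes place within the category of $\tilde N_{\varphi,\eta}$-groups. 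Part (b) is then immediate: pushout along $\eta : \bar T^{\vee,+} \to \C^\times$ is a functor on extensions that intertwines Baer sums, and as observed in the proof of Lemma \ref{lem:6.3}, upon pushout the $\tilde N_{\varphi,\eta}$-action factors through the quotient \eqref{eq:6.23}, so the resulting isomorphism in \eqref{eq:6.26} is equivariant for the induced action on that quotient.

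I do not expect any serious obstacle: the argument is a direct transcription of Lemma \ref{lem:6.3}, with $L^\vee$ replaced by $N_{G^\vee}(L^\vee)$. The only point requiring mild care is verifying that the exactness of the analogue of \eqref{eq:6.12} persists after taking $\mb W_F$-invariants, but this is precisely the parenthetical noted in the main text. Everything else is bookkeeping: tracking the normalizers, pullbacks, pushouts, and the resulting cocycle classes in the larger setup.
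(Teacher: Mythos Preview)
Your approach is essentially the paper's: both reduce directly to the proof of Lemma~\ref{lem:6.3}, which in turn cites \cite[\S 8]{Kal4}. The paper's entire justification is the single sentence preceding the lemma, observing that the argument for Lemma~\ref{lem:6.3} uses only \cite[\S 8]{Kal4} and therefore carries over verbatim when $L^\vee$ is enlarged to $N_{G^\vee}(L^\vee)$.

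One correction: your appeal to Propositions~\ref{prop:7.3} and~\ref{prop:6.4} for equivariant setwise splittings is misplaced and unnecessary. Those results establish \emph{groupwise} splittings of the pushouts $\mc E_{\theta,G}^0$ and $\mc E_\eta^{0,\varphi_T}$; they are not about setwise splittings of the extensions in \eqref{eq:6.25}, nor are they needed for the Baer-sum identification. The $\tilde N_{\varphi,\eta}$-equivariance follows more simply: all groups and homomorphisms in \eqref{eq:6.25} are $\tilde N_{\varphi,\eta}$-equivariant by construction, and the explicit product decomposition in \cite[Proposition 8.2]{Kal4} is natural in this data. This is exactly what the paper means in the proof of Lemma~\ref{lem:6.3} by saying the arguments ``all take place in the category of $\tilde N_{\varphi,\eta}$-groups.''
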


The analogue of Proposition \ref{prop:6.4} for these extensions is the following. 
\begin{prop}\label{prop:6.6}
The extension $\mc E_{\eta,G}^{0,\varphi_T}$ splits.
\end{prop}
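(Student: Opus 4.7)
The plan is to transfer the strategy of Proposition \ref{prop:7.3} from the $p$-adic side to the Galois side, using the same reduction techniques already established in the proof of Proposition \ref{prop:6.4}. Since we only need a splitting and not a $W(N_{G^\vee}(L^\vee), T^\vee)^{\mb W_F}_{\varphi_T,\eta}$-equivariant splitting, we have more flexibility than in Proposition \ref{prop:6.4}.

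First I would reduce to the case of $G^\vee$ simple and of adjoint type, using the standard arguments from the proof of Proposition \ref{prop:6.4}: dividing out $Z(G^\vee)$, decomposing along simple factors of $G^\vee$ that are permuted by $\mb W_F$, and replacing $\mb W_F$ by the stabilizer of one such factor. Next, using \eqref{eq:6.32} and \eqref{eq:6.16}, the problem reduces to constructing a setwise splitting of the extension
\begin{equation*}
1 \to T^{\vee,\mb W_F}_{\Sc} \to N_{N_{G^\vee}(L^\vee)_{\Sc}}(T^\vee_{\Sc})^{\mb W_F}_{\varphi_T,\eta}
\to W(N_{G^\vee}(L^\vee)_{\Sc}, T^\vee_{\Sc})^{\mb W_F}_{\varphi_T, \eta} \to 1
\end{equation*}
(here we use the version of \eqref{eq:6.12} with $N_{G^\vee}(L^\vee)$ in place of $L^\vee$, noted as valid after \eqref{eq:6.12}) which becomes a group homomorphism after pushout along $\eta_{\Sc}$. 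Decomposing $L^\vee_{\Sc}$ into simple factors $L^\vee_i$ permuted by $\tilde N_\varphi \rtimes \mb W_F$, as in the proof of Proposition \ref{prop:6.4}, the problem splits into two sub-problems: one for the semisimple part (one factor per $\tilde N_\varphi \rtimes \mb W_F$-orbit of simple factors of $L^\vee_{\Sc}$), and one for the central torus $Z(L^\vee)^\circ$ together with its stabilizer in $\tilde N_\varphi$.

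For the semisimple sub-problem, when $\tilde N_\varphi$ acts on a single simple factor $L^\vee_i$ of $L^\vee_{\Sc}$ (the analogues of Cases B, C, IV, V in the proof of Proposition \ref{prop:7.3}), the splitting provided by Proposition \ref{prop:6.4} combined with the few remaining cases already treated case-by-case there gives what we need. When $\tilde N_\varphi$ permutes several simple factors transitively (the analogue of Case A), I would lift simple reflections in the permutation group $S_d$ using Tits lifts: each transposition decomposes into a product of commuting reflections $s_\alpha$ with $\langle \alpha^\vee, \varphi_T \rangle = 0$, so the square of the Tits lift $\widetilde{s_\alpha}^2$ becomes trivial after pushout along $\eta$. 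This mirrors \eqref{eq:7.57}--\eqref{eq:7.TL} exactly.

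For the central-torus sub-problem, I would use the Whittaker-vector argument from the end of the proof of Proposition \ref{prop:7.3}: pick a $\Gamma_Z$-stable pinning of the simply connected cover $\cH$ of $Z_{G^\vee}(L^\vee_{\der})^\circ$, together with a compatible $\Gamma_Z$-invariant nondegenerate character $\xi_\cH$ of a maximal unipotent subgroup, and use that $\xi_\cH$ occurs with multiplicity one in $\ind_{\mc B}^{\cH}(\theta_\cH)$ to pin down a canonical lift. The main obstacle I anticipate is verifying that this Whittaker-vector construction carries over cleanly once we are inside $\bar G^\vee$ rather than in a reductive quotient over $k_F$, since some of the finite-field machinery (e.g.~the use of \cite[Theorem 2.9]{DLM} and Deligne--Lusztig adjunction) was specific to finite reductive groups; however, the complex-group version is strictly easier, because the relevant multiplicity-one statement follows from the classical theory of Whittaker models for complex reductive groups applied to principal series.
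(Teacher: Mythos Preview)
Your overall strategy—mirroring Proposition~\ref{prop:7.3} on the Galois side—is exactly what the paper does, and your handling of the permuted simple factors via Tits lifts matches the paper's Case~A almost verbatim. The gap is in your treatment of the central-torus sub-problem.

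The observation you are missing is that on the Galois side this sub-problem is \emph{trivially} solved, unlike on the $p$-adic side. By \eqref{eq:6.32}, $\eta$ factors through $\eta_\Sc$ on $\pi_0(T_\Sc^{\vee,\mb W_F})$, i.e.\ $\eta$ descends to a character of $(T^\vee/Z(L^\vee))^{\mb W_F}$. The paper uses this to replace \eqref{eq:6.31} by an extension \eqref{eq:6.33} with $(T^\vee/Z(L^\vee))^{\mb W_F}$ on the left; after decomposing along $X_*(Z(L^\vee)^\circ)\otimes\R \,\oplus\, \bigoplus_i X_*(T_i^\vee)\otimes\R$, the central-torus factor becomes the trivial extension $1\to 1\to P\to P\to 1$. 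So nothing further is needed. This is a genuine asymmetry with Proposition~\ref{prop:7.3}: there $\theta_\ff$ can be nontrivial on $Z(\mc L_y^\circ)^\circ(k_F)$, which is precisely why the Whittaker argument is required on the $p$-adic side.

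Your proposed Whittaker argument is therefore unnecessary, and your justification for why it should carry over is not convincing as stated. The objects on the Galois side are $\Fr_F$-invariants inside a complex group, not rational points of an algebraic group over a field, so there is no direct analogue of the induced representation $\ind_{\mc B(k_F)}^{\cH(k_F)}(\theta_\cH)$ to which a multiplicity-one result could be applied; the classical Whittaker theory for complex groups concerns representations of the complex group itself, not $\Fr_F$-equivariant structures within it. Once you recognize that $\eta$ kills the central-torus contribution, this obstacle disappears and the remainder of your argument (Cases~B and~C via the analysis of Propositions~\ref{prop:7.9} and~\ref{prop:7.3}, plus the explicit $D_4$ computation) lines up with the paper's.
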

\begin{proof}
The first part of the (long) proof consists of several simplifying steps, which allow us to reduce 
certain concrete cases to the case of simple groups, which will be treated by explicit arguments. As at the start of the proof of Proposition \ref{prop:6.4},
we can reduce to the case where $\mb I_F$ acts trivially on $G^\vee$, and thus we
may replace $\varphi$ by the single element $\varphi (\imath_F)$. Since 
\[
W(N_{G^\vee}(L^\vee),T^\vee)_{\varphi_T} \subset W(N_{G^\vee}(L^\vee),T^\vee)_{\varphi (\imath_F)},
\]
the required equivariance is automatic once we have constructed a splitting
in this simplified setting.

As before, $\eta$ factors 
through $(T^\vee / Z(G^\vee))^{\mb W_F}$, and the first extension in \eqref{eq:6.25} 
can be obtained by push out the following along $\eta$
\begin{equation}\label{eq:6.31}
1 \to (T^\vee / Z(G^\vee))^{\mb W_F} \!\to (N_{G^\vee}(L^\vee,T^\vee) / Z(G^\vee) 
)^{\mb W_F}_{\varphi (\imath_F),\eta} \!\to 
W(N_{G^\vee}(L^\vee),T^\vee)^{\mb W_F}_{\varphi (\imath_F), \eta} \!\to 1 .
\end{equation}
The extension \eqref{eq:6.31} is the direct product of analogous extensions for the $F$-simple 
factors of $G$.~Therefore we may assume that $G$ is $F$-simple and simply connected. 

Now $G^\vee$ is the direct product of its simple factors, and they are permuted
transitively by $\mb W_F$.~Hence we can replace $G^\vee$ by one of its simple factors
and $\mb W_F$ by the subgroup stabilizing that factor, without changing
\eqref{eq:6.31}. This allows us to reduce 
further to the cases where $G^\vee$ is simple. 

By \cite{Ste}, similar to the discussion for finite reductive groups near \eqref{eq:7.44}, 
we know that $W(G^\vee,T^\vee)_{\varphi (\imath_F)}$ is a semidirect
product of a normal subgroup and a complementary part that embeds into
$X_* (T^\vee) / Z R (G^\vee,T^\vee)$. By the simplicity of $G^\vee$,
this complement is either cyclic or isomorphic to the Klein four-group.~The 
group $W(L^\vee,T^\vee )_{\varphi (\imath_F)}$ embeds into
$W(G^\vee,T^\vee)_{\varphi (\imath_F)}$.~By the non-singularity of $\theta_\ff$,
$W(L^\vee,T^\vee )_{\varphi (\imath_F)}$ intersects the reflection
subgroup of $W(G^\vee,T^\vee)_{\varphi (\imath_F)}$ trivially. Hence 
\begin{equation}\label{eq:6.40}
W(L_i^\vee,T^\vee )_{\varphi (\imath_F)} \quad \text{embeds in} \quad
X_* (T^\vee) / \Z R^\vee (G^\vee,T^\vee).
\end{equation}
By the simplicity of $G^\vee$, the right-hand side of \eqref{eq:6.40} is either cyclic or 
isomorphic to the Klein four-group. Recall from \eqref{eq:6.32} that $\eta$ factors through $(T^\vee / Z(L^\vee))^{\mb W_F}$.
Hence \eqref{eq:6.31} can be replaced by 
\begin{equation}\label{eq:6.33}
1 \to (T^\vee / Z(L^\vee))^{\mb W_F} \!\to (N_{G^\vee}(L^\vee,T^\vee) / Z(L^\vee) 
)^{\mb W_F}_{\varphi (\imath_F),\eta} \!\to 
W(N_{G^\vee}(L^\vee),T^\vee)^{\mb W_F}_{\varphi (\imath_F),\eta} \!\to 1 ,
\end{equation}
and we need to show that this extension splits upon pushout along $\eta$. We decompose
\begin{equation}\label{eq:6.34}
X_* (T^\vee) \otimes_\Z \R = X_* (Z(L^\vee)^\circ) \otimes_\Z \R \: \oplus \: 
\bigoplus\nolimits_i \, X_* (L_i^\vee \cap T^\vee) \otimes_\Z \R ,
\end{equation}
where $L_i^\vee$ runs through the 
$W(N_{G^\vee}(L^\vee),T^\vee)^{\mb W_F}_{\varphi (\imath_F),\eta} \times \mb W_F$-orbits of 
simple factors of $L^\vee$.~Accordingly, the character $\eta$ decomposes as a product 
of characters $\eta_i$. Let $P$ denote the image of 
$W(N_{G^\vee} (L^\vee),T^\vee)^{\mb W_F}_{\varphi (\imath_F),\eta}$ in the orthogonal group 
of $X_* (Z(L^\vee)^\circ) \otimes_\Z \R$, and write $T_i^\vee:=T^\vee \cap L_i^\vee$. 
The decomposition \eqref{eq:6.34} gives rise to 
an embedding of \eqref{eq:6.33} in the product of the extensions $1 \to 1 \to P \to P \to 1$ and 
\begin{multline}\label{eq:6.35}
1 \to (T_i^\vee / Z(L_i^\vee))^{\mb W_F} \to 
\big(N_{G^\vee}(L^\vee,T^\vee) \big/ Z_{G^\vee}(L_i^\vee) \cap N_{G^\vee}
(L^\vee,T^\vee) \big)^{\mb W_F}_{\varphi (\imath_F),\eta_i}\\ 
\to W^{\mb W_F}_{i,\eta_i} 
\to 1 ,
\end{multline}
where $i$ runs through the same index set as in \eqref{eq:6.34}, and
\[
W_i :=  W(N_{G^\vee}(L^\vee),T^\vee)_{\varphi (\imath_F)} /
W ( Z_{G^\vee}(L_i^\vee) \cap N_{G^\vee}(L^\vee), T^\vee)_{\varphi (\imath_F)} . 
\]
It suffices to construct, for each $i$, a splitting of \eqref{eq:6.35} which becomes
a group homomorphism upon pushout along $\eta_i$:
\begin{equation}\label{eq:6.43}
1 \to \C^\times \to \mc E_{i,\eta,G}^{0,\varphi_T} \to W_{i,\eta_i}^{\mb W_F} \to 1.    
\end{equation}
Alternatively, we may directly construct a groupwise splitting of \eqref{eq:6.43}, i.e.~in 
this case $\eta_i$ extends to
$(N_{G^\vee}(L_i^\vee,T_i^\vee) / Z(L_i^\vee) )^{\mb W_F}_{\varphi (\imath_F),\eta}$, and hence 
to $(N_{G^\vee}(L_i^\vee,T_i^\vee) )^{\mb W_F}_{\varphi (\imath_F),\eta}$. 
Thus $\eta = \prod_i \eta_i$ extends to
$(N_{G^\vee}(L^\vee,T^\vee) )^{\mb W_F}_{\varphi (\imath_F),\eta}$ and we are done.

By classification of irreducible root systems and parabolic subsystems, $L_i^\vee / Z(L_i^\vee)$ 
is simple or is a product of adjoint groups of type $A$. 
This leads to three cases (A,B,C), treated separately below, in which we construct
such a splitting \eqref{eq:6.35}. \\

\textit{Case A. $L_i^\vee$ is a product of $d> 1$ type $A_{n-1}$ 
simple factors $M_i$, which are permuted transitively by 
$N_{G^\vee}(L^\vee)^{\mb W_F}_{\varphi (\imath_F)} \times \mb W_F$.} \\
All $M_i$'s are isomorphic to $\PGL_n (\C)$.~Recall that the $\mb W_F$-action is given via 
elements of $N_{L^\vee} (T^\vee) \rtimes \mb W_F$ and that $\mb I_F$ acts trivially.~Let 
$\Fr_F^{d'}$ be the smallest power of $\Fr_F$ that stabilizes any (or equivalently all) of 
the $M_i$'s.~Then $\Fr_F^{d'}$ acts as an elliptic element $F_A$ on each $M_i$.~Every elliptic 
element in $S_n$ is an $n$-cycle.~By adjusting the coordinates in each $M_i \cong \PGL_n (\C)$, 
we can make $F_A$ the product of the matrix of permutation $(1\,2\ldots n)$ with a scalar matrix.

By \eqref{eq:6.40}, the group $W(L_i^\vee, T_i^\vee)_{\varphi (\imath_F)}$
arises from at most two simple factors $M_i$ of $L_i^\vee$.~By the transitive action
on the set of simple factors of $L_i^\vee$, each such $M_i$ contributes the same
number of elements to $W(L_i^\vee, T_i^\vee)_{\varphi (\imath_F)}$.~If
$L_i^\vee$ has two simple factors that both contribute, then \eqref{eq:6.40} is 
not cyclic, thus $G^\vee$ has type $D_{2n}$.~But in this case, the
elements of $W(G^\vee,T^\vee)_{\varphi (\imath_F)}$ modulo its reflection subgroup do not
come from any type-$A$ Levi subgroup of $G^\vee$.~More precisely, when one expresses elements 
of $W(G^\vee,T^\vee)_{\varphi (\imath_F)}$ outside its reflection subgroup in terms of 
simple reflections, one must use some of the sign reflections in $W(D_{2n}) \cong S_{2n} \ltimes \{\pm 1\}^{2n}$. 
However, the Weyl group of a maximal type-$A$ Levi subgroup of $G^\vee$ is $S_{2n}$. 
This shows that $W(L_i^\vee, T_i^\vee)_{\varphi (\imath_F)}$ is trivial.

Consider two simple factors $M_1, M_2$ of $L_i^\vee$.~The Dynkin diagram of $M_1 \times
M_2$ embeds into a connected type-$A$ subdiagram $J$ of the Dynkin diagram of $G^\vee$.~In 
the Weyl group generated by $J$, we can find an element of 
$N_{G^\vee}(L^\vee)$ that exchanges $M_1$ and $M_2$ and stabilizes the other $M_i$'s.~Hence 
$W (N_{G^\vee}(L^\vee),T^\vee)_{\varphi (\imath_F)}$ surjects onto $S_d$, the latter viewed as
the permutation group on the set of simple factors $M_i$. We now take a closer look at the reflections in this $S_d$.~It suffices to consider
the transposition $s_{12}$ that exchanges $M_1$ and $M_2$.~The Levi subgroup of 
$G^\vee$ generated by $J$ is simple of type $A$, and it contains finite covers of 
$M_1$ and $M_2$ as Levi subgroups.~In terms of the coordinates of 
$M_i \cong \PGL_n (\C)$, 
$s_{12}$ is the product of $n$ commuting reflections $s_\alpha$, permuted cyclically
by $F_A$ and each sending one coordinate for $M_1$ to one coordinate for $M_2$.~Since 
$s_\alpha$ fixes $\varphi (\imath_F)$ and the two coordinates exchanged by $\alpha$ have 
no relations (like they would in $\PGL_2 (\C)$), we have $\alpha^\vee (\varphi (\imath_F)) = 1$ and $\alpha^\vee (\C^\times) \subset Z_{G^\vee}(\varphi (\imath_F))$. Let $\tilde s_\alpha$ be a Tits lift of $s_\alpha$ with respect to some pinning \cite{Tits}.
We set 
\[
\tilde s_{F_A^m (\alpha)} := F_A^m \tilde s_\alpha F_A^{-m} \quad \text{and} \quad
\tilde s_{12} := \prod\nolimits_{m=0}^{n-1} \, \tilde s_{F_A^m (\alpha)} .
\]
Then $\tilde s_{12}$ is fixed by $F_A$ and 
\[
\tilde s_{12}^2 = 
\prod\nolimits_{m=0}^{n-1} \, (F_A^m \alpha)^\vee (-1) \in Z(L^\vee)^{F_A}.
\]
The same construction can be carried out for any simple reflection $s_{i, i+1}$ 
in $S_d$. By the length multiplicativity of Tits lifts \cite[Proposition 9.3.2]{Spr},
this extends to a group homomorphism 
\begin{equation}\label{eq:6.41}
S_d \to \big( N_{G^\vee}(L^\vee,T^\vee) / Z_{G^\vee}(L_i^\vee) \big)^{F_A} .
\end{equation}
Recall that $F_A = \Fr_F^{d'}$ for some $d' \geq 1$.~This $d'$ is at most the order 
of the action of $\Fr_F$ on $G^\vee$, thus it is at most 3. By the assumptions in case A, 
every simple factor of $L_i^\vee$ lies in a $\Fr_F$-orbit consisting of precisely 
$d'$ of the $M_i$'s.~The centralizer of $\Fr_F$ in $S_d$ is a semidirect product of
two subgroups:
\begin{itemize}
\item[(i)] the normal subgroup $N (S_d,\Fr_F)$ generated by the cycles of $\Fr_F$, and
\item[(ii)] a subgroup $\Gamma (S_d,\Fr_F) \cong S_{d/d'}$ that permutes the various 
$\Fr_F$-orbits of $M_i$'s (where the coordinates of each $M_i$ are ordered 
in a cycle given by $F_A$). 
\end{itemize}
We order the $M_i$'s so that each $\Fr_F$-orbit forms one string of 
consecutive entries.~Then the set of those $\tilde s_\alpha$ (as above) 
such that $s_\alpha$ only permutes one $\Fr_F$-orbit, can be constructed in a
$\Fr_F$-equivariant way. This allows us to make the image of $N(S_d,\Fr_F)$ under 
\eqref{eq:6.41} $\Fr_F$-invariant. 

A transposition $\gamma$ in $\Gamma (S_d,\Fr_F)$ is given by a product of $d' n$
commuting reflections, each of the form 
$\Fr_F^m s_\alpha \Fr_F^{-m} = s_{\Fr_F^m (\alpha)}$.
Analogous to the construction of $\tilde s_\alpha$ above, we can construct
a $\Fr_F$-invariant lifting $\tilde \gamma$ of $\gamma$. Using these $\tilde \gamma$,
we can make the image of $\Gamma (S_d,\Fr_F)$ under \eqref{eq:6.41} $\Fr_F$-invariant. This provides the desired lift of $S_d^{\Fr_F}$, which gives a desired splitting of
\eqref{eq:6.35}. \\

\textit{Case B. $L_i^\vee$ is $\mb W_F$-stable and
$W(L_i^\vee, T_i^\vee)_{\varphi (\imath_F)} = \{1\}$.} \\
$W_i$ injects into the group of diagram automorphisms of $L_i^\vee$.~If $W_i$ is cyclic, 
then there exists a splitting of \eqref{eq:6.43} along
$\eta_i$; and $W_i$ can only be non-cyclic if $L_i^\vee$ has type $D_4$ and 
$W(N_{G^\vee}(L^\vee),T^\vee)_{\varphi (\imath_F)}$ surjects onto the group of outer
automorphisms Out$(D_4) \cong S_3$. 

We represent $\varphi (\imath_F)$ by an element $\varphi_a$ of the fundamental alcove for
$W_\af (D_4)$ in $X_* (T^\vee) \otimes_\Z \R$.~Then the stabilizer of $\varphi (\imath_F)$
in $W(D_4) \rtimes \mr{Out}(D_4)$ is isomorphic to the stabilizer of $\varphi_a$ in 
$X_* (T^\vee) \rtimes \mr{Aut}(W(D_4))$.~The structure of $W_i$ implies that 
it is generated by elements that stabilize the fundamental alcove.~The regularity of 
$\theta_\ff$ gives the regularity of $\varphi_a$, in the sense that it does not lie
on any hyperplane of the affine Coxeter complex.~Thus $W_i$ is isomorphic
to the group Out$(D_4)$ of diagram automorphisms, which we view as a subgroup of
$W(D_4) \rtimes \mr{Out}(D_4)$. 

We next show that $W_i^{\Fr_F}$ is cyclic or that \eqref{eq:6.43} splits. If $\Fr_F$ acts 
on $L_i^\vee$ by an outer automorphism, then Out $(D_4)^{\Fr_F}$ is cyclic of order 2 or 3. 
Since $\Fr_F$ acts on $L^\vee$ by an inner automorphism, its image in $W(D_4)$ is elliptic 
because $T^\vee$ is elliptic. The elliptic elements in 
$W(D_4) \cong S_4 \ltimes \{\pm 1\}^4$
are easily classified:
\begin{itemize}
\item[(i)] a product of two disjoint 2-cycles in $S_4$ times one sign change in both 
cycles, e.g. $(12)(34) \epsilon_1 \epsilon_4$; 
\item[(ii)] a 3-cycle in $S_4$ times two sign changes, of which one outside the 3-cycle,
e.g. $(123) \epsilon_1 \epsilon_4$; 
\item[(iii)] the central element $-1 = \epsilon_1 \epsilon_2 \epsilon_3 \epsilon_4$.
\end{itemize}
Elliptic elements of the first two kinds do not commute with the whole group 
Out$(D_4)$.~For such $\Fr_F$, $W_i^{\Fr_F}$ is a proper subgroup of Out$(D_4)$ and hence cyclic. 

Suppose now that $\Fr_F$ acts via a lift of $-1 \in W(D_4)$ to $N_{L^\vee}(T^\vee)$.~In this case, we need to use $\eta_i$.~The group $T_i^{\vee,\Fr_F}$
consists of the elements of order $\leq 2$ in $T_i^\vee$. Since we divided out
$Z(L^\vee)$, we may take $L_i^\vee = \mathrm{PSO}_8 (\C)$ with $T_i^\vee$ being the maximal
torus on the diagonal, i.e.
\[
T_i^\vee = \{ \mr{diag}(t_1,t_2,t_3,t_4,t_4^{-1}, t_3^{-1}, t_2^{-1}, t_1^{-1})
: t_i \in \C^\times \}. 
\] 
We write an arbitrary $t \in T_i^\vee$ as $t=(t_1,t_2,t_3,t_4)$.~In this notation, 
$T_i^{\vee,\Fr_F} \cong \{ \pm 1\}^4$ is generated by
\[
(-1,1,1,1), (1,-1,1,1), (1,1,-1,1), (1,1,1,-1) \text{ and } 
(\sqrt{-1},\sqrt{-1},\sqrt{-1},\sqrt{-1}). 
\]
The element $\epsilon_4 \in \mr{Out}(D_4)$ stabilizes the character $\eta_i$ of
$T_i^{\vee,\Fr_F}$ if and only if 
\begin{align*}
\eta_i \big( \epsilon_4 (\sqrt{-1},\sqrt{-1},\sqrt{-1},\sqrt{-1}) \big) & 
= 
\eta_i (\sqrt{-1},\sqrt{-1},\sqrt{-1},-\sqrt{-1}) \\
& 
= \eta_i (\sqrt{-1},\sqrt{-1},\sqrt{-1},\sqrt{-1}) \: \eta_i (1,1,1,-1)
\end{align*}
equals $\eta_i (\sqrt{-1},\sqrt{-1},\sqrt{-1},\sqrt{-1})$,
or equivalently $\eta_i (1,1,1,-1) = 1$.~The element $\epsilon_4$ can be
lifted to an element of the subgroup of $\mathrm{PO}_8 (\C)$ that changes only the
fourth and fifth coordinates (corresponding to only the fourth coordinate of
$T_i^\vee$).~Since $-1 \in W(D_4)$ acts on the coordinates
of $T^\vee_i$ separately, this lift of $\epsilon_4$ can be adjusted by some element 
$(1,1,1,t_4) \in T_i^\vee$ to make it into a $\Fr_F$-invariant lift, say $a_4$. Then 
\[
a_4^2 \in \{ (1,1,1,t_4) : t_4 \in \C^\times \}^{\Fr_F} =
\langle (1,1,1,-1) \rangle \subset \ker \eta_i.
\]
By construction, $a_4$ is canonical up to $\{ (1,1,1,t_4) : t_4 \in \C^\times \}^{\Fr_F}$, 
thus upon pushout along $\eta_i$ it becomes unique. Via conjugation by order 3 elements of Out$(D_4)$, we also obtain canonical
lifts of the other order 2 elements of Out$(D_4)$. By their canonicity, these
lifts combine to a splitting of \eqref{eq:6.43}.\\

\textit{Case C. $L_i^\vee$ is $\mb W_F$-stable and
$W(L_i^\vee, T_i^\vee)_{\varphi (\imath_F)} \neq \{1\}$.} \\
Cases I--V in the proof of Proposition \ref{prop:7.9} show that
\[
W_i \cong W(L_i^\vee, T_i^\vee)_{\varphi (\imath_F)} \times N, 
\]
where $N = \{1\}$ unless $L_i^\vee$ has type $D_n$ (case IV), in which case $N$ may have 
two elements. In case IV, the proof of Proposition \ref{prop:7.9} 
provides a $W(N_{G^\vee}(L^\vee),T^\vee)_{\varphi (\imath_F)}$-equivariant splitting
on $W(L_i^\vee, T^\vee \cap L_i^\vee)_{\varphi (\imath_F)}$ in \eqref{eq:6.43}. 
It remains to construct a
splitting for $N$, which has order two. Since it is cyclic, its generator 
$\epsilon_n$ can be represented by an element of 
\[
\big( N_{G^\vee}(L^\vee,T^\vee) \big/ Z_{G^\vee}(L_i^\vee) \cap N_{G^\vee}
(L^\vee,T^\vee) \big)^{\mb W_F}_{\varphi (\imath_F)}, 
\]
whose order reduces to two upon pushout along $\eta$. 
\end{proof}

\section*{Index of notations}
\begin{multicols}{2}
$\natural_{\mf s^\vee}$\qquad \iref{i:149}

$\natural_\tau$\qquad \iref{i:165}

$\mh A_S$\qquad \iref{i:102}

$\mc B (\mc L,F)$\qquad \iref{i:15}

$\mc B (\mc L_\ad,F)$\qquad \iref{i:11}

$C^{an}(U)$\qquad \iref{i:127} 

$CW (J,\hat \sigma)$\qquad \iref{i:161} 

$\Gamma_{\hat \sigma}$\qquad \iref{i:134} 

$\Gamma_{\hat \sigma,\tau}$\qquad \iref{i:139} 

$\Gamma_{\mf s^\vee}$\qquad \iref{i:146}

$\Gamma_{\mf s^\vee,\varphi_b}$\qquad \iref{i:154}

$D$\qquad \iref{i:96}

$\Delta_\af$\qquad \iref{i:97} 

$\Delta_{\ff,\af}$\qquad \iref{i:105}

$\eta$\qquad \iref{i:81}

$\mc E_\eta^{\varphi_T}$\qquad \iref{i:85} 

$\mc E_\eta^{\rtimes \varphi_T}$\qquad \iref{i:86} 

$\mc E_\theta^0$\qquad \iref{i:57} 

$\mc E_\theta^{[x]}$\qquad \iref{i:55}

$\mc E_\theta^{\ltimes [x]}$\qquad \iref{i:58}

$\mc E_{\theta,G}^0$\qquad \iref{i:66}

$F$\qquad \iref{i:1}

$F_s$\qquad \iref{i:44}

$\ff$\qquad \iref{i:18}

$\ff_L$\qquad \iref{i:17}

$\Fr$\qquad \iref{i:45}

$\Fr_F$\qquad \iref{i:88} 

$\mc G$\qquad \iref{i:3}

$G$\qquad \iref{i:4}

$\mc G^\flat$\qquad \iref{i:53}

$G^1$\qquad \iref{i:122}

$G^\vee$\qquad \iref{i:67}

$\mc G_\alpha$\qquad \iref{i:115}

$\mc G_\der$\qquad \iref{i:59}

$\mc G_\ff$\qquad \iref{i:24}

$G_\ff$\qquad \iref{i:23}

$G_{\ff,0+}$\qquad \iref{i:Uf}

$G_{\ff,\sigma}$ \qquad \iref{i:174}
 
$\mc G^\circ_\ff (k_F)$\qquad \iref{i:21} 

$\mc G_{\mf f_\alpha}^\circ (k_F)$\qquad \iref{i:111}

$\mc G_\sigma$\qquad \iref{i:114}

$\mc G_\Sc$\qquad \iref{i:60}

$G_\tau^2$\qquad \iref{i:126}

$G_\tau^3$\qquad \iref{i:126}

$\mc G_y^\circ$ \qquad \iref{i:46}

$\mc H (G,P_\ff,\sigma)$\qquad \iref{i:128}

$\mc H (G,\hat P_\ff, \hat \sigma)^\circ$\qquad \iref{i:133}

$\mh H (\tilde{\mc R}_\tau, W_{\mf s,\tau}, k^\tau,\natural_\tau)$\qquad \iref{i:155}

$\mh H \big( \mf s^\vee, \varphi_b, \log (q_F^{1/2}) \big)$\qquad \iref{i:158} 

$\mc H (\mf s^\vee, q_F^{1/2})$\qquad \iref{i:145}

$\mc H (\mf s^\vee, q_F^{1/2})^\circ$\qquad \iref{i:147} 

$\theta$\qquad \iref{i:27}

$\theta_\ff$\qquad \iref{i:28}

$\imath_F$\qquad \iref{i:87}

$\mb I_F$\qquad \iref{i:69}

$\Irr (\mc E_\theta^{[x]},\mr{id})$\qquad \iref{i:56}

$\Irr (G)_{\mf s}$\qquad \iref{i:172}

$\Irr (L)$\qquad \iref{i:36}

$\Irr^0 (L)$\qquad \iref{i:33}

$\Irr (N_L (T)_\theta, \theta)$\qquad \iref{i:9}

$\Irr (S_\varphi^+,\eta)$\qquad \iref{i:82} 

$J$\qquad \iref{i:116}

$j$\qquad \iref{i:48}

$j_0$\qquad \iref{i:49}

$k_F$\qquad \iref{i:2}

$\kappa_{(T,\theta)}^{L,\epsilon}$\qquad \iref{i:38}

$\kappa_{T,\theta,\rho}^{L,\epsilon}$\qquad \iref{i:63}

$\mc L$\qquad \iref{i:51}

$\mc L^\flat$\qquad \iref{i:50}

$\mf l^\vee$\qquad \iref{i:159}

$\mc L_\alpha$\qquad \iref{i:112} 

$\mc L_\ad$\qquad \iref{i:10}

${}^L G = {}^L \mc G$\qquad \iref{i:68}

${}^L j$\qquad \iref{i:74}

${}^L T$\qquad \iref{i:75}

$\mc L_{y,i}$\qquad \iref{i:64}

$\mc L_{y,i}^+$\qquad \iref{i:65}

$\Mod \text{ - } \cH (G,\hat P_\ff, \hat \sigma)$\qquad \iref{i:141}

$\Mod_\fl \text{ - } \mc H (G,\hat P_\ff, \hat \sigma)$\qquad \iref{i:131}

$N_G (L,T)$ \qquad \iref{i:NGLT}

$N_\varphi$\qquad \iref{i:168}

$\tilde N_\varphi$\qquad \iref{i:84}

$N_{\mc L_\ff (k_F)} (\mc T)_\theta$\qquad \iref{i:31}

$N_W (J)$\qquad \iref{i:103}

$\mf o_F$\qquad \iref{i:20}

$\mc O (\mathfrak{X}_{\nr} (G))$\qquad \iref{i:164} 

$\mb P_F$\qquad \iref{i:69}

$P_\ff$\qquad \iref{i:19}

$\hat P_\ff$\qquad \iref{i:22}

$P_{L,\ff}$\qquad \iref{i:25}

$\Pi_\varphi$\qquad \iref{i:90}

$\Pi_{\varphi,\eta}$\qquad \iref{i:89}

$\Pi_\varphi (L')$\qquad \iref{i:91}

$\pi (\varphi,\rho)$\qquad \iref{i:169}

$\pi^{st}(\varphi,\rho)$\qquad \iref{i:170}

$\Pi (L,T,\theta)$\qquad \iref{i:32}

$\Pi_{\mf s}$\qquad \iref{i:135}

$q_{\alpha^\vee}$\qquad \iref{i:148}

$q_F$\qquad \iref{i:166}

$q_{\sigma,\alpha}$\qquad \iref{i:109}

$q_{\theta,\alpha}$\qquad \iref{i:113}

$\Rep_\fl (G)$\qquad \iref{i:131}

$\Rep^0 (G)_{ns}$\qquad \iref{i:117}

$\Rep (G)_{\mf s}$\qquad \iref{i:35}

$\Rep (G)_{(P_\ff,\sigma)}$\qquad \iref{i:177}

$\Rep (G)_{(\hat P_\ff,\hat \sigma)}$\qquad \iref{i:130}

$\Rep (L)$\qquad \iref{i:34}

$R(G,S)$\qquad \iref{i:94} 

$R_\sigma$\qquad \iref{i:138}

$R_{\sigma, \tau}$\qquad \iref{i:137}

$R_{\mf s^\vee}$\qquad \iref{i:146}

$R_{\mf s^\vee,\varphi_b}$\qquad \iref{i:153}

$\mc R_{\mc T (k_F)}^{\mc L_\ff (k_F)} (\theta)$\qquad \iref{i:29}

$\pm \mc R_{\mc T (k_F)}^{\mc L_\ff (k_F)} (\theta)$\qquad \iref{i:30} 

$\rho_b$\qquad \iref{i:151}

$\rho_\pi$\qquad \iref{i:169}

$\mc S$\qquad \iref{i:93}

$\mf s$\qquad \iref{i:172}

$\mf s^\vee$\qquad \iref{i:413} 

$S_\af$\qquad \iref{i:98}

Sc \qquad \iref{i:171}

$S_{\ff,\af}$\qquad \iref{i:106}

$S_{\ff,\af,\sigma}$\qquad \iref{i:109}

$S_\varphi^+$\qquad \iref{i:72}

$\mf s_L^\vee$\qquad \iref{i:142}

$\sigma$\qquad \iref{i:107} 

$\hat \sigma$\qquad \iref{i:129}

$\mc T$\qquad \iref{i:16}

$\mf t$\qquad \iref{i:156}

$\mf t_\R$\qquad \iref{i:118}

$T^\flat$\qquad \iref{i:52}

$\mc T_s$\qquad \iref{i:95}

$T^\vee$\qquad \iref{i:73}

$T_\Sc$\qquad \iref{i:87}

$\overline{T}^{\vee,+}$\qquad \iref{i:80}

$\mc T_\ff$\qquad \iref{i:26}

$\mc T_{\ff,i}$\qquad \iref{i:61}

$t_\varphi$\qquad \iref{i:167}

$\tau$\qquad \iref{i:121}

$\tau_1$\qquad \iref{i:123}

$\Phi (G)$\qquad \iref{i:70}

$\Phi^0_e (G)$\qquad \iref{i:71}

$\Phi_e (G)^{\mf s^\vee}$\qquad \iref{i:144}

$\varphi$\qquad \iref{i:77}

$\varphi_b$\qquad \iref{i:151}

$\varphi_\pi$\qquad \iref{i:169}

$\varphi_T$\qquad \iref{i:76}

$v (\alpha,J)$\qquad \iref{i:104}

$W$\qquad \iref{i:100}

$W_\af$\qquad \iref{i:99} 

$W_\af (J)$ \qquad \iref{i:106}

$W_\af (J,\sigma)$\qquad \iref{i:110}

$\mb W_F$\qquad \iref{i:6}

$W_J$ \qquad \iref{i:116}

$W(G,L)$\qquad \iref{i:39}

$W(G^\vee,L^\vee)^{\mb W_F}$\qquad \iref{i:78} 

$W(G,L)_{\hat \sigma}$\qquad \iref{i:132}

$W(G,L)_{(T,\theta)}$\qquad \iref{i:40}

$W(G,L)_{(T,\Xo (L) \theta)}$\qquad \iref{i:43}

$W(J,\sigma)$\qquad \iref{i:108}

$W(J,\hat \sigma)$\qquad \iref{i:120}

$W_L$\qquad \iref{i:160}

$W_L (J,\sigma) $\qquad \iref{i:160}

$W(\mc L,\mc T)$\qquad \iref{i:5}

$W(L,\tau,\mathfrak{X}_{\nr} (L))$\qquad \iref{i:140}

$W(N_G (L),T)$\qquad \iref{i:41}

$W(N_{G^\vee}(L^\vee),T^\vee )^{\mb W_F}$\qquad \iref{i:79} 

$W_{\mf s^\vee}$\qquad \iref{i:150} 

$[x]$\qquad \iref{i:54}

$\mf{X}_\nr (L)$\qquad \iref{i:42} 

$\mf{X}_\nr (L)^+$\qquad \iref{i:152}

$\mf{X}_\nr (L^\vee)^+$\qquad \iref{i:152}

$\Xo (G)$\qquad \iref{i:Xo}

$\Xo (G^\vee)$\qquad \iref{i:Xov}

$\mf{X}_\nr (G,\tau)$\qquad \iref{i:125}

$X_* (Z^\circ (L))$\qquad \iref{i:14}

$X^* (\mc T_\ff)$\qquad \iref{i:47}

$\chi_\varphi$\qquad \iref{i:92} 

$y$\qquad \iref{i:46}

$\mc Z$\qquad \iref{i:7}

$T_\cpt$ \qquad \iref{i:cpt}

$Z(\mc L)$\qquad \iref{i:12}

$Z^\circ (\mc L)$\qquad \iref{i:13}

$Z(\mf l^\vee)^{\mb W_F}_\R$\qquad \iref{i:163}

$ZW (J,\hat \sigma)$\qquad \iref{i:124}

$\Omega$\qquad \iref{i:101}

$\Omega_{\ff,\sigma}$ \qquad \iref{i:174}

$\Omega (J)$ \qquad \iref{i:175}

$\Omega (J,\sigma)$\qquad \iref{i:109}

$\Omega (J,\hat \sigma)$\qquad \iref{i:119} 
\end{multicols}

\end{document}